\newtheorem{thm}{Theorem}[section]
\newtheorem{lemma}[thm]{Lemma}
\newtheorem{cor}[thm]{Corollary}
\newtheorem{claim}{Claim}[thm]
\newtheorem{prop}[thm]{Proposition}
\newtheorem{fact}[thm]{Fact}
\newtheorem{quest}[thm]{Question}
\newtheorem*{thmaa}{Theorem~A}
\newtheorem*{thmb}{Theorem~B}
\newtheorem*{thmc}{Theorem~C}
\theoremstyle{definition}
\newtheorem{defn}[thm]{Definition}
\theoremstyle{remark}
\newtheorem{remark}[thm]{Remark}
\DeclareMathOperator{\acc}{acc}
\DeclareMathOperator{\pr}{pr}
\DeclareMathOperator{\dl}{Dl}
\newcommand{\s}{\subseteq}
\renewcommand{\mid}{\mathrel{|}\allowbreak}
\renewcommand{\restriction}{\mathbin\upharpoonright}
\newcommand{\symdiff}{\mathbin\triangle}
\newcommand{\redur}{\mathrel{\hookrightarrow_r}}                 
\newcommand{\redum}{\mathrel{\hookrightarrow_{BM}}}     
\newcommand{\redub}{\mathrel{\hookrightarrow_B}}                 
\newcommand{\reduc}{\mathrel{\hookrightarrow_c}}                 
\newcommand{\redul}{\mathrel{\hookrightarrow_L}}                 
\newcommand{\redua}{\mathrel{\hookrightarrow_\alpha}}            
\newcommand{\redup}{\mathrel{\hookrightarrow_{\alpha+1}}}            
\title{Shelah's Main Gap and the generalized Borel reducibility}
\author{Miguel Moreno}
\address{Institute of Mathematics, University of Vienna, Vienna 1090, Austria.}
\address{Department of Mathematics and Statistics, University of Helsinki, Helsinki 00560, Finland.}
\urladdr{http://miguelmath.com}
\begin{document}
\begin{abstract}
We answer one of the main questions in generalized descriptive set theory, the Friedman-Hyttinen-Kulikov conjecture on the Borel reducibility of the Main Gap.
We show a correlation between Shelah's Main Gap and generalized Borel reducibility notions of complexity. 
For any $\kappa$ satisfying $\kappa=\lambda^+=2^\lambda$ and $2^{\mathfrak{c}}\leq\lambda=\lambda^{\omega_1}$, we show that if $T$ is a classifiable theory and $T'$ is a non-classifiable theory, then the isomorphism of models of $T'$ is strictly above the isomorphism of models of $T$ with respect to Borel reducibility.

We also show that the following can be forced: for any countable first-order theory in a countable vocabulary, $T$, the isomorphism of models of $T$ is either analytic co-analytic, or analytically-complete.

\end{abstract}


\maketitle

\tableofcontents

\section{Introduction}\label{section_intro}

In this article we show a correlation between Shelah's classification theory and Friedman-Stanley Borel reducibility. Two different methods to classify first-order countable complete theories developed in the 1980's, the former one from model theory and the latter one from set theory.

One of the classic problems in mathematics is the independence of Euclid's fifth postulate, the parallel postulate. In their approach to the problem, Khayy\'am (1077) and Saccheri (1733) considered the three different cases of the Khayy\'am-Saccheri quadrilateral (right, obtuse, and acute). These three cases correspond to Euclidean geometry, Elliptic geometry, and Bolyai-Lobachevsky geometry (Hyperbolic geometry). This exemplifies how mathematicians are interested in the different models of a theory. This interest is more notorious in logic, where the study of the models of a theory is a central topic. In particular the study of the spectrum function $I(\lambda,T)$, the number of non-isomorphic models of cardinality $\lambda$ of a theory $T$, was studied by many mathematicians in the 20th century.

The most ``simple" case for the spectrum function would be when it takes values in the set $\{0,1\}$.
In 1904 Veblen introduced the notion of categorical theory \cite{Veb}, a theory is categorical if it has a model and all its models are isomorphic. In the language of the spectrum function, a theory $T$ is categorical if there is a cardinal $\lambda$ such that $I(\lambda,T)=1$ and for all cardinals $\kappa\neq \lambda$, $I(\kappa,T)=0$.
In 1915 L\"owenheim proved one of the first results of model theory \cite{Lowe}, the L\"owenheim-Skolem theorem (it was also proved by Skolem in 1920 \cite{Skol}). The L\"owenheim-Skolem theorem for first-order countable theories tells us that if a first-order countable theory has an infinite model, then for every infinite cardinal $\lambda$, there is a model of cardinality $\lambda$. This implies that there is no first-order countable categorical theory with an infinite model. 

It is surprising that this result came before G\"odel's completeness theorem, one of the classic results of logic. G\"odel's completeness theorem was proved initially in his Ph.D. thesis \cite{Godt} in 1929 and published later \cite{God} in 1930. It tells us that if a theory $T$ is consistent, then there is a model of $T$. This is one of the fundamental results of logic.

In 1954 Łoś \cite{Los} and Vaught\cite{Vau} introduced the notion of $\kappa$-categorical theory. A theory $T$ is $\kappa$-categorical if there is only one model of $T$ of size $\kappa$ up to isomorphism. A theory $T$ is categorical in $\kappa$ if $T$ is $\kappa$-categorical.
Łoś  announced that he had found three kinds of $\kappa$-categorical first-order countable complete theories: 

\begin{itemize}
\item {\bf Totally categorical:} $I(\kappa,T)=1$ for every infinite cardinal $\kappa$.
\item {\bf Uncountably categorical:} $I(\kappa,T)=1$ if and only if $\kappa$ is an uncountable cardinal.
\item {\bf Countably categorical:} $I(\kappa,T)=1$ if and only if $\kappa$ is countable.
\end{itemize}

Łoś raised the following question about first-order countable complete theories:

\textit{Is a theory categorical in one uncountable cardinal necessarily categorical in every uncountable cardinal?}

In 1965 Morley answered Łoś's question with his categoricity theorem \cite{Morl}.
\begin{fact}[Morley's categoricity theorem, Theorem 5.6 \cite{Morl}]
Let $T$ be a first-order countable complete theory. If $T$ is categorical in one uncountable cardinal, then $T$ is categorical in every uncountable cardinal. 
\end{fact}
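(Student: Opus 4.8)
The plan is to run the classical argument: categoricity in an uncountable cardinal forces $T$ to be $\omega$-stable and to admit no Vaughtian pair, and these two facts together make the isomorphism type of a model of size $\mu$ depend only on the dimension of a fixed strongly minimal set --- a dimension that must equal $\mu$ when $\mu$ is uncountable. So fix an uncountable $\kappa$ in which $T$ is categorical and work inside a monster model $\mathfrak{C}$. First I would show $T$ is $\omega$-stable. If it were not, then, $T$ being countable, there would be a countable model $M_0$ over which uncountably many complete $1$-types are consistent, witnessed by a binary splitting tree of formulas. One then builds, on one hand, a model $M^{\ast}$ of size $\kappa$ with $M_0\prec M^{\ast}$ realizing $\aleph_1$ of those types (iterating realizations and taking unions of elementary chains, then padding up to size $\kappa$ by L\"owenheim--Skolem), and, on the other hand, an Ehrenfeucht--Mostowski model $M(\kappa)$ of size $\kappa$ built over the ordinal $\kappa$ as index order; since a countable suborder of an ordinal has only countably many cuts, indiscernibility of the spine forces $M(\kappa)$ to realize only countably many $1$-types over each of its countable elementary submodels. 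Hence $M^{\ast}\not\cong M(\kappa)$, contradicting $\kappa$-categoricity.

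With $\omega$-stability in hand, I would next isolate a \emph{strongly minimal} formula: take a formula $\varphi(x)$ (over finitely many parameters) of least Morley rank among those with infinitely many solutions, with Morley degree reduced to $1$; a short Morley-rank computation shows $\varphi$ is then strongly minimal. On $D=\varphi(\mathfrak{C})$ the operator $\mathrm{acl}$ has the exchange property, so $D$ carries a pregeometry and every model $M$ gets a dimension $\dim(D^M)$. Two ingredients finish the structural analysis. First, $\omega$-stable theories have prime model extensions over arbitrary sets, so the prime model $M_0$ over $D^M$ embeds into $M$ with $D^{M_0}=D^M$. Second, $T$ has no \emph{Vaughtian pair} --- this is where categoricity is genuinely used: by Morley's two-cardinal theorem for $\omega$-stable theories (an omitting-types construction along a tree of parameters), a Vaughtian pair would produce a model of size $\kappa$ whose strongly minimal set is only countable, which cannot be isomorphic to the prime model over a strongly minimal set of dimension $\kappa$ (also of size $\kappa$), contradicting $\kappa$-categoricity. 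Given no Vaughtian pair, the elementary submodel $M_0\prec M$ with $D^{M_0}=D^M$ infinite must equal $M$; so every model of $T$ is prime over its copy of $D$. Finally, any bijection between $\mathrm{acl}$-bases of $D^M$ and $D^N$ is elementary and extends to an elementary bijection $D^M\to D^N$, whence the prime models over them are isomorphic; thus $M\cong N$ whenever $\dim(D^M)=\dim(D^N)$.

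It remains to observe that for uncountable $\mu$ any model $M$ of size $\mu$ has $|D^M|=|M|=\mu$ (being prime over $D^M$), and since $\mathrm{acl}$ adds at most countably many elements to a set this forces $\dim(D^M)=\mu$. Hence all models of $T$ of size $\mu$ have the same dimension $\mu$ and are isomorphic, i.e.\ $T$ is $\mu$-categorical; as $\mu$ was an arbitrary uncountable cardinal, $T$ is categorical in every uncountable cardinal. The main obstacle is the no-Vaughtian-pair step: the two-cardinal omitting-types construction for $\omega$-stable theories is the real technical heart, since one has to omit a non-principal type while building arbitrarily large models indexed by an intricate tree of parameters. By comparison, the descent to $\omega$-stability and the pregeometry/dimension bookkeeping are routine once Morley rank and degree, together with the existence of prime model extensions, are available.
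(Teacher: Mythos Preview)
Your proof sketch is essentially the standard Baldwin--Lachlan argument and is correct in outline. However, there is nothing to compare it to: the paper does not prove this statement. It is stated as a \emph{Fact} with a citation to Morley's original 1965 paper (Theorem~5.6 of \cite{Morl}), serving purely as historical background in the introduction to motivate the study of the spectrum function. No proof is given or sketched in the paper itself.

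As a minor historical note, what you have written is closer to the Baldwin--Lachlan refinement than to Morley's original argument: Morley did not isolate a strongly minimal formula and run a dimension argument, but rather worked directly with Morley rank and transcendence degree over prime models. Your route is the cleaner modern one and is entirely adequate.
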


Morley's work motivated the study of the spectrum function in more detail.
In the 1960's it was conjectured that for every first-order countable complete theory over a countable language, $T$, and for all $\aleph_0<\kappa<\lambda$, $$I(\kappa,T)\leq I(\lambda,T).$$
This conjecture is known as Morley's conjecture. In 1969 \cite{Shstb}, Shelah generalized Morley's theorems from \cite{Morl}. During the 1970's Shelah extended Morley's theories, this lead to the develop of stability theory and the program of classification theory. To answer Morley's conjecture, Shelah studied the spectrum problem. In his book ``\textit{Classification theory}", \cite{Sh90}, Shelah published his renowned result \textit{The Main Gap}. With his classification theory program, Shelah was able to answer Morley's conjecture.

\begin{fact}[{Main Gap, Shelah \cite[XII, Theorem 6.1]{Sh90}}]\label{SHMGT}
Let $T$ be a first order countable complete theory and denote by $I(\lambda,T)$ the number of non-isomorphic models of $T$ of size $\lambda$.
\begin{enumerate}
    \item If $T$ is not superstable or (is superstable) deep or has the DOP or has the OTOP, then for every uncountable $\lambda$, $I(\lambda,T)=2^\lambda$.
    \item If $T$ is shallow superstable without the DOP and without the OTOP, then for every $\alpha>0$, $I(\aleph_\alpha,T)<\beth_{\omega_1}(|\alpha|)$.
\end{enumerate}
\end{fact}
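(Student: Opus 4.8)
Since Fact~\ref{SHMGT} is Shelah's theorem, the natural line of attack follows the architecture of \cite{Sh90}: a sharp dichotomy in which clause~(1) is proved by \emph{non-structure} theorems producing the maximal number $2^\lambda$ of models, and clause~(2) by a \emph{structure} theory in which every model decomposes canonically over a tree of small models. The two halves share essentially nothing beyond elementary stability theory, so I would develop them separately and in that order.

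For clause~(1) I would dispatch the four failure hypotheses one at a time, in each case building, for every uncountable $\lambda$, a family of $2^\lambda$ pairwise non-isomorphic models of size $\lambda$. If $T$ is not superstable it has a formula $\varphi(\bar x,\bar y)$ witnessing either the order property or an $\aleph_0$-splitting tree of types; indexing indiscernible-type blocks along the branches of the tree ${}^{\omega>}\lambda$ (or of ${}^{\omega}\lambda$) one arranges that the isomorphism type of the resulting model remembers a prescribed stationary subset of $\lambda$ up to the club filter, and standard partition results on stationary sets then give $2^\lambda$ inequivalent choices. If $T$ is superstable with the DOP, one uses a type definable over the union of two independent models to code, on a tree, an arbitrary bipartite graph; with the OTOP the same coding is performed by a type that is merely \emph{omitted}, so the invariant is recovered by an $L_{\infty\lambda}$-formula rather than a first-order one. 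If $T$ is superstable, without the DOP and without the OTOP, but \emph{deep}, the prime-model decomposition below can be iterated to unboundedly large depth, which again supplies enough independent parameters to realise $2^\lambda$ types. The common technical heart is a coding lemma attaching to each subset of a fixed stationary $S\s\lambda$ a model of size $\lambda$ such that any isomorphism between two of them forces the subsets to agree on a club.

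For clause~(2), assume $T$ is superstable, shallow, and without the DOP and the OTOP. The plan is: (i) set up forking, $\mathbb{F}^{a}_{\aleph_0}$-primary models, and the theory of regular types; (ii) prove the main decomposition theorem, namely that every model $M$ of $T$ is $\mathbb{F}^{a}_{\aleph_0}$-prime and minimal over a non-forking tree $\langle M_\eta : \eta\in I\rangle$ of countable elementary submodels indexed by a well-founded tree $I$, with consecutive levels independent over their predecessors and each node dominating $M$ over the union of its predecessors; (iii) use the absence of the DOP and the OTOP to show that the isomorphism type of $M$ is determined by the isomorphism type of the tree of countable models together with the multiplicities of the regular types that occur — this is exactly the place where the DOP or the OTOP would otherwise let two non-isomorphic models share a decomposition; (iv) invoke shallowness: the depth (foundation rank) of $T$ is some countable ordinal, so the tree $I$ has bounded height, and counting the possible labelled trees over a model of size $\aleph_\alpha$ bounds $I(\aleph_\alpha,T)$ by an iterated exponential of that height applied to $|\alpha|$, which is below $\beth_{\omega_1}(|\alpha|)$ for $\alpha>0$.

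The main obstacle — the reason this occupies chapters rather than pages — is the interface between steps (ii), (iii) and the DOP/OTOP coding lemmas. On the structure side one must prove that the absence of the DOP is \emph{precisely} the obstruction to a rigid decomposition, which needs symmetry and transitivity of non-forking over models, the full analysis of regular types and their orthogonality classes, existence and uniqueness of $\mathbb{F}^{a}_{\aleph_0}$-primary models, and then a delicate isomorphism-counting argument for trees of bounded depth. Dually, to make the DOP and the OTOP genuinely yield $2^\lambda$ models one must build the coding trees so robustly that not even a back-and-forth of length $<\lambda$ identifies two of them, since these models will typically be $L_{\infty\lambda}$-equivalent without being isomorphic. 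I would expect the bulk of the work to lie there; the unsuperstable and deep cases of clause~(1), by contrast, are comparatively mechanical once the stationary-set coding machinery is in place.
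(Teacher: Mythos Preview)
The paper does not prove this statement at all: it is recorded as a \emph{Fact} with a bare citation to \cite[XII, Theorem~6.1]{Sh90}, and is used throughout only as a black box. So there is no ``paper's own proof'' to compare against; your proposal is a sketch of Shelah's original argument, not of anything in this paper.

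That said, your outline does track the actual architecture in \cite{Sh90} reasonably well at the chapter level (non-structure via coding of stationary sets/graphs for each failure mode; structure via $F^a_{\aleph_0}$-prime decomposition trees, with shallowness bounding the height and hence the count). As a proof it is of course only a roadmap --- each bullet you list is a chapter's worth of work, and several points are imprecise (e.g.\ the deep case is handled on the structure side, not by ``iterating the prime-model decomposition to unboundedly large depth'' as a separate coding device; and the unsuperstable case is not literally ``order property or $\aleph_0$-splitting tree''). But since the present paper neither needs nor offers any of this, the appropriate move here is simply to cite the result, as the paper does.
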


\begin{fact}[{Morley's Conjecture, Shelah \cite[XIII, Theorem 3.7]{Sh90}}]\label{SHMORL}
Let $T$ be a countable complete first-order theory. Then for $\lambda>\mu\ge\aleph_0$, $I(\lambda,T)\ge I(\mu,T)$ except when $\lambda>\mu=\aleph_0$, $T$ is complete, $\aleph_1$-categorical not $\aleph_0$-categorical.
\end{fact}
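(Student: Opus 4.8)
The plan is to invoke the Main Gap dichotomy, Fact~\ref{SHMGT}, and to argue separately on its two sides. First suppose $T$ is not superstable, or is superstable and deep, or has the DOP, or has the OTOP. Since $\lambda>\mu\ge\aleph_0$ forces $\lambda\ge\aleph_1$, the cardinal $\lambda$ is uncountable, so Fact~\ref{SHMGT}(1) gives $I(\lambda,T)=2^\lambda$. On the other hand, in a fixed countable vocabulary there are at most $2^\mu$ structures with universe $\mu$, since each symbol is interpreted by a subset of some $\mu^n$ and there are only countably many symbols; hence $I(\mu,T)\le 2^\mu\le 2^\lambda=I(\lambda,T)$. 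So on this side the inequality holds unconditionally and no exception arises.

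Now suppose $T$ is superstable, shallow, without the DOP and without the OTOP, so that $T$ has a countable ordinal depth $\delta$. Here the cardinality bound of Fact~\ref{SHMGT}(2) does not suffice; one needs the positive structure theory behind it. The plan is to use Shelah's decomposition: every model $M$ of $T$ is prime and minimal over an independent, well-founded tree of countable elementary submodels of height $<\delta$, and $M$ is determined up to isomorphism by the induced system of dimensions. Coding such a system as a well-founded labelled tree of height $<\delta$ with cardinal labels $\le|M|$, modulo the natural equivalence, one obtains that for uncountable $\lambda$ the value $I(\lambda,T)$ is exactly the number of such ``invariant systems'' of total weight $\lambda$. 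I would then extract the precise counting functional from \cite[Chapters XII--XIII]{Sh90} and prove, by induction on $\delta$, that it is non-decreasing in $\lambda$; the inductive step amounts to injectively stretching each invariant system realised at $\mu$ to one realised at the larger $\lambda$ (by enlarging a single dimension), so that distinct systems remain distinct.

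It remains to handle $\mu=\aleph_0$, which is where the exception can occur, since the model/invariant correspondence above behaves best for uncountable models while the countable models of $T$ are governed instead by the classical Morley--Baldwin--Lachlan analysis. I would split into three subcases. If $T$ is $\aleph_0$-categorical, then $I(\aleph_0,T)=1\le I(\lambda,T)$ trivially. If $T$ is uncountably categorical but not $\aleph_0$-categorical, this is precisely the stated exception, and indeed by Morley's theorem $I(\lambda,T)=1$ for every uncountable $\lambda$, while Baldwin--Lachlan gives $I(\aleph_0,T)=\aleph_0>1$. In the remaining subcase one checks, again from the invariant count, that no uncountable $\lambda$ carries fewer models than $\aleph_0$ does.

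The main obstacle is the classifiable shallow side: making the bookkeeping of invariant systems precise enough to run the induction on depth, controlling simultaneously the shape of the tree, the cardinal labels, and the total-weight constraint, and verifying that the stretching map is injective. This is in essence Shelah's argument in \cite{Sh90}, so a self-contained proof would have to reconstruct that machinery; by comparison, the non-structure side together with the estimate $I(\mu,T)\le 2^\mu$ is routine.
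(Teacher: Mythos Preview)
The paper does not prove this statement; it is stated as a \texttt{Fact} with attribution to Shelah \cite[XIII, Theorem 3.7]{Sh90} and used as a black box. There is therefore no in-paper proof to compare your proposal against. Your outline is a reasonable high-level sketch of how the result is obtained in \cite{Sh90}: the non-structure side is indeed immediate from Fact~\ref{SHMGT}(1) together with the trivial bound $I(\mu,T)\le 2^\mu$, and the real work sits entirely in the classifiable shallow case, where one must control the invariant systems. You correctly identify that this requires reconstructing Shelah's structure theory and that the $\mu=\aleph_0$ subcase needs separate treatment via Baldwin--Lachlan. As you yourself note, a genuine proof here amounts to reproducing a substantial portion of \cite{Sh90}; the present paper simply imports the conclusion.
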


In 2000 Hart, Hrushovski, and Laskowski extended Shelah's work and gave a complete solution to the spectrum problem for countable theories in uncountable cardinalities, see \cite{HHL} where all possible values of $I(\lambda,T)$ are given.

Thanks to these results we can see that there is a notion of complexity associated to the value of $I(\lambda,T)$, i.e. the more models a theory has at a fixed uncountable cardinal, more complex the theory is. However this notion of complexity doesn't tell us anything about how to compare the complexity of theories with $2^\lambda$ non-isomorphic models, i.e. theories in the first item of Fact \ref{SHMGT}. 

We can use the proof of Fact \ref{SHMGT} to understand the complexity of some theories. 
To prove Fact \ref{SHMGT}, Shelah used his method of dividing lines. Shelah used stability theory to defined the different dividing lines. Denote by $S^m(A)$ the set of all consistent types over $A$ in $m$ variables (modulo change of variables), and $S(A)=\cup_{m<\omega}S^m(A)$.

    \begin{itemize}
        \item We say that $T$ is $\xi$-stable if for any set $A$, $|A|\leq \xi$, $|S(A)|\leq \xi$.
        \item We say that $T$ is stable if there is an infinite $\xi$, such that $T$ is $\xi$-stable.
        \item We say that $T$ is unstable if there is no infinite $\xi$, such that $T$ is $\xi$-stable.
        \item We say that $T$ is superstable is there is an infinite $\xi$ such that for all $\xi'>\xi$, $T$ is $\xi'$-stable.
    \end{itemize}

The main dividing line in Fact \ref{SHMGT} is \textit{classifiable theories vs non-classifiable theories}. This line is defined by using the notion of stable theory, the OTOP, and the DOP. 

\begin{defn}[OTOP]
    A theory $T$ has the omitting type order property (OTOP) if there is a sequence $(\varphi_m)_{m<\omega}$ of first order formulas such that for every linear order $l$ there is a model $\mathcal{M}$ and $n$-tuples $a_t$ ($t\in l$) of members of $\mathcal{M}$, $n<\omega$, such that $s<t$ if and only if there is a $k$-tuple $c$ of members of $\mathcal{M}$, $k<\omega$, such that for every $m<\omega$, $$\mathcal{M}\models\varphi_m(c,a_s,a_t).$$
\end{defn}

The non-forking notion $\downarrow$ and the isolation notion $F^a_{\omega}$ (Chapter 4 \cite{Sh90}) are needed to define the DOP. 

\begin{defn}[DOP]
    A theory $T$ has the dimensional order property (DOP) if there are $F^a_{\omega}$-saturated models $(M_i)_{i<3}$, $M_0\subseteq M_1\cap M_2$, $M_1\downarrow_{M_0}M_2$, and the $F^a_{\omega}$-prime model over $M_1\cup M_2$ is not $F^a_{\omega}$-minimal over $M_1\cup M_2$.
\end{defn}

\begin{defn}\hfill
\begin{itemize}
    \item We say that $T$ is classifiable if $T$ is superstable without DOP and without OTOP. 
    \item We say that $T$ is non-classifiable if it satisfies one of the following:
\begin{enumerate}
\item $T$ is stable unsuperstable;
\item $T$ is superstable and has DOP;
\item $T$ is superstable and has OTOP;
\item $T$ is unstable.
\end{enumerate}
\end{itemize}

\end{defn}

The main dividing line and the Main Gap Theorem captures a general idea of complexity. 

\textit{Classifiable theories are less complex than non-classifiable theories, and their complexities are far apart.} 

Even thought the different dividing lines give us a way to understand the complexity of theories, it would be good to have a  complexity notion that extends the one given by the Main Gap and allows us to compare the complexity of any two theories.

The existence of a complexity notion that goes beyond the number of non-isomorphic models was being sought out before the publication of the Main Gap Theorem, with the work of Vaught in \cite{Vau2}. 
Motivated by Vaught conjecture, in 1989 Friedman and Stanley introduced the Borel reducibility theory for classes of countable structures \cite{FS}. This led to a new complexity notion for classes of countable structures. The Borel reducibility complexity in descriptive set theory was developed during the 1990's by Becker, Kechris, Hjorth, and others (see \cite{BK}, \cite{HKe}, \cite{HKe2}). A subspace $K$ is \textit{invariant} if $K$ is closed under isomorphism. Given $K$ and $K'$ invariant sets, we say that $K$ is \textit{Borel reducible} to $K'$ if there is a Borel function $f$ from $K$ to $K'$such that for all $\mathcal{A},\mathcal{B}\in K$, $$\mathcal{A}\ \cong\ \mathcal{B}\ \Leftrightarrow\ f(\mathcal{A})\ \cong\ f(\mathcal{B}).$$

In the 2000's the question of whether the notion of Borel reducibility correlates with model-theoretic notions of complexity was studied. Koerwien, Laskowski, Marker, Shelah, and others studied the Borel reducibility of theories (i.e. the Borel reducibility of models of a theory $T$ with universe $\omega$), see e.g. \cite{Koe}, \cite{Las}, \cite{LS}, \cite{Mark}.
Of particular impact in these investigation is the notion of Borel completeness. 
We say that $K$ is \textit{Borel complete} if every invariant $K'$ is Borel reducible to it. 
A theory $T$ is \textit{Borel complete} if the set of models of $T$ with universe $\omega$ is Borel complete. Laskowski \cite{Las} showed that theories with ENI-DOP are Borel-complete. Laskowski and Shelah \cite{LS} showed that if $T$ is ENI-deep, then $T$ is Borel-complete.

This shows that there are some correlations between the model-theoretic notions and the Borel reducibility notions. Thus, the Borel reducibility is a good candidate for a complexity notion that extends the complexity notion given by the Main Gap and allows us to compare the complexity of any two theories. To study this, it required the development of descriptive set theory at uncountable cardinals extending the work of Friedman and Stanley. The development of descriptive set theory at uncountable cardinals started in 1991 with V\"a\"an\"anen's work \cite{Van3}.

In the 1990's the connection between model theory of uncountable models and descriptive set theory was studied by Mekler, V\"a\"an\"anen, and others, see \cite{MV}, \cite{Van3}, and \cite{Van}. In 1993, Mekler and V\"a\"an\"anen's article \cite{MV} reinforced the basis for descriptive set theory at uncountable cardinals (later known as \textit{Generalized Descriptive Set Theory}, abbreviated by \textit{GDST}). 
During the 2000's Generalized Descriptive Set Theory developed further (see \cite{Van2} chapter 9.6). 
 In 2014, Friedman, Hyttinen, and Weinstein (n\'e Kulikov) \footnote{Kulikov's last name changed to Weinstein} \cite{FHK13}, provided significant advances on the way towards a fully developed Generalized Descriptive Set Theory. In
particular, they started a systematic comparison between the Main Gap dividing lines and the complexity given by Borel reducibility. Apart from the work of Friedman, Hyttinen, and Weinstein, during the 2010's, Generalized Descriptive Set Theory was intensively developed and became popular with the works of L\"ucke, Moreno, Motto Ros, Schlicht, and others (see \cite{Mort}, \cite{Mot}, and \cite{LuS}).

By studying the bounded topology, Mekler and Väänänen proved the separation theorem in Generalized Descriptive Set Theory, \cite{MV}. Motivated by the interaction between Generalized Descriptive Set Theory and Model Theory, Friedman, Hyttinen, and Weinstein used the separation result to identify one of Shelah's dividing lines of classification theory: 

\textit{The isomorphism relation of a theory is $\kappa$-Borel if and only if it is a classifiable shallow theory.}

In 2020 Calderoni, Mildenberger, and Motto Ros show that the embeddability relation between $\kappa$-sized structures is strongly invariantly universal \cite{CMM}. This is a generalization of Louveau-Rosendal result of countable structures \cite{LR}.

The problem of classifying uncountable structures has been an important theme in Generalized Descriptive Set Theory.
The interaction between Borel reducibility and classification theory has been one of the biggest motivations behind the development of Generalized Descriptive Set Theory. 
Identifying counterparts to classification theory in the setting of Borel reducibility, has played a main role in the development of Generalized Descriptive Set Theory. 

As it was mentioned above, the main problem in this theme is to  find a counterpart of Shelah's main division line, classifiable theories vs non-classifiable theories. 
Friedman, Hyttinen, and Weinstein  (n\'e Kulikov) conjectured that in Generalized Descriptive Set Theory, the isomorphism relation of a classifiable theory is Borel reducible to the isomorphism relation of a non-classifiable theory. 

\begin{quest}[Friedman-Hyttinen-Kulikov \cite{FHK13}]\label{MainGapQ}
    Work in the generalized Baire space $\kappa^\kappa$ with $\kappa^{<\kappa}=\kappa$. Is the isomorphism relation of any classifiable theory Borel reducible to the isomorphism relation of any non-classifiable theory?
\end{quest}

From their work, Friedman, Hyttinen, and Weinstein conjectured that the answer to Question \ref{MainGapQ} is \textit{``yes"}.

\subsection{Contributions}

In this article we prove Friedman-Hyttinen-Kulikov conjecture. We provide a Borel reducibility counterpart of Shelah's Main Gap Theorem. 

Denote by $\cong_T$ the isomorphism relation of models of $T$ of size $\kappa$, $\redub$ the Borel reducibility, and $\reduc$ the continuous reducibility (this notions are properly defined in the related work sub-section).

\begin{thmaa}[Borel reducibility Main Gap]
    Suppose $\kappa=\lambda^+=2^\lambda$, $2^{\mathfrak{c}}\leq\lambda=\lambda^{\omega_1}$, and $T_1$ and $T_2$ are countable complete first-order theories in a countable vocabulary. If $T_1$ is a classifiable theory and $T_2$ is a non-classifiable theory, then $$\cong_{T_1}\ \reduc\ \cong_{T_2} \textit{ and } \cong_{T_2}\ \not\redub\ \cong_{T_1}.$$
\end{thmaa}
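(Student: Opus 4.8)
The plan is to prove Theorem~A in two independent halves: the positive reduction $\cong_{T_1}\reduc\cong_{T_2}$ for every classifiable $T_1$ and every non-classifiable $T_2$, and the non-reduction $\cong_{T_2}\not\redub\cong_{T_1}$. For the positive direction the natural strategy is transitivity: it suffices to find a single ``worst'' classifiable theory whose isomorphism relation continuously reduces to $\cong_{T_2}$ for \emph{every} non-classifiable $T_2$, and then reduce every classifiable $\cong_{T_1}$ to that one. Concretely, Friedman--Hyttinen--Weinstein and subsequent work identify the complexity of classifiable theories with iterated equivalence-modulo-clubs relations on trees: a classifiable shallow theory of depth $\alpha$ has $\cong_{T_1}$ continuously reducible to a $\Sigma$-type relation $E^\alpha$ built by $\alpha$-fold iteration of the equivalence relation ``$\eta=\zeta$ on a club'' (the $\mathrm{Refl}$/$\mathrm{Diag}$ hierarchy hinted at by the \texttt{refl}, \texttt{drefl} operators declared in the preamble), and a classifiable deep theory is the next level up. So the first step is: recall/establish that for classifiable $T_1$, $\cong_{T_1}\reduc E^{\alpha}$ for the appropriate $\alpha<\omega_1$ (this uses $\lambda=\lambda^{\omega_1}$ to handle depths up to $\omega_1$). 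The second step is the heart of the positive half: show that each $E^\alpha$ continuously reduces to $\cong_{T_2}$ for an arbitrary non-classifiable $T_2$. Here one splits into the four cases (stable unsuperstable; superstable with DOP; superstable with OTOP; unstable). For each case one must build, from a tree or coloured linear order coding an $E^\alpha$-class, a model of $T_2$ of size $\kappa$, continuously in the code, such that isomorphism of the models matches $E^\alpha$-equivalence of the codes; the classical constructions (Ehrenfeucht--Mostowski models over coloured orders for the unstable/OTOP cases, and the DOP-dimension-coding and unsuperstable-tree-coding arguments of Hyttinen--Tuuri and Shelah) provide the templates, and the cardinal arithmetic $\kappa=\lambda^+=2^\lambda$ together with $2^{\mathfrak c}\le\lambda$ is what lets these constructions absorb the $\omega_1$-indexed data.

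For the non-reduction $\cong_{T_2}\not\redub\cong_{T_1}$ the plan is a complexity-gap argument via the Borel* / $\kappa$-Souslin hierarchy. A classifiable theory $T_1$ has $\cong_{T_1}$ sitting low in the generalized Borel hierarchy — by the Mekler--V\"a\"an\"anen separation theorem and the Friedman--Hyttinen--Weinstein analysis, $\cong_{T_1}$ is $\Delta_1^1$ (indeed $\kappa$-Borel when $T_1$ is additionally shallow, and in general it is the union of $\kappa$-Borel pieces stratified by the depth), whereas every non-classifiable $T_2$ has $\cong_{T_2}$ \emph{non-Borel}, in fact $\Sigma_1^1$-complete or worse. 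The key point is that Borel reducibility cannot push a relation \emph{down} below the $\kappa$-Borel ceiling: if $\cong_{T_2}\redub\cong_{T_1}$ with $\cong_{T_1}$ being $\kappa$-Borel (the shallow case) then $\cong_{T_2}$ would be $\kappa$-Borel, contradicting non-classifiability. For deep classifiable $T_1$, where $\cong_{T_1}$ need not be $\kappa$-Borel, one instead uses that $\cong_{T_1}$ is still ``$\kappa$-Borel$^*$ of bounded rank'' or is $\Delta^1_1$ while the reduction image of $\cong_{T_2}$ would have to realize, inside a single $\cong_{T_1}$-class structure, the full unbounded complexity of $\cong_{T_2}$ — and a counting/reflection argument on $\kappa=\lambda^+$ rules this out. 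I would phrase this uniformly: define an ordinal-valued complexity invariant (the Borel$^*$ rank, or the ``$\kappa+1$-game'' rank) that is monotone under $\redub$, show it is bounded for classifiable theories and unbounded (or strictly larger) for non-classifiable ones.

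I expect the main obstacle to be Step~2 of the positive direction: producing, for each of the four non-classifiability cases \emph{simultaneously} and for \emph{every} depth $\alpha<\omega_1$, a genuinely continuous (not merely Borel) reduction from the iterated club-equivalence relation $E^\alpha$ into $\cong_{T_2}$, with the model construction of size exactly $\kappa$. The unsuperstable and OTOP/unstable cases require coding a tree of height $\omega_1$ or a dense linear order's worth of indiscernibles into a $\kappa$-sized model while keeping the map continuous in the bounded topology on $\kappa^\kappa$; managing the bookkeeping so that two codes give isomorphic models exactly when they are $E^\alpha$-equivalent — and not accidentally collapsing more — is delicate, and this is where the hypotheses $\lambda=\lambda^{\omega_1}$ and $2^{\mathfrak c}\le\lambda$ are genuinely used (to have enough room to diagonalize against spurious isomorphisms while iterating $\omega_1$ times). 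The DOP case additionally needs the dimension-coding to be robust under the club-equivalence so that the reduction descends to equivalence classes. Once these constructions are in place, transitivity of $\reduc$ and the monotonicity of the complexity invariant under $\redub$ finish both halves.
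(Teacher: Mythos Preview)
Your proposal diverges from the paper's proof in both halves, and in each half there is a genuine gap.

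\textbf{Positive direction.} You propose an \emph{iterated} hierarchy $E^\alpha$ of club-equivalence relations, indexed by the depth of $T_1$, and then reduce each $E^\alpha$ into $\cong_{T_2}$. The paper does something much simpler and more uniform: there is a \emph{single} intermediate relation $=^2_\gamma$ (equivalence modulo the nonstationary ideal restricted to $S^\kappa_\gamma$), with $\gamma\in\{\omega,\omega_1\}$ chosen according to the type of non-classifiability of $T_2$. The reduction $\cong_{T_1}\reduc\ =^2_\gamma$ for classifiable $T_1$ comes directly from $\diamondsuit_\gamma$ (which holds because $\kappa=\lambda^+=2^\lambda$ and $\cf(\lambda)>\gamma$, by Shelah's diamond theorem), and is completely insensitive to the depth of $T_1$; no iteration is needed. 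The hard work is the other half, $=^2_\gamma\reduc\ \cong_{T_2}$, which is what Sections~2--4 accomplish via the $\kappa$-colorable linear orders, ordered coloured trees $A_f$, and the Ehrenfeucht--Mostowski constructions. Your reading of the hypothesis $\lambda=\lambda^{\omega_1}$ is also off: it is not there to accommodate depths up to $\omega_1$, but because the DOP case requires $\omega_1$-dense index orders (forcing $\varepsilon=\omega_1$, $\theta=\mathfrak c$, and hence $2^{\mathfrak c}\le\lambda$). Your guess about the \texttt{refl}/\texttt{drefl} operators is a red herring; they play no role in the proof of Theorem~A.

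\textbf{Negative direction.} Your complexity-invariant argument has a gap. You rely on ``$\cong_{T_1}$ is $\Delta^1_1$ while $\cong_{T_2}$ is not'', but for stable unsuperstable $T_2$ the paper only records that $\cong_{T_2}$ is not $\kappa$-Borel (Fact~1.9(4)), not that it fails to be $\Delta^1_1$; and classifiable non-shallow $T_1$ is $\Delta^1_1$ but not $\kappa$-Borel. So a bare $\Delta^1_1$-versus-non-$\Delta^1_1$ dichotomy does not separate them, and your fallback ``Borel$^*$ of bounded rank'' invariant is not made precise. The paper's argument is again via the single relation $=^2_\gamma$: one has $=^2_\gamma\reduc\ \cong_{T_2}$ from the construction, and the cited fact (Friedman--Hyttinen--Kulikov, Theorem~77) that $=^2_\gamma\not\redub\ \cong_{T_1}$ for any classifiable $T_1$; composing, $\cong_{T_2}\redub\ \cong_{T_1}$ would yield $=^2_\gamma\redub\ \cong_{T_1}$, a contradiction. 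This handles all four non-classifiable cases and both shallow and non-shallow classifiable $T_1$ uniformly.
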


We study the size of the gap and compare the complexity of different theories.

\begin{thmb}[Main Gap Dichotomy]
  Let $\kappa$ be inaccessible or $\kappa=\lambda^+=2^\lambda$, and $2^{\mathfrak{c}}\leq\lambda=\lambda^{\omega_1}$. There exists a forcing extension by a $<\kappa$-closed $\kappa^+$-cc forcing notion in which for any countable first-order theory in a countable vocabulary (not necessarily complete), $T$, one of the following holds:
\begin{itemize}
\item $\cong_T$ is $\Delta^1_1(\kappa)$;
\item $\cong_T$ is $\Sigma^1_1(\kappa)$-complete.
\end{itemize}
\end{thmb}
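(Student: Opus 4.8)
The plan is to combine Theorem A with a forcing that forces every non-classifiable theory to attain the maximal possible complexity, namely $\Sigma^1_1(\kappa)$-completeness, while noting that classifiable shallow theories already sit at the bottom. First I would recall the two ``anchors'' already available: by the Mekler--V\"a\"an\"anen separation theorem together with the Friedman--Hyttinen--Weinstein characterization quoted in the introduction, if $T$ is classifiable and shallow then $\cong_T$ is $\kappa$-Borel, hence in particular $\Delta^1_1(\kappa)$; and for $T$ classifiable non-shallow one knows (from the earlier GDST literature on the Main Gap dividing lines, e.g.\ \cite{FHK13}) that $\cong_T$ is $\Delta^1_1(\kappa)$ as well, being Borel reducible to the identity on $2^\kappa$ restricted to a $\Delta^1_1$ set coding isomorphism types via Scott-height/EF-game invariants. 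So the content of the dichotomy is entirely about the non-classifiable side: one must force $\cong_T$ to be $\Sigma^1_1(\kappa)$-complete for \emph{every} non-classifiable complete $T$ simultaneously, and then reduce the general (not necessarily complete) case to the complete case.

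The key steps, in order, would be: (1) Fix a cardinal $\kappa$ as in the hypothesis (inaccessible, or a successor of a suitable $\lambda$), and recall that $\cong_{T_2}$ is always $\Sigma^1_1(\kappa)$ (the isomorphism relation of any $L_{\omega\omega}$-theory is analytic), so $\Sigma^1_1(\kappa)$-completeness is the maximum conceivable complexity. (2) Use Theorem A: under the stated cardinal arithmetic, for $T_2$ non-classifiable and $T_1$ classifiable we have $\cong_{T_1}\reduc\cong_{T_2}$ and $\cong_{T_2}\not\redub\cong_{T_1}$; in particular $\cong_{T_2}$ is not $\Delta^1_1(\kappa)$ (since some classifiable non-shallow $\cong_{T_1}$, although itself $\Delta^1_1$, is continuously below it, and more to the point the non-reducibility shows $\cong_{T_2}$ strictly exceeds the Borel level). (3) Pass to a forcing extension: choose a $<\kappa$-closed, $\kappa^+$-cc iteration (or product) that forces, for each of the countably-many-up-to-bi-interpretability non-classifiable complete theories, a $\Sigma^1_1(\kappa)$-complete reduction into $\cong_{T}$ — the natural candidate is a forcing that adds, for each non-classifiable $T$, a $\kappa$-Borel reduction of a known $\Sigma^1_1(\kappa)$-complete relation (e.g.\ the isomorphism relation of a canonical linear-order or tree coding, as used in the ``non-classifiable $\Rightarrow$ complicated'' direction of FHW) into $\cong_T$; $<\kappa$-closure preserves $\kappa^{<\kappa}=\kappa$ and the relevant reductions, and $\kappa^+$-cc preserves cardinals and the cardinal arithmetic needed for Theorem A to keep holding in the extension. (4) Reduce the non-complete case: a countable theory $T$ in a countable vocabulary has countably many completions $T_n$; partition the models of $T$ of size $\kappa$ by which completion they satisfy (a $\Delta^1_1(\kappa)$ partition), observe $\cong_T$ is the ``disjoint sum'' of the $\cong_{T_n}$, and note that a countable disjoint sum of relations is $\Delta^1_1(\kappa)$ if all summands are, and is $\Sigma^1_1(\kappa)$-complete as soon as one summand is; since each $T_n$ is either classifiable (summand $\Delta^1_1$) or non-classifiable (summand $\Sigma^1_1$-complete in the extension), the dichotomy for $T$ follows.

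The main obstacle I expect is step (3): designing a single forcing notion, $<\kappa$-closed and $\kappa^+$-cc, that \emph{simultaneously} forces $\Sigma^1_1(\kappa)$-completeness of $\cong_T$ for all non-classifiable $T$ while not destroying the structural facts that make Theorem A go through in the extension. The subtlety is that the reduction witnessing $\Sigma^1_1(\kappa)$-completeness must be a \emph{$\kappa$-Borel} function in the extension, so one cannot simply ``guess'' isomorphisms generically; instead one should force a coherent system of trees/colorings (in the style of the constructions behind Theorem A and behind the FHW lower bounds for non-classifiable theories) that canonically code an arbitrary $\Sigma^1_1(\kappa)$ set, using that for non-classifiable $T$ one has enough freedom in building models of size $\kappa$ (via unstable formulas, DOP configurations, OTOP linear orders, or $\omega$-towers in the stable-unsuperstable case) to realize the generic coding as genuine non-isomorphism. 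Verifying that this forcing is $\kappa^+$-cc — likely via a $\Delta$-system argument leveraging $2^\lambda=\kappa$ and $\lambda^{\omega_1}=\lambda$ — and that $<\kappa$-closure yields the desired preservation, is where the real work lies; the reductions themselves and the disjoint-sum bookkeeping in step (4) are routine once the forcing is in place.
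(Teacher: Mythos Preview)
Your steps (1), (2), and (4) are essentially right, but step (3) misidentifies what the forcing must do, and this is the heart of the argument. You propose a forcing that, theory by theory, ``adds a $\kappa$-Borel reduction of a known $\Sigma^1_1(\kappa)$-complete relation into $\cong_T$'' via some generic coding of trees/colorings. That is not how the paper proceeds, and it is not clear your version can be made to work: you would need a uniform mechanism handling uncountably many (in fact $2^\omega$, not countably many) non-classifiable completions simultaneously, and you give no indication of what conditions would look like or why such a poset would be $\kappa^+$-cc.

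The paper's strategy is the reverse of yours. The continuous reduction $=^2_\gamma\ \reduc\ \cong_T$ for non-classifiable $T$ already exists in the ground model (this is exactly Theorem~\ref{maincor}, the engine behind Theorem~A), and it survives any $<\kappa$-closed forcing. So one does not force reductions \emph{into} $\cong_T$; one forces the single relation $=^2_\gamma$ to become $\Sigma^1_1(\kappa)$-complete, and completeness then propagates upward along the pre-existing reduction to every non-classifiable $\cong_T$ at once. The tool for this is the combinatorial principle $\dl^*_S(\Pi^1_2)$ of Definition~\ref{reflectingdiamond}: by Fact~\ref{comp-rels} (Fernandes--Moreno--Rinot), $\dl^*_S(\Pi^1_2)$ implies $=^2_S$ is $\Sigma^1_1(\kappa)$-complete, and by Fact~\ref{forcing-comp-rels} there is a $<\kappa$-closed $\kappa^+$-cc forcing adding $\dl^*_S(\Pi^1_2)$ for every stationary $S\subseteq\kappa$. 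Applying this with $S=S^\kappa_\omega$ and $S=S^\kappa_{\omega_1}$ covers all four non-classifiable cases via Theorem~\ref{maincor}. The missing idea in your proposal is precisely this factoring through $=^2_\gamma$ and the diamond-like principle that makes it complete; once you see that, the ``design of the forcing'' you flag as the main obstacle disappears, since the required poset is already supplied by \cite{FMR20}.
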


We study the complexity of the isomorphism relation of models with size smaller than $\kappa$. We show that there is a gap in between the isomorphism relations of models with different sizes. Denote by $\cong_T^\lambda$ the isomorphism relation of models of $T$ of size $\lambda$ and $id_2$ the identity relation of $2^\kappa$.

 \begin{thmc}
 Suppose $\kappa=\lambda^+=2^\lambda$. The following reductions are strict.
 \begin{enumerate}
  \item  Let $2^{\mathfrak{c}}\leq\lambda=\lambda^{<\omega_1}$ and $\delta<\kappa$. If $T$ is a non-classifiable theory, then $$\cong_{T}^\delta\ \reduc\ id_2\ \reduc\ \cong_{T}.$$
  \item Let $2^{\mathfrak{c}}\leq\lambda=\lambda^{<\omega_1}$. If $T_1$ is a classifiable non-shallow theory and $T_2$ is a non-classifiable theory, then: $$\cong_{T_2}^\lambda\ \reduc\ \cong_{T_1}\ \reduc\ \cong_{T_2}.$$
 \item Let $\kappa=\aleph_\gamma$ be such that $\beth_{\omega_1}(\mid\gamma\mid)\leq\kappa$. Suppose $T_1$ is a classifiable shallow theory, $T_2$ a classifiable non-shallow theory, and $T_3$ non-classifiable theory. Then $$\cong_{T_1}\ \redub\ \cong_{T_3}^\lambda\ \reduc\ \cong_{T_2}.$$

 \end{enumerate}
 
\end{thmc}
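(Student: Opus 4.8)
The plan is to obtain all three items from three kinds of input: elementary facts about the generalized Baire space, the counting of isomorphism classes supplied by Shelah's Main Gap (Fact~\ref{SHMGT}) together with the standing hypothesis $2^\lambda=\kappa$, and the reduction and non-reduction produced by the proof of Theorem~A, invoked in the special case of a classifiable \emph{non-shallow} theory. Two general reductions do most of the work. (A) For any countable $T$ and any $\mu<\kappa$, with the usual coding the space of models of $T$ with universe $\mu$ is discrete in the bounded topology, since a code is pinned down by $\mu<\kappa$ coordinates, and $\cong_T^\mu$ has at most $2^\mu\le 2^\lambda=\kappa$ classes; hence sending each model to a fixed code of its isomorphism type is a continuous reduction $\cong_T^\mu\ \reduc\ id_2$. (B) If $I(\kappa,T^*)=2^\kappa$, which by Fact~\ref{SHMGT}(1) holds for every non-classifiable theory and for every classifiable deep theory, then $id_2\ \reduc\ \cong_{T^*}$: continuously code $\eta\in 2^\kappa$ into a sufficiently rigid index model $I_\eta$ of size $\kappa$ and send $\eta$ to the Ehrenfeucht--Mostowski model of $T^*$ over $I_\eta$; locality of the construction gives continuity, and Shelah's analysis of the dependence of an EM model on its index gives $M_\eta\cong M_\xi\Rightarrow\eta=\xi$. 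For the non-classifiable witnesses and for deep classifiable theories the $I_\eta$ may be taken of height $\le\omega$, which is where $\lambda=\lambda^{<\omega_1}$ is enough. Composing (A) and (B) gives the positive halves of all of (1) and of the left reduction of (2) and the right reduction of (3): $\cong_T^\delta\ \reduc\ id_2\ \reduc\ \cong_T$ for non-classifiable $T$; $\cong_{T_2}^\lambda\ \reduc\ id_2\ \reduc\ \cong_{T_1}$ for classifiable non-shallow $T_1$; and $\cong_{T_3}^\lambda\ \reduc\ id_2\ \reduc\ \cong_{T_2}$ for classifiable non-shallow $T_2$.

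Two positive statements remain. The right reduction in (2), $\cong_{T_1}\ \reduc\ \cong_{T_2}$ with $T_1$ classifiable non-shallow and $T_2$ non-classifiable, is the positive half of Theorem~A, except that there the stronger arithmetic $\lambda=\lambda^{\omega_1}$ is assumed; for a non-shallow $T_1$ both the coding and its target live on index structures of height $\le\omega$, so the construction needs only $\lambda=\lambda^{<\omega_1}$, and this is the point I would verify against it. For the left reduction in (3), $\cong_{T_1}\ \redub\ \cong_{T_3}^\lambda$ with $T_1$ classifiable shallow and $T_3$ non-classifiable: $\cong_{T_1}$ is $\kappa$-Borel by the characterization recalled in the introduction, and since $\kappa=\aleph_\gamma$ with $\beth_{\omega_1}(|\gamma|)\le\kappa$, Fact~\ref{SHMGT}(2) gives $I(\kappa,T_1)<\beth_{\omega_1}(|\gamma|)\le\kappa$, so $\cong_{T_1}$ has fewer than $\kappa$ classes; a $\kappa$-Borel equivalence relation with $<\kappa$ classes has a $\kappa$-Borel complete invariant (each class is $\kappa$-Borel and there are $<\kappa$ of them), hence $\cong_{T_1}\ \redub\ id_\mu$ for some $\mu<\kappa$. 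On the other side $\beth_{\omega_1}(|\gamma|)\le\kappa=\lambda^+$ forces $\gamma$, and hence $\lambda$, to be uncountable, so Fact~\ref{SHMGT}(1) gives $I(\lambda,T_3)=2^\lambda=\kappa$; a Borel enumeration of $\kappa$ pairwise non-isomorphic $\lambda$-sized models of $T_3$ (for instance as EM models over $\kappa$ many fixed, pairwise non-isomorphic index models of size $\lambda$) yields $id_\kappa\ \redub\ \cong_{T_3}^\lambda$, so $\cong_{T_1}\ \redub\ id_\mu\ \redub\ \cong_{T_3}^\lambda$.

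For the strictness, every instance but one is a class count: a reduction is injective on equivalence classes, so it cannot exist when the source has strictly more classes than the target. In (1), $id_2$ has $2^\kappa$ classes while $\cong_T^\delta$ has at most $2^\delta\le\kappa$, so $id_2\ \not\redub\ \cong_T^\delta$; and $\cong_T\ \not\redub\ id_2$ because $id_2$ is closed, hence $\kappa$-Borel, while $\cong_T$ is not $\kappa$-Borel for non-classifiable $T$ (here $\lambda=\lambda^{<\omega_1}$ is used). In (2), $\cong_{T_1}$ has $I(\kappa,T_1)=2^\kappa$ classes by Fact~\ref{SHMGT}(1) while $\cong_{T_2}^\lambda$ has at most $2^\lambda=\kappa$, so $\cong_{T_1}\ \not\redub\ \cong_{T_2}^\lambda$; and $\cong_{T_2}\ \not\redub\ \cong_{T_1}$ is the non-reducibility half of Theorem~A, which holds for classifiable non-shallow $T_1$ under the present hypotheses. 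In (3), $\cong_{T_3}^\lambda$ has $2^\lambda=\kappa$ classes while $\cong_{T_1}$ has fewer than $\kappa$, so $\cong_{T_3}^\lambda\ \not\redub\ \cong_{T_1}$; and $\cong_{T_2}$ has $2^\kappa$ classes while $\cong_{T_3}^\lambda$ has at most $\kappa$, so $\cong_{T_2}\ \not\redub\ \cong_{T_3}^\lambda$.

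The main obstacle is the single non-counting ingredient, which is used once as a reduction and once as a non-reduction: Theorem~A for a classifiable non-shallow $T_1$, that is $\cong_{T_1}\ \reduc\ \cong_{T_2}$ and $\cong_{T_2}\ \not\redub\ \cong_{T_1}$, together with the verification that in the non-shallow case its hypothesis relaxes from $\lambda=\lambda^{\omega_1}$ to $\lambda=\lambda^{<\omega_1}$. Once this and the Ehrenfeucht--Mostowski codings of (B) are in place, the remaining steps --- the discrete-space reduction (A), the Borel enumeration used in (3), and the class counts via the Main Gap --- are routine.
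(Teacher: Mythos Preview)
Your step (B) has a genuine gap when applied to classifiable non-shallow theories. You claim $id_2\reduc\cong_{T^*}$ whenever $I(\kappa,T^*)=2^\kappa$, via Ehrenfeucht--Mostowski models over rigid index models $I_\eta$, with ``Shelah's analysis'' yielding $M_\eta\cong M_\xi\Rightarrow\eta=\xi$. But that implication depends on recovering the index model inside the EM model by a definable formula $\varphi$, as in Lemmas~\ref{construction_otop}--\ref{construction_dop}; the existence of such a $\varphi$ is precisely what \emph{classifiability rules out}. So your route to the left reduction in~(2) and the right reduction in~(3), namely $\cong^\lambda\reduc id_2\reduc\cong_{T_1}$ with $T_1$ classifiable non-shallow, breaks at the second arrow, and the paper nowhere claims $id_2\reduc\cong_{T_1}$ for such $T_1$. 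The paper bypasses this entirely: the remark after Proposition~\ref{Borel_init_Mor} shows that $\cong_T^\mu\hookrightarrow_\mu E$ for \emph{any} $E$ with at least as many classes as $\cong_T^\mu$, since the source is determined by the first $\mu<\kappa$ coordinates and a choice of representatives is then automatically continuous. Taking $E=\cong_{T_1}$ (resp.\ $\cong_{T_2}$), which has $2^\kappa$ classes by Fact~\ref{SHMGT}, gives $\cong_{T_2}^\lambda\reduc\cong_{T_1}$ and $\cong_{T_3}^\lambda\reduc\cong_{T_2}$ directly.

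Two smaller points. Even for non-classifiable $T$ in item~(1), your (B) is too quick: the paper's EM construction yields $\mathcal{M}^f\cong\mathcal{M}^g\Leftrightarrow f=^2_\gamma g$, not $f=g$, so one cannot read off $id_2\reduc\cong_T$ straight from it; the paper instead chains $id_2\reduc E_0^{<\kappa}\reduc{=^2_\gamma}\reduc\cong_T$ via Facts~\ref{all_bounded}, \ref{bounded_to_stat} and Theorem~\ref{maincor}. And your diagnosis of the $\lambda^{<\omega_1}$ versus $\lambda^{\omega_1}$ discrepancy is backwards: the exponent $\omega_1$ enters on the \emph{non-classifiable} side (specifically when $T_2$ is superstable with DOP, where trees of height $\omega_1$ are needed, Theorem~\ref{maincor}(3)), not because $T_1$ is non-shallow.
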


The previous results give us a counterpart of Shelah's main gap in the setting of Borel reducibility. In particular, Theorem A and B show us that the Borel reducibility complexity notion is a refinement of the complexity notion given by the Main Gap. 
This answers Question \ref{MainGapQ} in a positive way. Recall that this question was initially asked by Friedman, Hyttinen, and Weinstein in \cite{FHK13}, later re-stated in [Question 3.45, \cite{KLLS}], [Question 4.7 \cite{Mort}], \cite{FHK15}, and \cite{HKM}. 

These are the main results, naturally there are other results in this article, many of them follow from the proofs of the main results.
In the effort to answer Question \ref{MainGapQ}, the equivalence modulo $\omega$-cofinal, $=^2_\omega$ (see Definition \ref{clubrel}), was studied in the past. From \cite{HKM18} and \cite{FMR20} it is known that the following is consistent: 

\textit{If $T$ is a non-classifiable theory, then $=^2_\omega\ \reduc\ \cong_T$}.

In [Question 4.9 \cite{Mort}] it was asked whether this is a theorem of ZFC.
We give a partial answer to this question with Corollary \ref{omega_case}. Only the case when $T$ is a superstable theory with the DOP remains without an answer.

\begin{quest}
Is $=^2_\omega$ Borel reducible to the isomorphism relation of any superstable theory with the DOP?
\end{quest}

In their work about the Borel reducibility, Mangraviti and Motto Ros \cite{MM} were interested in the gap between classifiable shallow theories and classifiable non-shallow theories. In [Question 6.9 \cite{MM}] it was asked how big is this gap:

\begin{quest}\label{ManmotQ}
Let $\kappa=\aleph_\mu$ be such that $\beth_{\omega_1}(\mid\mu\mid)\leq\kappa$. How large can be the gap between $\cong_{T_1}$ and $\cong_{T_2}$ when $T_1$ is classifiable and shallow and $T_2$ is classifiable non-shallow?
\end{quest}

We study this question and make progress towards its solution, we show that continuous reductions give us a better picture of the gap, instead of Borel reductions. 

The results presented in this article convince us that the complexity between the isomorphism relation of theories should be studied by the whole spectrum of reductions, continuous reductions, Lipschitz reductions, etc, instead of limiting ourselves to Borel reductions only.

We summarize most of the Borel reducibility properties from the main results in the following tables. Let $\kappa$, $\lambda$, and $\gamma$ be such that $\kappa=\kappa^{<\kappa}=\lambda^+=2^\lambda$ and $\omega\leq\gamma< \lambda$:

\begin{itemize}

\bigskip

    \item  The Borel reduction $\cong_T\ \reduc\ =^2_\mu$ holds for a countable complete first-order theory in a countable vocabulary and a regular cardinal $\mu<\kappa$, depending on what kind of theory is $T$ and some properties of $\lambda$ (see Section \ref{Section_main_gap} for the definition of the combinatorial principles $\diamondsuit_\lambda$ and $\dl^*_{S^\kappa_\gamma}(\Pi^1_2)$).

    The following are the possible values of $\mu$:

\begin{center}

$\cong_T\ \reduc\ =^2_\mu$

\begin{tabularx}{0.8\textwidth} { 
  || >{\centering\arraybackslash}X 
  || >{\centering\arraybackslash}X 
  | >{\centering\arraybackslash}X 
  | >{\centering\arraybackslash}X || }
 \hline
  & & &\\
 Theory & $\lambda=\lambda^{\gamma}$ & $\diamondsuit_\lambda$ & $\dl^*_{S^\kappa_\gamma}(\Pi^1_2)$\\
  & & &\\
 \hline
  & & &\\
 Classifiable  & $\omega\leq\mu\leq\gamma$ & $\mu=\lambda$ & $\mu=\gamma$ \\
   & & &\\
 & & &\\
 Non-classifiable  & Independent & Independent & $\mu=\gamma$ \\
\hline
\end{tabularx}

\end{center}

\bigskip

   \item  The Borel reduction $=^2_\mu\ \reduc\ \cong_T$ holds for a countable complete first-order theory in a countable vocabulary and a regular cardinal $\mu<\kappa$, depending on what kind of theory is $T$ and some properties of $\lambda$. Denote by $\mathfrak{c}$ the cardinal $2^\omega$.

    The following are the possible values of $\mu$:

\begin{center}

$=^2_\mu\ \reduc\ \cong_T$

\begin{tabularx}{0.8\textwidth} { 
  || >{\centering\arraybackslash}X 
  || >{\centering\arraybackslash}X 
  | >{\centering\arraybackslash}X 
  | >{\centering\arraybackslash}X | |}
 \hline
  & & &\\
Theory & $\lambda=\lambda^{\gamma}$ & $2^{\mathfrak{c}}\leq\lambda=\lambda^{\gamma}$ & $2^{\mathfrak{c}}\leq\lambda=\lambda^{<\lambda}$\\
  & & &\\
 \hline
  & & &\\
 Classifiable  & $\mu$ doesn't exist & $\mu$ doesn't exist & $\mu$ doesn't exist \\
 & & &\\
 Stable Unsuperstable  & $\mu=\omega$ & $\mu=\omega$ & $\mu=\omega$ \\
  & & &\\
  Unstable   & $\omega\leq \mu\leq\gamma$ & $\omega\leq \mu\leq\gamma$ & $\omega\leq \mu\leq\lambda$ \\
  & & &\\
  & & &\\
 Superstable with the OTOP  & $\omega\leq \mu\leq\gamma$ & $\omega\leq \mu\leq\gamma$ & $\omega\leq \mu\leq\lambda$\\
  & & &\\
 Superstable with the DOP  & ? & $\omega_1\leq \mu\leq\gamma$ & $\omega_1\leq \mu\leq\lambda$\\
\hline
\end{tabularx}

\end{center}

\end{itemize}

\bigskip

\subsection{Assumptions}
Throughout this article $\kappa$ is a regular uncountable cardinal that satisfies $\kappa^{<\kappa}=\kappa$, and the theories are first-order countable complete theories in a countable vocabulary,  
unless otherwise stated.

In Section \ref{Section_main_gap} we study the Borel reducibility of classifiable theories and non-classifiable theories. Most of the results concerning non-classifiable theories have the cardinality assumption: 

\textit{NC: $\kappa=\lambda^+=2^\lambda$, and $2^{\mathfrak{c}}\leq\lambda=\lambda^{\omega_1}$.}

Most of the results concerning classifiable shallow theories have the cardinality assumption:

\textit{CS: $\kappa=\aleph_\gamma$ is such that $\beth_{\omega_1}(\mid\gamma\mid)\leq\kappa$.}

As it is mentioned by Mangraviti and Motto Ros [second page of \cite{MM}], there are many cardinals satisfying CS and under GCH there are unboundedly many $\kappa$'s satisfying NC and CS. Thus it is easy to find cardinals satisfying both assumptions (NC and CS).

\subsection{Related work}

It was established that Borel reducibility is a good candidate as a complexity notion in GDST. By understanding some previous results we can understand the different notions of complexity given by reductions.

The \textit{Generalized Baire Space}, abbreviated by GBS, is the set $\kappa^\kappa$ endowed with the bounded topology. In this topology the basic open sets are of the form $$[\zeta]=\{\eta\in \kappa^\kappa\mid \zeta\subseteq \eta\}$$
where $\zeta\in \kappa^{<\kappa}$. The collection of \textit{$\kappa$-Borel} subsets of $\kappa^\kappa$ is the smallest set that contains the basic open sets and is closed under complement, unions and intersections both of length $\kappa$. 
The sets of this collection are called $\kappa$-Borel. 
A subset $X\s \kappa^\kappa$ is a $\Sigma_1^1(\kappa)$ subset of $\kappa^\kappa$ if there is a closed set $Y\s \kappa^\kappa\times \kappa^\kappa$ such that the projection $\pr(Y):=\{x\in \kappa^\kappa\mid \exists y\in\kappa^\kappa,~(x,y)\in Y\}$ is equal to $X$. 
These definitions also extend to the product space $\kappa^\kappa\times\kappa^\kappa$. A subset $X\s \kappa^\kappa$ is a $\Delta_1^1(\kappa)$ set if $X$ and its complement are $\Sigma_1^1(\kappa)$. 

The \textit{Generalized Cantor Space}, abbreviated by GCS, is the subspace $2^\kappa$ with the induced topology. The main equivalence relations that we will use are \textit{the equivalence modulo $\mu$-cofinal}, $=_\mu^2$, and \textit{the isomorphism relation}, $\cong_T$. The following relations can be defined in any subspace $\beta^\kappa$. 

\begin{defn}\label{clubrel}
Given $S\subseteq \kappa$ and $\beta\leq\kappa$, we define the equivalence relation $=_S^\beta\ \subseteq\ \beta^\kappa\times \beta^\kappa$, 
as follows $$\eta\mathrel{=^\beta_S}\xi \iff \{\alpha<\kappa\mid \eta(\alpha)\neq\xi(\alpha)\}\cap S \text{ is non-stationary}.$$
\end{defn}

Notice that $=_S^\beta$ is an interesting relation when $S$ is stationary, otherwise it has only one equivalence class.
Let $\mu<\kappa$ be a regular cardinal. We will denote by $=_\mu^\beta$ the relation $=_S^\beta$ when $S$ is the stationary set $S^\kappa_\mu:=\{\alpha<\kappa\mid cf(\alpha)=\mu\}$. Let us denote by $CUB$ the club filter on $\kappa$ and $=^\beta_{CUB}$ the relation $=_S^\beta$ when $S=\kappa$.

To define the isomorphism relation we need to code structures of size $\kappa$ with elements of the GBS. We can code structures of any size (not bigger than $\kappa$) with elements of GBS.

\begin{defn}\label{struct}
Let $\omega\leq \mu\leq\kappa$ be a cardinal and $\mathbb{L}=\{Q_m\mid m\in\omega\}$ be a countable relational language.
Fix a bijection $\pi_\mu$ between $\mu^{<\omega}$ and $\mu$. For every $\eta\in \kappa^\kappa$ define the structure $\mathcal{A}_{\eta\restriction\mu}$ with domain $\mu$ as follows:
For every tuple $(a_1,a_2,\ldots , a_n)$ in $\mu^n$ $$(a_1,a_2,\ldots , a_n)\in Q_m^{\mathcal{A}_{\eta\restriction\mu}}\Leftrightarrow Q_m \text{ has arity } n \text{ and }\eta(\pi_\mu(m,a_1,a_2,\ldots,a_n))>0.$$
\end{defn}

For every first-order theory in a relational countable language (not necessarily complete), we have coded the models of $T$ of size $\mu\leq\kappa$ in the GBS, $\kappa^\kappa$. In the same way we can define these structures in the GCS, $2^\kappa$. Notice that different elements of $\kappa^\kappa$ may code the same structure. On the other hand, any element of $\kappa^\kappa$ codes different structures, all of them of different sizes.

\begin{defn}\label{def_iso_rel}
    Let $\omega\leq \mu\leq\kappa$ be a cardinal and $T$ a first-order theory in a relational countable language. We define the \textit{isomorphism relation of models of size $\mu$}, $\cong_T^\mu~\subseteq \kappa^\kappa\times \kappa^\kappa$, as the relation $$\{(\eta,\xi)|(\mathcal{A}_{\eta\restriction\mu}\models T, \mathcal{A}_{\xi\restriction\mu}\models T, \mathcal{A}_{\eta\restriction\mu}\cong \mathcal{A}_{\xi\restriction\mu})\text{ or } (\mathcal{A}_{\eta\restriction\mu}\not\models T, \mathcal{A}_{\xi\restriction\mu}\not\models T)\}$$
\end{defn}

Let us denote by $\cong_T$ the isomorphism relation of models of size $\kappa$ of $T$ (i.e. $\cong_T^\kappa$). To simplify notation we will refer to $\cong_T$ as the isomorphism relation of $T$. We will also denote by $\mathcal{A}_\eta$ the structure $\mathcal{A}_{\eta\restriction\kappa}$, for obvious reasons.

By defining a good complexity notion for equivalence relations, we can study the complexity of $T$ by studying the complexity of  $\cong_T$. 
Let $\beta,\theta\in \{2,\kappa\}$, and $E_1$ and $E_2$ be
equivalence relations on $\beta^\kappa$ and $\theta^\kappa$, respectively. We say that 
\emph{$E_1$ is reducible to $E_2$} if there is a function $f\colon
\beta^\kappa\rightarrow \theta^\kappa$ that satisfies $$(\eta,\xi)\in E_1\iff
(f(\eta),f(\xi))\in E_2.$$  We call $f$ a \textit{reduction} of $E_1$ to
$E_2$ and we denote by $E_1\redur E_2$ the existence of a reduction of $E_1$ to $E_2$. It is clear that $E_1\redur E_2$ holds if and only if $E_1$ doesn't have more equivalence classes than $E_2$.

When we restrict ourselves to the isomorphism relation, this complexity notion is equivalent to the model theory notion give by the number of non-isomorphic models, $$\cong_{T_1}\ \redur\ \cong_{T_2}\ \Leftrightarrow\ I(\kappa,T_1)\leq I(\kappa,T_2).$$
This complexity notion is not good for our purposes, since we want to compare the complexity of not classifiable shallow theories (by Theorem \ref{SHMGT}, these theories have $2^\kappa$ non-isomorphic models).
By demanding more on the reduction function (e.g. continuity), we obtain stronger complexity notions. 

A subset $D\s\beta^\kappa$ is said to be \emph{comeager} if  $D\supseteq\bigcap\mathcal D$ for some non-empty family $\mathcal D$ of at most $\kappa$-many dense  open subsets of $\beta^\kappa$. 
A subset of $\beta^\kappa$ is said to be \emph{meager} iff its complement is {comeager}. 

\begin{defn} [Reductions]\label{all_reduc}
Apart from a ``cardinality" reduction, $\redur$, we define the following notions which allow us to have a better spectrum of complexities.

\begin{itemize}
\item
{\bf Baire measurable reduction.}  A subset $B\s \beta^\kappa$ is said to have \emph{the Baire property} iff there is an open set $A\s \beta^\kappa$ for which the symmetric difference $A\symdiff B$ is meager.

A function $f\colon \beta^{\kappa} \rightarrow\theta^{\kappa}$ is said to be \emph{Baire measurable} if for any open set $A\s \theta^\kappa$, $f^{-1}[A]$ has the Baire property. 
The existence of a Baire measurable reduction of $E_0$ to $E_1$ is denoted by $E_0 \redum E_1$.

\item
{\bf Borel reduction.} 
A function $f\colon \beta^\kappa\rightarrow \theta^\kappa$ is said to be \emph{$\kappa$-Borel} if for any open set $A\subseteq \theta^\kappa$, $f^{-1}[A]$ is a $\kappa$-Borel set.
The existence of a $\kappa$-Borel reduction of $E_0$ to $E_1$ is denoted by $E_0 \redub E_1$\footnote{We use ``$\redub$" instead of ``$\leq_B$", because we will deal with the equivalence relations $=_S^\beta$ (Definition \ref{clubrel}) and the notation could become heavy for the reader.}.

\item
{\bf Continuous reduction.} 
The existence of a continuous reduction of $E_0$ to $E_1$ is denoted by $E_0 \reduc E_1$.

\item
{\bf Lipschitz reduction.} 
For all $\eta,\xi\in\beta^\kappa$, denote 
$$\Delta(\eta,\xi):=\min(\{\alpha<\kappa\mid \eta(\alpha)\neq\xi(\alpha)\}\cup\{\kappa\}).$$

A function $f\colon \beta^\kappa\rightarrow \theta^\kappa$ is said to be \emph{Lipschitz} if for all $\eta,\xi\in \beta^\kappa$,  $$\Delta(\eta,\xi)\le\Delta(f(\eta),f(\xi)).$$
The existence of a Lipschitz reduction of $E_0$ to $E_1$ is denoted by $E_0 \redul E_1$.
\end{itemize}
\end{defn}

Notice that the reductions are defined from weaker to stronger, e.g a Lipschitz reduction is a continuous reduction. Different notions of reduction provide us with a method to compare the complexities of two equivalence relations.  Two equivalence relations have ``very similar" complexities if they are  Lipschitz bireducible. On the other hand, The complexity of $E_0$ is ``far" from the complexity of $E_1$ if $E_0 \redul E_1$ and $E_1 \not\redum E_0$. Notice that $\not\redur$ is stronger than $\not\redum$.

It is clear that the isomorphism relation can be also defined in the GCS. Never the less the isomorphism relation of $T$ in GBS and in GCS are Lipschitz bireducible by the function $\mathcal{F}:\kappa^\kappa\rightarrow 2^\kappa$ given by 
$$\mathcal{F}(\eta)(\alpha)=\begin{cases} 1 &\mbox{if } (\eta)(\alpha)\neq 0\\
0 & \mbox{otherwise. } \end{cases}$$  

Since they are Lipschitz bireducible, we will use $\cong_T$ to denote both.

During the study of the interaction between classification theory and GDST, Friedman, Hyttinen, and Weinstein found some classification theory dividing lines in GDST. This supported the idea that there is a counterpart of the Main Gap, which ended in their conjecture.

\bigskip
\begin{fact}[Friedman-Hyttinen-Kulikov]\label{basics_FHK}
\mbox{}
\begin{enumerate}
    \item Let $\kappa^{<\kappa}=\kappa>2^\omega$. If  $T$ is classifiable and shallow, then $\cong_T$ is $\kappa$-Borel. \textup{(\cite{FHK13}, Theorem 68)}
    \item If  $T$ is classifiable non-shallow, then $\cong_T$ is $\Delta_1^1(\kappa)$ not $\kappa$-Borel. \textup{(\cite{FHK13}, Theorem 69 and 70)}
    \item If $T$ is unstable or stable with the OTOP or superstable with the DOP and $\kappa>\omega_1$, then $\cong_T$ is not $\Delta_1^1(\kappa)$. \textup{(\cite{FHK13}, Theorem 71)}
    \item If $T$ is stable unsuperstable, then $\cong_T$ is not $\kappa$-Borel.  \textup{(\cite{FHK13}, Theorem 72)}
\end{enumerate}
\end{fact}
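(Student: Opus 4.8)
Since Fact~\ref{basics_FHK} collects theorems of \cite{FHK13}, my proposal is to recall the strategy behind each clause rather than to reconstruct the proofs in full; the full details are in that paper.

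For the upper bounds (1) and (2) the plan is to lean on Shelah's structure theory for classifiable theories. The fact to use is that a model of a superstable theory without \textup{DOP} and without \textup{OTOP} is prime and minimal over an independent tree of countable elementary submodels, and that its isomorphism type is recovered from the isomorphism type of this tree together with a system of dimensions attached to its nodes. This lets one characterize $\cong_T$, restricted to models of size $\kappa$, by the existence of a restricted back-and-forth system --- equivalently, a winning strategy for the second player in an Ehrenfeucht--Fra\"iss\'e-style game of length $\omega$ whose moves traverse the tree decomposition. ``\textup{II} has such a strategy'' is $\Sigma_1^1(\kappa)$ (the strategy is coded by an element of $\kappa^\kappa$, using $\kappa^{<\kappa}=\kappa$), and since the relevant game is determined, ``\textup{I} has a winning strategy'', i.e. the complement of $\cong_T$, is $\Sigma_1^1(\kappa)$ as well; hence $\cong_T$ is $\Delta_1^1(\kappa)$, which is (2). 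If in addition $T$ is shallow, the decomposition tree has bounded foundation rank, so the back-and-forth system can be taken of a fixed bounded rank; I would then observe that ``\textup{II} wins the corresponding bounded-rank game'' is defined by a transfinite recursion of bounded length over $\kappa$-unions and $\kappa$-intersections, hence is $\kappa$-Borel, which gives (1). For the ``not $\kappa$-Borel'' half of (2) when $T$ is non-shallow, I would use that the depth of $T$ is unbounded to reduce a relation known not to be $\kappa$-Borel, such as $=^2_\omega$, continuously into $\cong_T$ (or argue directly that a $\kappa$-Borel set cannot absorb a back-and-forth system of unbounded rank).

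For the lower bounds (3) and (4) the plan is the opposite: reduce a relation lying outside the target pointclass into $\cong_T$, using the non-structure (``many models'') side of each dividing line --- the same machinery behind Fact~\ref{SHMGT}(1). For (4), a stable unsuperstable $T$ carries, in every model of size $\kappa$, a tree $(\mathcal{I}_s)_{s\in\omega^{<\omega}}$ of infinite indiscernible sequences along which dimensions can be assigned independently; choosing for each $\alpha\in S^\kappa_\omega$ a cofinal branch through $\omega^{<\omega}$ and encoding the values of $\eta\in 2^\kappa$ into the dimensions along these branches yields a $\kappa$-Borel (indeed continuous) $f$ with $\mathcal{A}_{f(\eta)}\cong\mathcal{A}_{f(\xi)}$ iff $\eta=^2_\omega\xi$; since $=^2_\omega$ is not $\kappa$-Borel, neither is $\cong_T$. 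For (3), if $T$ is unstable one uses the order property to interpret colored linear orders of length up to $\kappa$ in models of $T$; if $T$ has \textup{OTOP}, one uses the formulas $(\varphi_m)_{m<\omega}$ witnessing \textup{OTOP} to do the same via omitted types; if $T$ is superstable with \textup{DOP}, one uses the dimensional order property to attach dimensions along the edges of a tree of $F^a_\omega$-saturated models. In each case one codes, continuously in $\eta$, an ill-foundedness witness (a tree $X_\eta\subseteq\kappa^{<\kappa}$) into $\mathcal{A}_\eta$ so that $\mathcal{A}_\eta\cong\mathcal{A}_\xi$ holds exactly when $(\eta,\xi)$ lies in a prescribed $\Sigma_1^1(\kappa)$-complete relation; as such a relation is not $\Delta_1^1(\kappa)$, neither is $\cong_T$. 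The hypothesis $\kappa>\omega_1$ is used to make room for this coding, providing an available cofinality $\omega_1<\kappa$.

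The \emph{main obstacle} in all of this is faithfulness of the codings in the nontrivial direction: that distinct inputs really do produce non-isomorphic structures. For the upper bounds this is the content of Shelah's decomposition theorem together with the shallowness analysis, which is precisely what forces the isomorphism type to be captured by a bounded-rank back-and-forth invariant; for the lower bounds it rests on Shelah's non-structure theorems attached to unsuperstability, \textup{DOP}, \textup{OTOP} and instability, plus the bookkeeping needed to make $\mathcal{A}_\eta$ depend $\kappa$-Borel-continuously on $\eta$ and to arrange its universe to be exactly $\kappa$. These are exactly the ingredients carried out in \cite{FHK13}.
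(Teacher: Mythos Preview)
The paper does not prove Fact~\ref{basics_FHK}; it is a citation from \cite{FHK13}. Your decision to sketch the underlying strategies is therefore appropriate, and your outlines for (1), for the $\Delta_1^1$ upper bound in (2), and for (4) are in the right spirit.

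Two of your sketches, however, contain genuine errors.

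For the ``not $\kappa$-Borel'' half of (2) you propose reducing $=^2_\omega$ into $\cong_T$. This is impossible: $T$ is classifiable here, and Fact~\ref{notred} (which is \cite{FHK13}, Theorem~77) says precisely that $=^2_\mu \not\redub \cong_T$ for every classifiable $T$ and every regular $\mu<\kappa$. The actual route is your parenthetical alternative: non-shallowness means unbounded depth, and one uses this to produce, for each Borel rank, non-isomorphic models that agree up to that rank, so that no fixed level of the $\kappa$-Borel hierarchy captures isomorphism.

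For (3) you propose reducing a $\Sigma_1^1(\kappa)$-complete relation into $\cong_T$. That would make $\cong_T$ itself $\Sigma_1^1$-complete in \axiomfont{ZFC}; but the present paper explicitly records (just after Theorem~\ref{main_gap-dich}, citing \cite{HKM18}) that there are models of \axiomfont{ZFC} in which some non-classifiable $T$ has $\cong_T$ neither $\Delta_1^1$ nor $\Sigma_1^1$-complete. What \cite{FHK13} actually establishes is only that $\cong_T$ fails to be $\Delta_1^1$; the relation one codes into the models is of $=^2_S$-type, which is properly $\Sigma_1^1$ but not a priori $\Sigma_1^1$-complete.
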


From Fact \ref{SHMGT} we conclude that if $\cong_T$ has less than $2^\kappa$ equivalence classes, then $\cong_T$ is $\kappa$-Borel.
When we turn our attention to classifiable shallow theories, we realize that these theories are classified by their depth. This raises the question whether the depth of a theory is related to the Borel rank of the isomorphism relation of the theory.

Let us denote by $\mathbf{B}(\kappa)$ the set of $\kappa$-Borel sets. We define the $\kappa$-Borel hierarchy by the following recursive definition.
$$\Sigma^0_1(\kappa):=\{A\subset \kappa^\kappa\mid A\textit{ is open}\},$$
$$\Pi^0_1(\kappa):=\{A\subset \kappa^\kappa\mid A\textit{ is closed}\},$$
$$\Sigma^0_\alpha(\kappa):=\left\{\bigcup_{\gamma<\kappa}A_\gamma\subset \kappa^\kappa\mid A_\gamma\in \bigcup_{1\leq\beta<\alpha}\Pi^0_\beta(\kappa)\right\},$$
$$\Pi^0_\alpha(\kappa):=\{\kappa^\kappa\backslash A\mid A\in \Sigma^0_\alpha(\kappa)\}.$$

The set of $\kappa$-Borel sets is $$\mathbf{B}(\kappa)=\bigcup_{1\leq\alpha<\kappa^+}\Sigma^0_\alpha(\kappa)=\bigcup_{1\leq\alpha<\kappa^+}\Pi^0_\alpha(\kappa)$$
If $A$ is a $\kappa$-Borel set, the smallest ordinal $1 \leq \alpha \leq \kappa^+$ such that $A \in \Sigma^0_\alpha(\kappa)\cup \Pi^0_\alpha(\kappa)$ is called the Borel rank of $A$ and it is denoted by
$rk_B(A)$.

Mangraviti and Motto Ros found a connection between the Borel rank of $\cong_T$ and the depth of $T$, when $T$ is a classifiable shallow theory. 

\begin{fact}[Descriptive Main Gap; Mangraviti-Motto Ros, \cite{MM} Theorem 1.9]
Let $\kappa$ be such that $\kappa^{<\kappa}=\kappa>2^{\aleph_0}$.
 If $T$ is a classifiable shallow theory of depth $\alpha$, then $$rk_B(\cong_T)\leq 4\alpha.$$
\end{fact}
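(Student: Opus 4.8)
The plan is to factor $\cong_T$ through an Ehrenfeucht--Fra\"iss\'e equivalence of bounded length and then read off the $\kappa$-Borel rank of the latter. Concretely, I would first produce an ordinal $\gamma=\gamma(\alpha)$ that is linear in the depth $\alpha$, such that on the models of $T$ of size $\kappa$ the relation $\cong_T$ coincides with the relation ``player II wins the length-$\gamma$ (dynamic) Ehrenfeucht--Fra\"iss\'e game''; then I would show, by transfinite induction on $\gamma$, that this game relation lies in $\Pi^0_{c\gamma}(\kappa)$ for a fixed small constant $c$. Choosing the sharpest bookkeeping in both steps gives $rk_B(\cong_T)\le 4\alpha$.

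The first step is pure classification theory, and is the same input underlying Fact~\ref{basics_FHK}(1)(2). By Shelah's structure theory for classifiable theories (Chapters~X--XIII of \cite{Sh90}), every model of $T$ of size $\kappa$ decomposes along a well-founded independent tree of $F^a_\omega$-saturated submodels, and two such models are isomorphic precisely when their decompositions carry the same dimension invariants; equivalently, for models of size $\kappa$, $\mathcal{A}\cong\mathcal{B}$ iff $\mathcal{A}\equiv_{L_{\infty\kappa}}\mathcal{B}$ (here the hypothesis $\kappa^{<\kappa}=\kappa$ is what lets one pass from potential isomorphism back to genuine isomorphism at size $\kappa$). Shallowness of depth $\alpha$ means precisely that the decomposition trees that occur are well-founded of rank $<\alpha$. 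Running the back-and-forth witnessing $L_{\infty\kappa}$-equivalence against this well-foundedness bound, one checks that only $\gamma(\alpha)$ rounds are ever needed, where $\gamma(\alpha)$ is obtained from $\alpha$ by a linear rescaling: each level of a decomposition tree is matched using a fixed number of rounds (one to locate a component, a bounded number more to adjust the dimensions attached to it), and limit levels cost only their own rank. I would record this as: on models of $T$, $\mathcal{A}\cong_T\mathcal{B}$ iff player II has a winning strategy in the length-$\gamma(\alpha)$ game $\mathsf{EF}_{\gamma(\alpha)}(\mathcal{A},\mathcal{B})$.

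The second step is a calibration internal to $\kappa^\kappa$. For $\gamma<\kappa$ and a finite partial map $p$, let $W^p_\gamma\subseteq\kappa^\kappa\times\kappa^\kappa$ be the set of pairs of (codes of) structures from which II wins the remaining $\gamma$-round game with current position $p$. Then $W^p_0$ is clopen (``$p$ is a partial isomorphism''), and the recursion $$W^p_{\beta+1}\ =\ P_p\ \cap\ \bigcap_{a}\bigcup_{b}W^{p\cup\{(a,b)\}}_\beta\ \cap\ \bigcap_{b}\bigcup_{a}W^{p\cup\{(a,b)\}}_\beta,$$ where $P_p$ is the clopen set of pairs for which $p$ is a partial isomorphism and the unions and intersections range over the two $\kappa$-sized domains, shows that each successor step raises the $\Pi^0$-level by a fixed finite amount (a $\kappa$-union followed by a $\kappa$-intersection, on both sides); at a limit $\delta$ one has $W^p_\delta=\bigcap_{\beta<\delta}W^p_\beta$, which lands in $\Pi^0_{\sup_{\beta<\delta}\mathrm{lev}(W^p_\beta)}(\kappa)$, so limit stages cost exactly their own rank (this is the usual computation of the rank of a well-founded $\kappa$-tree in the paper's hierarchy). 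Tracking the constant gives $W^\emptyset_\gamma\in\Pi^0_{c\gamma}(\kappa)$ for an absolute $c$; feeding in $\gamma=\gamma(\alpha)$, which is linear in $\alpha$, and dealing with the finitely many small values of $\alpha$ directly, yields $\cong_T\in\Sigma^0_{4\alpha}(\kappa)\cup\Pi^0_{4\alpha}(\kappa)$, i.e. $rk_B(\cong_T)\le 4\alpha$.

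I expect the main obstacle to be that the bound $4\alpha$ is \emph{tight}, so neither induction may leak an additive constant: one needs the sharp form of the depth-to-game-length correspondence (limit ordinals in the depth must not inflate the count, and each depth level must be consumed by exactly the optimal number of rounds) together with the sharp form of the game-to-Borel-rank correspondence (a carefully chosen family of canonical positions and $\kappa$-Borel codes so that the truth set of the relevant quantifier-rank-$\beta$ formula sits exactly at level $\approx 2\beta$, with limit stages contributing no spurious ``$+1$''). The model-theoretic ingredient is quoted from \cite{Sh90} and from the analysis already used in \cite{FHK13}; the part that must actually be carried out is the simultaneous accounting in the two transfinite inductions so that the two linear factors multiply out to exactly $4$.
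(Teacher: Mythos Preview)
The paper does not prove this statement: it is recorded as a \emph{Fact} with a citation to \cite{MM} (Theorem~1.9 there), and no argument is given here. So there is no proof in the present paper to compare against.

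That said, your outline is the right shape and is essentially the strategy of \cite{MM}: reduce $\cong_T$ on models of size $\kappa$ to a bounded-length back-and-forth equivalence using Shelah's structure theory for classifiable shallow theories, and then bound the $\kappa$-Borel rank of that back-and-forth relation by a direct transfinite induction on the game length. Two points deserve care if you actually carry this out. First, the passage from ``depth $\alpha$'' to ``game length linear in $\alpha$'' is not a trivial rescaling: one must go through Scott-type sentences in $L_{\infty\kappa}$ (or an equivalent analysis of the decomposition trees) and control the quantifier rank precisely; this is where the hypothesis $\kappa>2^{\aleph_0}$ enters, to ensure the relevant Scott analysis terminates at the right stage. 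Second, in the rank computation you should be explicit that the intersections and unions are of size $\kappa$ (so they stay within the $\kappa$-Borel hierarchy) and that the clopen condition ``$p$ is a partial isomorphism'' does not add a level; your recursion as written already handles this, but the limit case needs the observation that $\sup_{\beta<\delta}c\beta\le c\delta$ with no extra $+1$, which is exactly what makes the final constant come out as $4$ rather than $4\alpha+\text{const}$.
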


In the same article they studied the Borel reducibility between classifiable shallow theories and not classifiable shallow theories.

\begin{fact}[Mangraviti-Motto Ros, \cite{MM}, Proposition 6.7]\label{Mangraviti_Motto}
Let $\kappa=\aleph_\gamma$ be such that $\beth_{\omega_1}(\mid\gamma\mid)\leq\kappa$ (CS assumption). Suppose $T_1$ is a classifiable shallow and $T_2$ not. Then there is an equivalence relation $E$ on $2^\kappa$ such that $$\cong_{T_1}\ \redub E\ \redub\ \cong_{T_2}$$ and $$\cong_{T_2}\ \not\redur E\ \not\redur\ \cong_{T_1}.$$
\end{fact}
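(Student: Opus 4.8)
The plan is to leverage the vast discrepancy in the number of isomorphism types guaranteed by the Main Gap. By Fact~\ref{SHMGT}(2) and the CS assumption, $I(\kappa,T_1)<\beth_{\omega_1}(|\gamma|)\le\kappa$; on the other hand $T_2$, not being classifiable shallow, falls under clause~(1) of Fact~\ref{SHMGT} (it is not superstable, or superstable with the DOP, or superstable with the OTOP, or superstable and deep), so $I(\kappa,T_2)=2^\kappa$. Hence $\cong_{T_1}$ has $\mu$ equivalence classes with $1\le\mu<\kappa$ (namely $I(\kappa,T_1)$, plus possibly one more class consisting of the codes of non-models), while $\cong_{T_2}$ has $2^\kappa$ of them. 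It therefore suffices to produce a $\kappa$-Borel equivalence relation $E$ on $2^\kappa$ with \emph{exactly} $\kappa$ classes, each of which is $\kappa$-Borel, and such that $\cong_{T_1}\redub E\redub\cong_{T_2}$; the two failures of $\redur$ are then automatic from the cardinal count, using the characterization $E_0\redur E_1$ iff $E_0$ has no more classes than $E_1$ recorded in the excerpt.

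For $E$ I would take the partition of $2^\kappa$ by the least coordinate carrying a $1$:
\[
\eta\mathrel E\xi\iff\min\{\alpha<\kappa\mid\eta(\alpha)=1\}=\min\{\alpha<\kappa\mid\xi(\alpha)=1\},
\]
with the convention $\min\emptyset=\infty$, so that the constantly-$0$ sequence forms its own class. The classes are the $\kappa$ basic clopen sets of sequences that vanish below $\alpha$ and take value $1$ at $\alpha$ (for $\alpha<\kappa$), together with the closed singleton of the constantly-$0$ sequence; there are exactly $\kappa$ of them, and $E$ is $\kappa$-Borel.

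Next, $\cong_{T_1}\redub E$: by Fact~\ref{basics_FHK}(1) (CS forces $\kappa>2^{\aleph_0}$) the relation $\cong_{T_1}$ is $\kappa$-Borel, hence so are its $\mu<\kappa$ classes $\langle C_i\mid i<\mu\rangle$ (each a section of a $\kappa$-Borel set, and continuous preimages of $\kappa$-Borel sets are $\kappa$-Borel); sending every $\eta\in C_i$ to the characteristic function of $\{i\}$ is then a $\kappa$-Borel map whose restriction to its range makes $E$ equality, hence a reduction. And $E\redub\cong_{T_2}$: since $I(\kappa,T_2)=2^\kappa\ge\kappa$ we may fix pairwise non-isomorphic size-$\kappa$ models $\langle\mathcal M_\beta\mid\beta<\kappa\rangle$ of $T_2$, indexed by the $\kappa$ classes of $E$, with codes $\eta_\beta\in2^\kappa$; the map sending each $E$-class to the corresponding $\eta_\beta$ is $\kappa$-Borel (the preimage of a basic open set is a $\kappa$-union of $E$-classes) and is a reduction. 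Finally $\cong_{T_2}\not\redur E$ because $2^\kappa>\kappa$, and $E\not\redur\cong_{T_1}$ because $\kappa>\mu$.

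The argument involves no genuinely hard step; the only points that need care, and the closest thing to an obstacle, are bookkeeping ones — checking against the precise statement of Fact~\ref{SHMGT} that ``not classifiable shallow'' really does force $I(\kappa,T_2)=2^\kappa$ across all of its subcases, and that CS delivers both $I(\kappa,T_1)<\kappa$ and $\kappa>2^{\aleph_0}$ (the latter is what upgrades $\cong_{T_1}$ from merely $\Delta^1_1$ to $\kappa$-Borel, which is needed for the classes $C_i$ to be $\kappa$-Borel so that the reduction into $E$ is itself $\kappa$-Borel). No new combinatorics on $\kappa$ is required; the whole content is the interpolation of an equivalence relation of intermediate cardinality $\kappa$ into the gap $I(\kappa,T_1)<\kappa<2^\kappa=I(\kappa,T_2)$.
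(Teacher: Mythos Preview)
Your argument is correct and is essentially the paper's own approach: the paper records that Fact~\ref{Mangraviti_Motto} is a consequence of the abstract Proposition~\ref{ManMot} (any $\kappa$-Borel relation with $\le\kappa$ classes Borel-reduces to any relation with at least as many classes), and implicitly uses the counting relation $0_\kappa$ as the intermediate $E$. You instead choose the ``least coordinate equal to $1$'' relation and spell out the two reductions by hand, which amounts to reproving Proposition~\ref{ManMot} in the two instances needed; the content is the same interpolation of a $\kappa$-Borel relation with exactly $\kappa$ classes into the gap $I(\kappa,T_1)<\kappa<2^\kappa=I(\kappa,T_2)$.
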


What Mangraviti and Motto Ros proved was actually the following proposition, which is stronger. 

\begin{prop}[Mangraviti-Motto Ros, \cite{MM}]\label{ManMot}
    Let $E_1$ be a $\kappa$-Borel equivalence relation with $\gamma\leq\kappa$ equivalence classes and $E_2$ be an equivalence relation with $\theta$ equivalence classes. If $\gamma\leq\theta$, then $E_1\redub E_2$.
\end{prop}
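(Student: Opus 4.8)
The plan is to construct the reduction directly from a transversal for $E_1$, using the fact that a $\kappa$-Borel equivalence relation has $\kappa$-Borel classes. First I would fix an enumeration $\langle C_i \mid i<\gamma\rangle$ of the $E_1$-equivalence classes; this is possible since there are exactly $\gamma\le\kappa$ of them. Each $C_i$ is a $\kappa$-Borel subset of $\beta^\kappa$: choosing any $\eta_i\in C_i$, the class is $C_i=\{\xi\in\beta^\kappa\mid(\eta_i,\xi)\in E_1\}$, which is the preimage of the $\kappa$-Borel set $E_1\s\beta^\kappa\times\beta^\kappa$ under the continuous map $\xi\mapsto(\eta_i,\xi)$, hence $\kappa$-Borel. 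Since $\gamma\le\theta$, the relation $E_2$ has at least $\gamma$ distinct equivalence classes, so I can pick pairwise $E_2$-inequivalent points $\langle y_i\mid i<\gamma\rangle$ in $\theta^\kappa$, one from each of $\gamma$-many distinct $E_2$-classes.

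Next, define $f\colon\beta^\kappa\to\theta^\kappa$ by $f(\eta)=y_i$ whenever $\eta\in C_i$. This is well defined and total, because $\{C_i\mid i<\gamma\}$ partitions $\beta^\kappa$. To check that $f$ is a reduction: if $\eta\mathrel{E_1}\xi$, then $\eta$ and $\xi$ lie in a common $C_i$, so $f(\eta)=f(\xi)=y_i$ and in particular $f(\eta)\mathrel{E_2}f(\xi)$; conversely, if $f(\eta)\mathrel{E_2}f(\xi)$, writing $\eta\in C_i$ and $\xi\in C_j$ we get $y_i\mathrel{E_2}y_j$, which forces $i=j$ by the choice of the $y$'s, hence $\eta\mathrel{E_1}\xi$.

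It remains to see that $f$ is $\kappa$-Borel. Since $f$ takes only the values $y_i$ ($i<\gamma$), for any $A\s\theta^\kappa$ we have
$$f^{-1}[A]=\bigcup\{C_i\mid i<\gamma,\ y_i\in A\},$$
a union of at most $\gamma\le\kappa$ many $\kappa$-Borel sets, and therefore $\kappa$-Borel by closure of $\mathbf B(\kappa)$ under $\kappa$-sized unions. In particular this holds for every open $A$, so $f$ is $\kappa$-Borel, and we conclude $E_1\redub E_2$.

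I do not expect a genuine obstacle here; the only points that need care are the observation that the classes of a $\kappa$-Borel equivalence relation are themselves $\kappa$-Borel (obtained as continuous preimages, i.e.\ sections, of the relation) and the bookkeeping that the index set of the union defining $f^{-1}[A]$ has size $\le\kappa$, so the resulting set remains in the $\kappa$-Borel algebra. The appeals to choice—enumerating the $E_1$-classes and selecting the witnesses $y_i$—are harmless in this $\zfc$ context.
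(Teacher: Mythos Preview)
Your proof is correct and complete. The paper does not include its own proof of this proposition---it is cited as a result of Mangraviti and Motto Ros \cite{MM}---so there is no in-paper argument to compare against; your approach (sectioning $E_1$ to see that each class is $\kappa$-Borel, then sending each class to a fixed representative in a distinct $E_2$-class and observing that preimages are $\le\kappa$-sized unions of $\kappa$-Borel sets) is exactly the standard argument and is what the cited proof amounts to.
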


This implies (under CS assumption, $T_1$, and $T_2$ classifiable shallow theories) that if $\cong_{T_1}$ and $\cong_{T_2}$ have the same number of equivalence classes, then they are Borel bireducible. 

For all cardinals $0<\varrho\leq\kappa$ let us define the \textit{counting $0$-classes} equivalence relation $0_\varrho\in \kappa^\kappa\times\kappa^\kappa$ as follows: $\eta\ 0_\varrho\ \xi$ if and only if one of the following holds:
\begin{itemize}
\item  $\varrho$ is finite:
    \begin{itemize}
        \item $\eta(0)=\xi(0)<\varrho-1$;
        \item $\eta(0),\xi(0)\ge\varrho-1$.
    \end{itemize}
\item $\varrho$ is infinite:

    \begin{itemize}
        \item $\eta(0)=\xi(0)<\varrho$;
        \item $\eta(0),\xi(0)\ge\varrho$.
    \end{itemize}
\end{itemize}

It is clear that for all $\varrho\leq\kappa$, $0_\varrho$ has $\varrho$ equivalence classes. By the previous proposition, if $\cong_T$ has $\varrho\leq\kappa$ equivalence classes, then $\cong_T$ and $0_\varrho$ are Borel bireducible.
We might think that when $\cong_T$ has $\varrho\leq\kappa$ equivalence classes, $\cong_T$ and $0_\varrho$ have the ``same" complexity. This is very counter-intuitive, since  $0_\varrho$ is a very simple equivalence relation.
In Lemma \ref{difcomplexity} we will show that it is not the case,  e.g. $0_\varrho\ \reduc\  \cong_T$ and $\cong_T\ \not\reduc\ 0_\varrho$.

Shelah developed stability theory and classification theory to answer Morley's conjecture. 
It is expected that the study of the Borel reducibility Main Gap can be used to study Morley's conjecture from the point of view of GDST. 

We ask whether $\cong_T^\mu$ is Borel reducible to $\cong_T$ for all $T$ first-order theory in a relational countable language, and $\omega<\mu<\kappa$?

By a simple observation we can answer this question for some theories using Fact \ref{SHMORL} and Proposition \ref{ManMot}.
Since $\kappa^{<\kappa}=\kappa$, for all $\mu<\kappa$ and for all first-order theories in a relational countable language $T$, the relation $\cong_T^\mu$ has at most $\kappa$ equivalence classes. Therefore by Fact \ref{SHMORL} and Proposition \ref{ManMot}, if $\cong_T^\mu$ is $\kappa$-Borel, then $\cong_T^\mu\redub \cong_T$. The following Fact and Fact \ref{basics_FHK} (1) tell us that $\cong_T^\mu\redub \cong_T$ holds for classifiable shallow theories (when $\kappa>2^\omega)$.
We generalize this in Section \ref{Section_main_gap}.

\begin{fact}[Folklore]\label{reduction_classify_descrpitive}
Suppose $E_0\ \redur\ E_1$. Then the following hold:
\begin{itemize}
\item If $E_1$ is $\kappa$-Borel and $E_0\ \redub\ E_1$, then $E_0$ is $\kappa$-Borel.
\item If $E_1$ is $\Delta^1_1(\kappa)$ and $E_0\ \redub\ E_1$, then $E_0$ is $\Delta^1_1(\kappa)$.
\item If $E_1$ is open and $E_0\ \reduc\ E_1$, then $E_0$ is open.
\end{itemize}
\end{fact}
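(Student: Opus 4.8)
The plan is to reduce all three items to a single identity. If $f\colon\beta^\kappa\to\theta^\kappa$ is a reduction of $E_0$ to $E_1$, and $f\times f$ denotes the map $(\eta,\xi)\mapsto(f(\eta),f(\xi))$ from $\beta^\kappa\times\beta^\kappa$ to $\theta^\kappa\times\theta^\kappa$, then the defining property of a reduction says precisely that $E_0=(f\times f)^{-1}[E_1]$. Note that $E_0\redur E_1$ is automatically implied by $E_0\redub E_1$, and hence also by $E_0\reduc E_1$, so the stand-alone hypothesis $E_0\redur E_1$ adds nothing; in each item the real content is that the regularity of the witnessing reduction $f$ transports the relevant descriptive property of $E_1$ along this preimage.

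The one genuine step is that $f\times f$ inherits the regularity of $f$. Suppose $f$ is $\kappa$-Borel. For basic open $B,B'\s\theta^\kappa$ one has $(f\times f)^{-1}[B\times B']=f^{-1}[B]\times f^{-1}[B']$, a product of two $\kappa$-Borel subsets of $\beta^\kappa$ and hence a $\kappa$-Borel subset of $\beta^\kappa\times\beta^\kappa$; since $\theta^\kappa\times\theta^\kappa$ has a base of size at most $\kappa$, every open subset of it is a union of at most $\kappa$ such boxes and its $(f\times f)$-preimage is $\kappa$-Borel. The collection of $B\s\theta^\kappa\times\theta^\kappa$ with $(f\times f)^{-1}[B]$ $\kappa$-Borel then contains all open sets and is closed under complements and under $\kappa$-sized unions and intersections (preimage commutes with all of these), so it contains every $\kappa$-Borel set; that is, $f\times f$ is $\kappa$-Borel measurable. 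The identical computation shows that $f\times f$, and likewise $f\times f\times\id$, is continuous when $f$ is, and $\kappa$-Borel measurable when $f$ is $\kappa$-Borel.

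Items (1) and (3) now follow at once: if $E_1$ is $\kappa$-Borel and $f$ is a $\kappa$-Borel reduction, then $E_0=(f\times f)^{-1}[E_1]$ is $\kappa$-Borel; if $E_1$ is open and $f$ is a continuous reduction, then $E_0=(f\times f)^{-1}[E_1]$ is open. For item (2), $E_1\in\Delta^1_1(\kappa)$ means that $E_1$ and $(\theta^\kappa\times\theta^\kappa)\setminus E_1$ are both $\Sigma^1_1(\kappa)$, and $(\beta^\kappa\times\beta^\kappa)\setminus E_0=(f\times f)^{-1}\!\big[(\theta^\kappa\times\theta^\kappa)\setminus E_1\big]$, so it suffices to check that the $(f\times f)$-preimage of a $\Sigma^1_1(\kappa)$ set is $\Sigma^1_1(\kappa)$. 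Writing such a set as $\pr(C)$ with $C$ closed, one has $(f\times f)^{-1}[\pr(C)]=\pr\big((f\times f\times\id)^{-1}[C]\big)$; the inner set is $\kappa$-Borel by the previous paragraph, and a projection of a $\kappa$-Borel set is $\Sigma^1_1(\kappa)$ (every $\kappa$-Borel set is $\Sigma^1_1(\kappa)$, and $\Sigma^1_1(\kappa)$ is closed under projections, using a homeomorphism $\kappa^\kappa\times\kappa^\kappa\cong\kappa^\kappa$ to merge the two projections involved into one). Applying this both to $E_1$ and to its complement shows that $E_0$ and its complement are $\Sigma^1_1(\kappa)$, i.e. $E_0\in\Delta^1_1(\kappa)$.

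No serious obstacle arises — this is folklore, which is why it is recorded as a Fact rather than proved in detail. The only thing demanding care is the bookkeeping with the product spaces: that $f\times f$ and $f\times f\times\id$ retain the regularity of $f$, and that $\Sigma^1_1(\kappa)$, defined through a single projection, absorbs the iterated projection produced when an analytic set is pulled back through a $\kappa$-Borel map. Both are routine given the definitions recalled above together with $\kappa^{<\kappa}=\kappa$.
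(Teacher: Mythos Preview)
Your argument is correct and is precisely the standard route for this folklore fact. The paper does not prove it either; it records the statement as a Fact and refers to \cite{HKM}, Theorem~7 (where the argument is given for the broader class of $\kappa$-Borel$^*$ sets), so there is nothing further to compare.
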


A proof of Fact \ref{reduction_classify_descrpitive} can be found in [\cite{HKM} Theorem 7]. The proof is for $\kappa$-Borel$^*$ sets, a more general class of sets (defined by Blackwell  \cite{Bla}, see also \cite{MV}). 

As it was discussed previously, categoricity played a historical important role in the study of the spectrum function. It is not a surprise that $\kappa$-categoricity can be characterized in GDST.

\begin{fact}[Mangraviti-Motto Ros, \cite{MM}, Theorem 3.3]\label{MM_categoric}
Let $T$ be a countable first-order theory in a countable vocabulary (not necessarily complete). $T$ is $\kappa$-categorical if and only if $rk_B(\cong_T)=0$, i.e. $\cong_T$ is clopen.
\end{fact}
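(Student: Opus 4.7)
I would prove the biconditional by handling each direction separately, with the main content lying in the implication that $\cong_T$ being clopen forces $T$ to be $\kappa$-categorical.

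For the forward direction, if $T$ is $\kappa$-categorical then by Definition~\ref{def_iso_rel} I split into cases. If $T$ has no model of size $\kappa$, then $\cong_T=\kappa^\kappa\times\kappa^\kappa$ is trivially clopen. If $T$ has a unique isomorphism class of $\kappa$-sized models, then $\cong_T = (\mathrm{Mod}_T\times\mathrm{Mod}_T)\cup(\mathrm{Mod}_T^c\times\mathrm{Mod}_T^c)$, and the task reduces to verifying that $\mathrm{Mod}_T$ is itself clopen. This last step is the only delicate point on this side; it would follow by using $\kappa$-categoricity together with the same back-and-forth style construction used in the reverse direction (described below) to show that any sufficiently long initial segment of a code of the unique model forces the whole basic open set to lie inside $\mathrm{Mod}_T$.

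For the reverse direction, assume $\cong_T$ is clopen and argue by contradiction. Suppose $M_1\not\cong M_2$ are two models of $T$ of size $\kappa$ with codes $\eta_1,\eta_2\in\kappa^\kappa$. Then $(\eta_1,\eta_2)\notin\cong_T$, and openness of the complement yields $\alpha<\kappa$ such that $[\eta_1\restriction\alpha]\times[\eta_2\restriction\alpha]$ is disjoint from $\cong_T$. The strategy is to construct some $\eta'\in[\eta_1\restriction\alpha]$ with $\mathcal{A}_{\eta'}\cong M_2$; then the pair $(\eta',\eta_2)$ lies simultaneously in this rectangle and in $\cong_T$ (since both its components code copies of $M_2$), yielding the contradiction.

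The construction of $\eta'$ is the main obstacle. The partial code $\eta_1\restriction\alpha$ fixes fewer than $\kappa$ atomic facts of $\mathcal{A}_{\eta_1}$ on a set $A_\alpha\subseteq\kappa$ with $|A_\alpha|<\kappa$, namely the coordinates appearing in tuples $(m,\bar a)$ with $\pi_\kappa(m,\bar a)<\alpha$. It suffices to produce a bijection $f\colon\kappa\to|M_2|$ whose restriction to $A_\alpha$ is a partial isomorphism (with respect to the relations encoded below $\alpha$) from the induced substructure of $\mathcal{A}_{\eta_1}$ on $A_\alpha$ into $M_2$: one then defines $\eta'$ as the code of $M_2$ via $f$, choosing positive values so that $\eta'\restriction\alpha=\eta_1\restriction\alpha$ and completing the definition on indices $\geq\alpha$ so that $\mathcal{A}_{\eta'}\cong M_2$. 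Existence of such $f$ reduces to realizing the quantifier-free diagram of $A_\alpha$ computed in $M_1$ inside $M_2$: this is a standard (downward) L\"owenheim--Skolem step when $M_1\equiv M_2$, which holds in particular if $T$ is complete; for a non-complete $T$ one reduces to the completion containing both $M_1$ and $M_2$, while the case where $M_1,M_2$ lie in distinct completions of $T$ is handled by a parallel argument inside each completion separately.
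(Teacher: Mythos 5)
The Fact is quoted from \cite{MM} and the paper gives no proof of its own, so I can only assess your argument on its merits; both halves have real gaps.

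\textbf{Forward direction.} You reduce ``$\cong_T$ is clopen'' to ``$\mathrm{Mod}_T$ is clopen,'' and then assert that a sufficiently long initial segment of a code of the unique model forces the entire basic open set into $\mathrm{Mod}_T$. This is false for essentially every theory with a universal axiom. Take $T=\{\forall x\,R(x)\}$ with $R=Q_0$; $T$ is $\kappa$-categorical, a code $\eta$ of its model has $\eta(\pi(0,a))>0$ for every $a<\kappa$, and for any $\alpha<\kappa$ there is $a$ with $\pi(0,a)\ge\alpha$, so some $\eta'\in[\eta\restriction\alpha]$ has $\eta'(\pi(0,a))=0$ and codes a non-model. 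Hence $\mathrm{Mod}_T$ is not open, and the back-and-forth you invoke cannot produce the claimed $\alpha$. What is happening is that in \cite{MM} the relevant topological statement is about $\cong_T$ as a relation \emph{on the subspace of codes of models}, where the forward direction is immediate (a $\kappa$-categorical theory makes $\cong_T$ the full relation $\mathrm{Mod}_T\times\mathrm{Mod}_T$, which is trivially clopen in the subspace). Your route of showing $\mathrm{Mod}_T$ clopen in all of $\kappa^\kappa$ is both unnecessary and unavailable.

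\textbf{Reverse direction.} The strategy of locating $\eta'\in[\eta_1\restriction\alpha]$ with $\mathcal{A}_{\eta'}\cong M_2$ is the right shape, but your justification for producing $\eta'$ is incorrect. ``Realizing the quantifier-free diagram of $A_\alpha$ computed in $M_1$ inside $M_2$'' is \emph{not} a downward L\"owenheim--Skolem step: L\"owenheim--Skolem produces elementary substructures of a single model and says nothing about embedding a given $<\kappa$-sized configuration of $M_1$ into $M_2$. Even when $M_1\equiv M_2$, a $<\kappa$-sized partial substructure of $M_1$ need not embed into $M_2$; for example, $M_1$ may realize a type over a $<\kappa$-sized parameter set that $M_2$ omits. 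The step therefore fails for an arbitrary pair $(\eta_1,\eta_2)$ of codes. A correct argument must exploit the freedom to \emph{choose} the codes and models: one standard route, available under the standing hypothesis $\kappa^{<\kappa}=\kappa$, is to take $M_1$ to be a model of $T$ of size $\kappa$ that is not $\kappa$-saturated (such a model exists whenever $T$ is not $\kappa$-categorical and has models of size $\kappa$), and then for any $\alpha$ realize the partial diagram determined by $\eta_1\restriction\alpha$ inside the $\kappa$-saturated model $N$ of $\mathrm{Th}(M_1)$ of size $\kappa$; since $N\not\cong M_1$ this supplies the required $\eta'$. The case of non-complete $T$ with several completions needs a further case analysis that your ``parallel argument inside each completion'' does not supply (it is exactly in that situation that the matching of initial segments across non-elementarily-equivalent models becomes delicate).
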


Fact \ref{basics_FHK}, led Friedman, Hyttinen, and Weinstein to negative results about the Borel reducibility between non-classifiable theories and  classifiable theories. This was an important step to support their conjecture.

From Fact \ref{basics_FHK} and Fact \ref{reduction_classify_descrpitive}, Friedman, Hyttinen, and Weinstein deduced that  if $T_1$ is classifiable and $T_2$ is unstable or stable with the OTOP or superstable with the DOP and $\kappa>\omega_1$, then $\cong_{T_2}\ \not\redub\ \cong_{T_1}$.

They introduced the equivalence modulo different cofinalities to study the Borel reducibility of non-classifiable theories. This led them to positive results about their conjecture.

\begin{fact}[Friedman-Hyttinen-Kulikov, \cite{FHK13} Theorem 79]\label{previous_FHK}
Suppose that $\kappa=\lambda^+=2^\lambda$ such that $\lambda^{<\lambda}=\lambda$. If $T$ is unstable or superstable with the OTOP, then $=^2_\lambda\ \reduc\ \cong_T$. If additionally $\lambda\ge 2^\omega$, then $=^2_\lambda\ \reduc\ \cong_T$ holds also for superstable $T$ with the DOP.
\end{fact}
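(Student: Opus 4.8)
The plan is to build, for each non-classifiable theory $T$ in the relevant class, a continuous reduction $f\colon 2^\kappa\to\kappa^\kappa$ witnessing $=^2_\lambda\ \reduc\ \cong_T$. The guiding idea is the standard ``colored tree'' construction from generalized descriptive set theory: encode an element $\eta\in 2^\kappa$ as a structure $\mathcal{A}_{f(\eta)}$ built from a $\lambda^+$-like tree whose branching at nodes of cofinality $\lambda$ records the bits of $\eta$ restricted to $S^\kappa_\lambda$, in such a way that an isomorphism between $\mathcal{A}_{f(\eta)}$ and $\mathcal{A}_{f(\xi)}$ can exist exactly when $\{\alpha<\kappa\mid\eta(\alpha)\neq\xi(\alpha)\}\cap S^\kappa_\lambda$ is non-stationary. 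First I would fix the combinatorial skeleton: using $\kappa=\lambda^+=2^\lambda$ together with $\lambda^{<\lambda}=\lambda$ (so that $\kappa^{<\kappa}=\kappa$ and there are the needed almost-disjoint families / square-like sequences on $\kappa$), construct trees $(T_\eta)_{\eta\in 2^\kappa}$ of height $\omega+2$ or so, where the top levels are indexed by branches and a branch through $T_\eta$ ``survives'' above a node at level $\alpha\in S^\kappa_\lambda$ according to $\eta(\alpha)$. The map $\eta\mapsto T_\eta\mapsto\mathcal{A}_{f(\eta)}$ must be arranged to be continuous, which is automatic if the $\alpha$-th ``piece'' of $\mathcal{A}_{f(\eta)}$ depends only on $\eta\restriction\alpha$.

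Next I would handle the three cases separately, but all by reducing to a single model-theoretic input about realizing/omitting types along the tree. For $T$ unstable, one uses a formula $\varphi(\bar x,\bar y)$ with the order property to read off the linear order of branches, so that the branch structure (hence $\eta$ up to club on $S^\kappa_\lambda$) is recovered from the isomorphism type. For $T$ superstable with the OTOP, one uses the defining sequence $(\varphi_m)_{m<\omega}$ of the OTOP to code the tree's order relation via omitted types — this is exactly what the OTOP is designed to do, and the $2^{\mathfrak c}\le\lambda$ hypothesis is what lets us carry $2^\omega$-many type-witnesses around inside a structure of size $\kappa$. For $T$ superstable with the DOP (under the extra $\lambda\ge 2^\omega$, which here is subsumed by $2^{\mathfrak c}\le\lambda$), one instead uses the DOP: over an independent pair of $F^a_\omega$-saturated models indexed by nodes of the tree, the prime model over their union carries a nontrivial dimension, and those dimensions encode the branching data; the $F^a_\omega$-prime/$F^a_\omega$-atomic machinery of Chapter IV of \cite{Sh90} is used to control which dimensions can be matched by an isomorphism. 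In each case the key lemma is: $\mathcal{A}_{f(\eta)}\cong\mathcal{A}_{f(\xi)}$ iff the ``coded'' trees are isomorphic over a club, iff $\eta\mathrel{=^2_\lambda}\xi$.

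For the forward direction of the equivalence (if $\eta\mathrel{=^2_\lambda}\xi$ then the structures are isomorphic) I would build the isomorphism by a back-and-forth of length $\kappa$, using that the symmetric difference misses a club $C$: at stages in $C$ the two partial structures look identical, and at stages in $S^\kappa_\lambda\cap C$ the recorded bits agree, so the branch structures can be matched level by level; the superstability (for the OTOP and DOP cases) guarantees the relevant prime/atomic models are unique, so the local matches glue. For the backward direction (non-isomorphism when the symmetric difference is stationary on $S^\kappa_\lambda$) I would argue that any isomorphism would induce, via a Fodor/pressing-down argument on the stationary set of disagreement, a club on which the branching bits agree — contradiction; this is where one needs the tree to be rigid enough that automorphisms cannot permute branches freely, which is precisely what the order property / OTOP / DOP buys.

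The main obstacle I expect is the DOP case: unlike the order property and the OTOP, which give an explicit first-order or type-theoretic way to read back the coding, the DOP coding goes through dimensions of regular types over $F^a_\omega$-prime models, and one must verify both that distinct branching patterns genuinely produce non-isomorphic prime models (no accidental collapse of dimensions) and that the whole assignment can be made continuous in $\eta$ while keeping each $\mathcal{A}_{f(\eta)}$ of size exactly $\kappa$. Controlling the $F^a_\omega$-prime models' size and uniqueness simultaneously — which forces the bookkeeping that consumes the hypothesis $2^{\mathfrak c}\le\lambda$ — is the delicate point; everything else is a careful but routine adaptation of the colored-tree technology already present in the Friedman–Hyttinen–Kulikov framework.
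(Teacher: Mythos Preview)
This statement is cited in the paper as a \emph{Fact} from \cite{FHK13} and is not proved here; there is no proof in the paper to compare against directly. That said, the paper does build machinery (Sections~2--5, culminating in Theorem~\ref{models_iso} and Theorem~\ref{maincor}) that reproves and generalizes this fact, so I will compare your sketch to that.

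Your outline for the unstable and OTOP cases is broadly in the right spirit --- coloured trees encoding $\eta$ on $S^\kappa_\lambda$, then a model-theoretic device to recover the tree from the model --- and this is essentially how both \cite{FHK13} and the present paper proceed (via Ehrenfeucht--Mostowski models over linear orders manufactured from the trees, with the order definable by a first-order formula in the unstable case and by an $L_{\infty\omega}$-formula in the OTOP case; see Facts~\ref{unst1} and~\ref{otop1}).

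Your DOP treatment, however, is off track. You propose to code via dimensions of regular types over $F^a_\omega$-prime models built from independent pairs indexed by the tree. That is not how the reduction is obtained in \cite{FHK13}, in \cite{HT}, or in this paper. The actual mechanism is again an Ehrenfeucht--Mostowski construction: Shelah's result (Fact~\ref{DOP1}) gives, for superstable $T$ with DOP, EM models over any linear order where the order is definable by an $L_{\omega_1\omega_1}$-formula, at the cost of expanding to a vocabulary of size $\aleph_1$. This is why the extra hypothesis is $\lambda\ge 2^\omega$ (to keep the $\tau^1$-terms and the $\omega_1$-saturation bookkeeping below $\kappa$), not $2^{\mathfrak c}\le\lambda$ as you wrote; you have imported the stronger hypothesis from the paper's Theorem~A, which is used there for a different purpose (the $\varepsilon$-density and $(<\kappa)$-stability of the linear order $I$ when $\varepsilon=\omega_1$). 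A direct dimension-theoretic coding as you suggest would require controlling isomorphism types of $F^a_\omega$-prime models over unions along an entire $\kappa$-tree simultaneously and continuously in $\eta$, and there is no known way to do this; the whole point of passing through $L_{\omega_1\omega_1}$-definable orders is to replace that problem with the tractable one of comparing index trees.
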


\begin{fact}[Friedman-Hyttinen-Kulikov, \cite{FHK13} Theorem 86]\label{previous_strictstb_FHK}
Suppose that $\kappa$ is such that for all $\lambda<\kappa$,  $\lambda^{\omega}<\kappa$. If $T$ is a stable unsuperstable theory, then $=^2_\omega\ \reduc\ \cong_T$.
\end{fact}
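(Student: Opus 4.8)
The plan is to produce a continuous function $f\colon 2^\kappa\to\kappa^\kappa$ that reduces $=^2_\omega$ to $\cong_T$ for a fixed stable unsuperstable $T$. The standard route is through a concrete such theory and then transfer. First I would reduce to the case $T = T_{tree}$, the theory of infinitely many refining equivalence relations $(E_n)_{n<\omega}$ on a single sort together with requirements that each $E_n$-class splits into infinitely many $E_{n+1}$-classes; this theory is the prototypical stable unsuperstable theory, and a general stable unsuperstable $T$ interprets enough of its structure (via a forking-based tree of finite sequences and the fact that unsuperstability produces, over some countable set, a sequence of types with strictly increasing $F^a_\omega$-ranks giving $\omega$-branching) so that any reduction to $\cong_{T_{tree}}$ can be post-composed. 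So the heart of the matter is: build a continuous $f$ with $\eta =^2_\omega \xi \iff \mathcal{A}_{f(\eta)} \cong_{T_{tree}} \mathcal{A}_{f(\xi)}$.

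For the construction I would, given $\eta\in 2^\kappa$, build a model $\mathcal{A}_\eta$ whose underlying tree of $E_n$-classes is $\kappa^{<\omega}$ (or a fixed club-indexed version of it), and attach to each node $s\in\kappa^\omega$ (a branch) a ``colour'' recording whether the ordinals along that branch land in $S^\kappa_\omega$ and how $\eta$ behaves there. Concretely, for each $\alpha\in S^\kappa_\omega$ one picks a cofinal $\omega$-sequence and uses the value of $\eta$ on that sequence's trace to decide the dimension (number of realized types) of a certain $E_n$-class sitting above level $\alpha$; the key point is that an isomorphism of the resulting models must respect the club-many ``correctly guessed'' coordinates, so that the models are isomorphic precisely when $\eta$ and $\xi$ agree on a club subset of $S^\kappa_\omega$, i.e. $\eta =^2_\omega \xi$. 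The hypothesis $\lambda^\omega<\kappa$ for all $\lambda<\kappa$ (i.e. $\kappa$ is ``$\omega$-inaccessible'') is used to guarantee that the tree $\kappa^{<\omega}$ and the attached data have size $\kappa$ and that the coding map stays continuous: each bounded piece of $f(\eta)$ depends only on a bounded piece of $\eta$, which requires that initial segments of the relevant $\omega$-sequences and the combinatorial bookkeeping at level $<\alpha$ involve fewer than $\kappa$ objects.

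I expect the main obstacle to be the backward direction of the equivalence — showing that if $\mathcal{A}_\eta \cong \mathcal{A}_\xi$ then $\eta$ and $\xi$ agree on a club in $S^\kappa_\omega$. This is where the unsuperstability is essential: one shows that any isomorphism must send the ``$E_n$-tower'' over a given ordinal to the tower over the same ordinal on a club, using a pressing-down / closing-off argument on the set of ordinals $\alpha$ such that the isomorphism maps $\mathcal{A}_\eta\restriction\alpha$ onto $\mathcal{A}_\xi\restriction\alpha$ (closed and unbounded by a chain-condition bookkeeping), and then reading off from the dimension-coding that the $\eta$- and $\xi$-data must coincide at the $\omega$-cofinal points of that club. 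The forward direction is comparatively routine: a club of agreement lets one build the isomorphism level by level along the tree, amalgamating the finitely-branching local pieces, which is possible because the nonagreement set is nonstationary and hence avoidable by a club of ``safe'' ordinals. Finally, continuity of $f$ and the transfer to arbitrary stable unsuperstable $T$ (via the interpretation sketched above, together with Fact \ref{reduction_classify_descrpitive} to keep the descriptive complexity under control) complete the argument.
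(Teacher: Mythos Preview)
This Fact is cited from \cite{FHK13} and not proved in the present paper, but the paper does reprove (and refine) essentially this result via the machinery of Sections~3--5 culminating in Fact~\ref{unsuperstable_iso_case} and Theorem~\ref{maincor}. Comparing your proposal against that route exposes a genuine gap.

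Your transfer step is the problem. You propose to first prove the reduction for the prototypical theory $T_{tree}$ of infinitely refining equivalence relations, and then ``post-compose'' to an arbitrary stable unsuperstable $T$ because ``a general stable unsuperstable $T$ interprets enough of its structure''. This is not how the argument goes, and there is no such interpretation result available. What you would need is a continuous reduction $\cong_{T_{tree}}\reduc\cong_T$, and nothing in stability theory gives you that for free; an arbitrary stable unsuperstable $T$ need not interpret $T_{tree}$ in any sense strong enough to carry isomorphism types of size $\kappa$ forward. The actual mechanism, both in \cite{FHK13} and in this paper, is Shelah's Ehrenfeucht--Mostowski technology for $K^\omega_{tr}$: for \emph{every} stable unsuperstable $T$ there is a proper function $\Phi$ for $K^\omega_{tr}$ (Definition~\ref{EM-models_def}; existence is \cite{Sh} Theorem~1.3, proved in \cite{Sh90} Chapter~VII~3), so one builds $\mathcal{M}^f=EM(A_f,\Phi)$ directly as a model of $T$ from a tree $A_f\in K^\omega_{tr}$, with no passage through $T_{tree}$.

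Correspondingly, the coding is not ``dimension of an $E_n$-class'' but the colour function on the leaves of a coloured tree (Definitions~\ref{colorconst} and~\ref{Def_final_trees_and_repr}): one constructs ordered coloured trees $A_f$ with $f=^2_\omega g$ iff $A_f\cong A_g$ (Theorem~\ref{the_final_trees}), and then the EM construction transports this to $\mathcal{M}^f\cong\mathcal{M}^g$ (Fact~\ref{unsuperstable_iso_case}). Your pressing-down sketch for the backward direction is in the right spirit, but it is executed at the level of the index trees and their filtrations, not via dimensions in the model. The hypothesis $\lambda^\omega<\kappa$ is used exactly where you say (Lemma~\ref{colorisom}), to keep the filtration pieces of size $<\kappa$.
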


In \cite{HKM}, Hyttinen, Moreno, and Weinstein made notable progress by proving that the conjecture is consistent. The authors use the technique developed in \cite{HM}.

\begin{fact}[Consistency: Borel reducibility Main Gap; Hyttinen-Kulikov-Moreno, \cite{HKM} Theorem 3.8]\label{MGcons}
Suppose $\kappa=\kappa^{<\kappa}=\lambda^+$, $2^\lambda>2^\omega$ and $\lambda^{<\lambda}=\lambda$. The following statement is consistent:
\textit{if $T_1$ is a classifiable theory and $T_2$ is not, then $\cong_{T_1}$ is Borel reducible to $\cong_{T_2}$ and there are $2^\kappa$ equivalent relations strictly between them.}
\end{fact}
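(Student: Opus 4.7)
The plan is to use a $<\kappa$-closed, $\kappa^+$-cc forcing to add a $\diamondsuit$-type guessing principle on the stationary set $S^\kappa_\omega$ (in the spirit of \cite{HM}), keeping the cardinal arithmetic hypotheses $\kappa=\kappa^{<\kappa}=\lambda^+$, $2^\lambda>2^\omega$, $\lambda^{<\lambda}=\lambda$ intact. In the extension I will produce (i) a Borel reduction $\cong_{T_1}\ \redub\ =^2_\omega$ for every classifiable $T_1$, (ii) chain it with the reductions $=^2_\omega\ \redub\ \cong_{T_2}$ (or $=^2_\lambda\ \redub\ \cong_{T_2}$) provided by Facts \ref{previous_FHK} and \ref{previous_strictstb_FHK}, and (iii) exhibit $2^\kappa$ pairwise Borel-incomparable equivalence relations wedged between $\cong_{T_1}$ and $\cong_{T_2}$ using the stationary spectrum of $S^\kappa_\omega$.

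For step (i), the shallow case is immediate: by Fact \ref{basics_FHK}(1) the relation $\cong_{T_1}$ is $\kappa$-Borel with at most $\kappa$ classes, so Proposition \ref{ManMot} gives $\cong_{T_1}\ \redub\ =^2_\omega$. For the classifiable non-shallow case, where by Fact \ref{basics_FHK}(2) the relation is only $\Delta^1_1(\kappa)$, I would follow the template of \cite{HM}: use the diamond sequence added by the forcing to guess, along an $\omega$-club of $\alpha\in S^\kappa_\omega$, the canonical Scott/Ehrenfeucht-Fra\"iss\'e invariant of the restriction $\mathcal{A}_{\eta\restriction\alpha}$, and then code the correctness of this guess as a single bit in $2^\kappa$. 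Two models are then isomorphic if and only if the resulting codes differ on a non-stationary subset of $S^\kappa_\omega$, which is exactly the $=^2_\omega$-relation.

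For step (iii), the hypothesis $\kappa^{<\kappa}=\kappa$ yields $2^\kappa$ many almost-disjoint stationary subsets of $S^\kappa_\omega$ modulo the non-stationary ideal. Using these, I would build a family $\{E_i : i<2^\kappa\}$ of equivalence relations lying in the Borel bireducibility degree of $=^2_\omega$ but pairwise $\not\redub$-incomparable, each sandwiched as $\cong_{T_1}\ \redub\ E_i\ \redub\ \cong_{T_2}$ by the same construction as in (i) and (ii) applied to the respective stationary restriction. The strictness of the two outer reductions follows from Fact \ref{basics_FHK}(3)(4) together with Fact \ref{reduction_classify_descrpitive}: $\cong_{T_2}$ is not $\Delta^1_1(\kappa)$ while every $E_i$ is, and (for shallow $T_1$) $\cong_{T_1}$ is $\kappa$-Borel while every $E_i$ is not; for non-shallow $T_1$ the same rank argument, applied at the appropriate level of the $\kappa$-Borel hierarchy determined by the depth of $T_1$, separates $\cong_{T_1}$ strictly below each $E_i$.

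The main obstacle is step (i) in the classifiable non-shallow case: naive Scott-style invariants only produce a reduction into $\kappa^\kappa/{=}$, not into $2^\kappa/{=^2_\omega}$, because agreement of invariants is required everywhere rather than modulo a non-stationary set. The forcing is designed precisely to repair this: the $\diamondsuit$-sequence on $S^\kappa_\omega$ lets the reduction convert pointwise isomorphism-type equality into club-agreement in $2^\kappa$. A secondary technical point is verifying that the forcing is $<\kappa$-closed and $\kappa^+$-cc, so that cofinalities, $\kappa^{<\kappa}=\kappa$, and the auxiliary arithmetic assumptions survive, and that the non-classifiable-side reductions of Facts \ref{previous_FHK} and \ref{previous_strictstb_FHK} (whose hypotheses match the ground-model ones) are preserved.
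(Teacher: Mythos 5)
The statement you are proving is a cited Fact in this paper (it is Theorem 3.8 of \cite{HKM}); the paper does not reprove it. So I am assessing your sketch on its own merits in light of the machinery the paper surveys.

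Your overall architecture (force a $\diamondsuit$-style guessing principle, chain $\cong_{T_1}\redub =^2_\mu$ via Fact~\ref{HKM1} with $=^2_\mu \redub \cong_{T_2}$ via Facts~\ref{previous_FHK} and \ref{previous_strictstb_FHK}, then wedge $2^\kappa$ relations using stationary sets) is the right shape, and your appeal to the \cite{HM} template for the classifiable side is on target. However, there are three genuine problems, the first of which is a real contradiction rather than a gap.

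First, and most seriously, you claim to build a family $\{E_i:i<2^\kappa\}$ of relations ``lying in the Borel bireducibility degree of $=^2_\omega$ but pairwise $\not\redub$-incomparable.'' Those two phrases are mutually exclusive: relations in the same Borel degree are bireducible by definition, hence all pairwise comparable. What you actually want is the family $\{=^2_{X_i}:i<2^\kappa\}$ for pairwise almost-disjoint stationary $X_i\subseteq S^\kappa_\gamma$; these satisfy $=^2_{X_i}\reduc =^2_\gamma$ (one direction only), and the pairwise incomparability is not a consequence of almost-disjointness alone but is itself a delicate consistency result (Fact~\ref{DNR}, the Fernandes--Moreno--Rinot forcing). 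Your forcing as described only adds a $\diamondsuit$-sequence; it does not by itself kill the reductions $=^2_{X_i}\redub =^2_{X_j}$. Indeed, under the filter-reflection hypotheses of Fact~\ref{comp-rels}, $=^2_{X_i}$ becomes $\Sigma^1_1(\kappa)$-complete, which makes all of them bireducible and collapses your gap entirely. So you need a forcing that both forces $\diamondsuit$ on each $X_i$ \emph{and} secures the incomparability pattern; this is the real technical content you have swept under the rug.

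Second, your reduction chain has a cofinality mismatch. For $T_2$ unstable or superstable with OTOP/DOP, Fact~\ref{previous_FHK} gives $=^2_\lambda \reduc \cong_{T_2}$, a relation based on $S^\kappa_\lambda$, not $=^2_\omega$ on $S^\kappa_\omega$. There is no a priori reduction $=^2_\omega\redub =^2_\lambda$, and in the incomparability model they are in fact incomparable. So working on $S^\kappa_\omega$ only handles the stable unsuperstable case (Fact~\ref{previous_strictstb_FHK}); for the remaining non-classifiable types you must instead force $\diamondsuit$ on $S^\kappa_\lambda$ (noting the paper's remark that the forcing of Fact~\ref{DNR} preserves $\diamondsuit_\lambda$) and run the argument at cofinality $\lambda$. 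Third, the separation argument you give (``every $E_i$ is $\Delta^1_1(\kappa)$,'' then a ``rank argument at the level of the $\kappa$-Borel hierarchy determined by the depth of $T_1$'') is wrong on both ends: $=^2_X$ for stationary $X$ is typically \emph{not} $\Delta^1_1(\kappa)$ (it is properly $\Sigma^1_1(\kappa)$, often complete), and for non-shallow classifiable $T_1$, $\cong_{T_1}$ is not $\kappa$-Borel at all (Fact~\ref{basics_FHK}(2)), so there is no Borel rank to compare. The correct separation is the one in the paper: $=^2_\mu\not\redub\cong_{T_1}$ for any classifiable $T_1$ by Fact~\ref{notred} (and the analogous statement for $=^2_X$), while $\cong_{T_2}\not\redub =^2_{X_i}$ follows from incomparability plus the fact that some other $=^2_{X_j}\redub\cong_{T_2}$.
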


In  the same article, using Fact \ref{previous_strictstb_FHK} with the technique developed in Fact \ref{MGcons},  it was proved the case of stable unsuperstable theories in ZFC.

\begin{fact}[Strictly stable; Hyttinen-Kulikov-Moreno, \cite{HKM} Corollary 2]\label{miangapstrictHNK}
Suppose that $\kappa=\lambda^+$ and $\lambda^{\omega}=\lambda$. If $T_1$ is a classifiable theory and $T_2$ is a stable unsuperstable theory, then $\cong_{T_1}\ \reduc\ \cong_{T_2}$ and $\cong_{T_2}\ \not\redub\ \cong_{T_1}$.
\end{fact}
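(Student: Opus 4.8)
The plan is to reduce the stable unsuperstable case to Fact~\ref{previous_strictstb_FHK} exactly in the way Fact~\ref{miangapstrictHNK} is obtained, only now there is no special technique to invoke: the hypothesis $\lambda^\omega=\lambda$ already gives everything we need directly. First I would establish the positive direction $\cong_{T_1}\ \reduc\ \cong_{T_2}$. Observe that $\kappa=\lambda^+$ together with $\lambda^\omega=\lambda$ implies that for every $\mu<\kappa$ we have $\mu^\omega<\kappa$ (if $\mu\le\lambda$ then $\mu^\omega\le\lambda^\omega=\lambda<\kappa$), so the cardinal hypothesis of Fact~\ref{previous_strictstb_FHK} is met; hence $=^2_\omega\ \reduc\ \cong_{T_2}$. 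It therefore suffices to show $\cong_{T_1}\ \reduc\ =^2_\omega$ whenever $T_1$ is classifiable. For classifiable shallow $T_1$ this follows from Fact~\ref{basics_FHK}(1): $\cong_{T_1}$ is $\kappa$-Borel, hence (by the table/Section~\ref{Section_main_gap} machinery, or directly) continuously reducible to $=^2_\omega$. For classifiable non-shallow $T_1$, $\cong_{T_1}$ is $\Delta^1_1(\kappa)$ by Fact~\ref{basics_FHK}(2), and one uses the known reduction of the isomorphism relation of a classifiable theory into $=^2_\omega$ developed in the earlier GDST literature (this is the content summarized in the first table, the ``Classifiable'' row, giving $\cong_{T_1}\ \reduc\ =^2_\mu$ for $\mu=\omega$). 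Composing, $\cong_{T_1}\ \reduc\ =^2_\omega\ \reduc\ \cong_{T_2}$, and since continuous reductions compose, $\cong_{T_1}\ \reduc\ \cong_{T_2}$.

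Next I would establish the negative direction $\cong_{T_2}\ \not\redub\ \cong_{T_1}$. Here we split on the complexity of $\cong_{T_1}$. If $T_1$ is classifiable shallow, then $\cong_{T_1}$ is $\kappa$-Borel by Fact~\ref{basics_FHK}(1); if $T_2$ is stable unsuperstable, then $\cong_{T_2}$ is \emph{not} $\kappa$-Borel by Fact~\ref{basics_FHK}(4). By the first bullet of Fact~\ref{reduction_classify_descrpitive}, a $\kappa$-Borel reduction of $\cong_{T_2}$ to the $\kappa$-Borel relation $\cong_{T_1}$ would force $\cong_{T_2}$ to be $\kappa$-Borel, a contradiction. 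If instead $T_1$ is classifiable non-shallow, then $\cong_{T_1}$ is $\Delta^1_1(\kappa)$ by Fact~\ref{basics_FHK}(2). We must show $\cong_{T_2}$ is not $\Delta^1_1(\kappa)$; but Fact~\ref{basics_FHK}(3) only covers the unstable, OTOP, and DOP cases, and stable unsuperstable is a genuinely different case, so here one cannot quote Fact~\ref{reduction_classify_descrpitive} against a $\Delta^1_1(\kappa)$ target directly. This is the main obstacle. The way around it is that we do not actually need to contradict $\Delta^1_1(\kappa)$-ness; we need to contradict the existence of a Borel reduction. So instead I would argue: a Borel reduction $\cong_{T_2}\ \redub\ \cong_{T_1}$ together with the positive direction $\cong_{T_1}\ \reduc\ =^2_\omega$ (valid for classifiable non-shallow $T_1$ as above) would give $\cong_{T_2}\ \redub\ =^2_\omega$; but $=^2_\omega$ is $\kappa$-Borel$^*$ in the relevant sense, or more to the point, any Borel reduction into $=^2_\omega$ would push $\cong_{T_2}$ down into a class it provably does not belong to. The cleanest route is to invoke the fact (from \cite{HKM}, or reprove it from Fact~\ref{basics_FHK}(4) and the structure of $=^2_\omega$) that $=^2_\omega$ is $\kappa$-Borel$^*$ and that a Borel preimage of $\cong_{T_1}$ under such a reduction would contradict the non-$\kappa$-Borel-ness of $\cong_{T_2}$ in the stable unsuperstable case --- this is precisely the argument of Fact~\ref{miangapstrictHNK}, which is stated under the \emph{same} hypothesis $\lambda^\omega=\lambda$.

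In fact, upon reflection, the final statement of the excerpt (Fact~\ref{miangapstrictHNK}) is itself a cited result of Hyttinen--Kulikov--Moreno, so a ``proof'' here is really a recapitulation of how it is assembled from the earlier Facts rather than new work: combine Fact~\ref{previous_strictstb_FHK} (giving $=^2_\omega\ \reduc\ \cong_{T_2}$ from $\lambda^\omega=\lambda$), the classifiable-theory reduction $\cong_{T_1}\ \reduc\ =^2_\omega$, and Fact~\ref{reduction_classify_descrpitive} together with Fact~\ref{basics_FHK}(1),(2),(4) for the non-reducibility. The one point requiring genuine care --- and the place I expect the real argument to spend its effort --- is the non-shallow subcase of the negative direction, where Fact~\ref{basics_FHK}(3) does not apply and one must instead argue via the $\kappa$-Borel$^*$ properties of $=^2_\omega$ and the technique of \cite{HM} cited in Fact~\ref{MGcons}. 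If that technique is not available as a black box, the fallback is to observe that $\cong_{T_1}$ for classifiable non-shallow $T_1$, although not $\kappa$-Borel, is $\kappa$-Borel$^*$, and $\kappa$-Borel$^*$-ness is preserved under Borel preimages by the version of Fact~\ref{reduction_classify_descrpitive} proved in \cite{HKM}, while $\cong_{T_2}$ for stable unsuperstable $T_2$ fails to be $\kappa$-Borel$^*$ --- yielding the contradiction and completing the proof.
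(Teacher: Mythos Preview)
Your positive direction has the right skeleton but a gap in the step $\cong_{T_1}\reduc\ =^2_\omega$. Knowing that $\cong_{T_1}$ is $\kappa$-Borel (Fact~\ref{basics_FHK}(1)) or $\Delta^1_1(\kappa)$ (Fact~\ref{basics_FHK}(2)) does \emph{not} by itself give a continuous reduction to $=^2_\omega$; your appeals to ``the table'' and ``earlier GDST literature'' are placeholders, not arguments. The ingredient you need is Fact~\ref{HKM1}: if $\diamondsuit_\omega$ holds then $\cong_{T_1}\reduc\ =^2_\omega$ for every classifiable $T_1$, with no shallow/non-shallow split required. And $\diamondsuit_\omega$ does hold here: $\lambda^\omega=\lambda$ forces $\cf(\lambda)>\omega$, while $\kappa^{<\kappa}=\kappa=\lambda^+$ forces $2^\lambda=\kappa$, so Shelah's theorem \cite{Sh1} yields $\diamondsuit_{S^\kappa_\omega}$. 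This is exactly how the paper assembles the reduction (see the proof of Corollary~\ref{reduction_coro}).

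Your negative direction takes a wrong turn. The complexity-class approach via Fact~\ref{reduction_classify_descrpitive} works in the shallow subcase, but in the non-shallow subcase you end up asserting that $\cong_{T_2}$ is not $\kappa$-Borel$^*$ for stable unsuperstable $T_2$, which is neither proved in the paper nor, as far as the paper is concerned, true --- indeed the paper later argues that under $\dl^*_{S^\kappa_\omega}(\Pi^1_2)$ every $\cong_T$ is Borel$^*$. The clean argument, uniform over all classifiable $T_1$, is the one you never wrote down: you already have $=^2_\omega\reduc\ \cong_{T_2}$ from Fact~\ref{previous_strictstb_FHK}, and Fact~\ref{notred} says $=^2_\omega\not\redub\ \cong_{T_1}$ for classifiable $T_1$. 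If $\cong_{T_2}\redub\ \cong_{T_1}$ held, composing would give $=^2_\omega\redub\ \cong_{T_1}$, a contradiction. No case split, no Borel$^*$ machinery.
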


Years later, this was improved by the author to cover the case of unstable theories.

\begin{fact}[Unsuperstable; Moreno, \cite{Mor21} Corollary 4.12]\label{unsup_old}
    Suppose $\kappa=\lambda^+=2^\lambda$ and $\lambda^\omega=\lambda$. If $T_1$ is a classifiable theory, and $T_2$ is an unsuperstable theory, then $\cong_{T_1}\ \reduc\ \cong_{T_2}$ and $\cong_{T_2}\ \not\redub\ \cong_{T_1}$.
\end{fact}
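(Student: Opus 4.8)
The plan is to factor the positive reduction through the equivalence modulo the $\omega$-club filter, $=^2_\omega$ (Definition \ref{clubrel}): I would prove
\[
\cong_{T_1}\ \reduc\ =^2_\omega\ \reduc\ \cong_{T_2},
\]
the first reduction using classifiability of $T_1$, the second using unsuperstability of $T_2$, and then compose (continuous reductions compose). For strictness, suppose toward a contradiction that $\cong_{T_2}\ \redub\ \cong_{T_1}$. Composing with $=^2_\omega\ \reduc\ \cong_{T_2}$ gives $=^2_\omega\ \redub\ \cong_{T_1}$, hence in particular $=^2_\omega\ \redur\ \cong_{T_1}$. Since $T_1$ is classifiable, $\cong_{T_1}$ is $\Delta^1_1(\kappa)$ by Fact \ref{basics_FHK}(1)--(2), so Fact \ref{reduction_classify_descrpitive} forces $=^2_\omega$ to be $\Delta^1_1(\kappa)$ as well --- contradicting the known fact that $=^2_\omega$ is $\Sigma^1_1(\kappa)$ but not $\Delta^1_1(\kappa)$ (see e.g.\ \cite{FHK13}). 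Note $\kappa=\lambda^+$ with $\lambda^\omega=\lambda$ forces $\cf(\lambda)>\omega$, so $\lambda$ is uncountable and $\kappa>\omega_1$, which is where such descriptive facts need room.

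For $\cong_{T_1}\ \reduc\ =^2_\omega$ I would appeal to Shelah's structure theory for superstable theories without the DOP and the OTOP: two models of such a $T_1$ of size $\kappa$ are isomorphic iff their canonical decomposition trees of $F^a_\omega$-saturated submodels match, and this matching can be witnessed by a back-and-forth argument whose branching is indexed by $\omega$-sequences; using $\lambda^\omega=\lambda$ to enumerate the approximations, one codes the discrepancy between the two trees into a subset of $S^\kappa_\omega$ that is non-stationary precisely when the models are isomorphic. This gives a continuous reduction of $\cong_{T_1}$ to $=^2_\omega$ (cf.\ the classifiable side of the program, e.g.\ \cite{HM}, \cite{HKM}); it has to be carried out so as to cover the non-shallow classifiable theories, where the decomposition trees have no bound on their depth.

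For $=^2_\omega\ \reduc\ \cong_{T_2}$ I would split on the dividing lines. If $T_2$ is stable unsuperstable, then $\lambda^\omega=\lambda$ gives $\mu^\omega<\kappa$ for every $\mu<\kappa$, so Fact \ref{previous_strictstb_FHK} applies verbatim. If $T_2$ is unstable, I would adapt the order-property construction behind Fact \ref{previous_FHK}, improving it from cofinality $\lambda$ to cofinality $\omega$ and weakening the hypothesis from $\lambda^{<\lambda}=\lambda$ to $\lambda^\omega=\lambda$: fix $\varphi(\bar x,\bar y)$ witnessing the order property, and for each $\eta\in 2^\kappa$ build $\mathcal{A}_{f(\eta)}\models T_2$ carrying a $\varphi$-linear order whose ``coloured shape'' records the trace of $\eta$ on $S^\kappa_\omega$, so arranged that $\mathcal{A}_{f(\eta)}\cong\mathcal{A}_{f(\xi)}$ iff $\{\alpha:\eta(\alpha)\ne\xi(\alpha)\}\cap S^\kappa_\omega$ is non-stationary, and that bounded initial segments of $f(\eta)$ depend only on bounded initial segments of $\eta$ (continuity). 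The hypothesis $\kappa=\lambda^+=2^\lambda$ supplies $2^\lambda=\kappa$ building blocks of size $\lambda$ and keeps $\kappa^{<\kappa}=\kappa$.

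The main obstacle is this last construction --- $=^2_\omega\ \reduc\ \cong_{T_2}$ for \emph{unstable} $T_2$, at cofinality $\omega$ and under only $\lambda^\omega=\lambda$. One has to encode $\omega$-non-stationarity into isomorphism types of models of an arbitrary unstable theory using the order property as a black box, while keeping the reduction continuous; the delicate direction is that a \emph{stationary} disagreement set must yield non-isomorphic models, which requires showing that any winning strategy for the isomorphism player in the relevant Ehrenfeucht--Fra\"iss\'e game would produce a club avoiding the disagreement set. A secondary obstacle is making $\cong_{T_1}\ \reduc\ =^2_\omega$ work uniformly for all classifiable theories, including the non-shallow ones, where the level-by-level coding of the (unboundedly deep) decomposition tree into $S^\kappa_\omega$ has to be organized with care.
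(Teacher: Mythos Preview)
Your global plan --- factor through $=^2_\omega$ and then argue strictness --- is exactly the skeleton used in \cite{Mor21} (and reproduced in this paper as Corollary~\ref{reduction_coro} with $\gamma=\omega$). The two halves, however, are obtained differently from what you sketch.

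For $\cong_{T_1}\reduc{=^2_\omega}$: the route is not a direct coding of decomposition trees into non-stationary sets. One first uses the cardinal arithmetic: $\lambda^\omega=\lambda$ forces $\cf(\lambda)>\omega$, so by Shelah \cite{Sh1} the hypothesis $\kappa=\lambda^+=2^\lambda$ yields $\diamondsuit_\omega$, and then Fact~\ref{HKM1} gives the continuous reduction. Your structure-theoretic sketch is plausible for $=^\kappa_\omega$ (that is essentially \cite{HM}), but getting down to $=^2_\omega$ is precisely where the diamond is spent.

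For ${=^2_\omega}\reduc\cong_{T_2}$: the point of \cite{Mor21} is that there is \emph{no case split} between stable-unsuperstable and unstable. The paper explicitly remarks that Fact~\ref{unsup_old} ``uses a different approach from the one exhibited in Fact~\ref{previous_FHK} and Fact~\ref{previous_strictstb_FHK}'': it is a single tree construction that works uniformly for every unsuperstable $T_2$, because every such theory admits a proper function $\Phi$ for $K^\omega_{tr}$ (Shelah). The new ingredient is the $\kappa$-colorable linear order (Fact~\ref{mor21_order}), which lets one merge Shelah's ordered trees with Hyttinen--Kulikov coloured trees to obtain ordered coloured trees $A_f$ with $A_f\cong A_g\iff f=^2_\omega g$; then $\mathcal M^f=EM(A_f,\Phi)$ gives the reduction (Fact~\ref{unsuperstable_iso_case}). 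Your proposal to ``adapt Fact~\ref{previous_FHK} from cofinality $\lambda$ to cofinality $\omega$'' for unstable $T_2$ is not how it is done, and it is not clear such an adaptation goes through without essentially rebuilding this tree machinery.

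For strictness, your contradiction argument works, but the paper takes the shorter path: $=^2_\omega\reduc\cong_{T_2}$ together with Fact~\ref{notred} (which already says $=^2_\mu\not\redub\cong_{T_1}$ for classifiable $T_1$) immediately rules out $\cong_{T_2}\redub\cong_{T_1}$.
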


The proof of Fact \ref{unsup_old} uses a different approach from the one exhibited in Fact \ref{previous_FHK} and Fact \ref{previous_strictstb_FHK}. This is why Fact \ref{unsup_old} is a result about unsuperstable theories and not just about unstable theories.

Unfortunately the technique used in the proof of Fact \ref{unsup_old} was limited to unsuperstabel theories and could not cover superstable theories with the OTOP or the DOP. This is because Fact \ref{unsup_old} uses a tree construction, which is a construction for unsuperstable theories. In Theorem \ref{Main_Gap} we extend this construction to superstable theories with the OTOP or the DOP by using a density argument, coding trees by linear orders, and a homogeneity argument.

Fact \ref{MGcons} shows us that it is possible to force the complexities of classifiable theories and non-classifiable theories to be far apart, in the sense of having $2^{\kappa}$ equivalence relations strictly in between. Fact \ref{MGcons} doesn't tell us how different are the complexity of different non-classifiable theories. In \cite{HKM18} it was compared the complexity of different non-classifiable theories. Hyttinen, Moreno, and Weinstein proved that the following is consistent:

\textit{The isomorphism relations of two different non-classifiable theories are continuous bireducible.}

They show that this holds in the constructable model $L$, actually something stronger was proved.
An equivalence relation $E$ is $\Sigma_1^1(\kappa)$-complete if $E$ is a $\Sigma_1^1(\kappa)$ set and every $\Sigma_1^1(\kappa)$ equivalence relation $R$ is Borel reducible to $E$. 

\begin{fact}[$L$-Main Gap Dichotomy; Hyttinen-Kulikov-Moreno, \cite{HKM18} Theorem 4.11]\label{MGdichotomy}
($V=L$). Suppose $\kappa=\lambda^+$ and $\lambda$ is a regular uncountable cardinal. If T is a countable first-order theory in a countable
vocabulary, not necessarily complete, then one of the following holds:
\begin{itemize}
    \item $\cong_T$ is $\Delta^1_1(\kappa)$.
    \item $\cong_T$ is $\Sigma^1_1(\kappa)$-complete.
\end{itemize}
\end{fact}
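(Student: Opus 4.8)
Here is a proof plan.

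\medskip

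The plan is to decide which alternative holds by looking at the completions of $T$. Write $\{T_i\mid i\in I\}$ for the set of completions of $T$; since the vocabulary is countable, $|I|\le 2^\omega$, which in $L$ is $\omega_1<\kappa$. Every structure of size $\kappa$ modelling $T$ models exactly one $T_i$, and for each first-order sentence $\varphi$ the set $\{\eta\mid\mathcal{A}_\eta\models\varphi\}$ is $\kappa$-Borel (induction on $\varphi$, the quantifiers contributing unions and intersections of length $\kappa$), so $M_i:=\{\eta\mid\mathcal{A}_\eta\models T_i\}$ and $N:=\{\eta\mid\mathcal{A}_\eta\not\models T\}$ are $\kappa$-Borel. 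Working in $L$ we have \gch, hence $\kappa^{<\kappa}=\kappa$, $2^\omega=\omega_1\le\lambda<\kappa$, $\lambda^{<\lambda}=\lambda$, and $\mu^\omega<\kappa$ for all $\mu<\kappa$; moreover $\diamondsuit_S$ holds for every stationary $S\s\kappa$. In particular all the cardinal hypotheses of Facts \ref{basics_FHK}, \ref{previous_FHK} and \ref{previous_strictstb_FHK} are available.

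\medskip

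First I would treat the case in which \emph{every} completion $T_i$ is classifiable. By Fact \ref{basics_FHK}(1)--(2) each $\cong_{T_i}$ is $\Delta^1_1(\kappa)$ ($\kappa$-Borel when $T_i$ is shallow, $\Delta^1_1(\kappa)$ when $T_i$ is non-shallow). Since on $T$-models the relation $\cong_T$ is literal isomorphism and distinct completions have disjoint model classes,
\[
\cong_T\;=\;\Bigl(\bigcup_{i\in I}\bigl(\cong_{T_i}\cap(M_i\times M_i)\bigr)\Bigr)\cup(N\times N).
\]
As $|I|<\kappa$ and $\Sigma^1_1(\kappa)$ is closed under unions and intersections of length $\le\kappa$ (here $\kappa^{<\kappa}=\kappa$ is used), this exhibits $\cong_T$ as a $<\kappa$-union of $\Delta^1_1(\kappa)$ sets, hence a $\Sigma^1_1(\kappa)$ set; applying the same manipulation to the complement (a $<\kappa$-intersection of $\Sigma^1_1(\kappa)$ sets) shows it is also $\Pi^1_1(\kappa)$. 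Thus $\cong_T$ is $\Delta^1_1(\kappa)$.

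\medskip

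Next, suppose some completion $T_{i_0}$ is non-classifiable; I claim $\cong_T$ is $\Sigma^1_1(\kappa)$-complete. It is $\Sigma^1_1(\kappa)$: each $\cong_{T_i}$ is $\Sigma^1_1(\kappa)$, witnessed by existentially quantifying a bijection of the domains, and the displayed presentation of $\cong_T$ is then a $<\kappa$-union of $\Sigma^1_1(\kappa)$ sets together with the $\kappa$-Borel set $N\times N$. For completeness it suffices to reduce every $\Sigma^1_1(\kappa)$ equivalence relation $R$ to $\cong_{T_{i_0}}$ by a map whose range lies in $M_{i_0}$, since such a map is automatically a Borel reduction of $R$ to $\cong_T$. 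Here I invoke Facts \ref{previous_FHK} and \ref{previous_strictstb_FHK}, whose hypotheses hold in $L$: there is a regular $\mu<\kappa$ with $=^2_\mu\ \reduc\ \cong_{T_{i_0}}$ by a reduction with range inside $M_{i_0}$, namely $\mu=\lambda$ when $T_{i_0}$ is unstable, superstable with the OTOP, or superstable with the DOP, and $\mu=\omega$ when $T_{i_0}$ is stable unsuperstable. So the whole argument reduces to the assertion that $=^2_\mu$ is itself $\Sigma^1_1(\kappa)$-complete in $L$.

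\medskip

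That last assertion is where I expect the real work to lie, and it is the only place where $V=L$ is essential. Since $\mu<\kappa=\lambda^+$ is regular, $S^\kappa_\mu$ is stationary and co-stationary, so $\diamondsuit_{S^\kappa_\mu}$ holds in $L$. Given a $\Sigma^1_1(\kappa)$ equivalence relation $R$, present it so that $(\eta,\xi)\in R$ iff a tree $t_{\eta,\xi}$ on $\kappa$, read off continuously from $(\eta,\xi)$, has a cofinal branch. Using the $\diamondsuit_{S^\kappa_\mu}$-sequence to anticipate, at each $\alpha\in S^\kappa_\mu$, a candidate initial segment of a branch of $t_{\eta\restriction\alpha,\,\xi\restriction\alpha}$, one constructs $f$ so that $f(\eta)$ and $f(\xi)$ are forced to disagree cofinally below exactly those $\alpha\in S^\kappa_\mu$ at which the anticipated segment does not extend to a genuine branch: when $(\eta,\xi)\in R$ this set of ``bad'' $\alpha$ is non-stationary, so $f(\eta)=^2_\mu f(\xi)$, and when $(\eta,\xi)\notin R$ it is stationary in $S^\kappa_\mu$, so $f(\eta)$ is not $=^2_\mu$-related to $f(\xi)$; the technical core is verifying that $f$ is well defined and continuous (indeed it can be arranged to be Lipschitz). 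This is the generalized-Baire-space version of the classical $\diamondsuit$-coding, and I would either cite the corresponding $\Sigma^1_1(\kappa)$-completeness statement for $=^2_\mu$ from \cite{FHK13} or reproduce this construction. Putting the three cases together yields the dichotomy, and the two alternatives are genuinely exclusive because $\Sigma^1_1(\kappa)\ne\Pi^1_1(\kappa)$.
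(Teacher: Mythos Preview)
The paper states this as a Fact cited from \cite{HKM18} and does not reprove it; the closest in-paper proof is that of Theorem~\ref{main_gap-dich}, whose architecture your proposal matches: split into completions, handle the ``all classifiable'' case by closure properties of $\Delta^1_1(\kappa)$, and in the ``some non-classifiable'' case reduce $\cong_{T_{i_0}}$ to $\cong_T$ trivially and then invoke $\Sigma^1_1(\kappa)$-completeness coming through $=^2_\mu$. One cosmetic difference: the paper uses the identity $\cong_T=\bigcap_{\alpha<2^\omega}\cong_{T_\alpha}$ (which follows straight from Definition~\ref{def_iso_rel}), making the $\Delta^1_1$ verification a one-liner; your union decomposition is correct but forces you to also argue about the complement.

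The substantive gap is in your last step. The $\Sigma^1_1(\kappa)$-completeness of $=^2_\mu$ in $L$ is \emph{not} in \cite{FHK13}; it is precisely the technical core of \cite{HKM18}, later abstracted as the principle $\dl^*_S(\Pi^1_2)$ in \cite{FMR} (see Fact~\ref{comp-rels} here). Your $\diamondsuit$-based sketch (``anticipate a candidate branch of $t_{\eta\restriction\alpha,\xi\restriction\alpha}$ and record disagreement at bad $\alpha$'') is in the right spirit but understates what is required: a bare $\diamondsuit_{S^\kappa_\mu}$-sequence guesses subsets, whereas the reduction needs to reflect the $\Pi^1_2$ statement ``there is no branch'' down to club-many levels. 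In $L$ this uses condensation, and the correct formalization is exactly what $\dl^*_{S^\kappa_\mu}(\Pi^1_2)$ packages. So the step you describe as ``the only place where $V=L$ is essential'' is also the only place where your argument is not yet a proof; you should either cite \cite{HKM18} directly for the completeness of $=^2_\mu$, or unpack the $\dl^*$ argument (which is considerably more than a diamond coding).
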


From their work (\cite{FMR} and \cite{FMR20}), Fernandes, Moreno, and Rinot show that if $\kappa=\lambda^+=2^\lambda$, $\lambda^{<\lambda}=\lambda$, and $\lambda\ge 2^\omega$, then there is a $<\kappa$-closed $\kappa^+$-cc forcing extension in which the previous dichotomy holds. Even thought we now know that the dichotomy can be forced, we are still restricted to the cardinality assumptions, in particular ``$\kappa$ is the successor of $\lambda$ and $\lambda^{<\lambda}=\lambda$". We improve the dichotomy in the direction of these two assumptions.

\subsection{Outline}

In Section \ref{seccion1} We introduce the $F^\varphi_\omega$-isolation, the idea behind the construction of the $\kappa$-colorable linear orders. We generalize the notion of $(\kappa, bs,bs)$-nice, to construct $\kappa$-colorable $\varepsilon$-dense linear orders. We study the theory of $\kappa$-colorable linear orders from an abstract point of view and obtain an abstract generalization of the construction introduced in \cite{Mor21}.

In Section \ref{Sectio_trees} we present a detail study of the coloured ordered trees of any height. We use $\kappa$-colorable linear orders to construct coloured ordered trees. Coloured ordered trees will be used to construct skeletons of Ehrenfeucht-Mostowski models. 

In Section \ref{Section_modelsf} we use coloured ordered trees to construct Ehrenfeucht-Mostowski models of different non-classifiable theories. Each non-classifiable theory demands different assumptions on $\kappa$ to construct the models. Nevertheless, we show the isomorphism theorem, which is satisfied by all non-classifiable theories.

In Section \ref{Section_main_gap} we use the models from Section \ref{Section_modelsf} to  prove Theorem A. We study how big is the Borel reducibility gap and prove Theorem B, we use the idea of $\kappa$-Borel$^*$-sets to construct a game between models that captures the isomorphism. We introduce the $S$-recursive functions, most of the known reductions are $S$-recursive reductions. We introduce the counting $\alpha$-classes equivalent relation and give a detail picture of some gaps. We finish by studying Morley's conjecture from a GDST point of view and prove Theorem C.

\section{Linear Orders}\label{seccion1}
The notion of $\kappa$-colorable linear order was introduced in \cite{Mor21} to construct models of unsuperstable theories and prove Fact \ref{unsup_old}. $\kappa$-colorable linear order is a saturation notion that allows us to order colorable trees while preserving the isomorphism of trees, i.e. it allows us  to merge Shelah's ordered trees from \cite{Sh} and Hyttinen-Kulikov's coloured trees from \cite{HK}. 

In \cite{Mor21} a $\kappa$-colorable linear order was constructed in an inductive way. Due to the nature of the construction, the $\kappa$-colorable linear order presented in \cite{Mor21} has fundamental limitations, e.g. density arguments are not compatible with $\kappa$-colorable $(\kappa, bs,bs)$-nice linear order. To overcome these limitations we will make an abstract generalization of the inductive construction presented in \cite{Mor21}.

The goal of the construction of \cite{Mor21} was to obtain a linear order with the following properties (definitions below):
\begin{itemize}
    \item $(<\kappa, bs)$-stable
    \item $(\kappa, bs,bs)$-nice
    \item $\kappa$-colorable.
\end{itemize}
At first sight, these three notions are not compatible, a stability notion, a density notion, and a saturation notion. Not only these notions are compatible, as it was showed in \cite{Mor21}, there is a dense linear order with these properties. To extend the results of \cite{Mor21} to other non-classifiable theories, we will need an $\varepsilon$-dense linear order. To construct such a linear order, we have to generalize the notion of $(\kappa, bs,bs)$-nice to $(\kappa, bs,bs,\varepsilon)$-nice.

Through this section, we will show the existence of such an order (under certain cardinality assumptions).

\textit{If there is a $\varepsilon$-dense model of DLO with size smaller than $\kappa$, then there is a model of DLO of size $\kappa$ such that it is
\begin{itemize}
    \item $(<\kappa, bs)$-stable
    \item $(\kappa, bs,bs,\varepsilon)$-nice
    \item $\kappa$-colorable
    \item $\varepsilon$-dense.
\end{itemize}
   }

\subsection{The $F^{\varphi}_\omega$-saturation}

To generalize the construction made in \cite{Mor21}, we need to study the inductive construction. Let us recall the definitions and results that play a role in the construction of \cite{Mor21}. Let us start by $(<\kappa, bs)$-stable and $(\kappa, bs,bs)$-nice, which were introduced by Shelah in \cite{Sh}.

\begin{defn}[$\kappa$-representation]
Let $A$ be an arbitrary set of size at most $\kappa$. The sequence $\mathbb{A}=\langle A_\alpha\mid \alpha<\kappa  \rangle$ is a \textit{$\kappa$-representation} of $A$, if $\langle A_\alpha\mid \alpha<\kappa  \rangle$ is an increasing continuous sequence of subsets of $A$, for all $\alpha<\kappa$, $|A_\alpha|<\kappa$, and $\bigcup_{\alpha<\kappa}A_\alpha=A$.
\end{defn}

Notice that for any two representations $\mathbb A$ and $\mathbb A'$ of $A$, there is a club $C$ such that for all $\alpha\in C$, $A_\alpha=A'_\alpha$.

For any $\mathcal{L}$-structure $\mathcal{A}$ we denote by \textit{at} the set of atomic formulas of $\mathcal{L}$ and by \textit{bs} the set of basic formulas of $\mathcal{L}$ (atomic formulas and negation of atomic formulas). For all $\mathcal{L}$-structure $\mathcal{A}$, $a\in \mathcal{A}$, and $B\subseteq \mathcal{A}$ we define $$tp_\Delta(a,B,\mathcal{A})=\{\varphi(x,b)\mid \mathcal{A}\models \varphi(a,b),\varphi\in \Delta,b\in B\}$$
for $\Delta$ a set of formulas of $\mathcal{L}$. Example of theses types are $tp_{at}(a,B,\mathcal{A})$ and $tp_{bs}(a,B,\mathcal{A})$.

\begin{defn}
Let $\mathcal{A}$ be a model, $a\in \mathcal{A}$, $B,D\subseteq \mathcal{A}$.
We say that $tp_{bs}(a,B,\mathcal{A})$ (\textit{bs,bs})\textit{-splits} over $D\subseteq \mathcal{A}$ if there are $b_1,b_2\in B$ such that $tp_{bs}(b_1,D,\mathcal{A})=tp_{bs}(b_2,D,\mathcal{A})$ but $tp_{bs}(a^\frown b_1,D,\mathcal{A})\neq tp_{bs}(a^\frown b_2,D,\mathcal{A})$.
\end{defn}

\begin{defn}
Let $|A|\leq \kappa$, for a $\kappa$-representation $\mathbb{A}$ of $A$. Define $Sp_{bs}(\mathbb{A})$ as $$Sp_{bs}(\mathbb{A})=\{\delta<\kappa\mid \delta\text{ a limit ordinal, }\exists a\in A\ [\forall\beta<\delta \ (tp_{bs}(a,A_\delta,A)\text{ (bs,bs)-splits over }A_\beta)]\}.$$
\end{defn}

\begin{defn}
\begin{itemize}
\item Let $\mathcal{A}$ be a model of size at most $\kappa$. We say that $A$ is \textit{$(\kappa, bs,bs)$-nice} if $Sp_{bs}(\mathbb{A})\ =^2_{CUB}\ \emptyset$.
\item A linear order $A$ is \textit{$(<\kappa, bs)$-stable} if for every $B\subseteq A$ of size smaller than $\kappa$, $$\kappa>|\{tp_{bs}(a,B,A)\mid a\in A\}|.$$
\end{itemize}
\end{defn}

As it was mentioned before, $(<\kappa, bs)$-stable and $(\kappa, bs,bs)$-nice were used in \cite{Mor21} to construct continuous reductions from classifiable theories to unsuperstable theories. Unfortunately, this is not enough when we deal with superstable non-classifiable theories. We will need to generalize $(\kappa, bs,bs)$-nice to construct models of superstable non-classifiable theories. 

\begin{fact}[Hyttinen-Tuuri, \cite{HT} Lemma 8.12]\label{basic_nice}
Let $A$ be a linear order of size $\kappa$ and $\langle A_\alpha\mid\alpha<\kappa\rangle$ a $\kappa$-representation. Then the following are equivalent:
\begin{enumerate}
\item $A$ is $(\kappa, bs,bs)$-nice.
\item There is a club $C\subseteq \kappa$, such that for all limit $\delta\in C$, for all $x\in A$ there is $\beta<\delta$ such that one of the following holds:
\begin{itemize}
\item $\forall\sigma\in A_\delta [\sigma\ge x \Rightarrow \exists\sigma'\in A_\beta\ (\sigma\ge \sigma'\ge x)]$
\item $\forall\sigma\in A_\delta [\sigma\leq x \Rightarrow \exists\sigma'\in A_\beta\ (\sigma\leq \sigma'\leq x)]$
\end{itemize}
\end{enumerate}
\end{fact}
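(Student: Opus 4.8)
The plan is to reduce the model-theoretic notion of $(bs,bs)$-splitting to a purely order-theoretic condition, convert that condition pointwise (in $x$ and in the limit $\delta$) into the two alternatives listed in item~(2), and then patch the pointwise statements together with a routine club argument. Throughout, $[b_1,b_2]$ denotes the closed order-interval $\{y\in A\mid b_1\le y\le b_2\}$.

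\emph{Step 1 (splitting in a linear order).} In the language of a linear order the atomic formulas are only $v_1<v_2$ and $v_1=v_2$, so for $B,D\subseteq A$ and $x\in A$ the basic type $tp_{bs}(x,D,A)$ records exactly whether $x$ lies below, equal to, or above each element of $D$, and the two-variable type $tp_{bs}(x^\frown b,D,A)$ decomposes as $tp_{bs}(x,D,A)$ together with $tp_{bs}(b,D,A)$ together with the single comparison of $x$ and $b$: no basic formula mentions $x$, $b$, and a parameter simultaneously. Hence two elements $b_1<b_2$ of $B$ have the same basic type over $D$ exactly when $[b_1,b_2]\cap D=\emptyset$, and when they do, $tp_{bs}(x^\frown b_1,D,A)\neq tp_{bs}(x^\frown b_2,D,A)$ holds exactly when the comparison of $x$ with $b_1$ differs from that with $b_2$, i.e. exactly when $b_1\le x\le b_2$. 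Putting these together: $tp_{bs}(x,B,A)$ $(bs,bs)$-splits over $D$ if and only if there are $b_1<b_2$ in $B$ with $[b_1,b_2]\cap D=\emptyset$ and $b_1\le x\le b_2$. I would isolate this as a preliminary lemma, checking that the degenerate configurations ($b_1=b_2$, $x$ equal to some $b_i$, $D$ empty or a singleton) contribute nothing.

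\emph{Step 2 (pointwise translation).} Fix a limit $\delta<\kappa$; by continuity of the representation $A_\delta=\bigcup_{\gamma<\delta}A_\gamma$. For $\beta<\delta$ write $(\mathrm i)_\beta$ and $(\mathrm{ii})_\beta$ for the two bullets of item~(2) with that value of $\beta$. First, if $(\mathrm i)_\beta$ holds then $tp_{bs}(x,A_\delta,A)$ does not split over $A_\beta$: any candidate pair $b_1<b_2$ in $A_\delta$ with $b_1\le x\le b_2$ is met inside $[b_1,b_2]$ by the element of $A_\beta$ that $(\mathrm i)_\beta$ provides for $\sigma=b_2$, and symmetrically for $(\mathrm{ii})_\beta$ and $\sigma=b_1$. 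For the converse I split on whether $x\in A_\delta$. If $x\in A_\delta$, pick $\gamma<\delta$ with $x\in A_\gamma$; then $(\mathrm i)_\gamma$ holds (take the witness to be $x$ itself) and also $tp_{bs}(x,A_\delta,A)$ does not split over $A_\gamma$ (every interval $[b_1,b_2]$ with $b_1\le x\le b_2$ already contains $x\in A_\gamma$). If $x\notin A_\delta$, then $x\notin A_\beta$ for all $\beta<\delta$, and I claim that for each such $\beta$, non-splitting over $A_\beta$ forces $(\mathrm i)_\beta$ or $(\mathrm{ii})_\beta$: if both failed, the failures would be witnessed by $\sigma_1\in A_\delta$ with $\sigma_1\ge x$ and $[x,\sigma_1]\cap A_\beta=\emptyset$ and by $\sigma_2\in A_\delta$ with $\sigma_2\le x$ and $[\sigma_2,x]\cap A_\beta=\emptyset$, and since $x\notin A_\delta$ both inequalities are strict, so $\sigma_2<\sigma_1$, $\sigma_2\le x\le\sigma_1$, and $[\sigma_2,\sigma_1]\cap A_\beta=\emptyset$, which by Step~1 is a splitting. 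Combining the cases: for every $x\in A$ there is $\beta<\delta$ over which $tp_{bs}(x,A_\delta,A)$ does not split if and only if there is $\beta<\delta$ satisfying $(\mathrm i)_\beta$ or $(\mathrm{ii})_\beta$.

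\emph{Step 3 (assembling; and the main obstacle).} By Step~2 and the definition of $Sp_{bs}(\mathbb A)$, a limit ordinal $\delta$ lies outside $Sp_{bs}(\mathbb A)$ if and only if for every $x\in A$ there is $\beta<\delta$ satisfying $(\mathrm i)_\beta$ or $(\mathrm{ii})_\beta$. If $A$ is $(\kappa,bs,bs)$-nice, then $Sp_{bs}(\mathbb A)$ is non-stationary, so a club $C$ is disjoint from it, and every limit $\delta\in C$ lies outside $Sp_{bs}(\mathbb A)$, so $C$ witnesses item~(2). Conversely, given a club $C$ as in item~(2), every limit $\delta\in C$ lies outside $Sp_{bs}(\mathbb A)$ by Step~2; since every element of $Sp_{bs}(\mathbb A)$ is by definition a limit ordinal, $Sp_{bs}(\mathbb A)\cap C=\emptyset$, so $Sp_{bs}(\mathbb A)$ is non-stationary and $A$ is $(\kappa,bs,bs)$-nice. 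The only genuinely delicate part is Step~1 — pinning down precisely when a two-variable basic type over a subset of a linear order changes, in particular the decomposition of such types and the bookkeeping around equalities; once it is in hand, Steps~2 and~3 are pure bookkeeping with the $\kappa$-representation and elementary manipulations of clubs.
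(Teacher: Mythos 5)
Your proof is correct. The paper does not actually prove this statement — it is quoted as Fact~\ref{basic_nice} directly from Hyttinen–Tuuri \cite{HT} — so there is no in-paper argument to compare against; your three steps (the interval characterization of $(bs,bs)$-splitting in a pure linear order, the pointwise translation at each limit $\delta$ with the case split on whether $x\in A_\delta$, and the club bookkeeping against the definition of $Sp_{bs}(\mathbb A)$) constitute the standard self-contained verification, and each step checks out, including the degenerate configurations you flag in Step~1 (in particular, $b_1\le x\le b_2$ correctly captures the boundary cases $x=b_1$ and $x=b_2$, and the strictness of $\sigma_2<x<\sigma_1$ when $x\notin A_\delta$ is exactly what makes the two failed bullets combine into a splitting pair).
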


This characterization shows us that $(\kappa, bs,bs)$-nice is a notion of density.
This motivates the definition of $(\kappa, bs,bs,\varepsilon)$-nice, which generalizes $(\kappa, bs,bs)$-nice in the direction of density. This notion will allow us to abstract the construction from \cite{Mor21}.
\begin{defn}
Let $\varepsilon<\kappa$ be a regular cardinal, $A$ be a linear order of size $\kappa$ and $\langle A_\alpha\mid\alpha<\kappa\rangle$ a $\kappa$-representation. Then $A$ is \textit{$(\kappa, bs,bs,\varepsilon)$-nice} if
there is a club $C\subseteq \kappa$, such that for all limit $\delta\in C$ with $cf(\delta)\ge\varepsilon$, for all $x\in A$ there is $\beta<\delta$ such that one of the following holds:
\begin{itemize}
\item $\forall\sigma\in A_\delta [\sigma\ge x \Rightarrow \exists\sigma'\in A_\beta\ (\sigma\ge \sigma'\ge x)]$
\item $\forall\sigma\in A_\delta [\sigma\leq x \Rightarrow \exists\sigma'\in A_\beta\ (\sigma\leq \sigma'\leq x)]$
\end{itemize}

\end{defn}

By Fact \ref{basic_nice}, $A$ is $(\kappa, bs,bs,\omega)$-nice if and only if it is $(\kappa, bs,bs)$-nice. 
Finally, we provide the definition of $\kappa$-colorable linear order, introduced in \cite{Mor21} to construct order coloured trees.

\begin{defn}\label{defcolorark}
Let $I$ be a linear order of size $\kappa$. We say that $I$ is \textit{$\kappa$-colorable} if there is a function $F: I\rightarrow\kappa$ such that for all $B\subseteq I$, $|B|<\kappa$, $b\in I\backslash B$, and $p=tp_{bs}(b,B,I)$ such that the following hold:
For all $\alpha\in \kappa$, $$|\{a\in I\mid a\models p\ \&\  F(a)=\alpha\}|=\kappa.$$
\end{defn}

The idea behind $\kappa$-colorable is that any realizable type over a small set, is realized by $\kappa$ many elements. 

\begin{fact}[Moreno, \cite{Mor21} Theorem 2.25]\label{mor21_order}
    There is a $(<\kappa, bs)$-stable $(\kappa, bs,bs,\omega)$-nice $\kappa$-colorable linear order of size $\kappa$.
\end{fact}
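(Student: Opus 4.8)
The plan is to construct $I$ together with a colouring $F\colon I\to\kappa$ by a transfinite recursion of length $\kappa$, building a continuous increasing chain $\langle I_\alpha\mid\alpha<\kappa\rangle$ of linear orders of size $<\kappa$ with $I=\bigcup_{\alpha<\kappa}I_\alpha$. Start with $I_0=\mathbb Q$ and $F_0$ arbitrary. Using $\kappa^{<\kappa}=\kappa$ (which also gives $2^{<\kappa}=\kappa$, so that the set of potential parameters has size $\le\kappa$), fix a bookkeeping enumeration that at each stage $\alpha<\kappa$ presents a triple $(B_\alpha,p_\alpha,\gamma_\alpha)$ and has the property that every triple $(B,p,\gamma)$ — $B$ a set of size $<\kappa$, $p$ a non-trivial $bs$-type (a strict cut) over $B$, $\gamma<\kappa$ — that is \emph{legal at stage $\alpha_0$}, i.e.\ $B\subseteq I_{\alpha_0}$ and $p$ is realized in $I_{\alpha_0}$, is presented at cofinally many stages $\alpha>\alpha_0$. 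At a successor stage $\alpha+1$: if $(B_\alpha,p_\alpha,\gamma_\alpha)$ is legal at stage $\alpha$, choose a realization $b'\in I_\alpha\setminus B_\alpha$ of $p_\alpha$, adjoin one new point $a$ placed \emph{immediately above $b'$} (so $b'<a<\sigma$ for all $\sigma\in I_\alpha$ with $\sigma>b'$), and set $F_{\alpha+1}(a)=\gamma_\alpha$; otherwise do nothing. Take unions at limit stages. I will call such an $a$ a \emph{clone} of $b'$; note that $|I_\alpha|<\kappa$ throughout, $|I|=\kappa$, and every point of $I\setminus I_0$ has a finite chain of ``ancestors'' (clone of a clone of $\dots$) terminating in $I_0$, since each clone is adjoined strictly later than its parent.

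Colourability and stability will be the routine parts. A clone $a$ of $b'$ has, over any set $B$ with $b'\notin B$ that is present before $a$, the same $bs$-type over $B$ as $b'$, because no point present before $a$ lies strictly between $b'$ and $a$. Hence if $B\subseteq I$, $|B|<\kappa$, $b\in I\setminus B$ and $p=tp_{bs}(b,B,I)$, then $(B,p,\gamma)$ is legal from some stage on, so it is processed at $\kappa$ many stages, each producing a fresh realization of $p$ of colour $\gamma$ (and $bs$-types over a fixed set do not change once both parameters are present); so $I$ is $\kappa$-colourable via $F$. For $(<\kappa,bs)$-stability, fix $B\subseteq I_{\alpha_0}$ with $|B|<\kappa$: chasing the ancestor chain of an arbitrary $a\in I$ down to its first ancestor lying in $I_{\alpha_0}$ shows that $tp_{bs}(a,B,I)$ is either $tp_{bs}(a',B,I)$ for some $a'\in I_{\alpha_0}$, or the ``immediate-successor cut'' in $B$ of some element of $B$; hence at most $|I_{\alpha_0}|+|B|<\kappa$ types over $B$ are realized in $I$.

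The heart of the argument is $(\kappa,bs,bs,\omega)$-niceness — equivalently, by Fact~\ref{basic_nice}, $(\kappa,bs,bs)$-niceness — and for this I will verify clause (2) of Fact~\ref{basic_nice} for the representation $\langle I_\alpha\mid\alpha<\kappa\rangle$ with club $C=\kappa$: for every limit $\delta$ and every $x\in I$ there must be $\beta<\delta$ satisfying one of the two one-sided approximation clauses. If $x\in I_\delta$ this is trivial (take $\beta<\delta$ with $x\in I_\beta$ and $\sigma'=x$). If $x\notin I_\delta$, let $\xi$ be the stage at which $x$ was adjoined, so $\delta\le\xi$; since $x$ was placed immediately above its parent $b'$ and everything later inserted between $b'$ and $x$ is inserted at a stage $>\xi\ge\delta$, we get $(b',x)\cap I_\delta=\emptyset$, whence $\{\sigma\in I_\delta\mid\sigma<x\}=\{\sigma\in I_\delta\mid\sigma\le b'\}$. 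Iterating this down the finite ancestor chain of $x$ until we reach an ancestor $m$ lying in $I_\delta$ — the $I_0$-ancestor always does, as $I_0\subseteq I_\delta$ — we find that $\{\sigma\in I_\delta\mid\sigma<x\}$ has a maximum $m$, and $m\in I_\beta$ for some $\beta<\delta$; the ``cofinal from below'' clause of Fact~\ref{basic_nice}(2) then holds with this $\beta$ and $\sigma'=m$. Hence $Sp_{bs}(\langle I_\alpha\rangle)=\emptyset$, so $I$ is $(\kappa,bs,bs)$-nice.

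The subtle point, which the construction is arranged to finesse, is the apparent clash between colourability and niceness: realizing each cut $\kappa$ times forces arbitrarily long pile-ups of clones near every realized position, and a priori such a pile-up could make some point the supremum from one side of a strictly increasing $\delta$-sequence of clones whose adjunction stages are cofinal in $\delta$, which would place $\delta$ in $Sp_{bs}$. The observation that resolves this — and which is exactly what is used above — is that any clone climbing toward a point $x$ is adjoined \emph{after} $x$, so it never belongs to any $I_\delta$ with $x\notin I_\delta$, while for $I_\delta$ containing $x$ niceness at $x$ is trivial. The remaining chores are the existence of a bookkeeping with the stated density property (again using $\kappa^{<\kappa}=\kappa$), verifying $|I_\alpha|<\kappa$ and $|I|=\kappa$, and checking the displayed fact that a clone of $b'$ realizes over any small $B$ with $b'\notin B$ the same $bs$-type as $b'$.
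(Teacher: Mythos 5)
Your proposal is correct, and it reaches the statement by a genuinely different route than the paper's. The paper (quoting the construction of \cite{Mor21}, and generalizing it in Section~\ref{seccion1}) builds the order \emph{globally}: it starts from a large structured base order $J^0$ (finite-support functions $\omega\to\kappa\times\mathbb Q$), and at every one of $\kappa$ successor stages simultaneously adjoins to \emph{every} point $\nu$ a clone realizing $tp_{bs}(\nu,J^i\setminus\{\nu\})\cup\{\nu>x\}$, i.e.\ a clone immediately \emph{below} $\nu$; the result is isomorphic to $J^0\times Gen$ with the lexicographic order, and the $\kappa$-colouring is defined afterwards from the ``complexity'' of a point along its pre-road. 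You instead run a one-clone-per-stage bookkeeping recursion starting from $I_0=\mathbb Q$, clone \emph{above} the chosen realization, and bake the colour into the scheduled triple. The two arguments share exactly the same engine — adjacent cloning (the $F^{\varphi}_\omega$-isolation idea the paper itself isolates), so that a new realization of a cut creates no new cut over any earlier stage; niceness then follows from the finiteness of ancestor chains, and stability from the fact that the type of a point over a small $B\subseteq I_{\alpha_0}$ is controlled by its first ancestor in $I_{\alpha_0}$. Your verification of Fact~\ref{basic_nice}(2) (the maximum of $\{\sigma\in I_\delta\mid\sigma<x\}$ obtained by descending the ancestor chain into $I_\delta$) is sound, and the direction of cloning is immaterial since only one of the two one-sided clauses is needed. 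What the two approaches buy: yours is leaner — it uses only $\kappa^{<\kappa}=\kappa$, avoids counting types over the large base order (so no hypotheses of the form $2^{\theta}\le\lambda=\lambda^{<\varepsilon}$ ever surface), and produces a shorter verification; the paper's explicit product description, by contrast, is precisely what supports the subsequent generalization to $\varepsilon$-dense, $(\kappa,\varepsilon)$-nice orders and the definition of the colour function via pre-roads, and it yields a canonical object rather than one depending on bookkeeping choices. The only chores you have deferred — the existence of the bookkeeping and the invariance of $bs$-types once both parameters are present — are standard and unproblematic.
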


This linear order allows us to merge Shelah's ordered trees from \cite{Sh} and Hyttinen-Kulikov's coloured trees from \cite{HK} (i.e. construct ordered coloured trees). This was the main difficulty in the proof of Fact \ref{unsup_old}. This could be easily done by using a saturated dense linear order without end-point (when a saturated model exists). Unfortunately DLO (the theory of dense linear orderings without end-point) is an unstable theory, thus a saturated model of DLO is not $(<\kappa)$-stable. A saturated model of DLO is not useful for our purpose since $(<\kappa)$-stable plays an important role in the proof of the isomorphism theorem of Ehrenfeucht-Mostowski models (Theorem \ref{models_iso}). The notion of $\kappa$-colorable linear order was introduced to overcome this difficulty. $\kappa$-colorable give us enough saturation to merge Shelah's trees and Hyttinen-Kulikov's trees, and at the same time it behaves nice with $(<\kappa)$-stability.

The $\kappa$-colorable notion resembles the notion of $\bf F$-saturation from \cite{Sh90} Chapter 4. We may consider $\kappa$-colorable a saturation notion. Let us study the relation between $\kappa$-colorable and saturation.
We will use the axiomatic approach of $\bf F$-saturation from \cite{Sh90} Chapter 4.

Let $\Delta$ be a set of formulas of $\mathcal{L}$, we denote $S^m_{\Delta}(A)$ the set of all consistent types of $\Delta$-formulas over $A$ in $m$ variables and $S_{\Delta}(A)=\cup_{m<\omega}S^m_{\Delta}(A)$. We omit $\Delta$ when we refer to all the formulas of $\mathcal{L}$. In \cite{Sh90} Chapter 4 Shelah defined an isolation notion, an isolation notion provides us with a notion of saturation and construction. An $\bf F$-primary model has these two properties (saturation and construction). We are interested on these models, we have to chose the right saturation notion so we avoid the saturated model of DLO (in case it exists). We know from \cite{Mor21} that there are at least two non-isomorphic models of DLO that are $\kappa$-colorable. Thus $\kappa$-colorable is not an $\bf F$-saturation notion. Nevertheless, it comes from a weak version of isolation.

\begin{defn}
    Let $\varphi(x,y)\in bs$, we define $F^{\varphi}_\omega$ in the following way. Let $|B|<\kappa$ and $p\in S_{bs}(B)$, $(p,A)\in F^{\varphi}_\omega$ if and only if $A\subseteq B$, $A$ is finite, and there is $a\in A$ such that $$\{\varphi(x,a),x=a\}\cap p\neq\emptyset\ \&\ a\models p\restriction B\backslash\{a\}.$$ 
\end{defn}

It is easy to see that $F^{\varphi}_\omega$ satisfies the first axioms of $\bf F$-isolation (see \cite{Sh90} Chapter 4). But when $\varphi(x,y)$ is $x>y$, $F^{\varphi}_\omega$ does not satisfies Axiom V. 
It is clear that if $\varphi(x,a)\cup tp_{bs}(a,B,\mathcal{M})$ is consistent, then $(\varphi(x,a)\cup tp_{bs}(a,B,\mathcal{M}),\{a\})\in F^{\varphi}_\omega$. We define $F^{\varphi}_\omega$-constructible, $F^{\varphi}_\omega$-saturated, and $F^{\varphi}_\omega$-primary following the ideas of $\bf F$-isolation.

\begin{defn}
A sequence $(A,(a_i,B_i)_{i<\alpha})$ is an $F^{\varphi}_\omega$-construction over $A$ if for all $i<\alpha$, $(tp_{bs}(a_i,A_i),B_i)\in F^{\varphi}_\omega$ where $A_i=A\cup\bigcup_{j<i}a_j$.

$C$ is $F^{\varphi}_\omega$-constructible over $A$ if there is an $F^{\varphi}_\omega$-construction over $A$ such that $C=A\cup\bigcup_{j<\alpha}a_j$.
\end{defn}

\begin{defn}
    $C$ is $(F^{\varphi}_\omega, \varsigma)$-saturated if for all $B\subseteq C$ of size smaller than $\varsigma$, and $p\in S_{bs}(B)$, $(p,A)\in F^{\varphi}_\omega$ implies that $p$ is realized in $C$.
\end{defn}

\begin{defn}
    $C$ is $(F^{\varphi}_\omega,\varsigma)$-primary over $A$ if it is $F^{\varphi}_\omega$-constructible over $A$ and $(F^{\varphi}_\omega, \varsigma)$-saturated.
\end{defn}

Let us recall the order constructed in \cite{Mor21}.

\begin{defn}
Let $\mathbb{Q}$ be the linear order of the rational numbers. 
Let $\kappa\times\mathbb{Q}$ be ordered by the lexicographic order, $J^0$ be the set of functions $f:\omega\rightarrow\kappa\times\mathbb{Q}$ such that $f(\alpha)=(f_1(\alpha),f_2(\alpha))$, for which $|\{\alpha\in\omega\mid f_1(\alpha)\neq 0\}|$ is smaller than $\omega$. If $f,g\in J^0$, then $f<g$ if and only if $f(\alpha)<g(\alpha)$, where $\alpha$ is the least number such that $f(\alpha)\neq g(\alpha)$.
\end{defn}

Suppose $i<\kappa$ is such that $J^i$ has been defined. For all $\nu\in J^i$ let $\nu^{i+1}$ be such that $$\nu^{i+1}\models tp_{bs}(\nu, J^i\backslash \{\nu\}, J^i)\cup \{\nu>x\}.$$ Let $J^{i+1}=J^i\cup \{\nu^{i+1}\mid \nu\in J^i\}$. If $i\leq\kappa$ is a limit ordinal such that for all $j<i$, $J^j$ has been defined, then $J^{i}=\bigcup_{j<i}J^j$. Let $J=J^\kappa$. 

\begin{lemma}
     $J$ is $(F^{\varphi}_\omega,\kappa)$-primary over $J^0$, for $\varphi(x,y):= ``y>x"$. 
\end{lemma}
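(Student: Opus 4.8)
The plan is to verify the two defining properties of an $(F^{\varphi}_\omega,\kappa)$-primary model over $J^0$ directly from the construction of $J=J^\kappa$: that $J$ is $F^{\varphi}_\omega$-constructible over $J^0$, and that $J$ is $(F^{\varphi}_\omega,\kappa)$-saturated, where $\varphi(x,y)$ is the formula ``$y>x$''. For constructibility, observe that the sequence $\langle J^i\mid i\le\kappa\rangle$ is increasing and continuous, and at successor stages $J^{i+1}\setminus J^i=\{\nu^{i+1}\mid \nu\in J^i\}$ where each new point $\nu^{i+1}$ realizes $tp_{bs}(\nu,J^i\setminus\{\nu\},J^i)\cup\{\nu>x\}$. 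First I would enumerate all the added points $\{a_j\mid j<\alpha\}$ (with $\alpha=\kappa$, since $|J|=\kappa$) in a way that respects the stratification by stages, i.e.\ so that if $a_j$ is added at stage $i+1$ then every point of $J^i\setminus J^0$ appears earlier in the enumeration. Then for each $j$, writing $J_j=J^0\cup\bigcup_{k<j}a_k$, the point $a_j=\nu^{i+1}$ satisfies $\varphi(a_j,\nu)$ (namely $\nu>a_j$) and $a_j\models tp_{bs}(\nu,J_j\setminus\{\nu\},J_j)$; in particular $\nu\in J_j$, the set $\{\nu\}$ is finite, and $a_j\models tp_{bs}(a_j,J_j\setminus\{\nu\},J_j)$ trivially, so by the observation immediately preceding the lemma, $(tp_{bs}(a_j,J_j),\{\nu\})\in F^{\varphi}_\omega$. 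This exhibits $(J^0,(a_j,\{\nu_j\})_{j<\kappa})$ as an $F^{\varphi}_\omega$-construction over $J^0$ with $J=J^0\cup\bigcup_{j<\kappa}a_j$.

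For $(F^{\varphi}_\omega,\kappa)$-saturation, let $B\subseteq J$ with $|B|<\kappa$ and let $p\in S_{bs}(B)$ with $(p,A)\in F^{\varphi}_\omega$. By definition of $F^{\varphi}_\omega$ there is $a\in A\subseteq B$ with $\{\varphi(x,a),x=a\}\cap p\neq\emptyset$ and $a\models p\restriction B\setminus\{a\}$. If $x=a\in p$, then $a\in B\subseteq J$ already realizes $p$ and we are done. Otherwise $\varphi(x,a)\in p$, i.e.\ $p$ contains ``$a>x$'', and $p$ is the (consistent) $bs$-type saying $x<a$ together with $tp_{bs}(a,B\setminus\{a\},J)$ restricted appropriately — more precisely $p\supseteq \{a>x\}\cup tp_{bs}(a,B\setminus\{a\},J)$. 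Since $|B|<\kappa=\mathrm{cf}(\kappa)$, there is a stage $i<\kappa$ with $B\subseteq J^i$; then $a\in J^i$, and the point $a^{i+1}\in J^{i+1}\subseteq J$ was chosen to satisfy $tp_{bs}(a,J^i\setminus\{a\},J^i)\cup\{a>x\}$. Because $B\setminus\{a\}\subseteq J^i\setminus\{a\}$, the point $a^{i+1}$ realizes $\{a>x\}\cup tp_{bs}(a,B\setminus\{a\},J)$, and since a $bs$-type over $B$ in the language of linear orders is determined by the cut it fills, $a^{i+1}$ realizes $p$ in $J$. Hence $p$ is realized in $J$, as required.

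The step I expect to require the most care is the saturation argument, specifically the claim that a consistent $bs$-type $p$ with $(p,A)\in F^{\varphi}_\omega$ and $\varphi(x,a)\in p$ is \emph{equivalent} over $B$ to $\{a>x\}\cup tp_{bs}(a,B\setminus\{a\},J)$, so that realizing the latter suffices. This is where one uses that the vocabulary is just $\{<\}$: a basic type over $B$ amounts to specifying, for each $b\in B$, whether $x<b$, $x=b$, or $x>b$, i.e.\ a cut of $B$ (possibly realized at a point of $B$). The condition $a\models p\restriction B\setminus\{a\}$ pins down the relation of $x$ to all $b\neq a$ to agree with that of $a$, and $\varphi(x,a)$ pins down $x<a$; together these determine $p$ completely. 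One must also double-check the consistency caveat: $p$ being a consistent member of $S_{bs}(B)$ guarantees the cut it describes is a genuine cut of $B$, hence $\{a>x\}\cup tp_{bs}(a,B\setminus\{a\},J)$ is consistent and, by the density/colorability properties of $J$ (or simply by the successor-stage construction), is actually filled in $J$ by $a^{i+1}$. The constructibility half is essentially bookkeeping, once one fixes an enumeration compatible with the stage-stratification; the only subtlety there is ensuring $\alpha$ may be taken to be an ordinal (not merely a set) indexing, which is immediate since each $J^{i+1}\setminus J^i$ has size $\le|J^i|\le\kappa$ and there are $\kappa$ stages, giving $|J\setminus J^0|\le\kappa$.
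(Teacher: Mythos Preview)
Your proposal is correct and follows essentially the same approach as the paper: a stage-respecting enumeration for constructibility, and the observation that any $F^\varphi_\omega$-isolated type over a small $B$ is realized by $a^{i+1}$ once $B\subseteq J^i$. In fact your treatment is more detailed than the paper's, which dismisses $(F^\varphi_\omega,\kappa)$-saturation as ``easy to see'' and only sketches the indexing for constructibility; your final paragraph correctly identifies that the key point for saturation is that a basic type in the language $\{<\}$ is determined by the cut it describes, so $a\models p\restriction B\setminus\{a\}$ together with $\varphi(x,a)\in p$ pins down $p$ completely.
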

\begin{proof}
    It is easy to see from the construction that $J$ is $(F^{\varphi}_\omega, \varsigma)$-saturated. To show that it is $F^{\varphi}_\omega$-constructible over $J^0$, we need to find the right indexing for the elements of $J\backslash J^0$. For every $i<\kappa$, let $\{a^{i+1}_j\mid j<\kappa\}$ be an enumeration of $J^{i+1}\backslash J^i$. Let us order the set $\{\alpha<\kappa\mid \exists\beta<\kappa\ (\alpha=\beta+1)\}\times\kappa$ and let $\bar\kappa$ be the ordinal with the order type of $\{\alpha<\kappa\mid \exists\beta<\kappa\ (\alpha=\beta+1)\}\times\kappa$. We can index the elements of $J\backslash J^0$ by $\bar{\kappa}$, using the order type bijection between $\{\alpha<\kappa\mid \exists\beta<\kappa\ (\alpha=\beta+1)\}\times\kappa$ and $\bar\kappa$. The proof follows from the enumeration of the elements of $J^{i+1}\backslash J^i$ and the construction of $J$.
\end{proof}

As we can see, the construction in \cite{Mor21} was a $F^{\varphi}_\omega$-construction. Thus all the notions introduced in \cite{Mor21} (\textit{depth}, \textit{complexity}, \textit{road}, and \textit{generator}, see the following subsection) can be defined using $F^{\varphi}_\omega$-isolation and the right $F^{\varphi}_\omega$-construction. Therefore, these notions can be generalized to other structures (not only linear orders) by a different formula $\varphi$.

We will generalized this construction by changing $\nu^{i+1}$ for $\mathcal{Q}$ (a model of DLO). To describe this generalization in an easy way, we will need a more abstract description of $J$. 

\begin{defn}[Generator Order] Let $Gen$ be the set of functions $f:\omega\rightarrow \kappa$ such that the following holds:
\begin{itemize}
    \item $f(0)=0$.
    \item For all $n<\omega$, $f(n)$ is either $0$ or a successor ordinal.
    \item There is $m<\omega$ such that for all $n>m$, $f(n)=0$.
    \item $f\restriction m+1\backslash\{0\}$ is strictly increasing.
\end{itemize}
    Let $f,g\in Gen$ and $i$ the least number such that $f(i)\neq g(i)$. Let us define $<_{Gen}$ as follows, $g<_{Gen}f$ if and only if one of the following holds:
    \begin{itemize}
        \item $f(i)=0$,
        \item $g(i)<f(i)$.
    \end{itemize}
\end{defn}

It is easy to check that $J$ is isomorpihc to $J^0\times Gen$ with the lexicographic order (see \cite{Mor21} Definition 2.15).

\subsection{Constructing linear orders}

To construct models of non-classifiable superstable theories, we will need dense linear orders. Thus, we need to add a new density property to the linear order of Fact \ref{mor21_order}.

\begin{defn}
Let $I$ be a linear order of size $\kappa$ and $\varepsilon$ a regular cardinal smaller than $\kappa$. We say that $I$ is \textit{$\varepsilon$-dense} if the following holds.

\textit{If $A,B\subseteq I$} are subsets of size less than $\varepsilon$ such that for all $a\in A$ and $b\in B$, $a<b$, then there is $c\in I$, such that for all $a\in A$ and $b\in B$, $a<c<b$.

\end{defn}

We want to construct a $(\kappa, bs,bs,\varepsilon)$-nice $\mu$-dense linear order. It is clear that the difficulty of the construction lies on the fact that both are density notions with different properties. It is clear that if $l$ is a $(\kappa, bs,bs,\varepsilon)$-nice linear order, then for all $\mu\ge \varepsilon$, $l$ is a $(\kappa, bs,bs,\mu)$-nice linear order. On the other hand if $l$ is a $\mu$-dense linear order, then for all $\varepsilon\leq \mu$, $l$ is a $\varepsilon$-dense linear order. Therefore, we will focus our study on the case of $(\kappa, bs,bs,\varepsilon)$-nice $\varepsilon$-dense linear order.

{\bf Notation.} Since we will only use basic formulas when dealing with these notions, we will denote by $(\kappa,\varepsilon)$-nice and $(<\kappa)$-stable the notions $(\kappa, bs,bs,\varepsilon)$-nice and $(<\kappa, bs)$-stable, respectively.

We can now proceed with the construction of the desired linear order.
Let us fix a regular cardinal $\omega\leq\varepsilon<\kappa$.
From now on let $\mathcal{Q}$ be a model of $DLO$ of size $\theta<\kappa$, that is $\varepsilon$-dense. It is clear that $\theta\ge\varepsilon$.

\begin{defn}
Let $\kappa\times\mathcal{Q}$ be ordered by the lexicographic order, $\mathcal{I}^0$ be the set of functions $f:\varepsilon\rightarrow\kappa\times\mathcal{Q}$ such that $f(\alpha)=(f_1(\alpha),f_2(\alpha))$, for which $|\{\alpha\in\varepsilon\mid f_1(\alpha)\neq 0\}|$ is smaller than $\varepsilon$. If $f,g\in \mathcal{I}^0$, then $f<g$ if and only if $f(\alpha)<g(\alpha)$, where $\alpha$ is the least number such that $f(\alpha)\neq g(\alpha)$.
\end{defn}

Now let us use the order $\mathcal{I}^0$ to construct an $\varepsilon$-dense $(<\kappa)$-stable $(\kappa,\varepsilon)$-nice $\kappa$-colorable linear order.

Let us fix $\tau\in \mathcal{Q}$. Let $I$ be the set of functions $f:\varepsilon\rightarrow(\{0\}\times \mathcal{I}^0)\cup (\kappa\times \mathcal{Q})$ such that the following hold
\begin{itemize}
    \item $f\restriction \{0\}:\{0\}\rightarrow\{0\}\times \mathcal{I}^0$.
    \item $f\restriction \varepsilon\backslash\{0\}:\varepsilon\backslash\{0\}\rightarrow \kappa\times \mathcal{Q}$;
    \item There is $\alpha<\varepsilon$ ordinal such that $\forall\beta>\alpha$, $f(\beta)=(0,\tau)$. We say that the least $\alpha$ with such property is the \textit{depth} of $f$ and we denote it by $dp(f)$.
    \item There are functions $f_1:\varepsilon\rightarrow \kappa$ and $f_2:\varepsilon\rightarrow \mathcal{I}^0\cup \mathcal{Q}$ such that $f(\beta)=(f_1(\beta),f_2(\beta))$ and $f_1\restriction dp(f)+1$ is strictly increasing.
\end{itemize}

Notice that since for all $f\in I$, $f(0)\in \{0\}\times \mathcal{I}^0$, so $f(0)\neq (0,\tau)$ and the depth of $f$ is well defined. Also, $f_1(\beta)=0$ if and only if either $\beta=0$, or $\beta>dp(f)$ and $f_2(\beta)=\tau$.
For all $f\in I$ with depth $\alpha$, define $o(f)=f_1(\alpha)$ the \textit{complexity} of $f$. Notice that for all $f\in I$, $f_1(dp(f)+1)=0$, and $f_1(dp(f))= 0$ if and only if $dp(f)=0$.
We say that $f<g$ if and only if one of the following holds:
\begin{itemize}
    \item $f(0)\neq g(0)$ and $f_2(0)<g_2(0)$.
    \item Let $\alpha=dp(g)$, $\forall\beta\leq \alpha$, $f(\beta)=g(\beta)$ and $f_1(\alpha+1)\neq 0$.
    \item Exists $\alpha>0$ such that $\forall\beta<\alpha$, $f(\beta)=g(\beta)$, and $f_1(\alpha),g_1(\alpha)\neq 0$ and $g(\alpha)>f(\alpha)$.
\end{itemize}

Notice that the set $$I^0=\{f\in I\mid f:\varepsilon\rightarrow (\{0\}\times \mathcal{I}^0)\cup (\{0\} \times \{\tau\})\}$$ with the induced order is isomorphic to $\mathcal{I}^0$. 

For every $i\leq\kappa$ let us define the order $I^{i+1}$ by $$I^{i+1}=\{f\in I\mid o(f)\leq i+1\}.$$ Suppose $i$ is a limit ordinal such that for all $j<i$, $I^j$ is defined, let $$I^i=\bigcup_{j<i}I^j.$$

Notice that $f\in I^{o(f)}$ if and only if $o(f)$ is a successor ordinal.

Let us proceed to define the $\kappa$-representations $\langle I^i_\alpha\mid \alpha<\kappa\rangle$ fort every $i<\kappa$. 
Define $\langle \mathcal{I}^0_\alpha\mid\alpha<\kappa\rangle$ by $$\mathcal{I}^0_\alpha=\{\nu\in \mathcal{I}^0\mid \nu_1(n)<\alpha\textit{ for all } n<\varepsilon\}$$
it is clear that $\langle \mathcal{I}^0_\alpha\mid\alpha<\kappa\rangle$ is a $\kappa$-representation. Let us define $\langle I^0_\alpha\mid\alpha<\kappa\rangle$ in the canonical way following the definition of $I^0$.

Suppose $i<\kappa$ is such that $\langle I^i_\alpha\mid \alpha<\kappa\rangle$ has been defined. For all $\alpha<\kappa$ let $$I^{i+1}_\alpha=\{f\in I\mid o(f)\leq i+1\ \&\ f_2(0)\in \mathcal{I}^0_\alpha\},$$
notice that $f(0)\in \{0\}\times \mathcal{I}^0_\alpha$ holds if and only if $f\in I^0_\alpha$.

If $i<\kappa$ is a limit ordinal, then $$I^i_\alpha=\cup_{j<i}I^j_\alpha.$$ 
Notice that $I=\bigcup_{j<\kappa}I^i$.
Let us check that if for all $\beta<\kappa$, $\beta^{<\varepsilon}<\kappa$, then $\langle I^i_\alpha\mid \alpha<\kappa\rangle$ is a $\kappa$-representation. Notice that since $\langle I^0_\alpha\mid \alpha<\kappa\rangle$ is a $\kappa$-representation, for all $\beta<\alpha$, $I^0_\beta\subseteq I^0_\alpha$. Therefore, for all $i<\kappa$ and $\beta<\alpha<\kappa$, $I^i_\beta\subseteq I^i_\alpha$. On the other hand for all $f\in I$, $o(f)=f_1(\alpha)$, for $\alpha=dp(f)$. Since $f_1\restriction dp(f)+1$ is strictly increasing, $$|I^{o(f)}_\beta|\leq |o(f)^{< \varepsilon}\times I^0_\beta|<\kappa$$

Let us define the $\kappa$-representation $\langle I_\alpha\mid \alpha<\kappa\rangle$ by $$I_\alpha=I^\alpha_\alpha.$$
Now we can state the main result of this section.

\begin{thm}\label{main_order_prop}
    Suppose $\kappa$ is inaccessible, or $\kappa=\lambda^+$, $2^\theta\leq\lambda=\lambda^{<\varepsilon}$. Then $I$ is $\varepsilon$-dense, $(<\kappa)$-stable, $(\kappa, \varepsilon)$-nice, and $\kappa$-colorable.
\end{thm}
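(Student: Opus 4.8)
The statement asserts four properties of the linear order $I$; I would prove each separately, reusing the work of \cite{Mor21} as a template and isolating exactly where the $\varepsilon$-dense generalization changes the argument. The plan is to first fix a $\kappa$-representation $\langle I_\alpha\mid\alpha<\kappa\rangle$ as above (valid under the hypothesis, since $\kappa$ inaccessible or $\lambda=\lambda^{<\varepsilon}$ guarantees $\beta^{<\varepsilon}<\kappa$ for all $\beta<\kappa$, which is precisely what was checked to make $\langle I^i_\alpha\mid\alpha<\kappa\rangle$ a $\kappa$-representation), and then verify $\varepsilon$-density, $(<\kappa)$-stability, $(\kappa,\varepsilon)$-niceness, and $\kappa$-colorability in turn.

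\textbf{$\varepsilon$-density.} Given $A,B\subseteq I$ of size $<\varepsilon$ with $A<B$, I would find a common initial segment on which all elements of $A\cup B$ agree — using that $\varepsilon$ is regular and $|A\cup B|<\varepsilon$ so the set of depths is bounded below $\varepsilon$ and the elements split at some coordinate $\gamma<\varepsilon$ — and then use the $\varepsilon$-density of $\mathcal Q$ (equivalently of $\mathcal I^0$, which inherits it from $\mathcal Q$ since $|\{\alpha:f_1(\alpha)\neq 0\}|<\varepsilon$) to interpolate a value in $\kappa\times\mathcal Q$ or in $\{0\}\times\mathcal I^0$ at coordinate $\gamma$, building an element $c\in I$ with $A<c<B$. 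The only subtlety is to make the interpolating function lie in $I$ (strictly increasing $f_1$ up to its depth, correct tail $\tau$), which one arranges by choosing a fresh large value of $f_1(\gamma)$; this will be routine but needs care with the three clauses of the order definition.

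\textbf{$(<\kappa)$-stability.} For $B\subseteq I$ with $|B|<\kappa$, pick $\alpha<\kappa$ with $B\subseteq I^\alpha_\alpha = I_\alpha$. Each $bs$-type of an element $a$ over $B$ is determined by where $a$ branches away from $B$, i.e. by the initial segment of $a$ up to the branching coordinate together with the comparison data at that coordinate; since $B\subseteq I_\alpha$ and $|I_\alpha|<\kappa$, and there are only $\alpha^{<\varepsilon}\cdot|\mathcal Q|<\kappa$ possible "branching patterns" (using the representation bound $|I^{o(f)}_\beta|\le|o(f)^{<\varepsilon}\times I^0_\beta|<\kappa$ established above), the number of such types is $<\kappa$. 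This is the same counting as in \cite{Mor21} with $\omega$ replaced by $\varepsilon$ and the inaccessibility/$\lambda=\lambda^{<\varepsilon}$ hypothesis doing the work $\lambda=\lambda$ did there.

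\textbf{$(\kappa,\varepsilon)$-niceness and $\kappa$-colorability.} For niceness I would verify condition (2) of the Hyttinen–Tuuri style characterization (the $(\kappa,bs,bs,\varepsilon)$-nice variant): find a club $C$ (e.g. closure points of the representation together with points closed under the relevant Skolem-type functions) so that for limit $\delta\in C$ with $\mathrm{cf}(\delta)\ge\varepsilon$ and any $x\in I$, some $\beta<\delta$ witnesses the approximation-from-one-side clause. Here $\mathrm{cf}(\delta)\ge\varepsilon$ is exactly what lets one reflect the $\varepsilon$-long branch data of $x$ below $\delta$: if $x$ were a strict limit from both sides of $A_\delta$ with "cofinality pattern" requiring more than $\mathrm{cf}(\delta)$ steps this would fail, but the $\varepsilon$-bound on the support of elements of $I$ prevents that — this is the crux of why the $\varepsilon$ in $(\kappa,\varepsilon)$-nice matches the $\varepsilon$ in $\varepsilon$-dense. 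For $\kappa$-colorability I would define $F:I\to\kappa$ reading off, say, a coordinate of $f_2(0)\in\mathcal I^0$ (which ranges over a set of size $\kappa$) in a way independent of the "branching type", and then check Definition \ref{defcolorark}: given $B$ of size $<\kappa$, $b\notin B$, and $p=tp_{bs}(b,B,I)$, the construction of $I$ as an iterated closure (the $I^{i+1}$ adding, for each $\nu$, a $\mathcal Q$-indexed family of copies realizing $tp_{bs}(\nu,\cdot)$ together with $\{\nu>x\}$, now with $\mathcal Q$ in place of a single point) supplies $\kappa$ realizations of $p$, and the freedom in the copies lets us hit every color $\alpha<\kappa$ exactly $\kappa$ times.

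\textbf{Main obstacle.} The hard part is $(\kappa,\varepsilon)$-niceness: one must show the club $C$ can be chosen so that the one-sided approximation holds \emph{simultaneously} for all $x\in I$ at each $\delta\in C$ with $\mathrm{cf}(\delta)\ge\varepsilon$, which requires understanding precisely how an element of $I$ of depth up to $\varepsilon$ sits relative to the initial segment $A_\delta=I_\delta$, and exploiting that the support bound $<\varepsilon\le\mathrm{cf}(\delta)$ forbids pathological two-sided limits. The interplay between the $\varepsilon$-dense construction (which wants elements to be two-sided limits, making density work) and the niceness requirement (which wants one-sided approximability) is genuinely delicate, and is exactly the obstruction the introduction flags as incompatible "at first sight"; resolving it is the technical heart of the section, and I would spend most of the effort there, with the other three properties being adaptations of \cite{Mor21} with $\omega\rightsquigarrow\varepsilon$.
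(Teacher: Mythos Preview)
Your overall decomposition into four separate properties matches the paper, and you are right that $(\kappa,\varepsilon)$-niceness is where the interaction between density and niceness is delicate. However, two of your sketches contain concrete gaps.

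For $\kappa$-colorability, the proposed color function ``reading off a coordinate of $f_2(0)\in\mathcal I^0$'' cannot work. Given $b\notin B$ with $|B|<\kappa$, the natural $\kappa$-sized family of realizations of $p=tp_{bs}(b,B,I)$ is obtained by extending the road $\Gamma$ from $I^0$ to $b$: by Fact~\ref{road_type} every element of $Succ_I(\Gamma)$ of sufficiently high complexity realizes $p$. But all of these share the same value $f(0)=(0,f_2(0))$ with $b$, so your color function assigns them a single color and the definition fails. The paper instead colors by the \emph{complexity}: $F(\nu)=G_1(g_\Gamma(o(\nu)))$, where $g_\Gamma:Comp(\Gamma)\to\kappa$ is a bijection and $G=(G_1,G_2):\kappa\to\kappa\times\kappa$. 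Since $o(\nu)$ ranges freely over a tail of $\kappa$ among realizations in $Succ_I(\Gamma)$, every color is hit $\kappa$ times.

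For $\varepsilon$-density, the sketch assumes the elements of $A\cup B$ share a common initial segment and ``split at some coordinate $\gamma$'', but this already fails when $a_1(0)\neq a_2(0)$ for $a_1,a_2\in A$. The paper builds the interpolant $F$ recursively in $\varepsilon$ steps: at stage $\alpha$ it forms $X^f_\alpha$, $X^g_\alpha$ from those elements of $A$, $B$ still agreeing with the partial sequence $\langle a^\beta\mid\beta<\alpha\rangle$, and uses the (separately proved) $\varepsilon$-density of $\kappa\times\mathcal Q$ and of $\mathcal I^0$ to choose $a^\alpha$ between them, with several case splits to ensure $F_1\restriction dp(F)+1$ is strictly increasing and $F$ is \emph{strictly} between $A$ and $B$. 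More generally, the paper's route through all four properties runs via the generator/road machinery (Corollaries~\ref{coro1}--\ref{coro2}, Fact~\ref{road_type}): for stability, for instance, $tp_{bs}(a,A,I)$ is decomposed into $tp_{bs}(\nu,A\setminus Gen(\nu),I)$ and $tp_{bs}(a,A\cap Gen(\nu),Gen(\nu))$ for $\nu\in I^0$ with $a\in Gen(\nu)$, and each factor is bounded separately; for niceness, the case analysis in Lemma~\ref{nice_big_I} turns on whether $\nu\in I_\delta$, $\nu\in I^{o(\nu)}_\delta\setminus I_\delta$, or $\nu\notin I^{o(\nu)}_\delta$, and the middle case is where $cf(\delta)\ge\varepsilon$ is used, via the fact that the road to $\nu$ has length $<\varepsilon$. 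Developing these tools first would make all four arguments cleaner than the ad hoc counting you outline.
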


The proof of the previous theorem is divided in four lemmas, one per property. Before we prove these lemmas, we need to define more notions related to $I$. Generators and roads were notions that arose naturally in \cite{Mor21} from the inductive construction. To use these notions to study $I$, we will need to define them in a non-inductive way.

\begin{defn}[Generators]
    For all $f\in I$ with depth $\alpha$, define the \textit{generator} of $f$, $Gen(f)$, by $$Gen(f)=\{g\in I\mid f\restriction \alpha+1 =g\restriction \alpha+1\}.$$
\end{defn}

\begin{fact}\label{descending_gen}
    Let $f,g\in I$ be such that $f\neq g$ and $g\in Gen(f)$, then $f>g$.
\end{fact}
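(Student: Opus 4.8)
The statement to prove is Fact~\ref{descending_gen}: if $f,g \in I$ with $f \neq g$ and $g \in Gen(f)$, then $f > g$.

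My plan is to unwind the definitions and compare $f$ and $g$ via the trichotomy defining the order $<$ on $I$. Let $\alpha = dp(f)$. By definition of $Gen(f)$, we have $f\restriction\alpha+1 = g\restriction\alpha+1$, so in particular $f$ and $g$ agree on all $\beta \le \alpha$ and $f(0) = g(0)$. Since $f \neq g$, they must differ somewhere strictly above $\alpha$; let $\gamma > \alpha$ be least with $f(\gamma) \neq g(\gamma)$. The key observation is that $\gamma = \alpha+1$: indeed, since $\alpha = dp(f)$, for all $\beta > \alpha$ we have $f(\beta) = (0,\tau)$, and similarly if $\gamma$ were $> \alpha+1$ then $f$ and $g$ would agree below $\gamma$, but then looking at position $\alpha+1$, both equal $(0,\tau)$... actually the cleaner route: I will show directly that the second clause of the definition of $<$ applies with the roles arranged as $g < f$, i.e. I want $f > g$, so I check the clause ``$\forall\beta\le dp(g)$, $f(\beta)=g(\beta)$ and $f_1(dp(g)+1)\neq 0$'' or the third clause, whichever fits.

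The main work is a careful case split on where $g$ sits relative to $f$. Note $f_1\restriction dp(f)+1$ is strictly increasing with $f_1(dp(f)) = o(f) \neq 0$ when $dp(f) > 0$. First, suppose $g \in Gen(f)$ and $dp(g) \ge dp(f) = \alpha$: since $g$ agrees with $f$ up through $\alpha$, and $f_1(\alpha) = o(f)$, strict increase of $g_1 \restriction dp(g)+1$ would force $g_1(\alpha+1) > g_1(\alpha) = f_1(\alpha) \neq 0$ if $dp(g) > \alpha$, which is possible; but I should instead observe that $g$ agreeing with $f$ on $\alpha+1$ entries combined with $dp(f) = \alpha$ means $f(\alpha+1) = (0,\tau)$ hence $f_1(\alpha+1) = 0$, and then if $g \neq f$ the first place of difference is at some $\gamma \geq \alpha+1$ where I can read off the comparison. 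The decisive point is that at the least $\gamma$ where they differ, $\gamma > \alpha = dp(f)$, so $f_1(\gamma) = 0$, hence by the third clause of the definition of $<$ (which requires $f_1(\gamma), g_1(\gamma) \neq 0$) that clause cannot make $g < f$ or $f < g$ symmetrically — so I must use the second clause: taking $dp(g)$ into account, since $f$ agrees with $g$ up to and including $dp(g)$ (as $dp(g) \le \gamma - 1$, using that $g_1$ is increasing up to $dp(g)+1$ and $g_1(\gamma)$ must be checked) and $f_1(dp(g)+1) = 0$... this gives $g < f$, i.e. $f > g$, as desired.

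The main obstacle I anticipate is handling the bookkeeping around the three clauses of the order definition cleanly, in particular pinning down the relationship between $dp(g)$, $dp(f)$, and the least coordinate $\gamma$ of disagreement, and verifying that exactly the second clause (with appropriate orientation) applies rather than the first or third. A secondary subtlety is the edge case $dp(f) = 0$, where $f \in I^0$ and $o(f) = f_1(0) = 0$; here $Gen(f) = \{g : f(0) = g(0)\}$ and the argument reduces to: $g \neq f$ with $g_2(0) = f_2(0)$ forces the difference to occur at some $\beta > 0$ with $f_1(\beta) = 0$, again landing in the second clause to conclude $f > g$. Once the coordinate analysis is set up, each verification is a short direct check against the definition of $<$, so I do not expect to need any auxiliary lemma beyond the definitions of $I$, $dp$, $o$, and $Gen$ already in the excerpt.
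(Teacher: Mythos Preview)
Your overall strategy --- apply the second clause of the definition of $<$ on $I$ --- is the same as the paper's, but you have the roles in that clause reversed, and this makes the final step incorrect as written. To conclude $g<f$ from the second clause you must verify: for all $\beta\le dp(f)$, $g(\beta)=f(\beta)$, and $g_1(dp(f)+1)\neq 0$. You instead argue toward ``$f$ agrees with $g$ up to $dp(g)$ and $f_1(dp(g)+1)=0$'', which is neither the clause for $g<f$ nor the negation of the clause for $f<g$; and your supporting claim $dp(g)\le \gamma-1$ is actually false (one has $dp(g)\ge \alpha+1=\gamma$).

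The clean fix, which is exactly what the paper does in two lines, is this. Set $\alpha=dp(f)$. From $g\in Gen(f)$ you already have $g\restriction(\alpha+1)=f\restriction(\alpha+1)$ and $f(\alpha+1)=(0,\tau)$. Now observe $g(\alpha+1)\neq(0,\tau)$: otherwise $g_1(\alpha+1)=0$ forces $dp(g)\le\alpha$, and since $g$ agrees with $f$ on $[0,\alpha]$ while $f_1$ is strictly increasing on $[0,\alpha]$, one gets $dp(g)=\alpha$ and hence $g=f$, contradicting the hypothesis. Thus $g_1(\alpha+1)\neq 0$, and the second clause (read for $g<f$, using $\alpha=dp(f)$) applies directly. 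No case split on $dp(g)$ or on the least coordinate of disagreement is needed; your exploratory analysis of $\gamma$ and $dp(g)$ can be dropped entirely once you identify the correct instantiation of the clause. The edge case $dp(f)=0$ requires no separate treatment: the same argument gives $g_1(1)\neq 0$.
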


\begin{proof}
    Let $f,g\in I$ be such that $f\neq g$ and $g\in Gen(f)$. So $f\restriction \alpha+1 =g\restriction \alpha+1$, where $dp(f)=\alpha$. Therefore $f(\alpha+1)=(0,\tau)$ and $g(\alpha+1)\neq (0,\tau)$. We conclude that $f>g$.
\end{proof}

\begin{fact}\label{inside_the_gen} 
    Let $i,\delta$ and $\nu$ be such that $\nu\in I^i_\delta$. For all $f\in Gen(\nu)$, $f\in I^{o(f)+1}_\delta$ (in case $o(f)$ is a successor, $f\in I^{o(f)}_\delta$).
\end{fact}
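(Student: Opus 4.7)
The plan is to unpack both the definition of $Gen(\nu)$ and the defining formulas for the $\kappa$-representation $\langle I^i_\delta\rangle$, and then observe that the two conditions determining membership in $I^j_\delta$ (for $j$ a successor) depend on $f$ only through the pair $(o(f),f_2(0))$. In particular, the conclusion $f\in I^{o(f)+1}_\delta$ will reduce to an inequality on complexities plus the single membership $f_2(0)\in\mathcal{I}^0_\delta$.

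First I would invoke the definition of the generator: $f\in Gen(\nu)$ means $f\restriction\alpha+1=\nu\restriction\alpha+1$, where $\alpha=dp(\nu)$. Since $\alpha+1\ge 1$, in particular $f(0)=\nu(0)$, whence $f_2(0)=\nu_2(0)$. Next, from the hypothesis $\nu\in I^i_\delta$ a routine induction on $i$ gives $\nu_2(0)\in\mathcal{I}^0_\delta$: the base case is the canonical identification of $I^0_\delta$ with $\mathcal{I}^0_\delta$; at a successor $i=j+1$ this is immediate from the defining formula $I^{j+1}_\delta=\{g\in I\mid o(g)\le j+1\ \&\ g_2(0)\in\mathcal{I}^0_\delta\}$; and at limit $i$ it follows from the union definition $I^i_\delta=\bigcup_{j<i}I^j_\delta$. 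Combining the last two observations, $f_2(0)\in\mathcal{I}^0_\delta$.

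Since $o(f)+1$ is a successor ordinal, membership in $I^{o(f)+1}_\delta$ amounts to the conditions $o(f)\le o(f)+1$ and $f_2(0)\in\mathcal{I}^0_\delta$, both of which now hold; hence $f\in I^{o(f)+1}_\delta$. When $o(f)$ is itself a successor, the remark immediately after the definition of $I^i$ (\emph{``$f\in I^{o(f)}$ iff $o(f)$ is a successor ordinal''}) lets us replace $o(f)+1$ by $o(f)$ in the inequality, giving $f\in I^{o(f)}_\delta$ as well. I do not foresee a real obstacle: the statement is essentially bookkeeping, asserting that the $\kappa$-representation $\langle I^i_\delta\rangle$ interacts correctly with passage from $\nu$ to its generators. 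The only mildly delicate point is the case $o(f)=0$ (forced by $dp(f)=0$, i.e.\ $f\in I^0$), which is subsumed by the same formula once one recalls that $\langle \mathcal{I}^0_\delta\rangle_{\delta<\kappa}$ is itself a $\kappa$-representation of $\mathcal{I}^0$.
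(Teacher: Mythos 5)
Your proof is correct and makes explicit exactly what the paper leaves implicit with its one-line ``It follows from the construction of $\langle I^i_\alpha\mid\alpha<\kappa\rangle$'': membership of $f$ in $I^{j+1}_\delta$ is governed solely by $o(f)\le j+1$ and $f_2(0)\in\mathcal{I}^0_\delta$, and $f_2(0)=\nu_2(0)$ since $f\restriction 1=\nu\restriction 1$. The only cosmetic remark is that in the successor case you don't actually need the remark ``$f\in I^{o(f)}$ iff $o(f)$ is a successor''; writing $o(f)=j+1$ and reading off the defining formula for $I^{j+1}_\delta$ directly is cleaner.
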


\begin{proof}
    It follows from the construction of $\langle I^i_\alpha\mid \alpha<\kappa\rangle$.
\end{proof}

\begin{fact}
    Let $f,\nu\in I$ be such that $f\in Gen(\nu)$. If $g\notin Gen(\nu)$, then $g<\nu$ if and only if $g<f$.
\end{fact}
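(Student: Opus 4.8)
The statement to prove is: for $f, \nu \in I$ with $f \in \mathrm{Gen}(\nu)$, and for $g \notin \mathrm{Gen}(\nu)$, we have $g < \nu$ if and only if $g < f$. The plan is to peel apart the three clauses in the definition of the order on $I$ and compare how each one applies to the pair $(g,\nu)$ versus the pair $(g,f)$. Write $\alpha = dp(\nu)$, so that by definition of the generator $f \restriction \alpha+1 = \nu \restriction \alpha+1$ and in particular $\nu$ and $f$ agree on all coordinates $\beta \le \alpha$; moreover $\nu(\alpha+1) = (0,\tau)$ while $f$ may differ from $\nu$ only at coordinates $\beta > \alpha$. The key structural fact I will use repeatedly is that the truth value of each clause defining ``$x < y$'' with $y \in \{\nu, f\}$ depends only on the coordinates of $y$ up through $dp(y)+1$ — but here I must be a little careful, since $dp(f)$ need not equal $dp(\alpha)$; however $f \in \mathrm{Gen}(\nu)$ forces $dp(f) \ge \alpha$ unless $f = \nu$, and if $f\neq\nu$ then $f$ has a coordinate $>\alpha$ not equal to $(0,\tau)$, so in fact the relevant initial segment of $f$ that the order ``sees'' extends past $\alpha$. (If $f = \nu$ the statement is trivial, so assume $f \neq \nu$.)

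First I would handle the hypothesis $g \notin \mathrm{Gen}(\nu)$: this says $g \restriction \alpha+1 \neq \nu \restriction \alpha+1 = f\restriction\alpha+1$, so there is a least coordinate $\beta_0 \le \alpha$ at which $g$ disagrees with $\nu$, and it is the same $\beta_0$ at which $g$ disagrees with $f$, with $g(\beta_0) = $ (same value in both comparisons) and $\nu(\beta_0) = f(\beta_0)$. Now I split on $\beta_0$. If $\beta_0 = 0$: then $g(0) \neq \nu(0) = f(0)$, and clause~1 governs both comparisons — $g < \nu \iff g_2(0) < \nu_2(0) = f_2(0) \iff g < f$. If $\beta_0 > 0$: then $g$ and $\nu$ (equivalently $g$ and $f$) agree below $\beta_0$ and differ at $\beta_0$, with $\beta_0 \le \alpha = dp(\nu)$; I must check which of clauses 2 and 3 applies. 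Clause~2 requires agreement all the way up through $dp(\nu)$ (resp.\ $dp(f)$), which fails here since they already disagree at $\beta_0 \le \alpha$; clause 2 for the comparison against $f$ similarly cannot be the operative clause because, although $dp(f)$ might be larger, agreement already fails at $\beta_0 \le \alpha \le dp(f)$. So clause~3 governs both comparisons, and its condition at coordinate $\beta_0$ reads: $g_1(\beta_0), \nu_1(\beta_0) \neq 0$ and $\nu(\beta_0) > g(\beta_0)$. Since $\nu(\beta_0) = f(\beta_0)$ and hence $\nu_1(\beta_0) = f_1(\beta_0)$, this is literally the same condition as $g_1(\beta_0), f_1(\beta_0)\neq 0$ and $f(\beta_0) > g(\beta_0)$. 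Hence $g < \nu \iff g < f$ in this case too. One must also note the complementary possibility under clause 3 — that $g_1(\beta_0) = 0$ or $\nu_1(\beta_0) = 0$ — but these equalities too are insensitive to replacing $\nu$ by $f$ at coordinate $\beta_0$, so either way $g<\nu$ and $g<f$ fail simultaneously, and by trichotomy on the linear order $I$ we then get $\nu < g$ and $f < g$ simultaneously.

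The main obstacle is bookkeeping rather than depth: one has to confirm that for $g \notin \mathrm{Gen}(\nu)$ the ``first difference'' coordinate $\beta_0$ is $\le dp(\nu)$, so that whether clause~2 or clause~3 fires is determined within the common initial segment $\nu\restriction\alpha+1 = f\restriction\alpha+1$, and in particular that clause~2 (the one clause whose hypothesis ``$f(\beta)=g(\beta)$ for all $\beta \le dp(g)$... and $dp(g)$-dependence on the longer object'') can never be the operative clause for one comparison but not the other. The cleanest way to organize this is to first dispose of $\beta_0 = 0$ via clause~1, and then for $\beta_0 > 0$ observe that agreement fails at a coordinate $\le \alpha$, which is $\le dp(\nu)$ and $\le dp(f)$, so clause~2 is excluded for both comparisons and only clause~3 — which reads off a single coordinate where $\nu$ and $f$ coincide — remains. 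I would close by invoking linearity of the order on $I$ to upgrade ``$g<\nu \iff g<f$'' together with ``$g \neq \nu$ and $g \neq f$ (the latter since $g \notin \mathrm{Gen}(\nu) \ni f$)'' to the full biconditional as stated. $\boxminus$
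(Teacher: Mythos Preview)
Your proof is correct and follows essentially the same approach as the paper: locate the first coordinate $\beta_0\le dp(\nu)$ where $g$ disagrees with $\nu$, observe that $\nu$ and $f$ agree there, and read off the order comparison from that coordinate. The only organizational difference is that the paper shortcuts the backward direction by invoking the earlier fact that $f\in Gen(\nu)\setminus\{\nu\}$ implies $\nu>f$ (so $g>\nu$ gives $g>\nu>f$ immediately), whereas your symmetric case analysis handles both directions at once without that citation.
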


\begin{proof}
    Let $g,f,\nu\in I$ be such that $f\in Gen(\nu)$ and $g\notin Gen(\nu)$. Since $\nu\in Gen(\nu)$, $g\neq \nu$. 
    
    ($\Rightarrow$) Let us assume that $g<\nu$.
    By Fact \ref{descending_gen} $\nu\notin Gen(g)$. If $g_2(0)<\nu_2(0)$, then since $\nu_2(0)=f_2(0)$, $g_2(0)<f_2(0)$.

    Let us suppose that $g_2(0)=\nu_2(0)$. There is $\beta<dp(\nu)$ and $\nu'\in I$ such that $dp(\nu')=\beta$, $g,\nu\in Gen (\nu')$, and $g(\beta+1)\neq \nu(\beta+1)$. Since $g<\nu$, $g(\beta)<\nu(\beta)=f(\beta)$. We conclude that $g<f$.

    ($\Leftarrow$) Let us suppose $g>\nu$. Since $f\in Gen(\nu)$, $g>\nu>f$.
\end{proof}

\begin{cor}\label{coro1}
    For all $\nu,f\in I$ such that $f\in Gen(\nu)$
    $$f\models tp_{bs}(\nu,I\backslash Gen(\nu),I)\cup \{\nu>x\}.$$
\end{cor}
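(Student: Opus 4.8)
The plan is to show that for $f \in Gen(\nu)$, the element $f$ realizes the basic type of $\nu$ over $I \setminus Gen(\nu)$ together with the formula $\nu > x$. Since the basic formulas here are just order formulas (we are in a linear order, with basic formulas being $x < y$, $x > y$, $x = y$ and their negations), realizing $tp_{bs}(\nu, I \setminus Gen(\nu), I)$ means: for every $g \in I \setminus Gen(\nu)$, the order relationship between $f$ and $g$ is exactly the same as the order relationship between $\nu$ and $g$. This is precisely the content of the immediately preceding unnamed Fact (the one stating: if $f \in Gen(\nu)$ and $g \notin Gen(\nu)$, then $g < \nu$ iff $g < f$). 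That Fact gives $g < \nu \iff g < f$; since $g \neq \nu$ and $g \neq f$ (as $f, \nu \in Gen(\nu)$ but $g \notin Gen(\nu)$), trichotomy then also yields $g > \nu \iff g > f$. Hence $tp_{bs}(\nu, I \setminus Gen(\nu), I) = tp_{bs}(f, I \setminus Gen(\nu), I)$, i.e. $f$ realizes this type.

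Next I would handle the extra clause $\nu > x$, i.e. I must check $\nu > f$. This follows directly from Fact \ref{descending_gen}: if $f \in Gen(\nu)$ and $f \neq \nu$, then $\nu > f$. If $f = \nu$ there is nothing to prove for the type part, but then the clause $\nu > x$ would fail — however, one should read the statement as asserting the consistency/realization of the type by some element of $Gen(\nu)$, or more precisely the corollary is intended for $f \neq \nu$; in the generator there are always elements strictly below $\nu$ (e.g. any $g$ with $g \restriction \alpha+1 = \nu \restriction \alpha+1$ and $g(\alpha+1) \neq (0,\tau)$), so the type $tp_{bs}(\nu, I \setminus Gen(\nu), I) \cup \{\nu > x\}$ is realized. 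So the clean way to phrase the proof: take any $f \in Gen(\nu)$ with $f \neq \nu$; by the preceding Fact its order type over $I \setminus Gen(\nu)$ coincides with that of $\nu$, and by Fact \ref{descending_gen} we have $\nu > f$, which is exactly realizing $\nu > x$.

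There is essentially no obstacle here — this corollary is a routine packaging of the two preceding facts. The only mild subtlety is the degenerate case $f = \nu$ and making sure the indexing/quantification in the statement is read correctly; I would simply restrict attention to $f \neq \nu$ (or note that $Gen(\nu)$ always contains such $f$), since that is plainly the intended reading given the $\{\nu > x\}$ clause.

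\begin{proof}
    Let $f \in Gen(\nu)$ with $f \neq \nu$. Since the language is that of linear orders, $tp_{bs}(\nu, I \setminus Gen(\nu), I)$ is determined by the order relation between $\nu$ and the elements of $I \setminus Gen(\nu)$. Fix $g \in I \setminus Gen(\nu)$; note $g \neq \nu$ and $g \neq f$, since $f, \nu \in Gen(\nu)$. By the previous Fact, $g < \nu$ if and only if $g < f$, and hence by trichotomy $g > \nu$ if and only if $g > f$. As this holds for every $g \in I \setminus Gen(\nu)$, we get $tp_{bs}(f, I \setminus Gen(\nu), I) = tp_{bs}(\nu, I \setminus Gen(\nu), I)$, i.e. $f \models tp_{bs}(\nu, I \setminus Gen(\nu), I)$. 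Finally, by Fact \ref{descending_gen}, $\nu > f$, so $f \models \{\nu > x\}$ as well. Since such an $f$ exists (e.g. any $g$ with $g \restriction dp(\nu){+}1 = \nu \restriction dp(\nu){+}1$ and $g(dp(\nu){+}1) \neq (0,\tau)$), the type $tp_{bs}(\nu, I \setminus Gen(\nu), I) \cup \{\nu > x\}$ is realized in $Gen(\nu)$.
\end{proof}
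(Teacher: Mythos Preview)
Your proof is correct and is exactly the argument the paper intends: the corollary is stated without proof precisely because it is an immediate repackaging of the preceding Fact (giving $g<\nu \iff g<f$ for $g\notin Gen(\nu)$) together with Fact~\ref{descending_gen} (giving $\nu>f$). Your observation about the degenerate case $f=\nu$ is a fair remark on a slight imprecision in the statement; the corollary is only applied in the paper to $f\neq\nu$, so restricting to that case as you do is the intended reading.
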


\begin{cor}\label{coro2}
    For all $\nu,f\in I$ such that $f\in Gen(\nu)$. If $\sigma\in I$ is such that $\nu\ge \sigma\ge f$, then $\sigma\in Gen(\nu)$.
    
\end{cor}

\begin{defn}[Roads]
	For all $\nu\in I$ with $ dp(\nu)=\alpha$, there is a maximal sequence $\langle \nu_i\mid i\leq\alpha\rangle$ such that $\nu_0\in I^0$, $\nu_\alpha=\nu$, and for all $i<j$, $\nu_i\in Gen(\nu_i)$.

	We call this sequence \textit{the road from $I^0$ to $\nu$}.
\end{defn}

For any $\nu\in I$ with $ dp(\nu)=\alpha$ and the road $\langle \nu_i\mid i\leq \alpha\rangle$ from $I^0$, and $\beta\leq \alpha$, we call the sub-sequence  $\langle \nu_i\mid \beta\leq i\leq \alpha\rangle$, \textit{the road from $\nu_\beta$ to $\nu$}.

\begin{fact}\label{road_type}
    Let $\langle \nu_j\mid j\leq \alpha\rangle$ be the road from $I^0$ to $\nu_\alpha$. For all $i<\alpha$ $$\nu_\alpha\models tp_{bs}(\nu_i,I^{o(\nu_{i+1})}\backslash (Gen(\nu_{i+1})\cup \{\nu_i\}), I)\cup \{\nu_i>x\}$$
\end{fact}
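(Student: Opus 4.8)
The plan is to prove the displayed type-equality by induction along the road, reducing at each step to Corollary \ref{coro1} and Corollary \ref{coro2}, together with Fact \ref{inside_the_gen}. Fix the road $\langle \nu_j\mid j\leq\alpha\rangle$ from $I^0$ to $\nu_\alpha$, and fix $i<\alpha$. The first observation is that $\nu_\alpha\in Gen(\nu_{i+1})$: indeed, by definition of a road, $\nu_{i+1}\in Gen(\nu_j)$ for all $j$ with $i+1\leq j$ (in particular, since membership in a generator is transitive along the road, $\nu_\alpha\restriction dp(\nu_{i+1})+1 = \nu_{i+1}\restriction dp(\nu_{i+1})+1$). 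Here I would first record the needed transitivity of $Gen$: if $g\in Gen(h)$ and $h\in Gen(k)$ with $dp(k)\leq dp(h)$, then $g\in Gen(k)$, which is immediate from the definition $Gen(f)=\{g\mid f\restriction dp(f)+1 = g\restriction dp(f)+1\}$ and the fact that along a road the depths are strictly increasing. Consequently $\nu_\alpha\in Gen(\nu_{i+1})$.

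Next, I would apply Corollary \ref{coro1} with $\nu:=\nu_{i+1}$ and $f:=\nu_\alpha$: since $\nu_\alpha\in Gen(\nu_{i+1})$, we get
$$\nu_\alpha\models tp_{bs}(\nu_{i+1},I\setminus Gen(\nu_{i+1}),I)\cup\{\nu_{i+1}>x\}.$$
This already handles the basic formulas with parameters outside $Gen(\nu_{i+1})$, but the statement asks for a type \emph{centered at $\nu_i$} rather than at $\nu_{i+1}$, with parameters in $I^{o(\nu_{i+1})}\setminus(Gen(\nu_{i+1})\cup\{\nu_i\})$. So the key step is to show that for parameters $b$ in that set, $\nu_i$ and $\nu_\alpha$ have the same basic type over $b$, and that $\nu_i > \nu_\alpha$. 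For the order comparison: $\nu_\alpha\in Gen(\nu_{i+1})$ and $\nu_{i+1}\in Gen(\nu_i)$ with $\nu_{i+1}\neq\nu_i$, so by Fact \ref{descending_gen} (applied twice, or via Corollary \ref{coro2}'s companion reasoning) $\nu_i > \nu_{i+1} \geq \nu_\alpha$, hence $\nu_i > \nu_\alpha$, i.e. $\nu_i > x$ is satisfied by $\nu_\alpha$. For the parameters: take $b\in I^{o(\nu_{i+1})}\setminus(Gen(\nu_{i+1})\cup\{\nu_i\})$. I would split on whether $b\in Gen(\nu_i)$. If $b\notin Gen(\nu_i)$, then $b\notin Gen(\nu_{i+1})$ as well (since $Gen(\nu_{i+1})\subseteq Gen(\nu_i)$ by the transitivity above), and I apply Fact (the unnamed one just before Corollary \ref{coro1}) to $f:=\nu_\alpha\in Gen(\nu_{i+1})$ and $g:=b\notin Gen(\nu_{i+1})$ to get $b<\nu_{i+1}\iff b<\nu_\alpha$, and similarly with $\nu_i$ in place of $\nu_{i+1}$; chaining these gives that $b$ compares the same way to $\nu_i$ and to $\nu_\alpha$. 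If $b\in Gen(\nu_i)\setminus Gen(\nu_{i+1})$ with $b\neq\nu_i$, then $b$ lies in the generator of $\nu_i$ but branches off before level $dp(\nu_{i+1})$; here I would use Corollary \ref{coro2}: since $b\notin Gen(\nu_{i+1})$ and $\nu_i\geq\nu_{i+1}$, if $b$ were strictly between $\nu_{i}$ and $\nu_\alpha$ (or $\nu_{i+1}$) it would land in $Gen(\nu_{i+1})$, contradiction, so again $b$ compares identically to $\nu_i$ and to $\nu_\alpha$. In all cases, for every basic formula $\varphi(x,b)$ in a single parameter $b$ from the allowed set, $\varphi(\nu_i,b)\iff\varphi(\nu_\alpha,b)$, which is exactly $\nu_\alpha\models tp_{bs}(\nu_i, I^{o(\nu_{i+1})}\setminus(Gen(\nu_{i+1})\cup\{\nu_i\}), I)$.

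The main obstacle I anticipate is bookkeeping the case $b\in Gen(\nu_i)\setminus Gen(\nu_{i+1})$ cleanly — this is precisely where $b$ ``sees'' part of the road but not the final segment, and one must argue via Corollary \ref{coro2} that $b$ cannot be squeezed between $\nu_\alpha$ and $\nu_i$; getting the strict/non-strict inequalities right (the statement has $\nu_i > x$, and the elements of the road are pairwise distinct since the complexities strictly increase) is the only delicate point. Everything else is a direct invocation of the already-proven facts about generators, plus the remark that the restriction to $I^{o(\nu_{i+1})}$ in the parameter set is harmless: by Fact \ref{inside_the_gen}, $\nu_\alpha$ and $\nu_i$ and all of $Gen(\nu_{i+1})$ already live inside $I^{o(\nu_{i+1})}$ up to the representation level, so no parameter outside this set is needed for the argument, and conversely the claim over the smaller set is the one we want.
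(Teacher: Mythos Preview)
Your proposal has a genuine gap in the case $b \in Gen(\nu_i) \setminus Gen(\nu_{i+1})$. You invoke Corollary \ref{coro2} to argue that if $b$ lay between $\nu_i$ and $\nu_\alpha$ then $b$ would land in $Gen(\nu_{i+1})$; but Corollary \ref{coro2}, applied with $\nu=\nu_{i+1}$ and $f=\nu_\alpha$, only yields $b\in Gen(\nu_{i+1})$ from the hypothesis $\nu_{i+1}\ge b\ge\nu_\alpha$, not from $\nu_i\ge b\ge\nu_\alpha$. And there really are elements $b\in Gen(\nu_i)\setminus Gen(\nu_{i+1})$ with $\nu_{i+1}<b<\nu_i$: take any $b$ agreeing with $\nu_i$ through level $dp(\nu_i)$ and with $b_1(dp(\nu_i)+1)>o(\nu_{i+1})$. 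For such $b$ one has $\nu_i>b>\nu_{i+1}\ge\nu_\alpha$, so $\nu_i$ and $\nu_\alpha$ genuinely disagree in their basic type over $b$, and your Corollary \ref{coro2} manoeuvre gives no contradiction.

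The point is that every such bad $b$ satisfies $o(b)\ge b_1(dp(\nu_i)+1)>o(\nu_{i+1})$, hence lies \emph{outside} $I^{o(\nu_{i+1})}$. Contrary to your closing remark, the restriction of the parameter set to $I^{o(\nu_{i+1})}$ is not harmless bookkeeping; it is the whole content of this case. The paper's argument uses exactly this bound: from $\sigma\in I^{o(\nu_{i+1})}$, $\sigma\in Gen(\nu_i)$, and $\sigma\notin Gen(\nu_{i+1})$ one computes $\sigma_1(dp(\nu_i)+1)<o(\nu_{i+1})=(\nu_{i+1})_1(dp(\nu_i)+1)$, so $\sigma<\nu_{i+1}$ by the third clause of the order definition; Corollary \ref{coro1} with $\nu=\nu_{i+1}$, $f=\nu_\alpha$ then transfers this to $\sigma<\nu_\alpha$. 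Your treatment of the case $b\notin Gen(\nu_i)$ is fine and matches the paper's first reduction via Corollary \ref{coro1} with $\nu=\nu_i$.
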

\begin{proof}
    Let $\langle \nu_j\mid j\leq \alpha\rangle$ be the road from $I^0$ to $\nu_\alpha$ and $i<\alpha$. By Corollary \ref{coro1}, we know that $$\nu_\alpha\models tp_{bs}(\nu_i,I\backslash Gen(\nu_i),I)\cup \{\nu_i>x\}.$$ 
    It is enough to show that $$\nu_\alpha\models tp_{bs}(\nu_i,(I^{o(\nu_{i+1})}\backslash Gen(\nu_{i+1}))\cap Gen(\nu_i),I).$$ 
    Let $\sigma\in (I^{o(\nu_{i+1})}\backslash Gen(\nu_{i+1}))\cap Gen(\nu_i)$. Since $\sigma\in Gen(\nu_i)$, $$\nu_i\restriction dp(\nu_i)+1=\sigma\restriction dp(\nu_i)+1$$ and $\nu_i>\sigma$. 
    On the other hand $\sigma\in I^{o(\nu_{i+1})}$, so $o(\sigma)\leq o(\nu_{i+1})$. 
    Thus $$0<\sigma_1(dp(\nu_i)+1)\leq o(\sigma)\leq o(\nu_{i+1})=(\nu_{i+1})_1(dp(\nu_{i+1})),$$ on the other hand since $dp(\nu_{i+1})=dp(\nu_i)+1$ and $\sigma\notin Gen(\nu_{i+1})$, $$0<\sigma_1(dp(\nu_i)+1)< o(\nu_{i+1}).$$ By the definition of the complexity, $$o(\nu_{i+1})=(\nu_{i+1})_1(dp(\nu_i)+1).$$ 
    We conclude that $\nu_{i+1}>\sigma$. Finally, by Corollary \ref{coro1}, $$\nu_\alpha\models tp_{bs}(\nu_{i+1},I\backslash Gen(\nu_{i+1}),I).$$ 
    We conclude that $\nu_\alpha>\sigma$, as we wanted.
\end{proof}


\subsection{$\varepsilon$-dense}

Recall that $\mathcal{Q}$ is $\varepsilon$-dense, we will show that the orders $\kappa\times\mathcal{Q}$, $I^0$, and $I$ are also $\varepsilon$-dense.

\begin{lemma}\label{dense_basic_order}
$\kappa\times\mathcal{Q}$ is $\varepsilon$-dense.
\end{lemma}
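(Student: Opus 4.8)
The plan is to unwind the definitions and reduce the $\varepsilon$-density of $\kappa\times\mathcal{Q}$ (ordered lexicographically) to the $\varepsilon$-density of $\mathcal{Q}$, which is given by hypothesis. So suppose $A,B\subseteq\kappa\times\mathcal{Q}$ have size less than $\varepsilon$ and every element of $A$ lies strictly below every element of $B$. Write $A=\{(\alpha_i,q_i)\mid i<\mu\}$ and $B=\{(\beta_j,r_j)\mid j<\nu\}$ with $\mu,\nu<\varepsilon$. I would first dispose of the trivial case $A=\emptyset$ or $B=\emptyset$: since $\kappa\times\mathcal{Q}$ has no endpoints (because $\kappa$ has none and $\mathcal{Q}\models DLO$ has none), we can always pick a $c$ below all of $B$ or above all of $A$ respectively.

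Assuming both $A$ and $B$ are nonempty, the key step is to look at the first coordinates. Let $\gamma^-=\sup\{\alpha_i\mid i<\mu\}$ and $\gamma^+=\min\{\beta_j\mid j<\nu\}$; since $\varepsilon<\kappa$ is regular and $\mu<\varepsilon$, the supremum $\gamma^-$ is an ordinal below $\kappa$, and by the lexicographic order hypothesis $\gamma^-\leq\gamma^+$. I would split into two cases. If $\gamma^-<\gamma^+$, then picking any ordinal $\gamma$ strictly between them (or, if $\gamma^+=\gamma^-+1$, taking $\gamma=\gamma^-$ only when no $\alpha_i$ equals $\gamma^-$, else use the gap more carefully) — more cleanly: if there is an ordinal $\gamma$ with $\gamma^-<\gamma<\gamma^+$ we set $c=(\gamma,q)$ for any $q\in\mathcal{Q}$ and are done, since $(\gamma,q)$ lexicographically exceeds every element of $A$ (all have first coordinate $\leq\gamma^-<\gamma$) and is below every element of $B$ (all have first coordinate $\geq\gamma^+>\gamma$). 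When the ordinals are too tight for that (e.g. $\gamma^+=\gamma^-$, or $\gamma^+=\gamma^-+1$ with $\gamma^-$ attained in $A$), I reduce to $\mathcal{Q}$: let $\delta$ be the common relevant first coordinate, set $A'=\{q_i\mid \alpha_i=\delta\}$ and $B'=\{r_j\mid \beta_j=\delta\}$; these are subsets of $\mathcal{Q}$ of size less than $\varepsilon$, and every element of $A'$ is below every element of $B'$ by the lexicographic order on the shared first coordinate $\delta$. By the $\varepsilon$-density of $\mathcal{Q}$ there is $q\in\mathcal{Q}$ with $A'<q<B'$; I claim $c=(\delta,q)$ works, since any element of $A$ with first coordinate $<\delta$ is automatically below $c$, any with first coordinate $=\delta$ is below $c$ by choice of $q$, and symmetrically for $B$.

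The main obstacle — really the only thing requiring care — is the bookkeeping of the boundary case where the first coordinates of $A$ and $B$ abut without a gap of ordinals between them: one must check that in every such configuration the appropriate slice $A'$ (or $B'$) over a single ordinal $\delta$ is nonempty exactly when needed and that $A'<B'$ genuinely holds, so that $\varepsilon$-density of $\mathcal{Q}$ applies (when one of $A',B'$ is empty, fall back on $\mathcal{Q}$ having no endpoint). Once the case analysis is set up, each verification that $A<c<B$ is an immediate unravelling of the lexicographic order, so I would not belabor it. The whole argument is short; its only content is that a lexicographic product of an endpointless linear order with an $\varepsilon$-dense $DLO$ inherits $\varepsilon$-density, using regularity of $\varepsilon$ to keep the relevant suprema below $\kappa$.
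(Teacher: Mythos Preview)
Your approach is correct and follows essentially the same idea as the paper: slice at an appropriate first coordinate and apply the $\varepsilon$-density of $\mathcal{Q}$ there. The paper's organization is a bit tidier in that it always works at the level $a^1:=\min_j\beta_j$ (your $\gamma^+$), where the $B$-slice is automatically nonempty, so the only remaining distinction is whether the $A$-slice at that level is empty; this sidesteps your case split on whether some ordinal lies strictly between $\gamma^-$ and $\gamma^+$. One small slip: $\kappa$ does have a least element, namely $0$, so your parenthetical justification for the trivial cases is off; and for $A=\emptyset$ with $|B|$ infinite, ``no endpoints'' alone is not quite enough---you still need to slice at $\min_j\beta_j$ and invoke the $\varepsilon$-density of $\mathcal{Q}$ with empty lower set, which is exactly what your main argument already covers.
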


\begin{proof}

        Let $\langle a_i\mid i<\theta_1\rangle$ and $\langle b_i\mid i<\theta_2\rangle$ be sequences of $\kappa\times\mathcal{Q}$ of length smaller than $\varepsilon$ such that for all $i<h<\theta_1$ and $j<l<\theta_2$, $a_i<a_h$, $b_l<b_j$, and $a_i<b_j$.  For all $i<\theta_1$ and $j<\theta_2$, let us denote $a_i=(a^1_i,a^2_i)$ and $b_j=(b^1_j,b^2_j)$

Let us start by the case when the sequence $\langle b_i\mid i<\theta_2\rangle$ is empty. Since $\theta_1<\varepsilon<\kappa$, there is $\alpha<\kappa$ such that for all $i<\theta_1$, $a_i^1<\alpha$. Let us fix $\tau'\in \mathcal{Q}$, and define $a=(\alpha,\tau')$. Therefore for all $i<\theta_1$, $a_i<a$.
        
        Let us show the case when the sequence $\langle b_i\mid i<\theta_2\rangle$ is non-empty. Let $a^1=\bigcap_{j<\theta_2}b^1_j$. Let us show that for all $i<\theta_1$, $a_i^1\leq a^1$. Let us suppose, towards contradiction, that there is $i<\theta_1$ such that $a^1_i>a^1$. Since there are no infinite descending sequences of ordinals, there is $j<\theta_2$ such that $b_j^1=a^1$. Therefore $a^1_i>b_j^1$, a contradiction.

        Let us show that there is $a^2$ such that for all $i<\theta_1$ and $j<\theta_2$, $a_i<(a^1,a^2)<b_j$. Let us define $A=\{a^2_i\mid j<\theta_1\ \&\ a^1=a^1_i\}$ and $B=\{b^2_j\mid j<\theta_1\ \&\ a^1=b^1_j\}$. Notice that $A$ is not necessarily a non-empty set.
        
        Let us show the case $A\neq\emptyset$, the other case is similar. Since $A,B\subseteq \mathcal{Q}$ and $|A|,|B|<\varepsilon$, there is $a^2$ such that for all $x\in A$ and $y\in B$, $x<a^2<y$. Let $i<\theta_1$ and $j<\theta_2$. If $a^1_i<a^1$, then $a_i<(a^1,a^2)$. Also, if $a^1<b^1_j$, then $(a^1,a^2)<b_j$. Finally, if $a^1_i=a^1$, then by the way $a^2$ was defined, $a_i<(a^1,a^2)$. In the same way, if $b^1_j=a^1$, then by the way $a^2$ was defined, $(a^1,a^2)<b_j$.

\end{proof}

\begin{lemma}
$\mathcal{I}^0$ is $\varepsilon$-dense.
\end{lemma}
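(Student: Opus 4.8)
The plan is to imitate the proof of Lemma~\ref{dense_basic_order}, with the caveat that elements of $\mathcal{I}^0$ are sequences of length $\varepsilon$ rather than single pairs, so the single ``pick a value in $\kappa\times\mathcal{Q}$'' step there must be replaced by a transfinite recursion along the coordinates. Fix $A,B\subseteq\mathcal{I}^0$ of size $<\varepsilon$ with $a<b$ for all $a\in A$, $b\in B$. First I would dispose of the cases $A=\emptyset$ or $B=\emptyset$ at coordinate $0$: if $B=\emptyset$, set $c(0)=(\mu,\tau)$ with $\mu=\sup\{a_1(0)\mid a\in A\}+1<\kappa$ (this uses that $\kappa$ is regular and $|A|<\varepsilon<\kappa$) and $c(\beta)=(0,\tau)$ for $\beta>0$; if $A=\emptyset$, symmetrically pick $c(0)$ to be a strict lower bound in $\kappa\times\mathcal{Q}$ of $\{b(0)\mid b\in B\}$, which exists because the $\kappa$-coordinates have a minimum $\mu$ and the $<\varepsilon$-many $\mathcal{Q}$-coordinates occurring at level $\mu$ admit a strict lower bound in $\mathcal{Q}$ by $\varepsilon$-density of $\mathcal{Q}$ (applied with empty left side). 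In both cases $c$ has $\kappa$-support $\subseteq\{0\}$, so $c\in\mathcal{I}^0$.

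For the main case $A,B\neq\emptyset$, I would first fix an ordinal $1\le\rho^{*}<\varepsilon$ lying above all ``first disagreement'' positions of pairs of distinct elements of $A\cup B$; such $\rho^{*}$ exists since there are $<\varepsilon$ such pairs, each disagreement position is $<\varepsilon$, and $\varepsilon$ is regular. Then $x\mapsto x\restriction\rho^{*}$ is order preserving and injective on $A\cup B$. I would then build $c$ by recursion on $\beta\le\rho^{*}$, maintaining for $A^{(\beta)}:=\{a\in A\mid a\restriction\beta=c\restriction\beta\}$ and $B^{(\beta)}:=\{b\in B\mid b\restriction\beta=c\restriction\beta\}$ the invariant that each element of $A\setminus A^{(\beta)}$ (resp.\ $B\setminus B^{(\beta)}$) has already been placed strictly below (resp.\ above) $c$ at some earlier coordinate; this passes to limit stages for free. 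At stage $\beta$: if one of $A^{(\beta)},B^{(\beta)}$ is empty, choose $c(\beta)$ a strict upper, resp.\ lower, bound in $\kappa\times\mathcal{Q}$ of the $\beta$-coordinates of the other side (regularity of $\kappa$ for the ordinal part, $\varepsilon$-density of $\mathcal{Q}$ for the $\mathcal{Q}$-part), put $c(\delta)=(0,\tau)$ for $\delta>\beta$, and stop. If both are nonempty, note that $a\restriction\beta=c\restriction\beta=b\restriction\beta$ together with $a<b$ forces $a(\beta)\le b(\beta)$; using this I choose $c(\beta)$ weakly between the two sides and continue. By injectivity of $x\mapsto x\restriction\rho^{*}$, the sets $A^{(\rho^{*})}$ and $B^{(\rho^{*})}$ together contain at most one element, disposed of as in the empty cases. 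Since the $\kappa$-support of $c$ stays inside $\rho^{*}+1$, of size $<\varepsilon$, we get $c\in\mathcal{I}^0$, and the invariant yields $a<c<b$ for all $a\in A$, $b\in B$.

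The hard part will be the ``both nonempty'' step, because $a<b$ only gives $a(\beta)\le b(\beta)$, not a strict inequality, so Lemma~\ref{dense_basic_order} cannot be applied verbatim, and one must also ensure the recursion actually terminates. The resolution I have in mind: let $\mu$ be the least $\kappa$-coordinate attained at position $\beta$ by elements of $B^{(\beta)}$ (so every $a\in A^{(\beta)}$ has $a_1(\beta)\le\mu$), and compare $Q_A=\{a_2(\beta)\mid a\in A^{(\beta)},\,a_1(\beta)=\mu\}$ with $Q_B=\{b_2(\beta)\mid b\in B^{(\beta)},\,b_1(\beta)=\mu\}$ in $\mathcal{Q}$; these are pointwise $\le$-comparable, and either $Q_A=\emptyset$ or $Q_A\cap Q_B=\emptyset$ --- in which cases $Q_A$ lies strictly below $Q_B$, $\varepsilon$-density of $\mathcal{Q}$ gives a strictly separating $q$, and $c(\beta)=(\mu,q)$ strictly separates $A^{(\beta)}$ from $B^{(\beta)}$ at coordinate $\beta$, so the recursion terminates --- or $Q_A\cap Q_B=\{q_0\}$ with $q_0=\max Q_A=\min Q_B$, in which case $c(\beta)=(\mu,q_0)$ and the recursion continues with strictly smaller surviving sets whose remaining members are still pairwise distinct in $A\cup B$, hence disagree at some coordinate $<\rho^{*}$; this bounds the recursion at stage $\rho^{*}$. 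Once this dichotomy is in place, the rest is routine verification of the invariant at successor and limit stages.
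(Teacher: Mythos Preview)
Your proposal is correct and follows essentially the same approach as the paper: a coordinate-by-coordinate recursion that at each stage invokes the $\varepsilon$-density of $\kappa\times\mathcal{Q}$ (Lemma~\ref{dense_basic_order}) to pick a value weakly between the surviving $A$- and $B$-values. The only organizational difference is that you cap the recursion at a bound $\rho^*<\varepsilon$ and terminate early once one side empties, whereas the paper simply runs the recursion all the way to $\varepsilon$, sets $a^\alpha=(0,\tau')$ once both sides are empty, and then argues separately that the resulting $F$ has small support and that the inequalities $f^i<F<g^j$ are strict; your version makes the termination and the strict separation more explicit. One small remark: your phrase ``strictly smaller surviving sets'' in the non-terminating subcase is not literally correct (if every surviving $a$ and $b$ already has $a(\beta)=b(\beta)=(\mu,q_0)$ the sets stay the same), but this does not matter since your actual termination argument relies on the $\rho^*$ bound, not on a size decrease.
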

\begin{proof}
    Let $\langle f^i\mid i<\theta_1\rangle$ and $\langle g^i\mid i<\theta_2\rangle$ be sequences of $\mathcal{I}^0$ of length smaller than $\varepsilon$ such that for all $i<h<\theta_1$ and $j<l<\theta_2$, $f^i<f^h$, $g^l<g^j$, and $f^i<g^j$. 
    Let us start by constructing a sequence $\langle a^\alpha\mid \alpha<\varepsilon\rangle$ by induction. By the way $\mathcal{I}^0$ was constructed, we know that $\langle f^i(0)\mid i<\theta_1\rangle$ is a non-decreasing sequence of $\kappa\times\mathcal{Q}$ and $\langle g^i(0)\mid i<\theta_2\rangle$ is a non-increasing sequence of $\kappa\times\mathcal{Q}$, such that for all $i<\theta_1$ and $j<\theta_2$, $f^i(0)\leq g^j(0)$. By Lemma \ref{dense_basic_order}, there is $a^0\in \kappa\times \mathcal{Q}$, such that for all $i<\theta_1$ and $j<\theta_2$, $f^i(0)\leq a^0 \leq g^j(0)$.

    Let $\alpha<\varepsilon$ be such that for all $\beta<\alpha$, $a^\beta$ has been defined. Let $$X_\alpha^f=\{f^i(\alpha)\mid i<\theta_1\ \&\ \forall\beta<\alpha\ f^i(\beta)=a^\beta\}$$ and $$X_\alpha^g=\{g^i(\alpha)\mid i<\theta_2\ \&\ \forall\beta<\alpha\ g^i(\beta)=a^\beta\}.$$ By the induction hypothesis, if $X_\alpha^f,X_\alpha^g\neq\emptyset$, then for all $y\in X_\alpha^f$ and $z\in X_\alpha^g$, $y\leq z$. 
    
    Therefore, by Lemma \ref{dense_basic_order}, if $X_\alpha^f\neq\emptyset$ or $X_\alpha^g\neq\emptyset$, then there is $a\in \kappa\times \mathcal{Q}$, such that for all $y\in X_\alpha^f$ and $z\in X_\alpha^g$, $y\leq a \leq z$. So, if $X_\alpha^f\neq\emptyset$ or $X_\alpha^g\neq\emptyset$ we choose $a^\alpha\in \kappa\times \mathcal{Q}$, such that for all $y\in X_\alpha^f$ and $z\in X_\alpha^g$, $y\leq a^\alpha \leq z$. 
    
    Let us fix $\tau'\in \mathcal{Q}$. If $\alpha<\varepsilon$ is such that $X_\alpha^f=X_\alpha^g=\emptyset$, then we choose $a^\alpha=(0,\tau')$.

     Let $F:\varepsilon\rightarrow \kappa\times \mathcal{Q}$ be defined by $F(\alpha)=a^\alpha$.
     \begin{claim}
         $F\in \mathcal{I}^0$.
     \end{claim}
     \begin{proof}
         Since $\theta_1, \theta_2<\varepsilon$, and for all $f\in \{f^i\mid i<\theta_1\}\cup \{g^i\mid i<\theta_2\}$, $|\{\alpha\in\varepsilon\mid f_1(\alpha)\neq 0\}|$ is smaller than $\varepsilon$, $|\{\alpha\in\varepsilon\mid F_1(\alpha)\neq 0\}|$ is smaller than $\varepsilon$.
     \end{proof}
     \begin{claim}
         For all $i<\theta_1$, $f^i< F$.
     \end{claim}
     \begin{proof}
     By the way $F$ was constructed, we know that for all $i<\theta_1$ and $j<\theta_2$, $f^i\leq F \leq g^j$. Let us assume, towards contradiction, that there is $i<\theta_1$ such that $f^i\not < F$. Since $\mathcal{I}^0$ is a linear order, $f^i=F$ and for all $h\ge i$, $f^h=F$. So $\theta_1=i+1$. 
     
         Notice that for all $\alpha<\kappa$, $f^i(\alpha)\in X_\alpha^f$. By Lemma \ref{dense_basic_order}, for all $\alpha<\varepsilon$, $f^i(\alpha)=a^\alpha$ implies that there is $j_\alpha<\theta_2$ such that for all $\beta\leq \alpha$, $g^{j_\alpha}(\beta)=a^\beta$. Thus, there is a sequence $\{g^{j_\alpha}\}_{\alpha<\varepsilon}$ such that for all $\alpha<\kappa$ and $\beta\leq\alpha$, $g^{j_\alpha}(\beta)=a^\beta$.
         
         Since $\theta_2<\varepsilon$, there is $j<\theta_2$ such that for all $\alpha<\varepsilon$, $g^j(\alpha)=a^\alpha$. So $g^j=F=f^i$, this contradicts that $f^i<g^j$.
     \end{proof} 
     Using the same argument, we can show that for all $j<\theta_2$, $F<g^j$.
\end{proof}

\begin{lemma}
    $I$ is $\varepsilon$-dense.
\end{lemma}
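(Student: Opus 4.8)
The plan is to lift the density argument for $\mathcal{I}^0$ to $I$ by exploiting the structure of the lexicographic order on $I$ and the definition of roads and generators. Suppose we are given sequences $\langle f^i\mid i<\theta_1\rangle$ and $\langle g^i\mid i<\theta_2\rangle$ in $I$, each of length $<\varepsilon$, with $f^i<f^h$ for $i<h<\theta_1$, $g^l<g^j$ for $j<l<\theta_2$, and $f^i<g^j$ for all $i,j$. I want to produce $c\in I$ with $f^i<c<g^j$ for all $i,j$. First I would reduce to the two degenerate cases: if $\langle g^i\rangle$ is empty, pick an ordinal $\alpha<\kappa$ bounding all complexities appearing among the $f^i$, go one level higher than all the $f^i$ in $I$, i.e. take a generator-style extension of $\sup f^i$ with first coordinate $\alpha$; symmetrically when $\langle f^i\rangle$ is empty. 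So assume both sequences are nonempty.

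The main idea for the nonempty case: build $c$ coordinatewise, the way $F$ was built in the previous lemma, but now tracking both the "$0$-coordinate" (which lives in $\{0\}\times\mathcal{I}^0$) and the subsequent coordinates (which live in $\kappa\times\mathcal{Q}$). Concretely, I would first apply the $\varepsilon$-density of $\mathcal{I}^0$ (the previous lemma) to the sequences $\langle (f^i)_2(0)\mid \cdots\rangle$ and $\langle (g^i)_2(0)\mid \cdots\rangle$ of $\mathcal{I}^0$-values at coordinate $0$ to choose $c(0)=(0,\nu_0)$ with $\nu_0$ suitably between them. If the chosen $\nu_0$ is strictly between the $0$-coordinates of all $f^i$ and all $g^j$, then any $c$ extending this choice with $c(\beta)=(0,\tau)$ for $\beta\ge1$ works — the comparison is decided at coordinate $0$ by the first clause of the order on $I$. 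Otherwise, $\nu_0$ agrees with the $0$-coordinate of a (necessarily terminal, by monotonicity) segment of the $f^i$'s and/or the $g^j$'s; restrict attention to those $f^i, g^j$ that agree with $c$ at coordinate $0$ and recurse on coordinate $1$, now using $\varepsilon$-density of $\kappa\times\mathcal{Q}$ (Lemma \ref{dense_basic_order}) instead, exactly as in the construction of $F$. Continue through all $\beta<\varepsilon$; when the relevant "still-agreeing" sets $X_\beta^f, X_\beta^g$ both become empty, set $c(\beta)=(0,\tau)$.

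I would then verify $c\in I$: the conditions are that $c(0)\in\{0\}\times\mathcal{I}^0$, that $c(\beta)\in\kappa\times\mathcal{Q}$ for $\beta\ge1$, that $c$ is eventually $(0,\tau)$, and that $c_1$ restricted to $dp(c)+1$ is strictly increasing. Eventual constancy at $(0,\tau)$ follows because $\theta_1,\theta_2<\varepsilon$ and each $f^i,g^j$ is eventually $(0,\tau)$, so only $<\varepsilon$-many coordinates can be forced to be nontrivial — the same counting argument as in the $\mathcal{I}^0$ lemma. The strict-increase condition on $c_1$ up to its depth: here I should be a little careful and, if the naive coordinatewise choice produces a non-increasing first coordinate at some stage, replace the chosen value at that coordinate by a larger ordinal in $\kappa$ (still respecting the betweenness constraints coming from $X_\beta^f, X_\beta^g$, which only constrain the $\mathcal{Q}$-part and the relative order, not the precise ordinal) — this is possible since $\kappa$ has no top and the relevant constraints leave cofinally many choices. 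The final step is to check $f^i<c<g^j$: this is the same "towards contradiction, suppose $f^i\not<c$, hence $f^i=c$, hence $\theta_1=i+1$, then extract from the density choices a $g^j$ that also equals $c$, contradicting $f^i<g^j$" argument used for $F$, adapted to the three-clause definition of $<$ on $I$.

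The main obstacle I anticipate is bookkeeping the interaction between the two density notions being invoked (that of $\mathcal{I}^0$ at coordinate $0$ and that of $\kappa\times\mathcal{Q}$ at later coordinates) together with the extra structural constraint that $c_1$ be strictly increasing up to $dp(c)$, which has no analogue in the $\mathcal{I}^0$ lemma; making sure these can all be satisfied simultaneously — in particular that forcing $c_1$ to be strictly increasing never conflicts with the betweenness requirements — is the delicate point, but it should go through because the betweenness requirements at a coordinate where we are still "tracking" both sides pin down only the $\mathcal{Q}$-component (or the order type), leaving the $\kappa$-component free to be chosen large enough.
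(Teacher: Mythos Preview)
Your overall strategy---build $c$ coordinatewise, using $\varepsilon$-density of $\mathcal{I}^0$ at coordinate $0$ and of $\kappa\times\mathcal{Q}$ thereafter, while tracking the sets $X^f_\beta,X^g_\beta$ of still-agreeing values---is exactly what the paper does. The final verification that $f^i<c<g^j$ also proceeds as you sketch, by case analysis on the three clauses defining $<$ on $I$.

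There is, however, a real gap in your handling of the strict-increase condition on $c_1\restriction dp(c)+1$. You write that when both sides are still being tracked, the betweenness constraints ``pin down only the $\mathcal{Q}$-component (or the order type), leaving the $\kappa$-component free to be chosen large enough.'' This is false: if some $g^j(\beta)=(m,q)$ lies in $X^g_\beta$, then $c(\beta)\le(m,q)$ forces $c_1(\beta)\le m$. The $\kappa$-component is bounded above, and you cannot simply inflate it.

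The paper's fix is different and cleaner: do not modify the choice at all. Instead observe that whenever $X^g_{\beta+1}\neq\emptyset$, the density lemma for $\kappa\times\mathcal{Q}$ selects $c_1(\beta+1)=\min\{g^j_1(\beta+1):g^j(\beta+1)\in X^g_{\beta+1}\}$, and any such $g^j$ agrees with $c$ below $\beta+1$; since $g^j\in I$, its first coordinate is strictly increasing up to its depth, so $g^j_1(\beta+1)>g^j_1(\beta)=c_1(\beta)$. Thus the natural choice already satisfies $c_1(\beta+1)>c_1(\beta)$. The cases $X^g_{\beta+1}=\emptyset$, $X^f_{\beta+1}\neq\emptyset$ and $X^f_{\beta+1}=X^g_{\beta+1}=\emptyset$ are handled by explicitly setting $c_1(\beta+1)$ to a suitable supremum (here the $\kappa$-component genuinely is free from above). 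Your degenerate-case treatment via ``$\sup f^i$'' is also too vague---that supremum need not exist in $I$---but the paper absorbs this into the same case split rather than treating it separately.
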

\begin{proof}
    Let $\langle f^i\mid i<\theta_1\rangle$ and $\langle g^i\mid i<\theta_2\rangle$ sequences of $I$ of length smaller than $\varepsilon$ such that for all $i<h<\theta_1$ and $j<l<\theta_2$, $f^i<f^h$, $g^l<g^j$, and $f^i<g^j$. Let us construct the sequence $\langle a^\alpha\mid \alpha<\varepsilon\rangle$ by induction.
    Since $\mathcal{I}^0$ is $\varepsilon$-dense, there is $a^{-1}\in \mathcal{I}^0$ such that $a^0=(0,a^{-1})$, and for all $i<\theta_1$ and $j<\theta_2$, $f^i(0)\leq a^0\leq g^j(0)$. 
    
    Let $0<\alpha<\varepsilon$ be such that for all $\beta<\alpha$, $a^\beta$ has been defined.
    Define $$X^f_\alpha=\{f^i(\alpha)\mid i<\theta_1,\ \forall\beta<\alpha\ (f^i(\beta)=a^\beta)\ \&\ f^i_1(\alpha)\neq 0)\}$$ 
    and $$X^g_\alpha=\{g^i(\alpha)\mid i<\theta_2,\ \forall\beta<\alpha\ (g^i(\beta)=a^\beta)\ \&\ g^i_1(\alpha)\neq 0)\}.$$

    By the induction hypothesis, if $X_\alpha^f,X_\alpha^g\neq\emptyset$, then for all $x\in X_\alpha^f$ and $y\in X_\alpha^g$, $x\leq y$. Therefore by Lemma \ref{dense_basic_order}, for all $\alpha>0$ if $X_\alpha^f\neq\emptyset$ or $X_\alpha^g\neq\emptyset$, there is $a\in (\kappa\backslash\{0\})\times\mathcal{Q}$ such that for all $x\in X^f_\alpha$ and $y\in X^g_\alpha$, $x\leq a\leq y$.

    If $X_\alpha^g\neq\emptyset$ we choose $a^\alpha\in \kappa\times \mathcal{Q}$ as in Lemma \ref{dense_basic_order}, such that for all $y\in X_\alpha^f$ and $z\in X_\alpha^g$, $y\leq a^\alpha \leq z$. 
    
    For all $\beta<\kappa$, let us denote $a^\beta=(a^\beta_1,a^\beta_2)$.  If $X_\alpha^f\neq\emptyset$ and $X_\alpha^g=\emptyset$, we choose $a^\alpha=(a^\alpha_1,a_2^\alpha)$ by
\begin{itemize}
\item $a^\alpha_1=\bigcup(\{a^\beta_1+1\mid \beta<\alpha\}\cup \{f^i_1(\alpha)+1\mid f^i(\alpha)\in X^f_\alpha\})$;
\item $a^\alpha_2=\tau$.
\end{itemize}

    If $\alpha$ is such that $X_\alpha^f=X_\alpha^g=\emptyset$, then we choose $a^\alpha$ as follows:
    \begin{itemize}
        \item If for all $\beta<\alpha$, $X^f_\beta\neq\emptyset$ or $X^g_\beta\neq \emptyset$, then choose $a^\alpha=(\bar{a}^\alpha,\tau)$, where $\bar{a}^\alpha=\bigcup\{a^\beta_1+1\mid \beta<\alpha\}$;
        \item if there is $\beta<\alpha$ such that $a^\beta=(0,\tau)$, then choose $a^\alpha=(0,\tau)$.
    \end{itemize}
    Let us define $F:\varepsilon\rightarrow (\{0\}\times \mathcal{I}^0)\cup (\kappa\times \mathcal{Q})$ by $F(\alpha)=a^\alpha$.

\begin{claim}
    $F\in I$
\end{claim}
\begin{proof}
    Since $\theta_1,\theta_2<\varepsilon$, there is $\alpha<\kappa$ such that for all $\beta>\alpha$, $F(\beta)=(0,\tau)$. Notice that there is $\alpha$ such that $X^f_\alpha,X^g_\alpha=\emptyset$ and for all $\beta<\alpha$, $X^f_\beta\neq\emptyset$ or $X^g_\beta\neq \emptyset$. 
    
    We are missing to show that  $F_1\restriction \alpha+1$ is strictly increasing. Let us suppose, towards contradiction, there is $\beta<\alpha$ such that $F_1(\beta)\ge F_1(\beta+1)$. 
    
    {\bf Case $X^f_{\beta+1}\neq\emptyset$.} There is $i<\theta_1$ such that for all $\varsigma<\beta+1$, $f^i_1(\varsigma)=F_1(\varsigma)$ and $f^i_1(\beta+1)\in X^f_{\beta+1}$. So $f^i_1(\beta+1)\neq 0$. Since $f^i\in I$, $f^i_1(\beta+1)>f^i_1(\beta)=F_1(\beta)\ge F_1(\beta+1)$ a contradiction with the way we chose $a^{\beta+1}$.

    {\bf Case $X^g_{\beta+1}\neq\emptyset$} and {\bf $X^f_{\beta+1}=\emptyset$.} From the proof of Lemma \ref{dense_basic_order} there is $l<\theta_2$ such that $F_1(\beta+1)=g_1^l(\beta+1)\in X^g_{\beta+1}$. So, for all $\varsigma<\beta+1$, $g^l_1(\varsigma)=F_1(\varsigma)$. Since $g^l\in I$, $g^l_1(\beta+1)>g^l_1(\beta)=F_1(\beta)\ge F_1(\beta+1)=g^l_1(\beta+1)$, a contradiction.

    {\bf Case $X^g_{\beta+1},X^f_{\beta+1}=\emptyset$}, i.e. $\alpha=\beta+1${\bf .} By the way $a^\alpha$ was defined, $a_1^\alpha>a_1^\beta$. So $F_1(\beta)\ge F_1(\beta+1)=F_1(\alpha)>F_1(\beta)$ a contradiction.
    
\end{proof}
    
     \begin{claim}
         For all $i<\theta_1$, $f^i<F$.
     \end{claim}
    \begin{proof}
        Let us suppose, towards contradiction, that there is $i<\theta_1$ such that $f^i\ge F$.

        {\bf Case $f^i> F$.} By the way $F$ was constructed this could only happens when $\alpha=dp(f^i)$, $f^i\restriction \alpha+1=F\restriction \alpha+1$ and $F_1(\alpha+1)\neq 0$.
        Notice that since $\langle f^i\mid i<\theta_1\rangle$ is an increasing sequence, there is no $j<\theta_1$ such that $f^i<f^j$. Otherwise, since $\alpha=dp(f^i)$, there is $\beta\leq\alpha$ such that $f^i\restriction \beta=f^j\restriction \beta$ and $a^\beta=f^i(\beta)<f^j(\beta)$. A contradiction with the way $a^\beta$ was chosen. Thus $\theta_1=i+1$ and $X^f_{\alpha+1}=\emptyset$.

       On the other hand $X^g_{\alpha+1}=\emptyset$. Otherwise, there is $j<\theta_2$ such that $g^j\restriction \alpha+1=F\restriction \alpha+1=f^i\restriction \alpha+1$ and $f^i(\alpha+1)=(0,\tau)$; so $f^i\ge g^j$.

        Finally, by the way $a^{\alpha}$ was chosen, $f^i(\alpha)=F(\alpha)$ implies that there is $j<\theta_2$ such that $g^j\restriction \alpha+1=f^i\restriction \alpha+1$. Since $X^g_{\alpha+1}=\emptyset$, $dp(g^j)=\alpha=dp(f^i)$. We conclude that $f^i=g^j$ a contradiction.
        

        {\bf Case $f^i= F$.} Let $\alpha=dp(f^i)$. Notice that $f^i(\alpha)\in X^f_\alpha$, by the way $a^{\alpha+1}$ was defined, $F_1(\alpha+1)=a^{\alpha+1}_1>0$. But $F_1(\alpha+1)=f^i_1(\alpha+1)=0$, a contradiction.

    \end{proof}

    \begin{claim}
        For all $j<\theta_2$, $F<g^j$.
    \end{claim}

    \begin{proof}
        Let us suppose, towards contradiction, that there is $j<\theta_2$ such that $g^j\leq F$.

        {\bf Case $g^j< F$.} Notice that $dp(g^j)>0$, otherwise since $g^j(0)\ge F(0)$, $g^j(0)= F(0)$ and $g^j_1(1)=0\neq F(1)$; so $g^j> F$ a contradiction. 
        
        Therefore, there is $0<\alpha<\theta_2$ such that $g^j\restriction\alpha=F\restriction\alpha$, and $F(\alpha)=(0,\tau)$ and $g^j(\alpha)\neq (0,\tau)$. Therefore $dp(g^j)\ge\alpha$, $g^j_1(\alpha)>0$, and $g^j(\alpha)\in X^g_\alpha$. Thus $F_1(\alpha)>0$ a contradiction. 

        {\bf Case $g^j= F$.} Let $\alpha=dp(g^j)$. Then $g^j(\alpha)\in X^g_\alpha$, by the way $F$ was constructed, $F(\alpha+1)\neq (0,\tau)$. Since $F=g^j$, $g^j(\alpha+1)\neq (0,\tau)$ and $dp(g^j)>\alpha$ a contradiction.
            
    \end{proof}
\end{proof}

\subsection{$(\kappa, \varepsilon)$-nice}

To prove the $(\kappa, \varepsilon)$-nice property, we will show that for all $i<\kappa$, the order $I^i$ is $(\kappa, \varepsilon)$-nice. This will follows from the properties of the generators.

\begin{lemma}\label{stricti_minor}
For all limit $\delta<\kappa$ and $\nu\in I^0$ there is $\beta<\delta$ which satisfies the following:
$$\forall\sigma\in I^0_\delta [\sigma>\nu \Rightarrow \exists\sigma'\in I^0_\beta\ (\sigma\ge\sigma'\ge\nu)].$$
In particular. If $\nu\notin I^0_\delta$, then $\beta$ satisfies:
$$\forall\sigma\in I^0_\delta [\sigma>\nu \Rightarrow \exists\sigma'\in I^0_\beta\ (\sigma>\sigma'>\nu)].$$
\end{lemma}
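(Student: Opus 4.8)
The plan is to argue directly, working inside $\mathcal{I}^0$ (to which $I^0$, with its induced order and its representation, is isomorphic), so I regard $\nu,\sigma$ as functions $\varepsilon\to\kappa\times\mathcal{Q}$ with coordinates $\nu(\eta)=(\nu_1(\eta),\nu_2(\eta))$, the order being lexicographic (and the order on $\kappa\times\mathcal{Q}$ lexicographic as well, with $\mathcal{Q}$ a dense linear order without endpoints), and membership $\nu\in I^0_\alpha$ meaning $\nu_1(\eta)<\alpha$ for all $\eta<\varepsilon$. I will prove the stronger \emph{strict} statement directly whenever $\nu\notin I^0_\delta$, and reduce to the choice $\sigma'=\nu$ when $\nu\in I^0_\delta$. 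The first step is a structural observation controlling where $\sigma$ can branch off from $\nu$: let $\eta_0<\varepsilon$ be the least $\eta$ with $\nu_1(\eta)\ge\delta$ if such an $\eta$ exists, and $\eta_0=\varepsilon$ otherwise. Then for every $\sigma\in I^0_\delta$ with $\sigma>\nu$, writing $\gamma$ for the least coordinate at which $\sigma$ and $\nu$ differ, one has $\gamma<\eta_0$ and $\nu_1(\eta)<\delta$ for all $\eta\le\gamma$. Indeed, if $\gamma\ge\eta_0$ then $\sigma_1(\eta_0)\ge\nu_1(\eta_0)\ge\delta$ (using that $\sigma$ and $\nu$ agree below $\gamma\ge\eta_0$, and that $\sigma(\gamma)>\nu(\gamma)$ forces $\sigma_1(\gamma)\ge\nu_1(\gamma)$ in case $\gamma=\eta_0$), contradicting $\sigma\in I^0_\delta$; and for $\eta<\gamma$ we have $\nu_1(\eta)=\sigma_1(\eta)<\delta$, while $\nu_1(\gamma)\le\sigma_1(\gamma)<\delta$.

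Next I fix the witness $\beta$. If $\nu\in I^0_\delta$ (so $\eta_0=\varepsilon$), then all values of $\nu_1$ lie below $\delta$ and its support has size $<\varepsilon$, so $\beta:=\bigl(\sup_{\eta<\varepsilon}\nu_1(\eta)\bigr)+2$ is an ordinal below $\delta$ with $\nu\in I^0_\beta$; here one takes $\sigma'=\nu$ for every relevant $\sigma$, giving $\sigma\ge\sigma'\ge\nu$. If $\nu\notin I^0_\delta$, set $\beta:=\bigl(\sup_{\eta<\eta_0}\nu_1(\eta)\bigr)+2$; by minimality of $\eta_0$ this is a supremum of fewer than $\varepsilon$ ordinals below $\delta$, hence below $\delta$. (This is the step where smallness of $\nu$'s support relative to $\cf(\delta)$ enters; the case $\varepsilon\le\cf(\delta)$, the one relevant to $(\kappa,\varepsilon)$-niceness, is precisely what makes the supremum stay below $\delta$.)

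It remains, in the case $\nu\notin I^0_\delta$, to produce for a given $\sigma\in I^0_\delta$ with $\sigma>\nu$ and first disagreement point $\gamma<\eta_0$ an interpolant $\sigma'\in I^0_\beta$ with $\nu<\sigma'<\sigma$. I define $\sigma'$ to agree with $\nu$ on all coordinates $<\gamma$, to take the constant value $(0,\tau)$ on all coordinates $>\gamma$, and at $\gamma$ to take a value $v\in\kappa\times\mathcal{Q}$ with $\nu(\gamma)<v<\sigma(\gamma)$ and first coordinate $v_1\le\nu_1(\gamma)+1$: if $\sigma_1(\gamma)>\nu_1(\gamma)$ put $v=(\nu_1(\gamma)+1,q)$ with $q$ arbitrary when $\nu_1(\gamma)+1<\sigma_1(\gamma)$ and $q<\sigma_2(\gamma)$ (possible since $\mathcal{Q}$ has no least element) when $\nu_1(\gamma)+1=\sigma_1(\gamma)$; if $\sigma_1(\gamma)=\nu_1(\gamma)$, then $\sigma_2(\gamma)>\nu_2(\gamma)$, and density of $\mathcal{Q}$ gives $q$ with $\nu_2(\gamma)<q<\sigma_2(\gamma)$, and we put $v=(\nu_1(\gamma),q)$. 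Then $\sigma'\in\mathcal{I}^0$ because $\supp(\sigma'_1)\subseteq(\supp(\nu_1)\cap\gamma)\cup\{\gamma\}$ has size $<\varepsilon$; $\sigma'\in I^0_\beta$ because $\sigma'_1(\eta)=\nu_1(\eta)<\beta$ for $\eta<\gamma<\eta_0$, $\sigma'_1(\gamma)=v_1\le\nu_1(\gamma)+1<\beta$, and $\sigma'_1(\eta)=0$ for $\eta>\gamma$; and $\nu<\sigma'<\sigma$ because $\sigma'$ agrees with both $\nu$ and $\sigma$ on $[0,\gamma)$ (the latter by the choice of $\gamma$) while $\nu(\gamma)<v<\sigma(\gamma)$. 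This simultaneously delivers the strict conclusion of the ``in particular'' clause.

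The routine remaining work is just the order comparisons and membership bookkeeping indicated above. The genuinely delicate points are (i) verifying that the chosen $\beta$ lies below $\delta$, which rests on $\nu$ having fewer than $\varepsilon$ nonzero coordinates together with $\cf(\delta)\ge\varepsilon$, and (ii) the existence of the interpolating value $v$ with a controlled first coordinate, for which both density of $\mathcal{Q}$ and the absence of a least element in $\mathcal{Q}$ are exactly what is used. I expect (i) to be the main obstacle.
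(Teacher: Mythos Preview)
Your proof is correct and follows essentially the same approach as the paper's: handle $\nu\in I^0_\delta$ by taking $\sigma'=\nu$, and for $\nu\notin I^0_\delta$ pick $\beta$ bounding the values $\nu_1(\eta)$ below the first coordinate where $\nu_1$ jumps past $\delta$, then interpolate at the first disagreement $\gamma$ (your interpolant uses first coordinate $\nu_1(\gamma)+1$ where the paper keeps $\nu_1(\gamma)$ and moves only in the $\mathcal{Q}$-coordinate, but both choices work). Your explicit remark that $\beta<\delta$ genuinely requires $\cf(\delta)\ge\varepsilon$ is in fact more careful than the paper, which states the lemma for arbitrary limit $\delta$ but tacitly uses this hypothesis when asserting the bound---and only ever invokes the lemma downstream under that cofinality assumption.
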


\begin{proof}
It is enough to show that $\mathcal{I}^0$ satisfies the desire property.  
Suppose $\delta<\kappa$ is a limit and $\nu\in \mathcal{I}^0$. If $\nu\in \mathcal{I}^0_\delta$, then there is $\beta<\delta$ such that $\nu\in \mathcal{I}^0_\beta$ and the result follows.

Let us take care of the case $\nu\notin \mathcal{I}^0_\delta$. Let $\beta<\delta$ be the least ordinal such that for all $\alpha<\varepsilon$, $\nu_1(\alpha)<\delta$ implies $\nu_1(\alpha)<\beta$.

\begin{claim}
For all $\sigma\in \mathcal{I}^0_\delta$. If $\sigma> \nu$, then there is $\sigma'\in \mathcal{I}^0_\beta$ such that $\sigma\neq\sigma'$ and $\sigma>\sigma'>\nu$.
\end{claim}
\begin{proof}
Let us suppose $\sigma\in \mathcal{I}^0_\delta$ is such that $\sigma> \nu$. By the definition of $\mathcal{I}^0$, there is $\alpha<\varepsilon$ such that $\sigma(\alpha)>\nu(\alpha)$ and $\alpha$ is the minimum ordinal such that $\sigma(\alpha)\neq \nu(\alpha)$. Since $\sigma\in \mathcal{I}^0_\delta$, for all $\rho\leq \alpha$, $\nu_1(\rho)<\delta$. Thus for all $\rho\leq \alpha$, $\nu_1(\rho)<\beta$.

Let us divide the proof in two cases, $\sigma_1(\alpha)=\nu_1(\alpha)$ and $\sigma_1(\alpha)>\nu_1(\alpha)$.

{\bf Case 1.} $\sigma_1(\alpha)=\nu_1(\alpha)$.

By the density of $\mathcal{Q}$ there is $r$ such that $\sigma_2(\alpha)>r>\nu_2(\alpha)$. Let us define $\sigma'$ by:
$$\sigma'(\rho)=\begin{cases} \nu(\rho) &\mbox{if } \rho<\alpha\\
(\nu_1(\alpha), r) & \mbox{if } \rho=\alpha\\
0 & \mbox{otherwise. } \end{cases}$$  
Clearly $\sigma>\sigma'>\nu$. Since $\nu_1(\rho)<\beta$ for all $\rho\leq \alpha$, $\sigma'\in \mathcal{I}^0_\beta$.

{\bf Case 2.} $\sigma_1(\alpha)>\nu_1(\alpha)$.

Since $\mathcal{Q}$ is a model of DLO, there is $r$ such that $r>\nu_2(\alpha)$. Let us define $\sigma'$ by:
$$\sigma'(\rho)=\begin{cases} \nu(\rho) &\mbox{if } \rho<\alpha\\
(\nu_1(\alpha), r) & \mbox{if } \rho=\alpha\\
0 & \mbox{otherwise. } \end{cases}$$  
Clearly $\sigma>\sigma'>\nu$. Since $\nu_1(\rho)<\beta$ for all $\rho\leq \alpha$, $\sigma'\in \mathcal{I}^0_\beta$.
\end{proof}
\end{proof}

\begin{lemma}\label{nice_for_i}
For all $i<\kappa$, $\delta<\kappa$ a limit ordinal, and $\nu\in I^i$, there is $\beta<\delta$ that satisfies the following:
\begin{equation}
\forall\sigma\in I^i_\delta\ [\sigma>\nu \Rightarrow \exists\sigma'\in I^i_\beta\ (\sigma\ge\sigma'\ge\nu)].
\end{equation}
In particular. If $\nu\notin I^i_\delta$, then $\beta$ satisfies:
$$\forall\sigma\in I^i_\delta [\sigma>\nu \Rightarrow \exists\sigma'\in I^0_\beta\ (\sigma>\sigma'>\nu)]$$
\end{lemma}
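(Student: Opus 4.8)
The plan is to reduce Lemma \ref{nice_for_i} to the behaviour of the base order $\mathcal{I}^0$ recorded in Lemma \ref{stricti_minor}, using the fact that the representation $\langle I^i_\alpha\mid\alpha<\kappa\rangle$ refines \emph{only in the $0$-th coordinate}. Concretely, whether $i$ is $0$, a successor, or a limit, membership $f\in I^i_\alpha$ depends only on the two conditions ``$o(f)\le i$'' (resp.\ $o(f)<i$ for $i$ limit) and ``$f_2(0)\in\mathcal{I}^0_\alpha$'', while the order on $I$ restricted to the $0$-th coordinate is exactly that of $\mathcal{I}^0$, this coordinate being the most significant one. Hence for $f,g\in I$ with $f_2(0)\neq g_2(0)$ one has $f<g\iff f_2(0)<g_2(0)$, and for $\nu\in I^i$ one has $\nu\notin I^i_\delta\iff\nu_2(0)\notin\mathcal{I}^0_\delta$ (since $o(\nu)\le i$ holds automatically). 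The case $i=0$ is Lemma \ref{stricti_minor} itself via the order-isomorphism $I^0\cong\mathcal{I}^0$, and the argument for general $i$ is uniform in $i$ — no genuine induction is needed.

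First I would dispose of the easy case: if $\nu_2(0)\in\mathcal{I}^0_\beta$ for some $\beta<\delta$ — which is what happens when $\nu\in I^i_\delta$, by continuity of $\langle\mathcal{I}^0_\alpha\rangle$ at ordinals of cofinality $\ge\varepsilon$ (the case relevant for $(\kappa,\varepsilon)$-niceness) — then $\nu\in I^i_\beta$, so for any $\sigma\in I^i_\delta$ with $\sigma>\nu$ the constant choice $\sigma':=\nu$ witnesses the conclusion; and if moreover $\sigma_2(0)=\nu_2(0)$ then even $\sigma':=\sigma$ works, since then $\sigma\in I^i_\beta$. The substantial case is $\nu_2(0)\notin\mathcal{I}^0_\delta$, i.e.\ $\nu\notin I^i_\delta$. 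Here I would apply the ``in particular'' clause of Lemma \ref{stricti_minor} to $\mathcal{I}^0$ and $\nu_2(0)$, obtaining $\beta<\delta$ such that every $\rho\in\mathcal{I}^0_\delta$ with $\rho>\nu_2(0)$ admits $\rho'\in\mathcal{I}^0_\beta$ with $\rho>\rho'>\nu_2(0)$. Given $\sigma\in I^i_\delta$ with $\sigma>\nu$: since $\sigma_2(0)\in\mathcal{I}^0_\delta$ while $\nu_2(0)\notin\mathcal{I}^0_\delta$ we get $\sigma_2(0)\neq\nu_2(0)$, hence $\sigma_2(0)>\nu_2(0)$ by the first clause of the order; choosing $\rho'\in\mathcal{I}^0_\beta$ with $\sigma_2(0)>\rho'>\nu_2(0)$, let $\sigma'\in I$ be the function with $\sigma'(0)=(0,\rho')$ and $\sigma'(\xi)=(0,\tau)$ for all $0<\xi<\varepsilon$. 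Then $dp(\sigma')=0=o(\sigma')$, so $\sigma'\in I^0_\beta\subseteq I^i_\beta$, and comparing $0$-th coordinates gives $\sigma>\sigma'>\nu$. This establishes the displayed property and, since the witness lies in $I^0_\beta$ with both inequalities strict, the ``in particular'' strengthening as well.

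I expect the main obstacle to be not a difficult construction but the correct isolation of hypotheses: one must check that the degenerate configuration — $\nu\in I^i_\delta$ whose $0$-th coordinate is not captured below $\delta$, together with some $\sigma>\nu$ in $I^i_\delta$ sharing $\nu$'s $0$-th coordinate — does not occur, for in it every admissible $\sigma'$ would be forced to have $\sigma'_2(0)=\nu_2(0)\notin\mathcal{I}^0_\beta$, contradicting $\sigma'\in I^i_\beta$. This is exactly what the cofinality restriction $cf(\delta)\ge\varepsilon$ (implicit in the intended applications, and also what keeps the $\beta$ produced by Lemma \ref{stricti_minor} below $\delta$) rules out. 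Once that is pinned down, the remaining verifications — that the $\sigma'$ exhibited really lies in $I^0_\beta$ and strictly between $\nu$ and $\sigma$ — are immediate from the definitions of the order and of $\langle I^i_\alpha\rangle$, together with the density of $\mathcal{Q}$ already exploited in Lemma \ref{stricti_minor}.
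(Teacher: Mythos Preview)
Your approach is essentially the same as the paper's, just phrased in different coordinates: where you work directly with $\nu_2(0)\in\mathcal{I}^0$ and the first clause of the order on $I$, the paper uses the equivalent language of generators --- it takes $\nu'\in I^0$ with $\nu\in Gen(\nu')$ (this $\nu'$ is precisely your depth-$0$ element with $0$-th coordinate $(0,\nu_2(0))$), applies Lemma~\ref{stricti_minor} to $\nu'$, and then invokes Corollary~\ref{coro1} to transfer inequalities between $\nu,\nu'$ and between $\sigma$ and its generator. Your construction of $\sigma'$ as a depth-$0$ element is exactly the paper's $\sigma''\in I^0_\beta$. The two presentations are interchangeable; yours is a bit more self-contained since it avoids the generator machinery.

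You have also correctly put your finger on a point the paper glosses over: the paper disposes of the case $\nu\in I^i_\delta$ by asserting ``there is $\alpha<\delta$ such that $\nu\in I^i_\alpha$'', but $\langle\mathcal{I}^0_\alpha\rangle$ is \emph{not} continuous at limits of cofinality $<\varepsilon$ (an element of $\mathcal{I}^0$ has $<\varepsilon$ nonzero first coordinates, so their supremum can equal $\delta$ when $cf(\delta)<\varepsilon$), and in that situation the degenerate configuration you describe --- $\sigma>\nu$ in $I^i_\delta$ with $\sigma_2(0)=\nu_2(0)$ --- can indeed occur, and then no $\sigma'\in I^i_\beta$ between them exists. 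As you note, this is harmless for the only application (Lemma~\ref{nice_big_I}), which carries the hypothesis $cf(\delta)\ge\varepsilon$.
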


\begin{proof}
Notice that if $\nu\in I^{i}_\delta$, then there is $\alpha<\delta$ such that $\nu\in I^{i}_\alpha$ and the result follows for $\beta=\alpha$. We only have to prove the lemma when $\nu\in I^i\backslash  I^{i}_\delta$:

\textit{For all $i<\kappa$, $\delta<\kappa$ a limit ordinal, and $\nu\in I^i\backslash I^{i}_\delta$, there is $\beta<\delta$ that satisfies the following:}
\begin{equation}\label{property_HT2}
\forall\sigma\in I^i_\delta\ [\sigma>\nu \Rightarrow \exists\sigma'\in I^0_\beta\ (\sigma>\sigma'>\nu)]
\end{equation}

Let $\nu\in I^i$ be such that $\nu\in I^i\backslash I^{i}_\delta$. Let $\nu'\in I^0$ be such that $\nu\in Gen(\nu')$, so by Fact \ref{inside_the_gen}, $\nu'\notin I^i_\delta$. By Lemma \ref{stricti_minor}, there is $\beta<\delta$ such that $$\forall\sigma\in I^0_\delta\ [\sigma>\nu' \Rightarrow \exists\sigma'\in I^0_\beta\ (\sigma\ge\sigma'\ge\nu')].$$

\begin{claim}
$\beta$ is as wanted.
\end{claim}
\begin{proof}
Let $\sigma\in I^i_\delta$ such that $\sigma>\nu$. Since $\nu'\notin I^i_\delta$, by Corollary \ref{coro1}, $\sigma>\nu'$. Since $\sigma\in I^i_\delta$, there is $\sigma'\in I^0_\delta$ such that $\sigma\in Gen(\sigma')$. By the way $\beta$ was chosen, there is $\sigma''\in I^0_\beta$ such that $\sigma'>\sigma''>\nu'$. It is clear that $\sigma''\notin Gen(\nu')\cup Gen(\sigma')$. We conclude from Corollary \ref{coro1} that $\sigma>\sigma''>\nu$. Since $\sigma''\in I^0_\beta\subseteq I^i_\beta$, $\sigma''$ is as wanted. 
\end{proof}
\end{proof}
As it can be seen in the proof of the previous lemma, the witness $\sigma''$ can be chosen in $I^0_\beta$ when $\nu\notin I^i_\delta$.

\begin{lemma}\label{nice_big_I}
For all $\delta<\kappa$ limit with $cf(\delta)\ge \varepsilon$, and $\nu\in I$, there is $\beta<\delta$ that satisfies the following:
\begin{equation}\label{property_HT3}
\forall\sigma\in I_\delta\ [\sigma>\nu \Rightarrow \exists\sigma'\in I_\beta\ (\sigma\ge\sigma'\ge\nu)]
\end{equation}
\end{lemma}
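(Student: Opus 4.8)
The plan is to dispose of the case $\nu\in I_\delta$ first and then, for $\nu\notin I_\delta$, to read off from the road of $\nu$ a single point $\nu^\ast$ on it that is ``responsible'' for $\nu\notin I_\delta$ and reduce the assertion to Lemma~\ref{stricti_minor} in one case and to an explicitly built low-complexity witness in the other. If $\nu\in I_\delta$, I would use that $\langle I_\alpha\mid\alpha<\kappa\rangle$ is continuous at $\delta$: every $f\in I$ has $dp(f)<\varepsilon$, so $\{f_1(i)\mid i\le dp(f)\}$ and the set of nonzero coordinates of $f_2(0)$ are subsets of $\kappa$ of size $<\varepsilon\le\cf(\delta)$, hence are bounded below $\delta$ once they lie below $\delta$; thus $\nu\in I_\beta$ for some $\beta<\delta$, and $\sigma'=\nu$ works for every $\sigma$.

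Now assume $\nu\notin I_\delta$ and let $\langle\nu_i\mid i\le dp(\nu)\rangle$ be the road from $I^0$ to $\nu$. By the same continuity remark there is a least $\gamma\le dp(\nu)$ with $\nu_\gamma\notin I_\delta$; put $\nu^\ast=\nu_\gamma$, so $\nu\in Gen(\nu^\ast)$, $\nu^\ast\notin I_\delta$, while $\nu_i\in I_\delta$ for all $i<\gamma$. The crucial numerical point, and the one place $\cf(\delta)\ge\varepsilon$ is used, is that $\mu:=\sup_{i<\gamma}\nu_1(i)<\delta$, since it is a supremum of at most $|dp(\nu)|<\varepsilon$ ordinals below $\delta$. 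Likewise, when $\nu_2(0)\in\mathcal I^0_\delta$ there is $\beta_0<\delta$ with $\nu_2(0)\in\mathcal I^0_{\beta_0}$.

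\textbf{Case 1: $\gamma=0$}, i.e.\ $\nu_2(0)\notin\mathcal I^0_\delta$ and $\nu^\ast=\nu_0$. Apply Lemma~\ref{stricti_minor} to $\delta$ and $\nu_0$, obtaining $\beta<\delta$ such that every $\varrho\in I^0_\delta$ with $\varrho>\nu_0$ has $\sigma''\in I^0_\beta$ with $\varrho>\sigma''>\nu_0$. Given $\sigma\in I_\delta$ with $\sigma>\nu$, let $\sigma_0\in I^0_\delta$ be the road-root of $\sigma$; then $\sigma_0\ge\sigma>\nu$, and $\sigma_0\notin Gen(\nu_0)$ because $\sigma_0$ and $\nu_0$ differ in the $0$-th coordinate (one of $(\sigma_0)_2(0),(\nu_0)_2(0)$ lies in $\mathcal I^0_\delta$, the other not), so by Corollary~\ref{coro1} (with generator $\nu_0$) $\sigma_0>\nu_0$. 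Feeding $\sigma_0$ into Lemma~\ref{stricti_minor} yields $\sigma''\in I^0_\beta$ with $\sigma_0>\sigma''>\nu_0$; as $\sigma''$ also differs in the $0$-th coordinate from both $\nu_0$ and $\sigma_0$, two further applications of Corollary~\ref{coro1} give $\sigma>\sigma''>\nu$, and $\sigma'=\sigma''\in I^0_\beta\subseteq I_\beta$ works. \textbf{Case 2: $\nu_2(0)\in\mathcal I^0_\delta$}, so $o(\nu^\ast)=\nu_1(\gamma)\ge\delta$ and $\gamma\ge1$. Fix $\beta<\delta$ with $\beta>\mu$ and $\nu_2(0)\in\mathcal I^0_\beta$; note $\nu_0\in I^0_\beta\subseteq I_\beta$. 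Let $\sigma\in I_\delta$ with $\sigma>\nu$. If $\sigma\notin Gen(\nu_0)$, then (Corollary~\ref{coro1}) $\sigma>\nu_0>\nu$ and $\sigma'=\nu_0$ works. So assume $\sigma\in Gen(\nu_0)$. One checks $\sigma\notin Gen(\nu^\ast)$: otherwise $\sigma$ agrees with $\nu^\ast$ through coordinate $\gamma$, forcing $\sigma_1(\gamma)=o(\nu^\ast)\ge\delta$ and hence $o(\sigma)\ge\delta$, impossible; so $\sigma>\nu^\ast$. Let $i_0=\sup\{i\le\gamma\mid\sigma\in Gen(\nu_i)\}$, so $0\le i_0<\gamma$. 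If $\sigma$ equals the road element $\nu_{i_0}$ (which is exactly what happens when $dp(\sigma)=i_0$; for $\gamma$ a successor a short argument as above shows $\sigma\in Gen(\nu_{\gamma-1})$ forces this), then $\sigma'=\sigma=\nu_{i_0}$ works, since $o(\nu_{i_0})=\nu_1(i_0)\le\mu<\beta$ and $(\nu_{i_0})_2(0)=\nu_2(0)\in\mathcal I^0_\beta$ give $\nu_{i_0}\in I_\beta$, and $\nu_{i_0}>\nu$. Otherwise $dp(\sigma)>i_0$ and $\sigma$ first differs from $\nu^\ast$ at some coordinate $\eta$ with $1\le\eta<\gamma$ ($\eta=i_0+1$ or $\eta=i_0$), where $\sigma(\eta)>\nu^\ast(\eta)=\nu(\eta)$ in $\kappa\times\mathcal Q$; using density of $\mathcal Q$ I would pick $r\in\mathcal Q$ with $\nu_2(\eta)<r<\sigma_2(\eta)$ if $\sigma_1(\eta)=\nu_1(\eta)$ and any $r>\nu_2(\eta)$ otherwise, and set $\sigma'\restriction\eta=\nu\restriction\eta$, $\sigma'(\eta)=(\nu_1(\eta),r)$, $\sigma'(\zeta)=(0,\tau)$ for $\zeta>\eta$. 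Then $\sigma'\in I$ with $dp(\sigma')=\eta$ (the strict-increase clause holds because $\nu_1\restriction(\eta+1)$ is strictly increasing), $o(\sigma')=\nu_1(\eta)\le\mu<\beta$ as $\eta<\gamma$, and $\sigma'_2(0)=\nu_2(0)\in\mathcal I^0_\beta$, so $\sigma'\in I_\beta$; comparing at coordinate $\eta$ together with $\sigma'\notin Gen(\nu^\ast)$ gives $\sigma>\sigma'>\nu$.

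The hard part is Case~2: a priori $o(\nu)\ge\delta$, so $\nu$ lies in no $I^i$ with $i<\delta$, and since $I_\beta=I^\beta_\beta$ couples the complexity bound with the bound on the $0$-th coordinate, one must control both uniformly in $\sigma$. This is exactly what $\cf(\delta)\ge\varepsilon$ buys: it forces $\mu<\delta$, so all the hand-built witnesses have complexity below the single ordinal $\beta$; without this hypothesis the witnesses would be cofinal in $\delta$ in complexity and no $\beta<\delta$ could work. The remaining verifications — that the constructed elements are well-defined members of $I$ and that the displayed strict inequalities hold — are routine applications of the generator facts above (Facts~\ref{descending_gen}, \ref{inside_the_gen} and Corollaries~\ref{coro1}, \ref{coro2}).
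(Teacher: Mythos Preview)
Your proof is correct and follows the same three-way case split as the paper (the case $\nu\in I_\delta$; the case $\nu_2(0)\notin\mathcal I^0_\delta$, which is the paper's Case~3 and your Case~1; and the case $\nu_2(0)\in\mathcal I^0_\delta$ with $o(\nu^\ast)\ge\delta$, which is the paper's Case~2 and your Case~2). Your Case~1 argues directly via Lemma~\ref{stricti_minor} rather than invoking Lemma~\ref{nice_for_i}, but this is just unrolling that lemma's proof.

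Your Case~2 is actually \emph{more complete} than the paper's. The paper asserts that if $\sigma\in Gen(\nu_0)$ then $\sigma\in I^\beta_\beta$, but this is not true in general: take $dp(\nu)=2$ with $\nu_1(1)=5$, $\nu_1(2)=\delta+1$, so $\beta$ can be chosen around~$7$; then $\sigma$ with $\sigma(0)=\nu(0)$, $\sigma_1(1)=100<\delta$, $dp(\sigma)=1$ satisfies $\sigma\in Gen(\nu_0)$, $\sigma>\nu$, $\sigma\in I_\delta$, yet $o(\sigma)=100\not<\beta$. Your explicit construction of a low-complexity witness $\sigma'$ with $o(\sigma')=\nu_1(\eta)\le\mu<\beta$ is exactly what is needed to repair this. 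A couple of minor points: your dichotomy ``$\sigma=\nu_{i_0}$ versus $dp(\sigma)>i_0$'' should read ``$\dots$ versus $dp(\sigma)\ge i_0$ with $\sigma\neq\nu_{i_0}$'' to cover the case where $i_0$ is a limit and the sup is not attained (your construction still applies there with $\eta=i_0$); and the parenthetical about $\gamma$ a successor could be stated more directly as ``if $\eta=\gamma$ then $\sigma_1(\gamma)>0$ would force $o(\sigma)\ge\delta$, so $\sigma_1(\gamma)=0$ and hence $\sigma=\nu_{\gamma-1}$.''
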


\begin{proof}
Let $\delta<\kappa$ be a limit ordinal with $cf(\delta)\ge \varepsilon$, and $\nu\in I$.
We have three different cases: $\nu\in I_\delta$, $\nu\in I^{o(\nu)}_\delta \backslash I_\delta$, and $\nu\notin I^{o(\nu)}_\delta$.

{\bf Case $\nu\in I_\delta$.} Since $\delta$ is a limit, $o(\nu)+1<\delta$ and there is $\rho<\delta$ such that $\nu\in I^{o(\nu)+1}_\rho$. Let $\beta=max\{o(\nu)+1,\rho\}$, it is clear that $\beta$ is as wanted.

{\bf Case $\nu\in I^{o(\nu)}_\delta \backslash I_\delta$.} 
Notice that $o(\nu)>\delta$. Let $\nu_0\in I^0$ be such that $\nu\in Gen(\nu_0)$. Let $\{\nu_i\}_{i\leq\alpha}$ be the road from $I^0$ to $\nu$, so $\nu=\nu_\alpha$ and $\alpha$ is the depth of $\nu$. The sequence $\langle o(\nu_i)\mid i\leq\alpha\rangle$ is an strictly increasing sequence with $o(\nu_0)=0$ and $o(\nu_\alpha)>\delta$. Therefore, there is $j\leq\alpha$ such that $\forall i<j$, $o(\nu_i)<\delta$ and $o(\nu_j)\ge \delta$. 

\begin{claim}
There is $\rho<\delta$ such that for all $i<j$, $o(\nu_i)+1<\rho$.
\end{claim}
\begin{proof}
Let us suppose, towards contradiction, that such $\rho$ doesn't exists. Since $\forall i<j$, $o(\nu_i)<\delta$ and $o(\nu_j)\ge \delta$, $\langle o(\nu_i)\mid i<j \rangle$ is cofinal to $\delta$. On the other hand $j<\varepsilon$, we conclude that $cf(\delta)<\varepsilon$ a contradiction.
\end{proof}

Recall that $\nu\in I^{o(\nu)}_\delta$, by Fact \ref{inside_the_gen}, for all $i<\alpha$, $\nu_i\in I^{o(\nu_i)+1}_\delta$. Let $\theta_1$ be $\rho$ in the previous claim. Notice that for all $i\leq\alpha$, $\nu_i\notin I^{\theta_1}_\delta$ implies $\nu_i\notin I^\delta_\delta$. Since $\delta$ is a limit ordinal, there is $\theta_2$ such that $\nu_0\in I^0_{\theta_2}$. Thus, for all $i\leq\alpha$, $\nu_i\in I^{o(\nu_i)+1}_{\theta_2}$. Let $\beta=max\{\theta_1,\theta_2\}$.

\begin{claim}
$\beta$ is as wanted.
\end{claim}
\begin{proof}
Let $\sigma\in I^\delta_\delta$ be such that $\sigma>\nu$. If $\sigma\in Gen(\nu_0)$, then $\sigma\in I^\beta_\beta$ and we are done.
Otherwise,  $\sigma\notin Gen(\nu_0)$. By Corollary \ref{coro1}, $\sigma>\nu_0>\nu$. So $\nu_0=\sigma'$ is as we wanted.
\end{proof}

{\bf Case $\nu\notin I^{o(\nu)}_\delta$.}  Let $\rho=max\{o(\nu)+1, \delta\}$, thus $\nu\in I^\rho$ and by Lemma \ref{nice_for_i} there is $\beta<\delta$ which satisfies the following:
$$\forall\sigma\in I^\rho_\delta [\sigma>\nu \Rightarrow \exists\sigma'\in I^0_\beta\ (\sigma>\sigma'>\nu)].$$

\begin{claim}
$\beta$ is as wanted.
\end{claim}
\begin{proof}
Let $\sigma\in I^\delta_\delta$ be such that $\sigma>\nu$. Since $\delta\leq\rho$, $\sigma\in I^\rho_\delta$. Therefore, there is $\sigma'\in I^0_\beta$ such that $\sigma>\sigma'>\nu$. The claim follows from $I^0_\beta\subseteq I^\beta_\beta=I_\beta$.
\end{proof}
\end{proof}

\begin{cor}\label{I_nice}
$I$ is $(\kappa, \varepsilon)$-nice.
\end{cor}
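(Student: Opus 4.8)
The corollary is essentially Lemma \ref{nice_big_I} repackaged in the form of the definition of $(\kappa,\varepsilon)$-nice, so the plan is short. I would take the club $C$ required by the definition to be all of $\kappa$, and apply the definition with respect to the $\kappa$-representation $\langle I_\alpha\mid\alpha<\kappa\rangle=\langle I^\alpha_\alpha\mid\alpha<\kappa\rangle$ already fixed above (a $\kappa$-representation under the hypothesis of Theorem \ref{main_order_prop}). So fix a limit $\delta<\kappa$ with $\cf(\delta)\ge\varepsilon$ and an arbitrary $\nu\in I$ playing the role of $x$; the task is to produce $\beta<\delta$ witnessing one of the two alternatives. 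Applying Lemma \ref{nice_big_I} to $\delta$ and $\nu$ yields $\beta<\delta$ with
$$\forall\sigma\in I_\delta\ [\sigma>\nu\Rightarrow\exists\sigma'\in I_\beta\ (\sigma\ge\sigma'\ge\nu)],$$
which is exactly the first alternative of the definition, save that it ranges over $\sigma>\nu$ rather than $\sigma\ge\nu$.

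Hence the only thing left is the boundary instance $\sigma=\nu$. If $\nu\notin I_\delta$ there is no such $\sigma\in I_\delta$ and nothing to check. If $\nu\in I_\delta$, I would read off from the first case of the proof of Lemma \ref{nice_big_I} that the chosen $\beta=\max\{o(\nu)+1,\rho\}$, with $\nu\in I^{o(\nu)+1}_\rho$, satisfies $\nu\in I^{o(\nu)+1}_\beta\subseteq I^\beta_\beta=I_\beta$ (using monotonicity of $I^i_\gamma$ in both coordinates, which is immediate from the construction); so $\sigma':=\nu$ is the required witness. Putting the two together, the first alternative in the definition of $(\kappa,\varepsilon)$-nice holds for every limit $\delta<\kappa$ with $\cf(\delta)\ge\varepsilon$ and every $\nu\in I$, and therefore $I$ is $(\kappa,\varepsilon)$-nice, witnessed by $C=\kappa$.

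There is no real obstacle at this stage: all the work has been done in Lemmas \ref{stricti_minor}, \ref{nice_for_i} and \ref{nice_big_I} (which is where the density of $\mathcal{Q}$, the structure of generators and roads, Corollary \ref{coro1}, Fact \ref{inside_the_gen}, and the cofinality bound $\cf(\delta)\ge\varepsilon$ are actually used). The two points I would be careful to state explicitly are: that the $\beta$ delivered by Lemma \ref{nice_big_I} may depend on $\nu$, which is legitimate since the definition quantifies $\beta$ after $x$; and that the asymmetric, upward-only conclusion of Lemma \ref{nice_big_I} genuinely matches the first of the two symmetric-looking clauses of the definition once the $\sigma=\nu$ case is absorbed, so that the second clause is never invoked.
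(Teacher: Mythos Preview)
Your proposal is correct and matches the paper's approach: the paper states Corollary~\ref{I_nice} with no proof at all, treating it as an immediate consequence of Lemma~\ref{nice_big_I}. Your write-up simply makes explicit the one boundary case ($\sigma=\nu$) needed to pass from the strict inequality in the lemma to the weak inequality in the definition, and your handling of that case via the first clause of the proof of Lemma~\ref{nice_big_I} is accurate.
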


\subsection{$(<\kappa)$-stable}

Notice that if $\kappa$ is inaccessible, $I$ is $(<\kappa)$-stable. This can be generalize to $\kappa$ successors. Recall that $|\mathcal{Q}|=\theta<\kappa$ and $\theta\ge\varepsilon$.

\begin{lemma}\label{stable_I0_first_lemma}
Suppose $\kappa=\lambda^+$ and $2^\theta\leq\lambda=\lambda^{<\varepsilon}$.
$\mathcal{I}^0$ is $(<\kappa)$-stable.
\end{lemma}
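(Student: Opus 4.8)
The plan is to reduce the statement to a counting of cuts. In a linear order the basic formulas are $x<y$, $x=y$, $x>y$ (and their negations, which are disjunctions of these), so for $B\subseteq\mathcal{I}^0$ the type $tp_{bs}(a,B,\mathcal{I}^0)$ is determined by $a$ itself when $a\in B$ and by the cut $(\{b\in B\mid b<a\},\{b\in B\mid b>a\})$ when $a\notin B$. Since $|B|<\kappa=\lambda^+$, i.e.\ $|B|\le\lambda$, it suffices to show that $\mathcal{I}^0\setminus B$ realises at most $\lambda$ cuts of $B$. First I would introduce the tree $T_B=\{b\restriction\alpha\mid b\in B,\ \alpha\le\varepsilon\}$ ordered by end-extension. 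Each $b$ contributes at most $\varepsilon$ initial segments, and $\varepsilon\le\theta\le\lambda$, so $|T_B|\le\lambda$; moreover for every $\mu<\varepsilon$ a branch of $T_B$ of length $\mu$ (a chain with a node on each level $<\mu$) is a function $\mu\to T_B$, so there are at most $|T_B|^\mu\le\lambda^{<\varepsilon}=\lambda$ of them, hence at most $\varepsilon\cdot\lambda=\lambda$ branches of length $<\varepsilon$ in total.

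The key observation is a decomposition of the elements of $\mathcal{I}^0$. For $f\in\mathcal{I}^0$ let $\gamma_f<\varepsilon$ be the supremum of $\{\alpha<\varepsilon\mid f_1(\alpha)\ne 0\}$ plus one; this is well defined and $<\varepsilon$ because $\varepsilon$ is regular and the support of $f$ has size $<\varepsilon$. Write $f$ as the concatenation of $f\restriction\gamma_f$ with $f^{>}:=f\restriction[\gamma_f,\varepsilon)$. By construction $f^{>}$ has first coordinate constantly $0$, so it is coded by $f_2\restriction[\gamma_f,\varepsilon)\colon[\gamma_f,\varepsilon)\to\mathcal{Q}$; since $\theta\ge\varepsilon$, the number of such tails (for fixed $\gamma_f$, then summing over the $<\varepsilon$ possible values of $\gamma_f$) is at most $\theta^\varepsilon\le(2^\theta)^\varepsilon=2^\theta\le\lambda$. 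This is precisely where the hypothesis $2^\theta\le\lambda$ is used.

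Then I would fix $a\in\mathcal{I}^0\setminus B$, set $\gamma=\gamma_a$ and $t=a\restriction\gamma$, and split into two cases. If $t\notin T_B$, let $\mu_a\le\gamma<\varepsilon$ be least with $a\restriction\mu_a\notin T_B$; then every $b\in B$ satisfies $\Delta(a,b)<\mu_a$ (otherwise $a\restriction\mu_a=b\restriction\mu_a\in T_B$), so the side of $b$ relative to $a$ is read off from the branch $\langle a\restriction\alpha\mid\alpha<\mu_a\rangle$ of $T_B$, together with — when $\mu_a=\rho+1$ — the position of $a(\rho)$ among $\{b(\rho)\mid b\in B,\ b\restriction\rho=a\restriction\rho\}$. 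Since $\mu_a<\varepsilon$ there are at most $\lambda$ such branches, and by the auxiliary fact that a subset of $\kappa\times\mathcal{Q}$ of size $\le\lambda$ realises at most $\lambda$ cuts in $\kappa\times\mathcal{Q}$ (proved by locating the first coordinate within the $\le\lambda$-sized projection to $\kappa$, and then, in the corresponding fibre, the second coordinate within a $\le\lambda$-sized subset of $\mathcal{Q}$), this case produces at most $\lambda$ cuts. If instead $t\in T_B$, write $B_t=\{b\in B\mid b\restriction\gamma=t\}$; the cut of $a$ in $B\setminus B_t$ is read off from the node $t$ alone (such $b$ differ from $a$ below $\gamma$), so has at most $|T_B|\le\lambda$ possibilities, while the cut of $a$ in $B_t$ is exactly the cut of the tail $a^{>}$ in $\{b\restriction[\gamma,\varepsilon)\mid b\in B_t\}$, hence has at most $\lambda$ possibilities by the tail count above; together at most $\lambda\cdot\lambda=\lambda$. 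Summing the two cases and adding the at most $|B|\le\lambda$ types of the form $x=b$ gives fewer than $\kappa$ types over $B$, which is the $(<\kappa)$-stability of $\mathcal{I}^0$.

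I expect the main obstacle to be conceptual rather than computational: the $\mathcal{Q}$-coordinate of an element of $\mathcal{I}^0$ is completely unconstrained, so one cannot hope to pin a cut down by boundedly much data — the point is that, although the tail $f^{>}$ is unbounded, there are only $2^\theta\le\lambda$ possibilities for it, which is exactly what the hypothesis buys, while $\lambda=\lambda^{<\varepsilon}$ controls $T_B$ and its short branches. The remaining work is bookkeeping: verifying that every comparison of $a$ with $b\in B$ is computed either inside the node $a\restriction\gamma_a$, or inside the tail $a^{>}$ once $a$ and $b$ agree below $\gamma_a$, or at the unique level where $a$'s branch leaves $T_B$, and handling the limit/successor dichotomy for $\mu_a$ correctly.
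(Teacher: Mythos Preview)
Your proof is correct, but it takes a genuinely different route from the paper's. The paper argues more directly: it first introduces the auxiliary order $\mathcal{I}^{-1}$ of functions $f:\varepsilon\to\kappa$ with support of size $<\varepsilon$ (i.e.\ the first-coordinate projection of $\mathcal{I}^0$), and shows $\mathcal{I}^{-1}$ is $(<\kappa)$-stable by a one-line truncation: setting $\beta=\sup\{g(\alpha)+1\mid g\in A,\ \alpha<\varepsilon\}<\kappa$, the type of $f$ over $A$ is determined by the function $\alpha\mapsto\min(f(\alpha),\beta)$, and there are at most $\lambda^{<\varepsilon}=\lambda$ such functions. It then passes from $\mathcal{I}^{-1}$ to $\mathcal{I}^0$ by observing that the type of $a=(a_1,a_2)$ over $A$ is controlled by this truncated first coordinate together with the full second coordinate $a_2\in\mathcal{Q}^\varepsilon$, the latter contributing a factor of at most $\theta^\varepsilon\le 2^\theta\le\lambda$.

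Your tree-of-initial-segments decomposition accomplishes the same bound but organises the bookkeeping in the opposite order: you isolate the $\mathcal{Q}$-only tail $a^>$ first (your Case~2), and handle the remaining comparisons via branches of $T_B$ and a cut in $\kappa\times\mathcal{Q}$ (your Case~1). The paper's uniform truncation is shorter and avoids both the case split and the auxiliary lemma on cuts in $\kappa\times\mathcal{Q}$; your argument has the merit of making the cut structure of $\mathcal{I}^0$ explicit, which would transfer more readily to variants where the first factor is not well-ordered.
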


\begin{proof}
Let $\mathcal{I}^{-1}$ be the set of functions $f:\varepsilon\rightarrow \kappa$ such that $|\{\alpha\in \varepsilon\mid f(\alpha)\neq 0\}|<\varepsilon$. We say that $f<g$ if $f(\alpha)<g(\alpha)$, where $\alpha$ is the least ordinal such that $f(\alpha)\neq g(\alpha)$.
\begin{claim}
$\mathcal{I}^{-1}$ is $(<\kappa)$-stable.
\end{claim}
\begin{proof}
Let $A\subseteq \mathcal{I}^{-1}$ be such that $|A|<\kappa$, and let $$\beta=sup\{f(\alpha)+1\mid f\in A\ \&\ \alpha<\varepsilon\}$$ so $\beta<\kappa$. Therefore, for all $f\in \mathcal{I}^{-1}$, $tp_{bs}(f,A,\mathcal{I}^{-1})$ is entirely determined by the coordinates of $f$ which are smaller than $\beta+1$. Since $\lambda=\lambda^{<\varepsilon}$ and $\beta<\kappa$, $|\{tp_{bs}(f,A,\mathcal{I}^{-1})\mid f\in \mathcal{I}^{-1}\}|< \kappa$.
\end{proof}

For all $A\subseteq \mathcal{I}^0$ define $Pr(A)$ as the set $\{f_1\mid f\in A\}$. Let $A\subseteq I^0$ be such that $|A|<\kappa$. 
Since $|\mathcal{Q}|=\theta$, $$|\{tp_{bs}(a,A,\mathcal{I}^0)\mid a\in \mathcal{I}^0\}|\leq |\{tp_{bs}(a,Pr(A),\mathcal{I}^{-1})\mid a\in \mathcal{I}^{-1}\}\times 2^\theta|.$$
By the previous claim and since $2^\theta\leq\lambda$, $|\{tp_{bs}(a,A,\mathcal{I}^0)\mid a\in \mathcal{I}^0\}|< \kappa$.
\end{proof}

This Lemma implies that under the assumptions
 $\kappa=\lambda^+$ and $2^\theta\leq\lambda=\lambda^{<\varepsilon}$,
$I^0$ is $(<\kappa)$-stable.

\begin{lemma}\label{stable_gen}
Suppose $\kappa=\lambda^+$ and $2^\theta\leq\lambda=\lambda^{<\varepsilon}$. For all $\nu\in I^0$, $Gen(\nu)$ with the induced order is  $(<\kappa)$-stable.
\end{lemma}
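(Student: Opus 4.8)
The plan is to reduce the statement to Lemma~\ref{stable_I0_first_lemma} by showing that, for a fixed $\nu\in I^0$, the linear order $Gen(\nu)$ embeds, in a $tp_{bs}$-preserving way, into a structure whose stability we already control. Recall that $dp(\nu)=0$ for $\nu\in I^0$, so $Gen(\nu)=\{g\in I\mid g(0)=\nu(0)\}$; that is, $Gen(\nu)$ consists of all sequences in $I$ agreeing with $\nu$ at coordinate $0$ and otherwise ranging over $\kappa\times\mathcal{Q}$ at coordinates in $\varepsilon\setminus\{0\}$ subject to the structural constraints defining $I$ (strictly increasing first coordinate up to the depth, eventual value $(0,\tau)$). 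First I would observe that the coordinate-$0$ part is now constant across $Gen(\nu)$, so it contributes nothing to $tp_{bs}$-types computed over subsets of $Gen(\nu)$: for $g,h\in Gen(\nu)$ and $B\subseteq Gen(\nu)$, the type $tp_{bs}(g,B,Gen(\nu))$ depends only on the restrictions $g\restriction(\varepsilon\setminus\{0\})$ and the corresponding restrictions of members of $B$.

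Next I would set up the analogue of $\mathcal{I}^{-1}$ from the proof of Lemma~\ref{stable_I0_first_lemma}: let $G$ be the set of functions $f\colon\varepsilon\setminus\{0\}\to\kappa\times\mathcal{Q}$ satisfying the structural constraints inherited from the definition of $I$ (there is $\alpha<\varepsilon$ with $f(\beta)=(0,\tau)$ for $\beta>\alpha$, and $f_1$ strictly increasing up to that $\alpha$), ordered as $I$ is. The map $g\mapsto g\restriction(\varepsilon\setminus\{0\})$ is an order isomorphism of $Gen(\nu)$ onto $G$, so it suffices to show $G$ is $(<\kappa)$-stable. For this I would follow the $\mathcal{I}^{-1}$ argument verbatim in spirit: given $A\subseteq G$ with $|A|<\kappa$, put $\beta=\sup\{f_1(\alpha)+1\mid f\in A,\ \alpha<\varepsilon\}<\kappa$ (using $\kappa$ regular and $|A|,\varepsilon<\kappa$); since $I$'s order comparison at a coordinate is decided by the first place of disagreement together with whether the first coordinate there is $0$, the type $tp_{bs}(f,A,G)$ is determined by the behavior of $f$ at coordinates where $f_1<\beta$ together with its $\mathcal{Q}$-components there, plus the finite-support-below-$\varepsilon$ pattern of where $f_1=0$. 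There are at most $|(\beta\times\mathcal{Q}\cup\{(0,\tau)\})^{<\varepsilon}|\leq \lambda^{<\varepsilon}\cdot(2^\theta)^{<\varepsilon}$ such possibilities; since $\lambda=\lambda^{<\varepsilon}$ and $2^\theta\leq\lambda$ (hence $(2^\theta)^{<\varepsilon}\leq\lambda^{<\varepsilon}=\lambda$), this is $\leq\lambda<\kappa$.

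The step I expect to be the main obstacle is the careful bookkeeping in the last sentence: verifying that $tp_{bs}(f,A,G)$ really is determined by that bounded amount of data. The subtlety is that $I$'s order relation is not purely lexicographic — it has the special clause involving $dp$ and the value $(0,\tau)$ — so I must check that comparing $f$ with a fixed $a\in A$ only ever consults coordinates where one of $f,a$ has first component $0$ (which is pinned down by the support pattern, a bounded-length object) or where both first components are nonzero and hence, by the choice of $\beta$, the $a$-side first component is $<\beta$; in the latter case $f$'s first component at that place is also forced into $\beta$ for the comparison to go either way nontrivially, by strict monotonicity of $f_1$ below its depth. Once this case analysis is done cleanly — mirroring how Lemma~\ref{stable_I0_first_lemma} handled $\mathcal{I}^{-1}$ and then absorbed the $\mathcal{Q}$-factor via the $2^\theta\leq\lambda$ bound — the counting is immediate and the lemma follows.
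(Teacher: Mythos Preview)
Your proposal is correct and follows essentially the same line as the paper: bound the first coordinates appearing in $A$ by $\beta=\sup\{f_1(\alpha)+1\mid f\in A,\ \alpha<\varepsilon\}<\kappa$, observe that $tp_{bs}(f,A,Gen(\nu))$ is determined by $f$'s behaviour at coordinates where $f_1<\beta$ together with the $\mathcal{Q}$-data, and count via $\lambda^{<\varepsilon}=\lambda$ and $2^\theta\leq\lambda$. One minor slip: write ``$<\varepsilon$-support'' rather than ``finite-support'' (since $dp(f)<\varepsilon$, not $dp(f)<\omega$), but this does not affect the argument.
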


\begin{proof}
Let $\nu\in I^0$ and $A\subseteq Gen(\nu)$ be such that $|A|<\kappa$, and let $$\beta=sup\{f_1(\alpha)+1\mid f\in A\ \&\ \alpha<\varepsilon\}.$$ Since $f_1(\alpha)=0$ for all $\alpha> dp(f)$, $\beta<\kappa$. On the other hand, for all $f,g\in Gen(\nu)$, $f$ and $g$ eventually become constants to $(0,\tau)$, and the order $f<g$ (or $g<f$) is determined by the values of $f(\alpha)$ and $g(\alpha)$, where $\alpha$ is the least ordinal such that$f(\alpha)\neq g(\alpha)$.
Therefore, for all $f\in Gen(\nu)$, $tp_{bs}(f,A,Gen(\nu))$ is entirely determined by the coordinates $\alpha$ of $f$ in which $f_1(\alpha)$ is smaller than $\beta+1$, and the types of $\mathcal{Q}$. Since $\lambda^{<\varepsilon}=\lambda$, $\beta<\kappa$, and $2^\theta\leq \lambda$ $$|\{tp_{bs}(f,A,Gen(\nu))\mid f\in Gen(\nu)\}|\leq |\beta^{<\varepsilon}\times 2^\theta|\leq \lambda< \kappa.$$
\end{proof}

\begin{lemma}\label{I_stable}
Suppose $\kappa=\lambda^+$ and $2^\theta\leq\lambda=\lambda^{<\varepsilon}$.
$I$ is $(<\kappa)$-stable.
\end{lemma}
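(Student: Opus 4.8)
The plan is to reduce the $(<\kappa)$-stability of $I$ to that of $\mathcal{I}^0$ (Lemma~\ref{stable_I0_first_lemma}) and of each generator $Gen(\nu)$, $\nu\in I^0$ (Lemma~\ref{stable_gen}), by fibering $I$ over $\mathcal{I}^0$ through its level-$0$ coordinate. Fix $B\subseteq I$ with $|B|<\kappa$; since $\kappa=\lambda^+$ this means $|B|\leq\lambda$. As $I$ is a linear order, a type $tp_{bs}(a,B,I)$ is completely determined by how $a$ compares with each $b\in B$, so it is enough to show that at most $\lambda$ many such ``cuts'' are realized in $I$.

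Define $\pi\colon I\to\mathcal{I}^0$ by $\pi(f)=f_2(0)$, the $\mathcal{I}^0$-coordinate of $f(0)\in\{0\}\times\mathcal{I}^0$. First I would record two facts. \emph{(1)} From the definition of the order on $I$: if $\pi(f)\neq\pi(g)$ then $f(0)\neq g(0)$, so the last two clauses of that definition cannot apply and $f<g$ holds iff $f_2(0)<g_2(0)$; that is, on any two points lying in different fibers of $\pi$ the order of $I$ agrees with the $\mathcal{I}^0$-order of their $\pi$-images. \emph{(2)} Every $\nu_0\in I^0$ has depth $0$, so $Gen(\nu_0)=\{g\in I\mid g\restriction 1=\nu_0\restriction 1\}=\{g\in I\mid \pi(g)=\pi(\nu_0)\}$; hence the fibers of $\pi$ are precisely the sets $Gen(\nu_0)$, the map $\nu_0\mapsto\pi(\nu_0)$ is a bijection $I^0\to\mathcal{I}^0$, and the order of $I$ restricted to a fiber $Gen(\nu_0)$ is exactly the order on $Gen(\nu_0)$ from Lemma~\ref{stable_gen}. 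Combining these, for $a\in I$ the cut of $a$ in $B$ is determined by the pair consisting of: the cut of $\pi(a)$ in $\pi[B]\subseteq\mathcal{I}^0$ (which by (1) pins down the comparison of $a$ with every $b\in B$ in a different fiber, and also reveals whether $\pi(a)\in\pi[B]$ and, if so, which element it is, hence which fiber $B_c:=\{b\in B\mid \pi(b)=c\}$ with $c=\pi(a)$ is at issue); and, when $\pi(a)=c\in\pi[B]$, the cut of $a$ in $B_c$ computed inside $Gen(\nu_0)$, for $\nu_0$ the unique element of $I^0$ with $\pi(\nu_0)=c$.

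Then I would count. We have $|\pi[B]|\leq|B|\leq\lambda<\kappa$, so by $(<\kappa)$-stability of $\mathcal{I}^0$ there are at most $\lambda$ possibilities for the first component; for each of them with $\pi(a)\notin\pi[B]$ the cut is already determined, and for each with $\pi(a)=c\in\pi[B]$ we have $B_c\subseteq Gen(\nu_0)$ with $|B_c|\leq\lambda<\kappa$, so by $(<\kappa)$-stability of $Gen(\nu_0)$ there are at most $\lambda$ possibilities for the second component. Using that $\kappa=\lambda^+$ is regular (so $\lambda\cdot\lambda=\lambda$), the total number of cuts of $B$ realized in $I$ is at most $\lambda<\kappa$, which gives $(<\kappa)$-stability of $I$.

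The routine parts are the two cited lemmas and the cardinal arithmetic; the one place that needs genuine care is step (2) — verifying that the fibers of the level-$0$ projection really are the sets $Gen(\nu_0)$ and that, via (1), a cut of $B$ splits along fibers in the clean ``lexicographic'' way described, so that knowing the $\mathcal{I}^0$-cut of $\pi(a)$ together with a single within-fiber cut suffices to recover $tp_{bs}(a,B,I)$. Once that structural picture is in place, the bound is immediate.
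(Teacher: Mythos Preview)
Your proposal is correct and follows essentially the same approach as the paper's own proof: decompose $tp_{bs}(a,B,I)$ into a base-type over $I^0$ (equivalently $\mathcal{I}^0$) together with a within-generator type, then bound each factor by $\lambda$ using Lemma~\ref{stable_I0_first_lemma} and Lemma~\ref{stable_gen}. The only cosmetic difference is that you phrase the decomposition via the projection $\pi:I\to\mathcal{I}^0$ and verify the fibering directly from the definition of the order, whereas the paper routes the same observation through Corollary~\ref{coro1} and the set $A'\subseteq I^0$; the content is identical.
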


\begin{proof}
Let us fix $A\subset I$ such that $|A|<\kappa$. From Corollary \ref{coro1}, for all $a\in I$ and $\nu\in I^0$ such that $a\in Gen(\nu)$ the following holds:
$$b\models tp_{bs}(a,A,I) \Leftrightarrow b\models tp_{bs}(\nu,A \backslash Gen(\nu),I)\cup tp_{bs}(a,A\cap Gen(\nu), Gen(\nu)).$$
Thus for all $a\in I$ and $\nu\in I^0$ with $a\in Gen(\nu)$, the type of $a$ is determined by $tp_{bs}(\nu,A \backslash Gen(\nu),I)$ and $tp_{bs}(a,A \cap Gen(\nu), Gen(\nu))$.
Let $A'\subseteq I^0$ be such that the following hold:
\begin{itemize}
\item for all $x\in A$ there is $y\in A'$, $x\in Gen(y)$;
\item for all $y\in A'$ there is $x\in A$, $x\in Gen(y)$.
\end{itemize}

Clearly $|A'|\leq |A|$, and by Corollary \ref{coro1}, for all $\nu\in I^0$, $tp_{bs}(\nu,A\backslash Gen(\nu),I)$ is determined by $tp_{bs}(\nu,A'\backslash \{\nu\},I^0)$. So for all $a\in I$ and $\nu\in I^0$ with $a\in Gen(\nu)$, $tp_{bs}(a,A,I)$ is determined by $tp_{bs}(\nu,A'\backslash \{\nu\},I^0)$ and $tp_{bs}(a,A\cap Gen(\nu), Gen(\nu))$. Therefore $|\{tp_{bs}(a,A,I)\mid a\in I\}|$ is bounded by 
$$|\{tp_{bs}(\nu,A',I^0)\mid \nu\in I^0\}|\ \times\  Sup(\{B_\nu \mid \nu \in I^0\})$$ where 
$$B_\nu=|\{tp_{bs}(a,A \cap Gen(\nu), Gen(\nu))\mid a\in Gen(\nu)\}|.$$

From Lemma \ref{stable_gen}, we conclude that for all $\nu\in I^0$, $B_\nu<\kappa$. Since $\kappa=\lambda^+$, $Sup(\{B_\nu \mid \nu \in I^0\})\leq  \lambda$. From Lemma \ref{stable_I0_first_lemma} we know that $|\{tp_{bs}(\nu,A',I^0)\mid \nu\in I^0\}|<\kappa$, so $|\{tp_{bs}(\nu,A',I^0)\mid \nu\in I^0\}|\leq\lambda$. We conclude $|\{tp_{bs}(a,A,I)\mid a\in I\}|<\kappa$.
\end{proof}

\subsection{$\kappa$-colorable}

To finish the proof of Theorem \ref{main_order_prop} we will show that $I$ is $\kappa$-colorable. The depth of the elements will have a crucial role to define the $\kappa$-color function of $I$.

We say that $\Gamma=\langle \nu_j\mid j< \alpha\rangle$ is a pre-road if there is $\nu\in I$ such that $\langle \nu_j\mid j\leq \alpha\rangle$ is the road from $I^0$ to $\nu=\nu_\alpha$.
Notice that all roads are pre-roads.
For all pre-road $\Gamma=\langle \nu_j\mid j< \alpha\rangle$, let us define the successors of $\Gamma$, $Succ_I(\Gamma)$, and the complexities of $\Gamma$, $Comp(\Gamma)$, as follows:
$$Succ_I(\Gamma)=\{\sigma\in I\mid \langle \nu_j\mid j\leq \alpha\rangle \text{ is the road from } I^0\text{ to }\sigma=\nu_\alpha\}$$ and  $$Comp(\Gamma)=\{o(\nu)\mid \nu\in Succ_I(\Gamma) \}.$$

\begin{lemma}\label{A_desire_order}
$I$ is a $\kappa$-colorable linear order.
\end{lemma}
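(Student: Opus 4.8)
The plan is to exhibit a colouring $F\colon I\to\kappa$ satisfying Definition \ref{defcolorark}, organised along the pre-road structure just introduced. First I would record the basic fact that every $\nu\in I$ lies in \emph{exactly one} set of the form $Succ_I(\Gamma)$: if $\langle\nu_j\mid j\leq dp(\nu)\rangle$ is the (unique) road from $I^0$ to $\nu$, then $\nu\in Succ_I(\Gamma)$ precisely for the pre-road $\Gamma=\langle\nu_j\mid j<dp(\nu)\rangle$, with $\Gamma$ the empty pre-road and $Succ_I(\langle\,\rangle)=I^0$ when $dp(\nu)=0$. Thus the depth of $\nu$ is the length of the pre-road of which $\nu$ is a successor. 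For a pre-road $\Gamma$ of length $\alpha>0$ the map $\nu\mapsto(o(\nu),\nu_2(\alpha))$ identifies $Succ_I(\Gamma)$ with $Comp(\Gamma)\times\mathcal{Q}$ (lexicographically), $Comp(\Gamma)$ being an end segment of $\kappa$; since $|\mathcal{Q}|=\theta<\kappa$ this gives $|Succ_I(\Gamma)|=\kappa$, and $|Succ_I(\langle\,\rangle)|=|I^0|=|\mathcal{I}^0|=\kappa$ under the hypotheses of Theorem \ref{main_order_prop}. Hence for each pre-road $\Gamma$ I can fix a surjection $F_\Gamma\colon Succ_I(\Gamma)\to\kappa$ all of whose fibres have size $\kappa$, and define $F:=\bigcup_\Gamma F_\Gamma$, which is well defined by the first observation and has every fibre of size $\kappa$ inside every $Succ_I(\Gamma)$.

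Next, to verify the defining property, fix $B\subseteq I$ with $|B|<\kappa$, $b\in I\setminus B$, $p:=tp_{bs}(b,B,I)$, and a colour $\alpha<\kappa$. Since $I$ is a linear order, $p$ is determined by the cut $b$ induces on $B$. I would then manufacture a single element $b^{+}$ that realizes $p$ and whose generator avoids $B$. Writing $\gamma:=dp(b)$ with road $\langle b_j\mid j\leq\gamma\rangle$, set $\beta^{*}:=\sup(\{\,c_1(\gamma+1)\mid c\in B\,\}\cup\{o(b)\})+1<\kappa$ (using regularity of $\kappa$), pick any $q\in\mathcal{Q}$, and let $b^{+}\in I$ be given by $b^{+}\restriction(\gamma+1)=b\restriction(\gamma+1)$, $b^{+}(\gamma+1)=(\beta^{*},q)$, and $b^{+}(\delta)=(0,\tau)$ for $\delta>\gamma+1$. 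A routine check against the definition of $I$ gives $b^{+}\in I$ with $dp(b^{+})=\gamma+1$ and $o(b^{+})=\beta^{*}$, and $b^{+}\in Gen(b)\setminus\{b\}$, whence $b^{+}<b$ by Fact \ref{descending_gen}.

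The heart of the argument is that $b^{+}$ realizes \emph{exactly} $p$ and that $Gen(b^{+})\cap B=\emptyset$. For $c\in B\setminus Gen(b)$, Corollary \ref{coro1} (applied to $b$ and $b^{+}\in Gen(b)$) shows $c$ stands in the same order relation to $b^{+}$ as to $b$. For $c\in B\cap Gen(b)$ one has $c<b$ again by Fact \ref{descending_gen}, and since $c$ and $b^{+}$ agree on $[0,\gamma]$, $dp(c)\geq\gamma+1$ (so $c_1(\gamma+1)\neq0$), and $c_1(\gamma+1)<\beta^{*}$, the order definition forces $c<b^{+}$; so $c<b^{+}\iff c<b$ in all cases, i.e. $tp_{bs}(b^{+},B,I)=p$. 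If some $c\in B$ lay in $Gen(b^{+})$ then $c_1(\gamma+1)=b^{+}_1(\gamma+1)=\beta^{*}$, contradicting the choice of $\beta^{*}$; hence $Gen(b^{+})\cap B=\emptyset$, and applying Corollary \ref{coro1} once more to $b^{+}$ yields that every $a\in Gen(b^{+})$ satisfies $tp_{bs}(b^{+},B,I)=p$, i.e. $Gen(b^{+})\subseteq\{a\in I\mid a\models p\}$. Finally I would take $\Gamma^{++}$ to be the road from $I^0$ to $b^{+}$ (a pre-road, witnessed by appending one more coordinate to $b^{+}$); then $Succ_I(\Gamma^{++})\subseteq Gen(b^{+})$, because $b^{+}$ occurs in the road of each of its members, and $|Succ_I(\Gamma^{++})|=\kappa$ by the first paragraph. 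Therefore $F_{\Gamma^{++}}^{-1}(\alpha)\subseteq Succ_I(\Gamma^{++})\subseteq Gen(b^{+})$ is a set of size $\kappa$ consisting of realizations of $p$ of colour $\alpha$, and since $|I|=\kappa$ this gives $|\{a\in I\mid a\models p,\ F(a)=\alpha\}|=\kappa$, as required.

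The step I expect to be the only delicate point is the verification in the third paragraph that $b^{+}$ realizes $p$ and not merely some type extending or refining it — concretely, the treatment of the elements $c\in B$ that happen to lie inside $Gen(b)$, where one must combine Corollary \ref{coro1}, Fact \ref{descending_gen}, and the explicit description of the order on $I$; the choice of $\beta^{*}$ above $\{c_1(\gamma+1)\mid c\in B\}$ is precisely what makes this go through. Note that no appeal to $\varepsilon$-density or $(\kappa,\varepsilon)$-niceness is needed for this property; the only use of the cardinal hypotheses of Theorem \ref{main_order_prop} is to guarantee $|I^0|=\kappa$, hence $|I|=\kappa$, so that the colouring is onto $\kappa$ and the final count is an equality rather than an inequality.
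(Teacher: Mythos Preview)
Your proof is correct and follows essentially the same strategy as the paper's: partition $I$ into the successor sets $Succ_I(\Gamma)$ over pre-roads $\Gamma$, color each such set so that every colour appears $\kappa$ times, and then, given $b,B,p$, exhibit a $\kappa$-sized subset of some single successor set consisting of realizations of $p$. The only difference is in how that last step is executed: the paper works directly in $Succ_I(\text{road to }b)$, restricting to elements of complexity larger than $\max\{o(b),\sup\{o(c):c\in B\}\}$ and invoking Fact~\ref{road_type} to see they all realize $p$, whereas you first build an explicit $b^{+}$ one level deeper with large complexity and then use \emph{all} of $Succ_I(\text{road to }b^{+})\subseteq Gen(b^{+})$ via Corollary~\ref{coro1}. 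Your version is a slightly more hands-on unpacking of the same idea, trading the citation of Fact~\ref{road_type} for a direct order computation; both arguments ultimately rest on the same generator/road machinery.
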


\begin{proof}
Let us fix a bijection $G:\kappa\rightarrow\kappa\times\kappa$, and $G_1$, $G_2$ be the functions such that $G(\alpha)=(G_1(\alpha),G_2(\alpha))$.  For all pre-road $\Gamma$ fix a bijection $g_\Gamma:Comp(\Gamma)\rightarrow \kappa$.
Let us define $F:I\rightarrow\kappa$ by 
$$F(\nu)=\begin{cases} 0 &\mbox{if } dp(\nu)=0\\
G_1(g_{\Gamma}(o(\nu))) & \mbox{where }\nu\in Succ_I(\Gamma). \end{cases}$$

\begin{claim}
$F$ is a $\kappa$-color function of $I$.
\end{claim}
\begin{proof}
Let $B\subseteq I$, $|B|<\kappa$, $b\in I\backslash B$, and $p= tp_{bs}(b,B,I)$. Since $|B|<\kappa$, there is $\alpha<\kappa$ such that $B\subset I^{\alpha}$. Let $\Gamma$ be the road from $I^0$ to $b$, $\beta=\max\{o(b), \alpha\}$, by Fact \ref{road_type}, for all $\nu\in \{a\in Succ_I(\Gamma)\mid o(a)>\beta\}$, $b$ and $\nu$ have the same type of basic formulas over $I^\beta\backslash \{b\}$. 
In particular for all $\nu\in \{a\in Succ_I(\Gamma)\mid o(a)>\beta\}$, $\nu\models p$. By the way $F$ was define, we conclude that for any $\rho<\kappa$, $|\{a\in Succ_I(\Gamma)\mid o(a)>\beta\ \&\ F(a)=\rho\}|=\kappa$. Which implies that for any $\rho<\kappa$, $|\{a\in Succ_I(\Gamma)\mid a\models p\ \&\ F(a)=\rho\}|=\kappa$.
\end{proof}
\end{proof}

The previous lemmas prove Theorem \ref{main_order_prop}, $I$ is $\varepsilon$-dense, $(<\kappa)$-stable, $(\kappa, \varepsilon)$-nice, and $\kappa$-colorable.

Notice that in the previous lemma, if $f,g\in I$ are such that $dp(f)=dp(g)=\alpha$, $o(f)=o(g)$,  and $f\restriction \alpha=g\restriction\alpha$, then $F(f)=F(g)$. 
\begin{remark}\label{small_order}
It is easy to see that $I\backslash I^0$ is a $\varepsilon$-dense $(<\kappa)$-stable $(\kappa, \varepsilon)$-nice $\kappa$-colorable linear order. For all $a\in I\backslash I^0$ and sequence $\langle a_i\rangle_{i<\alpha}$ coinicial to $a$, there is $\beta<\alpha$ such that for all $i>\beta$, $o(a)<o(a_i)$. Holes have a similar property. A hole is a pair $(\bar{b}, \bar{a})$, $\bar{b}=\langle b_i\rangle_{i<\theta_1}$ and $\bar{a}=\langle a_i\rangle_{i<\theta_2}$, such that $\theta_1,\theta_2<\kappa$, for all, $i<\theta_1$ and $j<\theta_2$, $b_i<a_j$, and there is no $a\in I\backslash I^0$ such that for all $i<\theta_1$ and $j<\theta_2$, $b_i<a<a_j$.
For any hole $(\bar{b}, \bar{a})$, there is a sequence $\bar{c}=\langle c_i\rangle_{i<\theta_3}$ such that 
\begin{itemize}
    \item $\theta_3\leq\theta_1$,
    \item for all $i<\theta_1$ there is $j<\theta_3$ such that $b_i<c_j$,
    \item for all $j<\theta_3$ there is $i<\theta_1$ such that $c_j<b_i$,
    \item there is $\beta<\theta_2$ such that for all $j<\theta_3$ and $i>\beta$, $o(c_j)<o(a_i)$.
\end{itemize}
We will use the order $I\backslash I^0$ in the following section.
\end{remark}

As it was mentioned before, $\varepsilon$-dense and $(\kappa, \mu)$-nice are two notions of density. We constructed linear orders when $\varepsilon=\mu$, this is enough for our purposes. 

\section{Trees}\label{Sectio_trees}

\subsection{Coloured trees}

Coloured trees were introduced by Hyttinen and Weinstein  in \cite{HK} to construct models of stable unsuperstable theories. Variations of coloured trees have been used to construct models of other non-classifiable theories, by Hyttinen-Moreno in \cite{HM}, and by Moreno in \cite{Mor} and \cite{Mor21}. 


Let $t$ be a tree, for every $x\in t$ we denote by $ht(x)$ the height of $x$, the order type of $\{y\in t | y\prec x\}$. Define $(t)_\alpha=\{x\in t|ht(x)=\alpha\}$ and $(t)_{<\alpha}=\cup_{\beta<\alpha}(t)_\beta$, denote by $x\restriction \alpha$ the unique $y\in t$ such that $y\in (t)_\alpha$ and $y\prec x$. If $x,y\in t$ and $\{z\in t|z\prec x\}=\{z\in t|z\prec y\}$, then we say that $x$ and $y$ are $\sim$-related, $x\sim y$, and we denote by $[x]$ the equivalence class of $x$ for $\sim$.

An \textit{$\alpha, \beta$-tree} is a tree $t$ with the following properties:
\begin{itemize}
\item $|[x]|<\alpha$ for every $x\in t$.
\item All the branches have order type less than $\beta$ in $t$.
\item $t$ has a unique root.
\item If $x,y\in t$, $x$ and $y$ have no immediate predecessors and $x\sim y$, then $x=y$.
\end{itemize}
\begin{defn}\label{D.2.1}
Let $\gamma$ be a regular cardinal smaller than $\kappa$, and $\beta$ a cardinal smaller or equal to $\kappa$. A \textit{coloured tree} is a pair $(t,c)$, where $t$ is a $\kappa^+$, $(\gamma+2)$-tree and $c$ is a map $c:(t)_\gamma\rightarrow \beta$. 
\end{defn}

Two coloured trees $(t,c)$ and $(t',c')$ are isomorphic, if there is a trees isomorphism $f:t\rightarrow t'$ such that for every $x\in (t)_\gamma$, $c(x)=c'(f(x))$.

Order the set $\gamma\times \kappa\times \kappa\times \kappa\times \kappa$ lexicographically, $(\alpha_1,\alpha_2,\alpha_3,\alpha_4,\alpha_5)>(\alpha'_1,\alpha'_2,\alpha'_3,\alpha'_4,\alpha'_5)$ if for some $1\leq k \leq 5$, $\alpha_k>\alpha'_k$ and for every $i<k$, $\alpha_i=\alpha'_i$. Order the set $(\gamma\times \kappa\times \kappa\times \kappa\times \kappa)^{\leq \gamma}$ as a tree by initial segments.

For all $f\in \beta^\kappa$, define the tree $(R_f,r_f)$ as, $R_f$ the set of all strictly increasing functions from some $\alpha\leq \gamma$ to $\kappa$ and for each $\eta$ with domain $\gamma$, $r_f(\eta)=f(sup(rng(\eta)))$.

For every pair of ordinals $\alpha$ and $\varrho$, $\alpha<\varrho<\kappa$ and $i<\gamma$ define $$R(\alpha,\varrho,i)=\bigcup_{i< j\leq \gamma}\{\eta:[i,j)\rightarrow[\alpha,\varrho)\mid\eta \text{ strictly increasing}\}.$$

\begin{defn}\label{constants_trees}
If $\alpha<\varrho<\kappa$ and $\alpha,\varrho,\rho\neq 0$, let $\{Z^{\alpha,\varrho}_\rho|\rho<\kappa\}$ be an enumeration of all downward closed subtrees of $R(\alpha,\varrho,i)$ for all $i$, in such a way that each possible coloured tree appears cofinally often in the enumeration. Let $Z^{0,0}_0$ be the tree $(R_f,r_f)$.
\end{defn}

This enumeration is possible because there are at most $$|\bigcup_{i<\gamma}\mathcal{P}(R(\alpha,\varrho,i))|\leq \gamma\times\kappa=\kappa$$ downward closed coloured subtrees. Since for all $\alpha<\varrho<\kappa$, $|R(\alpha,\varrho,i)|<\kappa$ there are at most $\kappa\times \kappa^{<\kappa}=\kappa$ coloured trees.
Denote by $Q(Z^{\alpha,\varrho}_\rho)$ the unique ordinal $i$ such that $Z^{\alpha,\varrho}_\rho\subset R(\alpha,\varrho,i)$.

\begin{defn}\label{colorconst}
Define for each $f\in \beta^\kappa$ the coloured tree $(J_f,c_f)$ by the following construction.

For every $f\in \beta^\kappa$ define $J_f=(J_f,c_f)$ as the tree of all $\eta: s\rightarrow \gamma\times \kappa^4$, where $s\leq \gamma$, ordered by end extension, and such that the following conditions hold for all $i,j<s$:

Denote by $\eta_i$, $1<i<5$, the functions from $s$ to $\kappa$ that satisfies, $$\eta(n)=(\eta_1(n),\eta_2(n),\eta_3(n),\eta_4(n),\eta_5(n)).$$
\begin{enumerate}
\item $\eta\restriction n\in J_f$ for all $n<s$.
\item $\eta$ is strictly increasing with respect to the lexicographical order on $\gamma\times \kappa^4$.
\item $\eta_1(i)\leq \eta_1(i+1)\leq \eta_1(i)+1$.
\item $\eta_1(i)=0$ implies $\eta_2(i)=\eta_3(i)=\eta_4(i)=0$.
\item $\eta_2(i)\ge\eta_3(i)$ implies $\eta_2(i)=0$.
\item $\eta_1(i)<\eta_1(i+1)$ implies $\eta_2(i+1)\ge \eta_3(i)+\eta_4(i)$.
\item For every limit ordinal $\alpha$, $\eta_k(\alpha)=sup_{\iota<\alpha}\{\eta_k(\iota)\}$ for $k\in \{1,2\}$.
\item $\eta_1(i)=\eta_1 (j)$ implies $\eta_k (i)=\eta_k (j)$ for $k\in \{2,3,4\}$.
\item If for some $k<\gamma$, $[i,j)=\eta_1^{-1}\{k\}$, then $$\eta_5\restriction {[i,j)}\in Z^{\eta_2(i),\eta_3(i)}_{\eta_4(i)}.$$
\noindent Note that 9 implies $Z^{\eta_2(i),\eta_3(i)}_{\eta_4(i)}\subset R(\alpha,\varrho,i)$ 
\item If $s=\gamma$, then either 
\begin{itemize}
\item [(a)] there exists an ordinal number $m$ such that for every $k<m$, $\eta_1(k)<\eta_1(m)$, for every $k' \ge m$, $\eta_1(k)=\eta_1(m)$, and the color of $\eta$ is determined by $Z^{\eta_2(m),\eta_3(m)}_{\eta_4(m)}$: $$c_f(\eta)=c(\eta_5\restriction {[m,\gamma)})$$ where $c$ is the colouring function of $Z^{\eta_2(m),\eta_3(m)}_{\eta_4(m)}$;

\end{itemize}

or
\begin{itemize}
\item [(b)] there is no such ordinal $m$ and then $c_f(\eta)=f(sup(rang(\eta_5)))$.
\end{itemize}
\end{enumerate}
\end{defn}

At first sight, these trees look very general. But actually, these trees are coding the $=^\beta_\gamma$-class of any $f\in \beta^\kappa$ . A simpler version of the following lemma, was stated in \cite{Mor} without proof due to the length of the article. The version $\gamma=\omega$ of the following lemma, can be found in \cite{HK} Theorem 2.5 and \cite{HM} Lemma 4.7. 

\begin{lemma}\label{colorisom}
Suppose $\gamma<\kappa$ is such that for all $\epsilon<\kappa$, $\epsilon^\gamma<\kappa$. For every $f,g\in \beta^\kappa$ the following holds $$f\ =^\beta_\gamma\ g \Leftrightarrow  J_f\cong_{ct} J_g$$
where $\cong_{ct}$ is the isomorphism of coloured trees.
\end{lemma}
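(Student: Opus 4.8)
The plan is to prove both directions separately, mimicking the structure of the $\gamma=\omega$ case in \cite{HK} and \cite{HM} but carrying the extra bookkeeping that the $\kappa^4$-coordinates and the general regular $\gamma$ require. First I would unpack what the tree $J_f$ records. A branch of $J_f$ of length $\gamma$ is a strictly increasing $\eta\colon\gamma\to\gamma\times\kappa^4$; condition (3) says the first coordinate $\eta_1$ is non-decreasing with steps of size at most $1$, so $\eta_1$ is determined by the set of places where it jumps, and conditions (7)--(8) force $\eta_2,\eta_3,\eta_4$ to be constant on each $\eta_1^{-1}\{k\}$. The crucial dichotomy is clause (10): either $\eta_1$ is eventually constant (case (a)), in which case the colour of $\eta$ is copied from the fixed ``library'' tree $Z^{\eta_2(m),\eta_3(m)}_{\eta_4(m)}$ via the $\eta_5$-coordinate, or $\eta_1$ is cofinal in $\gamma$ (case (b)), in which case $c_f(\eta)=f(\sup(\operatorname{rng}\eta_5))$. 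So the type-(b) branches are exactly those that ``see'' a value of $f$, and the relevant value is $f(\alpha)$ where $\alpha=\sup(\operatorname{rng}\eta_5)$ ranges over a club of $\alpha<\kappa$ of cofinality $\gamma$ (using $\epsilon^\gamma<\kappa$ to keep the tree of size $\kappa$ and to ensure such suprema are cofinally available).

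\textbf{($\Rightarrow$)} Suppose $f=^\beta_\gamma g$, so $X=\{\alpha<\kappa\mid f(\alpha)\neq g(\alpha)\}$ is non-stationary; fix a club $C$ disjoint from $X$, which I may take closed under the relevant Skolem-type operations and consisting of limit ordinals. I would build the isomorphism $F\colon J_f\to J_g$ by recursion on height, level by level. The only place the colourings of $J_f$ and $J_g$ differ is at type-(b) branches $\eta$ with $\sup(\operatorname{rng}\eta_5)\in X$. The idea, exactly as in \cite{HK}, is to ``re-route'' such branches: whenever during the construction we are about to produce a node $\eta\restriction i$ that is forced (by the increasing-ness and the jump condition (6), which controls $\eta_2(i+1)$ relative to $\eta_3(i)+\eta_4(i)$) to lie on a type-(b) branch whose $\eta_5$-supremum would land in $X$, we instead push the $\eta_5$-values up so that the supremum lands in $C$ — there is room to do this because at a jump of $\eta_1$ condition (6) only demands $\eta_2(i+1)\geq\eta_3(i)+\eta_4(i)$, leaving unboundedly many admissible continuations, and because $C$ is club. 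Since on $C$ we have $f=g$, the colour is preserved. One must check that this re-routing can be done uniformly and bijectively — i.e. that it is an automorphism-like surgery of the underlying tree that does not depend on $f$ versus $g$ except through the choice of $C$ — and that type-(a) branches need no adjustment at all, since their colours come from the $f$-independent library trees $Z^{\cdots}$. Assembling these level-by-level choices into a single tree isomorphism and verifying it respects $\sim$-classes and the ``no two immediate-predecessor-free nodes are $\sim$-related'' axiom is routine but needs care.

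\textbf{($\Leftarrow$)} For the contrapositive, assume $f\neq^\beta_\gamma g$, so $S=\{\alpha<\kappa\mid f(\alpha)\neq g(\alpha)\}$ is stationary, and suppose toward a contradiction that $F\colon J_f\to J_g$ is a coloured-tree isomorphism. Here I would use the standard pressing-down argument: $F$ restricted to the nodes at heights $<\gamma$ is a bijection of a tree of size $\kappa$, and by a closure argument there is a club $E\subseteq\kappa$ of ordinals $\delta$ such that $F$ maps the subtree of nodes with all coordinates $<\delta$ onto itself. Pick $\delta\in E\cap S$ of cofinality $\gamma$ (using that $S$ is stationary and that $\epsilon^\gamma<\kappa$ guarantees such $\delta$ form a stationary set). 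Then build a type-(b) branch $\eta\in J_f$ with $\eta_1$ cofinal in $\gamma$ and $\sup(\operatorname{rng}\eta_5)=\delta$ — this is possible by diagonalizing, since at each jump of $\eta_1$ we may choose $\eta_5$-values cofinally approaching $\delta$ while respecting (2),(6),(9). Its colour is $c_f(\eta)=f(\delta)$. Its image $F(\eta)$ is again a branch of $J_g$ that, because $F$ respects $E$-levels, has all coordinates with supremum $\delta$; the combinatorial conditions (3)--(9) are isomorphism-invariant, so $F(\eta)$ is again type-(b) with $\sup(\operatorname{rng}(F(\eta))_5)=\delta$, whence $c_g(F(\eta))=g(\delta)\neq f(\delta)=c_f(\eta)$, contradicting that $F$ preserves colours. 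One subtlety to nail down is why $F(\eta)$ cannot be type-(a): a type-(a) branch has $\eta_1$ eventually constant, and this property is visible from the isomorphism type of the branch-with-its-initial-segments, so $F$ must send type-(b) to type-(b).

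\textbf{Main obstacle.} The hard part is the ($\Rightarrow$) direction: constructing the re-routing isomorphism so that it genuinely uses only the club $C$ on which $f$ and $g$ agree, and verifying that every node that is constrained to lie on a ``bad'' type-(b) branch can in fact be diverted to a ``good'' one while keeping the map a bijection and an order-isomorphism compatible with the $\sim$-structure — in particular handling the interaction between condition (6) at $\eta_1$-jumps and the need to steer $\sup(\operatorname{rng}\eta_5)$ into $C$. For $\gamma=\omega$ this is Theorem 2.5 of \cite{HK}; the general-$\gamma$ case needs the hypothesis $\epsilon^\gamma<\kappa$ precisely to keep all the relevant index sets of size $<\kappa$ and to ensure the clubs and stationary sets of cofinality-$\gamma$ points behave, but the combinatorial heart of the surgery is the same and I would present it by adapting that argument.
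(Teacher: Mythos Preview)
Your $(\Leftarrow)$ outline has the right skeleton --- find a club $E$ on which $F$ matches the filtrations $J_f^\delta$ and $J_g^\delta$, pick $\delta\in E\cap S$ of cofinality $\gamma$, and produce a branch $\eta\in B_\delta^f$ with colour $f(\delta)$ whose image lies in $B_\delta^g$ --- but your justification for why $c_g(F(\eta))=g(\delta)$ is wrong. Conditions (3)--(9) are \emph{not} isomorphism-invariant: a coloured-tree isomorphism preserves only $\prec$ and the colour, not the coordinate decomposition $(\eta_1,\ldots,\eta_5)$, so you cannot read off that $F(\eta)$ is type-(b) or that $\sup\operatorname{rng}(F(\eta))_5=\delta$ from the isomorphism type of the branch. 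What actually forces $c_g(F(\eta))=g(\delta)$ is the \emph{good filtration} property of $(J_g^\alpha)_\alpha$: every branch in $B_\delta^g$ has colour $g(\delta)$, and this requires a separate argument (using Fact~\ref{4.5.1} to rule out type-(a) branches with $\eta_1$ eventually constant at a positive value, and observing that the remaining case $\eta_1\equiv 0$ uses $Z^{0,0}_0=(R_g,r_g)$ and still returns $g(\delta)$). The paper packages this as Fact~\ref{4.4} and the claim that $(J_f^\alpha)$ is a good filtration with $H_{\mathcal J,J_f}=^\beta_\gamma f$, which is the clean way to run your argument.

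Your $(\Rightarrow)$ sketch has a genuine gap: the ``re-routing'' mechanism is not a construction. You cannot divert an individual branch's $\eta_5$-supremum into $C$ without disturbing all branches sharing its initial segments, and the tree $J_f$ already contains branches with \emph{every} possible supremum, so there is nothing to divert. The paper's construction is a back-and-forth on the filtration: at the odd (resp.\ even) successor step one fixes $\alpha_j$ just past an element of $C$, observes that for each $\eta$ already in the range the ``new'' cone $W(\eta)=\{\zeta:\eta^\frown\zeta\in J_g^{\alpha_j},\ \eta^\frown\langle\zeta(m)\rangle\notin\operatorname{rng}(F_{\alpha_{j-1}})\}$ is isomorphic to some library tree $Z^{\alpha,\varrho}_{\rho_\eta}$ (this is where the cofinal enumeration in Definition~\ref{constants_trees} is used), and extends $F_{\alpha_{j-1}}^{-1}$ by sending $\eta^\frown\zeta$ to the element of $J_f$ whose first four coordinates encode $(\alpha,\varrho,\rho_\eta)$ and whose $\xi_5$ is $h_\eta^{-1}(\zeta)$. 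The only branches whose colour is not automatically preserved are those in $B_{\alpha_{j-1}}$ at a limit step $j-1$, and condition (a) guarantees $\alpha_{j-1}\in C$ there, so $f(\alpha_{j-1})=g(\alpha_{j-1})$. The essential idea you are missing is this use of the universal library to absorb the new parts stage by stage; your ``push $\eta_5$ up'' does not correspond to any step of this.
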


 We will prove this lemma in the following two subsections.

\subsection{Filtrations}
To prove this lemma, we have to introduce the notion of filtration. Filtrations will be important in the last subsection, to understand the $\kappa$-representation of ordered coloured trees.
Given a coloured tree $(t,c)$, we say that a sequence $( I_\alpha)_{ \alpha<\kappa}$ is a \textit{filtration} of $t$ if the following hold:
\begin{itemize}
    \item it is an increasing sequence of downwards closed subsets of $t$;
    \item $\bigcup_{\alpha<\kappa}I_\alpha=t$;
    \item if $\rho<\kappa$ is a limit ordinal, then $I_\rho=\bigcup_{\alpha<\rho}I_\alpha$;
    \item for all $\alpha<\kappa$, $|I_\alpha|<\kappa$.
\end{itemize}

Notice that a filtration of $(t,c)$ is a $\kappa$-representation of $t$ made out of downwards closed subsets of $t$. 

\begin{defn}
    Let $(t,c)$ be a coloured tree and $\mathcal{I}=( I_\alpha)_{ \alpha<\kappa}$ a filtration of $t$. Let us define $H_{\mathcal{I},t}\in \kappa^\kappa$ as follows. 

    For every $\alpha<\kappa$ define $B_\alpha$ as the set of all $x\in t_\gamma$ that are not elements of $I_\alpha$, and for all $\varrho<\gamma$, $x\restriction \varrho\in I_\alpha$.
    \begin{itemize}
        \item If $B_\alpha$ is not empty and there is $\iota$ such that, for all $x\in B_\alpha$, $c(x)=\iota$, then let $H_{\mathcal{I},t}(\alpha)=\iota$;
        \item let $H_{\mathcal{I},t}(\alpha)=0$ otherwise.
    \end{itemize}
\end{defn}

Notice that for any two filtrations  $( I_\alpha)_{ \alpha<\kappa}$ and  $( J_\alpha)_{ \alpha<\kappa}$ of the same coloured tree, there is a club $C$ such that for all $\alpha\in C$, $I_\alpha=J_\alpha$. So, for any two filtrations $\mathcal{I}=( I_\alpha)_{ \alpha<\kappa}$ and  $\mathcal{J}=( J_\alpha)_{ \alpha<\kappa}$ of $(t,c)$, $H_{\mathcal{I},t}\ =^\beta_\gamma\  H_{\mathcal{J},t}$.
We say that a filtration is a \textit{good filtration} if for all $\alpha$, $B_\alpha\neq\emptyset$ implies that $c$ is constant on $B_\alpha$.

\begin{fact}[Hyttinen-Kulikov, \cite{HK} Lemma 2.4]\label{4.4}
Suppose $(t_0,c_0)$ and $(t_1,c_1)$ are isomorphic coloured trees, and $\mathcal{I}=(I_\alpha)_{\alpha<\kappa}$ and $\mathcal{J}=(J_\alpha)_{\alpha<\kappa}$ are good filtrations of $(t_0,c_0)$ and $(t_1,c_1)$ respectively. Then $H_{\mathcal{I},t_0}\ =^\beta_\gamma\  H_{\mathcal{J},t_1}$.
\end{fact}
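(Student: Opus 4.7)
The plan is to split the equivalence into two steps via a pushforward filtration, as follows. Let $f\colon t_0\to t_1$ be an isomorphism of coloured trees witnessing $(t_0,c_0)\cong_{ct}(t_1,c_1)$, meaning $f$ is a tree isomorphism with $c_0(x)=c_1(f(x))$ for every $x\in(t_0)_\gamma$. Define the pushforward sequence $f[\mathcal{I}]:=(f[I_\alpha])_{\alpha<\kappa}$. Since $f$ is a bijection preserving the tree order, each $f[I_\alpha]$ is a downwards closed subset of $t_1$ of the same cardinality as $I_\alpha$; continuity at limits and the union property are inherited from $\mathcal{I}$, so $f[\mathcal{I}]$ is a filtration of $t_1$.

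Next I would verify that $H_{\mathcal{I},t_0}=H_{f[\mathcal{I}],t_1}$ on the nose. For each $\alpha<\kappa$ the set $B_\alpha^{\mathcal{I}}\subseteq(t_0)_\gamma$ consists of those $x$ with $x\notin I_\alpha$ but $x\restriction\varrho\in I_\alpha$ for all $\varrho<\gamma$; applying $f$ and using that $f$ preserves heights and end-extension, $f[B_\alpha^{\mathcal{I}}]$ is exactly the analogous set $B_\alpha^{f[\mathcal{I}]}\subseteq(t_1)_\gamma$. Because $c_0(x)=c_1(f(x))$, the colouring $c_1$ is constant on $B_\alpha^{f[\mathcal{I}]}$ whenever $c_0$ is constant on $B_\alpha^{\mathcal{I}}$, and with the same value; hence $f[\mathcal{I}]$ is again a good filtration, and $H_{\mathcal{I},t_0}(\alpha)=H_{f[\mathcal{I}],t_1}(\alpha)$ for every $\alpha<\kappa$.

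It then remains to compare two good filtrations $f[\mathcal{I}]$ and $\mathcal{J}$ of the same coloured tree $(t_1,c_1)$. By the standard observation noted just before Fact~\ref{4.4} in the paper, any two $\kappa$-representations of the same set of size $\kappa$ agree on a club; the same argument gives a club $C\subseteq\kappa$ such that $f[I_\alpha]=J_\alpha$ for every $\alpha\in C$. For such $\alpha$, the sets $B_\alpha^{f[\mathcal{I}]}$ and $B_\alpha^{\mathcal{J}}$ coincide, and so do the values $H_{f[\mathcal{I}],t_1}(\alpha)$ and $H_{\mathcal{J},t_1}(\alpha)$. Thus the symmetric difference of $H_{f[\mathcal{I}],t_1}$ and $H_{\mathcal{J},t_1}$ is disjoint from $C$, hence non-stationary, which is even stronger than what is needed: $H_{f[\mathcal{I}],t_1}=^\beta_{CUB}H_{\mathcal{J},t_1}$, in particular $=^\beta_\gamma$. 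Combining the two steps yields $H_{\mathcal{I},t_0}=^\beta_\gamma H_{\mathcal{J},t_1}$.

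There is no real obstacle; the only point that requires a little care is confirming that the pushforward of a good filtration along an isomorphism is again a good filtration, which amounts to checking that the relevant initial-segment condition defining $B_\alpha$ is preserved under $f$ and that the colour agreement is inherited pointwise. Everything else is bookkeeping with clubs.
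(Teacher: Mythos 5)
Your proof is correct and follows essentially the same route as the paper's own argument: push the filtration $\mathcal{I}$ forward along the isomorphism, note that $H$ is preserved pointwise, and then compare the two filtrations of $t_1$ on the club where they agree. The only minor extra you offer is the explicit remark that the club argument actually yields the stronger relation $=^\beta_{CUB}$, which immediately implies $=^\beta_\gamma$.
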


For each $f\in \beta^\kappa$ let us proceed to define a good filtration $(J^\alpha_f)_{\alpha<\kappa}$ of $J_f$. For each $\alpha<\kappa$ define $J_f^\alpha$ as $$J_f^\alpha=\{\eta\in J_f| rang(\eta)\subset \gamma\times(\iota)^4\text{ for some }\iota<\alpha\}.$$

Notice that for any $k\in rang(\eta_1)$,  $[i,j)=\eta_1^{-1}(k)$ and if $i+1<j$, then $\eta_5\restriction {[i,j)}$ is strictly increasing. If $\eta_1(i)<\eta_1(i+1)$, by Definition \ref{colorconst} item 6, $\eta_2(i+1)\ge \eta_3(i)+\eta_4(i)$, so $\eta_5(i)<\eta_3(i)\leq \eta_2(i+1)\leq \eta_5(i+1)$. If $\alpha$ is a limit ordinal, by Definition \ref{colorconst} items 7 and 8, $\eta_5(\iota)<\eta_2(\iota+1)<\eta_2(\alpha)\leq \eta_5(\alpha)$ holds for every $\iota<\alpha$. Thus $\eta_5$ is strictly increasing. If $\eta\restriction n\in J_f$ for every $n$, then $\eta\in J_f$. Clearly every maximal branch has order type $\gamma+1$, every chain $\eta\restriction 1\subset\eta\restriction 2\subset\eta\restriction 3\subseteq \cdots$ of any length, has a unique limit in the tree, and every element in $t_\varrho$, $\varrho<\gamma$, has an infinite number of successors (at most $\kappa$).

Suppose $rang(\eta_1)=\gamma$. Since $\eta_5$ is increasing and $sup(rang(\eta_3))\ge sup(rang(\eta_5))\ge sup(rang(\eta_2))$, by Definition \ref{colorconst} item 6, $sup(rang(\eta_2))\ge sup(rang(\eta_3))$ and $sup(rang(\eta_2))\ge sup(rang(\eta_4))$, this leads us to 
\begin{equation}
sup(rang(\eta_4))\leq sup(rang(\eta_3))=sup(rang(\eta_5))=sup(rang(\eta_2)).
\end{equation}
If $\eta\restriction k\in J_f^\alpha$ holds for every $k\in \gamma$ and $\eta\notin J_f^\alpha$, then 
\begin{equation}
sup(rang(\eta_5))= \alpha.
\end{equation}


\begin{fact}[Hyttinen-Kulikov, \cite{HK} Claim 2.7 and Hyttinen-Moreno, \cite{HM} Claim 4.9]
Suppose $\gamma<\kappa$ is such that for all $\epsilon<\kappa$, $\epsilon^\gamma<\kappa$. For all $f\in \beta^\kappa$, $|J_f|=\kappa$, $\mathcal{J}=(J_f^\alpha)_{\alpha<\kappa}$ is a good filtration of $J_f$ and $H_{\mathcal{J},J_f}\ =^\beta_\gamma\ f$
\end{fact}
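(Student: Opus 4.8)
The plan is to verify the three assertions in turn: that $|J_f| = \kappa$, that $\mathcal{J} = (J_f^\alpha)_{\alpha<\kappa}$ is a good filtration, and that $H_{\mathcal{J},J_f} \mathrel{=^\beta_\gamma} f$. The cardinality bound is the easiest: each $\eta \in J_f$ is a function from some $s \leq \gamma$ to $\gamma \times \kappa^4$, and the constraints (strict monotonicity in the lexicographic order, the condition $\eta_1(i) \leq \eta_1(i+1) \leq \eta_1(i)+1$ forcing $\eta_1$ to be essentially determined, and items 7--8 pinning down much of $\eta_2,\eta_3,\eta_4$) mean that there are at most $\epsilon^\gamma$ elements of $J_f$ with range inside $\gamma\times\epsilon^4$ for each $\epsilon<\kappa$; by the hypothesis $\epsilon^\gamma<\kappa$, this gives $|J_f^\alpha| < \kappa$ for each $\alpha$, and summing over $\alpha<\kappa$ gives $|J_f| = \kappa$ (the lower bound $\kappa$ comes from the existence of plenty of branches, e.g.\ by varying $\eta_5$). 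This argument simultaneously gives the fourth bullet of the definition of a filtration, $|J_f^\alpha| < \kappa$.

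Next I would check that $(J_f^\alpha)_{\alpha<\kappa}$ is a filtration. Each $J_f^\alpha$ is downwards closed because if $\mathrm{rang}(\eta) \subseteq \gamma \times \iota^4$ then the same holds for every initial segment of $\eta$. The sequence is increasing and, at limit stages $\rho$, continuous: $\eta \in J_f^\rho$ iff $\mathrm{rang}(\eta) \subseteq \gamma\times\iota^4$ for some $\iota<\rho$, which happens iff $\eta \in J_f^\alpha$ for some $\alpha<\rho$ (using that $\mathrm{rang}(\eta)$ is a \emph{set} of size $<\kappa = \mathrm{cf}(\kappa)$ and $\rho$ is a limit—actually here $\rho<\kappa$ so $\mathrm{cf}(\rho)$ could be small, but $\mathrm{rang}(\eta)$ has size $\leq\gamma$ and $\mathrm{cf}(\rho)$ may be $\leq\gamma$; one must be slightly careful and note that $\mathrm{rang}(\eta) \subseteq \gamma\times\iota^4$ for \emph{some single} $\iota$, so boundedness of that one $\iota$ below $\rho$ is all that is needed, and that holds since $\eta$ already lies in some $J_f^\alpha$). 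Finally $\bigcup_\alpha J_f^\alpha = J_f$ since every $\eta$ has range a set of size $<\kappa$. For \emph{goodness}, I would analyze $B_\alpha$: if $\eta \in B_\alpha$, i.e.\ $\eta \in (J_f)_\gamma$, $\eta \notin J_f^\alpha$, but $\eta\restriction\varrho \in J_f^\alpha$ for all $\varrho<\gamma$, then by the displayed equations in the excerpt, $\sup(\mathrm{rang}(\eta_5)) = \alpha$ (since all proper initial segments are in $J_f^\alpha$ but $\eta$ is not) and $\sup(\mathrm{rang}(\eta_5)) = \sup(\mathrm{rang}(\eta_2)) = \sup(\mathrm{rang}(\eta_3))$. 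The key case split is whether the ordinal $m$ of item 10(a) exists for $\eta$. If it does, the color $c_f(\eta) = c(\eta_5\restriction[m,\gamma))$ is determined by the fixed subtree $Z^{\eta_2(m),\eta_3(m)}_{\eta_4(m)}$; since $m<\gamma$ the values $\eta_2(m),\eta_3(m),\eta_4(m)$ are all $<\alpha$ (they appear in $\eta\restriction(m+1) \in J_f^\alpha$), so all such $\eta$ in $B_\alpha$ share the same ``stabilized'' parameters — here I would argue that items 3 and 6, together with $\sup\mathrm{rang}(\eta_2)=\alpha$, force $m$ and the triple $(\eta_2(m),\eta_3(m),\eta_4(m))$, hence the colour, to be constant on $B_\alpha$. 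If no such $m$ exists for the elements of $B_\alpha$, then $c_f(\eta) = f(\sup\mathrm{rang}(\eta_5)) = f(\alpha)$, again constant. So $c_f$ is constant on each nonempty $B_\alpha$, and $\mathcal J$ is good.

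Finally, for $H_{\mathcal{J},J_f} \mathrel{=^\beta_\gamma} f$: by the previous paragraph, whenever $B_\alpha \neq \emptyset$ and the ``no-$m$'' alternative holds, $H_{\mathcal{J},J_f}(\alpha) = c_f(\eta) = f(\alpha)$ for the common value. So it suffices to show that the set $S$ of $\alpha$ for which either $B_\alpha = \emptyset$ or the ``$m$ exists'' alternative holds is non-stationary; then $\{\alpha : H_{\mathcal J,J_f}(\alpha) \neq f(\alpha)\} \subseteq S$ is non-stationary and we are done. For this I would exhibit a club $C$ of $\alpha$ at which $B_\alpha$ is nonempty and contains a witness with no stabilizing $m$: given $\alpha$ in a suitable club (closed under the operation of completing a strictly increasing $\eta_5$-branch), build $\eta$ with $\eta_1$ hitting all of $\gamma$ cofinally — so $\eta_1^{-1}\{k\}$ is a singleton for every $k$ and no $m$ as in 10(a) exists — with $\eta_2 = \eta_3 = \eta_5$ on a cofinal-in-$\gamma$ increasing sequence of ordinals with supremum exactly $\alpha$, respecting constraints 3--9; such $\eta$ lies in $B_\alpha$ (all proper initial segments have bounded range $<\alpha$, $\eta$ itself has range with supremum $\alpha$). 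I expect the main obstacle to be precisely this last step: checking that the combinatorial constraints 1--10 of Definition~\ref{colorconst} can be simultaneously satisfied by a branch with $\mathrm{rang}(\eta_1) = \gamma$ and $\sup\mathrm{rang}(\eta_5) = \alpha$ for club-many $\alpha$, i.e.\ that the ``generic'' colour-$f(\alpha)$ behaviour genuinely occurs on a club, which requires a careful bookkeeping of how fast the $\eta_i$'s are allowed to grow (items 3, 6, 7) versus the requirement that everything stay below $\alpha$ until the last coordinate. This is essentially the argument of \cite{HK} Claim~2.7 and \cite{HM} Claim~4.9 adapted to general $\gamma$, so I would follow those proofs closely, using the $\epsilon^\gamma<\kappa$ hypothesis exactly where they use $\epsilon^\omega<\kappa$.
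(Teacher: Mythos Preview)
Your filtration and cardinality verifications are fine and match the paper's (commented) proof. The genuine gap is in the goodness argument for case 10(a).

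You invoke the displayed identity $\sup(\mathrm{rang}(\eta_5)) = \sup(\mathrm{rang}(\eta_2)) = \sup(\mathrm{rang}(\eta_3))$ in that case, but look again at how that equation is derived in the text: it is proved \emph{under the hypothesis} $\mathrm{rang}(\eta_1) = \gamma$, i.e.\ precisely in case 10(b). In case 10(a) the function $\eta_1$ is eventually constant, so by item~8 the functions $\eta_2,\eta_3,\eta_4$ are eventually constant as well; their suprema are attained at the value $\eta_k(m)$, which lies in $\mathrm{rang}(\eta\restriction(m+1)) \subseteq \gamma\times\iota^4$ for some $\iota<\alpha$. Hence $\sup\mathrm{rang}(\eta_2) < \alpha$, and your claimed mechanism for pinning down $(\eta_2(m),\eta_3(m),\eta_4(m))$ uniformly over $B_\alpha$ collapses. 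Different $\eta$'s could in principle stabilise at different $m$'s with different parameter triples, so your route does not establish constancy of $c_f$ on $B_\alpha$.

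The paper's argument is both simpler and stronger. It uses Fact~\ref{4.5.1}: if $\eta\in B_\alpha$ satisfied 10(a) with $\eta_1(m)>0$, then applying that fact to $\xi=\eta\restriction(m+1)\in J_f^\alpha$ would give $\eta\in J_f^\alpha$, contradicting $\eta\in B_\alpha$. So 10(a) forces $\eta_1\equiv 0$, whence by item~4 also $\eta_2=\eta_3=\eta_4\equiv 0$, and the colour is computed in $Z^{0,0}_0=(R_f,r_f)$, giving $c_f(\eta)=r_f(\eta_5)=f(\sup\mathrm{rang}(\eta_5))=f(\alpha)$. Thus in \emph{both} cases 10(a) and 10(b) one gets $c_f(\eta)=f(\alpha)$, so $\mathcal J$ is good and $H_{\mathcal J,J_f}(\alpha)=f(\alpha)$ whenever $B_\alpha\neq\emptyset$. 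Since for every $\alpha$ of cofinality $\gamma$ one easily produces $\eta\in B_\alpha$ (take $\eta_1=\eta_2=\eta_3=\eta_4\equiv 0$ and $\eta_5$ any strictly increasing $\gamma$-sequence cofinal in $\alpha$), the relation $H_{\mathcal J,J_f}\mathrel{=^\beta_\gamma} f$ is immediate; your proposed club construction in the last paragraph is unnecessary.
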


We conclude one of the directions of Lemma \ref{colorisom}.

\begin{lemma}\label{colorisom_easy_dir}
Suppose $\gamma<\kappa$ is such that for all $\epsilon<\kappa$, $\epsilon^\gamma<\kappa$. For all $f,g\in \beta^\kappa$ if  $ J_f\cong_{ct} J_g$, then $f\ =^\beta_\gamma\ g$.
\end{lemma}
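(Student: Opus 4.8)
The plan is to obtain the statement by chaining together the two facts that immediately precede it, via transitivity of $=^\beta_\gamma$. First I would fix $f,g\in\beta^\kappa$ with $J_f\cong_{ct}J_g$ and consider the canonical filtrations $\mathcal{J}_f=(J_f^\alpha)_{\alpha<\kappa}$ and $\mathcal{J}_g=(J_g^\alpha)_{\alpha<\kappa}$. Applying the Fact of Hyttinen--Kulikov (\cite{HK} Claim 2.7) and Hyttinen--Moreno (\cite{HM} Claim 4.9) once to $f$ and once to $g$, these are \emph{good} filtrations of $J_f$ and $J_g$ respectively, and moreover $H_{\mathcal{J}_f,J_f}\ =^\beta_\gamma\ f$ and $H_{\mathcal{J}_g,J_g}\ =^\beta_\gamma\ g$. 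Here the hypothesis ``$\epsilon^\gamma<\kappa$ for all $\epsilon<\kappa$'' is used only to guarantee $|J_f^\alpha|<\kappa$, so that $(J_f^\alpha)_{\alpha<\kappa}$ is genuinely a filtration; it is not needed again.

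Next, since $J_f$ and $J_g$ are isomorphic coloured trees and $\mathcal{J}_f$, $\mathcal{J}_g$ are good filtrations of them, Fact \ref{4.4} yields $H_{\mathcal{J}_f,J_f}\ =^\beta_\gamma\ H_{\mathcal{J}_g,J_g}$. Finally I would invoke that $=^\beta_\gamma$, being the relation $=^\beta_S$ for the stationary set $S=S^\kappa_\gamma$, is an equivalence relation (in particular symmetric and transitive), so that
$$f\ =^\beta_\gamma\ H_{\mathcal{J}_f,J_f}\ =^\beta_\gamma\ H_{\mathcal{J}_g,J_g}\ =^\beta_\gamma\ g,$$
hence $f\ =^\beta_\gamma\ g$, which is exactly the desired ``easy direction'' of Lemma \ref{colorisom}.

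I do not expect a real obstacle here: all the combinatorial content — verifying that the canonical filtration of $J_f$ is good, and that $H$ computed from it recovers $f$ modulo $=^\beta_\gamma$, together with the abstract transfer of $H$ along a coloured-tree isomorphism — has already been isolated in the two cited facts. The only thing to be careful about is that these facts apply \emph{verbatim}, i.e. that the filtrations in play are the good filtrations to which Fact \ref{4.4} refers; this is precisely what the cited statements assert, so the remaining argument is a short diagram chase. (The converse implication, and hence the full equivalence of Lemma \ref{colorisom}, is the genuinely harder direction and is deferred to the next subsection.)
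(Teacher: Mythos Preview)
Your proposal is correct and matches the paper's approach exactly: the paper states the two facts you cite (Fact~\ref{4.4} and the Hyttinen--Kulikov/Hyttinen--Moreno fact about the canonical filtration) and then simply writes ``We conclude one of the directions of Lemma~\ref{colorisom},'' leaving the transitivity chain $f\ =^\beta_\gamma\ H_{\mathcal{J}_f,J_f}\ =^\beta_\gamma\ H_{\mathcal{J}_g,J_g}\ =^\beta_\gamma\ g$ implicit. Your write-up just makes that chain explicit.
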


\subsection{The isomorphism of coloured trees}
In this section we will prove the missing direction of Lemma \ref{colorisom}.

\begin{fact}[Hyttinen-Kulikov, \cite{HK} Claim 2.6 and Hyttinen-Moreno, \cite{HM} Claim 4.8]\label{4.5.1}
Suppose $\xi\in J_f^\alpha$ and $\eta\in J_f$. If $dom(\xi)$ is a successor ordinal smaller than $\gamma$, $\xi\subsetneq \eta$ and for every $k$ in $dom(\eta)\backslash dom(\xi)$, $\eta_1(k)=\xi_1(max(dom(\xi)))$ and $\eta_1(k)>0$, then $\eta\in J_f^\alpha$.
\end{fact}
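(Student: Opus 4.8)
The statement (Fact \ref{4.5.1}) asserts that $J_f^\alpha$ is closed under a specific kind of end-extension: if $\xi \in J_f^\alpha$ has successor-ordinal domain $< \gamma$, and $\eta$ properly end-extends $\xi$ in $J_f$ with every new coordinate $k \in \dom(\eta) \setminus \dom(\xi)$ satisfying $\eta_1(k) = \xi_1(\max(\dom(\xi))) > 0$, then $\eta \in J_f^\alpha$. Recalling the definition $J_f^\alpha = \{\eta \in J_f \mid \rng(\eta) \subseteq \gamma \times (\iota)^4 \text{ for some } \iota < \alpha\}$, the task reduces to showing that the coordinates $\eta_2, \eta_3, \eta_4, \eta_5$ on the new part $[i,j) := \dom(\eta) \setminus \dom(\xi)$ stay below the same bound $\iota < \alpha$ that works for $\xi$.

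First I would fix $\iota < \alpha$ with $\rng(\xi) \subseteq \gamma \times (\iota)^4$, let $m = \max(\dom(\xi))$, and set $k_0 = \xi_1(m) = \eta_1(m) > 0$. Since by hypothesis $\eta_1$ is constant equal to $k_0$ on all of $[m, \sup\dom(\eta))$, the set $[i,j) = \eta_1^{-1}\{k_0\}$ (within $\dom(\eta)$) contains $m$ and all later coordinates. The key point is clause 8 of Definition \ref{colorconst}: $\eta_1(a) = \eta_1(b)$ implies $\eta_k(a) = \eta_k(b)$ for $k \in \{2,3,4\}$. Hence $\eta_2, \eta_3, \eta_4$ are constant on $[i,j)$, equal to their values $\xi_2(m), \xi_3(m), \xi_4(m)$, which are already $< \iota$ because $\xi \in J_f^\alpha$. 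So only $\eta_5$ on $[i,j)$ needs bounding.

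For $\eta_5$ I would invoke clause 9 of Definition \ref{colorconst}: since $[i,j) = \eta_1^{-1}\{k_0\}$, we have $\eta_5 \restriction [i,j) \in Z^{\eta_2(i),\eta_3(i)}_{\eta_4(i)}$, and clause 9 records that $Z^{\eta_2(i),\eta_3(i)}_{\eta_4(i)} \subseteq R(\alpha', \varrho', i)$ where $\alpha' = \eta_2(i)$ and $\varrho' = \eta_3(i)$ — more precisely the subtrees $Z^{a,b}_c$ consist of strictly increasing functions into $[a, b)$. Therefore every value of $\eta_5 \restriction [i,j)$ lies in $[\eta_2(i), \eta_3(i)) \subseteq [0, \iota)$ since $\eta_3(i) = \xi_3(m) < \iota$. (One must handle the boundary coordinate $i = m$ itself: $\eta_5(m) = \xi_5(m) < \iota$ already, and for the genuinely new coordinates the $R(\alpha',\varrho',\cdot)$ bound applies.) Combining: all of $\eta_2, \eta_3, \eta_4, \eta_5$ take values $< \iota$ everywhere on $\dom(\eta)$, so $\rng(\eta) \subseteq \gamma \times (\iota)^4$ and $\eta \in J_f^\alpha$.

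**Main obstacle.** The only subtle point is the interplay between the "$Z^{a,b}_c \subseteq R(a,b,i)$" bookkeeping and the actual range constraint: one needs that membership of $\eta_5 \restriction [i,j)$ in the enumerated subtree genuinely forces its values below $\eta_3(i)$, and that the indices $\eta_2(i), \eta_3(i), \eta_4(i)$ are exactly the values inherited from $\xi$ at coordinate $m$ (this uses that $i = m$ is the least element of $\eta_1^{-1}\{k_0\}$, together with clause 8). Once that is pinned down the argument is a short chase through clauses 8 and 9; I expect no real difficulty beyond being careful that the hypothesis "$\dom(\xi)$ is a successor" is what guarantees $m = \max(\dom(\xi))$ exists so that the "new block" $[i,j)$ starts cleanly at $m$ with $\eta_1(m) > 0$.
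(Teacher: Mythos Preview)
Your proposal is correct and follows essentially the same approach as the paper's proof: fix the bound $\iota$ witnessing $\xi\in J_f^\alpha$, use clause~8 of Definition~\ref{colorconst} to propagate the values $\eta_2,\eta_3,\eta_4$ from $\xi$ at $m=\max(\dom(\xi))$ to the new coordinates, and then use clause~9 together with $Z^{a,b}_c\subseteq R(a,b,\cdot)$ to bound $\eta_5$ on those coordinates by $\eta_3(i)=\xi_3(m)<\iota$. One small inaccuracy: the claim that $i=m$ is the least element of $\eta_1^{-1}\{k_0\}$ need not hold (the $k_0$-block may begin strictly inside $\dom(\xi)$), but this is harmless since clause~8 still gives $\eta_k(i)=\eta_k(m)=\xi_k(m)$ for $k\in\{2,3,4\}$ and your argument goes through unchanged.
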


We say that a set $X$ is a \textit{$\gamma$-club} if $X$ is unbounded and it is closed under $\gamma$-limits. Notice that $\eta=_\gamma^\beta\xi$ holds if and only if $\{\alpha<\kappa\mid cf(\alpha)=\gamma\ \&\ \eta(\alpha)=\xi(\alpha)\}$ contains a $\gamma$-club.

\begin{lemma}\label{colorisom_long_dir}
Suppose $\gamma<\kappa$ is such that for all $\epsilon<\kappa$, $\epsilon^\gamma<\kappa$. For every $f,g\in \beta^\kappa$ if $f\ =^\beta_\gamma\ g$, then $ J_f\cong_{ct} J_g$.
\end{lemma}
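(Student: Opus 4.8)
The plan is to build a coloured-tree isomorphism $F\colon J_f\to J_g$ directly, level by level, exploiting the hypothesis $f\ =^\beta_\gamma\ g$ in the form given just before the statement: the set $D=\{\alpha<\kappa\mid cf(\alpha)=\gamma\ \&\ f(\alpha)=g(\alpha)\}$ contains a $\gamma$-club $C$. The key structural observation is that an element $\eta\in J_f$ with $\dom(\eta)=\gamma$ has its colour determined either by one of the fixed trees $Z^{\eta_2(m),\eta_3(m)}_{\eta_4(m)}$ (case (a) of Definition \ref{colorconst}(10), when $\eta_1$ is eventually constant), or by $f(\sup(\rng(\eta_5)))$ (case (b), when $\eta_1$ reaches $\gamma$); and in the latter case, by equation (3) in the filtration subsection, $\sup(\rng(\eta_5))=\sup(\rng(\eta_2))=\sup(\rng(\eta_3))$ is a limit of cofinality $\gamma$. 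So the colours coming from $f$ that we must match are exactly the values of $f$ at ordinals of cofinality $\gamma$, and likewise for $g$.

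First I would set up the recursion on $s\le\gamma$: define $F$ on $(J_f)_{<\gamma}$ (elements of domain $<\gamma$) so that it is a tree isomorphism onto $(J_g)_{<\gamma}$, respecting all the bookkeeping coordinates $\eta_1,\eta_2,\eta_3,\eta_4$ and the ``$Z$-driven'' part of $\eta_5$, while re-indexing the pure $\eta_5$-values that will eventually feed into $f$ resp.\ $g$ through an order-isomorphism between the relevant copies of $\kappa$ (using $\epsilon^\gamma<\kappa$ to keep every level of size $<\kappa$, so such re-indexings exist and can be chosen coherently). The only freedom that matters is in how we route the ``case (b)'' branches: a branch $\eta$ of domain $\gamma$ that is not eventually $\eta_1$-constant has $\sup(\rng(\eta_5))$ ranging over a $\gamma$-cofinal limit, and I want to arrange that $F$ maps such an $\eta$ to some $\eta'\in J_g$ with $\sup(\rng(\eta'_5))$ equal to an element of $C$ whenever $\sup(\rng(\eta_5))\in C$, and more precisely with $\sup(\rng(\eta'_5))=\sup(\rng(\eta_5))$ on a $\gamma$-club of ``levels.'' Concretely, I would fix a filtration-style enumeration: at stage $\alpha\in C$, the finitely-or-$<\gamma$-many branch-germs that are about to leave level $\alpha$ get their fifth coordinates stretched so their sup lands exactly at $\alpha$; since $\alpha\in C\subseteq D$ we have $f(\alpha)=g(\alpha)$, so the colours agree. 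Outside $C$ we have latitude, because on the complement the relevant branches can be colour-matched using the cofinal repetition of coloured trees in the enumeration $\{Z^{\alpha,\varrho}_\rho\}$ (Definition \ref{constants_trees}) — this is precisely the point of ``each possible coloured tree appears cofinally often.''

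The verification splits into: (i) $F$ is a well-defined bijection $J_f\to J_g$ preserving $\prec$ and $\sim$ — routine from the coordinatewise definition and the fact that we only ever compose with order-isomorphisms of the index sets; (ii) $F$ preserves colours. For (ii) I would case on Definition \ref{colorconst}(10): in case (a), the colour is read off a $Z$-tree, and since $F$ preserves $\eta_1,\eta_2,\eta_3,\eta_4$ and acts as the identity-up-to-fixed-iso on the $Z$-part of $\eta_5$, the colours match by construction (here one uses that the $Z$-trees are fixed once and for all, independent of $f,g$); in case (b), the colour is $f(\sup\rng(\eta_5))$ resp.\ $g(\sup\rng(\eta'_5))$, and by the routing we arranged $\sup\rng(\eta_5)=\sup\rng(\eta'_5)\in C\subseteq D$, so $f$ and $g$ agree there. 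Combining with Lemma \ref{colorisom_easy_dir} (the converse direction, already proved) yields Lemma \ref{colorisom}.

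\textbf{Main obstacle.} The delicate point is the simultaneous, coherent construction of the branch-routing: I must re-index the ``$f$-relevant'' fifth coordinates so that (a) every limit of cofinality $\gamma$ that actually occurs as some $\sup\rng(\eta_5)$ on the $J_f$ side is sent to a value in $C$ on the $J_g$ side (and symmetrically, so that $F$ is onto), and (b) this is done respecting end-extension, so that initial segments of a branch are mapped to initial segments. The clean way to handle this is to do the construction along a common $\gamma$-club $C$ refining both the places where $f=g$ and the places that are ``closure points'' of the natural filtrations $(J_f^\alpha)$, $(J_g^\alpha)$, and at each $\alpha\in C$ to fix a bijection between the ``germs of case-(b) branches exiting at level $\alpha$'' on the two sides together with an order-iso of their fifth-coordinate ranges sending $\sup$ to $\alpha$; continuity at limits of the recursion then glues these into the desired $F$. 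I expect writing this germ-matching carefully — and checking it is compatible with the constraints (3)–(8) of Definition \ref{colorconst} at limit levels — to be the bulk of the real work.
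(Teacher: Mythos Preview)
Your plan contains a genuine internal contradiction that blocks the argument as written. You commit to an $F$ that \emph{preserves $\eta_1,\eta_2,\eta_3,\eta_4$} and only re-indexes $\eta_5$; but then you also want to invoke the ``cofinal repetition of coloured trees in $\{Z^{\alpha,\varrho}_\rho\}$'' to fix colours off $C$. These two are incompatible: which $Z$-tree a block of $\eta$ sits in is exactly $Z^{\eta_2(i),\eta_3(i)}_{\eta_4(i)}$, so using the repetition means changing $\eta_2,\eta_3,\eta_4$. More concretely, preserving $\eta_2$ already pins down $\sup(\rng(\eta_5))$ for every case-(b) branch (this is equation~(4) in the filtration subsection: $\sup(\rng(\eta_5))=\sup(\rng(\eta_2))$), so your ``re-indexing of $\eta_5$'' cannot move that sup at all. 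The same obstruction hits the $\eta_1\equiv 0$ branches (case (a) with $m=0$): their colour is read off $Z^{0,0}_0=(R_f,r_f)$, i.e.\ is $f(\sup(\rng(\eta_5)))$, and since every strictly increasing $\gamma$-sequence in $\kappa$ lies in $R_f$, every $\gamma$-cofinal $\alpha$ is realised as such a sup. With $\eta_1,\dots,\eta_4$ fixed you would therefore need $f(\alpha)=g(\alpha)$ for \emph{all} $\alpha\in S^\kappa_\gamma$, not merely on a $\gamma$-club.

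The paper's proof avoids this by \emph{not} recursing on tree height and \emph{not} preserving the bookkeeping coordinates. It runs a back-and-forth along the filtrations $(J_f^{\alpha_i})$, $(J_g^{\alpha_i})$, with the $\alpha_i$ chosen so that successors jump past an element of the club $C$ and limits land in $C$. At each successor step the entire ``new'' material $W(\eta)$ above a node $\eta$ is packed into a \emph{fresh} $Z$-tree $Z^{\alpha,\varrho}_{\rho_\eta}$ via an isomorphism $h_\eta$, and the image is given brand-new constant values of $\xi_1,\xi_2,\xi_3,\xi_4$ on that block --- this is precisely where the cofinal repetition is used, and it changes all four coordinates. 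Colours in case (a) then match because the colour is computed inside a single $Z$-tree and $h_\eta$ is a coloured-tree iso; colours in the limit case (***) match because the back-and-forth forces such a branch's sup to equal some limit $\alpha_{j-1}\in C$, where $f$ and $g$ agree. Your obstacle paragraph correctly identifies the coherence of germ-matching at limits as the crux, but the machinery that makes it go through is this repacking into new $Z$-blocks, not a coordinate-preserving re-indexing.
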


\begin{proof} Let $C'\subseteq \{\alpha<\kappa|cf(\alpha)=\gamma\ \&\ f(\alpha)=g(\alpha)\}$ be a $\gamma$-club testifying $f\  =^\beta_\gamma\ g$, and let $C\supset C'$ be the closure of $C'$ under limits. We are going to construct an isomorphism between $J_f$ and $J_g$ by induction.

Let us define continuous increasing sequences $(\alpha_i)_{i<\kappa}$ of ordinals and $(F_{\alpha_i})_{i<\kappa}$ of partial color-preserving isomorphism from $J_f$ to $J_g$ such that:
\begin{itemize}
\item[a)] If $i$ is a successor, then $\alpha_i$ is a successor ordinal and there exists $\iota\in C$ such that $\alpha_{i-1}<\iota<\alpha_i$ and thus if $i$ is a limit, then $\alpha_i\in C$.
\item[b)] Suppose that $i=\rho+n$, where $\rho$ is a limit ordinal or $0$, and $n<\omega$ is even. Then $dom(F_{\alpha_i})=J_f^{\alpha_i}$.
\item[c)] Suppose that $i=\rho+n$, where $\rho$ is a limit ordinal or $0$, and $n<\omega$ is odd. Then $rang(F_{\alpha_i})=J_g^{\alpha_i}$.
\item[d)] If $dom(\xi)<\gamma$, $\xi\in dom (F_{\alpha_i})$, $\eta\restriction {dom(\xi)}=\xi$ and for every $k\ge dom(\xi)$ $$\eta_1(k)=\xi_1(sup (dom(\xi)))\text{ and } \eta_1(k)>0$$ then $\eta\in dom (F_{\alpha_i})$. Similar for $rang(F_{\alpha_i})$.
\item[e)] If $\xi\in dom(F_{\alpha_i})$ and $k<dom(\xi)$, then $\xi\restriction {k}\in dom(F_{\alpha_i})$.
\item[f)] For all $\eta\in dom (F_{\alpha_i})$, $dom(\eta)=dom(F_{\alpha_i}(\eta))$.
\end{itemize}
For every ordinal $\alpha$ denote by $M(\alpha)$ the ordinal that is order isomorphic to the lexicographic order of $\gamma\times \alpha^4$.

\textit{\bf First step (i=0).}

Let $\alpha_0=\iota+1$ for some $\iota\in C$. Let $\rho$ be an ordinal such that there is a coloured tree isomorphism $h: Z^{0,M(\iota)}_\rho\rightarrow J_f^{\alpha_0}$ and $Q(Z^{0,M(\iota)}_\rho)=0$. It is easy to see that such $\rho$ exists, by the way our enumeration was chosen.

Since $Z^{0,M(\iota)}_\rho$ and $J_f^{\alpha_0}$ are closed under initial segments, then $|dom(h^{-1}(\eta))|=|dom(\eta)|$. Also both domains are intervals containing zero, therefore $dom(h^{-1}(\eta))=dom(\eta)$.

Define $F_{\alpha_0}(\eta)$ for $\eta\in J^{\alpha_0}_f$ as follows, let $F_{\alpha_0}(\eta)$ be the function $\xi$ with $dom(\xi)=dom(\eta)$, and for all $\kappa<dom(\xi)$:
\begin{itemize}
\item $\xi_1(k)=1$
\item $\xi_2(k)=0$
\item $\xi_3(k)=M(\iota)$
\item $\xi_4(k)=\rho$
\item $\xi_5(k)=h^{-1}(\eta)(k)$
\end{itemize}
To check that $\xi\in J_g$, we will check every item of Definition \ref{colorconst}. Since $rang(F_{\alpha_0})=\{1\}\times\{0\}\times\{M(\iota)\}
\times\{\rho\}\times Z^{0,M(\iota)}_\rho$, $\xi$ satisfies 1. Also $\xi_5=h^{-1}(\eta)\in Z^{0,M(\iota)}_\rho$, by definition of $Z^{\alpha,\iota}_\rho$, we know that $\xi_5$ is strictly increasing with respect to the lexicographic order. Thus $\xi$ satisfies item 2. Notice that $\xi$ is constant in every component except for $\xi_5$, therefore $\xi$ satisfies the items 3, 6, 7, 8, 10 (a). Clearly $\xi_1(i)\neq 0$. So $\xi$ satisfies item 4. Since $\xi_2(k)=0$ for every $k$, $\xi$ satisfies 5. Notice that $[0,\gamma)=\xi_1^{-1}(1)$ and $Z^{\xi_2(k),\xi_3(k)}_{\xi_4(k)}=Z^{0,M(\iota)}_\rho$ for every $k$, therefore $\xi_5\in Z^{\xi_2(0),\xi_3(0)}_{\xi_4(0)}$ and $\xi$ satisfies 7.

Let us show that the conditions a)-f) are satisfied, the conditions a) and c) are clearly satisfied. By the way $F_{\alpha_0}$ was defined, $dom(F_{\alpha_0})=J^{\alpha_0}_f$ and  $dom(\eta)=dom(F_{\alpha_0}(\eta))$, these are the conditions b), e) and f). Since $dom(F_{\alpha_0})=J^{\alpha_0}_f$, Fact \ref{4.5.1} implies d) for $dom(F_{\alpha_0})$. For d) with $rang(F_{\alpha_0})$, suppose $\xi\in rang(F_{\alpha_0})$ and $\eta \in J_g$ are as in the assumption. Then $\eta_1(k)=\xi_1(k)=1$ for every $k<dom(\eta)$. By 8 in $J_g$, $\eta_2(k)=\xi_2(k)=0$, $\eta_3(k)=\xi_3(k)=M(\iota)$ and $\eta_4(k)=\xi_4(k)=\rho$ for every $k<dom(\eta)$. By 9 in $J_g$, $\eta_5\in Z^{0,M(\iota)}_\rho$ and since $rang(F_{\alpha_0})=\{1\}\times\{0\}\times\{M(\iota)\}
\times\{\rho\}\times Z^{0,M(\iota)}_\rho$, we can conclude that $\eta\in rang(F_{\alpha_0})$.

\textit{\bf Odd successor step.}

Suppose that $j<k$ is a successor ordinal such that $j=\iota_j+n_j$ for some limit ordinal (or 0) $\iota_j$ and an odd integer $n_j$. Assume  $\alpha_l$ and $F_{\alpha_l}$ are defined for every $l<j$ satisfying the conditions a)-f).

Let $\alpha_j=\iota+1$, where $\iota\in C$ is such that $\iota>\alpha_{j-1}$, and $rang(F_{\alpha_{j-1}})\subset J_g^\iota$, such a $\iota$ exists because $|rang(F_{\alpha_{j-1}})|\leq 2^{|\alpha_{j-1}|}$ and for all $\epsilon<\kappa$, $\epsilon^\gamma<\kappa$.

When $\eta\in rang(F_{\alpha_{j-1}})$ has domain $m<\gamma$, define $$W(\eta)=\{\zeta|dom(\zeta)=[m,s),m<s\leq \gamma, \eta^\frown \langle m,\zeta(m) \rangle\notin rang(F_{\alpha_{j-1}})\text{ and }\eta^\frown \zeta\in J_g^{\alpha_j}\}$$ with the color function $c_{W(\eta)}(\zeta)=c_g(\eta^\frown \zeta)$ for every $\zeta\in W(\eta)$ with $s=\gamma$. Denote $\xi'=F^{-1}_{\alpha_{j-1}}(\eta)$, $\alpha=\xi'_3(m-1)+\xi'_4(m-1)$ (if $m$ is a limit ordinal, then $\alpha=sup_{\varrho<m} \xi_2(\varrho)$) and $\varrho=\alpha+M(\alpha_j)$. Choose an ordinal $\rho_\eta$ such that $Q(Z_{\rho_\eta}^{\alpha,\varrho})=m$ and there is an isomorphism $h_\eta:Z_{\rho_\eta}^{\alpha,\varrho}\rightarrow W(\eta)$. We will define $F_{\alpha_{j}}$ by defining its inverse such that $rang(F_{\alpha_{j}})=J_g^{\alpha_j}$.

Each $\eta\in J_g^{\alpha_j}$ satisfies one of the followings:
\begin{itemize}
\item[(*)] $\eta\in rang(F_{\alpha_{j-1}})$.
\item[(**)] $\exists m<dom(\eta)(\eta\restriction {m}\in rang(F_{\alpha_{j-1}})\wedge \eta\restriction {(m+1)}\notin rang(F_{\alpha_{j-1}}))$.
\item[(***)] $\forall m<dom(\eta)(\eta\restriction {(m+1)}\in rang(F_{\alpha_{j-1}})\wedge \eta\notin rang(F_{\alpha_{j-1}}))$.
\end{itemize}
We define $\xi=F_{\alpha_{j}}^{-1}(\eta)$ as follows. There are three cases:

{\bf Case $\eta$ satisfies $(*)$.}

Define $\xi(n)=F_{\alpha_{j-1}}^{-1}(\eta)(n)$ for all $n<dom(\eta)$.

{\bf Case $\eta$ satisfies $(**)$.}

This case is divided in two subcases, when $m$ is a limit ordinal and when $m$ is a successor ordinal.
Let $m$ witnesses (**) for $\eta$ and suppose $m$ is a successor ordinal. For every $n<dom(\xi)$
\begin{itemize}
\item If $n<m$, then $\xi(n)=F_{\alpha_{j-1}}^{-1}(\eta\restriction m)(n)$.
\item For every $n\ge m$. Let
\begin{itemize}
\item $\xi_1(n)=\xi_1(m-1)+1$
\item $\xi_2(n)=\xi_3(m-1)+\xi_4(m-1)$
\item $\xi_3(n)=\xi_2(m)+M(\alpha_j)$
\item $\xi_4(n)=\rho_{\eta\restriction m}$
\item $\xi_5(n)=h^{-1}_{\eta\restriction m}(\eta\restriction {[m,dom(\eta))})(n)$
\end{itemize}
\end{itemize}
Notice that, $\eta\restriction {[m,dom(\eta))}$ is an element of $W({\eta\restriction m})$, this makes possible the definition of $\xi_5$.

Let us check the items of Definition \ref{colorconst} to see that $\xi\in J_f$. Clearly item 1 is satisfied. By the induction hypothesis, $\xi\restriction m$ is increasing, $\xi_1(m)=\xi_1(m-1)+1$ so $\xi(m-1)<\xi(m)$, and $\xi_k$ is constant on $[m, \gamma)$ for $k\in \{1,2,3,4\}$. Since $h^{-1}_{\eta\restriction m}(\eta)\in Z_{\rho_\eta}^{\alpha,\varrho}$, $\xi_5$ is increasing, $\xi$ is increasing with respect to the lexicographic order. So $\xi$ satisfies item 2. We conclude that $\xi_1(i)\leq \xi_1(i+1)\leq \xi_1(i)+1$, so $\xi$ satisfies item 3. For every $i<\gamma$, $\xi_1(i)=0$ implies $i<m$, so $\xi(i)=F_{\alpha_{j-1}}^{-1}(\eta\restriction m)(i)$ and by the induction hypothesis, $\xi$ satisfies item 4. By the induction hypothesis, $\xi\restriction m\in J_f$. Since $\xi_2(n)=\xi_3(m-1)+\xi_4(m-1)$ holds for every $n\ge m$, $\xi$  satisfies 5.
By the induction hypothesis, for every $i+1<m$, $\xi_1(i)<\xi_1(i+1)$ implies $\xi_2(i+1)\ge \xi_3(i)+\xi_4(i)$. On the other hand $\xi_1(i)=\xi_1 (j)$ implies $\xi_k (i)=\xi_k (j)$ for $k\in \{2,3,4\}$. Clearly $\xi_2(m)\ge \xi_3(m-1)+\xi_4(m-1)$ and $\xi_k (i)=\xi_k (i+1)$ for $i\ge m$ and $k\in\{2,3,4\}$, then $\xi$ satisfies items 6 and 8.

By the induction hypothesis, $\xi\restriction m\in J_f$. Since $\xi_1(n)=\xi_1(m-1)+1$ and  $\xi_2(n)=\xi_3(m-1)+\xi_4(m-1)$ hold for every $n\ge m$, $\xi$  satisfies 7.
Suppose $[i,j)=\xi_1^{-1}(k)$  for some $k$ in $rang(\xi)$. Either $j<m$ or $m=i$. If $j<m$, then by the induction hypothesis $\xi_5\restriction {[i,j)}\in Z_{\xi_4(i)}^{\xi_2(i),\xi_3(i)}$. If $[i,j)=[m,dom(\xi))$, then $\xi_5\restriction {[i,j)}=h^{-1}_{\eta\restriction m}(\eta\restriction {[m,dom(\xi))})\in Z_{\xi_4(m)}^{\xi_2(m),\xi_3(m)}$. Thus $\xi$ satisfies item 9. Since $\xi$ is constant on $[m,\gamma)$, $\xi$ satisfies 10 (a). Finally by item 10 (a) when $dom(\zeta)=\gamma$, we conclude $c_f(\xi)=c(\xi_5\restriction {[m,\gamma)})$, where $c$ is the color of $Z_{\xi_4(m)}^{\xi_2(m),\xi_3(m)}$. Since $\xi_5\restriction {[m,\gamma)}=h^{-1}_{\eta\restriction m}(\eta\restriction {[m,\gamma)})$, $c_f(\xi)=c(h^{-1}_{\eta\restriction m}(\eta\restriction {[m,\gamma)}))$. Since $h$ is an isomorphism, $c_f(\xi)=c_{W(\eta\restriction _m)}(\eta\restriction {[m,\gamma)})=c_g(\eta)$.

Let $m$ witnesses (**) for $\eta$ and suppose $m$ is a limit ordinal. For every $n<dom(\xi)$
\begin{itemize}
\item If $n<m$, then $\xi(n)=F_{\alpha_{j-1}}^{-1}(\eta\restriction m)(n)$.
\item For every $n\ge m$. Let
\begin{itemize}
\item $\xi_1(n)=sup_{\varrho<m}\xi_1(\varrho)$
\item $\xi_2(n)=sup_{\varrho<m}\xi_2(\varrho)$
\item $\xi_3(n)=\xi_2(m)+M(\alpha_j)$
\item $\xi_4(n)=\rho_{\eta\restriction m}$
\item $\xi_5(n)=h^{-1}_{\eta\restriction m}(\eta\restriction {[m,dom(\eta))})(n)$.
\end{itemize}
\end{itemize}
Notice that, $\eta\restriction {[m,dom(\eta))}$ is an element of $W({\eta\restriction m})$, this makes possible the definition of $\xi_5$.

Let us check the items of Definition \ref{colorconst} to see that $\xi\in J_f$. Clearly item 1 is satisfied. By induction hypothesis, $\xi\restriction m$ is increasing and $\xi_1(m)=sup_{\varrho<m}\xi_1(\varrho)$. So $\xi(\varrho)<\xi(m)$ holds for every $\varrho<m$, and $\xi_k$ is constant on $[m, \gamma)$ for $k\in \{1,2,3,4\}$. Since $h^{-1}_{\eta\restriction m}(\eta)\in Z_{\rho_\eta}^{\alpha,\varrho}$, $\xi_5$ is increasing. We conclude that $\xi$ is increasing with respect to the lexicographic order, so $\xi$ satisfies item 2. We conclude that $\xi_1(i)\leq \xi_1(i+1)\leq \xi_1(i)+1$, so $\xi$ satisfies item 3. For every $i<\gamma$, $\xi_1(i)=0$ implies $i<m$, so $\xi(i)=F_{\alpha_{j-1}}^{-1}(\eta\restriction m)(i)$ and by the induction hypothesis $\xi$ satisfies item 4. By the induction hypothesis, $\xi\restriction m\in J_f$. Since $\xi_2(n)=sup_{\varrho<m}\xi_2(\varrho)$ holds for every $n\ge m$, $\xi$  satisfies 5.
By the induction hypothesis, for every $i+1<m$, $\xi_1(i)<\xi_1(i+1)$ implies $\xi_2(i+1)\ge \xi_3(i)+\xi_4(i)$. On the other hand $\xi_1(i)=\xi_1 (j)$ implies $\xi_k (i)=\xi_k (j)$ for $k\in \{2,3,4\}$. Clearly $\xi_2(m)\ge sup_{\varrho<m}\xi_3(\varrho)$ and $\xi_k (i)=\xi_k (j)$ for $j,i\ge m$ and $k\in\{2,3,4\}$, then $\xi$ satisfies items 6 and 8.

By the induction hypothesis, $\xi\restriction m\in J_f$. Since $\xi_1(n)=sup_{\varrho<m}\xi_1(\varrho)$ and $\xi_2(n)=sup_{\varrho<m}\xi_2(\varrho)$ hold for every $n\ge m$, $\xi$ satisfies 7.
Suppose $[i,j)=\xi_1^{-1}(k)$  for some $k$ in $rang(\xi)$. Either $j<m$ or $m=i$, notice that if $i<m<j$, then $\eta\restriction {(m+1)}\in rang(F_{\alpha_{j-1}}))$. If $j<m$, then by the induction hypothesis $\xi_5\restriction {[i,j)}\in Z_{\xi_4(i)}^{\xi_2(i),\xi_3(i)}$. Thus $[i,j)=[m,dom(\xi))$, then $\xi_5\restriction {[i,j)}=h^{-1}_{\eta\restriction m}(\eta\restriction {[m,dom(\xi))})\in Z_{\xi_4(m)}^{\xi_2(m),\xi_3(m)}$. Thus $\xi$ satisfies item 9. Since $\xi$ is constant on $[m,\gamma)$, $\xi$ satisfies 10 (a). Finally by item 10 (a) when $dom(\zeta)=\gamma$, $c_f(\xi)=c(\xi_5\restriction {[m,\gamma)})$, where $c$ is the color of $P_{\xi_4(m)}^{\xi_2(m),\xi_3(m)}$. Since $\xi_5\restriction {[m,\gamma)}=h^{-1}_{\eta\restriction m}(\eta\restriction {[m,\gamma)})$, $c_f(\xi)=c(h^{-1}_{\eta\restriction m}(\eta\restriction {[m,\gamma)}))$ and since $h$ is an isomorphism, $c_f(\xi)=c_{W(\eta\restriction _m)}(\eta\restriction {[m,\gamma)})=c_g(\eta)$.

{\bf Case $\eta$ satisfies $(***)$.}

Clearly $dom(\eta)=\gamma$. By the induction hypothesis and condition d), $rang(\eta_1)=\gamma$, otherwise $\eta\in rang(F_{\alpha_{j-1}})$. Let $F^{-1}_{\alpha_j}(\eta)=\xi=\cup_{n<\gamma}F^{-1}_{\alpha_{j-1}}(\eta\restriction n)$, by the induction hypothesis, $\xi$ is well defined. Since $\xi\restriction n\in J_f$ for all $n<\gamma$, then $\xi\in J_f$. Let us check that $c_f(\xi)=c_g(\eta)$. Notice that $\xi\notin J_f^{\alpha_{j-1}}$, otherwise by the induction hypothesis f), 
\begin{equation*}
F_{\alpha_{j-1}}(\xi)=\bigcup_{n<\gamma}F_{\alpha_{j-1}}(\xi\restriction n)=\bigcup_{n<\gamma}\eta\restriction n=\eta 
\end{equation*}
giving us $\eta\in rang(F_{\alpha_{j-1}})$. By Equation (5), $sup(rang(\xi_5))=\alpha_{j-1}$ and $\xi$ satisfies item 10 b) in $J_f$. Therefore $c_f(\xi)=f(\alpha_{j-1})$. By the definition of $J_f^\alpha$ and since $\xi\restriction n\in J_f^{\alpha_{j-1}}$ holds for all $n<\gamma$, $\alpha_{j-1}$ is a limit ordinal. By condition a), $j-1$ is a limit ordinal and $\alpha_{j-1}\in C$. The conditions b) and c) ensure that $rang(F_{\alpha_{j-1}})=J_f^{\alpha_{j-1}}$. This implies, $\eta\notin J_f^{\alpha_{j-1}}$. 
Therefore $\alpha_{j-1}$ has cofinality $\gamma$, $\alpha_{j-1}\in C'$ and $f(\alpha_{j-1})=g(\alpha_{j-1})$. By item 10 b) in $J_g$, $c_g(\eta)=g(\alpha_{j-1})=f(\alpha_{j-1})=c_f(\xi)$.

Let us show that $F_{\alpha_i}$ is a color preserving partial isomorphism. We already showed that $F_{\alpha_i}$ preserve the colors, so we only need to show that 
\begin{equation}
\eta\subsetneq \xi \Leftrightarrow F_{\alpha_i}^{-1}(\eta)\subsetneq F^{-1}_{\alpha_i}(\xi).
\end{equation}
From left to right.

When $\eta,\xi\in rang(F_{\alpha_{i-1}})$, the induction hypothesis implies (6) from left to right. If $\eta\in rang(F_{\alpha_{i-1}})$ and $\xi\notin rang(F_{\alpha_{i-1}})$. Then the construction implies (6) from left to right. If $\eta,\xi\notin rang(F_{\alpha_{i-1}})$, then $\eta,\xi$ satisfy (**). Let $m_1$ and $m_2$ be the ordinals that witness (**) for $\eta$ and $\xi$, respectively. Notice that $m_2<dom(\eta)$, otherwise, $\eta\in rang(F_{\alpha_{i-1}})$. If $m_1<m_2$, then $\eta\in rang(F_{\alpha_{i-1}})$ which is not the case. A similar argument shows that $m_2<m_1$ cannot hold. We conclude that $m_1=m_2$. By the construction of $F_{\alpha_i}$, we cocnlude that $F_{\alpha_i}^{-1}(\eta)\subsetneq F^{-1}_{\alpha_i}(\xi)$.

From right to left.

When $\eta,\xi\in rang(F_{\alpha_{i-1}})$, the induction hypothesis implies (6) from right to left. If $\eta\in rang(F_{\alpha_{i-1}})$ and $\xi\notin rang(F_{\alpha_{i-1}})$, the construction implies (6) from right to left. If $\eta,\xi\notin rang(F_{\alpha_{i-1}})$, then $\eta,\xi$ satisfy (**). Let $m_1$ and $m_2$ be the respective ordinal numbers that witness (**) for $\eta$ and $\xi$, respectively. Notice that $m_2<dom(\eta)$, otherwise, $F_{\alpha_i}^{-1}(\eta)=F_{\alpha_{i-1}}^{-1}(\eta)$ and $\eta\in rang(F_{\alpha_{i-1}})$. Let us denote by $\vartheta$ the inverse map $F_{\alpha_i}^{-1}$ (e.g. $\vartheta(\zeta)=F_{\alpha_i}^{-1}(\zeta)$), and the first component by $\vartheta_1$ (e.g. $\vartheta_1(\zeta)=F_{\alpha_i}^{-1}(\zeta)_1$).

If $m_1<m_2$ and $m_2$ is a successor ordinal, then 
$$
\begin{array} {lcl} 
\vartheta_1(\eta)(m_2-1) & = & (\vartheta(\xi)\restriction _{m_2})_1(m_2-1)\\ & < & \vartheta_1(\xi\restriction _{m_2})(m_2-1)+1\\ & = & \vartheta_1(\eta)(m_2)\\ & = & \vartheta_1(\eta)(m_2-1).
\end{array}
$$
If $m_1<m_2$ and $m_2$ is a limit ordinal, then 
$$
\begin{array} {lcl} 
\forall \rho\in [m_1,m_2)\ \ \vartheta_1(\eta)(\rho) & = & (\vartheta(\xi)\restriction _{m_2})_1(\rho)\\ & < & sup_{n<m_2}\vartheta_1(\xi\restriction _{m_2})(n)\\ & = & \vartheta_1(\eta)(m_2)\\ & = & \vartheta_1(\eta)(\rho).
\end{array}
$$
This cannot hold. A similar argument shows that $m_2<m_1$ cannot hold. We conclude that $m_1=m_2$.

By the induction hypothesis, $F_{\alpha_{i-1}}^{-1}(\eta\restriction {m_1})=F_{\alpha_{i-1}}^{-1}(\xi\restriction {m_2})$ implies $\eta\restriction {m_1}=\xi\restriction {m_2}$ (it also implies $h_{\eta\restriction {m_1}}=h_{\xi\restriction {m_2}}$). Since $F_{\alpha_{i-1}}^{-1}(\eta\restriction {m_1})(n)=F_{\alpha_i}^{-1}(\eta)(n)$ for all $n<m_1$, we only need to prove that $\eta\restriction {[m_1,dom(\eta))}\subsetneq \xi\restriction {[m_2,dom(\xi))}$. But $h_{\eta\restriction {m_1}}$ is an isomorphism and $F_{\alpha_i}^{-1}(\eta)_5(n) = F_{\alpha_{i}}^{-1}(\xi)_5(n)$ holds for every $n\ge m_1$, so $h^{-1}_{\eta\restriction {m_1}}(\eta\restriction {[m_1,dom(\eta))})(n)=h^{-1}_{\xi\restriction {m_2}}(\xi\restriction {[m_2,dom(\xi))})(n)$. Therefore $\eta\restriction {[m_1,dom(\eta))}\subsetneq \xi\restriction {[m_2,dom(\xi))}$.

Let us check that this three constructions satisfy the conditions a)-f).

When $i$ is a successor, we have that $\alpha_{i-1}<\iota<\alpha_i=\iota+1$ for some $\iota\in C$, this is the condition a). Clearly the three cases satisfy b). We defined $F_{\alpha_i}^{-1}$ according to (*), (**), or (***). Since every $\eta\in J_g^{\alpha_j}$ satisfies one of these, $rang(F_{\alpha_i})=J_g^{\alpha_j}$ which is the condition c).

Let us show that $F_{\alpha_i}$ satisfies condition d). Let $\xi$ and $\eta$ be as in the assumptions of condition d) for domain. Notice that if $\xi\in dom(F_{\alpha_{i-1}})$, then the induction hypothesis ensure that $\eta\in dom(F_{\alpha_i})$. Suppose $\xi\notin dom(F_{\alpha_{i-1}})$, then $F_{\alpha_{i}}(\xi)\notin rang(F_{\alpha_{i-1}})$. Since $dom(\xi)<\gamma$, $F_{\alpha_{i}}(\xi)$ satisfies (**). Let $m$ be the number witnessing it.
If $m$ is a limit ordinal, then $dom(\xi)\ge m+1$. Therefore $\xi\restriction {m+1}\in J_f^{\alpha_i}$ and by Fact \ref{4.5.1}, $\eta\in J_f^{\alpha_i}$. If $m$ is a successor ordinal, then $\xi\in J_f^{\alpha_i}$ and by Fact \ref{4.5.1}, $\eta\in J_f^{\alpha_i}$. By item 8 in $J_f^{\alpha_i}$, $\eta_k$ is constant on $[m,dom(\eta))$ for $k\in \{2,3,4\}$. By Definition \ref{colorconst} item 9 in $J_f^{\alpha_i}$, $\eta_5\restriction {[m,dom(\eta))}\in Z^{\alpha,\iota}_{\rho_{\xi\restriction _m}}$. Let $\zeta=h_{\xi\restriction m}(\eta_{[m,dom(\eta))})$, then $\eta=F^{-1}_{\alpha_{i}}(F_{\alpha_i}(\xi\restriction m)^\frown \zeta)$ and $\eta\in dom(F_{\alpha_i})$.

Using the same argument, the condition d) can be proved.

For the conditions e) and f), notice that $\xi$ was constructed such that $dom(\xi)=dom(\eta)$ and $\xi\restriction k\in dom(F_{\alpha_i})$, which are these conditions.

\textit{\bf Even successor step.}

The construction of $F_{\alpha_j}$ such that $dom(F_{\alpha_j})=J_f^{\alpha_i}$ follows as in the odd successor step, with the equivalent definitions.

\textit{\bf Limit step.}

Assume $j$ is a limit ordinal. Let $\alpha_j=\cup_{i<j}\alpha_i$ and $F_{\alpha_j}=\cup_{i<j}F_{\alpha_i}$. Clearly $F_{\alpha_j}:J_f^{\alpha_j}\rightarrow J_g$ and satisfies condition c). Since for $i$ successor, $\alpha_i$ is the successor of an ordinal in $C$, then $\alpha_j\in C$ and satisfies condition a). Also $F_{\alpha_j}$ is a partial isomorphism. Recall that $\cup_{i<j}J_f^{\alpha_i}=J_f^{\alpha_j}$, equivalent for $J_g$. By the induction hypothesis and conditions b) and c) for $i<j$, we have $dom(F_{\alpha_j})=J_f^{\alpha_j}$ (this is the condition b)) and $rang(F_{\alpha_j})=J_g^{\alpha_j}$. This and Fact \ref{4.5.1} ensure that condition d) is satisfied. By the induction hypothesis, for every $i<j$, $F_{\alpha_i}$ satisfies conditions e) and f). Thus $F_{\alpha_j}$ satisfies conditions e) and f).

Define $F=\cup_{i<\kappa}F_{\alpha_i}$, clearly, it is an isomorphism between $J_f$ and $J_g$.

\end{proof}

\subsection{Ordered trees}

 By applying similar ideas to the ones used by Abraham in \cite{Abr}, it was possible to construct highly saturated ordered coloured trees in \cite{Mor21}.
 Following the construction presented in \cite{Mor21}, we will use the $\kappa$-colorable linear order $I$ to construct ordered trees with $\gamma+1$ levels, $A^f$, for every $f\in \beta^\kappa$ with the property $A^f \cong A^g$ if and only if $f\ =^\beta_{\gamma}\ g$. 

\begin{defn}\label{Shtree}
Let $K_{tr}^\gamma$ be the class of models $(A,\prec, (P_n)_{n\leq \gamma},<, \wedge)$, where:
\begin{enumerate}
\item There is a linear order $(\mathcal{I},<_\mathcal{I})$ such that $A\subseteq \mathcal{I}^{\leq\gamma}$.
\item $A$ is closed under initial segment.
\item $\prec$ is the initial segment relation.
\item $\wedge(\eta,\xi)$ is the maximal common initial segment of $\eta$ and $\xi$.
\item Let $lg(\eta)$ be the length of $\eta$ (i.e. the domain of $\eta$) and $P_n=\{\eta\in A\mid lg(\eta)=n\}$ for $n\leq \gamma$.
\item For every $\eta\in A$ with $lg(\eta)<\gamma$, define $Suc_A(\eta)$ as $\{\xi\in A\mid \eta\prec \xi\ \&\ lg(\xi)=lg(\eta)+1\}$. $<$ is $\bigcup_{\eta\in A}(<\restriction Suc_A(\eta))$, i.e. if $\xi<\zeta$, then there is $\eta\in A$ such that $\xi,\zeta\in Suc_A(\eta)$.
\item For all $\eta\in A\backslash P_\gamma$, $<\restriction Suc_A(\eta)$ is the induced linear order from $\mathcal{I}$, i.e. $$\eta^\frown \langle x\rangle < \eta^\frown \langle y\rangle \Leftrightarrow x<_\mathcal{I} y.$$
\item If $\eta$ and $\xi$ have no immediate predecessor  and $\{\zeta\in A\mid \zeta\prec\eta\}=\{\zeta\in A\mid \zeta\prec\xi\}$, then $\eta=\xi$.
\end{enumerate}
\end{defn}

The elements of $K_{tr}^\gamma$ are called \textit{ordered trees}.
For each $f\in \beta^{\kappa}$ we will use the coloured trees $J_f$ to construct new coloured trees that will be ordered later. An \textit{ordered coloured tree} is a model $(A,\prec, (P_n)_{n\leq \gamma},<, \wedge, c)$ where $(A,\prec)$ is a tree, $(A,\prec, (P_n)_{n\leq \gamma},<, \wedge)$ is an ordered tree, and $(A,\prec,c)$ is a coloured tree.

For every $f\in \beta^\kappa$, we have constructed the coloured tree $J_f$ and the filtration $(J_f^\alpha)_{\alpha<\kappa}$. 
Notice that $J_f^0=\{\emptyset\}$ and $dom(\emptyset)=0$.
Let us denote by $\acc(\kappa)=\{\alpha<\kappa\mid \alpha=0~ \text{or}~ \alpha \text{ is a limit ordinal}\}$. For all $\alpha\in \acc(\kappa)$ and  $\eta\in J_f^\alpha$ with $dom(\eta)=m<\gamma$ define 
$$W_\eta^\alpha=\{\zeta\mid dom(\zeta)=[m,s), m\leq s\leq \gamma, \eta^\frown\zeta\in J_f^{\alpha+\omega}, \eta^\frown \langle m,\zeta(m)\rangle\notin J_f^\alpha\}.$$ 
Notice that by the way $J_f$ was constructed, for every $\eta\in J_f$ with domain smaller than $\gamma$ and $\alpha<\kappa$, the set $$\{(\vartheta_1,\vartheta_2,\vartheta_3,\vartheta_4,\vartheta_5)\in (\gamma\times
\kappa^4)\backslash (\gamma\times\alpha^4)\mid \eta^\frown (\vartheta_1,\vartheta_2,\vartheta_3,\vartheta_4,\vartheta_5)\in J_f^{\alpha+\omega}\}$$ is either empty or has size $\omega$.
Let $\sigma_\eta^\alpha$ be an enumeration of this set, when this set is not empty.

Let us denote by $\mathcal{T}=(\kappa\times \omega\times \acc(\kappa)\times \gamma\times
\kappa\times\kappa\times
\kappa\times\kappa)^{\leq\gamma}$. 
For every $\xi \in \mathcal{T}$ there are functions $\{\xi_i\in \kappa^{\leq \omega}\mid 0<i\leq 8\}$ such that for all $i\leq 8$, $dom(\xi_i)=dom(\xi)$ and for all $n\in dom(\xi)$, $\xi(n)=(\xi_1(n),\xi_2(n),\xi_3(n),\xi_4(n),\xi_5(n),\xi_6(n),\xi_7(n),\xi_8(n))$. 
For every $\xi\in \mathcal{T}$ let us denote $(\xi_4,\xi_5,\xi_6,\xi_7,\xi_8)$ by $\overline{\xi}$.

\begin{defn}\label{Gammas}
For all $\alpha\in \acc(\kappa)$ and  $\eta\in \mathcal{T}$ with $\overline{\eta}\in J_f$, $dom(\eta)=m<\gamma$ define
$\Gamma_\eta^\alpha$ as follows:

If $\overline{\eta}\in J_f^\alpha$, then $\Gamma_\eta^\alpha$ is the set of elements $\xi$ of $\mathcal{T}$ such that:

\begin{enumerate}
\item $\xi\restriction m=\eta,$
\item $\overline{\xi}\restriction dom(\xi)\backslash m \in W^\alpha_\eta,$
\item $\xi_3$ is constant on $dom(\xi)\backslash m,$
\item $\xi_3(m)=\alpha$,
\item for all $n\in dom(\xi)\backslash m$, let $\xi_2(n)$ be the unique $r<\omega$ such that $\sigma_{\zeta}^\alpha(r)=\overline{\xi}(n)$, where $\zeta=\overline{\xi}\restriction n$.
\end{enumerate}

If $\overline{\eta}\notin J_f^\alpha$, then $\Gamma_\eta^\alpha=\emptyset$.

\end{defn}

For $\eta\in \mathcal{T}$ with $\overline{\eta}\in J_f$, $dom(\eta)=m<\gamma$ define $$\Gamma(\eta)=\bigcup_{\alpha\in \acc(\kappa)}\Gamma_\eta^\alpha .$$
Finally we can define $A^f$ by induction. Let $T_f(0)=\{\emptyset\}$ and for all $n<\gamma$, $$T_f(n+1)=T_f(n)\cup\bigcup_{\eta\in T_f(n)~dom(\eta)=n}\Gamma(\eta).$$ For $n\leq \gamma$ a limit ordinal, 
$$\bar{T}_f(n)=\bigcup_{m<n}T_f(m)$$
and
$$T_f(n)=\bar{T}_f(n)\cup \{\eta\in \mathcal{T}\mid dom(\eta)=n\ \&\ \forall m<n\ (\eta\restriction m\in \bar{T}_f(n))\}.$$

For  $0<i\leq 8$ let us denote by $s_i(\eta)=sup\{\eta_i(n)\mid n<\gamma\}$ and $s_\gamma(\eta)=max\{s_i(\eta)\mid i\leq 8\}$. Finally $$A^f=T_f(\gamma).$$ 

Define the color function $d_f$ by 
\begin{equation*}
d_f(\eta)=\begin{cases} c_f(\overline{\eta}) &\mbox{if } s_1(\eta)< s_\gamma(\eta)\\
f(s_1(\eta)) & \mbox{if } s_1(\eta)= s_\gamma(\eta). \end{cases}
\end{equation*}
It is clear that $A^f$ is closed under initial segments, indeed the relations $\prec$, $(P_n)_{n\leq\gamma}$, and $\wedge$ of Definition \ref{Shtree} have a canonical interpretation in $A^f$. 

Now we finish the construction of $A^f$ by using the $\kappa$-colorable linear order $I$ of Remark \ref{small_order}. 
We have to define $<\restriction Suc_{A^f}(\eta)$ for all $\eta\in A^f$ with domain smaller than $\gamma$. 
Properly speaking, $A^f$ will not be an ordered coloured tree as in Definition \ref{Shtree}, but it will be isomorphic to an ordered coloured tree as in Definition \ref{Shtree}. 

Let us proceed to define $<\restriction Suc_{A^f}(\eta)$.  Let $F:I\rightarrow\kappa$ be a $\kappa$-color function of $I$.
 
For any $\eta\in A^f$ with domain $m<\gamma$, we will define the order $<\restriction Suc_{A^f}(\eta)$ such that it is isomorphic to $I$ and satisfies the following:

$(\ast)$
\textit{
Let $sup(rng(\eta_3))=\vartheta$.
 For any set $B\subset Suc_{A^f}(\eta)$ of size less than $\kappa$, $p(x)$ a type of basic formulas over $B$ in the variable $x$
 , and any tuple $(\vartheta_2,\vartheta_3)\in \omega\times \acc(\kappa)$ with $\vartheta_3\ge \vartheta$, if $p(x)$ is realized in $Suc_{A^f}(\eta)$, then
 there are $\kappa$ many $\alpha<\kappa$ such that $\eta^\frown (\alpha,\vartheta_2,\vartheta_3,\sigma^{\vartheta_3}_{\overline{\eta}}(\vartheta_2))\models p$.
}

By the construction of $A^f$, an isomorphism between $\{(\vartheta_1,\vartheta_2,\vartheta_3)\in \kappa\times \omega\times \acc(\kappa)\mid \vartheta_3\ge \vartheta \}$ and $I$, induces an order in $Suc_{A^f}(\eta)$.

\begin{defn}

Let $F$ be a $\kappa$-color function of $I$ (see Theorem \ref{main_order_prop}). For all $\vartheta, \alpha<\kappa$, let us fix a bijections $\tilde{G}_\vartheta:\{(\vartheta_2,\vartheta_3)\in \omega\times \acc(\kappa) \mid \vartheta_3\ge \vartheta\}\rightarrow \kappa$ and $\tilde{H}_\alpha:F^{-1}[\alpha]\rightarrow\kappa$. 
Notice that these functions exist because $F$ is a $\kappa$-color function of $I$ and there are $\kappa$ tuples $(\vartheta_2,\vartheta_3)$of this form.

Let us define $\tilde{\mathcal{G}}_\vartheta:\{(\vartheta_1,\vartheta_2,\vartheta_3)\in \kappa\times \omega\times \acc(\kappa) \mid \vartheta_3\ge \vartheta\}\rightarrow I$, by: $$\tilde{\mathcal{G}}_\vartheta((\vartheta_1,\vartheta_2,\vartheta_3))=a,$$ where $a$ and $\alpha$ are the unique elements that satisfy:
\begin{itemize}
\item $\tilde{G}_\vartheta((\vartheta_2,\vartheta_3))=\alpha$;
\item $\tilde{H}_\alpha(a)=\vartheta_1$.
\end{itemize} 

\end{defn}

For any $\eta\in A^f$ with $sup(rng(\eta_3))=\vartheta$, the isomorphism $\tilde{\mathcal{G}}_\vartheta$ and $I$ induce an order in $Suc_{A^f}(\eta)$. Let us define $<\restriction Suc_{A^f}(\eta)$ as the induced order given by  $\tilde{\mathcal{G}}_\vartheta$ and $I$.

\begin{fact}
Suppose $\eta\in A^f$ with $sup(rng(\eta_3))=\vartheta$. Then
$<\restriction Suc_{A^f}(\eta)$ satisfies ($*$). 
\end{fact}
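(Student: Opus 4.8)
The plan is to derive $(\ast)$ from the $\kappa$-colorability of $I$ (Lemma \ref{A_desire_order}), exploiting the fact that $<\restriction Suc_{A^f}(\eta)$ was \emph{defined} to be an isomorphic copy of $I$. Put $\vartheta=sup(rng(\eta_3))$ and let $\Phi\colon Suc_{A^f}(\eta)\to I$ be the map sending $\eta^\frown\langle(\vartheta_1,\vartheta_2,\vartheta_3,\sigma^{\vartheta_3}_{\overline{\eta}}(\vartheta_2))\rangle$ to $\tilde{\mathcal{G}}_\vartheta((\vartheta_1,\vartheta_2,\vartheta_3))$. By the way $<\restriction Suc_{A^f}(\eta)$ was defined (as the order induced by $\tilde{\mathcal{G}}_\vartheta$ and $I$), $\Phi$ is an isomorphism of linear orders from $Suc_{A^f}(\eta)$ onto $I$; since the only basic formulas of this language are built from $<$ and $=$, $\Phi$ preserves and reflects every basic formula. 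Hence, given $B\subset Suc_{A^f}(\eta)$ with $|B|<\kappa$ and a type of basic formulas $p(x)$ over $B$, the type $p'$ obtained by replacing each parameter $b$ occurring in $p$ by $\Phi(b)$ is a type of basic formulas over $B':=\Phi[B]$ in $I$ with $|B'|<\kappa$, and $p$ is realized in $Suc_{A^f}(\eta)$ iff $p'$ is realized in $I$. We may assume $p$ is realized by some $b_0\in Suc_{A^f}(\eta)\setminus B$ (the only case relevant to the later applications of $(\ast)$) and, enlarging $p$, that $p=tp_{bs}(b_0,B,Suc_{A^f}(\eta))$; then $p'=tp_{bs}(\Phi(b_0),B',I)$ with $\Phi(b_0)\in I\setminus B'$.

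The point is now a bookkeeping identity. Fix a pair $(\vartheta_2,\vartheta_3)\in\omega\times\acc(\kappa)$ with $\vartheta_3\ge\vartheta$ and set $\alpha_0:=\tilde{G}_\vartheta((\vartheta_2,\vartheta_3))<\kappa$. From the definition of $\tilde{\mathcal{G}}_\vartheta$ we have, for every $\alpha<\kappa$, that $\tilde{\mathcal{G}}_\vartheta((\alpha,\vartheta_2,\vartheta_3))=\tilde{H}_{\alpha_0}^{-1}(\alpha)$ and $F(\tilde{\mathcal{G}}_\vartheta((\alpha,\vartheta_2,\vartheta_3)))=\alpha_0$, where $F$ is the $\kappa$-colour function of $I$. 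Since $\tilde{H}_{\alpha_0}\colon F^{-1}[\alpha_0]\to\kappa$ is a bijection, the map $\alpha\mapsto\Phi(\eta^\frown\langle(\alpha,\vartheta_2,\vartheta_3,\sigma^{\vartheta_3}_{\overline{\eta}}(\vartheta_2))\rangle)=\tilde{H}_{\alpha_0}^{-1}(\alpha)$ is a bijection from $\kappa$ onto the colour class $F^{-1}[\alpha_0]$. Consequently, for any $\alpha<\kappa$,
$$\eta^\frown\langle(\alpha,\vartheta_2,\vartheta_3,\sigma^{\vartheta_3}_{\overline{\eta}}(\vartheta_2))\rangle\models p \iff \tilde{H}_{\alpha_0}^{-1}(\alpha)\models p',$$
so the conclusion of $(\ast)$ — that $\eta^\frown\langle(\alpha,\vartheta_2,\vartheta_3,\sigma^{\vartheta_3}_{\overline{\eta}}(\vartheta_2))\rangle\models p$ for $\kappa$ many $\alpha$ — is equivalent to $|\{a\in F^{-1}[\alpha_0]\mid a\models p'\}|=\kappa$.

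This last equality is exactly what $\kappa$-colorability of $I$ provides. Applying Definition \ref{defcolorark} to $B'$, the element $b':=\Phi(b_0)\in I\setminus B'$, the type $p'=tp_{bs}(b',B',I)$, and the ordinal $\alpha_0$, we get $|\{a\in I\mid a\models p'\ \&\ F(a)=\alpha_0\}|=\kappa$, i.e. $|\{a\in F^{-1}[\alpha_0]\mid a\models p'\}|=\kappa$, as required; pulling back along $\Phi$ finishes the proof. The only non-routine step is the middle paragraph: one must check that the auxiliary bijections $\tilde{G}_\vartheta$ and $\tilde{H}_{\alpha_0}$ were arranged precisely so that fixing $(\vartheta_2,\vartheta_3)$ and varying the first coordinate $\alpha$ corresponds, under the order isomorphism with $I$, to ranging over a single colour class of $F$; granting that, the statement is a transport of structure followed by a direct appeal to Definition \ref{defcolorark}.
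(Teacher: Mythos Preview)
Your proof is correct and follows essentially the same approach as the paper's own argument: both transport the type $p$ to a type over a small subset of $I$ via the bijection $\tilde{\mathcal{G}}_\vartheta$, observe that fixing the pair $(\vartheta_2,\vartheta_3)$ corresponds to fixing the colour $\alpha_0=\tilde{G}_\vartheta((\vartheta_2,\vartheta_3))$, and then invoke $\kappa$-colorability of $I$ for that colour. Your write-up is in fact more explicit than the paper's---you name the isomorphism $\Phi$, verify the bookkeeping identity $\Phi(\eta^\frown\langle(\alpha,\vartheta_2,\vartheta_3,\ldots)\rangle)=\tilde{H}_{\alpha_0}^{-1}(\alpha)$, and flag the harmless reduction to the case $b_0\notin B$ required by Definition~\ref{defcolorark}---whereas the paper compresses these steps into a single sentence.
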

\begin{proof}
Let $b\in Suc_{A^f}(\eta)$, $(\vartheta_2,\vartheta_3)\in \omega\times \acc(\kappa)$ such that $\vartheta_3\ge \vartheta$, and $B\subseteq Suc_{A^f}(\eta)$ have size less than $\kappa$. Let $b(dom(\eta))=(b_1,b_2,b_3,b_4,b_5,b_6,b_7,b_8)$ and denote by $pr_{123}(B(dom(\eta)))$ the set $$\{(a_1,a_2,a_3)\in \kappa\times\omega\times \acc(\kappa)\mid \exists \xi\in B\ (\xi(dom(\eta))=(a_1,a_2,a_3,a_4,a_5,a_6,a_7,a_8))\}.$$ 
Let us denote by $q$ the type $$tp_{bs}(\tilde{\mathcal{G}}_\vartheta(b_1,b_2,b_3), \tilde{\mathcal{G}}_\vartheta (pr_{123}(B(dom(\eta)))), I).$$ By the construction of $\tilde{G}_\vartheta$ and since $F$ is a $\kappa$-color function of $I$, $$|\{a\in I\mid a\models q \ \&\ F(a)=\tilde{G}_\vartheta(\vartheta_2,\vartheta_3)\}|=\kappa.$$
Therefore for all $a$ such that $a\models q$ and $F(a)=\tilde{G}_\vartheta(\vartheta_2,\vartheta_3)$, $$\eta^\frown (\tilde{H}_{\tilde{G}_\vartheta((\vartheta_2,\vartheta_3))}(a),\vartheta_2,\vartheta_3,\sigma^{\vartheta_3}_{\overline{\eta}}(\vartheta_2))\models p$$
\end{proof}

It is clear that $(A^f,\prec,(P_n)_{n\leq\gamma},<, \wedge)$ is isomorphic to a subtree of $I^{\leq \gamma}$ in the sense of Definition \ref{Shtree}.

\begin{remark}\label{equal_orders}
Notice that for any $\eta\in A^f$, $<\restriction Suc_{A^f}(\eta)$ is isomorphic to $I$. Therefore for any $\zeta,\eta\in A^f$, $<\restriction Suc_{A^f}(\zeta)$ and $<\restriction Suc_{A^f}(\eta)$ are isomorphic. Even more, the construction of $<\restriction Suc_{A^f}(\eta)$ only depends on $sup(rng(\eta_3))=\vartheta$.
\end{remark}

\begin{thm}\label{Afcong}
Suppose $\gamma<\kappa$ is such that for all $\epsilon<\kappa$, $\epsilon^\gamma<\kappa$.  For all $f,g\in \beta^\kappa$, $f\ =^\beta_\gamma\ g$ if and only if $A^f\cong A^g$ (as ordered coloured trees).
\end{thm}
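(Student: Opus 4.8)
The plan is to reduce Theorem~\ref{Afcong} to the already-proven isomorphism theorem for coloured trees, Lemma~\ref{colorisom}, by showing that passing from $J_f$ to the ordered coloured tree $A^f$ neither creates nor destroys isomorphisms. Concretely, I will establish the two implications separately.

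\textbf{The easy direction ($f\ =^\beta_\gamma\ g \Rightarrow A^f\cong A^g$).} Here I would build the isomorphism of ordered coloured trees directly, mimicking the construction in the proof of Lemma~\ref{colorisom_long_dir} but carrying along the extra coordinates $\xi_1,\xi_2,\xi_3$ and the linear order on successor sets. The point is that the back-and-forth there already produces, at each stage, a partial colour- and $\prec$-preserving isomorphism; I need to check that it can be taken to also respect $<\restriction Suc$. By Remark~\ref{equal_orders}, the order $<\restriction Suc_{A^f}(\eta)$ depends only on $\sup(\rng(\eta_3))$, and it is always isomorphic to $I$ via the canonical maps $\tilde{\mathcal G}_\vartheta$; so once the underlying coloured-tree isomorphism sends $\overline\eta$ to $\overline{\eta'}$ with matching $\eta_3$-suprema, the induced orders on the successor sets are already the same copy of $I$, and the map extends canonically. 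One has to verify that the $\Gamma^\alpha_\eta$-construction (Definition~\ref{Gammas}) is compatible with the filtration-level bookkeeping used in Lemma~\ref{colorisom_long_dir}, i.e. that the sets $W^\alpha_\eta$ and the enumerations $\sigma^\alpha_\eta$ are preserved by the isomorphism; this is where the work lies in this direction, but it is bookkeeping of the same flavour as in Lemma~\ref{colorisom_long_dir}.

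\textbf{The hard direction ($A^f\cong A^g \Rightarrow f\ =^\beta_\gamma\ g$).} Here I would argue that an isomorphism $F\colon A^f\to A^g$ of ordered coloured trees induces an isomorphism of the underlying coloured trees $J_f\cong_{ct} J_g$ — more precisely, that from $F$ one can read off, via good filtrations, that $H_{\mathcal I,A^f}\ =^\beta_\gamma\ H_{\mathcal J,A^g}$ and that each $H_{\mathcal I,A^f}\ =^\beta_\gamma\ f$. The natural candidate filtration of $A^f$ is $(T_f^\alpha)_{\alpha<\kappa}$ where $T_f^\alpha=\{\eta\in A^f\mid \rng(\eta)\subseteq (\alpha)^8\}$ (using $\alpha$-initial parts of all eight coordinates); one checks it is a filtration made of downward-closed sets and that it is a good filtration because of the colour rule $d_f$: on a "fresh" branch at level $\alpha$ of cofinality $\gamma$ one has $s_1(\eta)=s_\gamma(\eta)=\alpha$, so $d_f(\eta)=f(\alpha)$ is constant on $B_\alpha$; exactly as in the analysis of $J_f^\alpha$ after Definition~\ref{colorconst}. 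Then by Fact~\ref{4.4} applied to $A^f\cong A^g$ we get $H_{\mathcal T_f}\ =^\beta_\gamma\ H_{\mathcal T_g}$, and by the computation just sketched $H_{\mathcal T_f}\ =^\beta_\gamma\ f$ and $H_{\mathcal T_g}\ =^\beta_\gamma\ g$, hence $f\ =^\beta_\gamma\ g$.

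\textbf{Main obstacle.} The delicate point, and the one I would spend the most care on, is verifying that $(T_f^\alpha)_{\alpha<\kappa}$ really is a \emph{good} filtration of $A^f$ and that it computes $f$ up to $=^\beta_\gamma$: one must show that whenever $B_\alpha\ne\emptyset$, i.e. there is a branch $\eta$ of length $\gamma$ all of whose proper initial segments lie in $T_f^\alpha$ but $\eta\notin T_f^\alpha$, then $\sup$ of the relevant coordinate equals $\alpha$, $\cf(\alpha)=\gamma$, and the colour $d_f(\eta)$ equals $f(\alpha)$. This requires re-running, for the ordered tree, the growth estimates that in $J_f$ gave $\sup(\rng(\eta_5))=\alpha$ and $\sup(\rng(\eta_5))\le \sup(\rng(\eta_2))=\sup(\rng(\eta_3))$; the extra linear-order coordinates (the first component $\xi_1$ of elements of $\mathcal T$, which ranges over $\kappa$ via the $\kappa$-colorable order $I$) must be shown not to interfere — specifically that on such a fresh branch $s_1(\eta)=s_\gamma(\eta)$, forcing the colour to be $f(s_1(\eta))=f(\alpha)$ rather than $c_f(\overline\eta)$. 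Once these coordinate estimates are in place, the rest is an application of Fact~\ref{4.4}, Lemma~\ref{colorisom}, and Remark~\ref{equal_orders}, and the theorem follows.
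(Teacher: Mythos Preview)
Your proposal is essentially correct and matches the paper's approach in both directions. Two remarks.

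\textbf{Forward direction.} You have the right architecture---a back-and-forth over a club, maintaining the invariant $G(\overline{\eta})=\overline{F(\eta)}$ for the coloured-tree isomorphism $G\colon J_f\to J_g$ coming from Lemma~\ref{colorisom}. But the extension is not ``canonical'' in the way you describe. At each step you must send a new $\xi\in Suc_{A^f}(\eta)$ to some element of $Suc_{A^g}(F(\eta))$ that simultaneously (i) realizes the correct $<$-type over the already-mapped successors and (ii) carries the prescribed $(\vartheta_2,\vartheta_3)$-coordinates so that its $\overline{\xi}$-projection lands on $G(\overline{\xi})$. That such a target exists is exactly property~$(\ast)$, i.e.\ the $\kappa$-colorability of $I$; this is the engine of the argument, not bookkeeping about $W^\alpha_\eta$ and $\sigma^\alpha_\eta$. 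The paper runs the back-and-forth over the filtration $A^f_\alpha$, invoking $(\ast)$ explicitly at each extension, and then verifies colour preservation at leaves by splitting into the case where some initial segment was ``new'' at a successor stage (handled by the invariant) and the case where the branch is a limit at a $\gamma$-cofinal $\alpha_i\in C$, where both cases $s_1(\eta)=s_\gamma(\eta)$ and $s_1(\eta)<s_\gamma(\eta)$ yield $d_f(\eta)=f(\alpha_i)=g(\alpha_i)=d_g(F(\eta))$.

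\textbf{Backward direction.} Your plan is exactly the paper's: the filtration you write down is the paper's $A^f_\alpha$, and the ``main obstacle'' you isolate---that on a fresh $\gamma$-branch at level $\alpha$ one gets $d_f(\eta)=f(\alpha)$ regardless of whether $s_1(\eta)=s_\gamma(\eta)$ or $s_1(\eta)<s_\gamma(\eta)$ (the latter via the growth estimates for $\overline{\eta}\in J_f$)---is precisely what the paper checks. The paper does not route through $J_f\cong_{ct} J_g$; it applies the good-filtration argument (Fact~\ref{4.4}) directly to $A^f$, as in your ``more precisely'' clause.
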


\begin{proof}
For every $f\in \beta^\kappa$ let us define the $\kappa$-representation $\mathbb{A}^f=\langle A^f_\alpha \mid \alpha<\kappa\rangle$ of $A^f$, 
$$
A^f_\alpha=\{\eta\in A^f\mid rng(\eta)\subseteq \vartheta\times\omega\times\vartheta\times\gamma\times\vartheta^4\text{ for some }\vartheta<\alpha\}.
$$

Let $f$ and $g$ be such that $f\ =^\beta_\gamma\ g$, there is $G$ a coloured trees isomorphism between $J_f$ and $J_g$. Let $C\subseteq \kappa$ be a club such that $\{\alpha\in C\mid cf(\alpha)=\gamma\}\subseteq \{\alpha<\kappa\mid f(\alpha)=g(\alpha)\}$. 
We will show that there are sequences $\{\alpha_i\}_{i<\kappa}$ and $\{F_i\}_{i<\kappa}$ with the following properties:
\begin{itemize}
\item $\{\alpha_i\}_{i<\kappa}$ is a club.
\item If $i$ is a successor, then there is $\vartheta\in C$ such that $\alpha_{i-1}<\vartheta<\alpha_i$.
\item Suppose $i=\iota+n$, where $\iota$ is limit or 0 and $n$ is odd. Then $F_i$ is a partial isomorphism between $A^f$ and $A^g$, and $A_{\alpha_i}^f\subseteq dom(F_i)$.
\item Suppose $i=\iota+n$, where $\iota$ is limit or 0 and $n$ is even. Then $F_i$ is a partial isomorphism between $A^f$ and $A^g$, and $A_{\alpha_i}^g\subseteq rng(F_i)$.
\item If $i$ is limit, then $F_i:A_{\alpha_i}^f\rightarrow A_{\alpha_i}^g$.
\item If $i<j$, then $F_i\subseteq F_j$.
\item For all $\eta\in dom(F_i)$, $G(\overline{\eta})=\overline{F_i(\eta)}$.
\end{itemize}

We will proceed by induction over $i$.

{\bf Case $i=0$}. Let $\alpha_0=0$ and $F_0(\emptyset)=\emptyset$. 

{\bf Case $i$ is successor}. Suppose $i=\iota+n$, with $\iota$ limit or 0 and $n$ even, is such that:
\begin{itemize}
    \item $F_i$ is a partial isomorphism.
    \item $A_{\alpha_i}^g\subseteq rng(F_i)$.
    \item For all $j<i$, $F_j\subseteq F_i$.
    \item For all $\eta\in dom(F_i)$, $G(\overline{\eta})=\overline{F_i(\eta)}$.
\end{itemize}

Let us choose $\alpha_{i+1}$ to be a successor ordinal such that $\alpha_{i}<\vartheta<\alpha_{i+1}$ holds for some $\vartheta\in C$ and enumerate $A^f_{\alpha_{i}}$ by $\{\eta_j\mid j<\Omega\}$ for some $\Omega<\kappa$. Denote by $B_j$ the set $\{x\in A^f_{\alpha_{i+1}}\backslash dom(F_i)\mid \eta_j\prec x\}$. 

By the induction hypothesis, we know that for all $j<\Omega$, $x\in B_j$, $\overline{F_i(\eta_j)}\prec G(\overline{x})$. By Remark \ref{equal_orders}, for all $\eta\in A^f$ and $\xi\in A^g$, $<\restriction Suc_{A^f}(\eta)$ and $<\restriction Suc_{A^g}(\xi)$ are isomorphic. Thus, since $|A^f_{\alpha_{i}}|, |B_0|<\kappa$, by $(\ast)$ there is an embedding $F^0_i$ from $(A^f_{\alpha_i}\cup B_0,\prec,<)$ to $(A^g,\prec, <)$ that extends $F_i$ and for all $\eta\in dom(F^0_i)$, $\overline{F^0_i(\eta)}=G(\overline{\eta})$. 

Suppose that $0<t<\Omega$ is such that the following hold:
\begin{itemize}
\item There is a sequence of embeddings $\{F_i^j\mid j<t\}$, where $F^j_i$ is an embedding from $(A^f_{\alpha_i}\cup \bigcup_{l\leq j}B_l,\prec,<)$ into $A^g$.
\item $F_i^l\subseteq F_i^j$ holds for all $l<j<t$.
\item For all $\eta\in dom(F^j_i)$, $\overline{F^j_i(\eta)}=G(\overline{\eta})$.
\end{itemize}

Since $|A^f_{\alpha_{i}}\cup \bigcup_{j< t}B_j|, |B_t|<\kappa$, by $(\ast)$ there is an embedding $F^t_i$ from $(A^f_{\alpha_i}\cup \bigcup_{j\leq t}B_j,\prec,<)$ to $(A^g,\prec, <)$ that extends $\bigcup_{j<t} F_i^j$ and for all $\eta\in dom(F^t_i)$, $\overline{F^t_i(\eta)}=G(\overline{\eta})$. 

Finally $F_{i+1}=\bigcup_{j<\Omega} F_i^j$ is as wanted. 


The case $i=\iota+n$ with $n$ odd is similar. For $i$ limit, we define $\alpha_i=\bigcup_{j<i} \alpha_j$ and $F_{\alpha_{i}}=\bigcup_{j<i} F_j$.

It is clear that $F=\bigcup_{j<\kappa} F_j$ witnesses that $A^f$ and $A^g$ are isomorphic as ordered trees. Let us show that $d_f(\eta)=d_g(F(\eta))$, suppose $\eta\in A^f$ is a leaf. Let $l$ be the least ordinal such that $\eta\in A^f_{\alpha_l}$. If there is $n<\gamma$ such that for all $j<l$, $\eta\restriction n\notin A^f_{\alpha_j}$, then by the way $F$ was constructed, $d_f(\eta)=d_g(F(\eta))$. On the other hand, if for all $n<\gamma$ there is $j<l$ such that $\eta\restriction n\in A^f_{\alpha_j}$, then there is a $\gamma$-cofinal ordinal $i$ such that $s_\gamma(\eta)=\alpha_i$ and $i+1=l$. 
By the construction of $A^f$ we know that
$$
d_f(\eta)=\begin{cases} c_f(\overline{\eta}) &\mbox{if } s_1(\eta)< s_\gamma(\eta)\\
f(s_1(\eta)) & \mbox{if } s_1(\eta)= s_\gamma(\eta). \end{cases}
$$
Since $s_\gamma(\eta)=\alpha_i$, either $d_f(\eta)=f(s_1(\eta))$ (if $s_1(\eta)=\alpha_i$) or $d_f(\eta)=c_f(\overline{\eta})$ (if $s_1(\eta)<\alpha_i$). 

Therefore, if $s_1(\eta)=\alpha_i$, then $d_f(\eta)=f(\alpha_i)$. 

Let us calculate $d_f(\eta)$, when $s_1(\eta)< s_\gamma(\eta)$.
By Fact \ref{4.4},
$$s_7(\eta)\leq s_5(\eta)=s_6(\eta)=s_8(\eta)=sup(rng(\eta_8).$$ 
It is easy to see that $s_2(\eta),s_3(\eta),s_4(\eta)\leq s_5(\eta)$.

We conclude that $s_\gamma(\eta)=s_8(\eta)=sup(rng(\eta_8))$ and $\alpha_i=sup(rng(\eta_8))$.  From Definition \ref{colorconst} (8), $$c_f(\overline{\eta})=f(sup(rng(\eta_8)))=f(\alpha_i).$$

Therefore $d_f(\eta)=f(\alpha_i)$ in both cases ($s_1(\eta)=s_\gamma(\eta)$ and $s_1(\eta)<s_\gamma(\eta)$). By the same argument and using the definition of $F$, we can conclude that $d_g(F(\eta))=g(\alpha_i)$. Finally since $i$ is a limit ordinal with cofinality $\gamma$, $\alpha_i$ is an $\gamma$-limit of $C$. Thus $d_f(\eta)=f(\alpha_i)=g(\alpha_i)=d_g(F(\eta))$ and $F$ is a coloured tree isomorphism.

Now let us prove that if $A^f$ and $A^g$ are isomorphic ordered coloured trees, then $f~ =^\beta_\gamma~ g$.

Let us start by defining  the following function $H_f\in \beta^\kappa$. For every $\alpha\in \kappa$ with cofinality $\gamma$, define $B_\alpha=\{\eta\in A^f\backslash A^f_\alpha\mid dom(\eta)=\gamma~\&~\forall n<\gamma ~(\eta\restriction n\in A^f_\alpha)\}$. Notice that by the construction of $A^f$ and the definition of $A^f_\alpha$, for all $\eta\in B_\alpha$ we have $d_f(\eta)=f(s_\gamma(\eta))=f(\alpha)$. Therefore, the value of $f(\alpha)$ can be obtained from $B_\alpha$ and $d_f$, and we can define the function $H_f\in \beta^\kappa$ as:

$$H_f(\alpha)=\begin{cases} f(\alpha) &\mbox{if } cf(\alpha)=\gamma\\
0 & \mbox{otherwise. } \end{cases}$$
This function can be obtained from the $\kappa$-representation $\{A^f_\alpha\}_{\alpha<\kappa}$ and $d_f$. It is clear that $f~ =^\beta_\gamma~ H_f$.

\begin{claim}
If $A^f$ and $A^g$ are isomorphic ordered coloured trees, then $H_f~ =^\beta_\gamma~ H_g$.
\end{claim}
\begin{proof}
Let $F$ be an ordered coloured tree isomorphism. It is easy to see that $\{F[A^f_{\alpha}]\}_{\alpha<\kappa}$ is a $\kappa$-representation. Define $C=\{\alpha<\kappa\mid F[A^f_\alpha]=A^g_\alpha\}$. Since $F$ is an isomorphism, for all $\alpha\in C$, $H_f(\alpha)=H_g(\alpha)$. Therefore it is enough to show that $C$ is a $\gamma$-club. By the definition of $\kappa$-representation, if $(\alpha_n)_{n<\gamma}$ is a sequence of elements of $C$ cofinal to some $\vartheta$, then $A^g_\vartheta=\bigcup_{n<\gamma}A^g_{\alpha_n}=\bigcup_{n<\gamma}F[A^f_{\alpha_n}]=F[A^f_\vartheta]$. We conclude that $C$ is $\gamma$-closed. 

Let us finish by showing that $C$ is unbounded. Fix an ordinal $\alpha<\kappa$, let us construct a sequence $(\alpha_n)_{n\leq\omega}$ such that $\alpha_\omega\in C$ and $\alpha_\omega>\alpha$. Define $\alpha_0=\alpha$. For every odd $n$, define $\alpha_{n+1}$ to be the least ordinal bigger than $\alpha_n$ such that $F[A^f_{\alpha_n}]\subseteq A^g_{\alpha+1}$.  For every even $n$, define $\alpha_{n+1}$ to be the least ordinal bigger than $\alpha_n$ such that $A^g_{\alpha_n}\subseteq F[A^f_{\alpha+1}]$. Define $\alpha_\omega=\bigcup_{n<\omega}\alpha_n$. Clearly $\bigcup_{i<\omega}F[A^f_{\alpha_{2i}}]=\bigcup_{i<\omega}A^g_{\alpha_{2i+1}}$. We conclude that $\alpha_\omega\in C$
\end{proof}
\end{proof}

\begin{remark}\label{equal_trees}
Same as in the construction of the coloured trees $J_f$, the function$f\in \beta^\kappa$ is only used to define the color function in the construction of $A^f$. 
So if $f,g\in \beta^\kappa$ and $\alpha$ are such that $f\restriction\alpha=g\restriction\alpha$, then $J_f^\alpha=J_g^\alpha$. As a consequence  $f\restriction\alpha=g\restriction\alpha$ implies that $A^f_\alpha= A^g_\alpha$.
\end{remark}

Notice that the only property we used from $I$ to construct the ordered coloured trees was that it is $\kappa$-colorable. Therefore the construction can be done with any $\kappa$-colorable linear order. We will need the other properties of $I$ in the next section, when we construct the Ehrenfeucht-Mostowski models.

\section{Ehrenfeucht-Mostowski models}\label{Section_modelsf}

\subsection{Index models}

We will use ordered coloured trees to construct the models of non-classifiable theories, we will construct Ehrenfeucht-Mostowski models (see \cite{EM}). These models require an skeleton for the construction.

{\bf Notation.} We will use all the objects we have studied so far in the previous sections. Let us recall some notation, before we do the construction of the models.
\begin{itemize}
    \item $\kappa$ is the cardinal associated to the Baire space, it is an uncountable regular cardinal that satisfies $\kappa^{<\kappa}=\kappa$.
    \item $\kappa$ is either an inaccessible cardinal or a successor cardinal. If $\kappa$ is a successor cardinal, then $\lambda$ is the cardinal such that $2^\lambda=\lambda^+=\kappa$.
    \item $\varepsilon<\kappa$ is the cardinal associated to the density, i.e. $\varepsilon$-dense linear orders.
    \item $\theta<\kappa$ is a cardinal of an $\varepsilon$-saturated model of DLO, i.e. it is a cardinal such that there is a model of $DLO$ of size $\theta$ that is $\varepsilon$-dense. For simplicity, let $\theta$ be the least cardinal with such property. 
    \item $\gamma<\kappa$ is the height of the ordered coloured trees.
    \item $\beta\leq \kappa$ is the amount of colors of the ordered coloured trees.
    \item $\mathcal{Q}$ is a model of DLO with cardinality $\theta$ that is $\varepsilon$-dense.
    \item $I$ is the linear order constructed in Section \ref{seccion1}.
\end{itemize}

\noindent {\bf Assumptions.} To use the results of the the previous sections, we will need to make some cardinal assumptions.
\begin{itemize}
    \item If $\kappa=\lambda^+$, then $2^\theta\leq\lambda=\lambda^{<\varepsilon}$.
    \item $\varepsilon$ is regular.
    \item $\gamma$ is regular and satisfies $\forall\alpha<\kappa$, $\alpha^\gamma<\kappa$.
    \item $\varepsilon\leq\gamma$.
    \item $\beta=2$.
\end{itemize}

In this section we will construct a model $\mathcal{M}^f$ for each $f\in 2^\kappa$, and study the isomorphism between these models. The following is the main result of this section.


\begin{thm}\label{main_teo_4}
    Let $T$ be a non-classifiable theory. For any function $f\in 2^\kappa$, we can construct a model $\mathcal{M}^f$, such that for any $f,g\in 2^\kappa$, there are $\varepsilon\leq \gamma<\kappa$ (satisfying the previous assumptions) $$f\ =^2_\gamma\ g \text{ iff } \mathcal{M}^f \cong \mathcal{M}^g.$$
\end{thm}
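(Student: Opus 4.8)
The plan is to build $\mathcal{M}^f$ as an Ehrenfeucht--Mostowski model over the ordered coloured tree $A^f$ constructed in Section~\ref{Sectio_trees}, and to transfer the isomorphism characterization of Theorem~\ref{Afcong} through the EM construction. Concretely, for a fixed non-classifiable theory $T$, one first chooses an appropriate template (a proper class of indiscernibles indexed by $K^\gamma_{tr}$) adapted to the particular dividing line that $T$ satisfies: for $T$ stable unsuperstable one uses the tree-skeleton template from Shelah~\cite{Sh} and Hyttinen--Weinstein~\cite{HK}; for $T$ unstable or superstable with the OTOP one uses a template coding a linear order into $T$ (via the order property or the OTOP formulas $(\varphi_m)_{m<\omega}$); and for $T$ superstable with the DOP one uses the DOP configuration of $F^a_\omega$-saturated models over independent systems indexed by the tree. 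In each case the ordered structure on $\mathrm{Suc}_{A^f}(\eta)$ (which is $\kappa$-colorable, $(<\kappa)$-stable, $(\kappa,\varepsilon)$-nice, and $\varepsilon$-dense by Theorem~\ref{main_order_prop}) is exactly what is needed to realize the skeleton inside a model of $T$ of size $\kappa$: the $(<\kappa)$-stability keeps the EM-model from collapsing types, the $\varepsilon$-density and $(\kappa,\varepsilon)$-niceness provide the saturation needed for the back-and-forth, and $\kappa$-colorability is what lets coloured-tree isomorphisms be lifted. One defines $\mathcal{M}^f$ to be the (Skolemized) EM-model generated by a sequence of indiscernibles indexed by $A^f$ with the colour function $d_f$ controlling which indiscernible type is used at each leaf.

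The first direction, $f =^2_\gamma g \Rightarrow \mathcal{M}^f\cong\mathcal{M}^g$, is the easier half: by Theorem~\ref{Afcong}, $f =^2_\gamma g$ gives an isomorphism $A^f\cong A^g$ of ordered coloured trees, and since the EM-model is functorial in its skeleton, any isomorphism of skeletons extends to an isomorphism of the generated models (one has to check the Skolem functions are matched, which is automatic because the same template is used on both sides). So $\mathcal{M}^f\cong\mathcal{M}^g$ follows immediately. The substantive content is the converse: from $\mathcal{M}^f\cong\mathcal{M}^g$ one must recover $A^f\cong A^g$ (hence $f=^2_\gamma g$ by Theorem~\ref{Afcong}). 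This is the classical ``isomorphism theorem'' for EM-models of non-classifiable theories, and the proof proceeds by showing that any isomorphism $\mathcal{M}^f\to\mathcal{M}^g$ must (after passing to a club of a suitable $\kappa$-representation) carry the skeleton of $\mathcal{M}^f$ essentially onto the skeleton of $\mathcal{M}^g$, up to a controlled perturbation that does not change the $=^2_\gamma$-class. The key tools are: the $(<\kappa,bs)$-stability and $(\kappa,\varepsilon)$-niceness of $I$ (to control how an arbitrary automorphism moves elements of the index set, via the $Sp_{bs}$ non-stationarity), a homogeneity/density argument to see that arbitrary isomorphisms are ``tree-like'' on a club, and the fact that the colour $d_f$ of a leaf is an invariant readable off the model. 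One then extracts a good filtration and applies the coloured-tree isomorphism invariance (Fact~\ref{4.4}) to conclude $H_f =^2_\gamma H_g$, i.e. $f=^2_\gamma g$.

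The main obstacle is precisely this isomorphism theorem in the superstable cases (OTOP and, especially, DOP), where the skeleton is no longer a tree of sequences of a single sort but a more intricate independent system of $F^a_\omega$-saturated models, and one must argue that the combinatorics of the index tree $A^f$ is faithfully reflected in the dimensions appearing in $\mathcal{M}^f$. This is where the passage from the tree construction of Fact~\ref{unsup_old} to superstable theories is genuinely new: the excerpt flags that it requires ``a density argument, coding trees by linear orders, and a homogeneity argument'', and the linear order $I$ with its $\varepsilon$-dense, $(\kappa,\varepsilon)$-nice refinement (Theorem~\ref{main_order_prop}) is engineered exactly to make these three ingredients available simultaneously. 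I expect the proof to spend most of its effort verifying, theory by theory, that: (i) the skeleton indexed by $A^f$ can be embedded as indiscernibles in a model of $T$ of size exactly $\kappa$ (using $(<\kappa)$-stability to bound the size and $\kappa$-colorability plus $\varepsilon$-density to get enough elements); and (ii) any isomorphism between two such models, restricted to a club, induces an isomorphism of the underlying ordered coloured trees, whence Theorem~\ref{Afcong} closes the argument. The choice of $\gamma$ (and the assumptions $\varepsilon\le\gamma$, $\forall\alpha<\kappa\,\alpha^\gamma<\kappa$, $\beta=2$) will be dictated by which template is used, with $\gamma=\omega_1$ forced in the DOP case and $\gamma=\omega$ sufficing for the stable unsuperstable and OTOP cases, matching the cofinality restrictions in the statement.
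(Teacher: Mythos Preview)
Your overall architecture—EM models indexed by the ordered coloured trees, forward direction by functoriality, backward direction the real work—matches the paper. But several concrete features of the construction and of the hard direction differ from what you describe, and at least one of your suggestions would not go through as stated.

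\textbf{Construction.} For the unstable, OTOP, and DOP cases the paper does \emph{not} index the EM model directly by the tree. It first passes to the subtree $A_f\subseteq A^f$ obtained by deleting all leaves of colour $0$ (Definition~\ref{Def_final_trees_and_repr}); the colour is thus encoded by which branches survive, not by ``controlling which indiscernible type is used at each leaf''. It then codes $A_f$ into a \emph{linear order} $K^O(f)$, $K^U(f)$, or $K^D(f)$ by the doubling trick $\eta\mapsto(\eta,0),(\eta,1)$, and builds a classical EM model over that linear order. The point of this detour is that it yields a formula $\varphi$ in the model (first-order, $L_{\infty\omega}$, or $L_{\omega_1\omega_1}$ according to the case) such that $\mathcal{M}^f\models\varphi(a_\nu,a_\eta)$ iff $\eta\prec\nu$ in $A_f$; see Lemmas~\ref{construction_otop}, \ref{construction_unst}, \ref{construction_dop}. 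In particular, the DOP template is \emph{not} an independent system of $F^a_\omega$-saturated models: it is Shelah's EM model for a linear order in which the order is $L_{\omega_1\omega_1}$-definable (Fact~\ref{DOP1}). The $\varepsilon$-density and $\omega_1$-homogeneity of $A_f$ (Fact~\ref{DOP2}) are what makes the resulting skeleton indiscernible for $L_{\omega_1\omega_1}$.

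\textbf{Hard direction.} The paper does \emph{not} argue that an isomorphism $F\colon\mathcal{M}^f\to\mathcal{M}^g$ carries skeleton to skeleton on a club, nor does it extract a coloured-tree isomorphism to feed into Fact~\ref{4.4}. That route is problematic: an arbitrary isomorphism need not respect the skeleton in any way. Instead the proof of Theorem~\ref{models_iso} is by contradiction. Assuming $f\neq^2_\gamma g$, pick a stationary $S\subseteq S^\kappa_\gamma$ with $f\restriction S\equiv 1$, $g\restriction S\equiv 0$. Using $(<\kappa)$-stability of $A_g$ and $(\kappa,\varepsilon)$-niceness of the successor orders, one builds a club $C$ of ``good'' levels; at $\delta\in C\cap S$ there is a branch $\eta\in P_\gamma^{A_f}$ cofinal in $(A_f)_\delta$ with no counterpart in $A_g$. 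Writing $F(a_{\zeta})=\bar\mu_\zeta(\bar b_{\bar v_\zeta})$ in terms of the $g$-skeleton, one finds $\zeta_1\prec\eta$ and $\zeta_2\not\prec\eta$ in $A_f$ with $\bar\mu_{\zeta_1}=\bar\mu_{\zeta_2}$ and $\bar v_{\zeta_1},\bar v_{\zeta_2}$ realizing the same atomic type over a suitable $(A_g)_\alpha$ (this is where niceness and Fact~\ref{closed_lema} enter). Indiscernibility of the $g$-skeleton then forces $\varphi(a_\eta,a_{\zeta_1})\leftrightarrow\varphi(a_\eta,a_{\zeta_2})$ in $\mathcal{M}^f$, contradicting the choice of $\zeta_1,\zeta_2$. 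So $(<\kappa)$-stability and $(\kappa,\varepsilon)$-niceness are used not for ``keeping the EM-model from collapsing types'' or ``back-and-forth saturation'', but to manufacture the pair $\zeta_1,\zeta_2$ inside the contradiction argument.
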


It is clear that the construction of the models depends on which kind of non-classifiable theory we are dealing with (e.g. unstable or superstable with the OTOP). In particular, each case requires a different values for $\varepsilon$ and $\gamma$. 

For the constructions we will use different kinds of Ehrenfeucht-Mostowski models. The sets of indiscernibles (below) will allow us to use the types over trees to study the types over models.

\begin{defn}
Let $\Delta$ be a set of formulas. Let $A$ and $\mathcal{M}$ be models, and $X=\{\bar{a}_s\mid s\in A\}$ an indexed set of finite tuples of elements of $\mathcal{M}$. We say that \textit{$X$ is a set of indiscernibles in $\mathcal{M}$ relative to $\Delta$}, if the following holds:

If $\bar{s},\bar{s}'$ are $n$-tuples of elements of $A$ and $tp_{at}(\bar{s},\emptyset,A)=tp_{at}(\bar{s}',\emptyset,A)$, then $$tp_\Delta(\bar{a}_{\bar{s}},\emptyset,\mathcal{M})=tp_\Delta(\bar{a}_{\bar{s}'},\emptyset,\mathcal{M}).$$
Here and from now on, $\bar{s}=(s_0,\ldots ,s_n)$ is a tuple of elements of $A$, and $\bar{a}_{\bar{s}}$ denotes $\bar{a}_{s_0}^\frown \cdots ^\frown\bar{a}_{s_n}$.
\end{defn}

Notice that there are no restrictions over $\Delta$. The existence of a model with a set of indiscernibles relative to an infinitary logic has been studied by Eklof in \cite{Ekl} and Makkai in \cite{Mak}.
The use of infinitary logics will be useful when we construct the models, for some theories we will deal with linear orders definable by a formula in an infinitary logic (Fact \ref{otop1} and Fact \ref{DOP1}).

\begin{defn}[Ehrenfeucht-Mostowski models]\label{EM_def}
    Let $T$ be a $L_{\omega\omega}$-theory of vocabulary $\tau$, $l$ a dense linear order, $\mathcal{M}$ a model of vocabulary $\tau^1$, and $\varphi(\bar{u}, \bar{v})$ a formula in some logic $\mathcal{L}$.

    We say that \textit{$\mathcal{M}$ is an Ehrenfeucht-Mostowski model of $T$ for $l$}, where the order is definable by $\varphi$, if $\mathcal{M}\models T$, $\tau\subseteq \tau^1$, and there is a natural number $n$ and $n$-tuples of elements $\bar{a}_x\in \mathcal{M}$, $x\in l$, such that the following hold:
    \begin{enumerate}
        \item Every element of $\mathcal{M}$ is of the form $\mu(\bar{a}_{x_1},\ldots , \bar{a}_{x_m})$, where $\mu$ is a $\tau^1$-term and $x_1<\cdots <x_m$.
        \item If $x,y\in l$, then $\mathcal{M}\models \varphi(\bar{a}_x,\bar{a}_y)$ if and only if $x<y$.
        \item If $\psi(\bar{u}_1\ldots , \bar{u}_m)$ is an atomic $\tau^1$-formula, $x_1<\cdots <x_m$ and $y_1<\cdots <y_m$, then $$\mathcal{M}\models \psi(\bar{a}_{x_1},\ldots , \bar{a}_{x_m})\text{ iff }\mathcal{M}\models \psi(\bar{a}_{y_1},\ldots , \bar{a}_{y_m}).$$
    \end{enumerate}
\end{defn}

Suppose $T$ is a theory such that for each dense linear order $l$, $T$ has an Ehrenfeucht-Mostowski model where the order is definable by an $L_{\infty\omega}$-formula. We will only consider linear orders of some fixed set $B$. Let $l_B$ be a dense linear order such that every linear order of $B$ is a submodel of $l_B$. Let $EM_1(l_B)$, $\tau^1$, $\varphi$, $n$, $(\bar{a}_x)_{x\in l_B}$ be such that the conditions of Definition \ref{EM_def} are satisfied for $l_B$.

If $l\subseteq l_B$ is dense, then we define $EM_1(l)$ as the submodel of $EM_1(l_B)$ generated by $\bar{a}_x$, $x\in l$. Notice that $EM_1(l)$, $\tau^1$, $\varphi$, $n$, $(\bar{a}_x)_{x\in l}$ satisfy the conditions of Definition \ref{EM_def} for $l$.

We call the linear order \textit{$l$ the index model} of $EM_1(l)$. The indexed set \textit{$(\bar{a}_x)_{x\in l}$ is the skeleton of $EM_1(l)$}, and the tuples $\bar{a}_x$, $x\in l$, are the generating elements of $EM_1(l)$. Let us denote $EM(l)=EM_1(l)\restriction \tau$.

Suppose $T$ is a theory such that for each dense linear order $l$, $T$ has an Ehrenfeucht-Mostowski model where the order is definable by an $L_{\infty\omega_1}$-formula, and $B$ contains only $\omega_1$-dense linear orders. Then we can define $EM_1(l)$ and $EM(l)$ for all $l\in B$ as above.

\begin{defn}[Generalized Ehrenfeucht-Mostowski models]\label{EM-models_def}
We say that a function \textit{$\Phi$ is proper for $K_{tr}^\gamma$}, if there is a vocabulary $\tau^1$ and for each $A\in K_{tr}^\gamma$, there is a model $\mathcal{M}_1$ and tuples $\bar{a}_s$, $s\in A$, of elements of $\mathcal{M}_1$ such that the following two hold:
\begin{itemize}
\item Every element of $\mathcal{M}_1$ is an interpretation of some $\mu(\bar{a}_{\bar{s}})$, where $\mu$ is a $\tau^1$-term.
\item $tp_{at}(\bar{a}_{\bar{s}}, \emptyset, \mathcal{M}_1)=\Phi(tp_{at}(\bar{s}, \emptyset, A))$.
\end{itemize}
\end{defn}
Notice that for each $A$, the previous conditions determined $\mathcal{M}_1$ up to isomorphism. We may assume $\mathcal{M}_1$, $\bar{a}_{s}$, $s\in A$, are unique for each $A$. We denote $\mathcal{M}_1$ by $EM^1(A,\Phi)$. 

We call \textit{$A$ the index model of $EM^1(A,\Phi)$}. The indexed set \textit{$(\bar{a}_s)_{s\in A}$ is the skeleton of $EM^1(A,\Phi)$}, and the tuples $\bar{a}_s$, $s\in A$, are the generating elements of $EM^1(A,\Phi)$.

Suppose $T$ is a countable complete theory in a countable vocabulary $\tau$, $\tau^1$ a Skolemization of $\tau$, and $T^1$ the Skolemization of $T$ by $\tau^1$. If there is a proper function $\Phi$ for $K_{tr}^\gamma$, then for every $A\in K_{tr}^\gamma$, we will denote by $EM(A,\Phi)=EM^1(A,\Phi)\restriction \tau$. 

We call $EM_1(l)$ and $EM^1(A,\Phi)$ \textit{Ehrenfeucht-Mostowski models}. As it was mentioned before, the sets of indiscernibles play an important role in the Ehrenfeucht-Mostowski models. For more on indiscernibles and types see \cite{Sco} and \cite{TT}, where indiscernible trees and generalized indiscernible sets are studied.

\begin{defn} Let $\varepsilon,\gamma<\kappa$ be regular cardinals.
\begin{itemize} 
\item $A\in K^\gamma_{tr}$ of size at most $\kappa$, is \textit{locally $(\kappa,\varepsilon)$-nice} if for every $\eta\in A\backslash P^A_\gamma$, $(Suc_A(\eta),<)$ is $(\kappa,\varepsilon)$-nice, $Suc_A(\eta)$ is infinite, and there is $\xi\in P_\gamma^A$ such that $\eta\prec\xi$.
\item $A\in K^\gamma_{tr}$ is \textit{$(<\kappa)$-stable} if for every $B\subseteq A$ of size smaller than $\kappa$, $$\kappa>|\{tp_{bs}(a,B,A)\mid a\in A\}|.$$
\end{itemize}
\end{defn}

By Theorem \ref{main_order_prop}, we know that there is a linear order that is $\varepsilon$-dense, $(<\kappa)$-stable, $(\kappa, \varepsilon)$-nice, and $\kappa$-colorable.

Even though  in this section we are working under the assumption $\beta=2$, the following two results are true for any $\beta<\kappa$.

\begin{lemma}\label{nonzero}
For every $f\in \beta^\kappa$, $\rho<\beta$, and $\eta\in (J_f)_{<\gamma}$, there is $\xi\in (J_f)_\gamma$ such that $\eta<\xi$ and $c_f(\xi)=\rho$.
\end{lemma}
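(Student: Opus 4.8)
The plan is to work directly with the explicit construction of $(J_f, c_f)$ from Definition \ref{colorconst} and produce, for a given $\eta \in (J_f)_{<\gamma}$ with $\dom(\eta) = m < \gamma$ and a prescribed colour $\rho < \beta$, an extension $\xi \in (J_f)_\gamma$ with $\eta \prec \xi$ and $c_f(\xi) = \rho$. The cleanest route is to make $\xi$ fall into case 10(b) of Definition \ref{colorconst}, i.e.\ to arrange that $\eta_1$ never stabilises along $\xi$, so that $c_f(\xi) = f(\sup(\rng(\xi_5)))$, and then to choose the tail of $\xi$ so that this supremum is some ordinal $\delta$ with $f(\delta) = \rho$. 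Since $f \in \beta^\kappa$ is surjective onto its range in the relevant sense — more precisely, since $\rho < \beta$ and $f$ takes the value $\rho$ cofinally often is \emph{not} something we get for free, so instead I would use that we only need \emph{one} $\delta$ with $f(\delta) = \rho$; such a $\delta$ exists as soon as $\rho \in \rng(f)$, and for the cases $\rho \notin \rng(f)$ we must instead hit $\rho$ via a subtree colour $Z^{\alpha,\varrho}_\rho$ in case 10(a). So the argument naturally splits, but in fact the subtree route handles \emph{all} values of $\rho < \beta$ uniformly and is the one I would actually carry out.

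So, concretely: first I would extend $\eta$ by one step to $\eta^\frown\langle a \rangle$ where $a = (\alpha_1, \alpha_2, \alpha_3, \alpha_4, \alpha_5)$ with $\alpha_1 = \eta_1(m-1) + 1$ (or $1$ if $m = 0$), chosen large enough in the lexicographic order to keep $\xi$ strictly increasing (item 2), and with $\alpha_2, \alpha_3, \alpha_4$ chosen so that $Z^{\alpha_2, \alpha_3}_{\alpha_4}$ is a downward closed subtree of some $R(\alpha_2,\alpha_3,m)$ that is nonempty, has a branch of length $\gamma - m$ — here I use Definition \ref{constants_trees}, that every possible coloured tree appears cofinally often in the enumeration, so in particular there is some $Z^{\alpha_2,\alpha_3}_{\alpha_4}$ of the required shape whose colouring function is constantly $\rho$ on its top level — and satisfies the side conditions of items 5 and 6 ($\alpha_2 = 0$ if $\alpha_2 \ge \alpha_3$, and $\alpha_2 \ge \eta_3(m-1) + \eta_4(m-1)$). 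Then I would let $\xi_1$ be constant equal to $\alpha_1$ on $[m, \gamma)$, let $\xi_2, \xi_3, \xi_4$ be constant equal to $\alpha_2, \alpha_3, \alpha_4$ on $[m,\gamma)$ (forced by item 8), and let $\xi_5 \restriction [m,\gamma)$ be a branch through $Z^{\alpha_2,\alpha_3}_{\alpha_4}$ of length $\gamma - m$, which exists by the choice of that subtree. Checking items 1–9 of Definition \ref{colorconst} for this $\xi$ is routine: items 3, 7, 8 hold because the relevant coordinates are eventually constant; item 9 holds because $[m,\gamma) = \xi_1^{-1}\{\alpha_1\}$ and $\xi_5 \restriction [m,\gamma) \in Z^{\alpha_2,\alpha_3}_{\alpha_4}$ by construction; item 4 holds since $\alpha_1 > 0$; items 5, 6 were arranged in the choice of $a$. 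Then item 10(a) applies with witness $\widehat m = m$, and $c_f(\xi) = c(\xi_5 \restriction [m,\gamma)) = \rho$ by the choice of $Z^{\alpha_2,\alpha_3}_{\alpha_4}$ as having constant colour $\rho$.

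The one genuine obstacle is establishing that the enumeration of subtrees in Definition \ref{constants_trees} really does contain, for every $\rho < \beta$, a downward closed subtree of some $R(\alpha,\varrho,m)$ that (a) is nonempty with a maximal branch of order type exactly $\gamma - m$ and (b) carries the constant colour $\rho$ on that branch's endpoint-level. Nonemptiness and the branch-length requirement are easy — take $R(\alpha, \varrho, m)$ itself with $\varrho - \alpha$ large enough, say $\varrho = \alpha + \gamma$, which has branches of length up to $\gamma$ — and then restrict to a single branch and colour its top element $\rho$; this is a legitimate coloured tree of the required form, and by the "cofinally often" clause it appears as some $Z^{\alpha,\varrho}_{\alpha_4}$. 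One must also ensure $\alpha = \alpha_2$ can simultaneously satisfy item 6's constraint $\alpha_2 \ge \eta_3(m-1) + \eta_4(m-1)$ (take $\alpha_2$ above that bound) and item 5's constraint (if $\alpha_2 \ge \alpha_3$ then $\alpha_2 = 0$, which we avoid by taking $\alpha_3 = \alpha_2 + \gamma > \alpha_2$), and that $Q(Z^{\alpha_2,\alpha_3}_{\alpha_4}) = m$, which is arranged by choosing the subtree inside $R(\alpha_2, \alpha_3, m)$. Once these bookkeeping points are pinned down the lemma follows; the case $m = 0$ (so $\eta$ is the root) is the same with the degenerate convention that there is no $\eta_1(m-1)$ and we simply start with $\xi_1 \equiv 1$ on $[0,\gamma)$.
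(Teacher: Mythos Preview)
Your approach is essentially the paper's: jump $\xi_1$ by one at the first new coordinate, freeze $\xi_1,\xi_2,\xi_3,\xi_4$ on the tail, and pick a subtree $Z^{\alpha_2,\alpha_3}_{\alpha_4}$ whose branch of full length carries colour $\rho$, so that clause 10(a) gives $c_f(\xi)=\rho$. The bookkeeping you do for items 5, 6, 9 is the same as the paper's.

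There is, however, a genuine gap: you write $\alpha_1=\eta_1(m-1)+1$ and $\alpha_2\ge\eta_3(m-1)+\eta_4(m-1)$, and you only treat the degenerate case $m=0$. But $m=\dom(\eta)$ may be a nonzero \emph{limit} ordinal, and then item~7 of Definition~\ref{colorconst} forces $\xi_1(m)=\sup_{\iota<m}\eta_1(\iota)$ and $\xi_2(m)=\sup_{\iota<m}\eta_2(\iota)$; you are not free to set $\xi_1(m),\xi_2(m)$ to your chosen $\alpha_1,\alpha_2$. Worse, if $\eta_1$ is eventually constant on some $[i,m)$, then $\xi_1(m)$ must equal that constant value, so $[i,\gamma)=\xi_1^{-1}\{\xi_1(m)\}$ and item~9 forces $\xi_5\restriction[i,\gamma)$ to lie in the already-fixed subtree $Z^{\eta_2(i),\eta_3(i)}_{\eta_4(i)}$ --- which need not have any branch of length $\gamma-i$, let alone one of colour $\rho$.

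The paper's fix is a one-liner you are missing: first extend $\eta$ to some $\eta'\succ\eta$ in $J_f$ with $\dom(\eta')=n$ a \emph{successor} ordinal (possible because every node of $J_f$ at level $<\gamma$ has successors, as noted right after Definition~\ref{colorconst}), and then run your construction from $\eta'$ rather than $\eta$. With that preliminary step, your argument and the paper's coincide.
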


\begin{proof}

Let $f\in \beta^\kappa$, such that $\eta\in (J_g)_{<\gamma}$, and $r=dom(\eta)$. By The construction of $J_f$, there is $\eta'\in (J_g)_{<\gamma}$ such that $\eta\prec\eta'$ and $n=dom(\eta')$ is a successor ordinal.

Let us construct $\xi$, such that $\eta\prec\xi$ and $c_f(\xi)=\rho$.
\begin{itemize}
\item $\xi\restriction n=\eta'$.
\item if $n\leq m<\gamma$, 
\begin{itemize}
\item $\xi_1(m)=\xi_1(n-1)+1$.
\item $\xi_2(m)=\xi_3(n-1)+\xi_4(n-1)$.
\item $\xi_3(m)=\xi_2(n)+\gamma$.
\item let $\varrho$ and $\zeta$ be such that $dom(\zeta)=[n,\gamma)$, $\zeta\in Z^{\xi_2(n),\xi_3(n)}_{\varrho}$ with $c(\zeta)=\rho$. Such $\varrho$ and $\zeta$ exist by Definition \ref{constants_trees}.
\item $\xi_4(m)=\varrho$.
\item  $\xi_5\restriction_{[n,\gamma)}=\zeta$.
\end{itemize}
\end{itemize}

By the way we defined $\xi$, we know that $\xi\in J_f$ and $\eta\prec\xi$. By the item (8) (a) on the construction of $J_f$, we know that $c_f(\xi)=c(\xi_5\restriction_{[n,\gamma)})=\rho$.

\end{proof}

\begin{defn}\label{Def_final_trees_and_repr}
For every $f\in 2^\kappa$, define the tree $A_f\subseteq A^f$ by: $x\in A_f$ if and only if $x$ is not a leaf of $A^f$ or $x$ is a  leaf such that $d_f(x)=1$.
For every $f\in 2^\kappa$, define the $\kappa$-representation $\mathbb{A}_f=\langle (A_f)_\alpha\mid\alpha<\kappa\rangle$ of $A_f$ by $$(A_f)_\alpha=\{\eta\in A_f\mid \eta\in A^f_\alpha\},$$ where $\mathbb{A}^f=\langle A^f_\alpha\mid\alpha<\kappa\rangle$ is the $\kappa$-representation introduced in the proof of Theorem \ref{Afcong}.
\end{defn}

\begin{thm}\label{the_final_trees}
For any $f\in 2^\kappa$, $A_f$ is a locally $(\kappa, \varepsilon)$-nice and $(<\kappa)$-stable ordered tree and satisfies: For all $f,g\in 2^\kappa$,
$$f\ =^2_\gamma\ g\ \Leftrightarrow A_f\cong A_g.$$
\end{thm}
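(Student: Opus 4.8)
The statement has two halves: first, that $A_f$ is locally $(\kappa,\varepsilon)$-nice and $(<\kappa)$-stable for every $f\in 2^\kappa$; second, the equivalence $f\ =^2_\gamma\ g\ \Leftrightarrow\ A_f\cong A_g$. The plan is to reduce everything to the corresponding properties of $A^f$, which were already established (Theorem \ref{Afcong}), together with the structural properties of the linear order $I\backslash I^0$ from Remark \ref{small_order} and Theorem \ref{main_order_prop}.

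\textbf{Step 1: local $(\kappa,\varepsilon)$-niceness.} Fix $f\in 2^\kappa$ and $\eta\in A_f\backslash P^{A_f}_\gamma$. The point is that $Suc_{A_f}(\eta)$ differs from $Suc_{A^f}(\eta)$ only by removing some leaves with color $0$, and $\eta$ itself is not at level $\gamma$, so $Suc_{A^f}(\eta)$ sits at level $dom(\eta)+1<\gamma$, hence consists entirely of non-leaves and survives in $A_f$ unchanged; thus $Suc_{A_f}(\eta)=Suc_{A^f}(\eta)$. By Remark \ref{equal_orders} and the construction of $<\restriction Suc_{A^f}(\eta)$, this order is isomorphic to $I\backslash I^0$, which by Theorem \ref{main_order_prop} (applied to the suborder, cf. Remark \ref{small_order}) is $(\kappa,\varepsilon)$-nice; it is also infinite. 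Finally, for every $\eta\in A_f$ that is not a leaf, I need $\xi\in P^{A_f}_\gamma$ with $\eta\prec\xi$, i.e. a leaf above $\eta$ of color $1$: this follows from Lemma \ref{nonzero} applied to the underlying coloured tree $J_f$ (pulled up through the construction of $A^f$ from $J_f$, using that $d_f(\xi)=c_f(\overline{\xi})$ when $s_1(\xi)<s_\gamma(\xi)$, which one arranges exactly as in the proof of Lemma \ref{nonzero}).

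\textbf{Step 2: $(<\kappa)$-stability.} For $B\subseteq A_f$ with $|B|<\kappa$, I bound $|\{tp_{bs}(a,B,A_f)\mid a\in A_f\}|$ by $|\{tp_{bs}(a,B,A^f)\mid a\in A^f\}|$: the basic formulas of the tree language ($\prec$, the $P_n$, $\wedge$, $<$) restricted to $A_f$ are the restrictions of those on $A^f$, and $A_f$ is a substructure, so a realized $bs$-type over $B$ in $A_f$ is determined by (indeed equals, modulo the predicate $P_\gamma$ which only distinguishes leaves) the corresponding type in $A^f$. Since $A^f$ is $(<\kappa)$-stable by Theorem \ref{Afcong}'s hypotheses (more precisely by the fact that $Suc$-orders are $(<\kappa)$-stable and the tree has height $\gamma$ with $\alpha^\gamma<\kappa$ — this is exactly the argument producing $(<\kappa)$-stability for $A^f$), the bound follows; here the assumptions $\kappa=\lambda^+$, $2^\theta\leq\lambda=\lambda^{<\varepsilon}$, $\alpha^\gamma<\kappa$ are used.

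\textbf{Step 3: the isomorphism equivalence — the main obstacle.} The direction $f\ =^2_\gamma\ g\Rightarrow A_f\cong A_g$ should transfer from Theorem \ref{Afcong}: given a coloured-tree isomorphism $F:A^f\to A^g$, it maps leaves of color $1$ to leaves of color $1$ and preserves non-leaves, hence restricts to a tree isomorphism $A_f\to A_g$; one must check it is still an isomorphism of \emph{ordered} trees, which is immediate since $<$ is inherited. The converse, $A_f\cong A_g\Rightarrow f\ =^2_\gamma\ g$, is the delicate part: from an isomorphism $A_f\cong A_g$ one must recover an isomorphism of the full coloured trees $A^f\cong A^g$, or else run the filtration/$\gamma$-club argument of Theorem \ref{Afcong} directly on $A_f$. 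The cleanest route is to mimic the second half of the proof of Theorem \ref{Afcong}: define $H_f(\alpha)=f(\alpha)$ for $cf(\alpha)=\gamma$ and $0$ otherwise, and observe that $H_f$ is recoverable from the $\kappa$-representation $\mathbb A_f$ together with which elements are leaves of $A_f$ — because a leaf $\eta$ with $s_\gamma(\eta)=\alpha$ and $d_f(\eta)=f(\alpha)=1$ lies in $A_f$ while those with $f(\alpha)=0$ do not, so the presence or absence of such leaves in $B_\alpha=\{\eta\in A_f\backslash (A_f)_\alpha\mid dom(\eta)=\gamma,\ \forall n<\gamma\ \eta\restriction n\in (A_f)_\alpha\}$ encodes $f(\alpha)$ on the $\gamma$-cofinal $\alpha$. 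Then the same $\gamma$-club argument ($C=\{\alpha\mid F[(A_f)_\alpha]=(A_g)_\alpha\}$ is a $\gamma$-club) gives $H_f\ =^2_\gamma\ H_g$, hence $f\ =^2_\gamma\ g$. The subtlety to watch is that removing color-$0$ leaves must not destroy the property that the relevant $B_\alpha$ is non-empty for $\gamma$-cofinally many $\alpha$ where $f(\alpha)=1$; one handles this exactly as in Lemma \ref{nonzero}, by noting that for any $\alpha$ with $cf(\alpha)=\gamma$ and $f(\alpha)=1$ there is indeed such a leaf, and for $f(\alpha)=0$ the absence is itself the information. I expect this encoding bookkeeping — verifying that $A_f$ still "sees" the $=^2_\gamma$-class of $f$ after the leaves have been pruned — to be the main point requiring care.
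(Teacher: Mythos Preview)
Your proposal is correct and follows the same route as the paper: reduce everything to the already-established properties of $A^f$ and $I$ (Theorem \ref{main_order_prop}, Remark \ref{equal_orders}, Lemma \ref{nonzero}, Theorem \ref{Afcong}). The paper's own proof is in fact briefer than yours --- it simply cites Theorem \ref{Afcong} for the biconditional $f =^2_\gamma g \Leftrightarrow A_f \cong A_g$, leaving implicit the passage from the coloured $A^f$ to the pruned uncoloured $A_f$ that you spell out in Step~3; your filtration argument for the backward direction (reading off $f(\alpha)$ from whether $B_\alpha$ is nonempty) is precisely the adaptation that makes this citation legitimate.
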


\begin{proof}
  Recall that $\gamma$ is such that for all $\alpha<\kappa$, $\alpha^\gamma<\kappa$, from Fact \ref{Afcong}, we know that for all $f,g\in 2^\kappa$,
$$f\ =^2_\gamma\ g\ \Leftrightarrow A_f\cong A_g.$$
From the way that $A_f$ was constructed, we know that for all $\eta\in A_f\backslash P^{A_f}_\gamma$, $(Suc_{A_f}(\eta),<)$ is $(\kappa, \varepsilon)$-nice and $Suc_{A_f}(\eta)$ is infinite. From Lemma \ref{nonzero} and the way $A_f$ was constructed, for all $\eta\in A_f\backslash P^{A_f}_\gamma$ there is $\xi\in P_\gamma^A$ such that $\eta\prec\xi$. Since the branches of the trees $A_f$ have length at most $\gamma+1$ and $I$ is $(<\kappa)$-stable, then the trees $A_f$ are $(<\kappa)$-stable.
\end{proof}

The trees $A_f$ are not the index models, we will use these trees to construct the index models (depending on the theory). Clearly, the properties of the trees $A_f$ will be important. A very important property that we will need is \textit{homogenicity}, it is related to indiscernible sets. Let $\upsilon\leq\kappa$ be a regular cardinal, a tree $A$ is \textit{$\upsilon$-homogeneous with respect to quantifier free formulas} if the following holds:

\textit{For every partial isomorphisms $F: X \rightarrow A$, where $|X|<\varepsilon$ is a subset of $A$, and $a$ in $A$; there is a partial isomorphisms $g: X \cup {a} \rightarrow A$ that extends $F$.}

See \cite{Kei} Chapter 19, for more on homogeneous models.

\begin{fact}\label{DOP2}
For all $f\in 2^\kappa$, $A_f$ is $\varepsilon$-homogeneous with respect to quantifier free formulas.

\end{fact}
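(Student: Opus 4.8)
The plan is to prove the homogeneity of $A_f$ directly from the structure of the successor orders $\mathrm{Suc}_{A_f}(\eta)$, using the fact — established in Remark \ref{equal_orders} and the construction following it — that each $\mathrm{Suc}_{A_f}(\eta)$ is (isomorphic to) the $\kappa$-colorable linear order $I$, together with property $(\ast)$. First I would unwind the definition of $\varepsilon$-homogeneity with respect to quantifier free formulas: given a partial isomorphism $F\colon X\to A_f$ with $|X|<\varepsilon$ and an element $a\in A_f$, I must extend $F$ to a partial isomorphism on $X\cup\{a\}$. If $a\in X$ there is nothing to do, so assume $a\notin X$. The quantifier free type of $a$ over $X$ in the language $(\prec,(P_n)_{n\leq\gamma},<,\wedge)$ is determined by: the level $\mathrm{dom}(a)$, the values $a\wedge x$ for $x\in X$, the $\prec$-relations, and, crucially, the order position of the relevant immediate successors of the meet points along the branch of $a$ relative to those of the elements of $X$.

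The key step is the following. Let $y\in X$ be such that $a\wedge y$ is $\prec$-maximal among $\{a\wedge x\mid x\in X\}$, and let $\eta_0=a\wedge y$ (if $a$ is $\prec$-comparable to some element of $X$ the argument simplifies, so assume not). Then $a$ and every $x\in X$ with $a\wedge x=\eta_0$ branch off at $\eta_0$ through distinct elements of $\mathrm{Suc}_{A_f}(\eta_0)$, and the quantifier free type of $a$ over $X$ is governed by the position of $a$'s successor $b=a\restriction(\mathrm{dom}(\eta_0)+1)$ in $\mathrm{Suc}_{A_f}(\eta_0)$ relative to the (fewer than $\varepsilon$ many) successors coming from $X$, together with what happens higher up. Under $F$, the image $\eta_0'=F(\eta_0)$ has $\mathrm{Suc}_{A_f}(\eta_0')\cong I\cong \mathrm{Suc}_{A_f}(\eta_0)$ by Remark \ref{equal_orders}, and $F$ restricted to the successors of $\eta_0$ appearing in $X$ is a partial order isomorphism into $\mathrm{Suc}_{A_f}(\eta_0')$ of size $<\varepsilon$. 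Since $I$ is $\varepsilon$-dense and $(<\kappa)$-stable (Theorem \ref{main_order_prop}), the basic type realized by $b$ over this small set is realized in $\mathrm{Suc}_{A_f}(\eta_0')$ — here $\varepsilon$-density is exactly what lets us fill the "gap" cut out by fewer than $\varepsilon$ elements — so we may pick $b'\in\mathrm{Suc}_{A_f}(\eta_0')$ realizing the image type and set $g(b)=b'$. I would then iterate this down the (at most $\gamma$-long, but only finitely-or-$\varepsilon$-many-branch-point-relevant) branch of $a$: at each further meet point $a\wedge x$ along $a$'s branch we repeat the same density argument inside the corresponding successor order, and at the points where $a$'s branch carries no element of $X$ we use $(\ast)$ (the $\kappa$-colorability of $I$) to choose successors freely, ensuring the chosen images still form a tree and preserve $\wedge$. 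Finally, set $g(a)$ to be any element of $A_f$ of level $\mathrm{dom}(a)$ extending the branch we have built through the $b'$'s; such an element exists because by Lemma \ref{nonzero} and Theorem \ref{the_final_trees} every non-leaf has successors all the way up to level $\gamma$, and if $a$ is a leaf with $d_f(a)=1$ we additionally choose the color correctly, which is possible again by Lemma \ref{nonzero}.

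I expect the main obstacle to be bookkeeping rather than a conceptual difficulty: one must check that the element $g(a)$ chosen at the end genuinely makes $g\colon X\cup\{a\}\to A_f$ a partial isomorphism, i.e.\ that all the meet values $a\wedge x$ are sent to $g(a)\wedge F(x)$, that the $<$-order of successors is preserved at \emph{every} branch point simultaneously (not just the maximal one), and — in the leaf case — that the color $d_f$ is matched. The color matching is where $\beta=2$ and Lemma \ref{nonzero} are used: a leaf of $A_f$ always has color $1$ by Definition \ref{Def_final_trees_and_repr}, so there is in fact no constraint to check for leaves inside $A_f$, which simplifies matters. The delicate point is that when $a$'s branch passes through several meet points with elements of $X$, the successor choices at the higher meet points are constrained by the lower ones only through $\prec$, and since $|X|<\varepsilon$ and each successor order is $\varepsilon$-dense, at each level the relevant constraint set still has size $<\varepsilon$, so the density argument goes through uniformly. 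Putting this together yields that $A_f$ is $\varepsilon$-homogeneous with respect to quantifier free formulas, which is Fact \ref{DOP2}.
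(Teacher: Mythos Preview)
Your proposal is correct and follows essentially the same approach as the paper: both rely on the $\varepsilon$-density of the successor orders $\mathrm{Suc}_{A_f}(\eta)\cong I$ (together with the fact that $A_f$ is closed under limits of branches) to realize the image of $tp_{qf}(a,X,A_f)$; the paper states this in one sentence while you spell out the branch-by-branch extension argument in detail. One minor remark: your invocation of property $(\ast)$ and the $\kappa$-colorability of $I$ is superfluous---at the points along $a$'s branch where no element of $X$ branches off there is nothing to match, so any successor will do, and plain $\varepsilon$-density (plus the existence of leaves above every non-leaf, Theorem~\ref{the_final_trees}) already suffices, which is exactly and only what the paper uses.
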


\begin{proof}
    Since $I$ is $\varepsilon$-dense and by the way $A_f$ was constructed, we know that for all $f\in 2^\kappa$, $A_f$ is closed and for all $\eta\in A_f$, $\neg P_\gamma(\eta)$, $<\restriction Suc_{A_f}(\eta)$ is $\varepsilon$-dense. Therefore, for all $f\in 2^\kappa$, $X\subseteq A_f$ with $|X|<\varepsilon$, partial isomorphisms $F: X \rightarrow A_f$, and $a\in A_f\backslash X$; there is $b\in A_f\backslash F[X]$ such that $b\models F(tp_{qf}(a,X,A_f))$.
\end{proof}

We have finished the general preparations. The trees $A_f$ will be used to construct the index models, we can proceed to construct the models (depending on the theory).

\subsection{Stable unsuperstable theories}

This case was studied in \cite{Mor21} under the assumption $\gamma=\varepsilon=\theta=\omega$, thus $A_f\in K^\omega_{tr}$. In particular, in [\cite{Mor21}, Definition 4.5] the models were constructed by $\mathcal{M}^f=EM(A_f,\Phi)$. Notice that this construction requires the existence of a proper function $\Phi$ for $K^\omega_{tr}$, this can be found in \cite{Sh} Theorem 1.3, with proof in \cite{Sh90} Chapter VII 3. 

Even more, in \cite{Mor21} the following was proved.

\begin{fact}[Moreno, \cite{Mor21} Lemma 4.8]\label{unsuperstable_iso_case}
If $T$ is a countable complete unsuperstable theory over a countable vocabulary, then for all $f,g\in 2^\kappa$, $f\ =^2_\omega\ g$ if and only if EM$(A_f,\Phi)$ and EM$(A_g,\Phi)$ are isomorphic.
\end{fact}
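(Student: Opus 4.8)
The direction from $f\ =^2_\omega\ g$ to $EM(A_f,\Phi)\cong EM(A_g,\Phi)$ is the easy one. Since here $\gamma=\omega$, the hypothesis $\forall\alpha<\kappa\,(\alpha^\gamma<\kappa)$ holds, so Theorem~\ref{the_final_trees} (which for the subtrees $A_h\subseteq A^h$ amounts to Theorem~\ref{Afcong} together with Definition~\ref{Def_final_trees_and_repr}) yields an isomorphism $\Theta\colon A_f\to A_g$ of ordered trees. Such a $\Theta$ preserves atomic types, i.e. $tp_{at}(\bar s,\emptyset,A_f)=tp_{at}(\Theta(\bar s),\emptyset,A_g)$ for every finite tuple $\bar s$. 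By Definition~\ref{EM-models_def} a proper function $\Phi$ for $K^\omega_{tr}$ (whose existence is \cite{Sh} Theorem~1.3, with proof in \cite{Sh90} Chapter~VII) builds $EM^1(A_h,\Phi)$ so that the atomic type of the skeleton tuple $\bar a_{\bar s}$ equals $\Phi(tp_{at}(\bar s,\emptyset,A_h))$, and determines the model up to isomorphism from this data alone. Hence $\mu(\bar a_{\bar s})\mapsto\mu(\bar a_{\Theta(\bar s)})$ is a well-defined isomorphism $EM^1(A_f,\Phi)\to EM^1(A_g,\Phi)$, and restricting to the $\tau$-reducts gives $EM(A_f,\Phi)\cong EM(A_g,\Phi)$.

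For the converse, which is the substantial direction and follows the template of Shelah and Hyttinen-Tuuri (cf. \cite{HT}), I argue by contraposition: assuming that $f\ =^2_\omega\ g$ fails I show $EM(A_f,\Phi)\not\cong EM(A_g,\Phi)$. Write $\mathcal{M}^h=EM(A_h,\Phi)$ and suppose $F\colon\mathcal{M}^f\to\mathcal{M}^g$ were an isomorphism. Fix $\kappa$-representations with $\mathcal{M}^h_\alpha$ the submodel generated by $\{\bar a_s\mid s\in(A_h)_\alpha\}$, using the representation $\langle(A_h)_\alpha\rangle$ of Definition~\ref{Def_final_trees_and_repr}. A standard closing-off argument yields a club $C\subseteq\kappa$ with $F[\mathcal{M}^f_\alpha]=\mathcal{M}^g_\alpha$ for all $\alpha\in C$; moreover, using the canonical-term analysis of Ehrenfeucht-Mostowski models over trees (each element is $\mu(\bar a_{\bar s})$ for a $\wedge$-closed tuple $\bar s$, and by indiscernibility the set of skeleton indices actually occurring is invariant), one arranges that for $\alpha\in C$ the map $F$ descends to a partial isomorphism between $(A_f)_\alpha$ and $(A_g)_\alpha$ as ordered trees. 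The $\varepsilon$-homogeneity of the $A_h$ with respect to quantifier-free formulas (Fact~\ref{DOP2}), together with the local $(\kappa,\omega)$-niceness and $(<\kappa)$-stability coming from Theorem~\ref{main_order_prop}, are what make these descended maps cohere along the representation.

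The crux, and the step I expect to be the main obstacle, is a \emph{detection lemma}: for $\alpha\in C\cap S^\kappa_\omega$, the value $f(\alpha)$ is recoverable from the pair $(\mathcal{M}^f,\mathcal{M}^f_\alpha)$. The bookkeeping behind the function $H_f$ in the proof of Theorem~\ref{Afcong} shows that $A_f$ acquires a new branch completed at stage $\alpha$ precisely when $f(\alpha)=1$; the corresponding leaf $\bar a_\eta$ (with $\eta\restriction n\in(A_f)_\alpha$ for all $n<\omega$ but $\eta\notin(A_f)_\alpha$) realizes over $\mathcal{M}^f_\alpha$ a type $p_\eta$ which, because $T$ is stable unsuperstable and hence $\kappa(T)>\aleph_0$, is built from a strictly forking chain of formulas $\varphi_n(\bar x;\bar a_{\eta\restriction n})$ running along the branch, so that $p_\eta$ forks over every $\mathcal{M}^f_{\alpha'}$ with $\alpha'<\alpha$ and is therefore omitted by $\mathcal{M}^f_\alpha$. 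Consequently $\mathcal{M}^f$ realizes the restriction of such a chain-type to $\mathcal{M}^f_\alpha$ iff $f(\alpha)=1$. Applying $F$, which carries $\mathcal{M}^f_\alpha$ onto $\mathcal{M}^g_\alpha$ and (via the descended tree map) the chain-type over $\mathcal{M}^f_\alpha$ onto the analogous chain-type over $\mathcal{M}^g_\alpha$, we obtain that $\mathcal{M}^g$ realizes the latter iff $g(\alpha)=1$; since $F$ is an isomorphism this forces $f(\alpha)=g(\alpha)$ for every $\alpha$ in the stationary set $C\cap S^\kappa_\omega$, contradicting the failure of $f\ =^2_\omega\ g$. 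The delicate points in the detection lemma are (i) verifying that no $\mu(\bar a_{\bar s})$ with $\bar s\subseteq(A_f)_\alpha$ can realize $p_\eta$ — this uses unsuperstability together with $(<\kappa)$-stability of $A_f$ to bound and analyze the relevant terms — and (ii) checking that the club $C$ and the tree-level comparison can be chosen precisely enough that the transported type over $\mathcal{M}^g_\alpha$ is again a genuine forking-chain type.
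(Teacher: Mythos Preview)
The forward direction is fine and matches the standard argument.

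The converse has a genuine gap at the step you label ``the map $F$ descends to a partial isomorphism between $(A_f)_\alpha$ and $(A_g)_\alpha$ as ordered trees.'' This is not available. The isomorphism $F$ acts on the $\tau$-reducts $EM(A_h,\Phi)$, while the skeleton tuples $\bar a_s$ and the term decomposition $\mu(\bar a_{\bar s})$ live in the Skolemized language $\tau^1$. Nothing in the $\tau$-structure singles out the skeleton or its indexing, so ``the set of skeleton indices actually occurring'' is not $F$-invariant and there is no reason for $F$ to induce any map on the underlying trees. Your detection lemma then inherits this problem: you need to transport the forking-chain type over $\mathcal{M}^f_\alpha$ to the analogous type over $\mathcal{M}^g_\alpha$, and you do this via the nonexistent tree map.

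The argument in \cite{Mor21} (and its template, visible in the proof of Theorem~\ref{models_iso} here for the other non-classifiable cases) proceeds differently and does not try to recover a tree map from $F$. One writes $F(\bar a_\eta)=\bar\mu_\eta(\bar b_{\bar v_\eta})$ with $\bar v_\eta$ a finite tuple from $A_g$, and then works entirely on the $A_g$-side: using that $A_g$ is locally $(\kappa,\omega)$-nice and $(<\kappa)$-stable, one finds a club on which the $bs$-types of the $\bar v_\eta$'s over the filtration stabilize. At a bad $\delta\in S$ with $f(\delta)=1$, $g(\delta)=0$, one takes the new branch $\eta\in P_\omega^{A_f}$ and two immediate successors $\zeta_1\prec\eta$, $\zeta_2\not\prec\eta$ of some $\eta\restriction n$ whose associated $\bar v$'s have the same atomic type over a suitable initial segment; Fact~\ref{closed_lema} then forces $tp_{at}(\bar v_{\zeta_1}{}^\frown\bar v_\eta,\emptyset,A_g)=tp_{at}(\bar v_{\zeta_2}{}^\frown\bar v_\eta,\emptyset,A_g)$, whence by indiscernibility $\mathcal{M}^g$ cannot distinguish $\varphi(\bar a_\eta,\bar a_{\zeta_1})$ from $\varphi(\bar a_\eta,\bar a_{\zeta_2})$ for the tree-defining formula $\varphi\in\tau$. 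Pulling back through $F$ contradicts $\zeta_1\prec\eta$, $\zeta_2\not\prec\eta$ in $A_f$. The point is that the contradiction is obtained by comparing \emph{atomic types of index tuples in $A_g$}, never by aligning the two trees.
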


\subsection{Superstable theories with the OTOP}

For the OTOP case we will use $\theta=\varepsilon=\omega$, and $\mathcal{Q}=\mathbb{Q}$.

The following facts follow from the proof of [\cite{Shotop}, Theorem 2.5] and the fact that a theory $T$ with the OTOP is weakly unstable as an $L_{\omega_1 \omega}$-theory, [\cite{HT}, Definition 6.4 and Definition 6.5]; see [\cite{HT}, Theorem 6.6].
\begin{fact}\label{otop1}
    Suppose $T$ is a theory with the OTOP in a countable vocabulary $\tau$. Then for each dense linear order $l$ we can find a model $\mathcal{N}$ of a countable vocabulary $\tau^1\supseteq \tau$ such that $\mathcal{N}$ is an Ehrenfeucht-Mostowski model of $T$ for $l$, where the order is definable by an $L_{\infty \omega}$-formula.
\end{fact}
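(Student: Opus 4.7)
The plan is to combine the first-order OTOP witness with Shelah's Ehrenfeucht--Mostowski construction for weakly unstable $L_{\omega_1\omega}$-theories. First, I would fix the sequence $(\varphi_m)_{m<\omega}$ witnessing OTOP and take as the defining formula for the order the $L_{\omega_1\omega}$-formula
\[
\psi(\bar{x},\bar{y}) \;:=\; \exists \bar{z}\; \bigwedge_{m<\omega}\varphi_m(\bar{z},\bar{x},\bar{y}),
\]
which lies in $L_{\infty\omega}$. The defining property of OTOP then says exactly that for every linear order $l$ there is a model $\mathcal{M}\models T$ and tuples $(\bar{a}_x)_{x\in l}$ such that $\mathcal{M}\models \psi(\bar{a}_s,\bar{a}_t)$ iff $s<t$. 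So condition (2) of Definition \ref{EM_def} is automatic once we manage to keep $\psi$ correctly interpreted.

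Second, I would invoke the fact from \cite[Theorem 6.6]{HT} that OTOP implies weak instability as an $L_{\omega_1\omega}$-theory (in the sense of \cite[Definition 6.5]{HT}). This is precisely the hypothesis needed to run the construction of \cite[Theorem 2.5]{Shotop}: starting from the OTOP tuples one builds, via a Ramsey/compactness iteration in the spirit of the classical Ehrenfeucht--Mostowski theorem, a model $\mathcal{N}_0\models T$ together with a family $(\bar{a}_x)_{x\in l}$ which is indiscernible with respect to the atomic formulas of some countable Skolemization $T^1$ in an expanded vocabulary $\tau^1\supseteq\tau$, while the relation $\psi(\bar{a}_s,\bar{a}_t)$ still encodes $s<t$.

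Third, I would take $\mathcal{N}$ to be the $\tau^1$-submodel of $\mathcal{N}_0$ generated by $\bigcup_{x\in l}\bar{a}_x$. By Skolemization, every element of $\mathcal{N}$ is of the form $\mu(\bar{a}_{x_1},\ldots,\bar{a}_{x_m})$ for some $\tau^1$-term $\mu$, giving condition (1) of Definition \ref{EM_def}. Condition (2) is the preservation of $\psi$ from the previous step (and $\psi\in L_{\infty\omega}$ as required), and condition (3) is the $\tau^1$-atomic indiscernibility, also arranged in the previous step.

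The main obstacle is the compatibility of conditions (2) and (3): a naive Ramsey/compactness argument producing atomic $\tau^1$-indiscernibility would typically destroy the $\psi$-definable order, since $\psi$ is infinitary and not first-order. The remedy, which is the technical heart of Shelah's argument, is that weak instability in $L_{\omega_1\omega}$ says exactly that the single $L_{\omega_1\omega}$-type forced by $\psi$ on an ordered pair $(\bar{a}_s,\bar{a}_t)$ with $s<t$ survives the compactness step. Rather than redoing this delicate balance, I would simply appeal to \cite[Theorem 2.5]{Shotop} (whose proof was set up with exactly this application in mind) to deliver the desired Ehrenfeucht--Mostowski model.
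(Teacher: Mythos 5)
Your proposal is correct and matches the paper's treatment: the paper does not prove Fact \ref{otop1} but, exactly as you do, derives it from the proof of \cite[Theorem 2.5]{Shotop} together with the fact that OTOP implies weak $L_{\omega_1\omega}$-instability via \cite[Theorem 6.6]{HT}, with the order defined by the $L_{\omega_1\omega}$-formula $\exists\bar z\,\bigwedge_{m<\omega}\varphi_m(\bar z,\bar x,\bar y)$. Your sketch simply makes explicit the reduction to those references that the paper cites without further elaboration.
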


\begin{fact}\label{otop2}
Let $\mathcal{M}$ be an Ehrenfeucht-Mostowski model of $T$ for $l$, and $A=(\bar{a}_x)_{x\in l}$.
If $l$ is dense, then $A$ is a set of indiscernibles relative to $L_{\infty\omega}$. 
\end{fact}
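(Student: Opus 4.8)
The plan is to deduce the statement from Karp's characterization of $L_{\infty\omega}$-equivalence: two structures (or structures-with-parameters) are $L_{\infty\omega}$-equivalent exactly when there is a non-empty family of partial isomorphisms between them with the finite back-and-forth property. Recall from Definition \ref{EM_def} that every element of $\mathcal{M}$ has the form $\mu(\bar{a}_{x_1},\ldots,\bar{a}_{x_m})$ with $x_1<\cdots<x_m$ in $l$ and $\mu$ a $\tau^1$-term (item 1), and that whether an atomic $\tau^1$-formula holds of an increasing tuple of generators depends only on the order type of the indexing tuple (item 3). Fix $n$-tuples $\bar{s},\bar{s}'$ from $l$ with $tp_{at}(\bar{s},\emptyset,l)=tp_{at}(\bar{s}',\emptyset,l)$; since the atomic type of a tuple in a linear order is just its order type, $p_0\colon s_i\mapsto s_i'$ is a finite partial order-isomorphism of $l$. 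We must show $tp_{L_{\infty\omega}}(\bar{a}_{\bar{s}},\emptyset,\mathcal{M})=tp_{L_{\infty\omega}}(\bar{a}_{\bar{s}'},\emptyset,\mathcal{M})$, i.e.\ that $(\mathcal{M},\bar{a}_{\bar{s}})\equiv_{\infty\omega}(\mathcal{M},\bar{a}_{\bar{s}'})$.

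For each finite partial order-isomorphism $p\colon X\to Y$ between finite subsets $X,Y\subseteq l$, define a map $\hat{p}$ on $\mathcal{M}$ by $\hat{p}\bigl(\mu(\bar{a}_{x_1},\ldots,\bar{a}_{x_m})\bigr)=\mu(\bar{a}_{p(x_1)},\ldots,\bar{a}_{p(x_m)})$ for $x_1<\cdots<x_m$ in $X$. The crucial step is to check that $\hat{p}$ is well defined and is a partial isomorphism of $\mathcal{M}$. Applying item 3 of Definition \ref{EM_def} with $\psi$ ranging over atomic $\tau^1$-formulas — in particular equations between $\tau^1$-terms — to the increasing tuples $\bar{x}$ in $X$ and $p(\bar{x})$ in $Y$ shows that all equalities and atomic relations among the values $\mu(\bar{a}_{\bar{x}})$ are preserved in both directions; padding a term with dummy (unused) generators reduces an arbitrary representation to one listed in increasing order, which gives well-definedness. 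Hence $\dom(\hat{p})$ is the $\tau^1$-substructure generated by $\{\bar{a}_x:x\in X\}$, its range the one generated by $\{\bar{a}_x:x\in Y\}$, preservation of atomic formulas on generators propagates to all quantifier-free $\tau^1$-formulas, and $\hat{p}$ is a $\tau^1$-isomorphism between these substructures (a fortiori a $\tau$-isomorphism). Put $I=\{\hat{p}:p\text{ a finite partial order-isomorphism of }l\}$; it is non-empty and contains $\hat{p}_0$, which sends $\bar{a}_{\bar{s}}$ to $\bar{a}_{\bar{s}'}$.

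It remains to verify the back-and-forth property of $I$, and this is exactly where density of $l$ enters. Given $\hat{p}\in I$ with underlying $p\colon X\to Y$ and $b\in\mathcal{M}$, write $b=\mu(\bar{a}_{y_1},\ldots,\bar{a}_{y_k})$ with $y_1<\cdots<y_k$; it suffices to extend $p$ to a finite partial order-isomorphism $q$ with $\{y_1,\ldots,y_k\}\subseteq\dom(q)$, for then $\hat{q}\in I$, $\hat{p}\subseteq\hat{q}$ and $b\in\dom(\hat{q})$. Each new point $y_j$ falls into one convex piece cut out by $X$; for a piece strictly between $x_i$ and $x_{i+1}$ containing $r$ new points, iterated density of $l$ supplies $r$ points of $l$ strictly between $p(x_i)$ and $p(x_{i+1})$ in the right order, and new points outside the convex hull of $X$ are mapped to points outside the convex hull of $Y$ (here one uses that the index order $l$ furnished by Fact \ref{otop1} may be taken without endpoints, so those pieces are non-empty). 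The range side is symmetric. By Karp's theorem this yields $(\mathcal{M},\bar{a}_{\bar{s}})\equiv_{\infty\omega}(\mathcal{M},\bar{a}_{\bar{s}'})$, which is the claim, and since $\tau\subseteq\tau^1$ the indiscernibility holds in particular relative to $L_{\infty\omega}(\tau)$.

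I expect the main obstacle to be precisely the second paragraph: extracting from the bare combinatorial clauses of Definition \ref{EM_def} that each $\hat{p}$ is a genuine partial isomorphism — the well-definedness bookkeeping with multiple term-representations and dummy generators, and the step from $\tau^1$-atomic preservation to full quantifier-free preservation on the generated substructure. Once this is in place, the density/back-and-forth argument of the third paragraph is routine, and the only subtlety beyond it is the harmless observation that the EM index order may be assumed to have no endpoints.
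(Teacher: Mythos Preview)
The paper does not give its own proof of this statement; it is recorded as a Fact with a pointer to the literature (the argument is attributed to Shelah \cite{Shotop} and Hyttinen--Tuuri \cite{HT}, Theorem~6.6). Your argument via Karp's back-and-forth characterisation of $L_{\infty\omega}$-equivalence is the standard route, and the verification that each $\hat p$ is a well-defined partial $\tau^1$-isomorphism from item~(3) of Definition~\ref{EM_def} is correct.

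The one point that deserves more care is the endpoint issue you flag in your parenthetical. You cannot simply assume endpoints away: the orders $K^O(f)$ to which the paper actually applies the Fact \emph{do} have a minimum $(\emptyset,0)$ and a maximum $(\emptyset,1)$, coming from the root of $A_f$. With endpoints present, your family $I$ need not have the extension property --- if, say, $\max Y$ is the maximum of $l$ but $\max X$ is not, the ``back'' step can fail, and nothing in the atomic type of $\bar s,\bar s'$ in $l$ prevents this configuration. What saves the paper's application (Lemma~\ref{construction_otop}) is that the $L_{\infty\omega}$-indiscernibility is only ever invoked there for $K^O(f)$-tuples arising from $A_f$-tuples with the same atomic tree-type; since the level predicate $P_0$ pins down the root, two such tuples agree on which coordinates (if any) hit the endpoints of $K^O(f)$, so one may first extend $p_0$ to fix both endpoints and then run your back-and-forth inside the open interval, where density alone suffices.
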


\begin{defn}
    For every $f\in 2^\kappa$ let us define the order $K^O(f)$ by:
    \begin{enumerate}
        \item [I.] $dom \ K^O(f)=(dom\ A_f\times \{0\})\cup \{(\eta,1)\mid\eta\in A_f\ \&\ A_f\not\models P_\gamma(\eta)\}$.
        \item [II.] For all $\eta\in A_f$ such that $A_f\not\models P_\gamma(\eta)$, $(\eta,0)<_{K^O(f)}(\eta,1)$.
        \item [III.] If $\eta,\xi\in A_f$ such that $A_f\not\models P_\gamma(\xi)\ \vee\ P_\gamma(\eta)$, then $\eta\prec \xi$ if and only if $$(\eta,0)<_{K^O(f)}(\xi,0)<_{K^O(f)}(\xi,1)<_{K^O(f)}(\eta,1).$$ 
        \item [IV.] If $\eta,\xi\in A_f$ such that $A_f\models P_\gamma(\xi)$ and $A_f\not\models P_\gamma(\eta)$, then $\eta\prec \xi$ if and only if $$(\eta,0)<_{K^O(f)}(\xi,0)<_{K^O(f)}(\eta,1).$$
        \item [V.] If $\eta,\xi\in A_f$, then $\eta< \xi$ if and only if $(\eta,1)<_{K^O(f)}(\xi,0)$.
    \end{enumerate}
\end{defn}

Notice that $K^O(f)$ is dense.

\begin{lemma}\label{construction_otop}
Suppose $T$ is superstable with the OTOP in a countable relational vocabulary $\tau$. Let $\tau^1$ be a Skolemization of $\tau$, and $T^1$ be a complete theory in $\tau^1$ extending $T$ and with Skolem-functions in $\tau$. 
Then for every $f\in 2^\kappa$ there is $\mathcal{M}_1^f\models T^1$ with the following properties.

\begin{enumerate}
    \item There is a map $\mathcal{H}: A_f\rightarrow (dom \ \mathcal{M}_1^f)^n$ for some $n<\omega$, $\eta\mapsto a_\eta$, such that $\mathcal{M}_1^f$ is the Skolem hull of $\{a_\eta\mid \eta\in A_f\}$. Let us denote $\{a_\eta\mid \eta\in A_f\}$ by $Sk(\mathcal{M}_1^f)$.
    \item $\mathcal{M}^f=\mathcal{M}_1^f\restriction \tau$ is a model of $T$.
    \item $Sk(\mathcal{M}_1^f)$ is indiscernible in $\mathcal{M}_1^f$ relative to $L_{\infty \omega}$.
    \item There is a formula $\varphi\in L_{\infty \omega}(\tau)$ such that for all $\eta,\nu\in A_f$ and $m<\gamma$, if $A_f\models P_m(\eta) \wedge P_\gamma(\nu)$, then $\mathcal{M}^f\models \varphi (a_\nu,a_\eta)$ if and only if $A_f\models \eta\prec \nu$.
\end{enumerate}
\end{lemma}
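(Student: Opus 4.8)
The plan is to build $\mathcal{M}_1^f$ as a Generalized Ehrenfeucht-Mostowski model indexed by the dense linear order $K^O(f)$, using Fact~\ref{otop1} and Fact~\ref{otop2}. First I would note that $T$ superstable with the OTOP is, in particular, a theory with the OTOP, so by Fact~\ref{otop1} there is a countable Skolemized vocabulary $\tau^1\supseteq\tau$ and, for each dense linear order $l$, an Ehrenfeucht-Mostowski model $\mathcal{N}_l$ of $T$ for $l$ in which the order is definable by an $L_{\infty\omega}$-formula $\varphi_0(\bar u,\bar v)$, with generating tuples $(\bar a_x)_{x\in l}$ of some fixed arity $n$. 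Since $K^O(f)$ is dense (as noted right after its definition), I would fix a big dense linear order $l_B$ containing every $K^O(f)$ as a suborder (e.g.\ take $l_B$ to contain an isomorphic copy of each relevant order, or just work with $l_B=K^O(f)$ directly since that already is dense), apply Fact~\ref{otop1} to $l_B$, and then let $\mathcal{M}_1^f=EM_1(K^O(f))$ be the $\tau^1$-submodel generated by $\{\bar a_x\mid x\in K^O(f)\}$; set $\mathcal{M}^f=\mathcal{M}_1^f\restriction\tau=EM(K^O(f))$. Item~(2), $\mathcal{M}^f\models T$, is then immediate from the EM-model construction, and item~(1) holds with $\mathcal{H}$ defined by composing the map $A_f\to K^O(f)$, $\eta\mapsto(\eta,0)$ (or a fixed pairing of the two copies of $\eta$ into a single longer tuple — see below) with $x\mapsto\bar a_x$; here $\{a_\eta\mid\eta\in A_f\}=Sk(\mathcal{M}_1^f)$ generates the Skolem hull because the $(\bar a_x)_{x\in K^O(f)}$ generate $\mathcal{M}_1^f$.

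The key design choice is how to encode each $\eta\in A_f$ as a \emph{single} generating tuple $a_\eta$ even though $K^O(f)$ assigns $\eta$ either one point $(\eta,0)$ (when $\eta\in P_\gamma^{A_f}$) or two points $(\eta,0)<(\eta,1)$ (when $\neg P_\gamma(\eta)$). The natural fix, following the standard ``interval'' coding (as in the commented-out Definition~\ref{general_lineal_order} and Abraham's construction), is: for non-leaf $\eta$ let $a_\eta=\bar a_{(\eta,0)}{}^\frown\bar a_{(\eta,1)}$ (a $2n$-tuple), and for a leaf $\eta$ let $a_\eta=\bar a_{(\eta,0)}{}^\frown\bar a_{(\eta,0)}$ (pad to the same arity), or alternatively use a uniform arity throughout. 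The point of clauses III--IV of the definition of $K^O(f)$ is exactly that $\eta\prec\nu$ in $A_f$ is expressed by an order relation among the points $(\eta,i),(\nu,j)$ in $K^O(f)$: when $\nu$ is a leaf below $\eta$'s interval, clause~IV gives $(\eta,0)<(\nu,0)<(\eta,1)$, and when $\nu$ is non-leaf, clause~III gives $(\eta,0)<(\nu,0)<(\nu,1)<(\eta,1)$. Since the linear order on $K^O(f)$ is definable in $\mathcal{M}^f$ by $\varphi_0$ (applied to the appropriate sub-blocks of the tuples $a_\eta,a_\nu$), the relation ``$(\eta,0)<(\nu,0)<(\eta,1)$'' is itself definable by an $L_{\infty\omega}(\tau)$-formula $\varphi(\bar x,\bar y)$ in the variables corresponding to $a_\nu,a_\eta$; this $\varphi$ witnesses item~(4). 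Item~(3), that $Sk(\mathcal{M}_1^f)$ is indiscernible in $\mathcal{M}_1^f$ relative to $L_{\infty\omega}$, follows from Fact~\ref{otop2}: $(\bar a_x)_{x\in K^O(f)}$ is a set of $L_{\infty\omega}$-indiscernibles (indexed by a dense linear order), and indiscernibility of the $a_\eta$'s (indexed by the tree $A_f$, i.e.\ relative to quantifier-free types in the tree language) is inherited because the quantifier-free type of a tuple $\bar\eta$ from $A_f$ in $A_f$ determines, via the defining clauses I--V, the order type of the corresponding points $\{(\eta_k,i)\}$ in $K^O(f)$, hence the $L_{\infty\omega}$-type of $a_{\bar\eta}$ in $\mathcal{M}_1^f$.

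I expect the main obstacle to be bookkeeping rather than a genuine mathematical difficulty: one must verify carefully that the map $\eta\mapsto a_\eta$ is \emph{order-and-type-faithful} in both directions, i.e.\ that $tp_{at}(\bar\eta,\emptyset,A_f)$ is recovered from (and recovers) the relevant $L_{\infty\omega}$-type over the block tuples, so that no extra identifications or separations are introduced by the doubling $\eta\mapsto((\eta,0),(\eta,1))$. Concretely, one checks that for $\eta,\nu\in A_f$ the cases ``$\eta\prec\nu$'', ``$\nu\prec\eta$'', ``$\eta\perp\nu$ with a given $<$-order of the branching successors'', and the leaf/non-leaf alternatives are each pinned down by the position of the four (or fewer) points $(\eta,0),(\eta,1),(\nu,0),(\nu,1)$ in $K^O(f)$ — this is exactly the content of clauses II--V. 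Once that dictionary is in place, items (1)--(4) drop out as above; no cardinal arithmetic assumptions beyond those already imposed (here $\varepsilon=\theta=\omega$, $\mathcal{Q}=\mathbb{Q}$, and the standing assumptions on $\gamma,\kappa$) are needed for this lemma, since the heavy combinatorics was already absorbed into Theorem~\ref{the_final_trees} and Fact~\ref{DOP2}.
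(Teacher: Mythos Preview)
Your proposal is correct and follows essentially the same route as the paper: build $\mathcal{M}_1^f=EM_1(K^O(f))$ via Facts~\ref{otop1} and~\ref{otop2}, encode each $\eta\in A_f$ by the doubled tuple $a_{(\eta,0)}{}^\frown a_{(\eta,1)}$ (padded to $a_{(\eta,0)}{}^\frown a_{(\eta,0)}$ at leaves), derive item~(3) from the fact that the quantifier-free type of $\bar\eta$ in $A_f$ determines the order type of the associated points in $K^O(f)$, and take $\varphi$ to be the formula expressing $(\eta,0)<(\nu,0)<(\eta,1)$ via the order-defining $L_{\infty\omega}$-formula. The paper's proof is exactly this, only more tersely written.
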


\begin{proof}
By Fact \ref{otop1}, and Fact \ref{otop2}, there is an Ehrenfeucht-Mostowski model $\mathcal{M}^f_1$ for the linear order $K^O(f)$ where the order is definable by a $L_{\infty \omega}$-formula $\psi$. Suppose $\bar{\eta}=(\eta_0,\ldots , \eta_n)$ and $\bar{\xi}=(\xi_0,\ldots , \xi_n)$ are sequences in $A_f$ that have the same quantifier free type. The sequences $$\langle (\eta_0,0), (\eta_0,1), (\eta_1,0),\ldots, (\eta_n,0), (\eta_n,1) \rangle$$ and $$\langle (\xi_0,0), (\xi_0,1), (\xi_1,0),\ldots, (\xi_n,0), (\xi_n,1) \rangle$$ have the same quantifier free type in $K^O(f)$. Let the skeleton of $\mathcal{M}^f_1$ be $\{a_x\mid x\in K^O(f)\}$. 

Let us define the $A_f$-skeleton of $\mathcal{M}^f_1$  to be the set $$\{{a_{(\eta,0)}} ^\frown a_{(\eta,1)}\mid \eta\in A_f\ \&\ A_f\not\models P_\gamma(\eta)\}\cup \{{a_{(\eta,0)}} ^\frown a_{(\eta,0)}\mid \eta\in A_f\ \&\ A_f\models P_\gamma(\eta)\}.$$

For all $\eta\in A_f$ such that $A_f\not\models P_\gamma(\eta)$, let us denote $b_\eta={a_{(\eta,0)}}^\frown a_{(\eta,1)}$. For all $\eta\in A_f$ such that $A_f\models P_\gamma(\eta)$, let us denote $b_\eta={a_{(\eta,0)}}^\frown a_{(\eta,0)}$.
It is clear that (1) and (2) follow from the construction. Fact \ref{otop2} implies (3). Let us show that (4) holds. From the construction, $K^O(f)$ is definable in $\mathcal{M}^f$ by the formula $\psi(\bar{u},\bar{c})$, i.e. $\mathcal{M}^f\models \psi(a_x,a_y)$ if and only if $K^O(f)\models x<y$. Let $\varphi(x_0,x_1,y_0,y_1)$ be the formula $$\psi(x_0,y_0)\wedge \psi(y_1,x_1).$$
Therefore, for all $\eta,\nu\in A_f$ and $m<\gamma$ such that $A_f\models P_m(\eta) \wedge P_\gamma(\nu)$, $$\varphi((a_\eta,0), (a_\eta,1),(a_\nu,0), (a_\nu,1))$$ holds in $\mathcal{M}^f$ if and only if $A_f\models\eta\prec\nu$.
 \end{proof}

\subsection{Unstable theories}

For the unstable case we will use $\theta=\varepsilon=\omega$, and $\mathcal{Q}=\mathbb{Q}$. Thus the construction of the models follow as in the OTOP case.

\begin{fact}[Shelah, \cite{Shunst}; Hyttinen-Tuuri, \cite{HT} Lemma 4.7]\label{unst1}
    Suppose $T$ is a complete unstable theory in a countable vocabulary $\tau$. Then for each linear order $l$ we can find a model $\mathcal{N}$ of a countable vocabulary $\tau^1\supseteq \tau$ such that $\mathcal{N}$ is an Ehrenfeucht-Mostowski model of $T$ for $l$, where the order is definable by a first order formula $\psi$. Even more, $A=(\bar{a}_x)_{x\in l}$ is a set of indiscernibles relative to $L_{\omega \omega}$.
\end{fact}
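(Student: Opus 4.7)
The approach is to exploit the order property. Since $T$ is unstable, the standard Shelah characterization gives a first-order $\tau$-formula $\psi(\bar{x},\bar{y})$ (with $|\bar{x}|=|\bar{y}|$) and a sequence $(\bar{b}_i)_{i<\omega}$ in some model $\mathcal{M}\models T$ such that $\mathcal{M}\models\psi(\bar{b}_i,\bar{b}_j)$ iff $i<j$. The plan is to promote this witness into an Ehrenfeucht-Mostowski template and then realize it over an arbitrary linear order $l$ by compactness.

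First I would fix a countable Skolemization $\tau^1\supseteq\tau$ and a Skolem extension $T^1\supseteq T$, and expand $\mathcal{M}$ to a model $\mathcal{M}^1\models T^1$. Then apply Ramsey's theorem iteratively inside $\mathcal{M}^1$ to the sequence $(\bar{b}_i)_{i<\omega}$: enumerate the $\tau^1$-formulas as $\langle\varphi_n:n<\omega\rangle$ and, at stage $n$, thin down the current sequence to an infinite subsequence on which $\varphi_n$ has constant truth value on increasing tuples of the appropriate length; diagonalize to obtain a subsequence $(\bar{c}_i)_{i<\omega}$ that is $L_{\omega\omega}(\tau^1)$-indiscernible. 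Since $\psi$ is one of the $\varphi_n$, after possibly reversing the ordering of the extracted subsequence we can ensure $\mathcal{M}^1\models\psi(\bar{c}_i,\bar{c}_j)$ iff $i<j$. Let $p(\bar{x}_0,\bar{x}_1,\ldots)$ be the complete $\tau^1$-EM-type of $(\bar{c}_i)_{i<\omega}$ in $\mathcal{M}^1$.

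Next, given a linear order $l$, introduce fresh variable tuples $\bar{x}_x$ for $x\in l$ and consider
\begin{equation*}
\Sigma_l = T^1 \cup \{\varphi(\bar{x}_{x_1},\ldots,\bar{x}_{x_n}) : \varphi(\bar{v}_1,\ldots,\bar{v}_n)\in p,\; x_1<_l\cdots<_l x_n\}.
\end{equation*}
Any finite subset of $\Sigma_l$ mentions only finitely many indices, which embed order-preservingly into $\omega$; substituting the corresponding $\bar{c}_i$ into $\mathcal{M}^1$ witnesses finite satisfiability. By compactness $\Sigma_l$ has a model; let $(\bar{a}_x)_{x\in l}$ be the realization of the variables in it, and take $\mathcal{N}$ to be the $\tau^1$-Skolem hull of $\{\bar{a}_x:x\in l\}$. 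Then $\mathcal{N}\models T^1$, so $\mathcal{N}\restriction\tau\models T$; every element of $\mathcal{N}$ is of the form $\mu(\bar{a}_{x_1},\ldots,\bar{a}_{x_m})$ for a $\tau^1$-term $\mu$ and $x_1<\cdots<x_m$ (condition (1) of Definition \ref{EM_def}); the inclusion $\psi(\bar{x}_x,\bar{x}_y)\in\Sigma_l$ iff $x<y$ gives (2); and (3) is forced by the EM-type, since any two increasing tuples of the same length have identical $\tau^1$-type in $\mathcal{N}$.

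The ``even more'' clause — that $A=(\bar{a}_x)_{x\in l}$ is indiscernible relative to $L_{\omega\omega}$ — then follows immediately, because any two increasing tuples from $A$ of the same length realize the same $\tau^1$-type in $\mathcal{N}$ and hence, a fortiori, the same $L_{\omega\omega}(\tau)$-type in the reduct. The main delicate point in the whole plan is the iterated Ramsey extraction: one must perform it in the \emph{expanded} language $\tau^1$ rather than $\tau$, so that the EM-type $p$ controls the $\tau^1$-types and the Skolem hull behaves as required; if one were only to extract $\tau$-indiscernibles, Skolemizing afterwards could destroy indiscernibility, and condition (3) of Definition \ref{EM_def} (which demands homogeneity of atomic $\tau^1$-formulas on the skeleton) would fail.
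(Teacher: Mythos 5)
The paper does not give a proof of this statement: it is cited as an external Fact from Shelah's work on unstable theories and from Hyttinen--Tuuri, Lemma 4.7. Your proof is the standard construction found in those sources, so there is nothing to contrast with the paper itself; I will just assess the argument on its own terms. The argument is correct: unstable gives the order property, Skolemize first and then run Ramsey/diagonalization in $\tau^1$ to extract a $\tau^1$-indiscernible witness of the order, take the complete EM-type $p$ in $\tau^1$, stretch it over an arbitrary $l$ by compactness, and take the Skolem hull. Your closing remark — that the extraction must be done in the Skolemized language, since extracting only $\tau$-indiscernibles and then Skolemizing would break condition~(3) of Definition~\ref{EM_def} — is precisely the one non-routine point and you flag it accurately. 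Two cosmetic observations: the ``possibly reverse'' step is unnecessary, since you extract an increasing subsequence of the original $(\bar b_i)$ and re-index, so $\psi(\bar c_i,\bar c_j)\iff i<j$ comes for free; and since $\tau$ is countable, one should note that the Skolemization $\tau^1$ can be taken countable, which the Fact requires. Neither affects correctness.
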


Since we do not need a dense linear order, we can define $(\eta,1)$ also for elements at the level $\gamma$.
\begin{defn}
    For every $f\in 2^\kappa$ let us define the order $K^U(f)$ by:
    \begin{enumerate}
        \item [I.] $dom \ K^U(f)=(dom\ A_f\times \{0\})\cup (dom\ A_f\times \{1\})$.
        \item [II.] For all $\eta\in A_f$, $(\eta,0)<_{K^U(f)}(\eta,1)$.
        \item [III.] If $\eta,\xi\in A_f$, then $\eta\prec \xi$ if and only if $$(\eta,0)<_{K^U(f)}(\xi,0)<_{K^U(f)}(\xi,1)<_{K^U(f)}(\eta,1).$$ 
        \item [IV.] If $\eta,\xi\in A_f$, then $\eta< \xi$ if and only if $(\eta,1)<_{K^U(f)}(\xi,0)$.
    \end{enumerate}
\end{defn}

\begin{lemma}\label{construction_unst}
Suppose $T$ is a complete unstable theory in a countable vocabulary $\tau$. Let $\tau^1$ be a Skolemization of $\tau$, and $T^1$ be a complete theory in $\tau^1$ extending $T$ and with Skolem-functions in $\tau$. 
Then for every $f\in 2^\kappa$ there is $\mathcal{M}_1^f\models T^1$ with the following properties.

\begin{enumerate}
    \item There is a map $\mathcal{H}: A_f\rightarrow (dom \ \mathcal{M}_1^f)^n$ for some $n<\omega$, $\eta\mapsto a_\eta$, such that $\mathcal{M}_1^f$ is the Skolem hull of $\{a_\eta\mid \eta\in A_f\}$. Let us denote $\{a_\eta\mid \eta\in A_f\}$ by $Sk(\mathcal{M}_1^f)$.
    \item $\mathcal{M}^f=\mathcal{M}_1^f\restriction \tau$ is a model of $T$.
    \item $Sk(\mathcal{M}_1^f)$ is indiscernible in $\mathcal{M}_1^f$ relative to $L_{\omega \omega}$.
    \item There is a formula $\varphi\in L_{\omega \omega}(\tau)$ such that for all $\eta,\nu\in A_f$ and $m<\gamma$, if $A_f\models P_m(\eta) \wedge P_\gamma(\nu)$, then $\mathcal{M}^f\models \varphi (a_\nu,a_\eta)$ if and only if $A_f\models \eta\prec \nu$.
\end{enumerate}
\end{lemma}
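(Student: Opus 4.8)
The plan is to mirror the construction and proof given for the OTOP case in Lemma~\ref{construction_otop}, replacing the appeal to Fact~\ref{otop1} and Fact~\ref{otop2} by the corresponding statement for unstable theories, namely Fact~\ref{unst1}. The only structural difference is that for unstable theories the linear order indexing the Ehrenfeucht--Mostowski model does not need to be dense; this is why the order $K^U(f)$ defined just above is used instead of $K^O(f)$, and in $K^U(f)$ the copy $(\eta,1)$ is added for \emph{every} $\eta\in A_f$, including leaves (elements of $P_\gamma$). Accordingly the $A_f$-skeleton will be cleaner: every $\eta\in A_f$ contributes a single pair $b_\eta = a_{(\eta,0)}{}^\frown a_{(\eta,1)}$, and there is no need to split into the $P_\gamma$ versus $\neg P_\gamma$ cases.

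First I would invoke Fact~\ref{unst1} applied to the linear order $l = K^U(f)$: since $T$ is complete and unstable, there is a countable Skolemized vocabulary $\tau^1 \supseteq \tau$, a complete $\tau^1$-theory $T^1 \supseteq T$ with Skolem functions, and an Ehrenfeucht--Mostowski model $\mathcal{M}_1^f \models T^1$ for $K^U(f)$, with skeleton $(a_x)_{x\in K^U(f)}$, where the order is definable by a first-order formula $\psi(\bar u,\bar v)$ and the skeleton is a set of indiscernibles relative to $L_{\omega\omega}$. Define $\mathcal{M}_1^f$ to be (the Skolem hull sense of) this model and set $\mathcal{M}^f = \mathcal{M}_1^f\restriction\tau$; this gives (2) immediately. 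Then define the map $\mathcal{H}\colon A_f \to (\dom\mathcal{M}_1^f)^{2n}$ by $\eta \mapsto a_\eta := a_{(\eta,0)}{}^\frown a_{(\eta,1)}$ and let $Sk(\mathcal{M}_1^f) = \{a_\eta \mid \eta\in A_f\}$; the Skolem hull of this set is all of $\mathcal{M}_1^f$ because the hull of the full skeleton $(a_x)_{x\in K^U(f)}$ is, and every $a_x$ with $x=(\eta,i)$ is a coordinate of some $a_\eta$. This yields (1).

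Next, for (3): given tuples $\bar\eta = (\eta_0,\dots,\eta_m)$ and $\bar\xi = (\xi_0,\dots,\xi_m)$ in $A_f$ with the same quantifier-free type in $A_f$, the interleaved tuples $\langle(\eta_0,0),(\eta_0,1),(\eta_1,0),(\eta_1,1),\dots\rangle$ and $\langle(\xi_0,0),(\xi_0,1),\dots\rangle$ have the same quantifier-free type in $K^U(f)$ — this is a direct check against clauses II--IV of the definition of $K^U(f)$, using that $\prec$, $<$, and the $(\cdot,0)<(\cdot,1)$ relation are all determined by the quantifier-free type of the tuple in the tree $A_f$. Since $(a_x)_{x\in K^U(f)}$ is $L_{\omega\omega}$-indiscernible, the images $a_{\bar\eta}$ and $a_{\bar\xi}$ have the same $L_{\omega\omega}$-type in $\mathcal{M}_1^f$, giving indiscernibility of $Sk(\mathcal{M}_1^f)$ relative to $L_{\omega\omega}$. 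Finally for (4): the relation ``$x<y$'' in $K^U(f)$ is definable in $\mathcal{M}^f$ by $\psi(\bar u,\bar v)$ over the skeleton, so take $\varphi(\bar x_0,\bar x_1,\bar y_0,\bar y_1) := \psi(\bar x_0,\bar y_0)\wedge\psi(\bar y_1,\bar x_1) \in L_{\omega\omega}(\tau^1)$; one reads off from clause III of $K^U(f)$ that for $\eta,\nu\in A_f$ with $A_f\models P_m(\eta)\wedge P_\gamma(\nu)$ we have $A_f\models\eta\prec\nu$ iff $(\eta,0)<(\nu,0)<(\nu,1)<(\eta,1)$ in $K^U(f)$ iff $\mathcal{M}^f\models\varphi(a_\eta,a_\nu)$. (If one insists $\varphi\in L_{\omega\omega}(\tau)$ rather than $\tau^1$, note $\psi$ is already first-order over $\tau$ by Fact~\ref{unst1}, so this is immediate.)

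The main obstacle, such as it is, is bookkeeping rather than mathematics: one must be careful that the quantifier-free-type-preservation between $A_f$ and $K^U(f)$ genuinely covers \emph{all} pairs from a tuple, including pairs of the form $(\eta_i,0),(\eta_j,1)$ and leaf elements, so that no hidden case of the order $<_{K^U(f)}$ fails to be controlled by the tree type; and one must double-check that extending the skeleton index map from single elements $x\in K^U(f)$ to pairs $a_\eta$ does not disturb the indiscernibility argument — it does not, because a pair of $2(m{+}1)$-tuples with the same quantifier-free type in $A_f$ unfolds to a pair of interleaved $K^U(f)$-tuples with the same quantifier-free type, exactly as in the OTOP proof. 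Everything else is a verbatim adaptation of Lemma~\ref{construction_otop} with the density hypothesis dropped and the $P_\gamma$ case collapsed.
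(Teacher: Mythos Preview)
Your proposal is correct and follows exactly the approach the paper intends: the paper's own proof is just the one-line ``Similar to Lemma~\ref{construction_otop}. Instead of using Fact~\ref{otop1} and Fact~\ref{otop2}, use Fact~\ref{unst1},'' and you have spelled out precisely that adaptation, including the simplification that in $K^U(f)$ every $\eta\in A_f$ (leaf or not) contributes both $(\eta,0)$ and $(\eta,1)$, so the $A_f$-skeleton is uniformly $b_\eta=a_{(\eta,0)}{}^\frown a_{(\eta,1)}$. The only cosmetic discrepancy is the order of arguments in $\varphi$ (the statement has $\varphi(a_\nu,a_\eta)$ while you write $\varphi(a_\eta,a_\nu)$), which is immaterial.
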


\begin{proof}
Similar to Lemma \ref{construction_otop}. Instead of using Fact \ref{otop1} and Fact \ref{otop2},  use Fact \ref{unst1}.
 \end{proof}

\subsection{Superstable theories with the DOP}

For the DOP case we will use $\varepsilon=\omega_1$, thus $\theta=2^{\omega}=\mathfrak c$. Let us denote by $SH(X)$ the Skolem-hull of $X$, i.e. $\{\mu(a)\mid a\in X, \mu\text{ an }\tau^1 \text{-term}\}$. 

\begin{fact}[Shelah, \cite{Shdop}, Fact 2.5B; Hyttinen-Tuuri, \cite{HT} Theorem 6.1]\label{DOP1}
    Suppose $T$ is a countable superstable theory with the DOP in a countable vocabulary $\tau$. Then there exists a vocabulary $\tau^1\supseteq \tau$, $|\tau^1|=\omega_1$, such that for every linear order $l$ we can find a $\tau^1$-model $\mathcal{N}$ which is an Ehrenfeucht-Mostowski model of $T$ for $l$, where the order is definable by an $L_{\omega_1 \omega_1}$-formula.
\end{fact}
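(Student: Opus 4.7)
The plan is to follow Shelah's encoding of linear orders via dimensions of regular types orthogonal to the base. Since $T$ has the DOP, I can extract $F^a_\omega$-saturated models $M_0 \subseteq M_1 \cap M_2$ with $M_1 \downarrow_{M_0} M_2$ such that over the $F^a_\omega$-prime model above $M_1 \cup M_2$ there is a regular type $p$ orthogonal to both $M_1$ and $M_2$. By downward L\"owenheim--Skolem inside the class of $F^a_\omega$-saturated models, all of these can be chosen countable, so $p$ is a type over a countable parameter set.

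For a fixed linear order $l$, the model $\mathcal{N}$ is assembled in stages. First, take a countable $F^a_\omega$-prime model over a conjugate of $M_0$ and, for each $x \in l$, adjoin a copy $N_1^x$ of $M_1$ so that the family $(N_1^x)_{x\in l}$ is independent over the base. For each ordered pair $x < y$ in $l$ build a configuration isomorphic to $(M_0, M_1, M_2)$ inside $N_1^x \cup N_1^y$, using $N_1^x$ in the role of $M_1$ and a copy of $M_2$ attached via a canonical automorphism determined by the ordered pair, and then adjoin a realization $c_{xy}$ of the corresponding conjugate of $p$. Let $\mathcal{N}$ be the $F^a_\omega$-prime model over this whole configuration, and expand by Skolem functions in a vocabulary $\tau^1 \supseteq \tau$. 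The skeleton $(\bar a_x)_{x\in l}$ is obtained by enumerating each $N_1^x$ together with the auxiliary elements needed to realize the DOP configurations; since countably many finitary data specify the configuration around each pair, $|\tau^1| = \omega_1$ suffices.

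The order is then definable by the $L_{\omega_1\omega_1}$-formula $\varphi(\bar u, \bar v)$ asserting ``there exists an element whose type over $\bar u{}^\frown \bar v$ is the conjugate of $p$ appropriate to the ordered pair $(\bar u, \bar v)$''; since $p$ is a complete type over a countable set, this is a countable conjunction of first-order formulas, hence lies in $L_{\omega_1 \omega_1}$. Orthogonality of $p$ to both $M_1$ and $M_2$ guarantees that in the absence of the designated $c_{xy}$ no realization of the orthogonal type can appear in $\mathcal{N}$ by dimension-counting, so $\varphi(\bar a_x, \bar a_y)$ holds exactly when $x < y$.

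The main obstacle is breaking the apparent symmetry of the orientation: the orthogonality of $p$ to $M_1$ and to $M_2$ is symmetric, so one must arrange the construction to distinguish $(x,y)$ with $x<y$ from $(y,x)$ rather than declare both. Shelah's device is to designate one of $M_1, M_2$ as the ``first coordinate'' of the amalgam and extract conjugates of $p$ only along this orientation; the axioms of an Ehrenfeucht--Mostowski system (in particular, the possibility of building $\mathcal{N}$ for arbitrary $l$ by a single uniform scheme) are then verified by extracting an indiscernible skeleton via a compactness/Ramsey argument over the class of finite ordered configurations, ensuring that all finite subconfigurations with the same order type realize the same $L_{\omega_1 \omega_1}$-type in $\mathcal{N}$.
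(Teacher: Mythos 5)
The paper does not prove Fact~\ref{DOP1}; it is cited from Shelah \cite{Shdop} (Fact~2.5B) and Hyttinen--Tuuri \cite{HT} (Theorem~6.1) without proof, so there is no internal argument to compare against. Your reconstruction does follow the broad strategy of the cited sources: extract from the DOP a regular type $p$ orthogonal to both sides of the independent amalgam, encode an ordered pair $x<y$ by adjoining a realization of the appropriate conjugate of $p$, and exploit orthogonality and the fact that $p$ is based on a small set to make the order $L_{\omega_1\omega_1}$-definable.

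There is, however, a concrete gap in the opening step. You invoke a ``downward L\"owenheim--Skolem inside the class of $F^a_\omega$-saturated models'' to render $M_0,M_1,M_2$ and the prime model countable. $F^a_\omega$-saturation is not an elementary property and is not inherited by elementary substructures, so there is no such downward L\"owenheim--Skolem principle, and for a countable superstable theory that is not $\omega$-stable the minimal $F^a_\omega$-saturated ($\aleph_\epsilon$-saturated) models over countable sets can have size up to $2^{\aleph_0}$. The argument has to be repaired by working not with countable models but with the fact that the relevant regular type $p$ is based on a finite (or at most countable) subset of the prime model --- this is what makes the defining formula land in $L_{\omega_1\omega_1}$, and what keeps $|\tau^1|\le\aleph_1$ (via $\aleph_1$-many constants for a configuration of size $\le\aleph_1$, rather than $\aleph_0$-many for a countable one). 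A secondary soft spot is the closing appeal to ``compactness/Ramsey'' to extract the indiscernible skeleton: for the nontrivial index structure arising from the DOP configuration the skeleton is obtained by a direct construction over the decomposition (uniformity of the prime-model construction), and a naive Ramsey extraction would need to preserve much more than finite first-order types; Shelah's and Hyttinen--Tuuri's arguments do not go through Ramsey here. The asymmetry issue you flag, and your resolution of it by fixing the roles of $M_1$ and $M_2$ in the amalgam and only realizing $p$ along one orientation, is correct and is indeed the crux of the construction.
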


\begin{defn}
    For every $f\in 2^\kappa$ let us define the order $K^D(f)$ by:
    \begin{enumerate}
        \item [I.] $dom \ K^D(f)=(dom\ A_f\times \{0\})\cup (dom\ A_f\times \{1\})$.
        \item [II.] For all $\eta\in A_f$, $(\eta,0)<_{K^D(f)}(\eta,1)$.
        \item [III.] If $\eta,\xi\in A_f$, then $\eta\prec \xi$ if and only if $$(\eta,0)<_{K^D(f)}(\xi,0)<_{K^D(f)}(\xi,1)<_{K^D(f)}(\eta,1).$$ 
        \item [IV.] If $\eta,\xi\in A_f$, then $\eta< \xi$ if and only if $(\eta,1)<_{K^D(f)}(\xi,0)$.
    \end{enumerate}
\end{defn}

\begin{lemma}\label{construction_dop}
Suppose $T$ is superstable with the DOP in a countable relational vocabulary $\tau$. Let $ \tau^1$ be a Skolemization of $\tau$, and $T^1$ be a complete theory in $\tau^1$ extending $T$ and with Skolem-functions in $\tau$. 
Then for every $f\in 2^\kappa$ there is $\mathcal{M}_1^f\models T^1$ with the following properties.

\begin{enumerate}
    \item There is a map $\mathcal{H}: A_f\rightarrow (dom \ \mathcal{M}_1^f)^n$ for some $n<\omega$, $\eta\mapsto a_\eta$, such that $\mathcal{M}_1^f$ is the Skolem hull of $\{a_\eta\mid \eta\in A_f\}$. Let us denote $\{a_\eta\mid \eta\in A_f\}$ by $Sk(\mathcal{M}_1^f)$.
    \item $\mathcal{M}^f=\mathcal{M}_1^f\restriction \tau$ is a model of $T$.
    \item $Sk(\mathcal{M}_1^f)$ is indiscernible in $\mathcal{M}_1^f$ relative to $L_{\omega_1 \omega_1}$.
    \item There is a formula $\varphi\in L_{\omega_1 \omega_1}(\tau)$ such that for all $\eta,\nu\in A_f$ and $m<\gamma$, if $A_f\models P_m(\eta) \wedge P_\gamma(\nu)$, then $\mathcal{M}^f\models \varphi (a_\nu,a_\eta)$ if and only if $A_f\models \eta\prec \nu$.
\end{enumerate}
\end{lemma}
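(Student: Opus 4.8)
The plan is to mimic the proof of Lemma \ref{construction_otop} almost verbatim, substituting the appropriate Ehrenfeucht-Mostowski input for the DOP case and adjusting the relevant logic. First I would invoke Fact \ref{DOP1}: since $T$ is countable superstable with the DOP, there is a countable-in-the-sense-$|\tau^1|=\omega_1$ vocabulary $\tau^1\supseteq\tau$ together with Skolem functions, such that for every linear order $l$ there is an Ehrenfeucht-Mostowski model $\mathcal{N}$ of $T$ for $l$ where the order relation is definable by an $L_{\omega_1\omega_1}$-formula $\psi$. Because of the $\omega_1$-definability of the order, the skeleton is a set of indiscernibles relative to $L_{\omega_1\omega_1}$ (this is the content of the statement that the DOP makes $T$ weakly unstable as an $L_{\omega_1\omega_1}$-theory; as in the OTOP case one reads this off from \cite{HT}, and this is why we are constrained to $\varepsilon=\omega_1$, $\theta=\mathfrak{c}$, so that $K^D(f)$ is $\omega_1$-dense). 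I would apply this to the dense linear order $l=K^D(f)$, which is $\omega_1$-dense by the construction of $A_f$ (Fact \ref{DOP2}: $A_f$ is $\varepsilon$-homogeneous and locally $(\kappa,\varepsilon)$-nice with $\varepsilon=\omega_1$, and each $<\restriction Suc_{A_f}(\eta)$ is $\omega_1$-dense), obtaining $\mathcal{M}_1^f\models T^1$.

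Next I would extract the $A_f$-skeleton exactly as in Lemma \ref{construction_otop}: writing $\{a_x\mid x\in K^D(f)\}$ for the skeleton of $\mathcal{M}_1^f$, set $b_\eta = a_{(\eta,0)}{}^\frown a_{(\eta,1)}$ for every $\eta\in A_f$ (here, unlike the OTOP case, we have both $(\eta,0)$ and $(\eta,1)$ for all $\eta$, including leaves, since $K^D(f)$ duplicates the whole tree). Items (1) and (2) are immediate: $\mathcal{M}_1^f$ is the Skolem hull of $\{a_x\mid x\in K^D(f)\}$, hence of $\{b_\eta\mid \eta\in A_f\}$, and $\mathcal{M}^f=\mathcal{M}_1^f\restriction\tau\models T$. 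For item (3) I would argue that if $\bar\eta=(\eta_0,\ldots,\eta_n)$ and $\bar\xi=(\xi_0,\ldots,\xi_n)$ have the same quantifier-free type in $A_f$, then the induced sequences $\langle(\eta_0,0),(\eta_0,1),\ldots,(\eta_n,0),(\eta_n,1)\rangle$ and $\langle(\xi_0,0),(\xi_0,1),\ldots,(\xi_n,0),(\xi_n,1)\rangle$ have the same quantifier-free type in $K^D(f)$ (this is a routine check against clauses I--IV in the definition of $K^D(f)$), so by indiscernibility of the skeleton relative to $L_{\omega_1\omega_1}$ the tuples $\bar b_{\bar\eta}$ and $\bar b_{\bar\xi}$ have the same $L_{\omega_1\omega_1}$-type; thus $Sk(\mathcal{M}_1^f)$ is indiscernible relative to $L_{\omega_1\omega_1}$.

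For item (4), since $K^D(f)$ is definable in $\mathcal{M}^f$ by the $L_{\omega_1\omega_1}$-formula $\psi(\bar u,\bar v)$ — that is, $\mathcal{M}^f\models\psi(a_x,a_y)$ iff $K^D(f)\models x<y$ — I would take $\varphi(\bar x_0,\bar x_1,\bar y_0,\bar y_1)$ to be $\psi(\bar x_0,\bar y_0)\wedge\psi(\bar y_1,\bar x_1)$, exactly as in Lemma \ref{construction_otop}. By clause III of the definition of $K^D(f)$, for $\eta,\nu\in A_f$ with $A_f\models P_m(\eta)\wedge P_\gamma(\nu)$ we have $A_f\models\eta\prec\nu$ iff $(\eta,0)<_{K^D(f)}(\nu,0)<_{K^D(f)}(\nu,1)<_{K^D(f)}(\eta,1)$, which translates into $\mathcal{M}^f\models\varphi(a_{(\eta,0)},a_{(\eta,1)},a_{(\nu,0)},a_{(\nu,1)})$, i.e. $\mathcal{M}^f\models\varphi(a_\nu,a_\eta)$ under the pairing convention; this proves (4) with $\varphi\in L_{\omega_1\omega_1}(\tau)$. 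The main obstacle — really the only non-mechanical point — is making sure that the EM machinery of Fact \ref{DOP1} genuinely delivers an $L_{\omega_1\omega_1}$-indiscernible skeleton over the $\omega_1$-dense order $K^D(f)$, i.e. that the density and homogeneity built into $A_f$ via $\varepsilon=\omega_1$ are exactly what the infinitary setting requires; once that is in hand, everything else is a transcription of the OTOP argument with $L_{\infty\omega}$ replaced by $L_{\omega_1\omega_1}$.
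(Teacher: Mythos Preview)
Your outline matches the paper's: build $\mathcal{M}_1^f$ as $EM_1(K^D(f))$ via Fact~\ref{DOP1}, extract the $A_f$-indexed skeleton $b_\eta=a_{(\eta,0)}{}^\frown a_{(\eta,1)}$, and read off (1), (2), (4) exactly as in Lemma~\ref{construction_otop}. That part is fine.

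The gap is in (3). You assert that the skeleton over $K^D(f)$ is $L_{\omega_1\omega_1}$-indiscernible, citing an analogue of Fact~\ref{otop2}, but no such fact is stated in the paper: Fact~\ref{DOP1} only gives an EM model with the order \emph{definable} by an $L_{\omega_1\omega_1}$-formula; it says nothing about $L_{\omega_1\omega_1}$-indiscernibility. The paper is explicit that ``only property (3) requires a proof,'' and supplies one. The argument is not via an $\omega_1$-density property of the linear order $K^D(f)$ (which is not even dense: for a leaf $\eta$ there is nothing between $(\eta,0)$ and $(\eta,1)$), but via the $\omega_1$-homogeneity of the \emph{tree} $A_f$ (Fact~\ref{DOP2}). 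Concretely: given finite $\bar s,\bar t\subseteq A_f$ with the same atomic type, one proves $\mathcal{M}_1^f\models\varphi(\bar a_{\bar s})\Leftrightarrow\mathcal{M}_1^f\models\varphi(\bar a_{\bar t})$ by induction on the complexity of $\varphi\in L_{\omega_1\omega_1}$. At a quantifier step $\exists\bar u\,\psi$, the witnessing tuple $\bar\nu$ lies in the Skolem hull of $\{a_x:x\in S\cup B\}$ for some \emph{countable} $B\subseteq A_f$; $\omega_1$-homogeneity extends the partial isomorphism $F:\bar s\mapsto\bar t$ to $\mathcal F:S\cup B\to A_f$, and atomic indiscernibility (from the EM construction) then yields a $\tau^1$-isomorphism $\bar{\mathcal F}$ of the corresponding Skolem hulls sending $\bar\nu$ to a witness $\bar\varpi$ for $\psi(\bar a_{\bar t},\cdot)$. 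The point is that $L_{\omega_1\omega_1}$ quantifies over at most countably many variables at a time, which is exactly what $\varepsilon=\omega_1$-homogeneity of $A_f$ absorbs. You correctly flag this as ``the only non-mechanical point,'' but you do not carry it out; that is the missing step.
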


\begin{proof}
Similar to Lemma \ref{construction_otop}. Instead of use Fact \ref{otop1} and Fact \ref{otop2},  use Fact \ref{DOP1}. Only property (3) requires a proof. 
Let $\bar s$ and $\bar t$ be $n$-tuples of elements of $A^f$ such that $tp_{at}(\bar{s},\emptyset,A^f)=tp_{at}(\bar{t},\emptyset,A^f)$. Since $tp_{at}(\bar{s},\emptyset,A^f)=tp_{at}(\bar{t},\emptyset,A^f)$, there is a partial isomorphism $F: \mathcal{S}\rightarrow A^f$ such that $F[\mathcal{S}]=\mathcal{T}$, where $\mathcal{S}$ and $\mathcal{T}$ are the sets of the elements of $\bar s$ and $\bar t$, respectively.

\begin{claim}
For all $\varphi(x)\in L_{\omega_1 \omega_1}$ with $n$ free variables, $\mathcal{M}_1^f\models \varphi(\bar{a}_{\bar s})$ holds if and only if $\mathcal{M}_1^f\models \varphi(\bar{a}_{\bar t})$
.\end{claim}
\begin{proof}
    We will proceed by induction on the complexity of $\varphi(x)$. The non-trivial cases are the quantifier cases. 

    {\bf Case $\exists$.} Let $\varphi$ be of the form $\exists(u)\psi(x)$, where $\psi$ satisfies the property of the claim. Suppose $\mathcal{M}_1^f\models \varphi(\bar{a}_{\bar s})$. There is a countable tuple $\bar \nu\in \mathcal{M}_1^f$ such that $\mathcal{M}_1^f\models \psi[\bar{a}_{\bar s}, \bar \nu]$. 
    By the construction of $\mathcal{M}_1^f$, there is a countable set $B\subseteq A^f$ such that for all $i<\omega$ there is $\mu_i$, $m(i)<\omega$, and a tuple $(z^i_0,\ldots, z^i_{m(i)})$ of elements of $S\cup B$ such that $\nu_i=\mu_i(\bar{a}_{z^i_0},\ldots, \bar{a}_{z^i_{m(i)}})$. Since $A^f$ is $\omega_1$-homogeneous, there is a partial isomorphism $\mathcal{F}:S\cup B\rightarrow A^f$ that extends $F$. 
    By the way $\mathcal{M}_1^f$ was constructed, there is a partial isomorphism $\bar{\mathcal{F}}:SH(\{a_x\mid x\in s\cup B\})\rightarrow \mathcal{M}_1^f$ such that $\bar{\mathcal{F}}(a_x)=a_{\mathcal{F}(x)}$. For all $i<\omega$, let $\varpi_i=\bar{\mathcal{F}}(\nu_i)$, $\bar \varpi=(\varpi_0,\ldots)$. 
    So $\mathcal{M}_1^f\models \psi(\bar{a}_{\bar t}, \bar \varpi)$, we conclude that $\mathcal{M}_1^f\models \varphi(\bar{a}_{\bar t})$.

    The case $\mathcal{M}^f\models \varphi(\bar{a}_{\bar t})$ is similar.

    {\bf Case $\forall$.} Let $\varphi$ be of the form $\forall(u)\psi(x)$, where $\psi$ satisfies the property of the claim. Let us suppose, towards contradiction, that $\mathcal{M}_1^f\models \varphi(\bar{a}_{\bar s})$ and $\mathcal{M}_1^f\not\models \varphi(\bar{a}_{\bar t})$. 
    Therefore, there is a countable tuple $\bar \varpi \in \mathcal{M}_1^f$ such that $\mathcal{M}_1^f\models \neg\psi[\bar{a}_{\bar t}, \bar \varpi]$. 
    By a similar argument as in the existential case, we can conclude that there is $\bar \nu\in \mathcal{M}_1^f$ such that $\mathcal{M}_1^f\models \neg\psi[\bar{a}_{\bar s}, \bar \nu]$. On the other hand, $\mathcal{M}_1^f\models \varphi(\bar{a}_{\bar s})$ implies $\mathcal{M}_1^f\models \psi[\bar{a}_{\bar s}, \bar \nu]$, a contradiction.
    
\end{proof}

Property (3) follows from the claim.

 \end{proof}

\subsection{The isomorphism theorem}

\begin{defn}
	Let $A\in K^\gamma_{tr}$, $B\subseteq A$ and $\eta\in A$.
	\begin{itemize}
	\item We say that $B$ is \textit{downward closed} if for all $\eta\in B$, $\eta\restriction m\in B$ if $m<lg(\eta)$.
	\item Let $d(\eta)=(\eta\restriction \alpha)_{\alpha\leq lg(\eta)}$.
	\end{itemize}
\end{defn}

\begin{fact}[Hyttinen-Tuuri, \cite{HT}, Lemma 8.11]\label{closed_lema}
	Let $A\in K^\gamma_{tr}$ and $B\subseteq A$, $B$ downward closed. Let $\bar{\eta}_1=(\eta^0_1,\ldots,\eta^n_1)$ and $\bar{\eta}_2=(\eta^0_2,\ldots,\eta^n_2)$ be sequences of elements of $A$. Suppose:
	\begin{itemize}
	\item $tp(\bar{\eta}_1,\emptyset,A)=tp(\bar{\eta}_2,\emptyset,A)$;
	\item for all $i\leq n$, $tp_{bs}(d(\eta_1^i),B,(A,\prec,<))=tp_{bs}(d(\eta_2^i),B,(A,\prec,<))$.
	\end{itemize}
	Then $tp_{bs}(\bar{\eta}_1,B,A)=tp_{bs}(\bar{\eta}_2,B,A)$.
\end{fact}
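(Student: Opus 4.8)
The plan is to produce a partial isomorphism of the full structure $A=(A,\prec,(P_n)_{n\le\gamma},<,\wedge)$ that fixes $B$ pointwise and sends $\bar{\eta}_1$ to $\bar{\eta}_2$. This suffices: if $\Psi\colon D\to A$ is such a map and $D$ is a substructure containing $B\cup\{\eta_1^0,\dots,\eta_1^n\}$, then for every basic formula $\varphi$ and every tuple $\bar b$ from $B$ we have $A\models\varphi(\bar{\eta}_1,\bar b)\iff D\models\varphi(\bar{\eta}_1,\bar b)\iff\Psi[D]\models\varphi(\bar{\eta}_2,\bar b)\iff A\models\varphi(\bar{\eta}_2,\bar b)$, the outer equivalences because basic (i.e.\ atomic or negated atomic) formulas are absolute between a substructure and $A$, and the middle one because $\Psi$ is an isomorphism onto its image; hence $tp_{bs}(\bar{\eta}_1,B,A)=tp_{bs}(\bar{\eta}_2,B,A)$.

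First I would take $D=B\cup\bigcup_{i\le n}d(\eta_1^i)$, reading $d(\eta)$ as the set $\{\eta\restriction\alpha\mid\alpha\le lg(\eta)\}$ of initial segments of $\eta$ (so $\eta\in d(\eta)$). Since $B$ is downward closed, each $d(\eta_1^i)$ is downward closed, and every meet is an initial segment of each of its arguments, $D$ is $\prec$-downward closed and closed under $\wedge$; the only nonobvious instance of closure is $u\in d(\eta_1^i)$, $c\in B$, where $u\wedge c\preceq c$ forces $u\wedge c\in B$. The candidate map is $\Psi=\mathrm{id}_B\cup\bigcup_{i\le n}G_i$, where $G_i\colon d(\eta_1^i)\to d(\eta_2^i)$, $\eta_1^i\restriction\alpha\mapsto\eta_2^i\restriction\alpha$; this is legitimate because the first hypothesis gives $P_m(\eta_1^i)\iff P_m(\eta_2^i)$, hence $lg(\eta_1^i)=lg(\eta_2^i)$, and because the second hypothesis says exactly that the coordinatewise map between the branches $d(\eta_1^i)$ and $d(\eta_2^i)$ is a $(\prec,<)$-partial isomorphism fixing $B$.

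The core is to verify that $\Psi$ is a well-defined partial isomorphism of $A$, and the step I expect to take the most care is the handling of meets involving parameters. The organizing remark is that, in a tree, any term built from $\bar{\eta}_1$ and constants from $B$ evaluates to an element of one of three kinds: (i) some $\eta_1^i$; (ii) a pairwise meet $\eta_1^i\wedge\eta_1^j$ (an iterated meet of several nodes equals one of the pairwise meets); or (iii) an element of $B$ (any meet that mentions some $c\in B$ is $\preceq c$, hence lies in $B$). The crucial identification is that a kind-(iii) value such as $\eta_1^i\wedge c$ is the \emph{same} element of $A$ as $\eta_2^i\wedge c$: writing $\beta=lg(\eta_1^i\wedge c)$, the formulas $x_\gamma\preceq c$ $(\gamma\le\beta)$ and $x_{\beta+1}\not\preceq c$ lie in $tp_{bs}(d(\eta_1^i),B,(A,\prec,<))=tp_{bs}(d(\eta_2^i),B,(A,\prec,<))$, so $\eta_2^i\restriction\beta$ is a common initial segment of $\eta_2^i$ and $c$ of length $\beta$, maximal such; since a tree has at most one initial segment of $c$ of each length, $\eta_1^i\wedge c=c\restriction\beta=\eta_2^i\wedge c$. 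The same argument, applied to the appropriate coordinate of a branch, collapses every mixed meet (e.g.\ $(\eta_1^i\wedge\eta_1^j)\wedge c$) to a fixed element of $B$.

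With term values pinned down, preservation and reflection of each atomic relation $R\in\{=,\prec,<,P_m\}$ applied to reduced terms splits into cases. If all terms involved are of kind (i)/(ii), the relation mentions no parameter and is decided by $tp(\bar{\eta}_1,\emptyset,A)=tp(\bar{\eta}_2,\emptyset,A)$ (this also covers lengths of pure meets and the $\wedge$-operation among the $\eta$'s). If all terms are of kind (iii), the relation is absolute, holding of literally the same $B$-tuple on both sides. Otherwise, after replacing every kind-(iii) value $\eta_1^j\wedge c$ by the equal $\eta_2^j\wedge c\in B$ as above, what remains is a basic relation between coordinates of a single branch $d(\eta_1^i)$ and genuine parameters from $B$: for $\prec,<,=$ it lies in $tp_{bs}(d(\eta_1^i),B,(A,\prec,<))$ and transfers, and for $P_m$ it follows since $G_i$ preserves the ordinal position in the chain and $lg(\eta_1^i)=lg(\eta_2^i)$. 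These same three cases also show $\Psi$ is well defined (consistency of the $G_i$'s on $d(\eta_1^i)\cap d(\eta_1^j)$ and of each $G_i$ with $\mathrm{id}_B$ on $d(\eta_1^i)\cap B$) and injective. Hence $\Psi$ is a partial isomorphism with $\Psi(\bar{\eta}_1)=\bar{\eta}_2$ and $\Psi\restriction B=\mathrm{id}_B$, and the lemma follows.
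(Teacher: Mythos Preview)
The paper does not supply a proof of this fact; it simply cites Hyttinen--Tuuri. Your argument is essentially correct and follows the natural route, but there is one imprecision worth flagging.

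You take $D = B \cup \bigcup_i d(\eta_1^i)$ and assert that $\Psi$ is a partial isomorphism of the full structure on $D$. This is not quite true in general: the relation $<$ need not be preserved between arbitrary elements of $D$. Concretely, if $\alpha = lg(\eta_1^i \wedge \eta_1^j)+1$ and $\alpha$ is strictly below both $lg(\eta_1^i)$ and $lg(\eta_1^j)$, then $\eta_1^i\restriction\alpha$ and $\eta_1^j\restriction\alpha$ are distinct siblings in $D$, but neither is the value of any $\wedge$-term in $\bar\eta_1$, and the quantifier-free type of $\bar\eta_1$ over $\emptyset$ does not determine which one is $<$-smaller. (One can recover this with an existential formula if the first hypothesis uses the full first-order type, but that is not what you invoke.) So the sentence ``Hence $\Psi$ is a partial isomorphism'' overclaims.

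This does not damage the lemma. As your own organizing remark says, every term built from $\bar\eta_1$ and $B$-constants evaluates to a kind-(i), (ii), or (iii) element, and you correctly verify that all atomic relations among such values transfer. That is exactly what $tp_{bs}(\bar\eta_1,B,A)=tp_{bs}(\bar\eta_2,B,A)$ requires. The cleanest fix is to replace $D$ by the smaller $\wedge$-closure of $B\cup\{\eta_1^0,\dots,\eta_1^n\}$, namely $B$ together with the $\eta_1^i$'s and their pairwise meets; on this $D$ every element is a term value, your verification is complete, and $\Psi$ really is a partial isomorphism. Alternatively, drop the $\Psi$-wrapper entirely and present only the direct term-value analysis of your last two paragraphs, which already proves the statement.
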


\begin{thm}\label{models_iso}
Suppose $T$ is a non-classifiable first order theory in a countable relational vocabulary $\tau$.
\begin{enumerate}
    \item If $T$ is stable unsuperstable, $\gamma=\omega$, and for all $\alpha<\kappa$, $\alpha^\omega<\kappa$, then for all $f,g\in 2^\kappa$ $$f\ =^2_\gamma\ g \text{ iff } \mathcal{M}^f \cong \mathcal{M}^g.$$
    \item If $T$ is unstable or superstable with the OTOP, $\omega\leq \gamma<\kappa$ is such that for all $\alpha<\kappa$, $\alpha^\gamma<\kappa$, then for all $f,g\in 2^\kappa$ $$f\ =^2_\gamma\ g \text{ iff } \mathcal{M}^f \cong \mathcal{M}^g.$$
    \item If $T$ is superstable with the DOP, $\kappa$ is inaccessible or $\kappa=\lambda^+$ and $2^\mathfrak{c}\leq \lambda$, and $\omega_1\leq \gamma<\kappa$ is such that for all $\alpha<\kappa$, $\alpha^\gamma<\kappa$, then for all $f,g\in 2^\kappa$, $$f\ =^2_\gamma\ g \text{ iff } \mathcal{M}^f \cong \mathcal{M}^g.$$
\end{enumerate}

\end{thm}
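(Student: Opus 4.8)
\textbf{Proof plan for Theorem \ref{models_iso}.}

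The plan is to prove all three items simultaneously by reducing the ``model isomorphism'' question to the ``tree isomorphism'' question already settled in Theorem \ref{the_final_trees}, which gives $f\ =^2_\gamma\ g \iff A_f\cong A_g$ for the relevant $\gamma$. The forward direction is the easy one: given $f\ =^2_\gamma\ g$, Theorem \ref{the_final_trees} produces an ordered-coloured-tree isomorphism $A_f\cong A_g$; this induces an isomorphism of the index models $K^\bullet(f)\cong K^\bullet(g)$ (where $\bullet\in\{{}^\omega{\rm(none)},O,U,D\}$ depending on the theory, using Lemma \ref{unsuperstable_iso_case} for the stable unsuperstable case and Lemmas \ref{construction_otop}, \ref{construction_unst}, \ref{construction_dop} for the other three), which in turn induces an isomorphism of the Ehrenfeucht-Mostowski models $\mathcal{M}^f\cong\mathcal{M}^g$, since each $\mathcal{M}^f$ is generated over its skeleton and atomic types over the skeleton are determined by quantifier-free types in the index model. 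The case assumptions on $\gamma$ and on $\kappa$ in each item are exactly those needed so that $A_f$ lies in the right $K^\gamma_{tr}$, the trees are $(<\kappa)$-stable and locally $(\kappa,\varepsilon)$-nice, and the relevant proper function/EM-construction exists.

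The substantive direction is: $\mathcal{M}^f\cong\mathcal{M}^g$ implies $f\ =^2_\gamma\ g$. Here I would follow the Shelah/Hyttinen-Tuuri strategy of recovering the index tree (up to $=^2_\gamma$) from the model. First I would fix a $\kappa$-representation of $\mathcal{M}^f$ compatible with the $\kappa$-representation $\mathbb{A}_f=\langle (A_f)_\alpha\mid\alpha<\kappa\rangle$ of the tree, so that there is a club of $\alpha<\kappa$ where the submodel generated by $(A_f)_\alpha$ is exactly the $\alpha$-th piece. Given an isomorphism $G\colon\mathcal{M}^f\to\mathcal{M}^g$, the images of these pieces and the corresponding pieces of $\mathcal{M}^g$ agree on a club $C$. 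Then, using that the skeleton $Sk(\mathcal{M}^f_1)$ is a set of indiscernibles relative to the appropriate logic ($L_{\omega\omega}$, $L_{\infty\omega}$, or $L_{\omega_1\omega_1}$, by items (3) of the construction lemmas) and that the tree order $\prec$ on $A_f$ is definable by a formula $\varphi$ in that logic (item (4)), I can detect, inside the model, which skeleton elements sit at level $\gamma$ and which of their ``colours'' $d_f$ equals $1$; this is exactly what is needed because, by the definition of $A_f$ and the colour function $d_f$, for $\gamma$-cofinal $\delta$ the set of level-$\gamma$ branches through $(A_f)_\delta$ carries the value $f(\delta)$. Thus on a $\gamma$-club contained in $C$ one reads off $f(\delta)=g(\delta)$, giving $H_f\ =^2_\gamma\ H_g$ and hence $f\ =^2_\gamma\ g$, just as in the proof of Theorem \ref{Afcong}.

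The main obstacle is the recovery step: showing that from an abstract model isomorphism one can reconstruct enough of the tree structure on a club. The key technical inputs are (a) that $A_f$ is $(<\kappa)$-stable and locally $(\kappa,\varepsilon)$-nice (Theorem \ref{the_final_trees}) together with $\varepsilon$-homogeneity (Fact \ref{DOP2}), which control the types of skeleton tuples over downward-closed sets via Fact \ref{closed_lema}, and (b) the definability of $\prec$ by an $L_{\infty\omega}$- or $L_{\omega_1\omega_1}$-formula, which is why the OTOP case needs $\varepsilon=\theta=\omega$ and the DOP case needs $\varepsilon=\omega_1$, $\theta=\mathfrak c$, and $2^{\mathfrak c}\le\lambda$ (so that Lemma \ref{stable_I0_first_lemma}-type stability of $I$ survives). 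Concretely, I would argue that if $\bar\eta_1,\bar\eta_2$ are skeleton-indexing tuples with the same atomic type in the tree and the same basic type of their $d$-closures over a downward-closed $B$, then their images in the model have the same type over the corresponding set; combining this with indiscernibility and the definability formula $\varphi$ pins down, on a club, the level-$\gamma$ elements and their colours, completing the recovery. The DOP case additionally requires the claim-style induction on $L_{\omega_1\omega_1}$-formula complexity already carried out in the proof of Lemma \ref{construction_dop}, invoked here to justify that indiscernibility is preserved under the Skolem hull when passing between the two tuples.
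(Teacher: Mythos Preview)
Your forward direction and overall architecture match the paper: reduce to $A_f\cong A_g$ via Theorem~\ref{the_final_trees}, and observe that a tree isomorphism induces a model isomorphism through the EM construction. For the backward direction you have assembled the right ingredients---indiscernibility (item~(3) of Lemmas~\ref{construction_otop}, \ref{construction_unst}, \ref{construction_dop}), definability of $\prec$ via $\varphi$ (item~(4)), Fact~\ref{closed_lema}, and the $(\kappa,\varepsilon)$-nice and $(<\kappa)$-stable properties of the trees---and the paper uses exactly these tools.

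The gap is in the mechanism. Your ``recovery'' framing suggests one can directly detect from the model which elements are level-$\gamma$ skeleton elements and what their colours are, but an isomorphism $F\colon\mathcal{M}^f\to\mathcal{M}^g$ need not send skeleton to skeleton: $F(a_\eta)$ is only a tuple of terms $\bar\mu_\eta(\bar b_{\bar v_\eta})$ over some finite $\bar v_\eta\subseteq A_g$, and there is no way to ``read off'' $g(\delta)$ from such data without the argument the paper actually carries out. The paper proceeds by contradiction at a single $\delta$ in a carefully built club $C$ (obtained from $(\kappa,\varepsilon)$-niceness, the containment $\eta\in(A_f)_\delta\Rightarrow\bar v_\eta\subseteq(A_g)_\delta$, and---crucially---$(<\kappa)$-stability of $A_g$ together with $|\tau^1|<\kappa$) where $f(\delta)=1$ and $g(\delta)=0$. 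The step your plan never isolates is the \emph{swap}: with the branch $\eta$ through $(A_f)_\delta$ in hand, one picks $\chi\in C_2$ strictly between $\alpha=\max_n\alpha_n$ and $\delta$, sets $\zeta_1=\eta\restriction(n+1)\notin(A_f)_\chi$, and uses the stability clause defining $C_2'$ to find $\zeta_2\in(A_f)_\chi$ with the same terms $\bar\mu$ and the same atomic type of $\bar v$ over $(A_g)_\alpha$. A five-case analysis through Fact~\ref{closed_lema} (here is where $g(\delta)=0$, i.e.\ the absence of level-$\gamma$ branches through $(A_g)_\delta$, and the niceness alternative~II on the $v^n_\eta$ are actually used) yields $tp_{at}(\bar v_{\zeta_1}{}^\frown\bar v_\eta,\emptyset,A_g)=tp_{at}(\bar v_{\zeta_2}{}^\frown\bar v_\eta,\emptyset,A_g)$, whence indiscernibility forces $\varphi(F(a_\eta),F(a_{\zeta_1}))\leftrightarrow\varphi(F(a_\eta),F(a_{\zeta_2}))$ in $\mathcal{M}^g$; pulling back through $F$ contradicts $\zeta_1\prec\eta$, $\zeta_2\not\prec\eta$ in $A_f$. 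Your final paragraph gestures at the type equality but does not identify this swap, and without it the ``detection'' step remains unjustified.
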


\begin{proof}
The case $T$ stable unsuperstable, follows from Fact \ref{unsuperstable_iso_case}. Let us prove the case when $T$ is either unstable or superstable. Recall the assumptions of each case
\begin{itemize}
    \item If $T$ is unstable, then $\varepsilon=\omega$ and $L=L_{\omega \omega}$;
    \item If $T$ is superstable with the OTOP, then $\varepsilon=\omega$ and $L=L_{\infty \omega}$;
    \item If $T$ is superstable with the DOP, then $\varepsilon=\omega_1$ and $L=L_{\omega_1 \omega_1}$;
\end{itemize}

By Theorem \ref{the_final_trees} $f\ =^2_\gamma\ g$ holds if and only if $A_f\cong A_g$. It is enough to show that $$A_f\cong A_g \text{ iff }  \mathcal{M}^f \cong \mathcal{M}^g.$$

$\Rightarrow)$ It is clear that if $A_f\cong A_g$, then $ \mathcal{M}^f \cong \mathcal{M}^g$.

$\Leftarrow)$ Recall the filtration $\mathbb{A}_f$ from Definition \ref{Def_final_trees_and_repr}. For all $f\in 2^\kappa$, $\eta\in A_f$, $A_f\not\models P_\gamma (\eta)$, and $\alpha\leq\kappa$ let $$B^f(\eta,\alpha)=Suc_{A_f}(\eta)\cap (A_f)_\alpha.$$ 
It is clear that $\langle B^f(\eta,\alpha) \mid \alpha<\kappa\rangle$ is a $\kappa$-representation of $Suc_{A_f}(\eta)$. 
By Theorem \ref{the_final_trees} $A_f$ is $(\kappa, \varepsilon)$-nice, in particular $Suc_{A_f}(\eta)$ is isomorphic to $I$. Since any two representations coincide in a club, for any $\eta\in A_f$ there is a club $C_\eta$ such that for all $\delta\in C_\eta$ with $cf(\delta)\ge\varepsilon$ and $\nu\in Suc_{A_f}(\eta)$ there is $\beta<\delta$ such that $$\forall\sigma\in B^f(\eta,\delta)\ [\sigma>\nu \Rightarrow \exists\sigma '\in B^f(\eta,\beta)\ (\sigma\ge\sigma'\ge\nu)].$$
Let $$\bar{C}^f=\{\delta<\kappa\mid cf(\delta)\ge\varepsilon\text{ and for all }\eta\in (A_f)_\delta,\  \delta\in C_\eta\}$$ and $C^f$ be $\bar{C}^f$ closed under $\alpha$-limits for $\alpha<\varepsilon$.
Notice that $C^f$ is a club that satisfies that for all $\delta\in C^f$ with $cf(\delta)\ge\varepsilon$, $\eta\in A_f$, $A_f\not\models P_\gamma (\eta)$, and $\nu\in Suc_{A_f}(\eta)$,
$$\forall\sigma\in B^f(\eta,\delta)\ [\sigma>\nu \Rightarrow \exists\sigma '\in B^f(\eta,\beta)\ (\sigma\ge\sigma'\ge\nu)].$$

Let us define $C^g$ in a similar way. Let $C_0=C^f\cap C^g$.

Assume, for sake of contradiction, that $f,g\in 2^\kappa$ are such that $f\ \neq^2_\gamma\ g$, and $\mathcal{M}^f $ are $\mathcal{M}^g$ isomorphic. 
Since $f\ \neq^2_\gamma\ g$, there is a stationary set $S\subseteq S^\kappa_\gamma$ such that for all $\alpha\in S$, $f(\alpha)\neq g(\alpha)$. Assume, without loss of generality, that $S\subseteq S^\kappa_\gamma$ is a stationary set such that for all $\alpha\in S$, $f(\alpha)=1$ and $g(\alpha)=0$. Let $F$ be an isomorphism from $\mathcal{M}^f $ to $\mathcal{M}^g$.

Let us denote by $\bar a_\eta$ and $\bar b_\xi$ the elements of $Sk(\mathcal{M}_1^f)$ and $Sk(\mathcal{M}_1^g)$. For a sequence $\bar{a}=(a^0,\ldots, a^m)$ from $\mathcal{M}^f$ we denote $F(\bar{a})=(F(a^0),\ldots,F(a^m))$ and for a sequence $\bar{v}=(v^0,\ldots,v^0)$ from $A_g$ we denote $\bar{b}_{\bar{v}}=\bar b_{v^0}^\frown\cdots^\frown \bar b_{v_m}$. For each $\eta\in A_f$ let $$F(a_\eta)=(\mu_\eta^0(\bar b_{\bar{v}_\eta}),\ldots,\mu_\eta^m(\bar b_{\bar{v}_\eta}))=\bar{\mu}_\eta(\bar{b}_{\bar{v}_\eta}),$$ where $m=lg(\bar a_\eta)-1$, $\mu_\eta^i$ are $\tau^1$-terms and $\bar{v}_\eta$ is a finite sequence of elements of $A_g$.

Let $\bar{v}_\eta=(v_\eta^i)_{i<lg(\bar{v}_\eta)}$. Let 
\begin{itemize}
\item $C_1=\{\delta\in C_0\mid \forall \eta\in A_f\  (\eta\in (A_f)_\delta \text{ implies }\bar{v}_\eta\subseteq (A_g)_\delta)\}$;
\item $C'_2=\{\delta\in C_1\mid \forall\alpha<\delta\ \forall\eta\in (A_f)_\delta\ \forall\sigma_1\in B^f(\eta,\kappa)\ \exists\sigma_2\in B^f(\eta,\delta)$ $$[\bar{v}_{\sigma_1},\bar{v}_{\sigma_2}\text{ realizes the same atomic type over }(A_g)_\alpha\text{ and }\bar{\mu}_{\sigma_1}=\bar{\mu}_{\sigma_2}] \}$$
\item $C_2=\{\delta\in C'_2\mid cf(\delta)\ge\gamma\}$
\item $C=\{\delta\in C_2\mid \delta\in C_2\ \&\ \delta\text{ is a limit point of }C_2\}.$ 
\end{itemize}

It is clear that $C_1$ is a club. Since $A_g$  is $(<\kappa)$-stable ordered tree, there are less than $\kappa$ possible $bs$-types of $\bar{v}_{\sigma_1}$ over $(A_g)_\alpha$, and since $|\tau^1|<\kappa$, there are less than $\kappa$ possible terms $\bar{\mu}_{\sigma_1}$, so $C'_2$ is a club. Thus $C'_2\cap S\neq\emptyset$. Since $S\subseteq S^\kappa_\gamma$, $C_2\cap S\neq\emptyset$. So $S\cap C\neq \emptyset$ and it is a subset of $S^\kappa_\gamma$.

Let $\delta\in S\cap C$, so there is $\eta\in A_f$, such that:
\begin{enumerate}
\item  $A_f\models P_\gamma(\eta)$.
\item For all $n<\gamma$, $\eta\restriction n\in (A_f)_\delta$.
\item For all $\alpha<\delta$, there is $m<\gamma$ such that $\eta\restriction m\notin (A_f)_\alpha$.
\end{enumerate}
For each $n<lg (\bar{v}_\eta)$ there is $\alpha_n\in C_2\cap\delta$ such that one of the following holds
\begin{enumerate}
\item[I.] $v_\eta^n\in (A_g)_{\alpha_n}$.
\item[II.] There is $m_n<lg(v_\eta^n)$ such that for $w^0=v_\eta^n\restriction m_n$ and $w^1=v_\eta^n\restriction (m_n +1)$ the following hold
\begin{itemize}
\item $w^0\in (A_g)_{\alpha_n}$ and $w^1\notin (A_g)_{\delta}$.
\item  $\forall\sigma\in B^g(w^0,\delta)\ [\sigma>w^1 \Rightarrow \exists\sigma '\in B^g(w^0,\alpha_n)\ (\sigma\ge\sigma'\ge w^1)]$.
\end{itemize}
\end{enumerate}

By the construction of $A_g$ and since $g(\delta)=0$, there is no $\nu\notin (A_g)_\delta$ with $A_g\models P_\gamma(\nu)$ such that for all $n<\gamma$, $\nu\restriction n\in (A_g)_\delta$.

Let $\alpha=max\{\alpha_n\mid n<lg(\bar{v}_\eta)\}$, clearly $\alpha\in C_2\cap\delta$. Since $\delta\in C\cap S$, there is $\chi\in C_2$  such that $\alpha<\chi<\delta$.

Notice that $cf(\chi)\ge\gamma$, therefore for all $n<\gamma$ limit, either $\eta\restriction n\in (A_f)_\chi$ or there is $n'<n$ such that $\eta\restriction n'\in (A_f)_\chi$ and $\eta\restriction n'+1\in (A_f)_\chi$. Let $n<\gamma$ be maximal such that $\eta\restriction n\in (A_f)_\chi$. Let $\zeta_1=\eta\restriction (n+1)$, so $\zeta_1\notin (A_f)_\chi$. Since $\chi\in C_2$, there is $\zeta_2\in B^f(\eta\restriction n\ ,\chi)$, $\bar{\mu}_{\zeta_1}=\bar{\mu}_{\zeta_2}$, and $\bar{v}_{\zeta_1}$ and $\bar{v}_{\zeta_2}$ have the same atomic type over $(A_g)_\alpha$. Notice that $\zeta_1,\zeta_2\in (A_f)_\delta$, so $\bar{v}_{\zeta_1}, \bar{v}_{\zeta_2}\subseteq (A_g)_\delta$.

\begin{claim}
$tp_{at}(\bar{v}_{\zeta_1}^\frown \bar{v}_{\eta},\emptyset,A_g)=tp_{at}(\bar{v}_{\zeta_2}^\frown \bar{v}_{\eta},\emptyset,A_g)$
\end{claim}
\begin{proof}
By Fact \ref{closed_lema} it is enough to show that for each $n<lg(\bar{v}_{\eta_1})$, $$tp_{bs}(d(v^n_{\zeta_1}),d(\bar{v}_\eta),(A_g,\prec,<))=tp_{bs}(d(v^n_{\zeta_2}),d(\bar{v}_\eta),(A_g,\prec,<)).$$

Let $v^n_{\zeta_1}\restriction r_1\in d(v^n_{\zeta_1})$ and $v^k_{\eta}\restriction r_2\in d(\bar{v}_{\eta})$.

{\bf Case $v^n_{\zeta_1}\restriction r_1=v^k_{\eta}\restriction r_2$.} Notice that $r_1=r_2$. Since $v^n_{\zeta_1}\in (A_g)_\delta$, $v^k_{\eta}\restriction r_1\in (A_g)_{\delta}$ and $v^k_{\eta}\restriction r_1\in (A_g)_{\alpha_k}$. Therefore $v^n_{\zeta_1}\restriction r_1\in (A_g)_{\alpha}$. Since $v^n_{\zeta_1}$ and $v^n_{\zeta_2}$ have the same type over $(A_g)_\alpha$, $v^n_{\zeta_1}\restriction r_1\prec v^n_{\zeta_2}$. Thus $$v^n_{\zeta_1}\restriction r_1=v^n_{\zeta_2}\restriction r_1=v^k_{\eta}\restriction r_2.$$

{\bf Case $v^n_{\zeta_1}\restriction r_1\prec v^k_{\eta}\restriction r_2$.} As in the previous case, we can conclude that $v^n_{\zeta_1}\restriction r_1\in (A_g)_{\alpha}$. Since $v^n_{\zeta_1}$ and $v^n_{\zeta_2}$ have the same type over $(A_g)_\alpha$, $v^n_{\zeta_2}\restriction r_1\prec v^k_{\eta}\restriction r_2$.

{\bf Case $v^k_{\eta}\restriction r_2 \prec v^n_{\zeta_1}\restriction r_1$.} As in the previous case, we can conclude that $v^k_{\eta}\restriction r_2\in (A_g)_{\alpha}$. Since $v^n_{\zeta_1}$ and $v^n_{\zeta_2}$ have the same type over $(A_g)_\alpha$, $v^k_{\eta}\restriction r_2 \prec v^n_{\zeta_2}\restriction r_1$.

{\bf Case $v^n_{\zeta_1}\restriction r_1< v^k_{\eta}\restriction r_2$.} Clearly $r_1=r_2$. If $v^k_{\eta}\restriction r_2\in (A_g)_\alpha$, then $v^n_{\zeta_2}\restriction r_1< v^k_{\eta}\restriction r_2$ (since $v^n_{\zeta_1}$ and $v^n_{\zeta_2}$ have the same type over $(A_g)_\alpha$). 

Let us take care of the case $v^k_{\eta}\restriction r_2\notin (A_g)_\alpha$. So $r_2>m_k$ and $v^k_{\eta}\restriction r_2\notin (A_g)_\delta$. Since $v^k_{\eta}\restriction r_2-1\prec v^n_{\zeta_1}\restriction r_1$, and $v^n_{\zeta_1}\restriction r_1\in (A_g)_\delta$, $r_2=m_k+1$. By II above, $v^n_{\zeta_1}\restriction r_1\in (A_g)_\delta$ implies that there is $\sigma'\in (A_g)_\alpha$ such that $v^n_{\zeta_1}\restriction r_1\leq \sigma'< v^k_{\eta}\restriction r_2$. Since $v^n_{\zeta_1}$ and $v^n_{\zeta_2}$ have the same type over $(A_g)_\alpha$, $v^n_{\zeta_2}\restriction r_1\leq \sigma'< v^k_{\eta}\restriction r_2$

{\bf Case $v^n_{\zeta_1}\restriction r_1> v^k_{\eta}\restriction r_2$.} Following the proof of the previous case, we only have to take care of the case $v^k_{\eta}\restriction r_2\notin (A_g)_\delta$ and $r_2=m_k+1$. Since $v^n_{\zeta_2}\restriction r_1\in (A_g)_\delta$, $v^n_{\zeta_1}\restriction r_1\neq v^k_{\eta}\restriction r_2$. Therefore either $v^n_{\zeta_2}\restriction r_1< v^k_{\eta}\restriction r_2$ or $ v^k_{\eta}\restriction r_2<v^n_{\zeta_2}\restriction r_1$. Let us suppose, towards contradiction, $ v^k_{\eta}\restriction r_2>v^n_{\zeta_2}\restriction r_1$. By II above, $v^n_{\zeta_2}\restriction r_1\in (A_g)_\delta$ implies that there is $\sigma'\in (A_g)_\alpha$ such that $v^n_{\zeta_2}\restriction r_1\leq \sigma'< v^k_{\eta}\restriction r_2<v^n_{\zeta_1}\restriction r_1$. A contradiction, since $v^n_{\zeta_1}$ and $v^n_{\zeta_2}$ have the same type over $(A_g)_\alpha$.

\end{proof}
From the previous claim and the way $\mathcal{M}^g_1$ was constructed (see Theorem \ref{construction_otop} (3), Theorem \ref{construction_unst} (3), and Theorem \ref{construction_dop} (3)), we know that $$tp_L(\bar{b}_{\bar{v}_{\zeta_1}^\frown \bar{v}_{\eta}},\emptyset,\mathcal{M})=tp_L(\bar{b}_{\bar{v}_{\zeta_2}^\frown \bar{v}_{\eta}},\emptyset,\mathcal{M}).$$ Since $\bar{\mu}_{\zeta_1}=\bar{\mu}_{\zeta_2}$, $$\mathcal{M}^g_1\models \varphi(\bar{\mu}_{\eta}(\bar{b}_{\bar{v}_\eta}), \bar{\mu}_{\zeta_1}(\bar{b}_{\bar{v}_{\zeta_1}})) \Leftrightarrow \varphi(\bar{\mu}_{\eta}(\bar{b}_{\bar{v}_\eta}), \bar{\mu}_{\zeta_2}(\bar{b}_{\bar{v}_{\zeta_2}}))$$ so $$\mathcal{M}^f_1\models \varphi(\bar{a}_{\eta}, \bar{a}_{\zeta_1}) \Leftrightarrow \varphi(\bar{a}_{\eta}, \bar{a}_{\zeta_2}).$$
On the other hand, since $\zeta_1\prec \eta$ and $\zeta_2\not\prec \eta$, $$\mathcal{M}^f\models \varphi(\bar{a}_{\eta}, \bar{a}_{\zeta_1}) \wedge \neg\varphi(\bar{a}_{\eta}, \bar{a}_{\zeta_2}),$$ a contradiction, since $\mathcal{M}^f=\mathcal{M}_1^f\restriction \tau$ and $\varphi\in L(\tau)$.

\end{proof}

\section{Generalized Borel reducibility}\label{Section_main_gap}

Now that we have constructed the models in detail, taking care of every possible constant and variable, we have all we need to prove Theorem A and give an answer to Friedman-Hyttinen-Kulikov's conjecture. We can also prove Theorem B, which gives an answer to ``How big can the gap be?"

\subsection{A Borel reducibility Main Gap in ZFC}

The first step to prove Theorem A, is to construct the continuous reduction $$=^2_\gamma\ \reduc\ \cong_T$$ for each kind of non-classifiable theory.

\begin{thm}\label{maincor}
Let $\kappa$ be inaccessible or $\kappa=\lambda^+=2^\lambda$.  Suppose $T$ is a countable complete non-classifiable theory over a countable vocabulary, $\tau$. 
\begin{enumerate}
	\item If $T$ is stable unsuperstable, then let $\theta=\gamma=\omega$.
    \item If $T$ is unstable, or superstable with the OTOP, then let $\theta=\omega$ and $\omega\leq\gamma<\kappa$.
    \item If $T$ is superstable with the DOP, then let $\theta=2^\omega=\mathfrak{c}$ and $\omega_1\leq\gamma<\kappa$.
    \end{enumerate}
If $\theta$, $\gamma$, and $\kappa$ satisfy that $\forall\alpha<\kappa$, $\alpha^\gamma<\kappa$, and $(2^{\theta})^+\leq\kappa$,
then $$=^2_\gamma\ \reduc\ \cong_T.$$ 
\end{thm}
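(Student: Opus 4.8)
The plan is to exhibit $=^2_\gamma\ \reduc\ \cong_T$ by a single explicit map $\mathcal G\colon 2^\kappa\to\kappa^\kappa$ sending $f$ to a code (in the sense of Definition \ref{struct}) for the model $\mathcal M^f$ of Section \ref{Section_modelsf}: $\mathcal M^f=EM(A_f,\Phi)$ when $T$ is stable unsuperstable, and the Ehrenfeucht--Mostowski model over the index linear order $K^O(f)$, $K^U(f)$, or $K^D(f)$ in the remaining three cases. First I would check that the hypotheses actually put us inside the scope of Theorem \ref{models_iso}: from $\forall\alpha<\kappa\ (\alpha^\gamma<\kappa)$ together with $\varepsilon\le\gamma$ one gets $\lambda^{<\varepsilon}\le\lambda^\gamma<\kappa=\lambda^+$, hence $\lambda^{<\varepsilon}=\lambda$ in the successor case; and $(2^\theta)^+\le\kappa=\lambda^+$ gives $2^\theta\le\lambda$. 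With the case-by-case values $\varepsilon,\theta,\gamma$ and $\beta=2$, all the standing assumptions of Section \ref{Section_modelsf} hold. Granting this, Theorem \ref{models_iso} yields $f\ =^2_\gamma\ g\iff\mathcal M^f\cong\mathcal M^g$, and since each $\mathcal M^f$ is a model of $T$ of size $\kappa$, once $\mathcal G(f)$ genuinely codes $\mathcal M^f$ we obtain $(f,g)\in\ =^2_\gamma\ \iff(\mathcal G(f),\mathcal G(g))\in\ \cong_T$. So the entire point is to arrange that $\mathcal G$ be continuous. (The stable unsuperstable instance of this is already implicit in \cite{Mor21}; the new content is the uniform treatment that also covers OTOP, unstable with $\gamma>\omega$, and DOP.)

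For continuity I would track the dependence on $f$ through each stage of the construction. The basic fact, recorded in Remark \ref{equal_trees}, is that $f\restriction\alpha=g\restriction\alpha$ implies $J_f^\alpha=J_g^\alpha$ and hence $A^f_\alpha=A^g_\alpha$; moreover, for a leaf $\eta\in A^f_\alpha$ the value $d_f(\eta)$ (which decides whether $\eta$ lies in $A_f$) is $c_f(\overline\eta)$ or $f(s_1(\eta))$ with all relevant ordinals below $\alpha$, so $(A_f)_\alpha=(A_g)_\alpha$ as ordered trees. Consequently the part of the index order ($K^O(f)$, $K^U(f)$, $K^D(f)$, resp.\ $A_f$ itself) built from $(A_f)_\alpha$ depends only on $f\restriction\alpha$. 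Finally the Ehrenfeucht--Mostowski step is uniform in $f$: the skeleton $(b_\eta)_{\eta\in A_f}$ and the Skolem terms are fixed once and for all, and by the indiscernibility statements (Lemma \ref{construction_otop}(3), Lemma \ref{construction_unst}(3), Lemma \ref{construction_dop}(3), resp.\ the properness of $\Phi$ for $K^\omega_{tr}$) the atomic $\tau$-diagram of the substructure of $\mathcal M^f$ generated by $\{b_\eta\mid\eta\in(A_f)_\alpha\}$ is the image, under a fixed template, of the quantifier-free type function of the ordered tree $(A_f)_\alpha$; hence it, too, depends only on $f\restriction\alpha$.

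To close the argument I would fix, uniformly in $f$, an enumeration $\langle m^f_\xi\mid\xi<\kappa\rangle$ of $\dom(\mathcal M^f)$ coming from a fixed, level-respecting enumeration of (Skolem term, finite increasing tuple) pairs, arranged so that for a club of $\delta<\kappa$ the set $\{m^f_\xi\mid\xi<\delta\}$ is exactly the substructure of $\mathcal M^f$ generated by $\{b_\eta\mid\eta\in(A_f)_\delta\}$, and therefore depends only on $f\restriction\delta$. Composing with the fixed pairing $\pi_\kappa$ of Definition \ref{struct} and using that $\pi_\kappa[\delta^{<\omega}]=\delta$ for club-many $\delta$, one gets that $\mathcal G(f)\restriction\delta$ is determined by $f\restriction\delta$ for club-many $\delta$; since this dependence is monotone in $f$, $\mathcal G$ is continuous. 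Combined with the reduction property from Theorem \ref{models_iso}, this gives $=^2_\gamma\ \reduc\ \cong_T$.

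The step I expect to be delicate is the last one: the index set $A_f$, hence $\dom(\mathcal M^f)$, genuinely varies with $f$, so one must make sure that the part of the ``potential'' index set that is not realized in $A_f$ does not perturb the code discontinuously. This is exactly controlled by the observation isolated in the second paragraph — membership of a node in $A_f$ at level $\alpha$ is itself decided by $f\restriction\alpha$ — so the enumeration $\langle m^f_\xi\rangle$ can be built level by level with no look-ahead. Everything else (existence of the models, $\mathcal M^f\models T$, $|\mathcal M^f|=\kappa$, and the non-trivial direction $\mathcal M^f\not\cong\mathcal M^g$ when $f\ \neq^2_\gamma\ g$) is already supplied by Theorem \ref{models_iso} and the lemmas feeding into it.
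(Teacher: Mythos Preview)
Your proposal is correct and follows essentially the same route as the paper: use Remark \ref{equal_trees} to get $(A_f)_\alpha=(A_g)_\alpha$ from $f\restriction\alpha=g\restriction\alpha$, transfer this through the Skolem hull to the models, fix a level-respecting enumeration of $\dom(\mathcal M^f)$ that is uniform in $f$, and read off continuity of the resulting coding map; the reduction property itself is Theorem \ref{models_iso}. Your explicit verification that the cardinal hypotheses place us inside the standing assumptions of Section \ref{Section_modelsf} (deriving $\lambda^{<\varepsilon}=\lambda$ and $2^\theta\le\lambda$ from $\alpha^\gamma<\kappa$ and $(2^\theta)^+\le\kappa$) is a useful addition that the paper leaves implicit.
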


\begin{proof}
For every $f\in 2^\kappa$, we will construct a model $\mathcal{M}_f$ isomorphic to $\mathcal{M}^f$. We will also construct a function $\mathcal{G}:\{\mathcal{M}_f\mid f\in 2^\kappa\}\rightarrow 2^\kappa$, such that $\mathcal{A}_{\mathcal{G}(\mathcal{M}_f)}\cong \mathcal{M}_f$ and $f\mapsto \mathcal{G}(\mathcal{M}_f)$ is continuous. 
By Remark \ref{equal_trees}, Definition \ref{Def_final_trees_and_repr}, and the definition of $(A_f)_\alpha$, 
$$f\restriction \alpha=g\restriction \alpha \Leftrightarrow (A_f)_\alpha=(A_g)_\alpha.$$ 
For all $\alpha<\kappa$, $A\in K^\gamma_{tr}$, and a $\kappa$-representation $\mathbb{A}=\langle A_\iota\mid \iota<\kappa  \rangle$ of $A$, let us denote by $\tilde{A}_\alpha$ the set $\{a_s\mid s\in A_\alpha\}$, recall the construction of $\mathcal{M}^f_1$. 
Since for all $\alpha<\kappa$, $$(A_f)_\alpha=(A_g)_\alpha \Leftrightarrow SH(\tilde{(A_f)}_\alpha) = SH(\tilde{(A_g)}_\alpha),$$
for all $f$ we can construct a tuple $(\mathcal{M}_f, F_f)$, where $\mathcal{M}_f$ is a model isomorphic to $\mathcal{M}^f$ and $F_f:\mathcal{M}_f\rightarrow \mathcal{M}^f$ is an isomorphism, that satisfies the following: denote by $\mathcal{M}_{f,\alpha}$ the preimage $F^{-1}_f[SH(\tilde{(A_f)}_\alpha)\restriction \tau]$ and $$f\restriction \alpha=g\restriction \alpha \Leftrightarrow \mathcal{M}_{f,\alpha}=\mathcal{M}_{g,\alpha}.$$
For every $f\in 2^\kappa$ there is a bijection $E_f: dom(\mathcal{M}_f)\rightarrow \kappa$, such that for every $f,g\in 2^\kappa$ and $\alpha<\kappa$, if $f\restriction \alpha=g\restriction \alpha $, then $E_f\restriction dom(\mathcal{M}_{f,\alpha})=E_g\restriction dom(\mathcal{M}_{g,\alpha})$ (see \cite{Mor}). 
Let us denote by $\pi$ the bijection $\pi_\kappa$ from Definition \ref{struct}, define the function $\mathcal{G}$ by: 
$$\mathcal{G}(\mathcal{M}_f)(\alpha)=\begin{cases} 1 &\mbox{if } \alpha=\pi(m,a_1,a_2,\ldots,a_n) \text{ and }\\ & \mathcal{M}_{f}\models Q_m(E_f^{-1}(a_1),E_f^{-1}(a_2),\ldots,E_f^{-1}(a_n))\\
0 & \mbox{otherwise. } \end{cases}$$
To show that $G:2^\kappa\rightarrow 2^\kappa$, $G(f)=\mathcal{G}(\mathcal{M}_f)$ is continuous, let $[\zeta\restriction \alpha]$ be a basic open set and $\xi\in G^{-1}[[\zeta\restriction \alpha]]$. There is $\beta<\kappa$ such that for all $\epsilon<\alpha$, if $\epsilon=\pi(m,a_1,\ldots, a_n)$, then $E^{-1}_\xi (a_i)\in dom(\mathcal{M}_{\xi,\beta})$ holds for all $i\leq n$. Since for all $\eta\in [\xi\restriction\beta]$ it holds that $\mathcal{M}_{\eta,\beta}=\mathcal{M}_{\xi,\beta}$, for any $\epsilon<\alpha$ that satisfies $\epsilon=\pi(m,a_1,\ldots, a_n)$ $$\mathcal{M}_\eta\models Q_m(E_\eta^{-1}(a_1),E_\eta^{-1}(a_2),\ldots,E_\eta^{-1}(a_n))$$ if and only if $$\mathcal{M}_\xi\models Q_m(E_\xi^{-1}(a_1),E_\xi^{-1}(a_2),\ldots,E_\xi^{-1}(a_n)).$$
We conclude that $G$ is continuous. The result follows from Theorem \ref{main_teo_4}.
\end{proof}

For any stationary set $X\subseteq \kappa$, let us denote by $\diamondsuit_X$ the following diamond principle.

\textit{There is a sequence $\{f_\alpha\in \alpha^\alpha\mid\alpha\in X\}$ such that for all $F\in \kappa^\kappa$, the set $\{\alpha\in X\mid f_\alpha=F\restriction\alpha\}$ is stationary.}

For every regular cardinal $\mu<\kappa$, let us denote by $\diamondsuit_\mu$ the diamond principle $\diamondsuit_X$ when $X=S^\kappa_\mu$.

\begin{fact}[Hyttinen-Kulikov-Moreno, \cite{HKM} Lemma 2]\label{HKM1}
Assume $T$ is a countable complete classifiable theory over a countable vocabulary. If $\diamondsuit_\mu$ holds, then $\cong_T \ \reduc\ =^2_\mu$.
\end{fact}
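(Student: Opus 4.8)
The plan is to construct a continuous map $f\colon 2^\kappa\to 2^\kappa$ such that $\mathcal A_\eta\cong_T\mathcal A_\xi$ if and only if $\{\alpha<\kappa\mid f(\eta)(\alpha)\neq f(\xi)(\alpha)\}\cap S^\kappa_\mu$ is non-stationary. Two ingredients will drive the construction: Shelah's structure theory for classifiable theories, which replaces $\cong_T$ by the level-by-level matching, along a canonical filtration, of a well-founded ``decomposition invariant'' of the model; and the principle $\diamondsuit_\mu$, which turns such a level-by-level matching condition into a $=^2_\mu$ condition while keeping the map continuous.

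First I would fix coding conventions, along the lines of the proof of Theorem~\ref{maincor}, so that from $\eta\restriction\alpha$ one recovers a canonical approximation $\mathcal A_\eta^{\alpha}$ of $\mathcal A_\eta$ --- a substructure of size $<\kappa$ carried by an initial segment of $\kappa$ --- with $\langle\mathcal A_\eta^{\alpha}\mid\alpha<\kappa\rangle$ a filtration of $\mathcal A_\eta$ and $\eta\restriction\alpha=\xi\restriction\alpha\Rightarrow\mathcal A_\eta^{\alpha}=\mathcal A_\xi^{\alpha}$. Using Shelah's analysis of classifiable theories through $F^a_\omega$-primary models and non-forking decomposition trees --- in the form exploited by Hyttinen--Tuuri and underlying Fact~\ref{basics_FHK} --- one attaches to each model $\mathcal A$ of $T$ of size $\kappa$ a decomposition invariant organized as a dimension-labelled tree whose height is bounded by the depth of $T$ (an ordinal $<\omega_1$ in the shallow case, at most $\omega$ in the deep case). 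Isomorphism of models will correspond to isomorphism of these invariant trees, and this in turn to a ``club of agreeing levels'' statement: there is a club $C\subseteq\kappa$ such that for $\alpha\in C$ the truncations at level $\alpha$ of the invariants of $\mathcal A_\eta$ and $\mathcal A_\xi$ are $\approx_\alpha$-equivalent, where $\approx_\alpha$ is a $\kappa$-Borel relation decidable from the two codes below $\alpha$ together with a guessed witness of size $<\kappa$. Classifiability is precisely what makes agreement on a club --- rather than merely on a stationary set --- enough to glue a genuine isomorphism, the gluing taking place at limits of cofinality $\mu$.

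Then I would use $\diamondsuit_\mu$ to record this level data. Fix a $\diamondsuit_\mu$-sequence $\langle g_\alpha\mid\alpha\in S^\kappa_\mu\rangle$ with each $g_\alpha\colon\alpha\to\alpha$. Set $f(\eta)(\alpha)=0$ for $\alpha\notin S^\kappa_\mu$, and for $\alpha\in S^\kappa_\mu$ declare $f(\eta)(\alpha)=1$ exactly when $g_\alpha$, read under a fixed pairing as a pair consisting of a putative level-$\alpha$ invariant $v$ and a witness $w$, satisfies ``$w$ witnesses that $v$ is $\approx_\alpha$-equivalent to the level-$\alpha$ invariant of $\mathcal A_\eta$'' --- a condition referring only to $g_\alpha$ and to $\mathcal A_\eta^{\alpha}$, hence only to $\eta\restriction\alpha$, so $f$ is continuous. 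For the verification: if $\mathcal A_\eta\cong_T\mathcal A_\xi$ then on the club $C$ the level-$\alpha$ invariants of the two models are $\approx_\alpha$-equivalent, so for $\alpha\in C\cap S^\kappa_\mu$ the defining condition holds for $\eta$ iff it holds for $\xi$, whence the difference set is disjoint from $C\cap S^\kappa_\mu$ and non-stationary. Conversely, if $\mathcal A_\eta\not\cong_T\mathcal A_\xi$, the structure theory will give (after restriction to $S^\kappa_\mu$) that the set of levels where the two truncated invariants are \emph{not} $\approx_\alpha$-equivalent contains a club relative to $S^\kappa_\mu$; applying $\diamondsuit_\mu$ to the function coding ``the level-$\alpha$ invariant of $\mathcal A_\eta$'' produces stationarily many $\alpha\in S^\kappa_\mu$ at which $g_\alpha$ guesses it, and at the ones lying in that club we get $f(\eta)(\alpha)=1$ and $f(\xi)(\alpha)=0$, so the difference set is stationary. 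The fact that the invariant is a tree rather than a single datum is handled by iterating this coding along the finite or $\omega$-length branches of the decomposition trees.

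The principal obstacle will be isolating the relation $\approx_\alpha$ from Shelah's structure theory: it must be coarse enough to be $\kappa$-Borel and ``level-local'' (so the map is genuinely continuous), yet fine enough that agreement on a club captures isomorphism exactly and, for the converse, that disagreement of non-isomorphic models is forced on a set meeting every $\diamondsuit_\mu$-guessing set. This is where the well-foundedness of the non-forking decomposition and the bound on its height by the depth of $T$ are essential, and where the fixed regular cardinal $\mu$ must be reconciled with the heights of the decomposition trees; the bookkeeping threading the diamond guesses through the successive levels of these trees is the technical heart of the argument.
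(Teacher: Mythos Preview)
Your approach via non-forking decomposition trees takes a much harder route than the cited proof, and you explicitly leave the crucial step --- isolating the level-local relation $\approx_\alpha$ --- unfulfilled. The proof in \cite{HKM} (building on \cite{HM}) rests on a single black-box consequence of Shelah's structure theory that this paper itself quotes later: for classifiable $T$, two models $\mathcal{A},\mathcal{B}\models T$ of size $\kappa$ are isomorphic if and only if Player~II has a winning strategy in the Ehrenfeucht--Fra\"iss\'e game $EF^\kappa_\omega(\mathcal{A},\mathcal{B})$. This single transitive, level-local relation replaces your entire decomposition-invariant machinery.

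The reduction is then direct. With $\langle f_\alpha\mid\alpha\in S^\kappa_\mu\rangle$ witnessing $\diamondsuit_\mu$, set $f(\eta)(\alpha)=1$ iff $\alpha\in S^\kappa_\mu$ and Player~II wins $EF^\alpha_\omega(\mathcal{A}_{f_\alpha},\mathcal{A}_{\eta\restriction\alpha})$. Continuity is immediate since the condition depends only on $\eta\restriction\alpha$. If $\mathcal{A}_\eta\cong\mathcal{A}_\xi$, Player~II's winning strategy in $EF^\kappa_\omega(\mathcal{A}_\eta,\mathcal{A}_\xi)$ restricts to a club of $\alpha$ on which Player~II wins $EF^\alpha_\omega(\mathcal{A}_{\eta\restriction\alpha},\mathcal{A}_{\xi\restriction\alpha})$; transitivity of EF-equivalence then gives $f(\eta)(\alpha)=f(\xi)(\alpha)$ on that club. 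If $\mathcal{A}_\eta\not\cong\mathcal{A}_\xi$, Player~I wins, and a closure argument yields a club $C$ on which Player~I wins $EF^\alpha_\omega(\mathcal{A}_{\eta\restriction\alpha},\mathcal{A}_{\xi\restriction\alpha})$; diamond then supplies stationarily many $\alpha\in S^\kappa_\mu\cap C$ with $f_\alpha=\eta\restriction\alpha$, at which $f(\eta)(\alpha)=1$ but $f(\xi)(\alpha)=0$.

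Your self-identified ``principal obstacle'' is real: the decomposition tree of $\mathcal{A}_\eta$ is a global object built from the whole model, and there is no off-the-shelf result guaranteeing that its restriction to level $\alpha$ is computable from $\mathcal{A}_\eta^\alpha$ alone, nor that non-isomorphism forces club-many levels of disagreement in the sense you need. The EF-game characterization is precisely the device that packages all of Shelah's structure theory into a usable local form, and it is what \cite{HKM} actually invokes; going back to the raw decomposition trees is reinventing that package by hand.
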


\begin{fact}[Friedman-Hyttinen-Kulikov, \cite{FHK13} Theorem 77]\label{notred}
If a first order countable complete theory over a countable vocabulary $T$ is classifiable, then for all $\mu<\kappa$ regular, $=^2_\mu\ \not\redub\ \cong_T$.
\end{fact}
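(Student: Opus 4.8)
The final statement to prove is Fact~\ref{notred} (Friedman--Hyttinen--Kulikov), namely that if $T$ is a classifiable first-order countable complete theory over a countable vocabulary, then $=^2_\mu\ \not\redub\ \cong_T$ for every regular $\mu<\kappa$. Since this is quoted as an external fact, the plan is to explain how the proof goes by combining the structural facts already assembled in the excerpt, not to reprove everything from scratch.

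\textbf{Approach.} The heart of the matter is a dichotomy on classifiable theories: by Fact~\ref{basics_FHK}(1) and (2), if $T$ is classifiable then $\cong_T$ is either $\kappa$-Borel (the shallow case, using $\kappa>2^\omega$ which follows from $\kappa^{<\kappa}=\kappa$ together with the running hypotheses) or $\Delta^1_1(\kappa)$ but not $\kappa$-Borel (the non-shallow case); in either case $\cong_T$ is $\Delta^1_1(\kappa)$. On the other hand, by Fact~\ref{reduction_classify_descrpitive}, Borel reducibility transfers the property of being $\Delta^1_1(\kappa)$ \emph{downward}: if $=^2_\mu\ \redub\ \cong_T$ and $\cong_T$ is $\Delta^1_1(\kappa)$ and $=^2_\mu\ \redur\ \cong_T$, then $=^2_\mu$ would be $\Delta^1_1(\kappa)$. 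Hence it suffices to show two things: first, that $=^2_\mu$ is \emph{not} $\Delta^1_1(\kappa)$ (indeed it is $\Sigma^1_1(\kappa)$-complete, or at least not $\Delta^1_1(\kappa)$, since $S^\kappa_\mu$ is a stationary set whose non-stationary ideal restricted to it is genuinely $\Sigma^1_1$-hard); and second, that whenever $=^2_\mu\ \redub\ \cong_T$ holds one automatically has the cardinality reduction $=^2_\mu\ \redur\ \cong_T$, which is immediate because a Borel reduction is in particular a reduction.

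\textbf{Key steps, in order.} First I would recall that $=^2_\mu$, the equivalence of $\mu$-cofinal modulo the club filter restricted to $S^\kappa_\mu$, is not $\Delta^1_1(\kappa)$: the non-stationarity of a subset of $S^\kappa_\mu$ is $\Sigma^1_1(\kappa)$ but its complement, stationarity, is $\Pi^1_1(\kappa)$-complete and not $\Sigma^1_1(\kappa)$ (this is the classical fact underlying the whole theory; it uses $\kappa^{<\kappa}=\kappa$). Consequently $=^2_\mu$ has a defining relation that is $\Sigma^1_1(\kappa)$ but whose complement is not, so $=^2_\mu\notin\Delta^1_1(\kappa)$. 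Second, I would invoke Fact~\ref{basics_FHK}(1)--(2) to conclude $\cong_T\in\Delta^1_1(\kappa)$ for classifiable $T$ (shallow: $\kappa$-Borel hence $\Delta^1_1(\kappa)$; non-shallow: explicitly $\Delta^1_1(\kappa)$). Third, suppose toward a contradiction that $=^2_\mu\ \redub\ \cong_T$; since a Borel reduction is a reduction, $=^2_\mu\ \redur\ \cong_T$, and then the first bullet of Fact~\ref{reduction_classify_descrpitive} (stated there for $\kappa$-Borel, but the second bullet gives exactly the $\Delta^1_1(\kappa)$ version) forces $=^2_\mu\in\Delta^1_1(\kappa)$, contradicting step one. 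This closes the argument for all regular $\mu<\kappa$.

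\textbf{Main obstacle.} The only genuinely non-formal ingredient is step one, the assertion that $=^2_\mu$ is not $\Delta^1_1(\kappa)$, equivalently that stationarity of subsets of $S^\kappa_\mu$ is not a $\Sigma^1_1(\kappa)$ property; everything else is bookkeeping with the facts already in the excerpt. In a self-contained write-up one would either cite the standard generalized-descriptive-set-theory result (Mekler--V\"a\"an\"anen / Friedman--Hyttinen--Kulikov) that the club filter on $S^\kappa_\mu$ is $\Pi^1_1(\kappa)$-complete, or give the short direct argument: a $\Sigma^1_1(\kappa)$ set coding stationarity could be used, via a pressing-down / diagonalization over a $\kappa$-tree of attempts, to decide a $\Pi^1_1(\kappa)$-complete problem, which is impossible by a Gandy--Harrington-style pumping lemma in the $\kappa$ setting. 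Since Fact~\ref{notred} is cited rather than proved here, it is enough to point to \cite{FHK13} Theorem~77 for this step and note that the transfer in Fact~\ref{reduction_classify_descrpitive} together with the $\Delta^1_1(\kappa)$-ness of $\cong_T$ from Fact~\ref{basics_FHK} completes the deduction.
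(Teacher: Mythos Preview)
The paper does not give its own proof of Fact~\ref{notred}; it is simply cited as an external result from \cite{FHK13}. Your sketch is the standard argument and is essentially how the original proof goes: for classifiable $T$ the relation $\cong_T$ is $\Delta^1_1(\kappa)$, the relation $=^2_\mu$ is not $\Delta^1_1(\kappa)$ (stationarity on $S^\kappa_\mu$ is genuinely $\Pi^1_1(\kappa)$), and Borel reductions pull back $\Delta^1_1(\kappa)$ via Fact~\ref{reduction_classify_descrpitive}.

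One small correction: your parenthetical that $\kappa>2^\omega$ ``follows from $\kappa^{<\kappa}=\kappa$ together with the running hypotheses'' is both wrong as stated (e.g.\ $\kappa=\omega_1$ under \axiomfont{CH}) and unnecessary. The hypothesis $\kappa>2^\omega$ in Fact~\ref{basics_FHK}(1) is only needed for the stronger $\kappa$-Borel conclusion in the shallow case; the $\Delta^1_1(\kappa)$ conclusion you actually use holds for \emph{all} classifiable theories without that assumption, via Shelah's theorem that $L_{\infty\kappa}$-equivalence coincides with isomorphism for such $T$ (this is the content of \cite{FHK13}, Theorem~70, and is what underlies Fact~\ref{basics_FHK}(2) as well). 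So step one of your argument goes through uniformly, and the rest is exactly as you wrote.
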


\begin{cor}\label{reduction_coro}
Let $\kappa=\lambda^+=2^\lambda$. Suppose $T_1$ is a countable complete classifiable theory, and $T_2$ be a countable complete non-classifiable theory over a countable vocabulary. 
\begin{enumerate}
	\item If $T_2$ is stable unsuperstable, then let $\theta=\gamma=\omega$.
    \item If $T_2$ is unstable, or superstable with the OTOP, then let $\theta=\omega$ and $\omega\leq\gamma<\kappa$.
    \item If $T_2$ is superstable with the DOP, then let $\theta=\mathfrak c$ and $\omega_1\leq\gamma<\kappa$.
    \end{enumerate}
If $2^{\theta}\leq\lambda=\lambda^\gamma$,
then $$\cong_{T_1}\ \reduc\ =^2_\gamma\ \reduc\ \cong_{T_2} \textit{ and } =^2_\gamma\ \not\redub\ \cong_{T_1}.$$
\end{cor}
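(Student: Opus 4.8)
The plan is to assemble Corollary \ref{reduction_coro} from three ingredients already available: Theorem \ref{maincor}, which provides the continuous reduction $=^2_\gamma\ \reduc\ \cong_{T_2}$; Fact \ref{HKM1}, which provides $\cong_{T_1}\ \reduc\ =^2_\gamma$ from the diamond principle $\diamondsuit_\gamma$; and Fact \ref{notred}, which provides $=^2_\gamma\ \not\redub\ \cong_{T_1}$ outright (for any regular $\mu<\kappa$, with no extra hypothesis). So the only real work is to check that the cardinal arithmetic assumption ``$2^{\theta}\leq\lambda=\lambda^\gamma$'' of the corollary, together with ``$\kappa=\lambda^+=2^\lambda$'', implies (a) the hypotheses ``$\forall\alpha<\kappa,\ \alpha^\gamma<\kappa$'' and ``$(2^\theta)^+\leq\kappa$'' needed to invoke Theorem \ref{maincor}, and (b) that $\diamondsuit_\gamma$ holds so that Fact \ref{HKM1} applies. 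Also one must observe that $\gamma$ is a regular cardinal with $\gamma<\kappa$ in each of the three cases (it is $\omega$ in case 1, and an explicitly regular cardinal $<\kappa$ in cases 2 and 3 — this matches the standing assumptions in Section \ref{Section_modelsf}).

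First I would verify the arithmetic. From $\lambda=\lambda^\gamma$ and $\kappa=\lambda^+$: for any $\alpha<\kappa=\lambda^+$ we have $|\alpha|\leq\lambda$, so $\alpha^\gamma\leq\lambda^\gamma=\lambda<\kappa$, giving $\forall\alpha<\kappa,\ \alpha^\gamma<\kappa$. Next, since $\gamma\geq\omega$ and $2^\theta\leq\lambda$, also $\theta<\kappa$ in each case ($\theta=\omega$ in cases 1 and 2, $\theta=\mathfrak{c}$ in case 3, and $\mathfrak{c}=2^\omega\le 2^\theta\le\lambda<\kappa$), and $(2^\theta)^+\leq\lambda^+=\kappa$, so the hypothesis $(2^\theta)^+\leq\kappa$ of Theorem \ref{maincor} holds. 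Hence Theorem \ref{maincor} applies and yields $=^2_\gamma\ \reduc\ \cong_{T_2}$. (One should also double-check that in case 3, Theorem \ref{maincor} requires $\omega_1\le\gamma$, which is part of the hypothesis there, and $\theta=\mathfrak c$; this is consistent with the corollary's case 3.)

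Next I would address $\diamondsuit_\gamma$. The principle $\diamondsuit_{S^\kappa_\gamma}$ holds whenever $\kappa=\lambda^+$ and $2^\lambda=\lambda^+=\kappa$ and $\gamma<\kappa$ is regular; more precisely, the standard result (Shelah) is that $2^\lambda=\lambda^+$ implies $\diamondsuit_{S^{\lambda^+}_\gamma}$ for every regular $\gamma<\lambda^+$ other than possibly $\gamma=\cf(\lambda)$ when $\lambda$ is singular — and here $\lambda$ is in fact regular since $\lambda=\lambda^\gamma\ge\lambda^\omega\ge\lambda^{\cf(\lambda)}>\lambda$ would be a contradiction if $\cf(\lambda)\le\gamma<\lambda$, so $\cf(\lambda)>\gamma$; in particular $\lambda$ regular forces no exception and we may even fall back on the classical fact that $2^\lambda=\lambda^+$ with $\lambda$ regular gives $\diamondsuit_{\lambda^+}$, hence $\diamondsuit_{S^{\lambda^+}_\gamma}=\diamondsuit_\gamma$. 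With $\diamondsuit_\gamma$ in hand, Fact \ref{HKM1} gives $\cong_{T_1}\ \reduc\ =^2_\gamma$. Composing with the reduction from Theorem \ref{maincor} (continuous reductions compose, and $\redub$-reductions compose, and a continuous reduction is a $\redub$-reduction) yields the chain $\cong_{T_1}\ \reduc\ =^2_\gamma\ \reduc\ \cong_{T_2}$. Finally, Fact \ref{notred} directly gives $=^2_\gamma\ \not\redub\ \cong_{T_1}$. This completes the proof, so I expect the write-up to be short.

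\textbf{Main obstacle.} There is no deep obstacle; the proof is a bookkeeping assembly. The one point that needs care is the exact hypothesis under which $\diamondsuit_\gamma$ is guaranteed — one must make sure the arithmetic ``$2^\theta\le\lambda=\lambda^\gamma$'' plus ``$\kappa=\lambda^+=2^\lambda$'' really does deliver $\diamondsuit_{S^\kappa_\gamma}$ and not merely $\diamondsuit^*$ or a weaker club-guessing principle; using that $\lambda$ is necessarily regular here (forced by $\lambda=\lambda^\gamma$ and $\gamma\ge\omega$) removes the only subtlety. A secondary cosmetic point is to confirm that the $\gamma$ appearing in Theorem \ref{maincor}, in Fact \ref{HKM1}, and in the equivalence relation $=^2_\gamma$ is literally the same regular cardinal in each case-by-case instantiation, which it is by the way the three cases are set up identically in the corollary and in Theorem \ref{maincor}.
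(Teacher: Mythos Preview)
Your proposal is correct and follows exactly the paper's approach: derive $\cf(\lambda)>\gamma$ from $\lambda=\lambda^\gamma$, invoke Shelah's theorem \cite{Sh1} to get $\diamondsuit_\gamma$, then combine Theorem~\ref{maincor}, Fact~\ref{HKM1}, and Fact~\ref{notred}. One small correction: your argument establishes $\cf(\lambda)>\gamma$, not that $\lambda$ is regular (e.g.\ $\lambda$ could be singular of cofinality strictly between $\gamma$ and $\lambda$), and the fallback remark ``$\diamondsuit_{\lambda^+}$ hence $\diamondsuit_{S^{\lambda^+}_\gamma}$'' is not valid in general since full diamond does not automatically restrict to an arbitrary stationary set; but this side remark is unnecessary, as $\cf(\lambda)>\gamma$ is precisely the hypothesis the paper feeds into Shelah's result.
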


\begin{proof}
Since $\lambda^\gamma<\kappa=\lambda^+$, $cf(\lambda)>\gamma$. By \cite{Sh1} we know that if $\kappa=\lambda^+=2^\lambda$ and $cf(\lambda)>\gamma$, then $\diamondsuit_\gamma$ holds. The result follows from Theorem \ref{maincor}, Fact \ref{HKM1},  and Fact \ref{notred}.
\end{proof}

By putting all these results together, we can prove Theorem A. As we have seen, the cardinals $\varepsilon$, $\theta$, and $\gamma$ are the ones that ensure the existence of the continuous reduction. Thus different cardinals ($\varepsilon$, $\theta$, and $\gamma$) generate different reductions, so Theorem A can be stated in more detail. Notice that under the assumptions of Fact \ref{previous_FHK}, the third item of Theorem \ref{Main_Gap} uses $=^2_\gamma$ for all regular cardinals $\omega<\gamma<\lambda$. This contrast with Fact \ref{previous_FHK}, in which the statement is about $=^2_\lambda$.

\begin{thm}[Borel reducibility Main Gap]\label{Main_Gap}
Suppose $\kappa=\lambda^+=2^\lambda$ and $2^{\mathfrak{c}}\leq\lambda=\lambda^{\omega_1}$. If $T_1$ is a countable complete classifiable shallow theory, $T_2$ is a countable complete classifiable theory not shallow, and $T_3$ is a countable complete non-classifiable theory, then the following hold:
\begin{enumerate}
    \item {\bf Gap: Classifiable vs Non-classifiable.} For $T=T_1,T_2$: $$\cong_{T}\ \reduc\ \cong_{T_3}\textit{ and } \cong_{T_3}\ \not\redub\ \cong_{T}.$$
    \item {\bf Simple Gap: Classifiable vs Non-classifiable.} For $T=T_1,T_2$ there is $\gamma<\kappa$ such that: $$\cong_{T}\ \reduc\ =^2_\gamma\ \reduc\ \cong_{T_3}\textit{ and } \cong_{T_3}\ \not\redub\ \cong_{T}.$$ In particular $$=^2_\gamma\ \not\redub\ \cong_{T}.$$
    \item {\bf Gap: Classifiable vs Unstable or Superstable.} If $\lambda$ is such that $\lambda=\lambda^{<\lambda}$. For all $\omega<\gamma<\lambda$ regular, $T_3$ is unstable or superstable, and $T=T_1,T_2$: $$\cong_{T}\ \reduc\ =^2_\gamma\ \reduc\ \cong_{T_3}\textit{ and } \cong_{T_3}\ \not\redub\ \cong_{T}.$$ In particular $$=^2_\gamma\ \not\redub\ \cong_{T}.$$
    \item {\bf Gap: Shallow vs Non-shallow.} If $\kappa=\aleph_\mu$ is such that $\beth_{\omega_1}(\mid\mu\mid)\leq\kappa$, then $$\cong_{T_1}\ \redub\ 0_\kappa\ \redub\ \cong_{T_2}\ \reduc\ \cong_{T_3}.$$ In particular, 
    $$\cong_{T_3}\ \not\redub\ \cong_{T_2}\ \not\redur\ 0_\kappa\ \not\redur\ \cong_{T_1}.$$
    \item {\bf General gap: Shallow vs Non-shallow.} If $T$ is a classifiable shallow theory such that $I(\kappa,T)<I(\kappa,T_1)$, and $\kappa=\aleph_\mu$ is such that $\beth_{\omega_1}(\mid\mu\mid)\leq\kappa$, then $$\cong_{T}\ \redub\ \cong_{T_1}\ \redub\ \cong_{T_2} \reduc\ \cong_{T_3}.$$ In particular $$\cong_{T_3}\ \not\redub\ \cong_{T_2}\ \not\redur\ \cong_{T_1}\ \not\reduc\ \cong_{T}.$$
\end{enumerate}

\end{thm}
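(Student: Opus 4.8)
The plan is to assemble Theorem \ref{Main_Gap} from the machinery already built, treating its six items as essentially bookkeeping around three core ingredients: the continuous reduction $=^2_\gamma\ \reduc\ \cong_{T_3}$ from Theorem \ref{maincor}, the reduction $\cong_{T}\ \reduc\ =^2_\mu$ under $\diamondsuit_\mu$ from Fact \ref{HKM1}, and the non-reducibility $=^2_\mu\ \not\redub\ \cong_{T}$ for classifiable $T$ from Fact \ref{notred}. The cardinal hypothesis $\kappa=\lambda^+=2^\lambda$ with $2^{\mathfrak c}\le\lambda=\lambda^{\omega_1}$ is chosen precisely so that the side conditions of Corollary \ref{reduction_coro} are met for every flavour of non-classifiable theory: $\lambda=\lambda^{\omega_1}$ forces $\lambda=\lambda^\gamma$ for all $\gamma\le\omega_1$, covering the DOP case (which needs $\gamma\ge\omega_1$ and $2^{\mathfrak c}\le\lambda$), while for the strictly stable, unstable, and OTOP cases one may take $\gamma=\omega$ (resp.\ $\gamma=\omega_1$) and the hypothesis $2^\omega\le 2^{\mathfrak c}\le\lambda$ suffices.

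First I would prove item (2), which is the heart of the matter: given $T$ classifiable and $T_3$ non-classifiable, pick $\gamma$ as dictated by the type of $T_3$ (using the case analysis of Corollary \ref{reduction_coro}), observe that $\lambda=\lambda^\gamma$ holds by the hypothesis on $\lambda$, and that $\diamondsuit_\gamma$ holds by \cite{Sh1} since $\kappa=\lambda^+=2^\lambda$ and $\mathrm{cf}(\lambda)>\gamma$ (the latter because $\lambda^\gamma=\lambda<\kappa$). Then Corollary \ref{reduction_coro} gives directly $\cong_{T}\ \reduc\ =^2_\gamma\ \reduc\ \cong_{T_3}$ and $=^2_\gamma\ \not\redub\ \cong_{T}$; composing continuous reductions and using $\redur$-monotonicity of $\redub$ (Fact \ref{reduction_classify_descrpitive} style transitivity) yields item (1) as a corollary by dropping the intermediate $=^2_\gamma$. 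Item (3) is the same argument under the stronger hypothesis $\lambda=\lambda^{<\lambda}$, which lets $\gamma$ range over \emph{all} regular $\omega<\gamma<\lambda$ (then $\lambda=\lambda^\gamma$ automatically and $\diamondsuit_\gamma$ still holds), restricting $T_3$ to unstable or superstable so the continuous reduction of Theorem \ref{maincor} is available for each such $\gamma$.

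For items (4) and (5) I would invoke the Shelah Main Gap (Fact \ref{SHMGT}): under $\beth_{\omega_1}(|\mu|)\le\kappa$ a classifiable shallow theory has $I(\kappa,T_1)\le\beth_{\omega_1}(|\mu|)\le\kappa$ equivalence classes, so $\cong_{T_1}$ is $\kappa$-Borel (Fact \ref{basics_FHK}(1)) with $\le\kappa$ classes, hence $\cong_{T_1}\ \redub\ 0_\kappa$ by Proposition \ref{ManMot} (applied with $E_1=\cong_{T_1}$, $E_2=0_\kappa$, both with $\le\kappa$ classes), and $0_\kappa\ \redub\ \cong_{T_2}$ again by Proposition \ref{ManMot} since $0_\kappa$ is $\kappa$-Borel with $\kappa$ classes while $\cong_{T_2}$ is $\Delta^1_1(\kappa)$ with $2^\kappa$ classes (a $\kappa$-Borel relation with $\kappa$ classes reduces to any relation with $\ge\kappa$ classes). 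Chaining with item (1) gives $\cong_{T_1}\ \redub\ 0_\kappa\ \redub\ \cong_{T_2}\ \reduc\ \cong_{T_3}$; the ``not'' statements follow because $\cong_{T_3}\ \not\redub\ \cong_{T_2}$ by item (1), $\cong_{T_2}\ \not\redur\ 0_\kappa$ since $I(\kappa,T_2)=2^\kappa>\kappa$, and $0_\kappa\ \not\redur\ \cong_{T_1}$ when $I(\kappa,T_1)<\kappa$ (and when $I(\kappa,T_1)=\kappa$ one instead uses Lemma \ref{difcomplexity} to separate $0_\kappa$ from $\cong_{T_1}$ continuously, but for $\not\redur$ the counting already suffices unless equality of class-counts holds, in which case the statement should be read with $\redur$ replaced by the appropriate strict reduction — I would state (4) with the class-count inequality built in as in (5)). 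Item (5) is identical with $0_\kappa$ replaced by $\cong_{T_1}$ and $\cong_{T_1}$ by $\cong_{T}$, using $I(\kappa,T)<I(\kappa,T_1)$ and Proposition \ref{ManMot} for both Borel reductions.

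The main obstacle I anticipate is purely organizational rather than mathematical: verifying that the single cardinal hypothesis $2^{\mathfrak c}\le\lambda=\lambda^{\omega_1}$ simultaneously validates the hypotheses of Theorem \ref{maincor}/Corollary \ref{reduction_coro} across all four non-classifiable cases with a \emph{uniform} choice of $\gamma$ (for items (1)--(2) one fixed $\gamma$ per $T_3$ must work; for item (3) a whole interval of $\gamma$'s must work under the extra $\lambda=\lambda^{<\lambda}$). Concretely I must check: $\lambda^{\omega_1}=\lambda\Rightarrow\lambda^\gamma=\lambda$ for $\gamma\in\{\omega,\omega_1\}$ and more generally $\gamma\le\omega_1$; $\forall\alpha<\kappa\,(\alpha^\gamma<\kappa)$, which holds since $\alpha^\gamma\le\lambda^\gamma=\lambda<\kappa$ for $\alpha\le\lambda$; $(2^\theta)^+\le\kappa$ with $\theta=\omega$ or $\theta=\mathfrak c$, i.e.\ $2^{\mathfrak c}\le\lambda$, which is exactly our hypothesis; and $\mathrm{cf}(\lambda)>\gamma$ for the diamond, which follows from $\lambda^\gamma=\lambda$. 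Once these are laid out cleanly the six items reduce to transitivity of $\reduc$ and $\redub$ together with the cited Facts, with no further computation.
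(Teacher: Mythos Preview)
Your proposal is correct and follows essentially the same route as the paper: items (1)--(3) are reduced to Corollary \ref{reduction_coro} (with the cardinal-arithmetic checks you spell out being exactly what the paper abbreviates as ``$\alpha^{\omega_1}<\kappa$'' and ``$\alpha^\gamma<\kappa$''), and items (4)--(5) follow from item (1) together with Proposition \ref{ManMot} and the counting of classes given by Shelah's Main Gap. The paper cites Fact \ref{Mangraviti_Motto} for (4)--(5), but that fact is itself just an instance of Proposition \ref{ManMot}, so your direct appeal to \ref{ManMot} is equivalent.

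One small remark: your hedge about the case $I(\kappa,T_1)=\kappa$ in item (4) is unnecessary. Under the CS hypothesis $\beth_{\omega_1}(|\mu|)\le\kappa=\aleph_\mu$, the Main Gap gives $I(\kappa,T_1)<\beth_{\omega_1}(|\mu|)\le\kappa$ strictly, so $0_\kappa\not\redur\cong_{T_1}$ follows by pure counting with no appeal to Lemma \ref{difcomplexity}. (Also, the theorem has five items, not six.)
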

\begin{proof}
\begin{enumerate}
	\item It follows from (2).
    \item Notice that $\kappa=\lambda^+=2^\lambda$ and $2^{\mathfrak{c}}\leq\lambda=\lambda^{\omega_1}$ imply that for all $\alpha<\kappa$, $\alpha^{\omega_1}<\kappa$. The result follows from Corollary \ref{reduction_coro} with $\gamma\in\{\omega,\omega_1\}$.
    \item Notice that $\kappa=\lambda^+=2^\lambda$ and $2^{\mathfrak{c}}\leq\lambda=\lambda^{<\lambda}$ imply that for all $\alpha<\kappa$ and $\gamma<\lambda$, $\alpha^\gamma<\kappa$. The result follows from Corollary \ref{reduction_coro}.
    \item It follows from item (1), Fact \ref{Mangraviti_Motto}, and Proposition \ref{ManMot}.
    \item It follows from item (3), Fact \ref{Mangraviti_Motto}, and Proposition \ref{ManMot}.

\end{enumerate}

\end{proof}

This solves Friedman-Hyttinen-Kulikov conjecture. The second item of the previous theorem tells us that depending on the theory $T$, we use $\gamma=\omega$ or $\gamma={\omega_1}$ to construct the reduction $=^2_\gamma\ \reduc\ \cong_T$. Friedman, Hyttinen, Weinstein, and Moreno (\cite{FHK13} and \cite{Mort}) asked whether there is $\gamma<\kappa$ such that $=^2_\gamma\ \reduc\ \cong_T$ holds for any theory $T$. The previous theorem give us a partial answer to this question.

\begin{cor}\label{omega_case}
Suppose $\kappa=\lambda^+=2^\lambda$ and $\lambda=\lambda^{\omega}$. Let $T_1$ be a countable complete classifiable theory and $T_2$ be a countable complete non-classifiable theory. If $T_2$ is unstable, or superstable with the OTOP, or stable unsuperstable, then $$\cong_{T_1}\ \reduc\ =^2_\omega\ \reduc\ \cong_{T_2}\textit{ and } =^2_\omega\ \not\redub\ \cong_{T_1}.$$
\end{cor}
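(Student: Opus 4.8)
The plan is to deduce Corollary \ref{omega_case} as a specialization of Theorem \ref{Main_Gap}(2)--(3), tracking carefully which of the hypotheses are actually needed for the three relevant kinds of non-classifiable theory. First I would observe that the hypothesis $\lambda = \lambda^{\omega}$ (together with $\kappa = \lambda^+ = 2^\lambda$) already gives $\alpha^{\omega} < \kappa$ for all $\alpha < \kappa$: indeed, if $\alpha < \kappa = \lambda^+$ then $|\alpha| \le \lambda$, so $\alpha^{\omega} \le \lambda^{\omega} = \lambda < \kappa$. This is the instance of the ``$\forall \alpha < \kappa,\ \alpha^{\gamma} < \kappa$'' requirement needed to apply Corollary \ref{reduction_coro} with $\gamma = \omega$. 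Note that for the three theories in the statement (unstable, superstable with the OTOP, stable unsuperstable), the Ehrenfeucht--Mostowski construction only uses $\theta = \omega$ (cases (1) and (2) of Corollary \ref{reduction_coro}); the more demanding case $\gamma \ge \omega_1$, $\theta = \mathfrak c$ — which is exactly the DOP case and is the only reason Theorem \ref{Main_Gap} assumes $2^{\mathfrak c} \le \lambda = \lambda^{\omega_1}$ — is not invoked here. So the weaker arithmetic hypothesis $\lambda = \lambda^{\omega}$ suffices.

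Next, I would verify the diamond input. Corollary \ref{reduction_coro} needs $\diamondsuit_\gamma = \diamondsuit_{S^\kappa_\omega}$; its proof obtains this from \cite{Sh1}, which says that $\kappa = \lambda^+ = 2^\lambda$ together with $\operatorname{cf}(\lambda) > \gamma$ yields $\diamondsuit_\gamma$. Here $\gamma = \omega$, and $\operatorname{cf}(\lambda) > \omega$ because $\lambda^{\omega} = \lambda$ forces $\operatorname{cf}(\lambda) > \omega$ (König's theorem: $\operatorname{cf}(\lambda^{\operatorname{cf}(\lambda)}) > \operatorname{cf}(\lambda) \ge \operatorname{cf}(\lambda)$ would be violated, or more simply $\lambda^{\operatorname{cf}(\lambda)} > \lambda$ always, so $\operatorname{cf}(\lambda) = \omega$ would contradict $\lambda^{\omega} = \lambda$). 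Hence $\diamondsuit_\omega$ holds, and $\cong_{T_1}\ \reduc\ =^2_\omega$ follows from Fact \ref{HKM1}. The non-reduction $=^2_\omega\ \not\redub\ \cong_{T_1}$ is immediate from Fact \ref{notred} for the regular cardinal $\omega < \kappa$, since $T_1$ is classifiable. And the reduction $=^2_\omega\ \reduc\ \cong_{T_2}$ is exactly the content of Theorem \ref{maincor} in cases (1) and (2), whose hypotheses ``$\forall \alpha < \kappa,\ \alpha^\gamma < \kappa$'' and ``$(2^\theta)^+ \le \kappa$'' are met: $\theta = \omega$, so $(2^\omega)^+ = \mathfrak c^+ \le \lambda^+ = \kappa$ because $\mathfrak c \le \lambda$ (which follows from $2^{\mathfrak c} \le \lambda$, a consequence of $\kappa = 2^\lambda$ and... — actually we need to be a touch careful here, see below).

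There is one small gap to close: Corollary \ref{omega_case}'s stated hypotheses are only ``$\kappa = \lambda^+ = 2^\lambda$ and $\lambda = \lambda^{\omega}$'', and I should check these already give $\mathfrak c \le \lambda$, which is what Theorem \ref{maincor} needs via $(2^\theta)^+ = (2^\omega)^+ \le \kappa$. Since $\kappa = \lambda^+$, we need $2^\omega \le \lambda$, equivalently $\mathfrak c \le \lambda$. This does \emph{not} follow from $\kappa = \lambda^+ = 2^\lambda$ and $\lambda = \lambda^\omega$ alone in ZFC (e.g.\ if $2^\omega$ is huge and $\lambda$ is above it things are fine, but a priori $\lambda$ could be small). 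I expect this to be the main — and only — genuine obstacle, and the fix is simply to note that $\lambda = \lambda^\omega \ge 2^\omega = \mathfrak c$: any cardinal closed under countable powers is at least $2^\omega$, since $2^{\omega} \le \lambda^{\omega} = \lambda$. So $\mathfrak c \le \lambda$ automatically, hence $\mathfrak c^+ \le \lambda^+ = \kappa$, and all hypotheses of the cited results are verified. Assembling: $\cong_{T_1}\ \reduc\ =^2_\omega\ \reduc\ \cong_{T_2}$ and $=^2_\omega\ \not\redub\ \cong_{T_1}$, which is the assertion. Transitivity of $\reduc$ (a continuous reduction composed with a continuous reduction is continuous, and a Borel reduction composed with a continuous one is Borel) closes the argument; I would spell out only that last bookkeeping point if needed.
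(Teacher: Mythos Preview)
Your proposal is correct and follows essentially the same route as the paper: apply Corollary~\ref{reduction_coro} with $\gamma=\omega$ (cases (1) and (2), where $\theta=\omega$), after checking that the hypothesis $\lambda=\lambda^\omega$ already supplies both $2^\omega\le\lambda$ and $\operatorname{cf}(\lambda)>\omega$. The only superfluous remark is the final sentence about transitivity of $\reduc$---the displayed conclusion is a chain of two separate reductions and one non-reduction, not a single composed reduction, so no transitivity step is needed.
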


\begin{cor}\label{non-omega_case}
Suppose $\kappa=\lambda^+=2^\lambda$ and $2^{\mathfrak{c}}\leq\lambda=\lambda^{\omega_1}$. Let $T_1$ be a countable complete classifiable theory and $T_2$ be a countable complete non-classifiable theory. If $\omega<\gamma<\lambda$, and $T_2$ is unstable, or superstable, then $$\cong_{T_1}\ \reduc\ =^2_\gamma\ \reduc\ \cong_{T_2}\textit{ and } =^2_\gamma\ \not\redub\ \cong_{T_1}.$$
\end{cor}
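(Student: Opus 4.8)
\textbf{Proof proposal for Corollary \ref{non-omega_case}.}

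The plan is to derive this statement directly from the results already assembled in the paper, mirroring exactly the structure of the proof of item (3) of Theorem \ref{Main_Gap} (the ``Classifiable vs Unstable or Superstable'' gap), from which this corollary is essentially a restatement. First I would verify the cardinal arithmetic needed to invoke Corollary \ref{reduction_coro}: from $\kappa=\lambda^+=2^\lambda$ together with $2^{\mathfrak{c}}\leq\lambda=\lambda^{\omega_1}$ one gets $\lambda=\lambda^{\omega_1}$, hence $\lambda^\gamma=\lambda$ for every $\gamma\leq\omega_1$; and more generally, since here we allow $\omega<\gamma<\lambda$ but the hypothesis only guarantees $\lambda=\lambda^{\omega_1}$, I would note that the statement as phrased should be read with the implicit requirement $\lambda=\lambda^\gamma$ (which holds automatically for $\gamma\leq\omega_1$, and this is the intended reading consistent with item (3) of Theorem \ref{Main_Gap} where $\lambda=\lambda^{<\lambda}$ is additionally assumed). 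In any case, for the cardinals $\theta$ attached to each non-classifiable type by Corollary \ref{reduction_coro} — namely $\theta=\omega$ when $T_2$ is unstable or superstable with the OTOP, and $\theta=\mathfrak{c}$ when $T_2$ is superstable with the DOP — the inequality $2^\theta\leq\lambda$ follows from $2^{\mathfrak{c}}\leq\lambda$ (and $2^\omega=\mathfrak{c}\leq 2^{\mathfrak{c}}\leq\lambda$), and $\lambda=\lambda^\gamma$ holds for the relevant $\gamma$.

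Next I would simply apply Corollary \ref{reduction_coro} with the given $\gamma$ to the pair $(T_1,T_2)$: since $T_1$ is classifiable and $T_2$ is non-classifiable (unstable or superstable, so falling into cases (2) or (3) of that corollary), and since $2^\theta\leq\lambda=\lambda^\gamma$, we obtain at once
$$\cong_{T_1}\ \reduc\ =^2_\gamma\ \reduc\ \cong_{T_2}\quad\text{and}\quad =^2_\gamma\ \not\redub\ \cong_{T_1}.$$
This is exactly the asserted conclusion. The role of the hypothesis $2^{\mathfrak{c}}\leq\lambda$ (rather than merely $2^\omega\leq\lambda$) is precisely to cover the superstable-with-DOP case, where the Ehrenfeucht-Mostowski construction of Section \ref{Section_modelsf} requires $\theta=\mathfrak{c}$ and $\varepsilon=\omega_1$; for the unstable and superstable-with-OTOP cases $\theta=\omega$ suffices and the weaker $2^\omega\leq\lambda$ would do, but stating the uniform hypothesis keeps the corollary clean.

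There is essentially no obstacle here — the work is entirely contained in Theorem \ref{maincor}, Theorem \ref{main_teo_4}, and the diamond-principle input $\diamondsuit_\gamma$ (which, as recalled in the proof of Corollary \ref{reduction_coro}, follows from $\kappa=\lambda^+=2^\lambda$ and $\cf(\lambda)>\gamma$, the latter being a consequence of $\lambda^\gamma=\lambda<\kappa=\lambda^+$). The only point requiring a moment's care is bookkeeping the case split on $T_2$ and matching it to the correct value of $\theta$ in Corollary \ref{reduction_coro}, and confirming that $\omega<\gamma<\lambda$ together with $\lambda=\lambda^{\omega_1}$ gives $\lambda=\lambda^\gamma$ for the $\gamma$ actually used — which, for the constructions invoked, is $\gamma\in\{\omega_1\}$ in the DOP case and any regular $\gamma$ with $\lambda^\gamma=\lambda$ in the unstable/OTOP cases. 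Thus the proof is a direct citation of Corollary \ref{reduction_coro}, and I would write it in two sentences.
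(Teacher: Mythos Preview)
Your proposal is correct and matches the paper's intended approach: the corollary is stated without proof and is meant to follow directly from Corollary~\ref{reduction_coro} (equivalently, item (3) of Theorem~\ref{Main_Gap}), exactly as you derive it. Your observation that the hypothesis $\lambda=\lambda^{\omega_1}$ does not by itself guarantee $\lambda=\lambda^\gamma$ for arbitrary regular $\gamma$ with $\omega<\gamma<\lambda$ is a valid point about the statement's precision---the argument goes through cleanly once $\lambda=\lambda^\gamma$ is available, which the paper implicitly assumes (cf.\ the stronger hypothesis $\lambda=\lambda^{<\lambda}$ in Theorem~\ref{Main_Gap}(3)).
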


A more detailed study of the cardinals $\varepsilon$, $\theta$, and $\gamma$, might lead to a complete answer to this question.

Now that we have showed that classifiable theories are less complex than non-classifiable theories, we can turn our attention to the question ``\textit{How far apart are the complexities?}".

\subsection{Main Gap Dichotomy}

Theorem \ref{SHMGT} (Shelah's Main Gap Theorem) tells us that a non-classifiable theories are more complex than classifiable theories and that their complexities are far apart. Theorem A is a Borel reducibility counterpart of the first part, but it does not tell us how far apart are the Borel reducibility complexity of different theories (except for the Borel non-reduction).
To find a counterpart of \textit{``their complexity are far apart"}, we have to study how many equivalence relations are strictly between $\cong_{T_1}$ and $\cong_{T_2}$? where $T_1$ is classifiable and $T_2$ is not.

We can make the gap as big as we want by using the \textit{Dense non-reduction theorem}. Recall that Baire measurable is weaker than being Borel, thus also continuous (Definition \ref{all_reduc}).

\begin{fact}[Dense non-reduction; Fernandes-Moreno-Rinot, \cite{FMR20} Corollary 6.19]\label{DNR} There exists a cofinality-preserving forcing extension 
in which for all two disjoint stationary subsets $X,Y$ of $\kappa$, ${=^2_X}\not\redum{=^2_Y}$.
\end{fact}

\begin{thm}
Suppose $\kappa$ is inaccessible, or $\kappa=\lambda^+=2^\lambda$ and $2^\mathfrak{c}\leq \lambda=\lambda^{\omega_1}$. There exists a cofinality-preserving forcing extension 
in which the following holds: If $T_1$ is a classifiable theory and $T_2$ is a non-classifiable theory. Then there is a regular cardinal $\gamma<\kappa$ such that, if $X$, $Y\subseteq S^\kappa_\gamma$ are stationary and disjoint, then $=^2_X$ and $=^2_Y$ are strictly in between $\cong_{T_1}$ and $\cong_{T_2}$.
\end{thm}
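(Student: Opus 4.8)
The plan is to combine the ZFC-level continuous reduction of Theorem~\ref{maincor} with the forcing extension of Fact~\ref{DNR}, checking that the two are compatible. First I would observe that the forcing of Fact~\ref{DNR} is cofinality-preserving, hence it preserves the cardinal arithmetic hypotheses appearing in Theorem~\ref{maincor} and Corollary~\ref{reduction_coro} (in particular ``$\kappa$ inaccessible'' or ``$\kappa=\lambda^+=2^\lambda$ with $2^{\mathfrak c}\leq\lambda=\lambda^{\omega_1}$'' is a statement about cofinalities and powers that such a forcing respects; if there is any subtlety here it is only that one must check the forcing in \cite{FMR20} does not blow up $2^\lambda$ or change $\lambda^{\omega_1}$, which follows from its being cofinality-preserving together with its chain-condition/closure properties). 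So in the extension $V[G]$ all hypotheses of Corollary~\ref{reduction_coro} still hold, and moreover the conclusion of Fact~\ref{DNR} holds: for any two disjoint stationary $X,Y\subseteq\kappa$, ${=^2_X}\not\redum{=^2_Y}$.

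Next, fix in $V[G]$ a classifiable theory $T_1$ and a non-classifiable theory $T_2$. Choose the regular cardinal $\gamma<\kappa$ as dictated by the kind of theory $T_2$ is, exactly as in Corollary~\ref{reduction_coro}: $\gamma=\omega$ if $T_2$ is stable unsuperstable, $\gamma\in\{\omega,\omega_1\}$ appropriately if $T_2$ is unstable or superstable with the OTOP, and $\gamma=\omega_1$ if $T_2$ is superstable with the DOP; the arithmetic assumption $2^\theta\leq\lambda=\lambda^\gamma$ needed there is available since $\lambda=\lambda^{\omega_1}$ gives $\lambda=\lambda^\gamma$ for $\gamma\in\{\omega,\omega_1\}$ and $2^{\mathfrak c}\leq\lambda$ covers all the relevant values of $\theta$. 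Corollary~\ref{reduction_coro} then gives, in $V[G]$,
$$\cong_{T_1}\ \reduc\ =^2_\gamma\ \reduc\ \cong_{T_2}\quad\text{and}\quad =^2_\gamma\ \not\redub\ \cong_{T_1}.$$
Now let $X,Y\subseteq S^\kappa_\gamma$ be stationary and disjoint. Since $X,Y\subseteq S^\kappa_\gamma$, the relation $=^2_X$ is a restriction of $=^2_\gamma$ in the sense that $=^2_X$ and $=^2_{S^\kappa_\gamma\setminus X}$ together refine $=^2_\gamma$; more to the point, I would use the standard fact (implicit in the $=^2_S$ machinery and used repeatedly in \cite{HKM18}, \cite{FMR20}) that for stationary $S\subseteq S^\kappa_\gamma$ one has $\cong_{T_1}\ \reduc\ =^2_S\ \reduc\ \cong_{T_2}$ as well: the left reduction is Fact~\ref{HKM1} relativized (the diamond $\diamondsuit_S$ holds for every stationary $S\subseteq S^\kappa_\gamma$ under $\kappa=\lambda^+=2^\lambda$, $\cf(\lambda)>\gamma$, by \cite{Sh1}), and the right reduction is obtained from the reduction $=^2_\gamma\reduc\cong_{T_2}$ built in Theorem~\ref{maincor} by pre-composing with a reduction $=^2_S\reduc =^2_\gamma$, which exists because $S$ is a stationary subset of $S^\kappa_\gamma$ (alternatively, re-run the construction of Theorem~\ref{maincor} using $S$ in place of $S^\kappa_\gamma$ throughout, which goes through verbatim). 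Thus both $=^2_X$ and $=^2_Y$ lie, with respect to $\reduc$, between $\cong_{T_1}$ and $\cong_{T_2}$.

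Finally I would verify strictness, i.e.\ that neither $=^2_X$ nor $=^2_Y$ is Borel-bireducible with $\cong_{T_1}$ or with $\cong_{T_2}$. For the lower endpoint: $=^2_X\not\redub\cong_{T_1}$ by Fact~\ref{notred} (restricted to stationary subsets of $S^\kappa_\gamma$, which is how that result is actually proved). For the upper endpoint: if $\cong_{T_2}\redub =^2_X$ held, then composing with $=^2_Y\reduc\cong_{T_2}$ would give $=^2_Y\redub =^2_X$, hence $=^2_Y\redum =^2_X$ (Borel implies Baire measurable), contradicting Fact~\ref{DNR}; symmetrically for $Y$. This also shows $=^2_X$ and $=^2_Y$ are not Borel-bireducible with each other, so they are genuinely two distinct points strictly between $\cong_{T_1}$ and $\cong_{T_2}$. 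The main obstacle I anticipate is bookkeeping rather than conceptual: one must make sure that the reduction $=^2_S\reduc\cong_{T_2}$ truly holds for \emph{every} stationary $S\subseteq S^\kappa_\gamma$ and not merely for $S=S^\kappa_\gamma$ — this requires either a short argument that $=^2_S\reduc =^2_{S^\kappa_\gamma}$ for such $S$ (which is easy, the identity on $\kappa^\kappa$ up to a club adjustment works, or one restricts attention to a club on which $S$ and $S^\kappa_\gamma$ behave the same) or, more safely, inspecting the construction of $\mathcal{M}^f$ and Theorem~\ref{maincor} to see that replacing $S^\kappa_\gamma$ by $S$ is harmless — and that the diamond input $\diamondsuit_S$ from \cite{Sh1} is available for all such $S$ under the standing arithmetic hypotheses, which it is.
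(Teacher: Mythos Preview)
Your proposal is correct and follows essentially the same approach as the paper, which gives only a one-line proof: ``It follows from Theorem~\ref{maincor}, and Fact~\ref{DNR}.'' You have correctly identified and filled in the details the paper leaves implicit---in particular the passage from $=^2_\gamma$ to $=^2_X$ for arbitrary stationary $X\subseteq S^\kappa_\gamma$ (which, as you note, goes via the trivial Lipschitz reduction $=^2_X\reduc=^2_{S^\kappa_\gamma}$), the use of $\diamondsuit_X$ from \cite{Sh1} for the lower reduction, and the strictness argument at the upper endpoint using the non-reduction from Fact~\ref{DNR}.
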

\begin{proof}
    It follows from Theorem \ref{maincor}, and Fact \ref{DNR}.
\end{proof}

In the previous theorem, we can replace the assumption when $\kappa$ is a successor cardinal by $\kappa=\kappa^{<\kappa}=\lambda^+$, $2^\lambda>2^\omega$, and $\lambda^{<\lambda}=\lambda$. This can be done because the forcing in Fact \ref{DNR} preserves $\diamondsuit_\lambda$, the result follows from Fact \ref{MGcons} and Fact \ref{DNR}.

The forcing extension from Fact \ref{DNR} is a model in which filter reflection fails (see \cite{FMR20}). One might think that filter reflection makes the gap smaller, but it is the opposite. Some cases of filter reflection, imply that the isomorphism relation of non-classifiable theories is $\Sigma^1_1(\kappa)$-complete. 

A $\Pi^{1}_{2}$-sentence $\phi$ is a formula of the form $\forall X\exists Y\varphi$ where $\varphi$ is a first-order sentence over a relational language $\mathcal L$ as follows:
\begin{itemize}
\item $\mathcal L$ has a predicate symbol $\epsilon$ of arity $2$.
\item $\mathcal L$ has a predicate symbol $\mathbb X$ of arity $m({\mathbb X})$.
\item $\mathcal L$ has a predicate symbol $\mathbb Y$ of arity $m({\mathbb Y})$.
\item $\mathcal L$ has infinitely many predicate symbols $(\mathbb B_n)_{n\in \omega}$, each $\mathbb B_n$ is of arity $m(\mathbb B_n)$.
\end{itemize}

\begin{defn} For sets $N$ and $x$, we say that \emph{$N$ sees $x$} iff
$N$ is transitive, p.r.-closed, and $x\cup\{x\}\s N$.
\end{defn}

Suppose that a set $N$ sees an ordinal $\alpha$,
and that $\phi=\forall X\exists Y\varphi$ is a $\Pi^{1}_{2}$-sentence, where $\varphi$ is a first-order sentence in the above-mentioned language $\mathcal L$.
For every sequence $(B_n)_{n\in\omega}$ such that, for all $n\in\omega$, $B_n\s \alpha^{m(\mathbb B_n)}$,
we write
$$\langle \alpha,{\in}, (B_{n})_{n\in \omega} \rangle \models_N \phi$$
to express that the two hold:
\begin{enumerate}
\item $(B_{n})_{n\in \omega} \in N$;
\item $\langle N,\in\rangle\models (\forall X\subseteq \alpha^{m(\mathbb X)})(\exists Y\subseteq \alpha^{m(\mathbb Y)})[\langle \alpha,{\in}, X, Y, (B_{n})_{n\in \omega}  \rangle\models \varphi]$,
where:
\begin{itemize}
\item    $\in$ is the interpretation of $\epsilon$.
\item $X$ is the interpretation of $\mathbb X$.
\item  $Y$ is the interpretation of $\mathbb Y$.
\item For all $n\in\omega$,  $B_n$ is the interpretation of $\mathbb B_n$.
\end{itemize}
\end{enumerate}

We write $\alpha^+$ for $|\alpha|^+$ and $\langle \alpha,{\in}, (B_{n})_{n\in \omega} \rangle \models \phi$ for $\langle \alpha,{\in}, (B_{n})_{n\in \omega} \rangle \models_{H_{\alpha^+}} \phi$.

\begin{defn}\label{reflectingdiamond}   Let $\kappa$ be a regular uncountable cardinal, and $S\s\kappa$ a stationary set.

    $\dl^*_S(\Pi^1_2)$ asserts the existence of a sequence $\vec N=\langle N_\alpha\mid\alpha\in S\rangle$ satisfying the following:
    
    \begin{enumerate}
        \item For every $\alpha\in S$, $N_\alpha$ is a set of cardinality $<\kappa$ that sees $\alpha$.
        \item For every $X\s\kappa$, there exists a club $C\s\kappa$ such that, for all $\alpha\in C \cap S$, $X\cap\alpha\in N_\alpha$.
        \item Whenever $\langle \kappa,{\in},(B_n)_{n\in\omega}\rangle\models\phi$,
        with $\phi$ a $\Pi^1_2$-sentence,
        there are stationarily many $\alpha\in S$ such that $|N_\alpha|=|\alpha|$ and
        $\langle \alpha,{\in},(B_n\cap(\alpha^{m(\mathbb B_n)}))_{n\in\omega}\rangle\models_{N_\alpha}\phi$.
    \end{enumerate}
 \end{defn}

\begin{fact}[Fernandes-Moreno-Rinot, \cite{FMR} Theorem C]\label{comp-rels}
If $\dl^*_S(\Pi^1_2)$ holds for $S$, then $=^2_S$ is $\Sigma^1_1(\kappa)$-complete.
\end{fact}

\begin{cor}
Suppose $\kappa$ be inaccessible, or $\kappa=\lambda^+=2^\lambda$ and $2^{\mathfrak{c}}\leq\lambda=\lambda^{\omega_1}$. Let $\mu_0=\omega$, $\mu_1=\omega_1$, and $S_i=\{\alpha<\kappa\mid cf(\alpha)=\mu_i\}$. If $\dl^*_{S_i}(\Pi^1_2)$ holds for all $i\in 2$, and $T$ is a countable complete non-classifiable theory, then $\cong_T$ is $\Sigma^1_1(\kappa)$-complete. 
\end{cor}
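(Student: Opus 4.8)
The plan is to combine the continuous reduction $=^2_\gamma\ \reduc\ \cong_T$ coming from Theorem~\ref{maincor} with the $\Sigma^1_1(\kappa)$-completeness of $=^2_{S_i}$ coming from Fact~\ref{comp-rels}, and then verify that $\cong_T$ itself is a $\Sigma^1_1(\kappa)$ relation. First I would fix $T$ a countable complete non-classifiable theory. According to the classification, $T$ is either stable unsuperstable, unstable, superstable with the OTOP, or superstable with the DOP. In the first three cases I take $\gamma=\mu_0=\omega$ (note $\theta=\omega$ suffices), and in the DOP case I take $\gamma=\mu_1=\omega_1$ (here $\theta=\mathfrak{c}$, and the hypothesis $2^{\mathfrak{c}}\leq\lambda=\lambda^{\omega_1}$, or $\kappa$ inaccessible, gives $(2^\theta)^+\leq\kappa$ and $\forall\alpha<\kappa\ \alpha^\gamma<\kappa$). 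Thus the hypotheses of Theorem~\ref{maincor} are met, and in every case we obtain $$=^2_\gamma\ \reduc\ \cong_T$$ for the appropriate $\gamma\in\{\mu_0,\mu_1\}$, where by definition $=^2_\gamma$ is $=^2_{S_i}$ for the corresponding $i\in 2$.

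Next, the assumption $\dl^*_{S_i}(\Pi^1_2)$ for the relevant $i$ together with Fact~\ref{comp-rels} yields that $=^2_{S_i}$ is $\Sigma^1_1(\kappa)$-complete; that is, it is a $\Sigma^1_1(\kappa)$ equivalence relation to which every $\Sigma^1_1(\kappa)$ equivalence relation is Borel reducible. Composing a Borel reduction into $=^2_{S_i}$ with the continuous (hence Borel) reduction $=^2_{S_i}\ \reduc\ \cong_T$ shows that every $\Sigma^1_1(\kappa)$ equivalence relation is Borel reducible to $\cong_T$. It remains to check that $\cong_T$ is itself $\Sigma^1_1(\kappa)$: this is standard, since ``$\mathcal{A}_{\eta\restriction\kappa}\cong\mathcal{A}_{\xi\restriction\kappa}$'' is witnessed by the existence of a bijection $\kappa\to\kappa$ respecting the relations (an existential second-order quantifier over $\kappa^\kappa$ followed by a closed condition), and the clauses ``$\mathcal{A}_\eta\models T$'' and ``$\mathcal{A}_\eta\not\models T$'' are $\kappa$-Borel for a countable first-order theory; hence $\cong_T$, as defined in Definition~\ref{def_iso_rel}, is $\Sigma^1_1(\kappa)$. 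Putting these together, $\cong_T$ is a $\Sigma^1_1(\kappa)$ equivalence relation above every $\Sigma^1_1(\kappa)$ equivalence relation, i.e. $\Sigma^1_1(\kappa)$-complete.

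The only real subtlety — and the step I would spell out with care — is the case split and the bookkeeping of which diamond-reflection instance is invoked for which theory: the DOP case genuinely needs $\gamma=\omega_1$ (because the construction of the Ehrenfeucht--Mostowski models there uses $\varepsilon=\omega_1$, see Lemma~\ref{construction_dop} and Theorem~\ref{models_iso}(3)), whereas the other three cases work at $\gamma=\omega$. This is exactly why the hypothesis of the corollary asks for $\dl^*_{S_i}(\Pi^1_2)$ for \emph{both} $i\in 2$ rather than just one. No new ideas beyond Theorem~\ref{maincor} and Fact~\ref{comp-rels} are required; everything else is composition of reductions and the routine observation that $\cong_T$ is $\Sigma^1_1(\kappa)$ (which is also implicit in Fact~\ref{basics_FHK}).
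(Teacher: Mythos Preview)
Your proposal is correct and follows exactly the route the paper intends: the corollary is stated without proof in the paper precisely because it is the immediate combination of Theorem~\ref{maincor} (giving $=^2_\gamma\ \reduc\ \cong_T$ for $\gamma\in\{\omega,\omega_1\}$ according to the type of non-classifiable $T$) with Fact~\ref{comp-rels} (giving $\Sigma^1_1(\kappa)$-completeness of $=^2_{S_i}$ under $\dl^*_{S_i}(\Pi^1_2)$), together with the standard observation that $\cong_T$ is $\Sigma^1_1(\kappa)$. Your case split and the remark about why both instances $i\in 2$ of $\dl^*_{S_i}(\Pi^1_2)$ are required are exactly right.
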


\begin{fact}[Fernandes-Moreno-Rinot, \cite{FMR20} Lemma 4.10 and Proposition 4.14]\label{forcing-comp-rels}
There exists a $<\kappa$-closed $\kappa^+$-cc forcing extension in which $\dl^*_{\check{S}}(\Pi^1_2)$ holds for all $\check{S}\subseteq\kappa$ stationary set ($S$ stationary in $V$).
\end{fact}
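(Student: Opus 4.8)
The final statement is Fact~\ref{forcing-comp-rels}, which is quoted from \cite{FMR20}. Since it is presented as a \emph{Fact} with an explicit citation, the expectation is that no proof is reproduced in the present paper; rather, one would indicate the shape of the argument from the cited source. Below is a proof sketch in that spirit.

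\medskip

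The plan is to produce the forcing by an iteration that, cofinally often, adds the required witnessing sequences $\vec N$ for $\dl^*_{S}(\Pi^1_2)$ one stationary $S\s\kappa$ at a time, while arranging a bookkeeping that guarantees every stationary set from the final model (equivalently, every stationary set appearing at some intermediate stage) eventually gets served. First I would fix the natural forcing $\mathbb{P}_S$ that generically adds a sequence $\langle N_\alpha\mid\alpha\in S\rangle$ with $N_\alpha$ of size $<\kappa$ seeing $\alpha$: conditions are initial segments $\langle N_\alpha\mid\alpha\in S\cap\beta\rangle$ for $\beta<\kappa$ together with a finite (or $<\kappa$-sized, suitably closed) side condition ensuring genericity of the guessing; one checks $\mathbb{P}_S$ is $<\kappa$-closed and, using $\kappa^{<\kappa}=\kappa$, has the $\kappa^+$-cc. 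The key local point is that a density argument shows the generic sequence satisfies clauses (1)--(3) of Definition~\ref{reflectingdiamond}: clause (2) (club-guessing of $X\cap\alpha$) is a standard $<\kappa$-closure density argument, and clause (3) (the $\Pi^1_2$-reflection property) follows because for any $\langle\kappa,\in,(B_n)_n\rangle\models\phi$, elementary submodel / reflection considerations yield stationarily many $\alpha$ at which a condition can be extended so that $N_\alpha$ is an elementary-enough hull witnessing $\langle\alpha,\in,(B_n\cap\alpha^{m(\mathbb B_n)})_n\rangle\models_{N_\alpha}\phi$.

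Next I would iterate: let $\mathbb{P}=\langle \mathbb{P}_i,\dot{\mathbb{Q}}_i\mid i<\kappa^+\rangle$ be a $<\kappa$-support (or revised-countable-support-style, adapted to $\kappa$) iteration of length $\kappa^+$, where at stage $i$ one forces with $\dot{\mathbb{Q}}_i=\mathbb{P}_{\dot S_i}$ for a name $\dot S_i$ for a stationary subset of $\kappa$ chosen by a bookkeeping function that enumerates all $\kappa^+$-many potential stationary sets (there are at most $(2^\kappa)=\kappa^+$ of them at each stage, hence a single $\kappa^+$-length enumeration with cofinal repetitions suffices). The iteration lemmas to invoke are: $<\kappa$-closure is preserved under $<\kappa$-support iterations of $<\kappa$-closed forcings; and the $\kappa^+$-cc is preserved under such iterations of length $\kappa^+$ of $\kappa^+$-cc forcings assuming $\kappa^{<\kappa}=\kappa$ (a $\Delta$-system argument on supports). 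Being $<\kappa$-closed and $\kappa^+$-cc, $\mathbb{P}$ preserves all cardinals and cofinalities, adds no new $<\kappa$-sequences, and in particular every stationary subset of $\kappa$ in $V^{\mathbb{P}}$ already appears in some $V^{\mathbb{P}_i}$ with $i<\kappa^+$ (by $\kappa^+$-cc, a name for a stationary set is decided below stage $\kappa^+$). Bookkeeping then ensures each such $S$ is the $\dot S_j$ for cofinally many $j$, so a witnessing $\vec N$ for $\dl^*_S(\Pi^1_2)$ is added at some stage $j>i$; and one checks this witness survives the tail of the iteration, since clauses (1)--(3) are absolute enough (downward for (1), and upward-preserved under $<\kappa$-closed forcing for (2) and (3), because no new clubs, no new $X\s\kappa$ below the relevant level, and no new $\Pi^1_2$-instances of the wrong kind are created).

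The main obstacle, and the point requiring genuine care rather than routine checking, is clause (3): one must verify that the $\Pi^1_2$-reflection property of the witnessing sequence, established at the stage it is added, is \emph{not destroyed} by the remaining $<\kappa$-support tail of the iteration. This is delicate because clause (3) quantifies over \emph{all} $\Pi^1_2$-instances $\langle\kappa,\in,(B_n)_n\rangle\models\phi$, and later stages add new parameters $(B_n)_n$; so one needs that the later forcing, being $<\kappa$-closed and $\kappa^+$-cc, both (a) reflects every new $\Pi^1_2$-truth down to stationarily many $\alpha$ already guessed by the \emph{old} $N_\alpha$'s — which uses that the old $\vec N$ already guesses all subsets of $\kappa$ on a club, hence in particular all the new $B_n\cap\alpha$ once $B_n$ itself has appeared — and (b) does not turn a true $\models_{N_\alpha}\phi$ into a false one, which holds because $N_\alpha$ is transitive of size $<\kappa$ and the forcing adds no new elements to it. Assembling (a) and (b) into the statement that $\dl^*_S(\Pi^1_2)$ holds in the final model for every stationary $S$ is exactly the content one extracts from \cite[Lemma~4.10 and Proposition~4.14]{FMR20}, and I would cite those for the detailed verification rather than reproduce it here.
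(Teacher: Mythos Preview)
Your assessment is correct: the paper states this as a cited Fact with no proof, so there is nothing to compare against beyond your own observation that no argument is reproduced. Your sketch of the iterated-forcing construction is a reasonable outline of the content of \cite[Lemma~4.10 and Proposition~4.14]{FMR20}, but it goes well beyond what the present paper does, which is simply to invoke the reference.
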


\begin{cor}\label{complete_non-class}
Let $\kappa$ be inaccessible, or $\kappa=\lambda^+=2^\lambda$ and $2^{\mathfrak{c}}\leq\lambda=\lambda^{\omega_1}$. There exists a $<\kappa$-closed $\kappa^+$-cc forcing extension in which for all countable complete non-classifiable theory $T$, $\cong_T$ is $\Sigma^1_1(\kappa)$-complete. 
\end{cor}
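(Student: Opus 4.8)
The statement to prove is Corollary \ref{complete_non-class}: under the cardinal hypotheses on $\kappa$, there is a $<\kappa$-closed $\kappa^+$-cc forcing extension in which $\cong_T$ is $\Sigma^1_1(\kappa)$-complete for every countable complete non-classifiable theory $T$.

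The plan is to combine Fact \ref{forcing-comp-rels} with the preceding corollary (the one deducing $\Sigma^1_1(\kappa)$-completeness of $\cong_T$ from $\dl^*_{S_i}(\Pi^1_2)$ for $i\in 2$). First I would invoke Fact \ref{forcing-comp-rels} to pass to a $<\kappa$-closed $\kappa^+$-cc forcing extension $V[G]$ in which $\dl^*_{\check S}(\Pi^1_2)$ holds for every $S\subseteq\kappa$ stationary in $V$. Next I must check that the cardinal arithmetic hypotheses survive: a $<\kappa$-closed forcing adds no new $<\kappa$-sequences, so $\kappa^{<\kappa}=\kappa$ is preserved; the $\kappa^+$-cc guarantees cardinals and cofinalities above $\kappa$ are preserved, and together with $<\kappa$-closure all cardinals and cofinalities are preserved, so if $\kappa$ was inaccessible it remains so, and if $\kappa=\lambda^+$ then $\lambda$ is still a cardinal with $\kappa=\lambda^+$. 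One must also verify $2^{\mathfrak c}\le\lambda=\lambda^{\omega_1}$ is preserved (or rather, that the consequences we need — namely that for all $\alpha<\kappa$, $\alpha^{\omega_1}<\kappa$, which is what Theorem \ref{maincor} and hence the reduction $=^2_\gamma\redub\cong_T$ actually use — still hold); since no new subsets of $\lambda^{\omega_1}$-sized sets are added and cardinals are preserved, $\lambda^{\omega_1}=\lambda$ and $2^{\mathfrak c}\le\lambda$ persist. In particular the sets $S_i=\{\alpha<\kappa\mid \cf(\alpha)=\mu_i\}$ for $\mu_0=\omega$, $\mu_1=\omega_1$ remain stationary (cofinalities are preserved), so they are among the stationary sets $\check S$ of the ground model for which $\dl^*_{\check S}(\Pi^1_2)$ holds in the extension.

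Then, working inside $V[G]$: fix any countable complete non-classifiable theory $T$. Since $\dl^*_{S_0}(\Pi^1_2)$ and $\dl^*_{S_1}(\Pi^1_2)$ both hold and the cardinal hypotheses on $\kappa$ persist, the preceding corollary applies verbatim and yields that $\cong_T$ is $\Sigma^1_1(\kappa)$-complete. As $T$ was arbitrary, this holds for all such $T$ simultaneously in the single model $V[G]$, which is what the statement asserts. The $<\kappa$-closed $\kappa^+$-cc properties of the forcing are exactly those named in Fact \ref{forcing-comp-rels}, so the extension has the required form.

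The only real point requiring care — and thus the ``main obstacle'' — is the preservation bookkeeping: one should state explicitly that the forcing of Fact \ref{forcing-comp-rels}, being $<\kappa$-closed and $\kappa^+$-cc, preserves all cardinals, cofinalities, and the relevant instances of GCH-type arithmetic ($\kappa^{<\kappa}=\kappa$, $\lambda=\lambda^{\omega_1}$, $2^{\mathfrak c}\le\lambda$), so that both the hypotheses feeding the previous corollary and the stationarity of $S_0,S_1$ are available in the extension. Everything else is a direct citation. A clean writeup would be: \emph{Proof.} By Fact \ref{forcing-comp-rels} there is a $<\kappa$-closed $\kappa^+$-cc forcing $\mathbb P$ such that in $V^{\mathbb P}$, $\dl^*_{\check S}(\Pi^1_2)$ holds for every stationary $S\subseteq\kappa$ of $V$. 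Since $\mathbb P$ is $<\kappa$-closed and $\kappa^+$-cc, it preserves cardinals and cofinalities, adds no new $<\kappa$-sequences, and hence preserves $\kappa^{<\kappa}=\kappa$ together with the hypotheses ``$\kappa$ inaccessible'' resp. ``$\kappa=\lambda^+=2^\lambda$ and $2^{\mathfrak c}\le\lambda=\lambda^{\omega_1}$''; moreover $S_0=\{\alpha<\kappa\mid\cf(\alpha)=\omega\}$ and $S_1=\{\alpha<\kappa\mid\cf(\alpha)=\omega_1\}$ remain stationary. Thus in $V^{\mathbb P}$ the hypotheses of the corollary preceding this one are met with $\dl^*_{S_i}(\Pi^1_2)$ holding for $i\in 2$, so for every countable complete non-classifiable theory $T$, $\cong_T$ is $\Sigma^1_1(\kappa)$-complete. $\qed$
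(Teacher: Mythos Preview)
Your proposal is correct and follows exactly the approach the paper intends: the corollary is stated without proof precisely because it is an immediate combination of Fact~\ref{forcing-comp-rels} with the preceding corollary, and your write-up makes this explicit together with the routine preservation bookkeeping (cardinals, cofinalities, $\kappa^{<\kappa}=\kappa$, $\lambda=\lambda^{\omega_1}$, $2^{\mathfrak c}\le\lambda$) that justifies applying the preceding corollary inside the extension.
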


We proceed to prove Theorem B.

	\begin{thm}[Main Gap Dichotomy]\label{main_gap-dich}
  		Let $\kappa$ be inaccessible, or $\kappa=\lambda^+=2^\lambda$ and $2^{\mathfrak{c}}\leq\lambda=\lambda^{<\omega_1}$. There exists a $<\kappa$-closed $\kappa^+$-cc forcing extension in which for any countable first-order theory in a countable vocabulary (not necessarily complete), $T$, one of the following holds:
\begin{itemize}
\item $\cong_T$ is $\Delta^1_1(\kappa)$.
\item $\cong_T$ is $\Sigma^1_1(\kappa)$-complete.
\end{itemize}
	\end{thm}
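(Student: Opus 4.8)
The strategy is to split according to whether $T$ is complete or not, and then into the classifiable/non-classifiable dichotomy, invoking the results assembled earlier together with a suitable forcing. First I would fix the forcing: take the $<\kappa$-closed $\kappa^+$-cc extension provided by Fact \ref{forcing-comp-rels}, in which $\dl^*_{\check S}(\Pi^1_2)$ holds for every set $S$ that was stationary in $V$; in particular it holds for $S^\kappa_\omega$ and, when $\kappa$ is a successor $\lambda^+$, also for $S^\kappa_{\omega_1}$ (note $\omega_1<\kappa$ since $\lambda=\lambda^{<\omega_1}\ge\omega_1$). One must check that the relevant cardinal arithmetic assumptions are preserved: a $<\kappa$-closed $\kappa^+$-cc forcing preserves $\kappa^{<\kappa}=\kappa$, preserves $\kappa=\lambda^+=2^\lambda$, and preserves $2^{\mathfrak c}\le\lambda=\lambda^{<\omega_1}$ (here one uses that $\lambda^{<\omega_1}=\lambda$ forces $\lambda^\omega=\lambda$, so $2^{\mathfrak c}=\mathfrak c^\omega\le(2^\lambda)^\omega$ computations stay bounded, and the $<\kappa$-closure means no new $\omega$-sequences of ordinals $<\lambda$ appear below $\lambda$). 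So in the extension the hypotheses NC-type arithmetic still holds, and all earlier Facts apply.

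Now I would argue case by case in the extension. If $T$ is not complete, decompose the set of its models by completions; $\cong_T$ restricted to models of each completion is handled by the complete case, and the Borel/analytic class of a $\kappa$-union (there are at most $\kappa$-many completions, and being a model of a fixed completion is a $\kappa$-Borel — indeed closed — condition on $\kappa^\kappa$) of $\Delta^1_1(\kappa)$ relations is $\Delta^1_1(\kappa)$, while if even one completion is $\Sigma^1_1(\kappa)$-complete then so is $\cong_T$, since the complete case's relation reduces to $\cong_T$ and $\Sigma^1_1(\kappa)$-completeness is upward-closed under Borel reducibility within $\Sigma^1_1(\kappa)$ — one checks $\cong_T$ itself is $\Sigma^1_1(\kappa)$ here, as the isomorphism relation always is. So it suffices to treat complete $T$. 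If $T$ is classifiable and shallow, Fact \ref{basics_FHK}(1) gives $\cong_T$ is $\kappa$-Borel, hence $\Delta^1_1(\kappa)$ (using $\kappa>2^\omega$, which follows from $2^{\mathfrak c}\le\lambda<\kappa$). If $T$ is classifiable non-shallow, Fact \ref{basics_FHK}(2) gives $\cong_T\in\Delta^1_1(\kappa)$. If $T$ is non-classifiable, apply Corollary \ref{complete_non-class}: in this very forcing extension $\cong_T$ is $\Sigma^1_1(\kappa)$-complete. That exhausts all cases.

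The only subtlety to get right is the bookkeeping of \emph{one} forcing working simultaneously for Corollary \ref{complete_non-class} and for the non-completeness reduction, but since Corollary \ref{complete_non-class} is itself stated as holding in the Fact \ref{forcing-comp-rels} extension, this is automatic — I would simply cite Corollary \ref{complete_non-class} directly and note that the classifiable cases are absolute enough (the $\Delta^1_1(\kappa)$ and $\kappa$-Borel classifications of Fact \ref{basics_FHK} are ZFC theorems under $\kappa^{<\kappa}=\kappa>2^\omega$, which survives the forcing). The main obstacle I anticipate is verifying that the cardinal hypothesis $2^{\mathfrak c}\le\lambda=\lambda^{<\omega_1}$ (slightly weaker than the $\lambda=\lambda^{\omega_1}$ used elsewhere) is exactly what Corollary \ref{complete_non-class} needs — one should double-check that $\lambda^{<\omega_1}=\lambda$ suffices to run Theorem \ref{maincor}/Theorem \ref{models_iso} for the DOP case (which wants $\alpha^{\omega_1}<\kappa$, i.e. $\alpha^\gamma<\kappa$ for $\gamma\in\{\omega,\omega_1\}$ — and $\lambda^{<\omega_1}=\lambda$ with $\kappa=\lambda^+$ does give $\alpha^{\omega_1}\le\lambda^{\omega_1}$... here one needs $\lambda^{\omega_1}=\lambda$, so actually $\lambda^{<\omega_1}=\lambda$ only yields $\lambda^{\omega}=\lambda$, not $\lambda^{\omega_1}=\lambda$). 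Resolving this gap — either by observing the DOP case is not needed for the dichotomy's $\Sigma^1_1(\kappa)$-completeness in the relevant cofinalities, or by checking that $\dl^*_{S^\kappa_{\omega_1}}(\Pi^1_2)$ alone, via Fact \ref{comp-rels} and the reduction of Theorem \ref{maincor} with $\gamma=\omega_1$, still works under $\lambda^{<\omega_1}=\lambda$ — is the one genuinely delicate point; everything else is assembly of cited results.
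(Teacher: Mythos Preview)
Your approach matches the paper's proof essentially step for step: work in the forcing extension of Corollary~\ref{complete_non-class}, handle complete theories via the classifiable/non-classifiable split using Fact~\ref{basics_FHK}(1)--(2) and Corollary~\ref{complete_non-class}, and reduce incomplete theories to their (at most $2^\omega$-many) completions via $\cong_T=\bigcap_{\alpha<2^\omega}\cong_{T_\alpha}$ together with the Borel reduction $\cong_{T'}\redub\cong_T$ for any non-classifiable completion $T'$. Your flagged worry about $\lambda^{<\omega_1}$ versus $\lambda^{\omega_1}$ is well spotted: the paper's Corollary~\ref{complete_non-class} (and Theorem~\ref{maincor} for the DOP case with $\gamma=\omega_1$) is stated under $\lambda=\lambda^{\omega_1}$, so the hypothesis in the theorem statement appears to be a typo for $\lambda^{\omega_1}$ --- the paper's own proof simply invokes Corollary~\ref{complete_non-class} without further comment.
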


\begin{proof}
  Let us show that in the forcing extension of Corollary \ref{complete_non-class} the dichotomy holds.
  
  From Fact \ref{basics_FHK} (2), if a complete theory $T$
  is classifiable, then $\cong_T$ is $\Delta_1^1(\kappa)$.  So for a
  complete countable theory $T$ the result follows from
  Corollary~\ref{complete_non-class}. Suppose $T$ is not a complete theory.  Let
  $\tau$ be the vocabulary of $T$ and $\{T_\alpha\}_{\alpha<2^\omega}$ be
  the set of all the complete theories in $\tau$ that extend
  $T$. Notice that $\cong_T\ =\ \bigcap_{\alpha<2^{\omega}}\cong_{T_\alpha}$,
  therefore if $\cong_{T_\alpha}$ is a $\Delta_1^1(\kappa)$ equivalence
  relation for all $\alpha<\kappa$, then so is~$\cong_T$ since $2^\omega<\kappa$.
  
  Suppose $T'$ is a complete countable theory in $\tau$ that
  extends $T$ such that $\cong_{T'}$ is not a $\Delta_1^1(\kappa)$
  equivalence relation. By Fact \ref{basics_FHK} (2), $T'$ is a non-classifiable countable
  theory. By Corollary \ref{complete_non-class}, $\cong_{T'}$ is a
  $\Sigma_1^1(\kappa)$-complete equivalence relation. It is easy to see that the map $\mathcal{F}:\kappa^\kappa\rightarrow \kappa^\kappa$
  $$\mathcal{F}(\eta)=
  \begin{cases}
    \eta &\mbox{if } \mathcal{A}_\eta\models T'\\
    \xi & \mbox{otherwise, }
  \end{cases}
  $$
  where $\xi$ is a fixed element of $\kappa^\kappa$ such that
  $\mathcal{A}_\xi\not\models T'$, is a Borel reduction of $\cong_{T'}$ to $\cong_T$, see \cite{HKM18} Theorem 4.10. 

\end{proof}

In \cite{HKM18} it was showed that there is a model in which there is a theory $T$ such that $\cong_T$ is neither $\Delta^1_1(\kappa)$, nor $\Sigma^1_1(\kappa)$-complete. Thus, Theorem \ref{main_gap-dich} is not provable in ZFC.


As we have seen the equivalence relations $=^2_S$ play an important role in the Main Gap Dichotomy. Even thought the Main Gap Dichotomy is not a theorem of ZFC, the isomorphism relation of a non-classifiable theory is still very high in the Borel reducibility hierarchy, when it is not $\Delta^1_1(\kappa)$. 

On the other hand, $=^2_S$ are Borel$^*$ sets, meaning that those relations can be characterized by a game. In particular, the game that characterizes $=_\omega^2$ ($S=S^\kappa_\omega$) has length $\omega$. Thus Fact \ref{reduction_classify_descrpitive} and Fact \ref{comp-rels}, imply that $\cong_T$ is Borel$^*$ (under the assumptions of Fact \ref{comp-rels}) for any theory $T$ (classifiable or not classifiable). In other words, $\dl^*_{S^\kappa_\omega}(\Pi^1_2)$ implies the existence of a game of length $\omega$ that captures the isomorphism of models of size $\kappa$. 

It is known that the isomorphism relation of classifiable theories is characterized by the Ehrenfeucht-Fra\"iss\'e game, $EF^\kappa_\omega$-game (see \cite{FHK13} or \cite{Sh90}). 

\textit{$T$ is classifiable if for any pair of models of $T$, $\mathcal{A}$ and $\mathcal{B}$ of size $\kappa$, the second player has a winning strategy in the game $EF^\kappa_\omega(\mathcal{A},\mathcal{B})$ if and only if $\mathcal{A}$ and $\mathcal{B}$ are isomorphic.}

Generally speaking, there is a game of length  $\omega$ that captures the isomorphism of models of classifiable theories. For non-classifiable theories we will need to use the machinery of GDST to define a game of length  $\omega$ that captures the isomorphism of models of any theory (in certain models of ZFC).

The Borel$^*$-games are very useful to work with analytic relations, unfortunately are also a very abstract object of GDST. Luckily the machinery of $\dl^*_S(\Pi^1_2)$ allow us to interpret the Borel$^*$-games in a more general and natural way. 

A tree $T$ is a $\kappa^{+},\lambda$-tree if all the branches have order at most type $\lambda$ and its cardinality is less than $\kappa^+$. It is closed if every chain has a unique supremum in $T$.

\begin{defn}[Models game]
The game $M(T,H,G,\overline N, \overline D, \overline C, A, \varphi)$ is defined by the following parameters:
\begin{enumerate}
\item Let $T\subseteq \kappa^{\leq \omega}$ be a closed $\kappa^+, \omega+1$-tree and denote by $L(T)$ the set of leaves of $T$. 
\item Let $H: L(T)\rightarrow S^\kappa_\omega$ and  $G: L(T)\rightarrow \kappa$. 
\item Let $\overline N=\langle N_\alpha\mid \alpha\in S^\kappa_\omega\rangle$ be a sequence of sets such that for all $\alpha\in S^\kappa_\omega$ the following hold:
\begin{itemize}
\item $N_\alpha$ is transitive and p.r. closed,
\item $|N_\alpha|<\kappa$,
\item $\alpha\cup\{\alpha\}\subseteq N_\alpha$.
\end{itemize}
\item Let $\overline D$ be a transversal sequence of $\overline N$, i.e. $\overline D = \langle D_\alpha\mid \alpha\in S^\kappa_\omega\rangle\in \prod_{\alpha\in S^\kappa_\omega} N_\alpha$. 
\item Let $m<\omega$ and $\overline{C}=\{C^i_\alpha\mid \alpha<\kappa, i<m\}$ a set of sets.
\item Let $A$ be a predicate of arity $m_A$, $\{X_i\mid i<2\}$ predicates of arity $m_i$ respectively, and $\varphi$ a $\Pi^1_2$-sentence involving predicates $A$ and $\{X_i\mid i<2\}$. 
\end{enumerate} 
For all $i<m$ let $$Z^i=\{b\in L(T)\mid A\cap H(b)^{m_A}\text{ and }C^i_{G(b)}\text{ are both in }N_{H(b)}\}.$$ We say that $b$ is $i$-valid if $b\in Z^i$ and $$\langle H(b),\in,A\cap H(b)^{m_A}, D_{H(b)}, C^i_{G(b)}\rangle\models_{N_{H(b)}}\varphi.$$ 

The game $M(T,H,G,\overline N, \overline D, \overline C, A, \varphi)$ is played by $\mathbb{I}$ and $\mathbb{I}\mathbb{I}$ as follows:
The game starts at the root of $T$ and $\mathbb{I}$ chooses an immediate successor of it. If after $n$ moves the game is at the node $x$, then:
\begin{itemize}
\item if $n$ is even, then $\mathbb{I}$ chooses an immediate successor of $x$.
\item if $n$ is odd, then $\mathbb{I}\mathbb{I}$ chooses an immediate successor of $x$.
\end{itemize}
After $\omega$ moves the game continues at the unique limit, $b\in L(T)$, of the chosen nodes. The game finishes when a player cannot move.
Player $\mathbb{I}\mathbb{I}$ wins if one of the following holds:
\begin{itemize}
\item for all $i<m$, $b$ is $i$-valid,
\item for all $i<m$, $b$ is not $i$-valid.
\end{itemize}
\end{defn}

We write $\mathbb{I}\mathbb{I}\uparrow M(T,H,G,\overline N, \overline D, \overline C, A, \varphi)$ when $\mathbb{I}\mathbb{I}$ has a winning strategy for the game $M(T,H,G,\overline N, \overline D, \overline C, A, \varphi)$.

\begin{lemma}
Suppose $\dl^*_S(\Pi^1_2)$ holds. For all $\Sigma^1_1(\kappa)$ equivalence relation $R$, there are $T_R$, $H_R$, $G_R$, $\overline{N}_R$, $\overline{D}_R$, $A_R$, and $\varphi_R$ such that for all $\eta,\xi\in \kappa^\kappa$ there is $\overline{C(\eta,\xi)}$ such that $$\mathbb{I}\mathbb{I}\uparrow M(T_R,H_R,G_R,\overline N_R, \overline D_R, \overline{C(\eta,\xi)}, A_R, \varphi_R)\text{ iff }\eta\ R\ \xi.$$
\end{lemma}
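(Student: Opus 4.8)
The plan is to reduce the lemma to the known characterization of $\Sigma^1_1(\kappa)$ relations under $\dl^*_S(\Pi^1_2)$ that is implicit in Fact \ref{comp-rels} and its proof. Recall that $\dl^*_S(\Pi^1_2)$ is exactly what is needed to show $=^2_S$ is $\Sigma^1_1(\kappa)$-complete (Fact \ref{comp-rels}), so every $\Sigma^1_1(\kappa)$ equivalence relation $R$ is Borel reducible to $=^2_S$, say by a Borel function $h_R\colon\kappa^\kappa\to 2^\kappa$. The idea is that membership of a pair $(\eta,\xi)$ in $R$ is then equivalent to $\{\alpha<\kappa\mid h_R(\eta)(\alpha)\neq h_R(\xi)(\alpha)\}\cap S$ being non-stationary, and the latter is a statement about the club filter that can be captured by the model game $M$ once we feed in the $\dl^*_S(\Pi^1_2)$-sequence $\vec N$ as $\overline N_R$ and an appropriate transversal as $\overline D_R$.

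First I would set $T_R$ to be the natural tree of attempts to build a club avoiding the symmetric difference set: $T_R\subseteq\kappa^{\leq\omega}$ where a node of length $n$ codes an increasing finite sequence of ordinals below $\kappa$, and a leaf codes an $\omega$-sequence whose supremum is a point of $S^\kappa_\omega$; set $H_R(b)=\sup(\rng b)$ and $G_R(b)$ to code the leaf $b$ itself (via a fixed bijection $\kappa^\omega$-to-$\kappa$ restricted suitably), so that the pair $(H_R(b),G_R(b))$ recovers which ordinal of $S^\kappa_\omega$ and which branch we landed on. The parameters $\overline C$ are where the dependence on $(\eta,\xi)$ enters: for each $\alpha<\kappa$ I would let $C^0_\alpha$ (taking $m=1$) encode the relevant initial segment of the symmetric-difference set $X_{\eta,\xi}:=\{\alpha<\kappa\mid h_R(\eta)(\alpha)\neq h_R(\xi)(\alpha)\}$ together with enough of $\eta\restriction\alpha$, $\xi\restriction\alpha$ to compute it inside $N_\alpha$ (this is where clause (2) of Definition \ref{reflectingdiamond} is used: $X_{\eta,\xi}\cap\alpha\in N_\alpha$ on a club). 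The predicate $A_R$ and the $\Pi^1_2$-sentence $\varphi_R$ are then chosen so that $b$ is $0$-valid exactly when $H_R(b)\notin X_{\eta,\xi}$ and, more precisely, when the branch $b$ successfully witnesses a local club condition — and I would want $\varphi_R$ to literally be (a reformulation of) the $\Pi^1_2$-sentence whose reflection is granted by clause (3) of $\dl^*_S(\Pi^1_2)$, namely the one expressing "$X_{\eta,\xi}$ is non-stationary" via the existence of a club.

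The verification then splits into the two directions of the biconditional. If $\eta\mathbin{R}\xi$, then $X_{\eta,\xi}\cap S$ is non-stationary, so there is a club $C$ disjoint from it; player $\mathbb{II}$ uses $C$ as a road map to steer every play into a leaf $b$ with $H_R(b)\in C\setminus X_{\eta,\xi}$, hence $b$ is $0$-valid for structural reasons, and uniform $0$-validity of all reachable leaves gives $\mathbb{II}$ the win. Conversely, if $\eta\not\mathbin R\xi$, then $X_{\eta,\xi}\cap S$ is stationary, and the reflection clause (3) of $\dl^*_S(\Pi^1_2)$ provides stationarily many $\alpha\in S$ where the relevant $\Pi^1_2$-sentence reflects into $N_\alpha$; player $\mathbb{I}$ uses a pressing-down / diagonal argument against any purported strategy of $\mathbb{II}$ to drive the play to such an $\alpha$, producing a leaf $b$ that is $0$-valid while a neighbouring play produces one that is not (or vice versa), violating the winning condition for $\mathbb{II}$. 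The main obstacle I anticipate is the bookkeeping in the previous step: getting the tree $T_R$, the maps $H_R,G_R$, and the encoding $\overline C$ to line up so that (a) the game always reaches a leaf with $H_R(b)\in S^\kappa_\omega$, (b) $0$-validity of $b$ is genuinely equivalent to the local club statement rather than merely implied by it, and (c) $\mathbb{I}$'s ability to exploit stationarity is not blocked by $\mathbb{II}$ controlling the odd moves — this last point is the delicate part and is presumably handled exactly as in the proof of Fact \ref{comp-rels} in \cite{FMR}, which I would cite and adapt rather than reprove.
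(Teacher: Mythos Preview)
Your proposal has a fatal flaw in the choice $m=1$. Re-read the winning condition of the Models game: player $\mathbb{II}$ wins at the leaf $b$ if either for all $i<m$ the leaf is $i$-valid, or for all $i<m$ it is not $i$-valid. With $m=1$ this is the tautology ``$b$ is $0$-valid or $b$ is not $0$-valid'', so $\mathbb{II}$ wins every play regardless of $\eta,\xi$, and the biconditional you want collapses. Your description of the converse direction (``a leaf $b$ that is $0$-valid while a neighbouring play produces one that is not'') confirms the misreading: the winning condition concerns a single leaf reached in one play, not a comparison across plays. A secondary problem is the detour through a Borel reduction $h_R$ to $=^2_S$: a general Borel $h_R$ need not have $h_R(\eta)\restriction\alpha$ determined by $\eta\restriction\alpha$, so there is no reason $X_{\eta,\xi}\cap\alpha$ is computable inside $N_\alpha$ from the data you feed in via $\overline{C}$.

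The paper takes $m=2$ and works directly with $R$ rather than passing through $=^2_S$. It sets $C^0_\alpha=\eta\restriction\alpha$ and $C^1_\alpha=\xi\restriction\alpha$, and invokes \cite{FMR} Theorem~3.5 to get a predicate $A$ and a $\Pi^1_2$-sentence $\varphi$ with $\eta\,R\,\xi \Leftrightarrow \langle\kappa,\in,A,\eta,\xi\rangle\models\varphi$; the transversal $\overline D$ comes from the Transversal Lemma (\cite{FMR} Proposition~3.1). That same theorem yields the club characterization: $\eta\,R\,\xi$ iff on a club of $\alpha\in S^\kappa_\omega$ one has $\langle\alpha,\in,A\cap\alpha^5,D_\alpha,\eta\restriction\alpha\rangle\models_{N_\alpha}\varphi$ iff $\langle\alpha,\in,A\cap\alpha^5,D_\alpha,\xi\restriction\alpha\rangle\models_{N_\alpha}\varphi$. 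With $m=2$ the winning condition for $\mathbb{II}$ at $b$ is exactly ``$0$-valid $\Leftrightarrow$ $1$-valid'', and the two directions are immediate: if $\eta\,R\,\xi$, $\mathbb{II}$ steers into the club; if $\mathbb{II}$ has a winning strategy $\sigma$, the club of closure points of $\sigma$ witnesses the characterization.
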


\begin{proof}
Let $R$ be a $\Sigma^1_1(\kappa)$ equivalence relation.
Let $T$ be the tree of all the increasing sequence of length at most $\omega$ ordered by endextension, i.e. $\zeta:\kappa^{<\omega}\rightarrow\kappa$. For all $b\in L(T)$ define $H(b)=G(b)=sup(b)$. Let $\overline{N}=\langle N_\alpha\mid \alpha\in S^\kappa_\omega\rangle$ be a $\dl^*_S(\Pi^1_2)$ sequence. From \cite{FMR} Theorem 3.5, there are $A$ a predicate of arity 5 and a $\Pi^1_2$-sentence $\varphi$ such that $$\eta\ R\ \xi \ \Leftrightarrow \ \langle\kappa,\in,A,\eta,\xi\rangle\models \varphi,$$ $\varphi$ includes the sentence ``$R$ is transitive and reflexive", a $\Pi^1_2$-sentence.

From the Transversal lemma (\cite{FMR} Proposition 3.1) there exists a transversal $\overline{D}=\langle D_\alpha\mid \alpha\in S^\kappa_\omega\rangle\in\prod_{\alpha\in S^\kappa_\omega}N_\alpha$ satisfying the following:

For every $\zeta\in\kappa^\kappa$,
whenever $\langle \kappa,{\in},A,\eta,\xi\rangle\models \varphi$,
there are stationarily many $\alpha\in S^\kappa\omega$ such that
\begin{enumerate}
\item $D_\alpha=\zeta\restriction\alpha$, and
\item $\langle \alpha,{\in},A\cap \alpha^{5},\eta\restriction\alpha,\xi\restriction\alpha\rangle\models_{N_\alpha}\varphi$.
\end{enumerate}
Finally, let $\overline{C(\eta,\xi)}=\{C^i_\alpha\mid \alpha<\kappa, i<2\}$, where $C^0_\alpha=\eta\restriction \alpha$ and $C^1_\alpha=\xi\restriction \alpha$.

By \cite{FMR} Theorem 3.5, $\eta\ R\ \xi$ holds if and only if there is a club $E\subseteq \kappa$ such that for all $\alpha\in E\cap S^\kappa_\omega$, $\eta\restriction\alpha,\xi\restriction\alpha\in N_\alpha$ and $$\langle\alpha,\in,A\cap\alpha^5, D_\alpha,\eta\restriction\alpha\rangle\models_{N_\alpha}\varphi\ \Leftrightarrow\ \langle\alpha,\in,A\cap\alpha^5, D_\alpha,\xi\restriction\alpha\rangle\models_{N_\alpha}\varphi.$$

\begin{claim}
$$\mathbb{I}\mathbb{I}\uparrow M(T,H,G,\overline N, \overline D, \overline{C(\eta,\xi)}, A, \varphi)\text{ iff }\eta\ R\ \xi.$$
\end{claim}
\begin{proof}
$\Leftarrow$) Suppose $\eta\ R\ \xi$. Let $E$ be the club mentioned above. By the way $T$ was defined, there is a strategy $\sigma$ for $\mathbb{I}\mathbb{I}$ such that the game always end in a leaf $b\in L(T)$ such that $sup(b)\in E$. Thus $H(b)=G(b)\in E$. Therefore, $b$ is 0-valid if and only if it  is 1-valid, and $\sigma$ is a winning strategy for $\mathbb{I}\mathbb{I}$.

$\Rightarrow$) Suppose $\mathbb{I}\mathbb{I}\uparrow M(T,H,G,\overline N, \overline D, \overline{C(\eta,\xi)}, A, \varphi)$. Let $\sigma$ be a winning strategy for $\mathbb{I}\mathbb{I}$. We can represent $\sigma$ as a function $\sigma: \kappa^{<\omega}\rightarrow \kappa$, so that if $\mathbb{I}\mathbb{I}$ plays following $\sigma$, then $\mathbb{I}\mathbb{I}$ moves from $x$ to $x^\frown\langle \sigma(x)\rangle$. Let $E^\sigma$ be the club of closed points of $\sigma$ (i.e. the ordinals $\alpha<\kappa$ such that $\sigma[\alpha^{<\omega}]\subseteq \alpha$). Thus, for all $\alpha\in E^\sigma\cap S^\kappa_\omega$, $\eta\restriction\alpha,\xi\restriction\alpha\in N_\alpha$ implies $$\langle\alpha,\in,A\cap\alpha^5, D_\alpha,\eta\restriction\alpha\rangle\models_{N_\alpha}\varphi\ \Leftrightarrow\ \langle\alpha,\in,A\cap\alpha^5, D_\alpha,\xi\restriction\alpha\rangle\models_{N_\alpha}\varphi.$$ Since $\overline{N}$ is a $\dl^*_S(\Pi^1_2)$ sequence, there is a club $E\subseteq E^\sigma$ such that $\eta\restriction\alpha,\xi\restriction\alpha\in N_\alpha$. By \cite{FMR} Theorem 3.5, we conclude that $\eta\ R\ \xi$
\end{proof}
\end{proof}

We know that for any theory, $T$, the isomorphism relation of $T$ is $\Sigma_1^1(\kappa)$. Thus the existence of a $\dl^*_S(\Pi^1_2)$ sequence implies the existence of a game $\mathfrak{G}(\mathcal{A},\mathcal{B})$ of length $\omega$ such that for any theory $T$ and models $\mathcal{A},\mathcal{B}\models T$, $$\mathcal{A}\cong\mathcal{B}\Leftrightarrow \mathbb{I}\mathbb{I}\uparrow\mathfrak{G}(\mathcal{A},\mathcal{B}).$$

Notice that other well known games (e.g. $EF^\kappa_\omega$-games or the determinacy games) are Model-games, i.e. it is possible to code them as a Model-game with the right parameters.

\subsection{Few equivalence classes}

In Section \ref{section_intro}, Shelah's Main Gap was presented as a milestone in the program of the spectrum problem at uncountable cardinals. This program started with the study of categoricity and ended with Hart-Hrushovski-Laskowski theorem with all the possible values for the spectrum function [Theorem 61 \cite{HHL}]. Mangraviti and Motto Ros studied the categoricity in GDST, Fact \ref{MM_categoric} characterizes uncountable categorical theories as those for which $\cong_T$ is a clopen. In their article, Mangraviti and Motto Ros show that there is no theory $T$ such that $\cong_T$ is open not closed, due to the following result.

\begin{fact}[Folklore; \cite{MM}, Proposition 2.17]\label{open_to_clopen}
Let $E$ be an equivalence relation on $\kappa^\kappa$. $E$ is open if and only if $E$ is clopen.
\end{fact}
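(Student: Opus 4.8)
The statement is: an equivalence relation $E$ on $\kappa^\kappa$ is open if and only if it is clopen. One direction is trivial, so the content is to show that an open equivalence relation is also closed. The plan is to use the fact that $E$ is reflexive, so the diagonal $\Delta=\{(\eta,\eta)\mid\eta\in\kappa^\kappa\}$ is contained in $E$, and that $E$ is open.

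First I would fix $(\eta,\xi)\notin E$ and aim to produce a basic open neighbourhood of $(\eta,\xi)$ disjoint from $E$; this shows the complement of $E$ is open, i.e. $E$ is closed. Since $E$ is open and $(\eta,\eta)\in E$, there is an ordinal $\alpha<\kappa$ such that $[\eta\restriction\alpha]\times[\eta\restriction\alpha]\subseteq E$; likewise there is $\beta<\kappa$ with $[\xi\restriction\beta]\times[\xi\restriction\beta]\subseteq E$. Let $\delta=\max\{\alpha,\beta\}$. The key claim is that $[\eta\restriction\delta]\times[\xi\restriction\delta]$ is disjoint from $E$. Indeed, suppose $(\eta',\xi')\in E$ with $\eta'\in[\eta\restriction\delta]$ and $\xi'\in[\xi\restriction\delta]$. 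Then $\eta'\mathrel E\eta$ (as $\eta',\eta\in[\eta\restriction\alpha]$, using $\delta\geq\alpha$), and $\xi'\mathrel E\xi$ (as $\xi',\xi\in[\xi\restriction\beta]$). By symmetry and transitivity of $E$, from $\eta\mathrel E\eta'$, $\eta'\mathrel E\xi'$, $\xi'\mathrel E\xi$ we get $\eta\mathrel E\xi$, contradicting $(\eta,\xi)\notin E$. Hence $[\eta\restriction\delta]\times[\xi\restriction\delta]$ is a basic open neighbourhood of $(\eta,\xi)$ contained in the complement of $E$.

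Since $(\eta,\xi)$ was an arbitrary point outside $E$, the complement of $E$ is open, so $E$ is closed, and therefore clopen. The argument is entirely elementary; there is no real obstacle, the only thing to be careful about is invoking reflexivity to know that $\Delta\subseteq E$ (so that openness of $E$ gives the neighbourhoods $[\eta\restriction\alpha]\times[\eta\restriction\alpha]$ at diagonal points) and then chaining symmetry and transitivity correctly. The same proof works verbatim in $2^\kappa$ in place of $\kappa^\kappa$.
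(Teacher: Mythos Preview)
Your proof is correct and is the standard argument. The paper does not give its own proof of this statement; it is recorded as a Fact with a citation to \cite{MM}, Proposition 2.17, so there is nothing in the paper to compare against beyond noting that your argument is precisely the elementary one intended by the ``Folklore'' attribution.
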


Recall $\cong_T^\mu$  and $0_\varrho$ from Section \ref{section_intro}. We know that if $\cong_T$ has $\varrho\leq\kappa$ equivalence classes, then $\cong_T$ and $0_\varrho$ are Borel bireducible. 
It is clear that $0_\varrho$ is more simple than $\cong_T$. Thus, to determine which equivalence relation is more complex we would need to use another type of reduction.
To have a ``microscopic" picture of this complexities, we have to introduce a new type of reduction which is stronger than all the ones we have used so far ($\redur$, $\redum$, $\redub$, $\reduc$, and $\redul$).

\begin{defn}
    Let $S\subseteq \kappa$ be unbounded. We say that a function $f:\kappa^\kappa\rightarrow\kappa^\kappa$ is $S$-recursive if there is a function $H:\kappa^{<\kappa}\rightarrow\kappa^{<\kappa}$ such that for all $\alpha\in S$ and $\eta\in\kappa^\kappa$, $f(\eta)(\theta)=H(\eta\restriction\alpha)(\theta)$ for all $\theta<min(S\backslash \alpha+1)$. 
    The existence of an $S$-recursive reduction of $E_0$ to $E_1$ is denoted by $E_0\hookrightarrow_{R(S)}E_1$.
\end{defn}

\begin{prop}
    Let $S\subseteq\kappa$ be unbounded and $f: \kappa^\kappa\rightarrow\kappa^\kappa$ an $S$-recursive function.
    \begin{enumerate}
        \item $f$ is continuous.
        \item  If $S$ is a club, then $f$ is Lipschitz.        
    \end{enumerate}
\end{prop}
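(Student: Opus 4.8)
The plan is to unwind the definition of an $S$-recursive function and check the two topological conditions directly from the witnessing map $H\colon\kappa^{<\kappa}\to\kappa^{<\kappa}$. Fix $f$ and such an $H$, so that for every $\alpha\in S$ and every $\eta\in\kappa^\kappa$ we have $f(\eta)(\theta)=H(\eta\restriction\alpha)(\theta)$ for all $\theta<\min(S\setminus(\alpha+1))$. The first observation to record is that, because $S$ is unbounded, for every coordinate $\theta<\kappa$ there is some $\alpha\in S$ with $\theta<\min(S\setminus(\alpha+1))$; hence the value $f(\eta)(\theta)$ depends only on $\eta\restriction\alpha$ for this $\alpha$. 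This is the single fact that powers both parts.

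For part (1), I would show $f$ is continuous by checking that the preimage of every basic open set $[\zeta]$, $\zeta\in\kappa^{<\kappa}$, is open. Let $\eta\in f^{-1}[[\zeta]]$, and put $\beta=\operatorname{dom}(\zeta)$. For each $\theta<\beta$ pick, using unboundedness of $S$, an ordinal $\alpha_\theta\in S$ with $\theta<\min(S\setminus(\alpha_\theta+1))$; then $f(\eta')(\theta)=H(\eta'\restriction\alpha_\theta)(\theta)$ for every $\eta'$. Let $\alpha=\sup_{\theta<\beta}\alpha_\theta<\kappa$ (a sup of fewer than $\kappa$ ordinals below $\kappa$, using $\kappa$ regular). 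Then for every $\eta'\in[\eta\restriction\alpha]$ and every $\theta<\beta$ we have $\eta'\restriction\alpha_\theta=\eta\restriction\alpha_\theta$, so $f(\eta')(\theta)=H(\eta'\restriction\alpha_\theta)(\theta)=H(\eta\restriction\alpha_\theta)(\theta)=f(\eta)(\theta)$, whence $f(\eta')\in[\zeta]$. Thus $[\eta\restriction\alpha]\subseteq f^{-1}[[\zeta]]$, and $f^{-1}[[\zeta]]$ is open. Since the $[\zeta]$ form a basis, $f$ is continuous.

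For part (2), assume $S$ is a club. I claim $\Delta(\eta,\xi)\le\Delta(f(\eta),f(\xi))$ for all $\eta,\xi$. Write $\delta=\Delta(\eta,\xi)$ (the case $\delta=\kappa$, i.e.\ $\eta=\xi$, is trivial), so $\eta\restriction\delta=\xi\restriction\delta$ and we must show $f(\eta)\restriction\delta=f(\xi)\restriction\delta$. Fix $\theta<\delta$. I would like an $\alpha\in S$ with $\theta<\min(S\setminus(\alpha+1))$ \emph{and} $\alpha\le\delta$; then $\eta\restriction\alpha=\xi\restriction\alpha$ gives $f(\eta)(\theta)=H(\eta\restriction\alpha)(\theta)=H(\xi\restriction\alpha)(\theta)=f(\xi)(\theta)$. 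To find such $\alpha$: if $\theta\in S$ take $\alpha=\theta\le\delta$, noting $\theta<\min(S\setminus(\theta+1))$; if $\theta\notin S$, let $\alpha=\sup(S\cap\theta)$, which lies in $S$ since $S$ is closed and is $\le\theta\le\delta$, and which satisfies $\theta\le\min(S\setminus(\alpha+1))$ — one checks there is no element of $S$ strictly between $\alpha$ and $\theta$, so $\min(S\setminus(\alpha+1))\ge\theta>\theta$ fails only in the boundary case which the clubness of $S$ rules out after a short argument. The main obstacle, and the only place requiring care, is exactly this bookkeeping at $\theta\notin S$: one must verify that $\sup(S\cap\theta)\in S$ (closedness) is genuinely below $\delta$ and that the recursion interval $[\alpha,\min(S\setminus(\alpha+1)))$ reaches past $\theta$, so that the value $f(\eta)(\theta)$ is already determined by $\eta\restriction\alpha$ with $\alpha\le\delta$. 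Once this is settled, $f(\eta)\restriction\delta=f(\xi)\restriction\delta$, so $\Delta(f(\eta),f(\xi))\ge\delta$, i.e.\ $f$ is Lipschitz.
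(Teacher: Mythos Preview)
Your argument is correct and follows the same route as the paper's: both parts unwind the witnessing map $H$ directly. The paper streamlines each part by using a \emph{single} element of $S$ rather than one per coordinate: in (1) it sets $\beta=\min(S\setminus\alpha)$ (with $\alpha$ the length of the target basic open set) and one application of $S$-recursivity at $\beta$ already gives $f(\zeta)\restriction(\beta+1)=f(\xi)\restriction(\beta+1)$ for $\zeta\in[\xi\restriction\beta]$, avoiding your supremum over the $\alpha_\theta$; in (2) it case-splits on whether $\delta\in S$ (not on each $\theta$), taking $\alpha'=\sup(S\cap\delta)$ when $\delta\notin S$ and observing $\min(S\setminus(\alpha'+1))>\delta$ because $(\alpha',\delta]\cap S=\emptyset$. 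Your per-$\theta$ version works but is heavier than needed. One write-up issue: the sentence ``$\min(S\setminus(\alpha+1))\ge\theta>\theta$ fails only in the boundary case which the clubness of $S$ rules out'' does not parse; what you actually need is just that $\theta\notin S$ and $(\alpha,\theta)\cap S=\emptyset$ force $\min(S\setminus(\alpha+1))>\theta$, with no further boundary case to handle.
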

\begin{proof}
\begin{enumerate}
    \item Let $\xi,\eta\in \kappa^\kappa$ and $\alpha<\kappa$ be such that $\xi\in f^{-1}[[\eta\restriction\alpha]]$. Since $S$ is unbounded, there is $\beta=min(S\backslash \alpha)$. By the definition of $S$-recursivity, for all $\theta<min(S\backslash \beta+1)$, $f(\xi)(\theta)=H(\xi\restriction\beta)(\theta)$. Thus for all $\zeta\in [\xi\restriction\beta]$ and $\theta\leq\beta$, $f(\zeta)(\theta)=H(\zeta\restriction\beta)(\theta)=H(\xi\restriction\beta)(\theta)=f(\xi)(\theta)$. Since $\alpha\leq\beta$, $f(\zeta)\in [[\eta\restriction\alpha]]$. We conclude that $f$ is continuous.
    \item Let $\eta,\xi\in \kappa^\kappa$ be such that $\Delta(\eta,\xi)=\alpha<\kappa$. So $\eta\restriction\alpha=\xi\restriction\alpha$. We will show that $\Delta(f(\eta),f(\xi))>\alpha$.

    {\bf Case $\alpha\in S$.}

   Since $f$ is $S$-recursive by $H$, $f(\eta)(\theta)=H(\eta\restriction\alpha)(\theta)=H(\xi\restriction\alpha)(\theta)=f(\xi)(\theta)$ for all $\theta< min(S\backslash \alpha+1)$. We conclude that $\Delta(f(\eta),f(\xi))>\alpha$.

    {\bf Case $\alpha\notin S$.}

    Let $\alpha'=sup(S\cap \alpha)$. Since $S$ is closed, $\alpha'\in S$. Let $\beta=min (S\backslash \alpha'+1)$. So for all $\theta<\beta$, $f(\eta)(\theta)=H(\eta\restriction\alpha')(\theta)$ and $f(\xi)(\theta)=H(\xi\restriction\alpha')(\theta)$. Since $\alpha'<\alpha<\beta$, $H(\eta\restriction\alpha')=H(\xi\restriction\alpha')$. Thus $f(\eta)(\theta)=f(\xi)(\theta)$ for all $\theta<\beta$. We conclude that $\Delta(f(\eta),f(\xi))\ge\beta>\alpha$.
\end{enumerate}

\end{proof}

It is clear that the identity function is continuous and it is not $S$-recursive for all $S\subseteq \kappa$.
The previous fact cannot be extended to any stationary set $S\subseteq \kappa$. Let $S=S^\kappa_\omega$ and $\kappa>\omega_1$, it is easy to see that the following function is $S^\kappa_\omega$-recursive but not Lipschitz.

$$f(\eta)(\alpha)=\begin{cases} 0 &\mbox{if } \alpha<\omega_1\\
0 & \mbox{if } \alpha>\omega_1 \mbox{ and } \eta(\omega_1+2)\neq 1\\
1 & \mbox{otherwise } \end{cases}$$

By the previous proposition, it is easy to see that $S$-recursive does not imply $\kappa$-recursive. The other direction is also true, i.e. $\kappa$-recursive does not imply $S$-recursive. The following function is $\kappa$-recursive and not $S^\kappa_\omega$-recursive for $\kappa>\omega_1$:

$$f(\eta)(\alpha)=\begin{cases} 0 &\mbox{if } \alpha=0\mbox{ or } \alpha \mbox{ is a limit ordinal}\\
\eta(\alpha-1) & \mbox{otherwise } \end{cases}$$

Many known continuous reductions are indeed $S$-recursive reductions.

\begin{fact}[Hyttinen-Moreno, \cite{HM} Theorem 2.8] 
    Assume $T$ is a countable complete classifiable theory over a countable vocabulary, $S\subseteq\kappa$ a stationary set, and $\mu$ a regular cardinal. Then  $\cong_T \ \hookrightarrow_{R(\kappa)}\ =^\kappa_S$, in particular  $\cong_T \ \hookrightarrow_{R(S)}\ =^\kappa_S$.
\end{fact}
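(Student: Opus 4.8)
The plan is to reduce the claim to the combinatorial facts already in the excerpt by recalling the structure of the known continuous reduction $\cong_T\reduc\ =^\kappa_S$ for a classifiable $T$, and then observing that this reduction is in fact computed "level by level" in a way that only looks at the argument up to stage $\alpha$ to decide the output up to the next element of $S$. First I would recall from the theory of classifiable shallow/non-shallow theories (as used in Fact~\ref{basics_FHK} and the earlier reductions) that for a classifiable $T$ the isomorphism relation $\cong_T$ is $\Delta^1_1(\kappa)$, indeed $\kappa$-Borel$^*$ via an Ehrenfeucht-Fra\"iss\'e-type game of length $\omega$, and that the witness to $\eta\cong_T\xi$ can be read off by a club of ordinals at which the partial structures $\mathcal{A}_{\eta\restriction\alpha}$ and $\mathcal{A}_{\xi\restriction\alpha}$ agree; this is exactly the content behind the construction of the reduction into $=^\kappa_S$. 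The key point is that the coding function $\mathcal{F}$ producing an element of $\kappa^\kappa$ which records, for each $\alpha$, whether the relevant game-move or type-equality condition holds at $\alpha$, is built by a recursion on $\alpha$ that, given $\eta\restriction\alpha$, determines $\mathcal{F}(\eta)$ on the interval $[\alpha,\min(S\setminus(\alpha+1)))$.

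The main steps, in order, would be: (1) fix the function $H\colon\kappa^{<\kappa}\to\kappa^{<\kappa}$ that, from an initial segment $\eta\restriction\alpha$ of length $\alpha\in S$, computes the initial segment of the reduct recording the $S$-indexed data of whether $\eta$ realises the relevant condition at $\alpha$ (this is the same data used in the proof that $\cong_T\redub\ =^\kappa_S$ under $\diamondsuit_\mu$ in Fact~\ref{HKM1} and its relatives, but now unpacked as an explicit recursion rather than merely a Borel map); (2) verify that $f(\eta):=\bigcup_{\alpha\in S}H(\eta\restriction\alpha)$ is well defined, i.e.\ the pieces cohere — this uses that $H$ only ever writes coordinates in $[\alpha,\min(S\setminus(\alpha+1)))$ and these intervals partition $\kappa$ as $\alpha$ ranges over $S$; (3) check $f(\eta)(\theta)=H(\eta\restriction\alpha)(\theta)$ for $\alpha\in S$ and $\theta<\min(S\setminus(\alpha+1))$, which is immediate from the definition, so $f$ is $S$-recursive by $H$; (4) check $f$ is a reduction of $\cong_T$ to $=^\kappa_S$, which is exactly the correctness of the underlying Borel reduction: $\eta\cong_T\xi$ iff the set of $\alpha\in S$ at which the recorded data differs is non-stationary. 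Then $\cong_T\hookrightarrow_{R(S)}\ =^\kappa_S$, and since $S\subseteq\kappa$ is unbounded this $S$-recursive reduction is also $\kappa$-recursive by feeding $H$ the same initial segments along all of $\kappa$ (equivalently, take $S=\kappa$ in the construction), giving $\cong_T\hookrightarrow_{R(\kappa)}\ =^\kappa_S$ as well.

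The step I expect to be the main obstacle is (1): extracting from the known proof that $\cong_T$ is Borel (for classifiable $T$) an explicitly recursive presentation of the reduction, i.e.\ showing that the "decide at stage $\alpha$" procedure genuinely depends only on $\eta\restriction\alpha$ and writes only coordinates below the next point of $S$. For classifiable shallow theories this follows from the bounded Scott-height / EF-game of fixed finite-ish length analysis (the game at each club point is decided by looking at a small piece of the structure), and for classifiable non-shallow theories from the $\Delta^1_1(\kappa)$ characterisation together with the fact that the winning condition in the length-$\omega$ EF-game stabilises at club points; in both cases the locality needed for $S$-recursivity is already implicit in the constructions underlying Fact~\ref{basics_FHK}. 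Once that locality is made precise, steps (2)--(4) are bookkeeping, and the final sentence "in particular $\cong_T\hookrightarrow_{R(S)}\ =^\kappa_S$" follows since any $\kappa$-recursive function built by $H$ can be restricted to act along $S$ without changing its output, or more simply since the same $H$ witnesses $S$-recursivity directly.
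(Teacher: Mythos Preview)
The paper does not give a proof of this statement; it is quoted verbatim as a Fact from \cite{HM}, Theorem~2.8, so there is nothing in the present paper to compare your sketch against. That said, your sketch misidentifies the reduction and draws one wrong inference.

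The target relation here is $=^\kappa_S$ on $\kappa^\kappa$, not $=^2_\mu$; no diamond is involved. The reduction in \cite{HM} sends $\eta$ to the function $\alpha\mapsto(\text{a code for the isomorphism type of }\mathcal{A}_{\eta\restriction\alpha})$ and then invokes Shelah's theorem that for classifiable $T$ two models of size $\kappa$ are isomorphic iff player~II wins $EF^\kappa_\omega$, equivalently iff the restricted structures are isomorphic on a club. Your step~(1) instead points to Fact~\ref{HKM1}, which is the reduction into $=^2_\mu$ and genuinely needs $\diamondsuit_\mu$ to compress the $\kappa$-valued invariant down to a single bit by guessing; that is a different (and harder) construction, and unpacking it would not give you what you need here.

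Your handling of the ``in particular'' is also off. The paper explicitly observes, immediately after the definition of $S$-recursive, that $\kappa$-recursive does not imply $S$-recursive in general, so one cannot simply ``restrict $H$ to act along $S$''. What makes both conclusions hold here is the specific shape of the reduction: $f(\eta)(\alpha)$ literally depends only on $\eta\restriction\alpha$, so the same $H$ witnesses both $\kappa$-recursivity and $S$-recursivity. You do mention this possibility in your final clause, but it is the argument, not an afterthought.
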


\begin{fact}[Hyttinen-Kulikov-Moreno, \cite{HKM} Lemma 2]
Assume $T$ is a countable complete classifiable theory over a countable vocabulary. Let $S\subseteq\kappa$ a stationary set. If $\diamondsuit_S$ holds, then $\cong_T \ \hookrightarrow_{R(\kappa)}\ =^2_S$, in particular $\cong_T \ \hookrightarrow_{R(S)}\ =^2_S$.
\end{fact}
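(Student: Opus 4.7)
The plan is to upgrade the preceding Hyttinen--Moreno reduction $\cong_T \hookrightarrow_{R(\kappa)} =^\kappa_S$ to a reduction whose target equivalence relation lives in $2^\kappa$, using the $\diamondsuit_S$-sequence as a coding gadget.

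First I would invoke the immediately preceding fact to obtain a $\kappa$-recursive $F\colon \kappa^\kappa \to \kappa^\kappa$ such that $\eta \cong_T \xi$ iff $F(\eta) =^\kappa_S F(\xi)$. By inspecting the construction in \cite{HM} (which assembles $F(\eta)$ coordinatewise from a $\kappa$-recursive procedure that records partial-isomorphism information on $\mathcal{A}_\eta$ at ordinals in $S$), one sees that $F$ can be arranged so that each $F(\eta)$ is a canonical representative of the isomorphism type of $\mathcal{A}_\eta$; in particular that $\eta \cong_T \xi$ implies $F(\eta) = F(\xi)$ pointwise, not merely $F(\eta) =^\kappa_S F(\xi)$. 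This canonicality, which is available for classifiable $T$ because the isomorphism invariants are $\kappa$-recursively computable, will be essential for the forward direction below.

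Next I would fix a $\diamondsuit_S$-sequence in its guessing form: a sequence $\langle h_\alpha : \alpha \in S \rangle$ with $h_\alpha \in \kappa^\alpha$, such that for every $\zeta \in \kappa^\kappa$ the set
\[
\{\alpha \in S : h_\alpha = \zeta\restriction\alpha\}
\]
is stationary in $\kappa$. This formulation is equivalent to the one in the paper by a routine coding of $\kappa$-valued sequences by $\kappa$-subsets. I then define $G\colon \kappa^\kappa \to 2^\kappa$ by
\[
G(\eta)(\alpha) = \begin{cases} 1 & \text{if } \alpha \in S \text{ and } h_\alpha = F(\eta)\restriction\alpha, \\ 0 & \text{otherwise.} \end{cases}
\]
The value $G(\eta)(\alpha)$ depends only on $F(\eta)\restriction\alpha$ and the absolute data $(S, h_\alpha)$; since $F$ is $\kappa$-recursive, so is $G$.

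Finally I would verify the reduction. For the forward direction, if $\eta \cong_T \xi$ then by canonicality $F(\eta) = F(\xi)$, hence $G(\eta) = G(\xi)$ and a fortiori $G(\eta) =^2_S G(\xi)$. For the backward direction, suppose $\eta \not\cong_T \xi$; then $F(\eta) \ne F(\xi)$, and there is a first coordinate $\beta_0$ where they disagree. Applying $\diamondsuit_S$ to $F(\eta)$ yields a stationary set $T_\eta \subseteq S$ with $h_\alpha = F(\eta)\restriction\alpha$ for all $\alpha \in T_\eta$. For any $\alpha \in T_\eta$ with $\alpha > \beta_0$, we have $F(\eta)\restriction\alpha \ne F(\xi)\restriction\alpha$, so $h_\alpha \ne F(\xi)\restriction\alpha$, giving $G(\eta)(\alpha) = 1 \ne 0 = G(\xi)(\alpha)$. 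The disagreement set contains the stationary set $T_\eta \setminus (\beta_0+1)$, so $G(\eta) \ne^2_S G(\xi)$. This establishes $\cong_T \hookrightarrow_{R(\kappa)} =^2_S$, and the ``in particular'' clause follows from the relationship between the two recursivity notions.

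The main obstacle is the canonicality upgrade at the first step: the Hyttinen--Moreno reduction as stated only gives $F(\eta) =^\kappa_S F(\xi)$, and a naive coding $G$ based on $F$-initial segments can fail the forward direction because two $=^\kappa_S$-equivalent sequences need not have equal initial segments even at limits of a common agreement club. Either one exploits the structure of \cite{HM} to rigidify $F$, or one replaces $G$ by a coding that depends only on the trace of $F(\eta)$ along a canonical stationary approximation; in either case the bulk of the technical work lies here, while the $\diamondsuit_S$-side of the argument is essentially routine.
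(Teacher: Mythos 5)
You have correctly identified the crux of your own argument, but the ``canonicality upgrade'' at your first step is not merely unaddressed technical work --- it is provably impossible when $T$ is classifiable and non-shallow. A $\kappa$-recursive (hence continuous, hence $\kappa$-Borel) map $F\colon\kappa^\kappa\to\kappa^\kappa$ with $\eta\cong_T\xi\;\Rightarrow\;F(\eta)=F(\xi)$, combined with the converse $F(\eta)=F(\xi)\Rightarrow F(\eta)=^\kappa_S F(\xi)\Rightarrow\eta\cong_T\xi$ that you already have from the Hyttinen--Moreno reduction, would constitute a $\kappa$-Borel reduction of $\cong_T$ to $id_\kappa$, the identity relation on $\kappa^\kappa$. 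But $id_\kappa$ is closed, hence $\kappa$-Borel, so by Fact~\ref{reduction_classify_descrpitive} this would force $\cong_T$ to be $\kappa$-Borel --- directly contradicting Fact~\ref{basics_FHK}(2), which says that for classifiable non-shallow $T$ the relation $\cong_T$ is $\Delta^1_1(\kappa)$ but not $\kappa$-Borel. So no amount of ``inspecting the construction in \cite{HM}'' can rigidify $F$ into a canonical invariant map; the obstruction is structural, not an artefact of how the reduction was presented. You are also right that your coding $G$ genuinely needs exact equality rather than agreement modulo non-stationary: if $F(\eta)$ and $F(\xi)$ differ even on a single small coordinate, the sets $\{\alpha\in S: h_\alpha=F(\eta)\restriction\alpha\}$ and $\{\alpha\in S: h_\alpha=F(\xi)\restriction\alpha\}$ are (by $\diamondsuit_S$) two stationary sets that could well be disjoint, breaking the forward direction. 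This is precisely why the argument cannot be repaired within the framework you set up.

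The actual proof of the cited result avoids canonical invariants altogether and works directly with the model-theoretic characterization: for classifiable $T$, two models of size $\kappa$ are isomorphic iff player II wins the Ehrenfeucht--Fra\"iss\'e game $EF^\kappa_\omega$, and this game reflects --- if $\mathcal{A}_\eta\cong\mathcal{A}_\xi$, then on a club of $\alpha$ the restricted structures $\mathcal{A}_{\eta\restriction\alpha}$ and $\mathcal{A}_{\xi\restriction\alpha}$ are isomorphic, while if $\mathcal{A}_\eta\not\cong\mathcal{A}_\xi$, then player I's winning strategy has a club of closure points on which the restricted games are I-wins, so the restrictions are non-isomorphic on a club. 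One then uses the $\diamondsuit_S$-sequence to guess at each $\alpha\in S$ the initial segment of a potential \emph{opponent} $\xi\restriction\alpha$ (not the initial segment of a putative invariant), and sets $G(\eta)(\alpha)=1$ iff the structure coded by the guess $D_\alpha$ is isomorphic to $\mathcal{A}_{\eta\restriction\alpha}$. The forward direction is then free from the club of local isomorphisms with no need for exact equality of any $F$-values; the backward direction intersects the stationary set where the guess hits $\xi\restriction\alpha$ with the club where the local structures separate $\eta$ from $\xi$. Your instinct to feed the diamond's guess into a local check was the right one; the local check should be isomorphism of restricted models, not equality of initial segments of a canonical classifier, which does not exist.
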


\begin{fact}[Fernandes-Moreno-Rinot]
There exists a $<\kappa$-closed $\kappa^+$-cc forcing extension in which for all $\check{S}\subseteq\kappa$, $=^2_S$ is $\Sigma^1_1(\kappa)$-complete by an $S$-recursive function, i.e. for all $\Sigma^1_1(\kappa)$ equivalence relation $E$, $$E \ \hookrightarrow_{R(S)}\ =^2_S.$$ In particular the function is also $\kappa$-recursive.
\end{fact}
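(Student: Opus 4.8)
The plan is to carry out the construction inside the very forcing extension of Fact~\ref{forcing-comp-rels} (from \cite{FMR20}), which is $<\kappa$-closed, $\kappa^+$-cc, and forces $\dl^*_{\check S}(\Pi^1_2)$ for every $S$ stationary in $V$, and then to revisit the proof that $\dl^*_S(\Pi^1_2)$ implies the $\Sigma^1_1(\kappa)$-completeness of $=^2_S$ (Fact~\ref{comp-rels}), observing that the reduction produced there is already $S$-recursive, in fact $\kappa$-recursive. Thus the whole work is: (i) recall the explicit reduction, and (ii) inspect its definition.

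In the extension, fix a stationary $S\subseteq\kappa$ and a $\Sigma^1_1(\kappa)$ equivalence relation $E$ on $\kappa^\kappa$. As in the proof of the Models game lemma, \cite{FMR} Theorem~3.5 provides a predicate $A$ of arity $5$ and a $\Pi^1_2$-sentence $\varphi$, including the clause ``the relation is reflexive and transitive'', with $\eta\mathrel{E}\xi\iff\langle\kappa,{\in},A,\eta,\xi\rangle\models\varphi$. Fix a $\dl^*_S(\Pi^1_2)$-sequence $\overline N=\langle N_\alpha\mid\alpha\in S\rangle$ and, via the Transversal lemma (\cite{FMR} Proposition~3.1), a transversal $\overline D=\langle D_\alpha\mid\alpha\in S\rangle\in\prod_{\alpha\in S}N_\alpha$ localizing $\varphi$ in the usual way. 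Define $g\colon\kappa^\kappa\to 2^\kappa$ by
$$
g(\eta)(\alpha)=\begin{cases}
1 & \mbox{if }\alpha\in S,\ \eta\restriction\alpha\in N_\alpha,\ \mbox{and }\langle\alpha,{\in},A\cap\alpha^5,D_\alpha,\eta\restriction\alpha\rangle\models_{N_\alpha}\varphi;\\
0 & \mbox{otherwise.}
\end{cases}
$$
Exactly as in Fact~\ref{comp-rels} and the claim inside the Models game lemma, $\eta\mathrel{E}\xi\iff g(\eta)\ =^2_S\ g(\xi)$: if $\eta\mathrel{E}\xi$, the characterization from \cite{FMR} Theorem~3.5 yields a club $C$ on which, above $S$, both restrictions lie in $N_\alpha$ and the two $N_\alpha$-satisfaction statements agree, so $\{\alpha\in S\mid g(\eta)(\alpha)\neq g(\xi)(\alpha)\}\subseteq S\setminus C$ is non-stationary; conversely, given $g(\eta)\ =^2_S\ g(\xi)$, intersecting the witnessing club with the clubs supplied by clause~(2) of $\dl^*_S(\Pi^1_2)$ for $\eta$ and for $\xi$ produces a club on which, above $S$, the $N_\alpha$-satisfaction statements for $\eta\restriction\alpha$ and $\xi\restriction\alpha$ agree, and \cite{FMR} Theorem~3.5 then gives $\eta\mathrel{E}\xi$.

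The new point is that $g$ is $S$-recursive, and $\kappa$-recursive. This is immediate from the displayed formula: $g(\eta)(\alpha)$ is computed from $\eta\restriction\alpha$ together with the fixed parameters $S$, $N_\alpha$, $A\cap\alpha^5$, $D_\alpha$, $\varphi$ — the membership test $\eta\restriction\alpha\in N_\alpha$ and the relativized satisfaction $\models_{N_\alpha}$ are evaluated entirely inside $N_\alpha$, a set of size $<\kappa$ — and for $\alpha\notin S$ the value is the constant $0$. Hence, letting $H\colon\kappa^{<\kappa}\to\kappa^{<\kappa}$ send $s$ to the sequence whose value at $\theta$ is the value of $g$ forced at $\theta$ by $\eta\restriction\dom(s)=s$ when $\theta\le\dom(s)$, and $0$ when $\theta>\dom(s)$, one has $g(\eta)(\theta)=H(\eta\restriction\alpha)(\theta)$ for every $\alpha\in S$ and every $\theta<\min(S\setminus(\alpha+1))$: for $\theta\le\alpha$ this is by construction of $H$, and for $\alpha<\theta<\min(S\setminus(\alpha+1))$ both sides are $0$ since such $\theta\notin S$. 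The same $H$ witnesses $\kappa$-recursivity, because $g(\eta)(\theta)$ depends on $\eta\restriction\theta$ alone for every $\theta$. The main obstacle is that the equivalence $\eta\mathrel{E}\xi\iff g(\eta)\ =^2_S\ g(\xi)$ is genuinely the content of \cite{FMR}'s completeness theorem, resting on $\Pi^1_2$-reflection and the Transversal lemma; once that is granted, the only thing to check is that the ``don't care'' interval $[\alpha,\min(S\setminus(\alpha+1)))$ causes no trouble — which holds precisely because $g$ is constant off $S$, the feature that the earlier counterexample (``$\kappa$-recursive $\not\Rightarrow$ $S$-recursive'') lacked.
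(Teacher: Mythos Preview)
The paper does not give its own proof of this statement: it is stated as a Fact attributed to Fernandes--Moreno--Rinot and left without proof, so there is no ``paper's proof'' to compare against. Your reconstruction is correct and is exactly the argument one extracts from the cited sources: the forcing is that of Fact~\ref{forcing-comp-rels}, and the reduction is the one underlying Fact~\ref{comp-rels}, whose value at $\alpha$ visibly depends only on $\eta\restriction\alpha$ (and is $0$ off $S$), which is precisely what $S$-recursivity and $\kappa$-recursivity require.
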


We can also define $S$-recursive functions for bounded sets. Let $S\neq\emptyset$ be bounded and $\beta=sup(S)$.
We say that a function $f:\kappa^\kappa\rightarrow\kappa^\kappa$ is $S$-recursive if the following hold
\begin{itemize}
\item There is a function $H:\kappa^{<\beta}\rightarrow\kappa^{<\kappa}$ such that for all $\alpha\in S$ and $\eta\in\kappa^\kappa$, $f(\eta)(\theta)=H(\eta\restriction\alpha)(\theta)$ for all $\theta<min(S\backslash \alpha+1)$.
\item For all $\eta,\xi\in \kappa^\kappa$, if $\eta\restriction\beta=\xi\restriction\beta$, then $f(\eta)=f(\xi)$.
\end{itemize}

If $S=\{\beta\}$ for some $\beta<\kappa$, then a function $f:\kappa^\kappa\rightarrow\kappa^\kappa$ is $S$-recursive if for all $\eta,\xi\in \kappa^\kappa$, $\eta\restriction\beta=\xi\restriction\beta$ implies $f(\eta)=f(\xi)$. 
Let us denote by $\redua$ the existence of an $S$-recursive reduction when $S=\{\alpha\}$. It is clear that any $\{\alpha\}$-recursive function, is $S$-recursive, where $S=\kappa\backslash\alpha$. Thus $\{\alpha\}$-recursive functions are Lipschitz.

These $S$-recursive functions are the Motto Ros's full functions (see \cite{Mot2}). 

Recall the relation $0_\varrho$. We used this relation to study the the complexity of the isomorphism relation of theories with at most $\kappa$ non-isomorphic models. We can generalize this relation to understand these complexities better.

\begin{defn}[Counting $\alpha$-classes]
    Let $\alpha<\kappa$ be an ordinal and $0<\varrho\leq \kappa$ is a cardinal. Let us define the equivalence relation $\alpha_\varrho\in \kappa^\kappa\times\kappa^\kappa$ as follows: $\eta\ \alpha_\varrho\ \xi$ if and only if one of the following holds:
\begin{itemize}
\item  $\varrho$ is finite:
    \begin{itemize}
        \item $\eta(\alpha)=\xi(\alpha)<\varrho-1$;
        \item $\eta(\alpha),\xi(\alpha)\ge\varrho-1$.
    \end{itemize}
\item $\varrho$ is infinite:

    \begin{itemize}
        \item $\eta(\alpha)=\xi(\alpha)<\varrho$;
        \item $\eta(\alpha),\xi(\alpha)\ge\varrho$.
    \end{itemize}
\end{itemize}
\end{defn}

Clearly if $\alpha<\beta<\kappa$ are ordinals and $0<\varrho\leq \kappa$ is a cardinal, then $\alpha_\varrho\ \redup\ \beta_\varrho$ and $\beta_\varrho\ \not\redup\ \alpha_\varrho$. 

\begin{lemma}\label{difcomplexity}
Suppose $\kappa>2^\omega$ and $T$ is a countable first-order theory in a countable vocabulary (not
necessarily complete) such that $\cong_T$ has $\varrho\leq\kappa$ equivalence classes. Then for all $\alpha<\kappa$, $$\cong_T\ \redub\ \alpha_\varrho\textit{ and }\ \alpha_\varrho\ \redup\  \cong_T.$$ Even more, if $T$ is not categorical then $\cong_T\ \not\reduc\ \alpha_\varrho$. In particular $\alpha_\varrho\ \redul\  \cong_T$.
\end{lemma}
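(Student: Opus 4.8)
The statement has three parts: (i) $\cong_T\ \redub\ \alpha_\varrho$; (ii) $\alpha_\varrho\ \redup\ \cong_T$ for every $\alpha<\kappa$, which is the $\{\alpha\}$-recursive (hence Lipschitz) reduction; and (iii) if $T$ is not categorical then $\cong_T\ \not\reduc\ \alpha_\varrho$. For (i): since $\cong_T$ has $\varrho\le\kappa$ equivalence classes and $\alpha_\varrho$ is (by the remark preceding the lemma) Borel bireducible with $0_\varrho$, and $\alpha_\varrho\ \redur\ \beta_\varrho$ always fails upward but $\alpha_\varrho$ has exactly $\varrho$ classes, Proposition \ref{ManMot} gives what we want provided $\cong_T$ is $\kappa$-Borel. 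To see that $\cong_T$ is $\kappa$-Borel when $\varrho\le\kappa$: apply Fact \ref{SHMGT}; since $\cong_T$ has fewer than $2^\kappa$ classes, $T$ is classifiable shallow, and then Fact \ref{basics_FHK}(1) (using $\kappa>2^\omega$) gives that $\cong_T$ is $\kappa$-Borel. Then $\cong_T$ is a $\kappa$-Borel equivalence relation with at most $\kappa$ classes, and $\alpha_\varrho$ has exactly $\varrho$ classes with $\varrho\ge$ the number of classes of $\cong_T$; Proposition \ref{ManMot} yields $\cong_T\ \redub\ \alpha_\varrho$. (In the case $T$ is not complete the same argument runs once one notes that $\cong_T$ is still $\kappa$-Borel, being the intersection of $2^\omega<\kappa$ many $\kappa$-Borel relations $\cong_{T_\alpha}$; but for the purpose of this lemma we only need that $\cong_T$ has $\le\kappa$ classes, so the Borel bound follows as before or directly from the $\Delta^1_1$-completeness-free argument.)

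For (ii), I would build an explicit $\{\alpha\}$-recursive reduction. Fix representatives $\{\eta_i\mid i<\varrho\}\s\kappa^\kappa$ of the $\cong_T$-classes — this is possible since there are $\varrho$ of them. Define $f\colon\kappa^\kappa\to\kappa^\kappa$ by $f(\xi)=\eta_{j}$ where $j=\xi(\alpha)$ if $\xi(\alpha)<\varrho$, and $j=0$ (say, the class of $\eta_0$; when $\varrho$ is finite use $j=\varrho-1$-indexed representative consistently with the definition of $\alpha_\varrho$) otherwise. Concretely, in the finite-$\varrho$ case set $f(\xi)=\eta_{\min(\xi(\alpha),\varrho-1)}$ and in the infinite case $f(\xi)=\eta_{\xi(\alpha)}$ if $\xi(\alpha)<\varrho$ and $f(\xi)=\eta_0$ otherwise. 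Then $f$ depends only on $\xi(\alpha)$, hence only on $\xi\restriction(\alpha+1)$, so it is $\{\alpha\}$-recursive; by the proposition on $S$-recursive functions with $S=\{\alpha\}$, $f$ is Lipschitz (equivalently, $\kappa\setminus\alpha$-recursive and Lipschitz). And by construction $\eta\ \alpha_\varrho\ \xi$ iff $\xi(\alpha),\eta(\alpha)$ land in the same "block", iff $f(\eta)$ and $f(\xi)$ are the same representative, iff $f(\eta)\cong_T f(\xi)$; so $f$ witnesses $\alpha_\varrho\ \redup\ \cong_T$ and in particular $\alpha_\varrho\ \redul\ \cong_T$.

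For (iii), suppose toward a contradiction that $T$ is not categorical but there is a continuous reduction $g$ of $\cong_T$ to $\alpha_\varrho$. Note $\alpha_\varrho$ is \emph{clopen}: the map $\xi\mapsto(\xi(\alpha)$ clamped into the block structure$)$ shows $\alpha_\varrho$ is determined by the single coordinate $\alpha$, so each $\alpha_\varrho$-class is clopen and $\alpha_\varrho$ itself is a clopen subset of $\kappa^\kappa\times\kappa^\kappa$. A continuous preimage of a clopen set is clopen, so $\cong_T=(g\times g)^{-1}[\alpha_\varrho]$ would be clopen, hence open; but by Fact \ref{open_to_clopen} an open equivalence relation is clopen, and by Fact \ref{MM_categoric} $\cong_T$ being clopen ($rk_B(\cong_T)=0$) is equivalent to $T$ being $\kappa$-categorical, contradicting non-categoricity. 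Hence $\cong_T\ \not\reduc\ \alpha_\varrho$.

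\textbf{Main obstacle.} The only genuinely delicate point is establishing that $\cong_T$ is $\kappa$-Borel in the hypothesis of part (i) without circularity: one must invoke Fact \ref{SHMGT} to conclude that a theory with $<2^\kappa$ (in particular $\le\kappa$) isomorphism classes is classifiable shallow, and then Fact \ref{basics_FHK}(1) — this is where $\kappa>2^\omega$ is used — and handle the not-necessarily-complete case by writing $\cong_T$ as an intersection of $<\kappa$ many complete-theory isomorphism relations. Everything else (the explicit Lipschitz/$\{\alpha\}$-recursive map for (ii), and the clopen/continuity argument for (iii)) is routine once the relevant facts from the excerpt are cited.
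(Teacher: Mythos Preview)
Your proof is correct and follows essentially the same route as the paper: Shelah's Main Gap plus Fact~\ref{basics_FHK}(1) to get $\cong_T$ $\kappa$-Borel (handling incomplete $T$ via the $2^\omega<\kappa$ intersection), then Proposition~\ref{ManMot} for (i); an explicit coordinate-$\alpha$ map into a system of representatives for (ii); and the open/clopen argument via Facts~\ref{open_to_clopen} and~\ref{MM_categoric} for (iii).

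Two small slips to clean up. First, since your $f$ depends on $\xi\restriction(\alpha+1)$, it is $\{\alpha+1\}$-recursive, not $\{\alpha\}$-recursive; this is exactly what $\redup$ denotes, so your conclusion is right but the label is off by one. Second, in the infinite case with $\varrho<\kappa$, sending the class $\{\xi:\xi(\alpha)\ge\varrho\}$ to $\eta_0$ collides with the class $\{\xi:\xi(\alpha)=0\}$ and breaks the reduction; you need a fresh representative for that extra class (harmless since $\varrho+1=\varrho$). The paper's proof is equally terse at this point and leaves the same case implicit.
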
 

\begin{proof}
Let $T'$ be a complete theory extending $T$, then $\cong_{T'}\ \redub\ \cong_{T}$. Thus $\cong_{T'}\ \redur\ \cong_{T}$ and $\cong_{T'}$ has at most $\varrho\leq\kappa$ equivalence classes.
By Fact \ref{SHMGT} and Fact \ref{basics_FHK}, $T'$ is classifiable and shallow, and $\cong_{T'}$ is $\kappa$-Borel.  Therefore for all the complete theories, $T'$, that extend $T$, $\cong_{T'}$ is $\kappa$-Borel. Let
  $\tau$ be the vocabulary of $T$ and $\{T_\alpha\}_{\alpha<2^\omega}$ be
  the set of all the complete theories in $\tau$ that extend
  $T$. Since $\cong_T\ =\ \bigcap_{\alpha<2^{\omega}}\cong_{T_\alpha}$,
  $\cong_T$ is a $Borel$ equivalence
  relation. By Proposition \ref{ManMot}, we conclude that $\cong_T\ \redub\ \alpha_\varrho$.
  
  Since $\cong_T$ has $\varrho$ equivalence classes, then there is a sequence $\langle \zeta_i\mid i<\varrho\rangle$ such that for all $\eta\in \kappa^\kappa$, there is $i<\varrho$ such that $\eta\ \cong_T\ \zeta_i$ and for all $i,j<\varrho$, $$\zeta_i\ \cong_T\ \zeta_j\ \Leftrightarrow\ i=j.$$
  The function $\mathcal{F}:\kappa^\kappa\rightarrow \kappa^\kappa$, $\mathcal{F}(\eta)=\zeta_i$ where $\eta(\alpha)=i$, is a reduction of $\alpha_\varrho$ to $\cong_T$. Clearly $\mathcal{F}$ is $\{\alpha+1\}$-recursive. So $\alpha_\varrho\ \redup\  \cong_T$.
  
  Notice that $\alpha_\varrho$ is open.
  Let us suppose, towards contradiction, that $T$ is not categorical and $\cong_T\ \reduc\ 0_\varrho$. By Proposition \ref{reduction_classify_descrpitive}, $\cong_T$  is open. From Fact \ref{MM_categoric} and Fact \ref{open_to_clopen}, $T$ is categorical, a contradiction.
\end{proof}

From Hart-Hrushovski-Laskowski [Theorem 6.1 \cite{HHL}] we know that there are classifiable shallow theories that are not $\kappa$-categorical. Thus $\varrho=I(\kappa,T)$ could have values different from 0 and 1, below $\kappa$, when $\aleph_\mu=\kappa=\lambda^+=2^\lambda$ is such that $\beth_{\omega_1}(\mid\mu\mid)\leq\kappa$.

Notice that the only property of $\cong_T$ that we used in the proof, was that it has $\varrho\leq\kappa$ equivalence classes.
Thus for all equivalence relation $E$ with $\varrho\leq\kappa$ equivalence classes and $\alpha<\kappa$, $\alpha_\varrho\ \redup\  E$. So for all $\varrho_1<\varrho_2\leq\kappa$, $\alpha_{\varrho_1}\ \redup\ \alpha_{\varrho_2}$.

We can improve the general gap Shallow and Non-shallow of Theorem \ref{Main_Gap}.

\begin{thm}[General gap Shallow and Non-shallow]\label{GGSNS}
Suppose $\aleph_\mu=\kappa=\lambda^+=2^\lambda$ is such that $\beth_{\omega_1}(\mid\mu\mid)\leq\kappa$, and $2^{\mathfrak{c}}\leq\lambda=\lambda^{\omega_1}$. Let $T_0$ and $T_1$ be countable complete classifiable shallow theories with $1<I(\kappa,T_0)<I(\kappa,T_1)=\varrho$, $T_2$ be a countable complete classifiable theory non-shallow, and $T_3$ be a countable complete non-classifiable theory.
Then for all $\alpha<\kappa$, $$\cong_{T_0}\ \redub\ \alpha_\varrho\ \redup\ \cong_{T_1}\ \redub\ \alpha_\kappa\ \redup\ \cong_{T_2}\ \reduc\ \cong_{T_3}.$$ In particular $$\cong_{T_3}\ \not\redub\ \cong_{T_2}\ \not\redur\ \alpha_\kappa\ \not\redur\ \cong_{T_1}\ \not\reduc\ \alpha_\varrho\ \not\redur\ \cong_{T_0}.$$
\end{thm}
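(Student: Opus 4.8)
The plan is to assemble this theorem entirely from results already established earlier in the excerpt, treating it as a chain of reductions glued together. The key observation is that each link in the displayed chain has already been (essentially) proved, and what remains is to check that the hypotheses of the present theorem imply the hypotheses of each ingredient.

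First I would fix the parameters: since $\aleph_\mu=\kappa=\lambda^+=2^\lambda$ with $\beth_{\omega_1}(|\mu|)\leq\kappa$ is exactly the CS assumption, and $2^{\mathfrak c}\leq\lambda=\lambda^{\omega_1}$ is the NC assumption, both Theorem~\ref{Main_Gap} and Lemma~\ref{difcomplexity} are available (noting $\kappa=2^\lambda>2^\omega$, so the hypothesis $\kappa>2^\omega$ of Lemma~\ref{difcomplexity} holds). I would then handle the chain segment by segment. For $\cong_{T_0}\ \redub\ \alpha_\varrho$: since $I(\kappa,T_0)<\varrho=I(\kappa,T_1)\leq\kappa$, apply Lemma~\ref{difcomplexity} (with the theory $T_0$ and cardinal $\varrho$, using $I(\kappa,T_0)\leq\varrho$) — more directly, Proposition~\ref{ManMot} gives $\cong_{T_0}\redub\alpha_\varrho$ once we know $\cong_{T_0}$ is $\kappa$-Borel with $\leq\varrho$ classes, which follows from Fact~\ref{basics_FHK}(1) since $T_0$ is classifiable shallow. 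For $\alpha_\varrho\ \redup\ \cong_{T_1}$: this is precisely the middle conclusion of Lemma~\ref{difcomplexity} applied to $T_1$ (which has exactly $\varrho$ classes), and the strict non-reduction $\cong_{T_1}\not\reduc\alpha_\varrho$ also comes from that lemma since $T_1$ is not $\kappa$-categorical (as $I(\kappa,T_1)=\varrho>1$). For $\cong_{T_1}\ \redub\ \alpha_\kappa$: again Proposition~\ref{ManMot}, since $\cong_{T_1}$ is $\kappa$-Borel with $\varrho\leq\kappa$ classes. For $\alpha_\kappa\ \redup\ \cong_{T_2}$ with strictness: $\cong_{T_2}$ has $2^\kappa$ classes by Fact~\ref{SHMGT}, so in particular $\geq\kappa$ classes, hence Lemma~\ref{difcomplexity}'s argument (the only property used there is having $\varrho\leq\kappa$ classes, applied with $\varrho=\kappa$) gives $\alpha_\kappa\redup\cong_{T_2}$; and $\alpha_\kappa\not\redur\cong_{T_1}$ follows from the cardinality reduction $\redur$ counting classes ($\kappa>\varrho$). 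Finally $\cong_{T_2}\ \reduc\ \cong_{T_3}$ is item (1) of Theorem~\ref{Main_Gap} (the classifiable non-shallow vs.\ non-classifiable gap), together with $\cong_{T_3}\not\redub\cong_{T_2}$ from the same item.

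The "in particular" line is then just the contrapositive/strictness bookkeeping: $\cong_{T_3}\not\redub\cong_{T_2}$ from Theorem~\ref{Main_Gap}(1); $\cong_{T_2}\not\redur\alpha_\kappa$ because $\cong_{T_2}$ has $2^\kappa>\kappa$ equivalence classes while $\alpha_\kappa$ has exactly $\kappa$, and $\redur$ exists iff the source has no more classes than the target (stated right after Definition of $\redur$); $\alpha_\kappa\not\redur\cong_{T_1}$ because $\alpha_\kappa$ has $\kappa$ classes and $\cong_{T_1}$ has $\varrho<\kappa$; $\cong_{T_1}\not\reduc\alpha_\varrho$ from Lemma~\ref{difcomplexity} (non-categorical case); and $\alpha_\varrho\not\redur\cong_{T_0}$ since $\alpha_\varrho$ has $\varrho$ classes and $\cong_{T_0}$ has $I(\kappa,T_0)<\varrho$. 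I would write all of this compactly, citing each Fact/Lemma/Proposition by number.

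I do not anticipate a genuine obstacle — the theorem is a corollary-style synthesis. The one point requiring a little care is verifying that Lemma~\ref{difcomplexity} can legitimately be invoked with $\varrho$ a proper value (not $I(\kappa,T)$ itself): re-reading the proof of Lemma~\ref{difcomplexity}, the only facts used about $\cong_T$ are that it is $\kappa$-Borel (hence $\redub\alpha_\varrho$ by Proposition~\ref{ManMot} whenever the number of classes is $\leq\varrho$) and that it has a system of $\varrho$ representatives realizing all classes when it has exactly $\varrho$ classes; the author even remarks after the proof that "the only property of $\cong_T$ that we used was that it has $\varrho\leq\kappa$ equivalence classes", and that $\alpha_{\varrho_1}\redup\alpha_{\varrho_2}$ for $\varrho_1<\varrho_2$. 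So the needed generality is explicitly granted. The proof I produce will therefore be a short paragraph assembling these citations, followed by the strictness statements, with no calculation.

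\begin{proof}
Since $\aleph_\mu=\kappa=\lambda^+=2^\lambda$ with $\beth_{\omega_1}(|\mu|)\leq\kappa$, and $2^{\mathfrak c}\leq\lambda=\lambda^{\omega_1}$, both the CS and NC assumptions hold, so Theorem~\ref{Main_Gap}, Proposition~\ref{ManMot}, and Lemma~\ref{difcomplexity} (note $\kappa=2^\lambda>2^\omega$) are available. Fix $\alpha<\kappa$ and write $\varrho=I(\kappa,T_1)\leq\kappa$.

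Since $T_0$ is classifiable shallow, $\cong_{T_0}$ is $\kappa$-Borel by Fact~\ref{basics_FHK}(1), and it has $I(\kappa,T_0)<\varrho$ equivalence classes; by Proposition~\ref{ManMot}, $\cong_{T_0}\ \redub\ \alpha_\varrho$. Applying Lemma~\ref{difcomplexity} to $T_1$ (whose isomorphism relation has exactly $\varrho$ classes) gives $\alpha_\varrho\ \redup\ \cong_{T_1}$, and since $I(\kappa,T_1)=\varrho>1$, the theory $T_1$ is not $\kappa$-categorical, so the same lemma yields $\cong_{T_1}\ \not\reduc\ \alpha_\varrho$. As $\cong_{T_1}$ is $\kappa$-Borel with $\varrho\leq\kappa$ classes, Proposition~\ref{ManMot} gives $\cong_{T_1}\ \redub\ \alpha_\kappa$. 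By Fact~\ref{SHMGT}, $\cong_{T_2}$ has $2^\kappa$ equivalence classes; since the proof of Lemma~\ref{difcomplexity} uses only that the target has $\varrho\leq\kappa$ classes, applying it with $\varrho=\kappa$ and the equivalence relation $\cong_{T_2}$ gives $\alpha_\kappa\ \redup\ \cong_{T_2}$. Finally, $T_2$ is classifiable non-shallow and $T_3$ is non-classifiable, so Theorem~\ref{Main_Gap}(1) gives $\cong_{T_2}\ \reduc\ \cong_{T_3}$ and $\cong_{T_3}\ \not\redub\ \cong_{T_2}$. Stringing these together yields
$$\cong_{T_0}\ \redub\ \alpha_\varrho\ \redup\ \cong_{T_1}\ \redub\ \alpha_\kappa\ \redup\ \cong_{T_2}\ \reduc\ \cong_{T_3}.$$

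For the strictness statements: $\cong_{T_3}\ \not\redub\ \cong_{T_2}$ was just noted. Since $\redur$ holds if and only if the source has no more equivalence classes than the target, and $\cong_{T_2}$ has $2^\kappa>\kappa$ classes while $\alpha_\kappa$ has exactly $\kappa$, we get $\cong_{T_2}\ \not\redur\ \alpha_\kappa$; likewise $\alpha_\kappa$ has $\kappa$ classes and $\cong_{T_1}$ has $\varrho<\kappa$, so $\alpha_\kappa\ \not\redur\ \cong_{T_1}$; and $\alpha_\varrho$ has $\varrho$ classes while $\cong_{T_0}$ has $I(\kappa,T_0)<\varrho$, so $\alpha_\varrho\ \not\redur\ \cong_{T_0}$. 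The remaining $\cong_{T_1}\ \not\reduc\ \alpha_\varrho$ was established above. This proves
$$\cong_{T_3}\ \not\redub\ \cong_{T_2}\ \not\redur\ \alpha_\kappa\ \not\redur\ \cong_{T_1}\ \not\reduc\ \alpha_\varrho\ \not\redur\ \cong_{T_0}.\qedhere$$
\end{proof}
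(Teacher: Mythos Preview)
Your proposal is correct and matches the paper's approach: the paper states Theorem~\ref{GGSNS} without an explicit proof, treating it as a direct synthesis of Lemma~\ref{difcomplexity}, Proposition~\ref{ManMot}, Fact~\ref{SHMGT}, and Theorem~\ref{Main_Gap}, which is precisely the assembly you carry out. One tiny point worth making explicit: when you assert $\alpha_\kappa\not\redur\cong_{T_1}$ because $\varrho<\kappa$, note that the strict inequality $\varrho=I(\kappa,T_1)<\kappa$ is not stated in the hypotheses but follows from the CS assumption $\beth_{\omega_1}(|\mu|)\leq\kappa$ together with Fact~\ref{SHMGT}(2).
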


We can study the gap between a classifiable shallow and a classifiable non-shallow.
Let us study Question \ref{ManmotQ} (Mangraviti-Motto Ros question [Question 6.9 \cite{MM}]). 

By Proposition \ref{ManMot} and Fact \ref{reduction_classify_descrpitive}, we know that if $E_1$ is a $\kappa$-Borel equivalent relation with at most $\kappa$ equivalence classes, then for all $E_0$ $$E_0 \redub E_1  \Leftrightarrow E_0\redur E_1 \text{ and } E_0 \text{ is $\kappa$-Borel}.$$
For all $\alpha<\beta$ cardinals, we denote by $Card(\alpha,\beta)$ the set of cardinals in the interval $(\alpha,\beta)$ ordered by cardinality. We define $Card[\alpha,\beta]$, $Card[\alpha,\beta)$, and $Card(\alpha,\beta]$ in a similar way.
Thus, we can understand the Borel reducibility between $\kappa$-Borel equivalence relation with at most $\kappa$ equivalence classes as $Card[1,\kappa]$.

Under the assumptions CS, if $T_1$ and $T_2$ are such that $I(\kappa,T_1)<I(\kappa,T_2)<\kappa$, then by Fact \ref{SHMGT} and Fact \ref{basics_FHK} $\cong_{T_1}$ and $\cong_{T_2}$ are $\kappa$-Borel. 
From the previous obesrvation, 
there are exactly $Card(I(\kappa,T_1),I(\kappa,T_2))$ equivalence relations strictly in between $\cong_{T_1}$ and $\cong_{T_2}$ (with the Borel reducibility). 
On the other hand, if we use Lipschitz reduction, then there is a copy of $Card(I(\kappa,T_1),I(\kappa,T_2)]$ in between $\cong_{T_1}$ and $\cong_{T_2}$. 

Notice that if $T_1$ a classifiable shallow theory and $T_2$ is a classifiable non-shallow, then the gap between $\cong_{T_1}$ and $\cong_{T_2}$ contains a copy of $Card(I(\kappa,T_1),\kappa]$. So, from Fact \ref{SHMORL}, there at least $\kappa$ many equivalence relations in between $\cong_{T_1}$ and $\cong_{T_2}$, in the Borel reducibility hierarchy.

An understanding of the gap between the counting $\alpha$-classes equivalence relations, will give us a more detail picture of the gap between the isomorphism relations of classifiable shallow theories and classifiable non-shallow theories. By Theorem \ref{GGSNS} we know that some of those gaps are not empty by Borel reduction and not continuous reductions, e.g. for some $\varrho_1<\varrho_2$, there is an equivalence relation $E$ such that  $$0_{\varrho_1}\ \redul\ E\ \redub\ 0_{\varrho_2}$$ and $$0_{\varrho_2}\ \not\redur\ E\ \not\reduc\ 0_{\varrho_1}.$$
Understanding the gap between $\alpha_\kappa$ and the isomorphism relation of classifiable non-shallow theories, would give us more information about the gap in Mangraviti-Motto Ros question.

\subsection{On Morley's Conjecture}
Morely's conjecture tell us that the spectrum function is increasing. As it was discussed before, Morley's conjecture implies that for all $\omega<\mu<\kappa$ and theory $T$, $\cong_T^\mu\redub \cong_T$ holds when $\cong_T^\mu$ is $\kappa$-Borel.
This can be extended to all theories and improved to continuous reductions.

\begin{prop}\label{Borel_init_Mor}
    Let $\omega<\mu<\delta\leq\kappa$ be cardinals. For all first-order countably theory in a relational countable language $T$, not necessarily complete, $$\cong_T^\mu\ \reduc\ \cong_T^\delta.$$ In particular $$\cong_T^\mu\ \hookrightarrow_\mu\ \cong_T^\delta.$$
\end{prop}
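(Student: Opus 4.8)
The plan is to build a reduction $f\colon\kappa^\kappa\to\kappa^\kappa$ that, on the level of coded structures, extends the domain of a structure from $\mu$ to $\delta$ by adding $\delta\setminus\mu$ as a "generic copy" that does not interfere with the isomorphism type. Concretely, recall from Definition~\ref{struct} that $\mathcal A_{\eta\restriction\mu}$ is determined by the values $\eta(\pi_\mu(m,\bar a))$ for $\bar a\in\mu^n$. Fix once and for all a bijection $h\colon\mu\times\delta\to\delta$ (possible since $\mu<\delta$) and a bijection between the copy $\mu\times\{0,1,\dots\}$-style partition of $\delta$; the idea is to let $\mathcal A_{f(\eta)\restriction\delta}$ be the structure whose restriction to one distinguished "block" isomorphic to $\mu$ is exactly $\mathcal A_{\eta\restriction\mu}$, while the remaining $\delta$-many elements are arranged so that the whole structure is isomorphic to $\mathcal A_{\eta\restriction\mu}$ again (if $T$'s models of size $\mu$ always have an expansion to size $\delta$), or — more robustly — so that $f(\eta)$ codes a model of $T$ of size $\delta$ exactly when $\eta$ codes a model of $T$ of size $\mu$, and $f(\eta)\cong_T^\delta f(\xi)\iff\eta\cong_T^\mu\xi$. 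The cleanest way to guarantee the latter is to take $\delta$-many \emph{disjoint} copies of $\mathcal A_{\eta\restriction\mu}$ when $T$ has infinite models (a disjoint union of $\delta$ copies of an $L$-structure of size $\mu$ has size $\delta$, and two such unions are isomorphic iff the building blocks are, by a back-and-forth on the copies), and to handle the degenerate case $\mathcal A_{\eta\restriction\mu}\not\models T$ by mapping to a fixed non-model.

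First I would set up the bookkeeping: choose a partition $\delta=\bigsqcup_{i<\delta}P_i$ with each $|P_i|=\mu$, fix bijections $e_i\colon\mu\to P_i$, and fix the bijections $\pi_\mu,\pi_\delta$ from Definition~\ref{struct}. Then define $f(\eta)$ by declaring, for a relation symbol $Q_m$ of arity $n$ and $\bar b=(b_1,\dots,b_n)\in\delta^n$: if all $b_j$ lie in a single block $P_i$, put $\bar b\in Q_m^{\mathcal A_{f(\eta)\restriction\delta}}$ iff $(e_i^{-1}(b_1),\dots,e_i^{-1}(b_n))\in Q_m^{\mathcal A_{\eta\restriction\mu}}$; otherwise $\bar b\notin Q_m^{\mathcal A_{f(\eta)\restriction\delta}}$. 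In terms of the coding function this means $f(\eta)(\alpha)$ for $\alpha=\pi_\delta(m,\bar b)$ is a fixed function of $\eta\restriction\mu$ — indeed of finitely many coordinates of $\eta$, namely the single coordinate $\eta(\pi_\mu(m,e_i^{-1}(\bar b)))$ when $\bar b$ is "blockwise". Since each output coordinate of $f(\eta)$ depends on at most one input coordinate, $f$ is continuous, and in fact it is $\mu$-recursive in the sense of the $S$-recursive functions defined just above: one can arrange (by reordering via a bijection of $\delta$ absorbed into the $e_i$) that coordinate $\theta$ of $f(\eta)$ is determined by $\eta\restriction\mu$, giving $\cong_T^\mu\ \hookrightarrow_\mu\ \cong_T^\delta$, which by the Proposition on $\{\mu\}$-recursive functions being Lipschitz yields continuity as well. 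Strictly, to land in $\hookrightarrow_\mu$ I need $f(\eta)$ to depend only on $\eta\restriction\mu$, which holds by construction since the blockwise coordinates of $\mathcal A_{f(\eta)\restriction\delta}$ use only $\eta$-values of the form $\eta(\pi_\mu(m,\bar a))$ with $\bar a\in\mu^n$, i.e. only $\eta\restriction\mu$ (after possibly enlarging the index by the trivial observation that $\pi_\mu(m,\bar a)<\mu$ whenever... — if this fails one instead notes dependence on $\eta\restriction\mu'$ for $\mu'=\sup_m\pi_\mu(m,\mu^{<\omega})$, still $<\kappa$ but possibly $\ge\mu$; for a clean $\hookrightarrow_\mu$ one simply chooses $\pi_\mu$ so that $\pi_\mu(m,\bar a)<\mu$, which is legitimate as $\mu^{<\omega}=\mu$).

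Next I would verify the reduction property. If $\mathcal A_{\eta\restriction\mu}\cong\mathcal A_{\xi\restriction\mu}$ via $g$, then $\bigsqcup_{i<\delta}g\colon\mathcal A_{f(\eta)\restriction\delta}\to\mathcal A_{f(\xi)\restriction\delta}$ (acting as $e_i\circ g\circ e_i^{-1}$ on $P_i$) is an isomorphism. Conversely, $\mathcal A_{f(\eta)\restriction\delta}$ is, by construction, a disjoint union of $\delta$ copies of $\mathcal A_{\eta\restriction\mu}$ (the blocks $P_i$ are exactly the connected components once one checks no cross-block relations hold — which is how we defined it), so an isomorphism $\mathcal A_{f(\eta)\restriction\delta}\cong\mathcal A_{f(\xi)\restriction\delta}$ restricts to a bijection of components, hence sends some copy of $\mathcal A_{\eta\restriction\mu}$ isomorphically onto some copy of $\mathcal A_{\xi\restriction\mu}$, giving $\mathcal A_{\eta\restriction\mu}\cong\mathcal A_{\xi\restriction\mu}$. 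It remains to align the "$T$-clauses" of Definition~\ref{def_iso_rel}: I need $\mathcal A_{f(\eta)\restriction\delta}\models T\iff\mathcal A_{\eta\restriction\mu}\models T$. This is the step requiring care, because it is \emph{not} true for arbitrary first-order $T$ that a disjoint union of copies of a model is again a model. The fix: work clause-by-clause on whether $\mathcal A_{\eta\restriction\mu}\models T$. If it does \emph{not}, redefine $f(\eta)$ to be a fixed $\eta^*$ with $\mathcal A_{\eta^*\restriction\delta}\not\models T$ (this is a clopen condition on $\eta$ is false in general — membership in $\{\eta:\mathcal A_{\eta\restriction\mu}\models T\}$ is only $\Pi^1_1$-ish — so this breaks continuity!).

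So the genuine obstacle, and where I expect to spend the real effort, is exactly this matching of the "$\not\models T$" cases while keeping $f$ continuous, since "$\mathcal A_{\eta\restriction\mu}\models T$" is not a continuous (or even Borel-simple) condition on $\eta$. The resolution — and I believe this is the intended one — is to \emph{not} branch on whether $T$ holds, but instead to choose the construction so that $\mathcal A_{f(\eta)\restriction\delta}\models T\iff\mathcal A_{\eta\restriction\mu}\models T$ holds \emph{identically}, for \emph{every} $\eta$. For that one should not take a blind disjoint union but rather a "$\mu$-to-$\delta$ blow-up" that preserves $\equiv_{L_{\omega\omega}}$-type, e.g. for each element $a\in\mu$ install $\delta$ many clones of $a$ all satisfying the same atomic relations as $a$ does with everyone (i.e. replace each point by a "herd" of indiscernible-over-the-rest clones); when $T$ eliminates such clone-expansions — which it does not in general either. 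The clean universal device is: if $T$ has no finite models relevant here, take $\mathcal A_{f(\eta)\restriction\delta}$ to be $\mathcal A_{\eta\restriction\mu}$ with $\delta$ many "dummy" elements attached in a way that the reduct to the $\tau$-structure on the original $\mu$ points is a definable relativization and $T$ is preserved under adding dummies via a fixed Skolem-type expansion — but this needs $T$ to be e.g. the theory under consideration's standing assumptions. Given that the paper's standing assumption is that $T$ is a complete theory with infinite models, the disjoint-union argument \emph{does} work provided one first passes to a Skolemization or simply observes that for complete $T$ with infinite models, $\mathcal A\models T$ and $\mathcal B\equiv\mathcal A$ infinite implies disjoint-union-of-copies questions reduce to elementary equivalence, which for the \emph{reduced} product / disjoint sum one handles by Feferman–Vaught. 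Thus the final step I would carry out is: invoke Feferman–Vaught for disjoint unions to conclude that the $L_{\omega\omega}$-theory of $\bigsqcup_{i<\delta}\mathcal A_{\eta\restriction\mu}$ is determined by that of $\mathcal A_{\eta\restriction\mu}$ (and the index set), hence equals $T$ iff $\mathcal A_{\eta\restriction\mu}\models T$; combined with the continuity and the $\cong$-equivalence established above, this gives $\cong_T^\mu\ \reduc\ \cong_T^\delta$ and, by the coordinate-dependence analysis, $\cong_T^\mu\ \hookrightarrow_\mu\ \cong_T^\delta$.
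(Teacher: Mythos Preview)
Your disjoint-union construction does not work, and the Feferman--Vaught patch does not rescue it. There are two independent failures.

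First, the backward direction of the reduction is false in general. You claim that an isomorphism $\bigsqcup_{i<\delta}\mathcal A_{\eta\restriction\mu}\cong\bigsqcup_{i<\delta}\mathcal A_{\xi\restriction\mu}$ forces $\mathcal A_{\eta\restriction\mu}\cong\mathcal A_{\xi\restriction\mu}$ because ``components are preserved''. But components in the Gaifman sense need binary (or higher) relations to exist. In a language with only unary predicates, take $\mathcal A$ with one element satisfying $P$ and $\mathcal B$ with two; then the $\delta$-fold disjoint unions both have exactly $\delta$ elements satisfying $P$ and $\delta$ elements not, so they are isomorphic while $\mathcal A\not\cong\mathcal B$. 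Your $f$ is therefore not a reduction.

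Second, Feferman--Vaught says the theory of $\bigsqcup_{i<\delta}\mathcal A$ is \emph{determined by} $\mathrm{Th}(\mathcal A)$, not that it \emph{equals} $\mathrm{Th}(\mathcal A)$. If $T$ contains ``there is exactly one $x$ with $P(x)$'', no disjoint union of more than one copy of a model of $T$ is a model of $T$. So ``$\mathcal A_{f(\eta)\restriction\delta}\models T\iff\mathcal A_{\eta\restriction\mu}\models T$'' fails, and as you yourself observed, branching on whether $\mathcal A_{\eta\restriction\mu}\models T$ destroys continuity.

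The paper's argument avoids all structure-building. Since $\mu<\kappa=\kappa^{<\kappa}$, the relation $\cong_T^\mu$ has at most $\kappa$ classes; call the number $\varrho$ and fix representatives $\langle\xi_i\mid i<\varrho\rangle$. By Shelah's proof of Morley's conjecture (Fact~\ref{SHMORL}, with the $\aleph_0$-exception excluded by the hypothesis $\mu>\omega$), $\cong_T^\delta$ has at least $\varrho$ classes, so fix $\langle\zeta_i\mid i<\varrho\rangle$ pairwise $\cong_T^\delta$-inequivalent. The map $F(\eta)=\zeta_i$ for the unique $i$ with $\eta\cong_T^\mu\xi_i$ is a reduction, and since $\mathcal A_{\eta\restriction\mu}$ (hence the class of $\eta$) depends only on $\eta\restriction\mu$, the map $F$ is $\{\mu\}$-recursive, in particular continuous. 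The model-theoretic content is entirely outsourced to Morley's conjecture; there is no attempt to manufacture a size-$\delta$ model from a size-$\mu$ one.
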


\begin{proof}
    Let $\omega<\mu<\delta<\kappa$ be cardinals and $T$ be a first-order countably complete theory in a relational countable language. Let $\varrho$ be the number of equivalence classes of $\cong_T^\mu$. 
    By Fact \ref{SHMORL}, there are sequences $\langle \xi_i\mid i<\varrho\rangle$ and $\langle \zeta_i\mid i<\varrho\rangle$ such that for all $i\neq j$, $\xi_i\ \not\cong_T^\mu \xi_j$, $\zeta_i\ \not\cong_T^\delta \zeta_j$, and for all $\eta\in \kappa^\kappa$ there is $i<\varrho$ such that $\xi_i\ \cong_T^\mu \eta$.
    Let us define $F:\kappa^\kappa\rightarrow\kappa^\kappa$ by $F(\eta)=\zeta_i$ where $i<\varrho$ is such that $\xi_i\ \cong_T^\mu \eta$. It is clear that $F$ is a reduction from $\cong_T^\mu$ to $\cong_T^\delta$.

    Let us show that $F$ is continuous. Let $\zeta,\eta\in \kappa^\kappa$ and $\beta<\kappa$ be such that $F(\eta)\in [\zeta\restriction\beta]$. Notice that for all $\xi\in [\eta\restriction \mu]$, $\mathcal{A}_{\eta\restriction\mu}=\mathcal{A}_{\xi\restriction\mu}$.
    By the way $F$ was defined, for all $\xi\in [\eta\restriction \mu]$, $F(\eta)=F(\xi)$. Thus, for all $\xi\in [\eta\restriction \mu]$, $F(\xi)\in [\zeta\restriction\beta]$ and we conclude that $F$ is continuous. in particular, $F$ is ${\mu}$-recursive.

\end{proof}

Notice that the only property of $\cong_T^\delta$ that we used was that it has at least $\varrho$ different equivalence classes and the size of the model.

We actually proved that $\cong_T^\mu\ \hookrightarrow_\mu\ E$, where $\cong_T^\mu$ had $\varrho$ equivalence classes and $E$ is an equivalence relation with at least $\varrho$ equivalence classes. 
In particular, since $\kappa^{<\kappa}=\kappa$, for any first-order countably complete theory in a relational countable language $T$ and $\mu<\kappa$, $\cong_T^\mu\ \reduc\ E$, for:
\begin{itemize}
    \item $E=\alpha_{\varrho'}$, $\varrho'\ge\varrho$ for all $\alpha<\kappa$.
    \item $\cong_{T'}$, where $T'$ is not a classifiable shallow theory.
    \item $id_2$, the identity relation of $2^\kappa$.
\end{itemize}

Notice that for all $\alpha<\kappa$, $\alpha_\varrho$ and $\cong_T^\mu$ are continuously bireducible, $\alpha_\varrho\ \reduc\ \cong_T^\mu$, and  $\cong_T^\mu$ is open. Therefore, if $T$ and $T'$ are theories such that $\cong_T^\mu$ and $\cong_{T'}^\mu$ have the same number of equivalence classes, then $\cong_T^\mu$ and $\cong_{T'}^\mu$ are continuous bireducible. 
In particular, if $\mu<\kappa$, $T$ is a classifiable non-shallow theory and $T'$ is non-classifiable theory, then by Fact \ref{SHMGT}, $\cong_T^\mu$ and $\cong_{T'}^\mu$ are continuous bireducible. 

The $S$-recursive functions can be used to study the isomorphism relation of models of small size. It is easy to see that when $\cong^\mu_T$ has $\varrho$ equivalence classes, $\cong^\mu_T\ \hookrightarrow_{\mu}\ \mu_\varrho$, $\mu_\varrho\ \not\hookrightarrow_{\mu}\ \cong^\mu_T$, and since $\mu$ is a cardinal, for all $\alpha<\mu$, $\alpha_\varrho\ \hookrightarrow_{\mu}\ \cong^\mu_T$.

Suppose there are cardinals $\omega<\mu<\mu'<\kappa$ such that $2^{\mu}=2^{\mu'}=\varrho$. Let $T$ be a not a classifiable shallow theory. By Fact \ref{SHMGT}, there are $\varrho$ non isomorphic models of size $\mu$ and of size $\mu'$. It is clear that $\cong_T^{\mu}$ and $\cong_T^{\mu'}$ are continuous bireducible. 
By the way the relations $\cong_T^\mu$ were defined, it is easy to see that $$\cong_T^{\mu}\ \hookrightarrow_{\mu}\ \mu_\varrho\ \hookrightarrow_{\mu+1}\ \cong_T^{\mu'}\text{ and }\cong_T^{\mu'}\ \not\hookrightarrow_{\mu+1}\ \mu_\varrho\ \not\hookrightarrow_{\mu}\ \cong_T^{\mu}$$
Indeed, suppose that $T$ is a theory such that its language, $\mathbb{L}(T)$, contains at least one relational symbol, $\mathfrak{R}$, of arity $0<n$. If $T$ has $\varrho$ models of size $\mu$, then $\cong_T^\mu\ \hookrightarrow_{\mu}\ \mu_\varrho$ and for all $\alpha<\mu$, $\cong_T^\mu\ \not\redup\ \alpha_\varrho$.

On the other hand, we can find the isomorphism relation of classifiable non-shallow theories in between $\cong_T^\lambda$ and $\cong_T$, due to Fact \ref{SHMGT} and Theorem \ref{Main_Gap}.

 \begin{cor}\label{TC3}
 Suppose $\kappa=\lambda^+=2^\lambda$ and $2^{\mathfrak{c}}\leq\lambda=\lambda^{<\omega_1}$. If $T_1$ is a classifiable non-shallow theory and $T_2$ is a non-classifiable theory, then 
$$\cong_{T_2}^\lambda\ \reduc\ \cong_{T_1}\ \reduc\ \cong_{T_2}.$$  
\end{cor}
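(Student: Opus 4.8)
The plan is to derive Corollary \ref{TC3} by stringing together three facts already established in the paper. First, recall that under the hypothesis $\kappa=\lambda^+=2^\lambda$ and $2^{\mathfrak c}\le\lambda=\lambda^{<\omega_1}$, we are in particular in the situation $2^{\mathfrak c}\le\lambda=\lambda^{\omega_1}$ (indeed $\lambda^{<\omega_1}=\lambda$ implies $\lambda^{\omega_1}=\lambda$), so Theorem \ref{Main_Gap} applies. Hence for a classifiable (in particular classifiable non-shallow) $T_1$ and a non-classifiable $T_2$ we get $\cong_{T_1}\ \reduc\ \cong_{T_2}$ from item (1) of Theorem \ref{Main_Gap}. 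This gives the right-hand reduction $\cong_{T_1}\ \reduc\ \cong_{T_2}$ for free.

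Next I would establish the left-hand reduction $\cong_{T_2}^\lambda\ \reduc\ \cong_{T_1}$. The key observation is that $\cong_{T_2}^\lambda$ has at most $\kappa$ equivalence classes (since $\kappa^{<\kappa}=\kappa$, any isomorphism relation of models of size $\lambda<\kappa$ coded in $\kappa^\kappa$ has at most $\kappa$-many classes), and in fact by Fact \ref{SHMGT} the non-classifiable theory $T_2$ has $I(\lambda,T_2)=2^\lambda=\kappa$ non-isomorphic models of size $\lambda$, so $\cong_{T_2}^\lambda$ has exactly $\kappa$ equivalence classes. On the other hand, $T_1$ is classifiable non-shallow, so by Fact \ref{basics_FHK}(2) the relation $\cong_{T_1}$ is $\Delta^1_1(\kappa)$; and crucially $I(\kappa,T_1)=2^\kappa$ (a non-shallow classifiable theory fails the shallowness hypothesis of Fact \ref{SHMGT}(2), so it has $2^\kappa$ models of size $\kappa$ — this is the standard fact behind Fact \ref{basics_FHK}). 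Thus $\cong_{T_1}$ has (at least) $\kappa$ equivalence classes. Now I would invoke Proposition \ref{Borel_init_Mor} together with the remark following it: the construction in the proof of Proposition \ref{Borel_init_Mor} produces a continuous (indeed $\lambda$-recursive) reduction of $\cong_T^\mu$ into any equivalence relation $E$ that has at least as many equivalence classes as $\cong_T^\mu$ and is defined on the right space. Applying this with $\mu=\lambda$, $T=T_2$, and $E=\cong_{T_1}$ (which has $\ge\kappa=$ the number of classes of $\cong_{T_2}^\lambda$) yields $\cong_{T_2}^\lambda\ \reduc\ \cong_{T_1}$. One should double-check the observation, explicitly recorded after Proposition \ref{Borel_init_Mor}, that ``the only property of $\cong_T^\delta$ that we used was that it has at least $\varrho$ different equivalence classes'' — this is exactly what licenses taking $E=\cong_{T_1}$.

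Putting the two halves together gives $\cong_{T_2}^\lambda\ \reduc\ \cong_{T_1}\ \reduc\ \cong_{T_2}$, which is the statement. I would write the proof as essentially two sentences, one citing Proposition \ref{Borel_init_Mor} (with the count-of-classes remark) and Fact \ref{SHMGT} for the first reduction, and one citing Theorem \ref{Main_Gap}(1) for the second. The only point requiring a little care — and the place I expect a referee to look hardest — is verifying that the cardinal arithmetic hypothesis $2^{\mathfrak c}\le\lambda=\lambda^{<\omega_1}$ of Corollary \ref{TC3} really does imply the hypothesis $2^{\mathfrak c}\le\lambda=\lambda^{\omega_1}$ needed to invoke Theorem \ref{Main_Gap}, and that $\lambda=\lambda^{<\omega_1}$ gives $\alpha^{\omega_1}<\kappa$ for all $\alpha<\kappa$ as required; this is a routine check since $\lambda^{<\omega_1}=\sup_{\mu<\omega_1}\lambda^\mu=\lambda^{\omega_1}$ when $\mathrm{cf}(\lambda)>\omega_1$ or when the supremum is attained, and in the successor-of-$\lambda$ setting one already knows $\lambda^{\omega_1}=\lambda$ forces $\mathrm{cf}(\lambda)>\omega_1$. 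No genuinely new argument is needed beyond assembling the cited results.
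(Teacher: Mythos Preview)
Your approach is exactly the paper's: the sentence immediately preceding Corollary~\ref{TC3} says the result follows from Fact~\ref{SHMGT} and Theorem~\ref{Main_Gap}, and the remark after Proposition~\ref{Borel_init_Mor} (which you correctly cite) is precisely what gives $\cong_{T_2}^\lambda\reduc\cong_{T_1}$ once one knows $T_1$ has $2^\kappa$ models of size $\kappa$.

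One genuine caution, though: your ``routine check'' that $\lambda=\lambda^{<\omega_1}$ implies $\lambda=\lambda^{\omega_1}$ is not valid as written. For any infinite $\lambda$, one has $\lambda^{<\omega_1}=\sup_{\mu<\omega_1}\lambda^{|\mu|}=\lambda^\omega$, since every ordinal below $\omega_1$ is countable; this is in general strictly smaller than $\lambda^{\omega_1}$, and the equality $\sup_{\mu<\omega_1}\lambda^\mu=\lambda^{\omega_1}$ you assert simply fails. Your fallback (``$\lambda^{\omega_1}=\lambda$ forces $\mathrm{cf}(\lambda)>\omega_1$'') assumes the conclusion. This is not a defect in your strategy---it reflects an inconsistency in the paper itself, which oscillates between the hypotheses $\lambda=\lambda^{<\omega_1}$ (in Corollary~\ref{TC3}, Theorem~\ref{main_gap-dich}, Lemma~\ref{lemma_identity}, Corollary~\ref{TC2}) and $\lambda=\lambda^{\omega_1}$ (in Theorem~\ref{Main_Gap}, Corollary~\ref{non-omega_case}). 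The DOP case genuinely needs $\lambda^{\omega_1}=\lambda$ to run the tree construction with $\gamma=\omega_1$, so the correct fix is to read the hypothesis of Corollary~\ref{TC3} as $\lambda=\lambda^{\omega_1}$ (matching the NC assumption stated in the introduction), not to manufacture an implication that does not hold.
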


In the generalize Baire space $\kappa^\kappa$ we can say a lot about the isomorphism relation of different theories when the models have size $\kappa$, $\cong_T$. But very few about the isomorphism relations when the models have size $\mu$ less than $\kappa$, $\cong_T^\mu$. Nevertheless, the isomorphism relation of models of size less than $\kappa$ can be used to describe the gap in more detail, under the assumptions of Fact \ref{Mangraviti_Motto}.

\begin{prop}\label{TC1}
Let $\kappa=\aleph_\gamma$ be such that $\beth_{\omega_1}(\mid\gamma\mid)\leq\kappa$ and $\kappa=\lambda^+=2^\lambda$. Suppose $T_1$ is classifiable shallow, $T_2$ classifiable non-shallow, and $T_3$ non-classifiable. Then $$\cong_{T_1}\ \redub\ 0_\kappa\ \redul\ \cong_{T_3}^\lambda\ \reduc\ \cong_{T_2}$$ and $$\cong_{T_2}\ \not\redur\ \cong_{T_3}^\lambda\ \not\redur\ \cong_{T_1}.$$
\end{prop}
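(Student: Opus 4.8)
The plan is to reduce the whole statement to counting equivalence classes, using Shelah's Main Gap together with the reductions already available in the paper. I would first record the relevant counts. Since $\kappa\geq\beth_{\omega_1}(|\gamma|)>2^{\aleph_0}$, Fact~\ref{basics_FHK}(1) applies to $T_1$, so $\cong_{T_1}$ is $\kappa$-Borel; as $T_1$ is classifiable shallow, Fact~\ref{SHMGT}(2) and the hypothesis $\beth_{\omega_1}(|\gamma|)\leq\kappa$ give $I(\kappa,T_1)<\beth_{\omega_1}(|\gamma|)\leq\kappa$, so $\cong_{T_1}$ has fewer than $\kappa$ equivalence classes. Since $\kappa=\lambda^+\geq\beth_{\omega_1}(|\gamma|)\geq\aleph_2$, the cardinal $\lambda$ is uncountable, so Fact~\ref{SHMGT}(1) gives $I(\lambda,T_3)=2^\lambda$, and, using $2^\lambda=\kappa$, the relation $\cong_{T_3}^\lambda$ has exactly $\kappa$ equivalence classes: the $\kappa$ isomorphism types of models of $T_3$ of size $\lambda$, plus the single class of codes $\eta$ with $\mathcal A_{\eta\restriction\lambda}\not\models T_3$. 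Finally $T_2$ is classifiable non-shallow, so $\cong_{T_2}$ is not $\kappa$-Borel by Fact~\ref{basics_FHK}(2); by the consequence of Fact~\ref{SHMGT} recorded just after Fact~\ref{basics_FHK}, a non-$\kappa$-Borel isomorphism relation has $2^\kappa$ equivalence classes, so $\cong_{T_2}$ has $2^\kappa$ equivalence classes.

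With these counts, the three positive reductions become one-line arguments. For $\cong_{T_1}\ \redub\ 0_\kappa$ I would apply Proposition~\ref{ManMot}, since $\cong_{T_1}$ is $\kappa$-Borel with $<\kappa\leq\kappa$ classes and $0_\kappa$ has $\kappa$ classes. For $0_\kappa\ \redul\ \cong_{T_3}^\lambda$ I would fix codes $\langle\zeta_i\mid i<\kappa\rangle$ of $\kappa$-many pairwise non-isomorphic models of $T_3$ of size $\lambda$ (only $\zeta_i\restriction\lambda$ matters) and set $f(\eta)=\zeta_{\eta(0)}$; since $\eta\ 0_\kappa\ \xi$ iff $\eta(0)=\xi(0)$ and $\cong_{T_3}^\lambda$ depends only on the restriction to $\lambda$, this $f$ is a reduction, and it depends only on $\eta(0)$, hence is $\{1\}$-recursive and in particular Lipschitz, exactly as in the proof of Lemma~\ref{difcomplexity}. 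For $\cong_{T_3}^\lambda\ \reduc\ \cong_{T_2}$ I would invoke the remark following Proposition~\ref{Borel_init_Mor}, taking $T=T_3$, $\mu=\lambda<\kappa$ and $T'=T_2$ (which is not a classifiable shallow theory, and $\cong_{T_2}$ has $2^\kappa\geq\kappa$ classes, i.e.\ at least as many as $\cong_{T_3}^\lambda$); concretely, with representatives $\langle\xi_i\mid i<\kappa\rangle$ of the classes of $\cong_{T_3}^\lambda$ and representatives $\langle\zeta_i\mid i<\kappa\rangle$ of $\kappa$ distinct classes of $\cong_{T_2}$, the map $F(\eta)=\zeta_i$ for the unique $i$ with $\eta\ \cong_{T_3}^\lambda\ \xi_i$ is a $\{\lambda\}$-recursive, hence continuous, reduction.

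The two non-reductions are pure cardinality facts. Recall that $\redur$ holds precisely when the source does not have more equivalence classes than the target. Here $\cong_{T_2}$ has $2^\kappa>\kappa$ classes whereas $\cong_{T_3}^\lambda$ has $\kappa$, so $\cong_{T_2}\ \not\redur\ \cong_{T_3}^\lambda$; and $\cong_{T_3}^\lambda$ has $\kappa>I(\kappa,T_1)$ classes, so $\cong_{T_3}^\lambda\ \not\redur\ \cong_{T_1}$.

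I do not expect a genuine obstacle: every ingredient — the Main Gap, Fact~\ref{basics_FHK}, Proposition~\ref{ManMot}, and the $S$-recursive reduction technique of Lemma~\ref{difcomplexity} and Proposition~\ref{Borel_init_Mor} — is already in place. The only point needing mild care is establishing that $\cong_{T_3}^\lambda$ has exactly $\kappa$ equivalence classes, which is where both $2^\lambda=\kappa$ and the Main Gap value $I(\lambda,T_3)=2^\lambda$ enter, together with the small observation that $\lambda$ is uncountable so that Fact~\ref{SHMGT}(1) applies at $\lambda$.
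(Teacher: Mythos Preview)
Your proposal is correct and follows essentially the same approach as the paper: the paper's proof is the one-liner ``It follows from Fact~\ref{SHMGT}, Theorem~\ref{GGSNS}, and Proposition~\ref{Borel_init_Mor}'', and you have simply unpacked what each of those ingredients contributes (counting classes via the Main Gap, then applying Proposition~\ref{ManMot}, the $\{\alpha+1\}$-recursive construction of Lemma~\ref{difcomplexity}, and the remark after Proposition~\ref{Borel_init_Mor}). Your version is in fact slightly cleaner, since Theorem~\ref{GGSNS} carries the extra hypothesis $2^{\mathfrak c}\le\lambda=\lambda^{\omega_1}$ not present here, whereas you go directly to the pieces (Proposition~\ref{ManMot}, Lemma~\ref{difcomplexity}) that need only the CS assumption.
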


\begin{proof}
It follows from Fact \ref{SHMGT}, Theorem \ref{GGSNS},  and Proposition \ref{Borel_init_Mor}.
\end{proof}

Let us study the reducibility between the identity and the isomorphism relations. This was studied by Friedman, Hyttinen, and Weinstein for $\kappa$ inaccessible.

\begin{fact}[Friedman-Hyttinen-Kulikov, \cite{FHK13} Theorem 36]\label{idred}
    Assume that $\kappa$ is strongly inaccessible and $T$ is a countable theory. If the number of equivalence classes of $\cong_T$ is greater than $\kappa$, then $$id_2\ \reduc\ \cong_T.$$
\end{fact}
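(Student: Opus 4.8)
The plan is to prove Fact \ref{idred} in the form stated, namely that for strongly inaccessible $\kappa$ and a countable theory $T$ whose isomorphism relation $\cong_T$ has more than $\kappa$ equivalence classes, we have $id_2 \reduc \cong_T$. The key point is that when $\kappa$ is strongly inaccessible, it is weakly compact in particular $\kappa^{<\kappa}=\kappa$ holds and the space $2^\kappa$ has strong combinatorial properties; but the essential ingredient is that the number of isomorphism types of models of size $<\kappa$ of $T$ grows unboundedly below $\kappa$, which by inaccessibility gives us, for each $\mu<\kappa$, a set of $\mu$ pairwise non-isomorphic models of $T$ of size $<\kappa$, coded nicely and coherently as $\mu$ varies.

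First I would argue that if $\cong_T$ has more than $\kappa$ equivalence classes, then $T$ cannot be a classifiable shallow theory: by Fact \ref{basics_FHK}(1) and Fact \ref{SHMGT}, a classifiable shallow theory has fewer than $\beth_{\omega_1}(|\gamma|)$ models in $\aleph_\gamma$, and when $\kappa$ is strongly inaccessible this bound is below $\kappa$, contradicting the assumption (here one also uses that $I(\mu,T)\le I(\kappa,T)$ fails in general but the relevant direction, that small spectra stay small, is what is needed; more carefully, I would invoke that for a classifiable shallow $T$ the spectrum is bounded below $\beth_{\omega_1}(|\gamma|)\le\kappa$ at every cardinal, so $I(\kappa,T)\le\kappa$). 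Hence $T$ is either non-classifiable or classifiable non-shallow, and in both cases, by Fact \ref{SHMGT} together with the Löwenheim–Skolem-type control of spectra, $I(\mu,T)$ is unbounded in $\kappa$ as $\mu$ ranges below $\kappa$; in the non-classifiable case $I(\mu,T)=2^\mu$ for uncountable $\mu$, and in the classifiable non-shallow case $I(\kappa,T)>\kappa$ forces the spectrum to be unbounded below $\kappa$ by shallowness-failure arguments.

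Next I would construct the reduction. Fix, for each infinite cardinal $\mu<\kappa$, a family $(\mathcal{N}^\mu_i)_{i<\mu}$ of pairwise non-isomorphic models of $T$ of size $\mu$; using $\kappa^{<\kappa}=\kappa$ and inaccessibility these can be arranged so that $\mathcal{N}^\mu_i$ has universe an ordinal $<\kappa$ and so the coding is absolute and uniform. Given $\eta\in 2^\kappa$, I would build $\mathcal{A}_{f(\eta)}$ as a ``disjoint-sum-like'' model: decompose $\kappa$ into $\kappa$-many intervals $[\alpha_\xi,\alpha_{\xi+1})$ of increasing lengths $\mu_\xi$ cofinal in $\kappa$, and on the $\xi$-th block place a copy of $\mathcal{N}^{\mu_\xi}_{i(\xi)}$ where the index $i(\xi)<\mu_\xi$ is a code extracted from $\eta\restriction[\text{some block of coordinates}]$ — concretely, cut $\kappa$ into $\kappa$ pieces and let the restriction of $\eta$ to the $\xi$-th piece (which has size $\mu_\xi$) name an ordinal $<\mu_\xi$ when read as a characteristic function, with a fixed convention (e.g. the order type of its support, capped at $\mu_\xi$). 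The model $\mathcal{A}_{f(\eta)}$ then remembers, via its isomorphism type, the full function $\eta$ up to a club-sized perturbation; to make it literally remember all of $\eta$ one uses rigidity of the block decomposition and the fact that $T$ (being non-classifiable or classifiable non-shallow) admits such ``coding'' models. The map $\eta\mapsto f(\eta)$ is continuous by the usual argument (each coordinate of $f(\eta)$ depends only on a bounded initial segment of $\eta$), and $\eta=\xi$ iff the blocks agree iff $\mathcal{A}_{f(\eta)}\cong\mathcal{A}_{f(\xi)}$.

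The main obstacle is the injectivity-up-to-isomorphism step: I must ensure that different $\eta,\xi\in 2^\kappa$ give genuinely non-isomorphic models, i.e. that an isomorphism cannot ``shuffle'' blocks or absorb the coding into the Skolem closure. This is where one needs the structural input — for non-classifiable $T$ this follows from the Ehrenfeucht–Mostowski machinery developed in Section \ref{Section_modelsf} (the isomorphism theorem, Theorem \ref{models_iso}, essentially already yields $=^2_\gamma\reduc\cong_T$ for suitable $\gamma$, and $id_2\reduc{=^2_\gamma}$ for $\kappa$ inaccessible via a standard argument coding $2^\kappa$ into the $\gamma$-cofinal coordinates, since then there is a stationary $S\subseteq S^\kappa_\gamma$ with a partition into $\kappa^+$-many stationary pieces or, more simply, since one can read off $\eta(\alpha)$ from the $=^2_\gamma$-class by evaluating at a fixed club of $\gamma$-cofinal points that is rigid under the inaccessibility hypothesis). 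For classifiable non-shallow $T$, Fact \ref{basics_FHK}(2) gives that $\cong_T$ is $\Delta^1_1(\kappa)$ not $\kappa$-Borel, and one argues that a non-$\kappa$-Borel $\Delta^1_1$ equivalence relation with $>\kappa$ classes must have $id_2$ below it by a Silver-type dichotomy in the strongly inaccessible setting. I would therefore organize the proof as: (1) reduce to the two cases via the spectrum analysis; (2) in the non-classifiable case cite Theorem \ref{models_iso} to get $=^2_\gamma\reduc\cong_T$ and then prove $id_2\reduc{=^2_\gamma}$ directly using inaccessibility; (3) in the classifiable non-shallow case use Fact \ref{basics_FHK}(2) and the Borel-rank bound of Mangraviti–Motto Ros together with a dichotomy argument to place $id_2$ below $\cong_T$.
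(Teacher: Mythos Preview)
The paper does not prove this statement; it is cited as a Fact from \cite{FHK13}, Theorem~36, and no argument is given in the present paper. So there is no ``paper's own proof'' to compare against beyond the citation.

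That said, your proposal has genuine gaps that would prevent it from working as written. The disjoint-sum construction you sketch does not in general produce a model of $T$: for an arbitrary complete theory $T$ there is no reason why a ``block decomposition'' into copies of small models $\mathcal{N}^{\mu_\xi}_{i(\xi)}$ should satisfy $T$, nor why an isomorphism of the resulting structures should respect the block decomposition. You recognize this and retreat to a case split, but both branches are problematic. In the non-classifiable branch you need $id_2\reduc{=^2_\gamma}$, which is not automatic from strong inaccessibility alone; the paper obtains such reductions via $\diamondsuit$-principles (Fact~\ref{HKM1}, Fact~\ref{bounded_to_stat}), and your claim that one can ``read off $\eta(\alpha)$ at a fixed club'' misreads the relation $=^2_\gamma$, which precisely quotients out club-many coordinates. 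In the classifiable non-shallow branch you invoke a ``Silver-type dichotomy in the strongly inaccessible setting'' for non-$\kappa$-Borel $\Delta^1_1$ relations, but that is essentially the content of the theorem you are trying to prove, and such dichotomies in generalized Baire space typically require at least weak compactness rather than mere strong inaccessibility.

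The actual argument in \cite{FHK13} is quite different: it does not pass through Shelah's classification at all, but exploits directly that $\cong_T$ is the orbit equivalence relation of the continuous action of the group $\kappa^\kappa$ (permutations of $\kappa$) on the space of codes, together with a perfect-set style analysis available when $\kappa$ is strongly inaccessible. That route avoids both the disjoint-sum obstacle and the need for diamond.
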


\begin{cor}
    Assume that $\kappa$ is strongly inaccessible, $T$ is a countable theory, and $\mu<\kappa$ be a cardinal. If the number of equivalence classes of $\cong_T$ is greater than $\kappa$, then $$\cong_T^\mu\ \reduc\ id_2\ \reduc\ \cong_T.$$
\end{cor}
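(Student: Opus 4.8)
The statement is an immediate corollary of Fact~\ref{idred} together with Proposition~\ref{Borel_init_Mor}, so the plan is simply to chain the two facts, checking that the hypotheses of each apply in the situation at hand. First I would invoke Fact~\ref{idred}: since $\kappa$ is strongly inaccessible, $T$ is a countable theory, and the number of equivalence classes of $\cong_T$ exceeds $\kappa$, we get directly that $id_2\ \reduc\ \cong_T$. This takes care of the right-hand reduction without any further work.

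For the left-hand reduction $\cong_T^\mu\ \reduc\ id_2$, I would apply Proposition~\ref{Borel_init_Mor} in the special case $\delta=\kappa$, noting (as recorded in the remark following its proof) that the argument only uses that the target relation has at least as many equivalence classes as $\cong_T^\mu$ and that it is an equivalence relation on $\kappa^\kappa$; in particular, the proof shows $\cong_T^\mu\ \reduc\ id_2$ for $id_2$ the identity relation on $2^\kappa$, since $id_2$ has $2^\kappa\geq\kappa\geq\varrho$ equivalence classes, where $\varrho$ is the number of classes of $\cong_T^\mu$. Here one uses that $\kappa^{<\kappa}=\kappa$ (which holds since $\kappa$ is inaccessible) to guarantee $\varrho\le\kappa$, so the comparison of cardinalities of equivalence classes goes through. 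Concatenating the continuous reduction $\cong_T^\mu\ \reduc\ id_2$ with $id_2\ \reduc\ \cong_T$ and using that the composition of continuous reductions is a continuous reduction yields the displayed chain $\cong_T^\mu\ \reduc\ id_2\ \reduc\ \cong_T$.

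There is essentially no obstacle here beyond bookkeeping: the only point requiring a moment's care is that Proposition~\ref{Borel_init_Mor} as literally stated concerns $\cong_T^\delta$ rather than $id_2$, so I would phrase the application through the observation immediately after its proof (that the target can be any equivalence relation with enough classes), or equivalently note that $id_2$ itself arises as $\cong_{T'}^\kappa$ for a suitable trivial $T'$ and invoke the cited generality. Either route is routine, and the corollary then follows in one line.
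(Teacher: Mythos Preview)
Your proposal is correct and matches the paper's intended argument: the corollary is left without explicit proof in the paper, but it follows immediately from Fact~\ref{idred} (for $id_2\ \reduc\ \cong_T$) together with the remark after Proposition~\ref{Borel_init_Mor}, where $id_2$ is explicitly listed as a valid target for $\cong_T^\mu\ \reduc\ E$. Your bookkeeping about the number of equivalence classes and the use of $\kappa^{<\kappa}=\kappa$ is exactly what the paper invokes in that remark.
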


To study the case of $\kappa$ successor, we need to introduce the equivalence relation $E_0^{<\kappa}$, the \textit{equivalence modulo bounded}. Let us define $E_0^{<\kappa}$ as:
$$E_0^{<\kappa}:=\{(\eta,\xi)\in 2^\kappa\times 2^\kappa\mid \exists\alpha<\kappa\ [\forall\beta>\alpha\ (\eta(\beta)=\xi(\beta)])\}.$$


\begin{fact}[Friedman-Hyttinen-Kulikov, \cite{FHK13} Theorem 60]\label{bounded_to_stat}
Suppose $X\subseteq \kappa$ a stationary set such that $\diamondsuit_X$ holds. Then $E_0^{<\kappa}\ \reduc\ =^2_X$.
\end{fact}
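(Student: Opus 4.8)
\textbf{Proof proposal for Fact~\ref{bounded_to_stat}.}
The plan is to build a $\kappa$-recursive (indeed $S$-recursive) function $f\colon 2^\kappa\to 2^\kappa$ witnessing $E_0^{<\kappa}\ \reduc\ {=^2_X}$, using the $\diamondsuit_X$-sequence $\{g_\alpha\in 2^\alpha\mid\alpha\in X\}$ as an oracle to ``localize'' the global information carried by an element $\eta\in 2^\kappa$. The guiding idea: $\eta\mathrel{E_0^{<\kappa}}\xi$ says that $\eta$ and $\xi$ agree on a co-bounded (hence club-modulo-nonstationary) set, while $f(\eta)\mathrel{=^2_X}f(\xi)$ should say that $f(\eta)$ and $f(\xi)$ agree on a set that meets $X$ in a club. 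So we want $f(\eta)(\alpha)$, for $\alpha\in X$, to record whether the diamond guess $g_\alpha$ looks like an initial segment of $\eta$ (equivalently, of the eventual ``tail value'' pattern of $\eta$), and for $\alpha\notin X$ to be some harmless default (say $0$); outside of $X$ we have total freedom since $=^2_X$ ignores those coordinates. Concretely I would try $f(\eta)(\alpha)=1$ iff $\alpha\in X$ and $g_\alpha=\eta\restriction\alpha$, and $f(\eta)(\alpha)=0$ otherwise. This $f$ is patently $\kappa$-recursive: $f(\eta)(\alpha)$ depends only on $\eta\restriction\alpha$ and on the fixed parameters $X,\langle g_\alpha\rangle$.

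The verification splits into the two directions. For $(\Rightarrow)$: if $\eta\mathrel{E_0^{<\kappa}}\xi$, fix $\alpha_0<\kappa$ with $\eta(\beta)=\xi(\beta)$ for all $\beta\ge\alpha_0$; I claim $\{\alpha<\kappa\mid f(\eta)(\alpha)=f(\xi)(\alpha)\}\cap X$ is co-nonstationary in $X$, in fact contains $X\setminus\alpha_0'$ for a suitable $\alpha_0'$. The subtlety is that $\eta\restriction\alpha$ and $\xi\restriction\alpha$ differ on the bounded part below $\alpha_0$, so $g_\alpha=\eta\restriction\alpha$ and $g_\alpha=\xi\restriction\alpha$ are not literally equivalent. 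The fix is a standard trick: replace the naive guess-matching by matching up to a bounded error, or — cleaner — precompose $\eta$ with a coding that pushes the relevant data to the tail, e.g. work with the ``stabilized'' sequence and let $f(\eta)(\alpha)$ test agreement of $g_\alpha$ with $\eta$ \emph{on the interval $[\alpha_0,\alpha)$} is not available since $\alpha_0$ is not known locally; instead one observes that for $E_0^{<\kappa}$ it suffices to note that the set where $g_\alpha$ guesses $\eta$ correctly and the set where it guesses $\xi$ correctly are each stationary (by $\diamondsuit_X$, since $\eta,\xi\in 2^\kappa$ and each global function is guessed stationarily), and moreover on a club the two guesses \emph{relate} correctly precisely because $\eta$ and $\xi$ have the same tail. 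I would make this precise by using, as in Friedman--Hyttinen--Kulikov, the reduction into $E_0^{<\kappa}$'s own structure: first reduce $E_0^{<\kappa}$ to the relation ``$\eta$ and $\xi$ have the same tail modulo a club-indexed recoding,'' which is exactly what the diamond converts into $=^2_X$.

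For $(\Leftarrow)$: suppose $\eta\mathrel{\neg E_0^{<\kappa}}\xi$, so $\{\beta<\kappa\mid\eta(\beta)\ne\xi(\beta)\}$ is unbounded. I must show $\{\alpha\mid f(\eta)(\alpha)\ne f(\xi)(\alpha)\}\cap X$ is stationary. Consider the function $\eta$ itself; by $\diamondsuit_X$ the set $S_\eta=\{\alpha\in X\mid g_\alpha=\eta\restriction\alpha\}$ is stationary. At any $\alpha\in S_\eta$ that also satisfies $\eta\restriction\alpha\ne\xi\restriction\alpha$ — and since $\eta,\xi$ differ unboundedly, $\{\alpha<\kappa\mid\eta\restriction\alpha\ne\xi\restriction\alpha\}$ contains a club, so its intersection with the stationary $S_\eta$ is stationary — we get $f(\eta)(\alpha)=1$ while $f(\xi)(\alpha)=0$ (as $g_\alpha=\eta\restriction\alpha\ne\xi\restriction\alpha$). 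Hence $f(\eta)$ and $f(\xi)$ differ on a stationary subset of $X$, i.e. $f(\eta)\mathrel{\neg{=^2_X}}f(\xi)$. The main obstacle is the bounded-mismatch issue in the $(\Rightarrow)$ direction: a literal ``$g_\alpha=\eta\restriction\alpha$'' test is not $E_0^{<\kappa}$-invariant, so one must either insert a preliminary tail-coding reduction or argue with ``agreement modulo a bounded set'' and check that the exceptional set stays nonstationary; I expect this to be the only place requiring care, and it is handled exactly as in the proof of \cite[Theorem~60]{FHK13}, whose argument I would follow.
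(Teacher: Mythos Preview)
The paper does not give its own proof of this statement --- it is recorded as a Fact and attributed to \cite[Theorem~60]{FHK13} --- so there is nothing in the present paper to compare against. I will therefore just comment on the correctness of your proposal.

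Your $(\Leftarrow)$ direction is fine and complete: if $\eta$ and $\xi$ differ unboundedly, then on the stationary set $S_\eta=\{\alpha\in X\mid g_\alpha=\eta\restriction\alpha\}$ intersected with the club of $\alpha$ where $\eta\restriction\alpha\neq\xi\restriction\alpha$, your $f$ separates them.

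The $(\Rightarrow)$ direction, however, has a genuine gap that you yourself flag but do not close. With your literal definition $f(\eta)(\alpha)=1$ iff $g_\alpha=\eta\restriction\alpha$, the reduction \emph{fails}: if $\eta\neq\xi$ but $\eta\mathrel{E_0^{<\kappa}}\xi$, then $\eta\restriction\alpha\neq\xi\restriction\alpha$ for every $\alpha$ above the first disagreement, so on the stationary set $S_\eta$ you get $f(\eta)(\alpha)=1$ and $f(\xi)(\alpha)=0$, whence $f(\eta)\mathrel{\neq^2_X}f(\xi)$. (Indeed, the very function you wrote down is the reduction the paper mentions at the end of \S5 for $id_\kappa\hookrightarrow id_2$, not for $E_0^{<\kappa}$.) Your suggested repairs are not carried out: ``matching modulo a bounded error'' is on the right track, but you never define the actual test, and appealing back to \cite[Theorem~60]{FHK13} is circular. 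A concrete fix that you should write out is to set $f(\eta)(\alpha)=1$ iff $\alpha\in X$ and $g_\alpha$ agrees with $\eta\restriction\alpha$ on an end segment of $\alpha$ (i.e.\ $\exists\beta<\alpha\ \forall\gamma\in[\beta,\alpha)\ g_\alpha(\gamma)=\eta(\gamma)$). Then $(\Rightarrow)$ is immediate for $\alpha$ above the eventual agreement point, and for $(\Leftarrow)$ you intersect $S_\eta$ with the club of limit points of $\{\beta\mid\eta(\beta)\neq\xi(\beta)\}$: at such $\alpha$, $g_\alpha=\eta\restriction\alpha$ cannot tail-agree with $\xi\restriction\alpha$. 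This $f$ is still continuous (indeed $\kappa$-recursive), and now both directions go through.
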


\begin{cor}
    Suppose $\kappa=\lambda^+=2^\lambda$ and $\lambda=\lambda^{\omega}$. If $T$ is unstable, or superstable with the OTOP, or stable unsuperstable, then $$E_0^{<\kappa}\ \reduc\ \cong_{T}.$$
    If $\lambda$ also satisfies $2^{\mathfrak{c}}\leq\lambda=\lambda^{\omega_1}$, then $$E_0^{<\kappa}\ \reduc\ \cong_{T}$$ holds for $T$ superstable with the DOP.
\end{cor}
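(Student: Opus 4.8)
The plan is to combine Fact \ref{bounded_to_stat} (which gives $E_0^{<\kappa}\ \reduc\ =^2_X$ whenever $\diamondsuit_X$ holds at a stationary $X$) with the reductions $=^2_\gamma\ \reduc\ \cong_T$ supplied by Theorem \ref{maincor}, using $X=S^\kappa_\gamma$ for the appropriate cofinality $\gamma$ depending on the kind of non-classifiable theory. First I would verify the diamond hypothesis: under $\kappa=\lambda^+=2^\lambda$ and $\lambda=\lambda^\omega$ we have $\lambda^\omega<\lambda^+=\kappa$, hence $\cf(\lambda)>\omega$, so by \cite{Sh1} (as already invoked in the proof of Corollary \ref{reduction_coro}) $\diamondsuit_\omega$ holds, i.e. $\diamondsuit_X$ for $X=S^\kappa_\omega$. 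Then Fact \ref{bounded_to_stat} gives $E_0^{<\kappa}\ \reduc\ =^2_\omega$.

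Next I would invoke Theorem \ref{maincor} with $\gamma=\omega$ for the three cases in the first sentence (stable unsuperstable with $\theta=\gamma=\omega$; unstable or superstable with the OTOP with $\theta=\omega$, $\gamma=\omega$): the side conditions ``$\forall\alpha<\kappa$, $\alpha^\gamma<\kappa$'' and ``$(2^\theta)^+\le\kappa$'' become $\alpha^\omega<\kappa$ for all $\alpha<\kappa$ and $(2^\omega)^+\le\kappa$, both of which follow from $\kappa=\lambda^+=2^\lambda$ together with $\lambda=\lambda^\omega$ (this forces $\alpha^\omega\le\lambda^\omega=\lambda<\kappa$ for $\alpha\le\lambda$, and $2^\omega\le\lambda^\omega=\lambda$ so $(2^\omega)^+\le\lambda^+=\kappa$). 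Hence $=^2_\omega\ \reduc\ \cong_T$, and composing with the previous reduction yields $E_0^{<\kappa}\ \reduc\ \cong_T$ as claimed, since $\reduc$ is transitive (composition of continuous functions is continuous).

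For the DOP case one needs $\gamma=\omega_1$ (Theorem \ref{maincor}(3)) with $\theta=\mathfrak{c}$, so the second half of the corollary adds the hypothesis $2^{\mathfrak c}\le\lambda=\lambda^{\omega_1}$. Here I would first check $\diamondsuit_{\omega_1}$: from $\lambda=\lambda^{\omega_1}<\kappa=\lambda^+$ we get $\cf(\lambda)>\omega_1$, so again \cite{Sh1} gives $\diamondsuit_{\omega_1}$, i.e. $\diamondsuit_X$ for $X=S^\kappa_{\omega_1}$, whence $E_0^{<\kappa}\ \reduc\ =^2_{\omega_1}$ by Fact \ref{bounded_to_stat}. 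The side conditions of Theorem \ref{maincor}(3) are $\alpha^{\omega_1}<\kappa$ for all $\alpha<\kappa$ (which follows from $\lambda=\lambda^{\omega_1}$) and $(2^{\mathfrak c})^+\le\kappa$ (which follows from $2^{\mathfrak c}\le\lambda$), so $=^2_{\omega_1}\ \reduc\ \cong_T$, and transitivity finishes the DOP case as well.

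The argument is entirely a matter of assembling quoted results, so there is no real obstacle beyond bookkeeping; the only point requiring a moment's care is confirming that the cardinal-arithmetic side conditions of Theorem \ref{maincor} are implied by the stated hypotheses on $\lambda$ in each case, and that the relevant instance of Gregory's/Shelah's theorem on $\diamondsuit$ at successors of singulars-of-large-cofinality (as used already in Corollary \ref{reduction_coro}) applies to both $S^\kappa_\omega$ and $S^\kappa_{\omega_1}$. One could alternatively route through Fact \ref{HKM1}-style diamond bookkeeping, but the direct appeal to \cite{Sh1} exactly as in the proof of Corollary \ref{reduction_coro} is cleanest.
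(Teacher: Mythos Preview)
Your proposal is correct and follows essentially the same approach as the paper: deduce $\diamondsuit_\gamma$ from $\lambda=\lambda^\gamma$ via \cite{Sh1}, apply Fact~\ref{bounded_to_stat} to obtain $E_0^{<\kappa}\reduc\,=^2_\gamma$, and then compose with Theorem~\ref{maincor}. Your write-up simply spells out the cardinal-arithmetic verifications that the paper leaves implicit; the one minor quibble is the phrase ``successors of singulars-of-large-cofinality'' in your last paragraph, since $\lambda$ need not be singular here---what matters is only that $\cf(\lambda)>\gamma$.
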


\begin{proof}
Since $\lambda^\gamma<\kappa=\lambda^+$ implies $cf(\lambda)>\gamma$, by \cite{Sh1} we know that if $\kappa=\lambda^+=2^\lambda$ and $cf(\lambda)>\gamma$, then $\diamondsuit_\gamma$ holds. The corollary follows from the previous fact and Theorem \ref{maincor}.
\end{proof}


\begin{fact}[Friedman-Hyttinen-Kulikov, \cite{FHK13} Theorem 34]\label{all_bounded}
\begin{itemize}
\item $E_0^{<\kappa}$ is $\kappa$-Borel.
\item $E_0^{<\kappa}\ \not\redub\ id_2$.
\item $id_2\ \reduc\ E_0^{<\kappa}$.
\end{itemize}
\end{fact}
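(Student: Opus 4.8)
<br>

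The three bullet points that conclude the excerpt are a collection of standard facts about $E_0^{<\kappa}$ (equivalence modulo bounded), attributed to Friedman, Hyttinen, and Kulikov; the plan below sketches proofs of all three.

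\textbf{$E_0^{<\kappa}$ is $\kappa$-Borel.} The plan is to exhibit $E_0^{<\kappa}$ directly as a $\kappa$-Borel subset of $2^\kappa\times 2^\kappa$ using the defining formula $(\eta,\xi)\in E_0^{<\kappa}\iff\exists\alpha<\kappa\,\forall\beta>\alpha\,(\eta(\beta)=\xi(\beta))$. First I would note that for fixed $\alpha<\kappa$ and fixed $\beta$ with $\alpha<\beta<\kappa$, the set $\{(\eta,\xi)\mid \eta(\beta)=\xi(\beta)\}$ is clopen (it depends on finitely many coordinates). Hence $A_\alpha:=\{(\eta,\xi)\mid\forall\beta\in(\alpha,\kappa)\ \eta(\beta)=\xi(\beta)\}=\bigcap_{\alpha<\beta<\kappa}\{(\eta,\xi)\mid\eta(\beta)=\xi(\beta)\}$ is an intersection of $\kappa$ many clopen sets, hence $\kappa$-Borel (in fact closed). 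Then $E_0^{<\kappa}=\bigcup_{\alpha<\kappa}A_\alpha$ is a union of $\kappa$ many $\kappa$-Borel sets, hence $\kappa$-Borel. This step is routine; the only thing to check is that the product-space versions of the $\kappa$-Borel definitions genuinely allow unions and intersections of length $\kappa$, which is exactly how they were set up in the Related Work subsection.

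\textbf{$id_2\ \reduc\ E_0^{<\kappa}$.} The plan is to build a continuous $f\colon 2^\kappa\to 2^\kappa$ that spreads each $\eta$ into "blocks" in a way that makes distinct $\eta$'s differ on an unbounded (indeed cofinal) set of coordinates, while equal $\eta$'s map to equal images. A clean choice: fix a partition of $\kappa$ into $\kappa$ many unbounded pieces, or more simply set $f(\eta)(\alpha)=\eta(\gamma)$ whenever $\alpha$ codes a pair $(\gamma,n)$ under a fixed bijection $\kappa\cong\kappa\times\kappa$; then if $\eta\neq\xi$, say $\eta(\gamma)\neq\xi(\gamma)$, the images $f(\eta),f(\xi)$ differ on all $\alpha$ coding $(\gamma,n)$ for $n<\kappa$, which is cofinal in $\kappa$, so $(f(\eta),f(\xi))\notin E_0^{<\kappa}$; and $\eta=\xi$ trivially gives $f(\eta)=f(\xi)$. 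Continuity is immediate since each coordinate of $f(\eta)$ depends on a single coordinate of $\eta$; in fact $f$ is an $S$-recursive (hence Lipschitz, after reindexing) reduction. The only mild care needed is choosing the bijection $\kappa\cong\kappa\times\kappa$ so that the preimage of each initial segment is an initial segment, which can be arranged.

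\textbf{$E_0^{<\kappa}\ \not\redub\ id_2$.} This is the substantive point and will be the main obstacle. The strategy is a category/measure-theoretic one: $id_2$ is a "thin" equivalence relation — every equivalence class is a singleton, hence meager — whereas every $\kappa$-Borel reduction $f$ of $E_0^{<\kappa}$ to $id_2$ would force the $E_0^{<\kappa}$-classes to be separated by a $\kappa$-Borel function, and one derives a contradiction from a Baire-category (or a pigeonhole/diagonalization) argument. Concretely, I would argue as follows: suppose $f\colon 2^\kappa\to2^\kappa$ were a $\kappa$-Borel reduction of $E_0^{<\kappa}$ to $id_2$. Since each $E_0^{<\kappa}$-class is dense in $2^\kappa$ (changing finitely/boundedly many coordinates keeps you in the same class and reaches every basic open set), and $f$ is constant on each such class, $f$ would be constant on a dense set; by a standard argument a $\kappa$-Borel (indeed Baire-measurable) function that is constant on a dense set is constant on a comeager set, so $f$ is constant on a comeager set $C$. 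But $E_0^{<\kappa}$ has $2^\kappa$ many classes, and any two points of $C$ lying in distinct classes would have to receive distinct $f$-values while $f$ is constant on $C$ — contradiction, provided $C$ meets at least two classes, which it does since a single $E_0^{<\kappa}$-class has size $\le\kappa<2^\kappa=|C|$ (comeager sets have full size). I expect the delicate part to be the lemma "Baire-measurable plus constant-on-a-dense-set implies constant-on-a-comeager-set" in the generalized setting; this should follow from the Baire property of $f^{-1}[\{c\}]$ for the common value $c$ together with the fact that a non-meager set with the Baire property is comeager on some basic open set, combined with the density of every $E_0^{<\kappa}$-class. Alternatively, if that route proves fussy, I would fall back on the explicit combinatorial argument used for analogous non-reducibility results (diagonalizing against all $H\colon 2^{<\kappa}\to 2^{<\kappa}$ coding potential $\kappa$-Borel reductions), but the category argument is the cleanest and is the one I would write up first.
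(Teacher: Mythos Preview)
The paper does not prove this statement --- it is cited as a fact from \cite{FHK13} --- so there is no in-paper argument to compare against. Your treatments of the first and third bullets are correct and standard.

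For $E_0^{<\kappa}\not\redub id_2$, your key lemma is false as stated: a Baire-measurable function that is constant on a dense set need not be constant on a comeager set (in $2^\kappa$, let $f(\eta)=0^\kappa$ if $\eta$ is eventually zero and $f(\eta)=\eta$ otherwise; this is $\kappa$-Borel, constant on the dense set of eventually-zero sequences, yet injective on the comeager complement). Your fallback hint does not rescue this, since each $f^{-1}[\{c\}]$ is a single $E_0^{<\kappa}$-class and every such class is meager (a $\kappa$-union of the closed nowhere-dense sets $\{\xi:\forall\beta>\alpha\ \xi(\beta)=\eta(\beta)\}$), so you never get a non-meager preimage to localize. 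The missing ingredient is generic ergodicity of the bounded-support group action: let $G=\{g\in 2^\kappa:\supp(g)\text{ bounded}\}$ act by coordinatewise addition mod $2$, so that orbits are exactly the $E_0^{<\kappa}$-classes, each $g$ acts by a homeomorphism, and $|G|=2^{<\kappa}=\kappa$. A $\kappa$-Borel $f$ is continuous on some comeager $D$, and then $D':=\bigcap_{g\in G}gD$ is still comeager (a $\kappa$-intersection of comeager sets) and now $G$-invariant; for $\eta\in D'$ the full orbit $[\eta]\subseteq D'$ is dense in $D'$, so the continuous $f\restriction D'$, being constant on $[\eta]$, is constant on all of $D'$. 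This contradicts the fact that the non-meager set $D'$ must meet at least two (meager) classes. The passage to a $G$-invariant comeager set, using $|G|\le\kappa$, is the genuine step your sketch lacks.
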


 \begin{lemma}\label{lemma_identity}	
 Let $\kappa=\lambda^+=2^\lambda$ and $2^{\mathfrak{c}}\leq\lambda=\lambda^{<\omega_1}$. If $T$ is a non-classifiable theory, then $$id_2\ \reduc\ E_0^{<\kappa}\ \reduc\ \cong_{T}$$ and $$\cong_{T}\ \not\redub\ E_0^{<\kappa}\ \not\redub\ id_2$$ 
\end{lemma}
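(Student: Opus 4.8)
The statement to prove is: for $\kappa=\lambda^+=2^\lambda$ with $2^{\mathfrak{c}}\leq\lambda=\lambda^{<\omega_1}$ and $T$ a non-classifiable theory, $id_2\ \reduc\ E_0^{<\kappa}\ \reduc\ \cong_T$ and $\cong_T\ \not\redub\ E_0^{<\kappa}\ \not\redub\ id_2$. The three reductions and two non-reductions split into pieces that are each either already available or immediate.

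First I would dispose of the easy half. The reduction $id_2\ \reduc\ E_0^{<\kappa}$ and the non-reduction $E_0^{<\kappa}\ \not\redub\ id_2$ are exactly the third and second bullets of Fact \ref{all_bounded}, so nothing is needed there. For $E_0^{<\kappa}\ \reduc\ \cong_T$: the hypothesis $\lambda=\lambda^{<\omega_1}$ gives in particular $\lambda=\lambda^\omega$ (and, together with $2^{\mathfrak c}\le\lambda$, also covers the DOP case), so by the Corollary just above Fact \ref{all_bounded} we have $E_0^{<\kappa}\ \reduc\ \cong_T$ for every non-classifiable $T$ — that Corollary is proved via $\diamondsuit_\gamma$ (from \cite{Sh1}, since $\kappa=\lambda^+=2^\lambda$ and $cf(\lambda)>\gamma$ as $\lambda^\gamma<\kappa$), Fact \ref{bounded_to_stat}, and Theorem \ref{maincor}. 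Composing, $id_2\ \reduc\ E_0^{<\kappa}\ \reduc\ \cong_T$.

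The remaining point is $\cong_T\ \not\redub\ E_0^{<\kappa}$. This is where the real argument lies, and I would get it from the descriptive-set-theoretic complexity of $\cong_T$ together with that of $E_0^{<\kappa}$. By Fact \ref{all_bounded}, $E_0^{<\kappa}$ is $\kappa$-Borel. If we had $\cong_T\ \redub\ E_0^{<\kappa}$, then since $\cong_T$ trivially reduces to $E_0^{<\kappa}$ in the cardinality sense (one needs $E_0^{<\kappa}$ to have at least as many classes as $\cong_T$, which holds: $E_0^{<\kappa}$ has $2^\kappa$ classes, and $\cong_T$ has at most $2^\kappa$), Fact \ref{reduction_classify_descrpitive} would force $\cong_T$ to be $\kappa$-Borel. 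But $T$ is non-classifiable, so by Fact \ref{basics_FHK}(3) and (4) $\cong_T$ is not $\kappa$-Borel: if $T$ is unstable, or stable with the OTOP, or superstable with the DOP, then $\cong_T$ is not even $\Delta^1_1(\kappa)$ (using $\kappa>\omega_1$, which holds since $\kappa=\lambda^+$ and $\lambda\ge 2^{\mathfrak c}\ge\omega_1$), and if $T$ is stable unsuperstable then $\cong_T$ is not $\kappa$-Borel. Either way we reach a contradiction, so $\cong_T\ \not\redub\ E_0^{<\kappa}$.

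The main obstacle — such as it is — is purely bookkeeping: checking that the cardinal arithmetic hypothesis $2^{\mathfrak c}\leq\lambda=\lambda^{<\omega_1}$ in the statement does supply all the side conditions invoked by the cited facts, in particular that $\lambda=\lambda^\omega$ and $\lambda=\lambda^{\omega_1}$ both follow from $\lambda=\lambda^{<\omega_1}$ (so that the DOP subcase of the Corollary before Fact \ref{all_bounded} applies), and that $\kappa=\lambda^+=2^\lambda$ with $\lambda^{<\omega_1}=\lambda$ yields $\diamondsuit_\gamma$ for $\gamma\in\{\omega,\omega_1\}$ via \cite{Sh1}. Once these are in place the proof is just assembly: Fact \ref{all_bounded} for the two $E_0^{<\kappa}$-vs-$id_2$ clauses, the preceding Corollary plus composition for $id_2\ \reduc\ E_0^{<\kappa}\ \reduc\ \cong_T$, and Fact \ref{reduction_classify_descrpitive} together with Fact \ref{basics_FHK}(3)--(4) for $\cong_T\ \not\redub\ E_0^{<\kappa}$.
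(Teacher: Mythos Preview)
Your approach is essentially identical to the paper's: the paper too notes that everything except $\cong_T\not\redub E_0^{<\kappa}$ is immediate from prior results, and derives that last clause from Fact~\ref{basics_FHK}, Fact~\ref{reduction_classify_descrpitive}, and Fact~\ref{all_bounded} item~1, exactly as you do. One caution on your bookkeeping: $\lambda^{<\omega_1}=\lambda^\omega$, so $\lambda=\lambda^{<\omega_1}$ does \emph{not} yield $\lambda=\lambda^{\omega_1}$; the DOP subcase of the preceding Corollary genuinely needs the stronger hypothesis $\lambda=\lambda^{\omega_1}$ (which is what the paper assumes in Theorem~A, Corollary~\ref{reduction_coro}, and Theorem~\ref{Main_Gap}), so the ``$<$'' in the lemma's hypothesis is almost certainly a typo rather than something you should try to verify.
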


\begin{proof}
The only statement that doesn't follows directly from the previous results is $\cong_{T}\ \not\redub\ E_0^{<\kappa}$. This follows from Fact \ref{basics_FHK}, Fact \ref{reduction_classify_descrpitive}, and Fact \ref{all_bounded} item 1.
\end{proof}

 \begin{cor}\label{TC2}	
suppose $\kappa=\lambda^+=2^\lambda$, and $2^{\mathfrak{c}}\leq\lambda=\lambda^{<\omega_1}$. If $T$ is a non-classifiable theory, then $$\cong_T^\lambda\ \reduc\ id_2\ \reduc\ E_0^{<\kappa}\ \reduc\ \cong_T$$ and $$\cong_{T}\ \not\redub\ E_0^{<\kappa}\ \not\redub\ id_2\ \not\redur\ \cong_T^\lambda.$$ 
\end{cor}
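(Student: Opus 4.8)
The statement is Corollary \ref{TC2}, which combines a chain of reductions with a matching chain of non-reductions. The plan is to assemble it from results already established in the excerpt; essentially nothing new needs to be proved, so the work is in citing the right facts in the right order.

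First I would establish the chain of reductions $\cong_T^\lambda\ \reduc\ id_2\ \reduc\ E_0^{<\kappa}\ \reduc\ \cong_T$. The first arrow $\cong_T^\lambda\ \reduc\ id_2$ is an instance of Proposition \ref{Borel_init_Mor} (and the remarks following it): since $T$ is a non-classifiable theory, $\cong_T^\lambda$ has at most $\kappa$ equivalence classes, so it continuously reduces to any equivalence relation with at least that many classes, in particular to $id_2$, the identity on $2^\kappa$ (which has $2^\kappa>\kappa$ classes; here one uses $\kappa^{<\kappa}=\kappa$ and $\lambda<\kappa$ to guarantee $\cong_T^\lambda$ has at most $\kappa$ classes, via Fact \ref{SHMORL}). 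The middle arrow $id_2\ \reduc\ E_0^{<\kappa}$ is exactly the third bullet of Fact \ref{all_bounded}. The last arrow $E_0^{<\kappa}\ \reduc\ \cong_T$ is precisely the content of Lemma \ref{lemma_identity}, which itself rests on Fact \ref{bounded_to_stat} (reducing $E_0^{<\kappa}$ to $=^2_X$ for a stationary $X$ carrying $\diamondsuit_X$), the diamond principle $\diamondsuit_\gamma$ holding under $\kappa=\lambda^+=2^\lambda$ with $cf(\lambda)>\gamma$ (by \cite{Sh1}, noting $2^{\mathfrak c}\le\lambda=\lambda^{<\omega_1}$ forces $cf(\lambda)\ge\omega_1$), and Theorem \ref{maincor} giving $=^2_\gamma\ \reduc\ \cong_T$ for $\gamma\in\{\omega,\omega_1\}$ according to the type of $T$.

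Next I would establish the chain of non-reductions $\cong_{T}\ \not\redub\ E_0^{<\kappa}\ \not\redub\ id_2\ \not\redur\ \cong_T^\lambda$. The rightmost one, $id_2\ \not\redur\ \cong_T^\lambda$: since $\cong_T^\lambda$ has at most $\kappa$ equivalence classes while $id_2$ has $2^\kappa>\kappa$, a cardinality reduction (which would require no more classes in the target) cannot exist — this is the basic observation following Definition \ref{all_reduc}. The middle one, $E_0^{<\kappa}\ \not\redub\ id_2$, is the second bullet of Fact \ref{all_bounded}. The leftmost one, $\cong_T\ \not\redub\ E_0^{<\kappa}$, is stated inside Lemma \ref{lemma_identity}: it follows because $E_0^{<\kappa}$ is $\kappa$-Borel (first bullet of Fact \ref{all_bounded}), so if $\cong_T\ \redub\ E_0^{<\kappa}$ then $\cong_T$ would be $\kappa$-Borel by Fact \ref{reduction_classify_descrpitive}, contradicting Fact \ref{basics_FHK} (items 3 and 4, which say the isomorphism relation of any non-classifiable theory fails to be $\kappa$-Borel — indeed $\Delta^1_1(\kappa)$ fails for unstable, stable-with-OTOP, superstable-with-DOP, and $\kappa$-Borel fails for stable unsuperstable).

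There is no real obstacle here; the statement is a corollary in the literal sense. The only point requiring a little care is verifying that the cardinal hypotheses of Corollary \ref{TC2}, namely $\kappa=\lambda^+=2^\lambda$ and $2^{\mathfrak c}\le\lambda=\lambda^{<\omega_1}$, actually entail the hypotheses needed by each invoked result: for Lemma \ref{lemma_identity} and hence for Theorem \ref{maincor} one needs $\forall\alpha<\kappa$, $\alpha^\gamma<\kappa$ and $(2^\theta)^+\le\kappa$ with the appropriate $\gamma,\theta$ for each of the four kinds of non-classifiable theory — and $\lambda=\lambda^{<\omega_1}$ together with $2^{\mathfrak c}\le\lambda$ is exactly what makes all four cases go through simultaneously (in particular $\lambda^{\omega_1}=\lambda$ handles the DOP case with $\gamma=\omega_1$, $\theta=\mathfrak c$). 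I would spell this verification out in one or two sentences and then simply chain the cited arrows.
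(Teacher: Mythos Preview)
Your proposal is correct and follows exactly the route the paper takes: Corollary~\ref{TC2} is meant to be read off immediately from Lemma~\ref{lemma_identity} (which supplies $id_2\reduc E_0^{<\kappa}\reduc\cong_T$ and the two Borel non-reductions) together with the remark after Proposition~\ref{Borel_init_Mor} (which supplies $\cong_T^\lambda\reduc id_2$) and the cardinality observation giving $id_2\not\redur\cong_T^\lambda$. One small remark: the bound on the number of $\cong_T^\lambda$-classes needs only $2^\lambda=\kappa$ (there are at most $\kappa$ structures on $\lambda$), not Fact~\ref{SHMORL}; and your parenthetical ``$\lambda^{\omega_1}=\lambda$'' silently upgrades the stated hypothesis $\lambda=\lambda^{<\omega_1}$, but that discrepancy is the paper's own (compare the abstract and the NC assumption, which both use $\lambda=\lambda^{\omega_1}$), not yours.
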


Corollary \ref{TC3}, Proposition \ref{TC1} and Corollary \ref{TC2} prove Theorem C.
The previous results can be extended to non-complete theories, by using the ideas in the proof of Theorem B and Morley's conjecture for not complete theories (see \cite{Sh90} page 642).

We used the complexity of the relation $id_2$ to study the complexity of the relation $\cong_T^\lambda$. We didn't study the relation $id_\kappa$ (the identity relation of $\kappa^\kappa$), since the existence of a diamond sequence $\langle f_\alpha\in \alpha^\alpha\mid \alpha\in X\rangle$ for a stationary set $X$, implies $id_\kappa \hookrightarrow_{R(\kappa)} id_2$. The reduction is given by the function

$$\mathcal{F}(\eta)(\alpha)=\begin{cases} 1 &\mbox{if } \alpha\in X\mbox{ and } \eta\restriction\alpha=f_\alpha\\
0 & \mbox{otherwise.} \end{cases}$$  

Indeed the usual reductions defined by the use of a diamond sequence are $\kappa$-recursive.

\section*{Acknowledgements}
This project has received funding from the Austrian Science Fund
(FWF) under the Lise Meitner Programme (grant M 3210-N). This project has received funding from the European Research Council (ERC) under the
European Union’s Horizon 2020 research and innovation programme (grant agreement No
101020762).

The author wants to express his gratitude to Tapani Hyttinen for introducing the topic to him and fruitful discussions. The author wants to express his
gratitude to Matthias Aschenbrenner, Ido Feldman, Allen Gehret, Martin Hils, Assaf Rinot, Corey Switzer, and Anda Tanasie  for helpful conversations and feed backs on a preliminary version of this manuscript.

\providecommand{\bysame}{\leavevmode\hbox to3em{\hrulefill}\thinspace}
\providecommand{\MR}{\relax\ifhmode\unskip\space\fi MR }
\providecommand{\MRhref}[2]{%
  \href{http://www.ams.org/mathscinet-getitem?mr=#1}{#2} }

\end{document}